\subjclass[2020]{14M25}
\newtheorem{theorem}{Theorem}
\numberwithin{theorem}{section}
\newtheorem{proposition}[theorem]{Proposition}
\newtheorem{lemma}[theorem]{Lemma}
\newtheorem{corollary}[theorem]{Corollary}
\theoremstyle{definition} 
\newtheorem{definition}[theorem]{Definition}
\theoremstyle{remark}
\newtheorem{example}[theorem]{Example}
\numberwithin{theorem}{section}
\numberwithin{equation}{section}
\providecommand{\Q}{\mathbb Q}
\providecommand{\RR}{\mathbb R}
\providecommand{\ZZ}{\mathbb Z}
\providecommand{\charac}{\operatorname{char}}
\providecommand{\supp}{\operatorname{Supp}}
\providecommand{\Val}{\operatorname{Val}}
\providecommand{\val}{\operatorname{val}}
\providecommand{\Ray}{\operatorname{Ray}}
\providecommand{\Spec}{\operatorname{Spec}}
\providecommand{\A}{\mathbb A}
\providecommand{\PP}{\mathbb P}
\providecommand{\Gm}{\mathbb{G}_m}
\providecommand{\pr}{\mathrm{pr}}
\providecommand{\HH}{\mathscr{H}}
\providecommand{\WW}{\mathscr{W}}
\providecommand{\id}{\mathrm{id}}
\providecommand{\Rt}{\mathscr{R}}
\providecommand{\Kt}{\mathscr{K}}
\providecommand{\spe}{\operatorname{sp}}
\providecommand{\bdd}{\operatorname{bdd}}
\providecommand{\OO}{\mathscr O}
\providecommand{\rank}{\operatorname{rank}}
\providecommand{\ker}{\operatorname{ker}}
\title{Stable rationality of hypersurfaces of mock toric variety I}
\author{Taro Yoshino}
\date{\today}
\address{Graduate School of Mathematical Sciences, The University of Tokyo, 3-8-1 Komaba,
Meguro-ku, Tokyo, 153-8914, Japan}
\email{yotaro@ms.u-tokyo.ac.jp}
\begin{document}

\begin{abstract}
    We introduce a mock toric variety, a generalization of a toric variety. 
    For a non-toric example, Del-Pezzo surfaces are mock toric varieties. 
    These new varieties inherit some properties of mock toric varieties. 
    In application, we give sufficient conditions for the concrete construction of a strictly toroidal model of a hypersurface in a mock toric variety. 
\end{abstract}

\maketitle
\tableofcontents
\section{Introduction}
    \subsection{Toric variety and its application for construction of a strictly toroidal model}
    Toric varieties are "simple" objects. 
    Hence, they are suitable for concretely constructing various abstract concepts. 
    For example, we give examples of compactification and a resolution of singularities. 
    In contexts of toric varieties, a complete fan induces a compactification of an algebraic tori, and an unimodular subdivision induces the resolution of a toric variety. 

    Let $k$ be an algebraically closed field with $\charac(k) = 0$. 
    Let $\Rt$ be a valuation ring defined as follows:
            \[
                \Rt = \bigcup_{n\in\ZZ_{>0}} k[[t^{\frac{1}{n}}]]
            \]
    Let $\Kt$ be a fraction field of $\Rt$. 
            We remark that $\Kt$ is written as follows:
            \[
                \Kt = \bigcup_{n\in\ZZ_{>0}} k((t^{\frac{1}{n}}))
            \]
    In this article, we consider the following definition: 
    \begin{definition}[\cite{NO21}]\label{NONO21} 
        Let $\mathscr{X}$ be a flat and separated $\Rt$-scheme of finite type, and let $x\in\mathscr{X}_k$ be a point. 
        We say that $\mathscr{X}$ is strictly toroidal at $x$ if there exist a toric monoid $S$, an open neighbourhood $U$ of $x$ in $\mathscr{X}$, and a smooth morphism of $\Rt$-schemes $U \rightarrow \Spec(\Rt[S]/(\chi^\omega - t^q))$, where $q$ is a positive rational number, $\omega$ is an element of $S$, and $\chi^\omega$ is a torus invariant monomial in $k[S]$ associated with $\omega$ such that $k[S]/(\chi^\omega)$ is reduced. 
        We call that $\mathscr{X}$ is \textbf{strictly toroidal} if it is so at any $x\in \mathscr{X}_k$. 
        \end{definition}
    For a proper and smooth $\Kt$-variety $X$, a strictly toroidal model of $X$ is essential to compute a stably birational model of $X$.(cf. \cite[Theorem 3.3.5]{NO21})
    Although its construction is generally challenging, it is easy for toric varieties and Newton non-degenerate hypersurfaces of toric varieties over $\Kt$(cf. \cite{NO22}). 

    This article gives new examples of strictly toroidal models of some varieties. 
    In particular, we would like to mention the following two points in this article: 
    \begin{itemize}
        \item An introduction to mock toric varieties, which are the generalization of toric varieties
        \item Sufficient conditions for the concrete (NOT abstract) construction of a strictly toroidal model of a hypersurface in a mock toric variety(See Proposition \ref{prop: irreducible computation})
    \end{itemize}
    These new examples are applied to check the stable rationality of some varieties. 
    For further details, please refer to the preprint by the same author, which will be released on arXiv soon.
    
    In the following subsections, we summarize this article's key concepts and outline. 
    \subsection{Mock toric varieties}
    Section 3 defines mock toric varieties(cf. Definition \ref{def:mock-toric}). 
    In short, a mock toric variety $Z$ is a closed subscheme of a toric variety $X$, and it is covered by open subschemes of toric varieties. 
    In particular, it is a toroidal rational variety. 
    For example, toric varieties are mock toric varieties. 
    Moreover, the "base point free" hyperplane arrangement induces a mock toric variety(See Proposition \ref{prop: hyper-example}). 
    As a corollary of it, we show that a Del Pezzo surface is a mock toric variety(See Corollary \ref{cor: Del-Pezzo-model}). 
    
    While the definition is complicated, mock toric varieties inherit various properties with toric varieties as follows: 
    \begin{itemize}
        \item The torus orbit decomposition of the ambient space $X$ induces the stratification of $Z$. 
        Similar to toric varieties, there is a correspondence between the inclusion relationships of the closure of each stratum and the inclusion relationships of cones(cf. Corollary\ref{cor:stratification-structure2}). 
        \item Each closure of the stratum is also a mock toric variety(cf. Proposition\ref{prop:orbit-structure}). 
        \item Let $\mu$ be a dominant toric morphism $X'\rightarrow X$. 
        Then $Z\times_X X'$ is a mock toric variety(cf. Proposition\ref{prop:relative-induced-structure}). 
        \item There exists discrete valuations on $Z$ trivial on $k$ corresponding to lattice points in a toric fan of $X$(cf. Proposition\ref{prop:property-of-valuations}). 
        \item If $Z$ is proper over $k$, then a closed immersion $Z\hookrightarrow X$ is a tropical compactification of $Z\cap T$ in $X$, where $T$ is a dense open torus orbit of $X$(cf. Proposition\ref{prop:tropical}). 
    \end{itemize}
    \subsection{Hypersurfaces of mock toric varieties}
        Let $X$ be a toric variety, $T$ be a dense open torus orbit of $X$, $H^\circ$ be a hypersurface in $T$, and $H$ be a scheme theoretic closure of $H^\circ$ in $X$. 
        For each torus orbit $O$, we can check whether $O\subset H$ by using torus invariant valuations of the defining Laurent polynomial of $H^\circ$(cf. Lemma\ref{lem: orbit and function}). 
        In general, $H$ contains some torus orbits of $X$.
        However, there exists a toric resolution $X'\rightarrow X$ such that $H'$ does not contain any torus orbits of $X'$, where $H'$ denotes a scheme theoretic closure of $H^\circ$ in $X'$(cf.Propositon\ref{lem: property of val function}(g)).
        The condition that the closure of $H^\circ$ does not contain any torus orbits of the toric variety is important for constructing a good model. 
        
        The same argument works in the case of mock toric varieties. 
        The details of the above are discussed in Section 6. 
    \subsection{Strictly toroidal models of hypersurfaces of mock toric varieties}
        One of the goals of this article is to give sufficient conditions for the concrete construction of a strictly toroidal model of a hypersurface in a mock toric variety. 
        Because mock toric varieties inherit most properties from toric varieties, sufficient conditions for them can be given, similarly to those of a strict toroidal model of a hypersurface in a toric variety. 
        
        As a note, \cite{NO22} focused on an irreducible hypersurface over $\Kt$ in an algebraic tori. 
        In this article, we start the argument from irreducible hypersurfaces in mock toric varieties over $k(t)$ and then change it to those over $\Kt$ by a base change.
        In general, the irreducibility might be compromised due to the field extension. 
        Instead, we consider all irreducible components. 
        Although the argument is complicated, it is still applicable for practical purposes. 
    \subsection{Outline of the paper}
        This paper is organized as follows.
        In Section 2, we organize the notation used in this article. 
        Section 3 introduces the definition of mock toric varieties and considers their basic properties. 
        Section 4 considers some mock toric varieties constructed from a mock toric variety. 
        We also discuss the set of discrete valuations on mock toric varieties corresponding to the lattice points of the ambient toric fan. 
        In Section 5, we give some examples of mock toric varieties. 
        In particular, we show that the "base point free" hyperplane arrangements induce mock toric varieties. 
        In Section 6, we consider hypersurfaces of mock toric varieties. 
        Section 7 gives sufficient conditions for the concrete construction of a strictly toroidal model of a hypersurface in a mock toric variety. 
        In Section 8, we prove the lemmas needed in this article. 
    \subsection{Acknowledgment}
        The author is grateful to his supervisor, Yoshinori Gongyo, for his encouragement.
\section{Notation}
    In this paper, we use the following notations according to \cite{CLS11} and \cite{Ful93}:
    \begin{itemize}
        \item Let $k$ be a field. 
        \item For an integral scheme $X$, 
        let $K(X)$ denote the function field of $X$. 
        Let $\Val(X)$ denote a set of all discrete valuations on $K(X)$. 
        \item We call an integral separated scheme of finite type over $k$ a variety over $k$. 
        Let $X$ be a variety over $k$ and $\Val_k(X)$ denote a set of all discrete valuations on $K(X)$, which is trivial on $k$. 
        \item Let $f\colon X\dashrightarrow Y$ be a dominant rational map of integral schemes, $f^*$ denote a morphism of fields $K(Y)\rightarrow K(X)$ induced by $f$, and $f_{\natural}$ denote the map $\Val(X)\rightarrow \Val(Y)$ defined by $f_\natural(v) = v\circ f^*$ for $v\in \Val(X)$. 
        \item Let $V$ be an $\RR$-linear vector space of finite dimension, $W$ be the dual space of $V$, and $B$ be a subset of $V$. 
        Let $B^\perp$ and $B^\vee$ denote as the following subsets of $W$: 
        \[
            B^\perp = \{w\in W\mid w(x) = 0\quad(\forall x\in B)\}
        \]
        \[
            B^\vee = \{w\in W\mid w(x) \geq 0\quad(\forall x\in B)\}
        \]
        \item Let $N$ denote a lattice of finite rank and $M$ denote the dual lattice of $N$. 
        \item Let $\sigma$ be a convex cone in $N_\RR$ and $\tau$ be a face of $\sigma$.   
        Then we write $\tau\preceq \sigma$. 
        \item Let $\sigma$ be a convex cone in $N_\RR$. 
        Then $\sigma^\circ$ denotes the relative interior of $\sigma$. 
        \item Let $\Delta$ be a strongly convex rational polyhedral fan in $N_\RR$.  
        Then $X_k(\Delta)$ denote a toric variety corresponding to $\Delta$ over $k$. 
        We sometimes write $X(\Delta)$ insted of $X_k(\Delta)$. 
        \item Let $k[M]$ denote a $k$-algebra induced by a monoid $M$. 
        For $\omega\in M$, let $\chi^\omega$ denote a monomial in $k[M]$ associated with $\omega$. 
        \item Let $T_N$ denote $\Spec(k[M])$. 
        we remark that $X(\{0_N\}) = T_N$. 
        \item Let $\Delta$ be a strongly convex rational polyhedral fan and $\sigma\in\Delta$ be a cone. 
        Let $O_\sigma$ denote the orbit of the torus action of $X(\Delta)$ corresponding to $\sigma$. 
        \item Let $\sigma$ be a convex cone in an $\RR$-linear space $V$. 
        Let $\langle\sigma\rangle$ denote an $\RR$-linear subspace of $V$ which is spanned by $\sigma$. 
        \item 
        For a strongly convex rational fan $\Delta$ in $N_\RR$ and $\sigma\in\Delta$, let $N[\sigma]$ denote $N/(\langle\sigma\rangle\cap N)$, $\pi^\sigma$ denote the quotient map from $N$ to $N[\sigma]$, $k[\sigma^\vee\cap M]$ denote $k$-algebra defined by a monoid $\sigma^\vee\cap M$, $M^\sigma$ denote the dual lattice of $N[\sigma]$. 
        We remark that $k[\sigma^\vee\cap M] \cong \Gamma(X(\sigma), \OO_{X(\sigma)})$ and $k[M^\sigma]\cong \Gamma(O_\sigma, \OO_{O_\sigma})$. 
        \item For the notation above, let $p^\sigma$ denote a quotient map from $k[\sigma^\vee\cap M]$ to $k[M^\sigma]$ associated with the closed immersion $O_\sigma\hookrightarrow X(\sigma)$ and $(\pi^\sigma)^*$ denote a lattice morphism $M^\sigma\rightarrow M$ induced by $\pi^\sigma$. 
        We remark that $(\pi^\sigma)^*$ is injective, $(\pi^\sigma)^*(M^\sigma) = \sigma^\perp\cap M$, and an ideal $\ker(p^\sigma)$ is generated by $\{\chi^\omega\}_{\omega\in (\sigma^\vee\cap M)\setminus \sigma^\perp}$. 
        \item For the notation above, let $\Delta[\sigma]$ denote the following set:
        \[
            \Delta[\sigma] = \{\pi^\sigma_\RR(\tau)\mid\sigma\preceq\tau\in\Delta\}
        \]
        From Proposition \ref{prop:orbit-toric}, $\Delta[\sigma]$ is a strongly convex rational polyhedral fan in $N[\sigma]_\RR$. 
        \item Let $N$ and $N'$ be lattices of finite rank and $f\colon N'\rightarrow N$ be a homomorphism, $f_\RR$ denote an $\RR$-linear map $N'_\RR\rightarrow N_\RR$ induced by $f$,  $M'$ denote the dual lattice of $N'$, and $f^*$ denote a lattice morphism $M\rightarrow M'$ induced by $f$. 
        With a slight abuse of notation, We also use $f^*$ for a morphism of $k$-algebras $k[M]\rightarrow k[M']$ defined by $\chi^\omega\mapsto\chi^{f^*(\omega)}$ for $\omega\in M$. 
        \item On the convention above, let $\Delta$ and $\Delta'$ be a strongly convex rational polyhedral fan in $N_\RR$ and $N'_\RR$ respectively. 
        If for any $\tau\in\Delta'$, there exists  $\sigma\in\Delta$ such that $f_\RR(\tau)\subset\sigma$, then we call that $f$ is compatible with the fans $\Delta'$ and $\Delta$. 
        \item On the notation above, the map $f_*$ denote the toric morphism from $X(\Delta')$ to $X(\Delta)$ induced by $f$. 
        \item Let $\Delta_!$ denote a convex fan $\{\{0\}, [0, \infty)\}$ in $\RR$. 
        \item Let $\Delta_1, \Delta_2$ be fans in $\RR^n$ and $\RR^m$ respectively and $\Delta_1\times\Delta_2$ denote the following fan in $\RR^{n+m}$: 
        \[
            \{\sigma_1\times\sigma_2\mid\sigma_1\in \Delta_1, \sigma_2\in \Delta_2\}
        \]
        \item Let $\Delta$ be a strongly convex rational polyhedral fan in $N_\RR$, $f\in k[M]$ denote a polynomial.  
        Let $H^\circ_{X(\Delta), f}$ denote a hypersurface of $T_N$ defined by $f = 0$, and $H_{X(\Delta), f}$ denote a scheme theoretic closure of $H^\circ_{X(\Delta), f}$ in $X(\Delta)$. 
        \item Let $N$ be a lattice of finite rank, $M$ be the dual lattice, and $\langle\cdot,\cdot\rangle$ be a pairing of $N$ and $M$. 
        For $v\in N$, we can identify $v$ as a $T_N$-invariant valuation on $T_N$ with trivial on $k$, and let $v(f)$ denote a value of $f\in k(M)$ by $v$, where $k(M)$ denote the fraction field of $k[M]$. 
        On this identification, $v(\chi^\omega) = \langle v,\omega\rangle$ for any $v\in N$ and $\omega\in M$. 
        \item Let $S$ be a monoid. 
        If there exists a lattice $N$ of finite rank and a full and strongly convex rational polyhedral cone $\sigma$ in $N_\RR$ such that $S$ is isomorphic to  $\sigma^\vee\cap M$, then we call $S$ is a \textbf{toric monoid}.
    \end{itemize}
\section{Mock toric variety}
    In this section, $k$ denotes a field. 
    We define a structure of mock toric varieties for $k$-schemes. 
    Subsequently, we examine the properties of the mock toric varieties. 
    Surprisingly, these new varieties inherit certain properties of the toric varieties.
    \subsection{Definition of the mock toric variety}
    In this subsection, we introduce mock toric varieties. 
    We remark that this variety has no torus action in general. 
    \begin{definition}\label{def:mock-toric}
        Let $Z$ be a scheme over $k$. 
        The mock toric structure of $Z$ consists of the following 6 data, which satisfy the following conditions from (1) to (5):
        \begin{itemize}
            \item A lattice $N$ of finite rank 
            \item A strongly convex rational polyhedral fan $\Delta$ in $N_\RR$
            \item A closed immersion $\iota\colon Z\hookrightarrow X(\Delta)$
            \item A finite set $\Phi$
            \item A family $\{N_\varphi\}_{\varphi\in\Phi}$ which consists of sublattices of $N$
            \item A family $\{\Delta_\varphi\}_{\varphi\in\Phi}$ which consists of sub fans of $\Delta$
        \end{itemize}
        \begin{itemize}
            \item[(1)] 
            For any $\varphi\in\Phi$ and $\sigma\in\Delta_\varphi$, $N/((\langle\sigma\rangle\cap N)+N_\varphi)$ is torsion-free. 
            In particular, from this condition, $N/N_\varphi$ is torsion-free. (see proposition Proposition \ref{prop: first prop}(a))
            \item[(2)] The following equation holds:
            \[
                \Delta = \bigcup_{\varphi\in\Phi}\Delta_\varphi.
            \]
            \item[(3)] For $\varphi\in \Phi$, let $q_\varphi$ denote a quotient map  $N\rightarrow N/N_{\varphi}$. 
            Then the map $(q_\varphi)_\RR|_{\supp(\Delta_\varphi)}$ is injective. 
            \item[(4)]For any $\sigma\in\Delta$, 
            $Z\cap O_\sigma \neq \emptyset$. 
            \item[(5)] From (3), The following set $\Delta(\varphi)$ is a strongly convex rational polyhedral fan in $(N/N_\varphi)\otimes\RR$\ (see Proposition \ref{prop: first prop}(c)): 
            \[
                \Delta(\varphi) = \{(q_\varphi)_\RR(\sigma)\mid\sigma\in\Delta_{\varphi}\}
            \]
            Then the following composition $\iota_\varphi$ is an open immersion: 
            \begin{equation*}
                \begin{tikzcd} 
                    Z\cap X(\Delta_\varphi)\ar[r, "\iota"]& X(\Delta_\varphi)\ar[r, "(q_\varphi)_*"]& X(\Delta(\varphi)), 
                \end{tikzcd}
            \end{equation*}
            where the former one is a closed immersion restricted to $Z\cap X(\Delta_\varphi)$.
        \end{itemize}
        We call that $(N, \Delta, \iota, \Phi, \{N_\varphi\}_{\varphi\in\Phi},\{\Delta_\varphi\}_{\varphi\in\Phi})$ is a mock toric structure of $Z$. 
    \end{definition}
    The following lemma summarizes fundamental facts that immediately hold from Definition\ref{def:mock-toric}. 
    \begin{proposition}\label{prop: first prop}
        In the definition Definition \ref{def:mock-toric}, the following statements follow:
        \begin{enumerate}
            \item[(a)] For any $\varphi\in\Phi$, $N/N_\varphi$ is a torsion-free module. 
            \item[(b)] For any $\varphi\in\Phi$ and any $\sigma\in\Delta_\varphi$, $\langle\sigma\rangle\cap N_\varphi = \{0\}$. 
            \item[(c)] A set $\Delta(\varphi)$ is a strongly convex rational polyhedral fan in $(N/N_\varphi)\otimes\RR$. 
            \item[(d)] Let $\varphi\in\Phi$ be an element and $\sigma\in\Delta\varphi$ be a cone. Let $\sigma_\varphi$ denote $(q_\varphi)_\RR(\sigma)$.   
            Then $q_\varphi|_{\langle\sigma\rangle\cap N}\colon \langle\sigma\rangle\cap N\rightarrow \langle\sigma_\varphi\rangle\cap N/N_\varphi$ is isomorphic. 
        \end{enumerate}
    \end{proposition}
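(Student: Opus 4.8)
\emph{Plan.} The idea is to read off all four statements directly from Definition~\ref{def:mock-toric}, with condition~(3) (injectivity of $(q_\varphi)_\RR$ on $\supp(\Delta_\varphi)$) carrying (a)--(c) and condition~(1) (the torsion-freeness hypothesis) carrying~(d). For (a): every fan contains the zero cone, so $\{0\}\in\Delta_\varphi$, and applying condition~(1) to $\sigma=\{0\}$ gives at once that $N/N_\varphi$ is torsion-free. For (b): given $x\in\langle\sigma\rangle\cap N_\varphi$ I would write $x=a-b$ with $a,b\in\sigma$ (possible since a convex cone satisfies $\langle\sigma\rangle=\sigma-\sigma$); from $x\in N_\varphi=\ker q_\varphi$ one gets $(q_\varphi)_\RR(a)=(q_\varphi)_\RR(b)$, and as $a,b\in\sigma\subseteq\supp(\Delta_\varphi)$, condition~(3) forces $a=b$, hence $x=0$. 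The very same computation shows that $(q_\varphi)_\RR$ is injective on the whole subspace $\langle\sigma\rangle$, a fact I would record for use in~(c).

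For (d): injectivity of $q_\varphi|_{\langle\sigma\rangle\cap N}$ is immediate from (b), since its kernel is $\langle\sigma\rangle\cap N\cap N_\varphi$, and the map visibly lands in $\langle\sigma_\varphi\rangle\cap(N/N_\varphi)$, so only surjectivity has content. Here I would set $L:=(\langle\sigma\rangle\cap N)+N_\varphi\subseteq N$, so that $q_\varphi(\langle\sigma\rangle\cap N)=L/N_\varphi$. Condition~(1) says $N/L$ is torsion-free, whence $L/N_\varphi$ is a saturated sublattice of $N/N_\varphi$ and therefore equals its $\RR$-span intersected with $N/N_\varphi$; since $\sigma$ is rational, $\langle\sigma\rangle\cap N$ spans $\langle\sigma\rangle$, so that $\RR$-span is $(q_\varphi)_\RR(\langle\sigma\rangle)=\langle\sigma_\varphi\rangle$. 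Thus the image is exactly $\langle\sigma_\varphi\rangle\cap(N/N_\varphi)$, as claimed.

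For (c): each $\sigma_\varphi:=(q_\varphi)_\RR(\sigma)$ is a rational polyhedral cone, being the image of one under a rational linear map; by the injectivity of $(q_\varphi)_\RR$ on $\langle\sigma\rangle$ noted in (b), $(q_\varphi)_\RR$ carries $\sigma$ isomorphically onto $\sigma_\varphi$, so $\sigma_\varphi$ is strongly convex and its faces are precisely the $(q_\varphi)_\RR$-images of the faces of $\sigma$; closure of $\Delta(\varphi)$ under faces then follows because $\Delta_\varphi$ is a subfan of $\Delta$. For the intersection axiom I would prove $\sigma_\varphi\cap\sigma'_\varphi=(q_\varphi)_\RR(\sigma\cap\sigma')$ for $\sigma,\sigma'\in\Delta_\varphi$: the inclusion $\supseteq$ is clear, and for $\subseteq$ a common image point $z=(q_\varphi)_\RR(a)=(q_\varphi)_\RR(b)$ with $a\in\sigma$, $b\in\sigma'$ has $a,b\in\supp(\Delta_\varphi)$, so condition~(3) gives $a=b\in\sigma\cap\sigma'$; since $\sigma\cap\sigma'$ is a common face of $\sigma$ and $\sigma'$, its image is a common face of $\sigma_\varphi$ and $\sigma'_\varphi$. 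I expect part~(c) to be the main obstacle, precisely because the image of a fan under a linear map need not be a fan in general --- it is exactly the injectivity hypothesis~(3) that rules out the pathologies (a cone collapsing onto a lower-dimensional one, or two image cones overlapping without meeting along a common face); the other three parts are short diagram chases once conditions~(1) and~(3) are brought to bear.
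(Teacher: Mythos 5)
Your proof is correct. Parts (a) and (b) coincide with the paper's argument (take $\sigma=\{0\}$ for (a); write $v=v_1-v_2$ with $v_1,v_2\in\sigma$ and invoke condition (3) for (b)). Where you diverge is in which auxiliary facts you rely on: for (c) the paper simply cites its Lemma \ref{lem: injective fan}(c),(d) (images of cones under a map injective on the support form a fan, strong convexity is preserved), whereas you re-prove exactly that inline — same substance, including the key identity $\sigma_\varphi\cap\sigma'_\varphi=(q_\varphi)_\RR(\sigma\cap\sigma')$ forced by condition (3) and the face correspondence under the linear isomorphism $\langle\sigma\rangle\to\langle\sigma_\varphi\rangle$. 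For (d) your route is genuinely different: the paper uses Lemma \ref{lem:take-section} (condition (1) plus (b) guarantee a section $\theta$ of $q_\varphi$ whose image contains $\langle\sigma\rangle\cap N$, and surjectivity onto $\langle\sigma_\varphi\rangle\cap N/N_\varphi$ is read off from $\theta$), while you argue directly that $L/N_\varphi=q_\varphi(\langle\sigma\rangle\cap N)$ is saturated in $N/N_\varphi$ by condition (1), hence equals its real span intersected with the lattice, and that this span is $\langle\sigma_\varphi\rangle$ by rationality of $\sigma$. Your saturation argument is more elementary and self-contained; the paper's choice has the advantage that Lemmas \ref{lem: injective fan} and \ref{lem:take-section} are needed repeatedly elsewhere (e.g.\ in Propositions \ref{prop:basic-property1}, \ref{prop:fan-varphi-smooth}, \ref{prop:orbit-structure}), so routing (c) and (d) through them avoids duplication. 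No gaps in your argument.
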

    \begin{proof}
        We prove these statements from (a) to (d) in order: 
        \begin{enumerate}
            \item[(a)] Because $\Delta_\varphi$ is a strongly convex fan, we have $\{0\}\in \Delta_\varphi$. 
            Thus, $N/N_\varphi$ is torsion-free from the condition (1). 
            \item[(b)] Let $v\in \langle\sigma\rangle\cap N_\varphi$ be an element. 
            Then there exists $v_1$ and $v_2\in \sigma$ such that $v = v_1 - v_2$. 
            Because $v\in N_\varphi$, we have $(q_\varphi)_\RR(v_1) = (q_\varphi)_\RR(v_2)$. 
            Thus, $v_1 = v_2$ from the condition (3). 
            Therefore, $v = 0$. 
            \item[(c)] 
            We remark that $N/N_\varphi$ is a lattice from (a). 
            From the condition (3) and Lemma \ref{lem: injective fan}(c), $\Delta(\varphi)$ is a polyhedral convex fan in $(N/N_\varphi)\otimes_\ZZ\RR$. 
            Because $q_\varphi$ is a lattice morphism, $\Delta(\varphi)$ is a rational polyhedral convex fan. 
            Moreover, because $\Delta_\varphi$ is strongly convex, $\Delta(\varphi)$ is strongly convex from \ref{lem: injective fan}(d). 
            \item[(d)] From the condition (1) in \ref{def:mock-toric}, (b), and Lemma\ref{lem:take-section}, there exists a section $\theta$ of $q_\varphi$ such that $\langle\sigma\rangle\cap N\subset \theta(N/N_\varphi)$. 
            In particular, $\langle\sigma\rangle\subset \theta_\RR((N/N_\varphi)_\RR)$. 
            Let $v'\in\langle\sigma_\varphi\rangle\cap N/N_\varphi$ be an element. 
            Then there exists $v_1\in\langle\sigma\rangle$ and $v_2\in N$ such that $(q_\varphi)_\RR(v_1) = v'$ and $q_\varphi(v_2) = v'$. 
            From the condition of $\theta$ and the rational polyhedral convexity of $\sigma$, $v_1 = \theta_\RR(v')$ and $v_2 = \theta(v')$. 
            Thus, $v_1 = v_2$ because $\theta_\RR$ is injective. 
            This shows that $v'\in q_\varphi(\langle\sigma\rangle\cap N)$. 
            
            On the other hand, $q_\varphi|_{\langle\sigma\rangle\cap N}$ is injective form (b). Therefore, the statement holds. 
        \end{enumerate}
    \end{proof}
    \begin{example}
        We show some examples of mock toric varieties.   
        \begin{itemize}
            \item[(1)] 
            Let $N$ be a lattice of finite rank and $\Delta$ be a strongly convex rational polyhedral fan in $N_\RR$. 
            Then, $Z = X(\Delta)$ has a natural mock toric structure. 
            In fact, $\mathrm{id}_Z$ is a closed immersion and let $\Phi$ denote $\{0\}$, $N_0$ denote $\{0\}$, and $\Delta_0$ denote $\Delta$.
            Then one can check easily that $(N$, $\Delta$, $\mathrm{id}_Z$, $\{0\}$, $\{N_0\}$, $\{\Delta_0\})$ is a mock toric structure of $Z$. 
            \item[(2)] 
            Let $Z$ be a projective line $\PP^1$. 
            A scheme $Z$ has a natural toric structure, but it has another mock toric structure as follows: 
            Let $N$ be a lattice of rank $2$. 
            Then there exists $e_0, e_1, e_2\in N$ as follows:
            \begin{itemize}
                \item These are generators of $N$.
                \item $e_0+e_1+e_2 = 0$.
            \end{itemize}
            Let $\Delta$ denote the following fan: 
            \[
                \Delta=\{\{0\}, \RR_{\geq0}e_{0}, \RR_{\geq0}e_{1}, \RR_{\geq0}e_{2}\}
            \]
            This toric variety $X(\Delta)$ is an open subscheme of projective plane $\PP^2$. 
            Let $\iota\colon Z\rightarrow X(\Delta)$ be a closed immersion in which $Z$ intersects transversely at one point on each torus invariant divisor of $X(\Delta)$. 
            Let $\Phi$ denote $\{0, 1, 2\}$, and for $i\in\Phi$, we define sublattice $N_i$ of $N$ as $\ZZ\cdot e_i$ and a sub fan $\Delta_i$ of $\Delta$ as $\Delta\setminus\{\RR_{\geq0}e_{i}\}$.
            Then we can check easily that $(N$, $\Delta$, $\iota$, $\Phi$, $\{N_i\}_{i\in\Phi}$, $\{\Delta_i\}_{i\in\Phi})$ is a mock toric structure of $Z$.  
            \item[(3)] For example (2), we can regard three points on $\PP^1_k$ as a "base point free" hyperplane arrangement on $\PP^1_k$. 
            In general, we show that a "base point free" hyperplane arrangement induces a mock toric variety in Section 5. 
        \end{itemize}
    \end{example}
    From the definition of a mock toric structure, we can check easily the properties of mock toric varieties as follows:  
    \begin{proposition}\label{prop:basic-property1}
        Let $Z$ be a scheme over $k$ and $(N, \Delta, \iota, \Phi, \{N_\varphi\}_{\varphi\in\Phi},\{\Delta_\varphi\}_{\varphi\in\Phi})$ be a mock toric structure of $Z$. 
        The following statements follow:
        \begin{itemize}
            \item[(a)] The scheme $Z$ is a separated normal integral scheme of finite type over $k$.
            \item[(b)] For any $\varphi\in\Phi$, the rank of $N/N_\varphi$ is $\dim(Z)$.
            \item[(c)] For any $\varphi\in\Phi$, the image of $\iota_\varphi$ intersects generic points of all torus orbits of $X(\Delta(\varphi))$. 
            \item[(d)] For any $\sigma\in\Delta$, we have $\dim(Z\cap O_\sigma) + \dim(\langle\sigma\rangle) = \dim(Z)$.  
        \end{itemize}
    \end{proposition}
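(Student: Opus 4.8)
The plan is to reduce all four assertions to standard facts about toric varieties by means of conditions (2) and (5) of Definition~\ref{def:mock-toric}: condition (5) identifies each $Z\cap X(\Delta_\varphi)$, through $\iota_\varphi$, with an open subscheme $U_\varphi$ of the genuine toric variety $X(\Delta(\varphi))$, and condition (2) says these pieces cover $Z$. Throughout I will use the orbit--cone bookkeeping for the toric morphism $(q_\varphi)_*\colon X(\Delta_\varphi)\to X(\Delta(\varphi))$, namely that $(q_\varphi)_*$ carries $O_\tau$ into $O_{(q_\varphi)_\RR(\tau)}$ for $\tau\in\Delta_\varphi$, together with the injectivity of $(q_\varphi)_\RR$ on $\supp(\Delta_\varphi)$ from condition (3). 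For (a): separatedness and finite type are inherited from $X(\Delta)$ because $\iota$ is a closed immersion, and normality and reducedness are local, hence follow at once from the cover of $Z$ by the $Z\cap X(\Delta_\varphi)\cong U_\varphi$, each an open subscheme of a normal toric variety. The one substantive point is irreducibility, which I would obtain by showing that $Z\cap T_N$ is irreducible and dense in $Z$: since $\{0\}\in\Delta_\varphi$ and $(q_\varphi)_\RR$ is injective on $\supp(\Delta_\varphi)\ni 0$, the only cone of $\Delta_\varphi$ mapped to $\{0\}$ is $\{0\}$ itself, so $(q_\varphi)_*^{-1}(T_{N/N_\varphi})=T_N$; hence $\iota_\varphi$ identifies $Z\cap T_N$ with an open subscheme of the torus $T_{N/N_\varphi}$, which is irreducible, and this open subscheme is nonempty by condition (4). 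A nonempty open of an irreducible scheme is dense, so $Z\cap T_N$ is dense in each $Z\cap X(\Delta_\varphi)$ and therefore in $Z$; a scheme possessing a dense irreducible subset is irreducible, and combined with reducedness this proves $Z$ is integral.

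For (b) and (c), once (a) is available the rest is formal. The open subscheme $Z\cap X(\Delta_\varphi)$ of the integral scheme $Z$ is nonempty (it contains $Z\cap T_N$), so $\dim Z=\dim U_\varphi=\dim X(\Delta(\varphi))=\rank(N/N_\varphi)$, which is (b). For (c), an arbitrary torus orbit of $X(\Delta(\varphi))$ has the form $O_{(q_\varphi)_\RR(\tau)}$ for some $\tau\in\Delta_\varphi$, and $\iota_\varphi(Z\cap O_\tau)\subset U_\varphi\cap O_{(q_\varphi)_\RR(\tau)}$ is nonempty by condition (4); a nonempty open subset of the irreducible scheme $O_{(q_\varphi)_\RR(\tau)}$ contains its generic point.

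For (d), fix $\sigma\in\Delta$, choose by condition (2) a $\varphi$ with $\sigma\in\Delta_\varphi$, and put $\bar\sigma=(q_\varphi)_\RR(\sigma)$. Injectivity of $(q_\varphi)_\RR$ on $\supp(\Delta_\varphi)$ forces distinct cones of $\Delta_\varphi$ to have distinct images, so that $\iota_\varphi^{-1}(O_{\bar\sigma})=Z\cap O_\sigma$ and $\iota_\varphi$ restricts to an isomorphism $Z\cap O_\sigma\cong U_\varphi\cap O_{\bar\sigma}$. By (c) the right-hand side is a nonempty open of the irreducible torus $O_{\bar\sigma}$, so $\dim(Z\cap O_\sigma)=\dim O_{\bar\sigma}=\rank(N/N_\varphi)-\dim\langle\bar\sigma\rangle$. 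Finally Proposition~\ref{prop: first prop}(d) gives $\dim\langle\bar\sigma\rangle=\dim\langle\sigma\rangle$ and part (b) gives $\rank(N/N_\varphi)=\dim Z$, and these combine to $\dim(Z\cap O_\sigma)+\dim\langle\sigma\rangle=\dim Z$.

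The genuinely non-formal point is the irreducibility step in (a): everything else is orbit bookkeeping for the morphisms $(q_\varphi)_*$, but to glue the toric local models $U_\varphi$ into an irreducible scheme one must exhibit a single irreducible dense open, and this is precisely where condition (4) (nonemptiness of $Z\cap T_N$) and condition (3) (so that $Z\cap T_N$ really lands in the dense torus of each $X(\Delta(\varphi))$) are indispensable; condition (3) then reappears in (d) to pin down the exact preimage of an orbit under $\iota_\varphi$.
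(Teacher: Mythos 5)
Your argument is correct, and its overall strategy is the one the paper uses: reduce everything to toric facts through the charts $\iota_\varphi$ given by conditions (2) and (5) of Definition~\ref{def:mock-toric}. Where you diverge is in the implementation of (c) and (d). The paper establishes (c) by building Cartesian squares: using condition (1), Proposition~\ref{prop: first prop}(b), Lemma~\ref{lem:take-section} and Lemma~\ref{lem:torus-fibration}(e) it shows that $O_\sigma = X(\sigma)\times_{X(\sigma_\varphi)}O_{\sigma_\varphi}$, which yields the stronger statement that $\iota_\varphi(Z\cap O_\sigma)$ is an \emph{open} subscheme of $O_{\sigma_\varphi}$; this openness is then reused for (d) and again later (e.g.\ in Propositions~\ref{prop:stratification-structure} and~\ref{prop:property-of-valuations}). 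You instead get (c) more cheaply — the image of $\iota_\varphi$ is open in $X(\Delta(\varphi))$ and meets every orbit by condition (4), hence contains every orbit's generic point — and you get (d) by a cone-counting argument: injectivity of $(q_\varphi)_\RR$ on $\supp(\Delta_\varphi)$ (Lemma~\ref{lem: injective fan}(b)) shows the only orbit of $X(\Delta_\varphi)$ hitting $O_{\bar\sigma}$ is $O_\sigma$, so the preimage of $O_{\bar\sigma}$ in $Z\cap X(\Delta_\varphi)$ is $Z\cap O_\sigma$. One small caveat: this preimage identification is justified in your write-up only set-theoretically, so the claimed isomorphism $Z\cap O_\sigma\cong U_\varphi\cap O_{\bar\sigma}$ is, as argued, a homeomorphism rather than an isomorphism of schemes (upgrading it is exactly what the paper's torus-fibration Cartesian square does). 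This is harmless here, since dimension depends only on the underlying topological space, but it means your shortcut proves slightly less than the paper's proof of (c), which the paper cites again later for scheme-level statements.
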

    \begin{proof}
        We prove these statements from (a) to (d) in order.
        \begin{itemize}
            \item[(a)] From the condition (2) and (5), $\{Z\cap X(\Delta_\varphi)\}_{\varphi\in\Phi}$ is an open covering of $Z$. 
            From condition (5), each $Z\cap X(\Delta_\varphi)$ is an integral normal scheme, and each of them contains a non-empty subset $Z\cap T_N$ from condition (4). 
            Thus, $Z$ is an integral normal scheme. 
            A scheme $Z$ is a closed subscheme of toric variety over $k$, so $Z$ is a separated scheme of finite type over $k$. 
            \item[(b)] From the condition (5) and the above argument, $\dim(Z) = \dim(X(\Delta(\varphi))$ for any $\varphi\in\Phi$. 
            Because $\Delta(\varphi)$ is strongly convex, $\rank(N/N_\varphi) = \dim(X(\Delta(\varphi)))$. 
            Thus, the statement holds.
            \item[(c)]
            From condition (3) and Lemma \ref{lem: injective fan}(b), there exists a one-to-one correspondence of cones in $\Delta_\varphi$ and those in $\Delta(\varphi)$ by $(q_\varphi)_\RR$. 
            Let $\varphi\in\Phi$ be an element and $\sigma\in\Delta_\varphi$ be a cone. 
            Let $\sigma_\varphi$ denote $(q_\varphi)_\RR(\sigma)$. 
            Then the following diagram of toric morphisms is a Cartesian diagram: 
            \begin{equation*}
                \begin{tikzcd} 
                    X(\sigma)\ar[d, hook]\ar[r, "(q_\varphi)_*"]& X(\sigma_\varphi)\ar[d, hook]\\
                    X(\Delta_\varphi)\ar[r, "(q_\varphi)_*"]& X(\Delta(\varphi)), 
                \end{tikzcd}
            \end{equation*}
            where vertical morphisms are open immersions. 
            Moreover, from the condition (1), Proposition \ref{prop: first prop}(b), Lemma \ref{lem:take-section}, and Lemma \ref{lem:torus-fibration}(e), the following diagram is a Cartesian diagram too. 
            \begin{equation*}
                \begin{tikzcd} 
                    O_\sigma\ar[d, hook]\ar[r, "(q_\varphi)_*"]& O_{\sigma_\varphi}\ar[d, hook]\\
                    X(\sigma)\ar[r, "(q_\varphi)_*"] & X(\sigma_\varphi).
                \end{tikzcd}
            \end{equation*}
            Thus, there exists the following diagram whose two small squares are Cartesian squares:
            \begin{equation*}
                \begin{tikzcd} 
                    Z\cap O_\sigma\ar[r, hook, "\iota"]\ar[d, hook]& O_\sigma\ar[d, hook]\ar[r, "(q_\varphi)_*"]& O_{\sigma_\varphi}\ar[d, hook]\\
                    Z\cap X(\Delta_\varphi)\ar[r, hook, "\iota"]& X(\Delta_\varphi)\ar[r, "(q_\varphi)_*"] & X(\Delta(\varphi))
                \end{tikzcd}
            \end{equation*}
            Hence, from the condition (5) and the above diagram, $\iota_\varphi(Z\cap O_\sigma)$ is an 
            open subset of $O_{\sigma_\varphi}$. 
            From the condition (4), $\iota_\varphi(Z\cap O_\sigma)$ is non-empty, so the statement follows. 
            \item[(d)] We use the notation in the proof of (c). 
            We have that $\dim(\langle\sigma_\varphi\rangle) + 
            \dim(O_{\sigma_\varphi}) = \rank(N/N_\varphi)$ from the argument of the toric varieties. 
            Because $(q_\varphi)_\RR|_{\langle\sigma\rangle}$ is injective, $\dim(\langle\sigma_\varphi\rangle) = \dim(\langle\sigma\rangle)$. 
            Moreover, from the above argument of proof of (c), $\dim(O_{\sigma_\varphi}) = \dim(Z\cap O_\sigma)$. 
            Thus, from (b), $\dim(\langle\sigma\rangle) + \dim(Z\cap O_\sigma) = \dim(Z)$. 
        \end{itemize}
    \end{proof}
    A mock toric variety has a natural stratification and properties similar to a toric variety. 
    After defining the notation of the stratification at Definition \ref{def: stratification}, we show the properties of this stratification in Proposition\ref{prop:stratification-structure}.
    \begin{definition}\label{def: stratification}
        Let $Z$ be a scheme over $k$, $(N, \Delta, \iota, \Phi, \{N_\varphi\}_{\varphi\in\Phi}, \{\Delta_\varphi\}_{\varphi\in\Phi})$ be a mock toric structure of $Z$, and $\sigma\in\Delta$ be a cone. 
        \begin{itemize}
            \item[(1)] 
            Let $Z(\sigma)$ denote an open subscheme of $Z$ defined as the following Cartesian diagram: 
            \begin{equation*}
                \begin{tikzcd} 
                    Z(\sigma) \ar[d]\ar[r, hook, "\iota"]& X(\sigma)\ar[d]\\
                    Z\ar[r, hook,"\iota"] & X(\Delta), 
                \end{tikzcd}
            \end{equation*}
            where vertical morphisms are open immersions. 
            \item[(2)] 
            Let $Z_\sigma$ denote a scheme over $k$ defined as the following Cartesian diagram: 
            \begin{equation*}
                \begin{tikzcd} 
                    Z_\sigma \ar[d]\ar[r, hook, "\iota"]& O_\sigma\ar[d]\\
                    Z(\sigma)\ar[r, hook,"\iota"] & X(\sigma), 
                \end{tikzcd}
            \end{equation*}
            where vertical morphisms are closed immersions. 
            \item[(3)] Let $Z^\circ$ denote $Z\cap T_N$. 
            \item[(4)] Let $k[Z^\circ]$ denote the section ring $\Gamma(Z^\circ, \OO_Z)$. 
        \end{itemize}
    \end{definition}
    \begin{proposition}\label{prop:stratification-structure}
        Let $Z$ be a scheme over $k$ and $(N, \Delta, \iota, \Phi, \{N_\varphi\}_{\varphi\in\Phi},\{\Delta_\varphi\}_{\varphi\in\Phi})$ be a mock toric structure of $Z$. 
        The following statements follow. 
        \begin{itemize}
            \item[(a)] For any $\sigma$ and $\tau\in\Delta$, the following statements are equivalent: 
            \begin{itemize}
                \item[(1)] $\tau\subset\sigma$.
                \item[(2)] $Z_\sigma\subset\overline{Z_\tau}$.
                \item[(3)] $Z(\tau)\subset Z(\sigma)$. 
            \end{itemize}
            \item[(b)] For any $\sigma\in\Delta$, $Z(\sigma)$ is an integral normal affine scheme of finite type over $k$.
            \item[(c)] For any $\sigma\in\Delta$, $Z_\sigma$ is an integral smooth affine scheme of finite type over $k$. 
            \item[(d)] A ring $k[Z^\circ]$ is a UFD. 
        \end{itemize}
    \end{proposition}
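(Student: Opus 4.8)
The plan is to reduce every assertion to the toric charts $\iota_\varphi$ of condition~(5), using two elementary facts throughout: since $\iota\colon Z\hookrightarrow X(\Delta)$ is a closed immersion, $Z$ meets each affine chart $X(\sigma)$ and each orbit $O_\sigma$ in a \emph{closed} subscheme of it; and $\{0\}\in\Delta$ with $O_{\{0\}}=T_N$, so that $Z^\circ=Z\cap T_N\subset Z(\sigma)$ is non-empty for every $\sigma$ by condition~(4) and Proposition~\ref{prop:basic-property1}. Also recall that, for two cones of a common fan, $\tau\subset\sigma$ is equivalent to $\tau\preceq\sigma$, since then $\tau=\tau\cap\sigma$ is a face of $\sigma$.

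For (b), the projection $Z(\sigma)=Z\times_{X(\Delta)}X(\sigma)\to X(\sigma)$ is a base change of $\iota$, hence a closed immersion into the affine $k$-variety $X(\sigma)$; this yields affine and of finite type over $k$ simultaneously. Since $Z(\sigma)$ is moreover an open subscheme of $Z$, which is integral and normal by Proposition~\ref{prop:basic-property1}(a), and since it contains the non-empty $Z^\circ$, it is integral and normal. For (c), by the same base-change argument $Z_\sigma=Z\times_{X(\Delta)}O_\sigma\to O_\sigma$ is a closed immersion into the torus $O_\sigma$, so $Z_\sigma$ is affine and of finite type over $k$; choosing $\varphi$ with $\sigma\in\Delta_\varphi$, the two Cartesian squares displayed in the proof of Proposition~\ref{prop:basic-property1}(c) identify $Z_\sigma$, through the open immersion $\iota_\varphi$, with a non-empty open subscheme of the torus $O_{\sigma_\varphi}$, hence integral and smooth.

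For (a), $(1)\Rightarrow(3)$ is immediate: $\tau\preceq\sigma$ gives $X(\tau)\subset X(\sigma)$, so $Z(\tau)=Z\cap X(\tau)\subset Z\cap X(\sigma)=Z(\sigma)$. For $(3)\Rightarrow(1)$, note $\emptyset\ne Z_\tau=Z\cap O_\tau\subset Z(\tau)\subset Z(\sigma)\subset X(\sigma)=\bigsqcup_{\rho\preceq\sigma}O_\rho$, so $O_\tau\cap X(\sigma)\ne\emptyset$, forcing $\tau\preceq\sigma$. For $(2)\Rightarrow(1)$, $\emptyset\ne Z_\sigma\subset\overline{Z_\tau}\subset\overline{O_\tau}=\bigsqcup_{\rho\succeq\tau}O_\rho$ (closures in $Z$, equivalently in $X(\Delta)$ as $Z$ is closed), and since $Z_\sigma\subset O_\sigma$ while the orbits are disjoint, $\tau\preceq\sigma$. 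The remaining implication $(1)\Rightarrow(2)$ is the one genuinely using the charts: for $\varphi$ with $\sigma\in\Delta_\varphi$ one has $\tau\in\Delta_\varphi$ and, by the face-preserving cone correspondence of Lemma~\ref{lem: injective fan}, $\tau_\varphi\preceq\sigma_\varphi$; under $\iota_\varphi$, $Z_\tau$ becomes a dense open subscheme of $O_{\tau_\varphi}$, so its closure contains $\overline{O_{\tau_\varphi}}\supset O_{\sigma_\varphi}\supset\iota_\varphi(Z_\sigma)$, and, $\iota_\varphi$ being a homeomorphism onto its image, this pulls back to $Z_\sigma\subset\overline{Z_\tau}$.

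For (d), $Z^\circ=Z\times_{X(\Delta)}T_N$ is a closed subscheme of the affine torus $T_N$, hence affine, and via $\iota_\varphi$ (taken at $\sigma=\{0\}$) it is an open subscheme $U$ of the torus $T'=T_{N/N_\varphi}$, the big torus of $X(\Delta(\varphi))$. Thus (d) reduces to the statement that an affine open subscheme $U$ of a torus $T'=\Spec A$, with $A$ a Laurent polynomial $k$-algebra, has a UFD coordinate ring. Since $A$ is a Noetherian normal UFD, the finitely many codimension-one components of the closed set $T'\setminus U$ are principal prime divisors $(f_1),\dots,(f_r)$, and by normality $\Gamma(U,\OO_{T'})$ is the intersection of the localizations $A_{\mathfrak p}$ over the height-one primes $\mathfrak p$ with $V(\mathfrak p)\cap U\ne\emptyset$, i.e.\ over the $\mathfrak p\notin\{(f_1),\dots,(f_r)\}$; that intersection is exactly the localization $A[(f_1\cdots f_r)^{-1}]$, which is a UFD. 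I expect part (d) to be the principal obstacle --- precisely this identification of an affine open subset of a torus with an explicit localization, where the UFD property and normality of $k[M]$ are essential; $(1)\Rightarrow(2)$ in (a) is the only other point that truly needs the mock toric charts, and there one must be careful to compute the relevant closures inside $X(\Delta(\varphi))$ before transporting back to $Z$.
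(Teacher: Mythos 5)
Your proof is correct. For (a)--(c) you follow essentially the same route as the paper: the only implication that genuinely needs the mock toric charts is $(1)\Rightarrow(2)$, which you, like the paper, transport through the open immersion $\iota_\varphi$ using that $\iota_\varphi(Z_\tau)$ and $\iota_\varphi(Z_\sigma)$ are dense open in $O_{\tau_\varphi}$ and $O_{\sigma_\varphi}$, while the remaining implications use only the orbit decomposition of $X(\Delta)$ together with condition (4) of Definition~\ref{def:mock-toric} (the paper runs the cycle $(1)\Rightarrow(2)\Rightarrow(3)\Rightarrow(1)$, you prove $(1)\Leftrightarrow(3)$ and $(1)\Leftrightarrow(2)$, which is equivalent); (b) and (c) combine "closed in $X(\sigma)$, resp.\ $O_\sigma$" with "open in $Z$, resp.\ $O_{\sigma_\varphi}$" exactly as in the paper, citing the proof of Proposition~\ref{prop:basic-property1}(c). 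The genuine divergence is in (d). The paper reduces to a general ring-theoretic lemma: if $A$ is a finite-type $k$-algebra and a UFD and $V\subset\Spec(A)$ is an affine open whose section ring $B$ is of finite type, then $B$ is a UFD; this is proved by showing each height-one prime of $B$ is generated by a generator of its contraction to $A$, via a lowest-terms representation $g=u^{-1}h$. You instead use normality of the Laurent-polynomial coordinate ring $A$ of $T_{N/N_\varphi}$ to write $\Gamma(U,\OO)=\bigcap A_{\mathfrak p}$ over the height-one primes meeting $U$, and identify this with the localization $A[(f_1\cdots f_r)^{-1}]$, where the $(f_i)$ cut out the codimension-one components of the complement of $U$; this yields the UFD property immediately, does not need finite generation of $\Gamma(U)$ (only that $A$ is a Noetherian normal UFD), and gives the sharper conclusion that $k[Z^\circ]$ is a localization of a Laurent polynomial ring. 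Both arguments are sound; yours is more explicit, the paper's is self-contained at the level of height-one primes.
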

    \begin{proof}
        We prove these statements from (a) to (c) in order. 
        \begin{itemize}
            \item[(a)] 
            First, when the statement (1) holds, we show that the statement (2) holds. 
            From the condition (2) in Definition \ref{def:mock-toric}, there exists $\varphi\in\Phi$ such that $\sigma\in\Delta_\varphi$. 
            Because $\Delta_\varphi$ is a sub fan of $\Delta$, $\tau\in\Delta_\varphi$. 
            Let $\sigma_\varphi$ denote $(q_\varphi)_\RR(\sigma)$ and $\tau_\varphi$ denote $(q_\varphi)_\RR(\tau)$. 
            From the proof of Proposition \ref{prop:basic-property1}(c), $\iota_\varphi(Z_\sigma)$ and $\iota_\varphi(Z_\tau)$ are dense open subset of $O_{\sigma_\varphi}$ and $O_{\tau_\varphi}$ respectively. 
            In $X(\Delta(\varphi))$, $O_{\sigma_\varphi}\subset\overline{O_{\tau_\varphi}}$ because $X(\Delta(\varphi))$ is a toric variety. 
            Thus, statement (2) holds. 

            Second, when the statement (2) holds, we show that the statement (3) holds. 
            From the statement (2) and the condition (2) in Definition \ref{def:mock-toric}, $O_\sigma\cap \overline{O_\tau}\neq\emptyset$. 
            Thus, from the property of the torus orbit decomposition of toric varieties, $X(\tau)\subset X(\sigma)$. 

            Finally, when the statement (3) holds, we show that the statement (1) holds. 
            From statement (3), we can check that $O_\tau\cap X(\sigma)\neq\emptyset$. 
            Thus, $\tau\subset\sigma$ from the property of the torus orbit decomposition of toric varieties. 
            \item[(b)] From the definition of $Z(\sigma)$, $Z(\sigma)$ is an open subscheme of a normal integral scheme of $Z$ from Proposition \ref{prop:basic-property1} (a). 
            Moreover, $Z(\sigma)$ is a closed subscheme of an affine scheme $X(\sigma)$. 
            Thus, the statement follows. 
            \item[(c)] From the definition of $Z_\sigma$, $Z_\sigma$ is a closed subscheme of an affine variety of $O_\sigma$, so that $Z_\sigma$ is an affine scheme. 
            Let $\varphi\in\Phi$ be an element such that $\sigma\in\Delta_\varphi$.  
            we have already shown  that $Z_\sigma$ is an open subscheme of $O_{\sigma_\varphi}$ in the proof of Proposition \ref{prop:basic-property1}(c), where $\sigma_\varphi$ denote $(q_\varphi)_\RR(\sigma)$. 
            Thus, $Z_\sigma$ is smooth over $k$. 
            \item[(d)] Because $Z^\circ$ is a closed subscheme of $T_N$, $k[Z^\circ]$ is of finite type over $k$. 
            In particular, $k[Z^\circ]$ is a Noether ring. 
            On the other hand, for any $\varphi\in\Phi$, $Z^\circ$ is an open subscheme of  $T_{N/N_\varphi}$. 
            Thus, it is enough to show the following basic statement of the ring theory:
            \begin{itemize}
                \item Let $A$ be a ring of finite type over $k$. Let $V$ be an affine open subscheme of $\Spec(A)$ and $B$ denote $\Gamma(V, \OO_{\Spec(A)})$. 
                We assume that $B$ is of finite type over $k$ and $A$ is a UFD. 
                Then $B$ is a UFD. 
            \end{itemize}
            Indeed, it is enough to show that every hight 1 prime ideal of $B$ is principal. 
            Let $\mathfrak{p}\in V$ be a hight 1 prime of $B$ and $\mathfrak{q}$ denote $A\cap \mathfrak{p}$. 
            We remark that $\mathfrak{q}$ is a hight 1 prime ideal of $A$ because $V\rightarrow \Spec(A)$ is an open immersion. 
            Thus, there exists $f\in A$ such that $f$ generates $\mathfrak{q}$. 
            We claim that $f$ generate $\mathfrak{p}$. 
            Let $g\in\mathfrak{p}$ be an element. 
            Then there exists $u, h\in A$ such that $u$ and $h$ are coprime and $g = u^{-1}h$ . 
            Hence, we can check that $\supp(A/(u))\cap V=\emptyset$ because $g\in\Gamma(V, \OO_{\Spec(A)})$ and $A$ is a UFD. 
            In particular, $u\in B^*$ and $h\in \mathfrak{q}$. 
            Thus, there exists $l\in A$ such that $h = lf$. 
            Therefore, $g = u^{-1}lf$ in $B$. 
        \end{itemize}
    \end{proof}
    The following proposition shows that the scheme theoretic closure of the torus orbit of the ambient toric variety gives the scheme theoretic closure of each stratum of the mock toric variety.  
    \begin{proposition}\label{prop: closure of stratum}
        Let $Z$ be a scheme over $k$ and $(N, \Delta, \iota, \Phi, \{N_\varphi\}_{\varphi\in\Phi}, \{\Delta_\varphi\}_{\varphi\in\Phi})$ be a mock toric structure of $Z$. 
        Then the scheme theoretic closure of $Z_\sigma$ in $Z$ is equal to $\overline{O_\sigma}\times_{X(\Delta)} Z$ for any $\sigma\in\Delta$. 
    \end{proposition}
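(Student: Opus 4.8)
The plan is to reduce to a local computation on the open cover $\{Z\cap X(\Delta_\varphi)\}_{\varphi\in\Phi}$ of $Z$ coming from condition (2) of Definition~\ref{def:mock-toric}, exploiting that $\iota_\varphi$ identifies $Z\cap X(\Delta_\varphi)$ with an open subscheme of the toric variety $X(\Delta(\varphi))$. Set $W:=\overline{O_\sigma}\times_{X(\Delta)}Z$; this is a closed subscheme of $Z$ containing $Z_\sigma=Z\cap O_\sigma$, which is nonempty by condition (4). Since $Z$ is Noetherian the scheme-theoretic closure commutes with restriction to opens, so it suffices to prove, for each $\varphi$, that the scheme-theoretic closure of $Z_\sigma\cap X(\Delta_\varphi)$ in $Z\cap X(\Delta_\varphi)$ equals $W\cap X(\Delta_\varphi)=(\overline{O_\sigma}\cap X(\Delta_\varphi))\times_{X(\Delta_\varphi)}(Z\cap X(\Delta_\varphi))$. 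If $\sigma\notin\Delta_\varphi$, then since $\Delta_\varphi$ is closed under faces no cone of $\Delta_\varphi$ has $\sigma$ as a face, so $\overline{O_\sigma}\cap X(\Delta_\varphi)=\emptyset$ and $O_\sigma\cap X(\Delta_\varphi)=\emptyset$, and both sides are empty; hence assume $\sigma\in\Delta_\varphi$ and write $\sigma_\varphi:=(q_\varphi)_\RR(\sigma)\in\Delta(\varphi)$.

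Assuming for the moment the identity $\overline{O_\sigma}\cap X(\Delta_\varphi)=(q_\varphi)_*^{-1}(\overline{O_{\sigma_\varphi}})$ of closed subschemes of $X(\Delta_\varphi)$ — here $\overline{O_{\sigma_\varphi}}$ is the orbit closure in $X(\Delta(\varphi))$ and $(q_\varphi)_*^{-1}$ denotes scheme-theoretic preimage — one computes $W\cap X(\Delta_\varphi)=\overline{O_{\sigma_\varphi}}\times_{X(\Delta(\varphi))}(Z\cap X(\Delta_\varphi))=\iota_\varphi^{-1}(\overline{O_{\sigma_\varphi}})$, an open subscheme of the integral scheme $\overline{O_{\sigma_\varphi}}$; it is nonempty because it contains $Z_\sigma$, hence it is integral. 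Since $Z_\sigma\cap X(\Delta_\varphi)=Z_\sigma=\iota_\varphi^{-1}(O_{\sigma_\varphi})$ is a nonempty open subscheme of this integral scheme (cf. the proof of Proposition~\ref{prop:basic-property1}(c)), and a nonempty open subscheme of an integral scheme is scheme-theoretically dense, the scheme-theoretic closure of $Z_\sigma$ in $Z\cap X(\Delta_\varphi)$ is $W\cap X(\Delta_\varphi)$. As this holds for all $\varphi$ and the $Z\cap X(\Delta_\varphi)$ cover $Z$, the proposition follows.

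The remaining identity is the heart of the matter, and I expect its scheme-theoretic (not merely set-theoretic) precision to be the main obstacle. I would verify it on the affine charts $X(\tau)$, $\tau\in\Delta_\varphi$: by the one-to-one correspondence of cones of $\Delta_\varphi$ and $\Delta(\varphi)$ under $(q_\varphi)_\RR$ (Lemma~\ref{lem: injective fan}), which also yields $X(\tau)=(q_\varphi)_*^{-1}(X(\tau_\varphi))$ with $\tau_\varphi:=(q_\varphi)_\RR(\tau)$, both sides are closed subschemes testable on these charts, and on a chart with $\sigma\not\preceq\tau$ both restrictions vanish. For $\tau\succeq\sigma$, writing $\bar M$ for the dual lattice of $N/N_\varphi$, the ideal of $\overline{O_\sigma}\cap X(\tau)$ in $k[\tau^\vee\cap M]$ is $I_2:=(\chi^\omega\mid\omega\in(\tau^\vee\cap M)\setminus\sigma^\perp)$, while that of $(q_\varphi)_*^{-1}(\overline{O_{\sigma_\varphi}})\cap X(\tau)$ is $I_1:=(\chi^{(q_\varphi)^*\bar\omega}\mid\bar\omega\in(\tau_\varphi^\vee\cap\bar M)\setminus\sigma_\varphi^\perp)$, and $I_1\subseteq I_2$ is clear. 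For the reverse inclusion I would apply Lemma~\ref{lem:take-section} to the cone $\tau$ — its hypotheses being condition (1) of Definition~\ref{def:mock-toric} and Proposition~\ref{prop: first prop}(b) — to produce a section $\theta$ of $q_\varphi$ with $\langle\tau\rangle\cap N\subseteq\theta(N/N_\varphi)$; then $N=\theta(N/N_\varphi)\oplus N_\varphi$ and $\tau=\theta_\RR(\tau_\varphi)$, which dualizes to a direct sum $\tau^\vee\cap M=(q_\varphi)^*(\tau_\varphi^\vee\cap\bar M)\oplus(\theta(N/N_\varphi)^\perp\cap M)$ whose second summand lies in $\sigma^\perp$ (because $\theta(N/N_\varphi)\supseteq\langle\tau\rangle\cap N\supseteq\langle\sigma\rangle\cap N$). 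Hence $\tau^\vee\cap M$ admits a monoid generating set consisting of $(q_\varphi)^*$-images of generators of $\tau_\varphi^\vee\cap\bar M$ together with elements of $\sigma^\perp$; since $I_2$ is generated by the $\chi^g$ for $g$ in such a generating set with $g\notin\sigma^\perp$, we obtain $I_2\subseteq I_1$, completing the proof of the identity and hence of the proposition.
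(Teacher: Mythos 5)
Your proof is correct, and it reaches the same essential mechanism as the paper — namely that, after passing through the open immersion $\iota_\varphi$, the orbit closure $\overline{O_\sigma}$ is the scheme-theoretic preimage of $\overline{O_{\sigma_\varphi}}$ — but packages it differently. The paper covers $Z$ by the affine opens $Z(\tau)$, $\tau\in\Delta$, and for $\tau\supseteq\sigma$ quotes Lemma \ref{lem:torus-fibration}(d),(e) (proved there by splitting $N\cong (N/N_\varphi)\oplus N_\varphi$ and reducing to a product situation) to get the Cartesian squares, then concludes with Lemma \ref{lem:open-image} from the fact that $\overline{O_{\sigma_\varphi}}\cap X(\tau_\varphi)$ is the scheme-theoretic closure of $O_{\sigma_\varphi}$ in $X(\tau_\varphi)$. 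You instead work with the coarser cover $\{Z\cap X(\Delta_\varphi)\}_\varphi$, reprove the key identity $\overline{O_\sigma}\cap X(\Delta_\varphi)=(q_\varphi)_*^{-1}(\overline{O_{\sigma_\varphi}})$ by a direct monomial-ideal computation on the charts $X(\tau)$ (using Lemma \ref{lem:take-section} to split $\tau^\vee\cap M$ as $(q_\varphi)^*(\tau_\varphi^\vee\cap \bar M)\oplus(\theta(N/N_\varphi)^\perp\cap M)$, the second summand lying in $\sigma^\perp$, and the observation that a sum of generators can only leave $\sigma^\perp$ if some summand does), and then finish by noting $W\cap X(\Delta_\varphi)$ is integral with $Z_\sigma$ a nonempty open, hence scheme-theoretically dense. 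Your ideal computation is in effect a self-contained proof of the special case of Lemma \ref{lem:torus-fibration}(e) that is needed here, which makes the argument more explicit at the cost of redoing work the paper delegates to the appendix; the paper's route, conversely, avoids any explicit ideal manipulation but leans on the torus-fibration lemma and on Lemma \ref{lem:open-image} rather than an integrality/density argument. All the auxiliary points you use (emptiness of both sides when $\sigma\notin\Delta_\varphi$ because $\Delta_\varphi$ is face-closed, $X(\tau)=(q_\varphi)_*^{-1}(X(\tau_\varphi))$, and $Z_\sigma=\iota_\varphi^{-1}(O_{\sigma_\varphi})$) check out against Lemmas \ref{lem: injective fan}, \ref{lem: basic-includ-cone}, \ref{lem: basic-toric-morphism-inclusion} and the proof of Proposition \ref{prop:basic-property1}(c), so I see no gap.
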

    \begin{proof}
        A family of open subset $\{Z(\tau)\}_{\tau\in\Delta}$ is an open covering of $Z$. 
        From Lemma \ref{lem:covering-image}, it is enough to show that the scheme theoretic closure of $Z_\sigma$ in $Z(\tau)$ is $\overline{O_\sigma}\times_{X(\Delta)} Z(\tau) = (\overline{O_\sigma}\cap X(\tau))\times_{X(\tau)} Z(\tau)$ for any $\tau\in \Delta$. 
        Moreover, it is enough to check this claim for all $\tau\in\Delta$ such that $\sigma\subset\tau$. 
        Indeed, for any $\tau\in\Delta$ such that $\sigma\not\subset\tau$, we have $\overline{O_\sigma}\cap X(\tau) = \emptyset$ and $Z_\sigma\cap Z(\tau) = \emptyset$. 
        There exists $\varphi\in\Phi$ such that $\tau\in\Delta_\varphi$.  
        let $\sigma_\varphi$ denote $(q_\varphi)_\RR(\sigma)$ and $\tau_\varphi$ denote $(q_\varphi)_\RR(\tau)$. 
        From  Lemma \ref{lem:torus-fibration}(d) and (e), all small squares of the following diagram are 
        Cartesian squares and the lower morphism from $Z(\tau)$ to $X(\tau_\varphi)$ is an open immersion:
        \begin{equation*}
            \begin{tikzcd} 
                Z_\sigma\ar[r, "\iota"]\ar[d]& O_\sigma\ar[r, "(q_\varphi)_*"]\ar[d]&O_{\sigma_\varphi}\ar[d]\\
                (\overline{O_\sigma}\cap X(\tau))\times_{X(\tau)} Z(\tau)\ar[r, "\iota"]\ar[d]& \overline{O_\sigma}\cap X(\tau)\ar[r, "(q_\varphi)_*"]\ar[d]&\overline{O_{\sigma_\varphi}}\cap X(\tau_\varphi)\ar[d]\\
                Z(\tau)\ar[r, "\iota"]&X(\tau)\ar[r, "(q_\varphi)_*"]&X(\tau_\varphi)\\
            \end{tikzcd}
        \end{equation*}
        By the way, $\overline{O_{\sigma_\varphi}}\cap X(\tau_\varphi)$ is a scheme theoretic closure of $O_{\sigma_\varphi}$ in $X(\tau_\varphi)$. 
        Thus, from Lemma \ref{lem:open-image}, $(\overline{O_\sigma}\cap X(\tau))\times_{X(\tau)} Z(\tau)$ is a scheme theoretic closure of $Z_\sigma$ in $Z(\tau)$. 
    \end{proof}
    As toric varieties, the following statement holds from Proposition\ref{prop: closure of stratum}.
    \begin{corollary}\label{cor:stratification-structure2}
        We keep the notation in Proposition \ref{prop:stratification-structure}.  
        The following statements are equivalent:
        \begin{enumerate}
            \item[(i)] $\tau\subset\sigma$
            \item[(ii)] $Z_\sigma\cap\overline{Z_\tau} \neq \emptyset$
        \end{enumerate}
    \end{corollary}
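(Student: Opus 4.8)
The plan is to derive both implications directly from the two preceding results, Proposition \ref{prop:stratification-structure}(a) and Proposition \ref{prop: closure of stratum}, combined with the standard description of orbit closures in a toric variety; no genuinely new argument should be needed.

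For the implication (i)$\Rightarrow$(ii), I would argue as follows. Assuming $\tau\subset\sigma$, Proposition \ref{prop:stratification-structure}(a) gives $Z_\sigma\subset\overline{Z_\tau}$, hence $Z_\sigma\cap\overline{Z_\tau} = Z_\sigma$. It then suffices to observe that $Z_\sigma = Z\cap O_\sigma$ is non-empty: this is exactly condition (4) of Definition \ref{def:mock-toric}. Therefore $Z_\sigma\cap\overline{Z_\tau}\neq\emptyset$.

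For the converse (ii)$\Rightarrow$(i), I would use Proposition \ref{prop: closure of stratum}: the scheme-theoretic closure of $Z_\tau$ in $Z$ equals $\overline{O_\tau}\times_{X(\Delta)}Z$, so under the closed immersion $\iota$ the subscheme $\overline{Z_\tau}$ is carried into $\overline{O_\tau}$, while $Z_\sigma = O_\sigma\times_{X(\Delta)}Z$ is carried into $O_\sigma$. Since $\iota$ is injective on points, a non-empty intersection $Z_\sigma\cap\overline{Z_\tau}$ forces $O_\sigma\cap\overline{O_\tau}\neq\emptyset$ inside $X(\Delta)$. Now I invoke the orbit decomposition of $X(\Delta)$ — precisely as in the final paragraph of the proof of Proposition \ref{prop:stratification-structure}(a) — namely that $\overline{O_\tau}=\bigcup_{\tau\preceq\gamma}O_\gamma$ and the $O_\gamma$ are disjoint, to conclude $\tau\preceq\sigma$, and in particular $\tau\subset\sigma$.

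I do not expect any real obstacle: the substantive content has already been established in Proposition \ref{prop: closure of stratum}, and the only point needing (minimal) care is the passage from intersections computed inside $Z$ to intersections inside the ambient toric variety, which is immediate from $\iota$ being a closed immersion. Alternatively, one could run the whole argument on the toric side, using condition (4) of Definition \ref{def:mock-toric} to see that $Z$ meets every orbit, but the route above is shorter.
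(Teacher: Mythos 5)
Your proof is correct and follows the paper's own argument: (i)$\Rightarrow$(ii) from Proposition \ref{prop:stratification-structure}(a) together with the non-emptiness of $Z_\sigma$ (condition (4) of Definition \ref{def:mock-toric}), and (ii)$\Rightarrow$(i) by passing through Proposition \ref{prop: closure of stratum} to reduce to $O_\sigma\cap\overline{O_\tau}\neq\emptyset$ and the standard toric orbit decomposition. Your write-up is only slightly more explicit than the paper's about why $Z_\sigma\neq\emptyset$, which is a welcome clarification but not a different route.
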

    \begin{proof}
        We can check quickly that one direction follows from Proposition \ref{prop:stratification-structure} (a). 
        Then, we assume that statement (ii) holds. 
        From Proposition \ref{prop: closure of stratum}, $\overline{Z_\tau} = Z\cap\overline{O_\tau}$. 
        This indicates $O_\sigma\cap \overline{O_\tau} \neq\emptyset$. 
        Thus, the statement (i) holds from the argument of toric varieties. 
    \end{proof}
    Fundamentally, one notable point of the superiority of \cite{NO22} is to focus on tropical compactification. 
    Tropical compactification was initially defined in \cite{Tev} and is known for various applications. 
    Interestingly, the following proposition asserts that a proper mock toric variety $Z$ is a tropical compactification of $Z\cap T_N$. 
    This result indicates that the method of \cite{NO22} can be applied to the mock toric varieties.
    \begin{proposition}\label{prop:tropical}
        Let $Z$ be a scheme over $k$ and $(N, \Delta, \iota, \Phi, \{N_\varphi\}_{\varphi\in\Phi},\{\Delta_\varphi\}_{\varphi\in\Phi})$ be a mock toric structure of $Z$. 
        Let $T$ denote $T_N$. 
        Then the following action map $m$ is faithfully flat:
        \[
            m\colon T\times Z\rightarrow X(\Delta)
        \]
        Moreover, if $Z$ is proper over $k$, then $Z$ is a tropical compactification of $Z^\circ$.  
    \end{proposition}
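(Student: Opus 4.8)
The plan is to show first that $m$ is faithfully flat --- that is, flat and surjective --- and then to read off the tropical compactification statement from Tevelev's definition. Surjectivity is immediate: the image $m(T\times Z) = T\cdot\iota(Z)$ is stable under the $T$-action on $X(\Delta)$, and by condition (4) of Definition~\ref{def:mock-toric} each orbit $O_\sigma$ contains a point $\iota(z) = m(1,z)$ of the image; since a $T$-stable subset of $X(\Delta)$ meeting an orbit contains that whole orbit, $m(T\times Z)\supseteq\bigcup_{\sigma\in\Delta}O_\sigma = X(\Delta)$.

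Flatness is the substance, and I would check it locally on $X(\Delta)$. The affine charts $X(\sigma)$, $\sigma\in\Delta$, cover $X(\Delta)$ and $m^{-1}(X(\sigma)) = T\times Z(\sigma)$, so it suffices to prove that each $m_\sigma\colon T\times Z(\sigma)\to X(\sigma)$ is flat. Fix $\varphi\in\Phi$ with $\sigma\in\Delta_\varphi$ (condition (2)) and, exactly as in the proof of Proposition~\ref{prop: first prop}(d), use Lemma~\ref{lem:take-section} to choose a section $\theta$ of $q_\varphi$ with $\langle\sigma\rangle\cap N\subseteq N'':=\theta(N/N_\varphi)$, so that $N = N_\varphi\oplus N''$ and $\sigma\subseteq N''_\RR$. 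Writing $\sigma_\varphi := (q_\varphi)_\RR(\sigma)$, this splitting together with Lemma~\ref{lem:torus-fibration} identifies $X(\sigma)$ with $T_{N_\varphi}\times X_{N''}(\sigma)$, identifies $T = T_N$ with $T_{N_\varphi}\times T_{N''}$, and turns $(q_\varphi)_*\colon X(\sigma)\to X(\sigma_\varphi)$ into the projection onto $X_{N''}(\sigma)\cong X(\sigma_\varphi)$. Moreover, base-changing the open immersion $\iota_\varphi$ along $X(\sigma_\varphi)\hookrightarrow X(\Delta(\varphi))$ --- using the Cartesian square $X(\sigma)\to X(\sigma_\varphi)$ over $X(\Delta_\varphi)\to X(\Delta(\varphi))$ from the proof of Proposition~\ref{prop:basic-property1}(c) --- exhibits $Z(\sigma)$ as an open subscheme of $X_{N''}(\sigma)$, so that $\iota|_{Z(\sigma)} = (s,\iota_\varphi)\colon Z(\sigma)\to T_{N_\varphi}\times X_{N''}(\sigma)$ for some morphism $s\colon Z(\sigma)\to T_{N_\varphi}$.

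In these coordinates $m_\sigma\bigl((a,b),z\bigr) = \bigl(a\,s(z),\, b\cdot\iota_\varphi(z)\bigr)$. Precomposing with the automorphism $\bigl((a,b),z\bigr)\mapsto\bigl((a\,s(z),b),z\bigr)$ of $T\times Z(\sigma)$ converts $m_\sigma$ into $\mathrm{id}_{T_{N_\varphi}}\times m''$, where $m''\colon T_{N''}\times Z(\sigma)\to X_{N''}(\sigma)$ sends $(b,z)$ to $b\cdot\iota_\varphi(z)$; since $\mathrm{id}_{T_{N_\varphi}}\times m''$ is the base change of $m''$ along the projection $T_{N_\varphi}\times X_{N''}(\sigma)\to X_{N''}(\sigma)$, it is enough to show that $m''$ is flat. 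But $m''$ factors as the open immersion $\mathrm{id}\times\iota_\varphi\colon T_{N''}\times Z(\sigma)\hookrightarrow T_{N''}\times X_{N''}(\sigma)$ followed by the action map $T_{N''}\times X_{N''}(\sigma)\to X_{N''}(\sigma)$, and the action map is carried by the automorphism $(t,x)\mapsto(t,tx)$ of $T_{N''}\times X_{N''}(\sigma)$ to the projection, hence is flat. So $m''$ is flat, hence so is $m_\sigma$, hence so is $m$; combined with surjectivity, $m$ is faithfully flat.

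For the last assertion, suppose $Z$ is proper over $k$. By Proposition~\ref{prop:basic-property1}(a), $Z$ is an integral closed subscheme of $X(\Delta)$, and $Z^\circ = Z\cap T_N$ is a nonempty (condition (4) with $\sigma = \{0\}$) open, hence dense, subscheme of $Z$; therefore $Z$ is the scheme-theoretic closure of $Z^\circ$ in $X(\Delta)$. Together with the properness of $Z$ and the faithful flatness of $m$ proved above, this is exactly Tevelev's definition (\cite{Tev}) of a tropical compactification of $Z^\circ$, so the claim follows. The step I expect to require the most care is the local analysis of $m_\sigma$: isolating the twist $s\colon Z(\sigma)\to T_{N_\varphi}$, absorbing it into an automorphism of $T\times Z(\sigma)$, and using the torus-fibration structure $X(\sigma)\cong T_{N_\varphi}\times X_{N''}(\sigma)$ consistently with the fact that $\iota_\varphi$ restricts to an open immersion on every chart $Z(\sigma)$; everything else is formal.
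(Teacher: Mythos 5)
Your proof is correct and follows essentially the same route as the paper's: the paper likewise reduces to the charts $X(\sigma)$, splits $N = N_\varphi\oplus N''$ via Lemma \ref{lem:take-section} and Lemma \ref{lem:torus-fibration}, absorbs the $T_{N_\varphi}$-component of $\iota$ (its map $p$, your $s$) into an isomorphism (its $\theta$), and reduces to the flat toric action map restricted to the open image $\iota_\varphi(Z(\sigma))$, exactly as you do. The only cosmetic slip is the direction of your change of variables: since $m_\sigma\bigl((a,b),z\bigr) = \bigl(a\,s(z),\, b\cdot\iota_\varphi(z)\bigr)$, one has $m_\sigma = (\id_{T_{N_\varphi}}\times m'')\circ\phi$ with $\phi\bigl((a,b),z\bigr) = \bigl((a\,s(z),b),z\bigr)$, so it is precomposition with $\phi^{-1}$ (i.e.\ $a\mapsto a\,s(z)^{-1}$) that literally turns $m_\sigma$ into $\id_{T_{N_\varphi}}\times m''$; either way the two maps differ by an automorphism of the source, so the flatness conclusion is unaffected.
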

    \begin{proof}
        From condition (4) in Definition \ref{def:mock-toric}, $m$ is surjective. 
        Thus, it is enough to show that the restriction $m|_{T\times Z(\sigma)}\colon T\times Z(\sigma)\rightarrow X(\sigma)$ is flat for any $\sigma\in\Delta$. 
        Let $\varphi\in\Phi$ be an element such that $\sigma\in\Delta_\varphi$, and $\sigma_\varphi$ denote $(q_\varphi)_\RR(\sigma)$. 
        From Lemma \ref{lem:torus-fibration}, there exists the toric isomorphism $\lambda\colon X(\sigma)\rightarrow X(\sigma_\varphi)\times T'$ such that $\pr_1\circ \lambda = (q_\varphi)_*$, where $T'$ denotes an algebraic torus with $\dim(T') = \rank(N_\varphi)$. 
        Let $T_\varphi$ denote $T_{N/N_\varphi}$, $m_\varphi$ denote an action map $T_\varphi\times X(\sigma_\varphi)\rightarrow X(\sigma_\varphi)$, and $m'$ denote the multiplicative morphism $T'\times T'\rightarrow T'$. 
        We remark that $\lambda$ induces the following commutative diagram:
        \begin{equation*}
            \begin{tikzcd} 
                T\times X(\sigma)\ar[r, "m"]\ar[d, "\lambda\times\lambda"]& X(\sigma)\ar[d, "\lambda"]\\
                (T_\varphi\times X(\sigma_\varphi))\times (T'\times T')\ar[r, "m_\varphi\times m'"]& X(\sigma_\varphi)\times T'
            \end{tikzcd}
        \end{equation*}
        Moreover, we can check that $\lambda|_{T}\colon T\rightarrow T_\varphi\times T'$ is isomorphic. 
        From the condition (5) in Definition \ref{def:mock-toric}, $\iota_\varphi(Z(\sigma))$ is an open subset of $X(\sigma_\varphi)$. 
        Let $V$ denote $\iota_\varphi(Z(\sigma))$, $j\colon V\rightarrow Z(\sigma)$ denote the inverse morphism of $\iota_\varphi$, and $p$ denote the composition $\pr_2\circ\lambda\circ\iota\colon 
        Z\rightarrow T'$. 
        Let $\theta$ denote the morphism from $T'\times Z(\sigma)\rightarrow T'\times V$ defined as follows:
        \[
            T'\times Z(\sigma)\ni(t', z)\mapsto (t'\cdot p(z), \iota_\varphi(z))\in T'\times V
        \]
        Then we claim the following statements:
        \begin{itemize}
            \item[(1)] The morphism $\theta$ is an isomorphism. 
            \item[(2)] The following diagram is commutative:
            \begin{equation*}
                \begin{tikzcd} 
                    T\times Z(\sigma)\ar[r, "\mathrm{id}_{T}\times\iota"]\ar[d, "\lambda\times\mathrm{id}_{Z(\sigma)}"]& T\times X(\sigma)\ar[r, "m"]& X(\sigma)\ar[dd, "\lambda"]\\
                    T_\varphi\times T'\times Z(\sigma)\ar[d, "\mathrm{id}_{T_\varphi}\times\theta"]
                    & & \\
                    T_\varphi\times T'\times V\ar[r, "l"]&(T_\varphi\times T')\times (X(\sigma_\varphi)\times T')\ar[r, "m_\varphi\times m'"]&X(\sigma_\varphi)\times T', 
                \end{tikzcd}
            \end{equation*}
            where $l$ denotes a morphism defined as follows:
            \[
                T_\varphi\times T'\times V\ni(t, t', v)\mapsto((t, t'), (v, 1))\in (T_\varphi\times T')\times(X(\sigma_\varphi)\times T')
            \]
        \end{itemize}
        Indeed, we can check that the inverse morphism of $\theta$ 
        is defined as follows:
        \[
            T'\times V\ni(t', v)\mapsto(t'\cdot {p(j(v))}^{-1}, j(v))\in T'\times Z(\sigma)
        \]
        Moreover, we can check the commutativity of the above diagram because of the definition of $\theta$ and the above remark. 

        Then the composition $(m_\varphi\times m')\circ l$ is equal to the restriction $(m_\varphi\times\mathrm{id}_{T'})|_{(T_\varphi\times V)\times T'}\colon$ $(T_\varphi\times V)\times T'\rightarrow X(\sigma_\varphi)\times T'$. 
        Thus, $(m_\varphi\times m')\circ l$ is flat because $V$ is an open subset of $X(\sigma_\varphi)$ and $m_\varphi$ is flat. 
        In particular, the action map $m\colon T\times Z(\sigma)\rightarrow X(\sigma)$ is flat from the above claim. 
    \end{proof}
    The following proposition shows that the smoothness of the ambient space induces the smoothness of Z.  
    \begin{proposition}\label{prop:fan-varphi-smooth}
        Let $Z$ be a scheme over $k$ and $(N, \Delta, \iota, \Phi, \{N_\varphi\}_{\varphi\in\Phi},\{\Delta_\varphi\}_{\varphi\in\Phi})$ be a mock toric structure of $Z$. 
        We assume that $\Delta$ is unimodular. 
        Then the following statements follow: 
        \begin{enumerate}
            \item[(a)] For any $\varphi\in\Phi$, $\Delta(\varphi)$ is unimodular. 
            \item[(b)] A scheme $Z$ is smooth scheme over $k$.
        \end{enumerate}
        \begin{proof}
            We prove the statements from (a) to (b).
            \begin{enumerate}
                \item[(a)] Let $\sigma\in\Delta$ be a cone and $\varphi\in\Phi$ be an element such that $\sigma\in\Delta_\varphi$. 
                Let $\sigma_\varphi$ denote $(q_\varphi)_\RR(\sigma)$. 
                From the condition (1), Proposition \ref{prop: first prop} (b), and Lemma \ref{lem:take-section}, there exists a sub lattice $N_1$ of $N$ such that $\langle\sigma\rangle\cap N \subset N_1$ and $N_1\oplus N_\varphi = N$. 
                Because $N_1/(\langle\sigma\rangle\cap N)$ is a torsion free, there exists a sub lattice $N_2$ of $N_1$ such that $(\langle\sigma\rangle\cap N)\oplus N_2 = N_1$. 
                Thus, we have $(\langle\sigma\rangle\cap N)\oplus N_2 \oplus N_\varphi = N$. 
                Because $\sigma$ is unimodular, there exists $v_1, \ldots, v_r\in\sigma$ such that $\{v_i\}_{1\leq i\leq r}$ is a basis of $\langle\sigma\rangle\cap N$ and $\{\RR_{\geq 0}v_i\}_{1\leq i\leq r}$ are all rays of $\sigma$, where $r$ is $\dim(\sigma)$. 
                From Lemma \ref{lem:torus-fibration} (c), $\{\RR_{\geq 0}q_\varphi(v_i)\}_{1\leq i\leq r}$ is the set of all rays of $\sigma_\varphi$. 
                Moreover, from the above disjoint sum, $\{q_\varphi(v_i)\}$ is a basis of  $\langle\sigma_\varphi\rangle\cap (N/N_\varphi)$. 
                Therefore, $\sigma_\varphi$ is unimodular. 
                \item[(b)] From (a), for any $\varphi\in\Phi$, $X(\Delta(\varphi))$ is smooth over $k$. 
                Thus, from condition (5) in Definition \ref{def:mock-toric}, $Z\cap X(\Delta_\varphi)$ is smooth over $k$. 
                Therefore, $Z$ is smooth over $k$. 
            \end{enumerate}
        \end{proof}
    \end{proposition}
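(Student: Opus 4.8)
The plan is to deduce (b) formally from (a) together with condition (5) in Definition \ref{def:mock-toric}, and to prove (a) by a lattice-splitting argument that mirrors the proof of Proposition \ref{prop: first prop}(d).

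For (a), fix $\varphi\in\Phi$ and a cone $\sigma\in\Delta_\varphi$, and set $\sigma_\varphi=(q_\varphi)_\RR(\sigma)$. First I would produce a direct sum decomposition $N=(\langle\sigma\rangle\cap N)\oplus N_2\oplus N_\varphi$. Condition (1) says $N/((\langle\sigma\rangle\cap N)+N_\varphi)$ is torsion-free, and Proposition \ref{prop: first prop}(b) says $\langle\sigma\rangle\cap N_\varphi=\{0\}$; feeding these into Lemma \ref{lem:take-section} gives a sublattice $N_1\supseteq\langle\sigma\rangle\cap N$ with $N_1\oplus N_\varphi=N$, and since $N_1/(\langle\sigma\rangle\cap N)$ is torsion-free it splits further as $(\langle\sigma\rangle\cap N)\oplus N_2$. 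Now because $\sigma$ is unimodular there is a basis $v_1,\dots,v_r$ of $\langle\sigma\rangle\cap N$ whose nonnegative spans $\RR_{\geq 0}v_i$ are exactly the rays of $\sigma$. Applying $q_\varphi$: by Lemma \ref{lem:torus-fibration}(c) (or equivalently the injectivity statements of Lemma \ref{lem: injective fan}) the rays of $\sigma_\varphi$ are precisely the $\RR_{\geq 0}q_\varphi(v_i)$, and by the decomposition above $q_\varphi(v_1),\dots,q_\varphi(v_r)$ is part of a $\ZZ$-basis of $N/N_\varphi$ with complement $q_\varphi(N_2)$, hence in particular a basis of $\langle\sigma_\varphi\rangle\cap(N/N_\varphi)$. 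Therefore $\sigma_\varphi$ is unimodular, and letting $\sigma$ range over $\Delta_\varphi$ shows $\Delta(\varphi)$ is unimodular.

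For (b), by (a) each $X(\Delta(\varphi))$ is a smooth toric variety. Condition (5) in Definition \ref{def:mock-toric} makes $\iota_\varphi\colon Z\cap X(\Delta_\varphi)\hookrightarrow X(\Delta(\varphi))$ an open immersion, so $Z\cap X(\Delta_\varphi)$ is smooth over $k$; by condition (2) the sets $Z\cap X(\Delta_\varphi)$ cover $Z$, hence $Z$ is smooth over $k$.

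I expect the only genuinely delicate point to be the lattice bookkeeping in (a): one must get the images $q_\varphi(v_i)$ to form part of a basis of the \emph{whole} quotient lattice $N/N_\varphi$, not merely a basis of $\langle\sigma_\varphi\rangle\cap(N/N_\varphi)$. This is precisely where the full strength of the torsion-freeness hypothesis in condition (1) is needed — applied twice, once to split $N_\varphi$ off $N$ and once to split $\langle\sigma\rangle\cap N$ off its complement — rather than just the weaker consequence Proposition \ref{prop: first prop}(a) that $N/N_\varphi$ is torsion-free. Everything else is bookkeeping about toric varieties and open coverings.
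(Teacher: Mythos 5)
Your proposal is correct and follows essentially the same route as the paper: the same splitting $N=(\langle\sigma\rangle\cap N)\oplus N_2\oplus N_\varphi$ obtained from condition (1), Proposition \ref{prop: first prop}(b) and Lemma \ref{lem:take-section}, the same use of Lemma \ref{lem:torus-fibration}(c) to identify the rays of $\sigma_\varphi$ with the images $\RR_{\geq 0}q_\varphi(v_i)$, and the same deduction of (b) from condition (5) and the covering by the $Z\cap X(\Delta_\varphi)$. Your closing remark about needing $q_\varphi(v_1),\dots,q_\varphi(v_r)$ to extend to a basis of $N/N_\varphi$ (via the complement $q_\varphi(N_2)$) is exactly the point the paper settles with the displayed direct sum, so there is no gap.
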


\section{Induced mock toric structure}
    Let $Z$ be a scheme over $k$ and $(N$, $\Delta$, $\iota$, $\Phi$, $\{N_\varphi\}_{\varphi\in\Phi}$, $\{\Delta_\varphi\}_{\varphi\in\Phi})$ be a mock toric structure of $Z$. 
    In this section, we construct other mock toric varieties from $Z$. 
    To be precise, we construct two mock toric structures for these schemes:
    \begin{itemize}
        \item[(1)] A scheme $Z\times_{X(\Delta)}\overline{O_\sigma}$, where $\sigma\in\Delta$. 
        This is a scheme theoretic closure of $Z_\sigma$ in $Z$ from Proposition \ref{prop: closure of stratum}
        \item[(2)] The pullback of $Z$ along a dominant toric morphism $X(\Delta')\rightarrow X(\Delta)$
    \end{itemize}
    \subsection{Induced mock toric structure for orbit closure}
        Let $Z$ be a scheme over $k$ and $(N$, $\Delta$, $\iota$, $\Phi$, $\{N_\varphi\}_{\varphi\in\Phi}$, $\{\Delta_\varphi\}_{\varphi\in\Phi})$ be a mock toric structure of $Z$. 
        Let $\sigma\in\Delta$ be a cone. 
        From Proposition \ref{prop: closure of stratum}, $\overline{Z_\sigma}$ is a closed subscheme of toric variety $\overline{O_\sigma}$. 
        In this subsection, we construct a mock toric structure of $\overline{Z_\sigma}$. 
    \begin{definition}\label{def: mock induced for orbit}
        We use the above notation. 
        Let $N[\sigma]$ denote the quotient lattice $N/(\langle\sigma\rangle\cap N)$ and $\pi^\sigma$ denote the quotient map $N\rightarrow N[\sigma]$. 
        Let $\Delta^{\sigma}$ denote the following sub fan of $\Delta$: 
        \[
            \Delta^{\sigma} = \{\gamma\in\Delta\mid \exists\tau\in\Delta\ \ \mathrm{s.t. }\ \ \gamma\cup\sigma\subset\tau\}
        \]
        , and $\Delta[\sigma]$ denote the following set:
        \[
            \Delta[\sigma] = \{(\pi^\sigma)_\RR(\tau)\mid\sigma\subset\tau\in\Delta\}  
        \]
        From Proposition \ref{prop:orbit-toric}, $\Delta[\sigma]$ is a strongly convex rational polyhedral fan, and we regard $Z\cap\overline{O_\sigma}$ as closed subscheme of $X(\Delta[\sigma])$ from the following diagram whose all squares in the following diagram are Cartesian squares:
        \begin{equation*}
            \begin{tikzcd} 
                Z\cap\overline{O_\sigma}\ar[d, hook]\ar[r, hook, "\iota"]& \overline{O_\sigma}\ar[dr, "\simeq"]\ar[d, hook]&\\
                Z\cap X(\Delta^\sigma)\ar[r, hook, "\iota"]\ar[d]& X(\Delta^\sigma)\ar[d]\ar[r, "(\pi^\sigma)_*"]&X(\Delta[\sigma])\\
                Z\ar[r, hook, "\iota"]& X(\Delta) &
            \end{tikzcd}
        \end{equation*}
        Let $\iota^\sigma$ denote a closed immersion from $Z\cap\overline{O_\sigma}$ to $X(\Delta[\sigma])$ in the above diagram. 
        Let $\Phi^\sigma$ denote a subset of $\Phi$ as follows:
        \[
            \Phi^\sigma = \{\varphi\in\Phi\mid \sigma\in\Delta_\varphi\}
        \]
        For $\varphi\in\Phi^\sigma$, let $N[\sigma]_\varphi$ denote $\pi^\sigma(N_\varphi)$ and $\Delta[\sigma]_\varphi$ denote the following set: 
        \[
            \Delta[\sigma]_\varphi = \{\pi^\sigma_\RR(\tau)\in\Delta[\sigma]\mid \tau\in \Delta_\varphi \ \ \mathrm{and}\ \ \sigma\subset\tau\}
        \]
    \end{definition}
    In the following proposition, we check that $N[\sigma]_\varphi$ is a lattice and $\Delta[\sigma]_\varphi$ is a subfan of $\Delta[\sigma]$ for any $\varphi\in\Phi^\sigma$. 
    \begin{proposition}
        We keep the notation in Definition \ref{def: mock induced for orbit} and let $\varphi\in\Phi^\sigma$ be an element.  
        Then $N[\sigma]_\varphi$ is a lattice and $\Delta[\sigma]_\varphi$ is a subfan of $\Delta[\sigma]$. 
    \end{proposition}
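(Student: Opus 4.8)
The plan is to prove the two assertions separately, reducing each to one classical fact about fans together with one input from the mock toric axioms.

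\emph{The lattice $N[\sigma]_\varphi$.} First I would note that $N[\sigma] = N/(\langle\sigma\rangle\cap N)$ is itself a lattice, since $\langle\sigma\rangle\cap N$ is a saturated sublattice of $N$ (it is the set of lattice points of a linear subspace, hence closed under division); therefore its subgroup $N[\sigma]_\varphi = \pi^\sigma(N_\varphi)$ is automatically a free abelian group of finite rank. To record the sharper statement that will be needed when the mock toric axioms for $\overline{Z_\sigma}$ are verified, I would also remark that $\varphi\in\Phi^\sigma$ forces $\sigma\in\Delta_\varphi$, so Proposition \ref{prop: first prop}(b) applies and yields $\langle\sigma\rangle\cap N_\varphi = \{0\}$; since $\ker\bigl(\pi^\sigma|_{N_\varphi}\bigr) = N_\varphi\cap(\langle\sigma\rangle\cap N) = \langle\sigma\rangle\cap N_\varphi = \{0\}$, the restriction $\pi^\sigma|_{N_\varphi}$ is injective and $N[\sigma]_\varphi\cong N_\varphi$. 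This step is routine.

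\emph{The subfan $\Delta[\sigma]_\varphi$.} By construction $\Delta[\sigma]_\varphi\subseteq\Delta[\sigma]$, and $\Delta[\sigma]$ is a strongly convex rational polyhedral fan by Proposition \ref{prop:orbit-toric}; since the intersection axiom is inherited by any subset of a fan, it suffices to check that $\Delta[\sigma]_\varphi$ is nonempty and closed under passing to faces. Nonemptiness is clear: $\{0\} = \pi^\sigma_\RR(\sigma)\in\Delta[\sigma]_\varphi$ because $\sigma\in\Delta_\varphi$. For face-closedness, take $\bar\tau = \pi^\sigma_\RR(\tau)\in\Delta[\sigma]_\varphi$ with $\tau\in\Delta_\varphi$ and $\sigma\subseteq\tau$, and let $\bar\gamma\preceq\bar\tau$. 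Here I would invoke the standard description of the fan of an orbit closure, which should be part of Proposition \ref{prop:orbit-toric}: the assignment $\gamma\mapsto\pi^\sigma_\RR(\gamma)$ is an inclusion-preserving bijection from $\{\gamma\in\Delta\mid \sigma\subseteq\gamma\}$ onto $\Delta[\sigma]$, and in particular every face of $\pi^\sigma_\RR(\tau)$ has the form $\pi^\sigma_\RR(\gamma)$ for a unique face $\gamma\preceq\tau$ with $\sigma\subseteq\gamma$. Thus $\bar\gamma = \pi^\sigma_\RR(\gamma)$ with $\sigma\subseteq\gamma\preceq\tau$. Since $\Delta_\varphi$ is a subfan of $\Delta$, it is closed under faces, so $\gamma\in\Delta_\varphi$; hence $\bar\gamma\in\Delta[\sigma]_\varphi$. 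Therefore $\Delta[\sigma]_\varphi$ is a nonempty face-closed subset of the fan $\Delta[\sigma]$, i.e.\ a subfan.

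The only non-formal ingredient, and the step I would take the most care over, is the face correspondence for $\Delta[\sigma]$: that a face of $\pi^\sigma_\RR(\tau)$ is $\pi^\sigma_\RR(\gamma)$ for a unique face $\gamma$ of $\tau$ with $\sigma\subseteq\gamma$. This is classical toric geometry — it is exactly what underlies the identification $\overline{O_\sigma}\cong X(\Delta[\sigma])$ used in Definition \ref{def: mock induced for orbit} — and I expect it to be available directly from Proposition \ref{prop:orbit-toric}; failing that, one proves it by observing that $\pi^\sigma_\RR$ induces a linear isomorphism of $\langle\tau\rangle/\langle\sigma\rangle$ onto $\langle\pi^\sigma_\RR(\tau)\rangle$ under which supporting hyperplanes of $\tau$ containing $\langle\sigma\rangle$ correspond to supporting hyperplanes of $\pi^\sigma_\RR(\tau)$.
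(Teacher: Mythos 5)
Your proposal is correct and follows essentially the same route as the paper: torsion-freeness (saturatedness) of $\langle\sigma\rangle\cap N$ gives that $N[\sigma]_\varphi$ is a lattice, and face-closedness of $\Delta[\sigma]_\varphi$ is deduced by lifting a face $\gamma'\preceq\tau'$ to a cone $\gamma\in\Delta$ with $\sigma\subset\gamma$ and $\gamma\subset\tau$, then using that $\Delta_\varphi$ is a subfan. The only cosmetic difference is that you package the lifting as a face-poset bijection from Proposition \ref{prop:orbit-toric}, whereas the paper gets it from the fan property of $\Delta[\sigma]$ together with Corollary \ref{cor: cor-orbit-toric}(a).
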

    \begin{proof}
        A module $N/(\langle\sigma\rangle\cap N)$ is torsion-free, so that subgroup $\pi^\sigma(N_\varphi)$ of $N[\sigma]$ is torsion free too. 
        Thus, we only need to show that for any $\tau'\in \Delta[\sigma]_\varphi$, all faces of $\tau'$ are in $\Delta[\sigma]_\varphi$. 
        There exists $\tau\in\Delta$ such that $(\pi^\sigma)_\RR(\tau) = \tau'$, $\tau\in\Delta_\varphi$, and $\sigma\subset\tau$. 
        Because $\Delta[\sigma]$ is a fan, for any face $\gamma'\preceq\tau'$, there exists $\gamma\in\Delta$ such that $(\pi^\sigma)_\RR(\gamma) = \gamma'$ and $\sigma\subset\gamma$. 
        From  Corollary \ref{cor: cor-orbit-toric}(a), we have $\gamma\subset\tau$. 
        Because $\Delta_\varphi$ is a fan, $\gamma\in\Delta_\varphi$, in particular, $\gamma'\in\Delta[\sigma]_\varphi$. 
    \end{proof}
    The following proposition shows that $\overline{Z_\sigma}$ has a mock toric structure. 
    \begin{proposition}\label{prop:orbit-structure}
        We keep the notation in Definition \ref{def: mock induced for orbit}.  
        Then $(N[\sigma], \Delta[\sigma], $ $\iota^\sigma, \Phi^\sigma,$ $\{N[\sigma]_\varphi\}_{\varphi\in\Phi^\sigma},\ \ \ $ $\{\Delta[\sigma]_\varphi\}_{\varphi\in\Phi^\sigma})$ is a mock toric structure of $Z\cap\overline{O_\sigma}$. 
    \end{proposition}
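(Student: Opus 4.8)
The plan is to verify, for the tuple $(N[\sigma],\Delta[\sigma],\iota^\sigma,\Phi^\sigma,\{N[\sigma]_\varphi\}_{\varphi\in\Phi^\sigma},\{\Delta[\sigma]_\varphi\}_{\varphi\in\Phi^\sigma})$, the six data of Definition \ref{def:mock-toric} together with conditions (1)--(5). The data are already in hand: $N[\sigma]$ is a finite-rank lattice, $\Delta[\sigma]$ is a strongly convex rational polyhedral fan by Proposition \ref{prop:orbit-toric}, $\iota^\sigma$ is the closed immersion built in Definition \ref{def: mock induced for orbit}, $\Phi^\sigma\subseteq\Phi$ is finite, and the preceding proposition shows each $N[\sigma]_\varphi$ is a sublattice and each $\Delta[\sigma]_\varphi$ a subfan. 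Before checking (1)--(5), I would record, for $\varphi\in\Phi^\sigma$, the following compatibilities (writing $\sigma_\varphi:=(q_\varphi)_\RR(\sigma)\in\Delta(\varphi)$, $q^\sigma_\varphi$ for the quotient $N[\sigma]\to N[\sigma]/N[\sigma]_\varphi$, and $\pi^{\sigma_\varphi}$ for the quotient of $N/N_\varphi$ by $\langle\sigma_\varphi\rangle\cap(N/N_\varphi)$): (i) $N[\sigma]_\varphi=\pi^\sigma(N_\varphi)$ with $\pi^\sigma|_{N_\varphi}$ injective (Proposition \ref{prop: first prop}(b)), so there is a canonical isomorphism $N[\sigma]/N[\sigma]_\varphi\cong (N/N_\varphi)/(\langle\sigma_\varphi\rangle\cap(N/N_\varphi))$, both being the quotient of $N$ by $(\langle\sigma\rangle\cap N)+N_\varphi$ (here Proposition \ref{prop: first prop}(d) identifies $q_\varphi(\langle\sigma\rangle\cap N)$ with $\langle\sigma_\varphi\rangle\cap(N/N_\varphi)$); (ii) under that isomorphism $q^\sigma_\varphi\circ\pi^\sigma=\pi^{\sigma_\varphi}\circ q_\varphi$; (iii) $\langle(\pi^\sigma)_\RR(\tau)\rangle\cap N[\sigma]=\pi^\sigma(\langle\tau\rangle\cap N)$ for $\tau\in\Delta_\varphi$ with $\sigma\subseteq\tau$; and (iv) the set $\Delta[\sigma](\varphi):=\{(q^\sigma_\varphi)_\RR(\tau')\mid\tau'\in\Delta[\sigma]_\varphi\}$ equals the star–quotient fan $\Delta(\varphi)[\sigma_\varphi]$, so $X(\Delta[\sigma](\varphi))$ is identified with $\overline{O_{\sigma_\varphi}}\subseteq X(\Delta(\varphi))$. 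These are short computations with the defining quotient maps and Lemma \ref{lem:take-section}.

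Conditions (1), (2), (4) then follow quickly. For (1): for $\tau'=(\pi^\sigma)_\RR(\tau)\in\Delta[\sigma]_\varphi$ with $\tau\in\Delta_\varphi$, $\sigma\subseteq\tau$, combine (iii) and (i) to get $(\langle\tau'\rangle\cap N[\sigma])+N[\sigma]_\varphi=\pi^\sigma((\langle\tau\rangle\cap N)+N_\varphi)$; since $\langle\sigma\rangle\cap N\subseteq\langle\tau\rangle\cap N$, the third isomorphism theorem gives $N[\sigma]/((\langle\tau'\rangle\cap N[\sigma])+N[\sigma]_\varphi)\cong N/((\langle\tau\rangle\cap N)+N_\varphi)$, torsion-free by condition (1) for the original structure. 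For (2): any $\tau'\in\Delta[\sigma]$ is $(\pi^\sigma)_\RR(\tau)$ with $\sigma\subseteq\tau\in\Delta$; choose $\varphi$ with $\tau\in\Delta_\varphi$ by condition (2) for $Z$; since $\sigma=\sigma\cap\tau$ is a face of $\tau$ and $\Delta_\varphi$ is a subfan, $\sigma\in\Delta_\varphi$, i.e. $\varphi\in\Phi^\sigma$, and $\tau'\in\Delta[\sigma]_\varphi$; the reverse inclusion is trivial. For (4): the orbits of $X(\Delta[\sigma])\cong\overline{O_\sigma}$ are exactly the $O_\tau$ with $\sigma\subseteq\tau\in\Delta$, and $(Z\cap\overline{O_\sigma})\cap O_\tau=Z\cap O_\tau\neq\emptyset$ by condition (4) for $Z$.

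The substantive point is condition (3): $(q^\sigma_\varphi)_\RR$ is injective on $\supp(\Delta[\sigma]_\varphi)$. I would deduce this from the criterion of Lemma \ref{lem: injective fan}, checking that $(q^\sigma_\varphi)_\RR$ is injective on each cone of $\Delta[\sigma]_\varphi$ and that it carries cone intersections to intersections. For injectivity on a cone $\tau'=(\pi^\sigma)_\RR(\tau)$: condition (3) for $Z$ forces $\langle\tau\rangle\cap (N_\varphi\otimes\RR)=0$ (same argument as Proposition \ref{prop: first prop}(b)), hence $\langle\tau\rangle\cap(\langle\sigma\rangle+(N_\varphi\otimes\RR))=\langle\sigma\rangle$, which by (ii) is precisely $\ker(q^\sigma_\varphi)_\RR\cap\langle\tau'\rangle=0$. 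For the intersection identity: by (ii), $(q^\sigma_\varphi)_\RR((\pi^\sigma)_\RR(\tau))=(\pi^{\sigma_\varphi})_\RR((q_\varphi)_\RR(\tau))$; condition (3) for $Z$ makes $\tau\mapsto(q_\varphi)_\RR(\tau)$ a bijection $\Delta_\varphi\xrightarrow{\sim}\Delta(\varphi)$ carrying cone intersections to cone intersections, while $\Delta[\sigma]$ and $\Delta(\varphi)[\sigma_\varphi]$, being genuine star–quotient fans (Proposition \ref{prop:orbit-toric}, Corollary \ref{cor: cor-orbit-toric}), satisfy $(\pi^\sigma)_\RR(\tau_1)\cap(\pi^\sigma)_\RR(\tau_2)=(\pi^\sigma)_\RR(\tau_1\cap\tau_2)$ and the analogue for $\pi^{\sigma_\varphi}$; chaining these equalities gives the desired compatibility. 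Keeping track of which cones contain $\sigma$ (resp.\ $\sigma_\varphi$) and of how $(\pi^\sigma)_\RR$ and $(q_\varphi)_\RR$ interchange under (ii) is where the bookkeeping is heaviest, and I expect this to be the main obstacle.

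Finally, for condition (5) I would use (iv): under $\overline{O_\sigma}\cong X(\Delta[\sigma])$ and $\overline{O_{\sigma_\varphi}}\cong X(\Delta[\sigma](\varphi))$ induced by $\pi^\sigma$ and $\pi^{\sigma_\varphi}$, the composite $\iota^\sigma_\varphi$ becomes the restriction of $\iota_\varphi$ to $\iota_\varphi^{-1}(\overline{O_{\sigma_\varphi}})$. From the Cartesian squares in the proof of Proposition \ref{prop:basic-property1}(c) together with Lemma \ref{lem:torus-fibration} one gets $(q_\varphi)_*^{-1}(\overline{O_{\sigma_\varphi}})=\overline{O_\sigma}\cap X(\Delta_\varphi)$, hence $\iota_\varphi^{-1}(\overline{O_{\sigma_\varphi}})=(Z\cap\overline{O_\sigma})\cap X(\Delta_\varphi)$; since $\iota_\varphi$ is an open immersion by condition (5) for $Z$, this is an open immersion onto the open subscheme $\iota_\varphi(Z\cap X(\Delta_\varphi))\cap\overline{O_{\sigma_\varphi}}$ of $\overline{O_{\sigma_\varphi}}$, and restricting further along the open immersion $X(\Delta[\sigma]_\varphi)\hookrightarrow X(\Delta[\sigma])$ (equivalently, restricting to the corresponding open subscheme of $X(\Delta_\varphi)$) keeps it an open immersion. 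This establishes (5) and completes the verification.
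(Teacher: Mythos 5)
Your proposal is correct and verifies the same five conditions, with (1), (2) and (4) argued exactly as in the paper (the key identity $\langle\tau'\rangle\cap N[\sigma]=\pi^\sigma(\langle\tau\rangle\cap N)$ plus the third isomorphism theorem for (1); lifting $\tau'$ to $\tau\supseteq\sigma$ and using that $\Delta_\varphi$ is a subfan for (2); $O_{\tau'}=O_\tau$ for (4)). Where you genuinely diverge is on (3) and (5). For (3) the paper argues directly: given $x'\in\tau'$, $y'\in\gamma'$ with equal images, it lifts to $x\in\tau$, $y\in\gamma$, writes $x-y\in\langle\sigma\rangle+(N_\varphi)_\RR$ as $(z-w)$ plus an element of $(N_\varphi)_\RR$ with $z,w\in\sigma$, and applies condition (3) for $Z$ to $x+w,y+z\in\supp(\Delta_\varphi)$; you instead patch cone-wise injectivity together with the intersection identity $(q^\sigma_\varphi)_\RR(\tau_1')\cap(q^\sigma_\varphi)_\RR(\tau_2')=(q^\sigma_\varphi)_\RR(\tau_1'\cap\tau_2')$ obtained by chaining Corollary \ref{cor: cor-orbit-toric}(b) in $\Delta$ and in $\Delta(\varphi)$ through the commutation $q^\sigma_\varphi\circ\pi^\sigma=\pi^{\sigma_\varphi}\circ q_\varphi$. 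That route works, but be aware that Lemma \ref{lem: injective fan} does not supply the patching criterion you invoke (it assumes injectivity on the support and derives consequences); you must add the short argument that cone-wise injectivity plus compatibility with intersections implies injectivity on $\supp(\Delta[\sigma]_\varphi)$ (if $f(x)=f(y)$ with $x\in\tau_1'$, $y\in\tau_2'$, the intersection identity produces $z\in\tau_1'\cap\tau_2'$ with $f(z)=f(x)$, and cone-wise injectivity gives $x=z=y$) — easy, but not quotable from that lemma. For (5) the paper runs a four-step, cone-by-cone reduction, proving the isomorphism $\overline{O_{\sigma_\varphi}}\cap X(\tau_\varphi)\cong X(\tau'')$ affine chart by affine chart and then quoting the open-immersion claim from the proof of Proposition \ref{prop: closure of stratum}; you package the same content globally via the identification $\Delta[\sigma](\varphi)=\Delta(\varphi)[\sigma_\varphi]$, hence $X(\Delta[\sigma](\varphi))\cong\overline{O_{\sigma_\varphi}}$, and then realize $\iota^\sigma_\varphi$ as the base change of the open immersion $\iota_\varphi$ along $\overline{O_{\sigma_\varphi}}\hookrightarrow X(\Delta(\varphi))$. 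This is tidier, but the scheme-theoretic Cartesian identification $(q_\varphi)_*^{-1}(\overline{O_{\sigma_\varphi}})=\overline{O_\sigma}\cap X(\Delta_\varphi)$ (including the emptiness of both sides over cones of $\Delta_\varphi$ not containing $\sigma$) still requires exactly the inputs the paper assembles in its Steps 1--3 (Lemma \ref{lem:torus-fibration}(e), Proposition \ref{prop:orbit-toric}(d), and the face-preserving bijection $\Delta_\varphi\to\Delta(\varphi)$), so the gain is in organization rather than in the amount of verification.
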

    \begin{proof}
        We check that this tuple satisfies the conditions in Definition \ref{def:mock-toric} from (1) to (5) in order. 
        \begin{enumerate}
            \item[(1)] We show that for any $\varphi\in\Phi^\sigma$ and $\tau'\in\Delta[\sigma]_\varphi$, $N[\sigma]/((\langle\tau'\rangle\cap N[\sigma])+N[\sigma]_\varphi)$ is a torsion-free module. 
            Let $\tau\in\Delta_\varphi$ be a cone such that $(\pi^\sigma)_\RR(\tau) = \tau'$ and $\sigma\subset\tau$. 
            Before the proof, we remark that $\pi^\sigma(\langle\tau\rangle\cap N) = \langle\tau'\rangle \cap N[\sigma]$. 
            Indeed, for any $v'\in \langle\tau'\rangle \cap N[\sigma]$, there exists $v\in \langle\tau\rangle$ and $w\in N$ such that $v' = (\pi^\sigma)_\RR(v) = (\pi^\sigma)_\RR(w)$. 
            Thus, $v-w\in \langle\sigma\rangle$, in particular, $w\in\langle\tau\rangle$ because $\sigma\subset\tau$. 
            Therefore, $w\in\langle\tau\rangle\cap N$. 
            
            From the following equation, $N[\sigma]/((\langle\tau'\rangle\cap N[\sigma])+N[\sigma]_\varphi)$ is isomorphic to $N/((\langle\tau\rangle\cap N) + N_\varphi)$:
            \begin{align*}
                (\langle\tau'\rangle\cap N[\sigma])+N[\sigma]_\varphi &= \pi^\sigma(\langle\tau\rangle\cap N) + \pi^\sigma(N_\varphi)\\
                &= \pi^\sigma((\langle\tau\rangle\cap N)+ N_\varphi)
            \end{align*}
            Thus, $N[\sigma]/((\langle\tau'\rangle\cap N[\sigma])+N[\sigma]_\varphi)$ is a torsion-free module from the condition (1) in Definition \ref{def:mock-toric}. 
            \item[(2)] We show that the following equation holds:
            \[
                \Delta[\sigma] = \bigcup_{\varphi\in\Phi^\sigma}\Delta[\sigma]_\varphi.
            \]
            For $\tau'\in\Delta[\sigma]$, let $\tau\in\Delta$ be a cone such that $\sigma\subset\tau$ and $(\pi^\sigma)_\RR(\tau) = \tau'$. 
            From the condition (2) in Definition \ref{def:mock-toric}, there exists $\varphi\in\Phi$ such that $\tau\in\Delta_\varphi$. 
            Because $\Delta_\varphi$ is a subfan of $\Delta$, we have $\sigma\in\Delta_\varphi$. 
            Thus, $\varphi\in\Phi^\sigma$ and $\tau' = (\pi^\sigma)_\RR(\tau)\in\Delta[\sigma]_\varphi$. 
            \item[(3)] For $\varphi\in \Phi^\sigma$, let $q^\sigma_\varphi$ denote the quotient morphism 
            $N[\sigma]\rightarrow N[\sigma]/N[\sigma]_{\varphi}$. 
            We show that the map $(q^\sigma_\varphi)_\RR|_{\supp(\Delta[\sigma]_\varphi)}$ is injective. 
            Let $\pi^\sigma_\varphi\colon N/N_\varphi\rightarrow N/((\langle\sigma\rangle\cap N) + N_\varphi) = N[\sigma]/N[\sigma]_\varphi$ be the natural quotient map, and we remark that the following diagram is commutative:
            \begin{equation*}
                \begin{tikzcd} 
                    N\ar[r, "\pi^\sigma"]\ar[d, "q_\varphi"]& N[\sigma]\ar[d, "q^\sigma_\varphi"]\\
                    N/N_\varphi\ar[r, "\pi^\sigma_\varphi"]& N[\sigma]/N[\sigma]_\varphi
                \end{tikzcd}
            \end{equation*}
            Let $x'$ and $y'\in\supp(\Delta[\sigma]_\varphi)$ be elements. 
            Then there exists $\tau'$ and $\gamma'\in\Delta[\sigma]_\varphi$ such that $x'\in\tau'$ and $y'\in\gamma'$. 
            From the definition of $\Delta[\sigma]_\varphi$, there exists $\tau$ and $\gamma\in\Delta_\varphi$ such that $(\pi^\sigma)_\RR(\tau) = \tau'$, $(\pi^\sigma)_\RR(\gamma) = \gamma'$, and $\sigma\subset\tau\cap\gamma$. 
            In particular, there exists $x\in\tau$ and $y\in\gamma$ such that $(\pi^\sigma)_\RR(x) = x'$ and $(\pi^\sigma)_\RR(y) = y'$. 
            
            From now on, we assume that $(q^\sigma_\varphi)_\RR(x') = (q^\sigma_\varphi)_\RR(y')$ and we show that $x'=y'$. 
            From the assumption, $x - y\in \langle\sigma\rangle + (N_\varphi)_\RR$. 
            Thus, there exists $z, w\in\sigma$ such that $(x - y) - (z - w)\in (N_\varphi)_\RR$. 
            Hence, $(q_\varphi)_\RR(x + w) = (q_\varphi)_\RR(y + z)$. 
            Because $x\in\tau$, $y\in\gamma$ and $\sigma\subset\tau\cap\gamma$, we have $x+w\in\tau$ and $y+z\in\gamma$. 
            Thus, $x + w = y + z$ from the fact $x + w, y + z\in\supp(\Delta_\varphi)$ and the condition (3) in Definition \ref{def:mock-toric}. 
            Therefore, from the following equation, we have $x' = y'$:
            \begin{align*}
                x' &= (\pi^\sigma)_\RR(x)\\
                &= (\pi^\sigma)_\RR(x + w)\\
                &= (\pi^\sigma)_\RR(y + z)\\
                &= (\pi^\sigma)_\RR(y)\\
                &= y'
            \end{align*}
            \item[(4)] For any $\tau'\in\Delta[\sigma]$, we show that $O_{\tau'}\cap(Z\cap\overline{O_\sigma})\neq\emptyset$. 
            Let $\tau\in\Delta$ be a cone such that $\sigma\subset\tau$ and $\pi^\sigma_\RR(\tau) = \tau'$. 
            Then $O_{\tau'}$ is equal to $O_\tau$, so that $O_{\tau'}\cap(Z\cap\overline{O_\sigma})\neq\emptyset$ from the condition (4) in Definition \ref{def:mock-toric}. 
            \item[(5)] Let $\Delta[\sigma](\varphi)$ denote the following strongly convex rational polyhedral fan in $(N[\sigma]/N[\sigma]_\varphi)\otimes\RR$: 
            \[
                \Delta[\sigma](\varphi) = 
                \{
                    (q^\sigma_\varphi)_\RR(\tau')\mid \tau'\in\Delta[\sigma]_{\varphi}
                \}
            \]
            Then, we show that the following composition 
            $\iota^\sigma_\varphi$ is an open immersion: 
            \begin{equation*}
                \begin{tikzcd} 
                    (Z\cap \overline{O_\sigma})\cap X(\Delta[\sigma]_\varphi)\ar[r, "\iota^\sigma"]& X(\Delta[\sigma]_\varphi)\ar[r, "(q^\sigma_\varphi)_*"]& X(\Delta[\sigma](\varphi)), 
                \end{tikzcd}
            \end{equation*}
            where the former one is a closed immersion restricted to $(Z\cap \overline{O_\sigma})\cap X(\Delta[\sigma]_\varphi)$. 

            However, the proof of the above statement is very complicated, so we go through 4 steps. 
            \begin{enumerate}
                \item[Step.1] The codomain of $\iota^\sigma_\varphi$ is a toric variety, so it is enough to show that for any open affine sub toric variety $V\subset X(\Delta[\sigma](\varphi))$, the restriction $\iota^\sigma_\varphi|_{(\iota^\sigma_\varphi)^{-1}(V)}$ is an open immersion. 
                In this step, we show that for any $\tau''\in\Delta[\sigma](\varphi)$, there unique exists $\tau\in\Delta_\varphi$ such that $\sigma\subset\tau$ and $\tau'' = (q^\sigma_\varphi)_\RR\circ(\pi^\sigma)_\RR(\tau)$ and all squares of the following diagram are Cartesian squares: 
                \begin{equation*}
                    \begin{tikzcd} 
                        Z\cap(\overline{O_\sigma}\cap X(\tau))\ar[dd]\ar[r, "\iota"]&\overline{O_\sigma}\cap X(\tau)\ar[d]\\
                        \ &X(\tau)\ar[d, "\pi^\sigma_*"]\\
                        (Z\cap \overline{O_\sigma})\cap X((\pi^\sigma)_\RR(\tau))\ar[d]\ar[r, "\iota^\sigma"]& X((\pi^\sigma)_\RR(\tau))\ar[d]\ar[r, "(q^\sigma_\varphi)_*"]&X(\tau'')\ar[d]\\
                        (Z\cap \overline{O_\sigma})\cap X(\Delta[\sigma]_\varphi)\ar[r, "\iota^\sigma"]& X(\Delta[\sigma]_\varphi)\ar[r, "(q^\sigma_\varphi)_*"]&X(\Delta[\sigma](\varphi))
                    \end{tikzcd}
                \end{equation*}
                Indeed, from the argument in (3), there unique exists $\tau'\in\Delta[\sigma]_\varphi$ such that $(q^\sigma_\varphi)_\RR(\tau') = \tau''$. 
                From  Corollary \ref{cor: cor-orbit-toric}(a), there exists unique $\tau\in\Delta$ such that $\sigma\subset\tau$, $(\pi^\sigma)_\RR(\tau) = \tau'$, and $\tau\in\Delta_\varphi$. 
                Therefore $\tau$ is a unique cone in $\Delta_\varphi$ such that $\sigma\subset\tau$ and $\tau'' = (q^\sigma_\varphi)_\RR\circ(\pi^\sigma)_\RR(\tau)$.
                
                From the injectivity of the restriction $(q^\sigma_\varphi)_\RR|_{\supp(\Delta[\sigma]_\varphi)}$, the lower right square of the above diagram is a Cartesian square. 
                All squares on the left side are Cartesian squares because they are given only by a pullback from a closed immersion $\iota^\sigma\colon Z\cap\overline{O_\sigma}\hookrightarrow X(\Delta[\sigma])$. 
                \item[Step.2] We use the notation in Step.1 and proof of confirming the condition (5). 
                In this step, we show that the following diagram is commutative:
                \begin{equation*}
                    \begin{tikzcd} 
                        \overline{O_\sigma}\cap X(\tau)\ar[d]\ar[r]&\overline{O_{\sigma_\varphi}}\cap X(\tau_\varphi)\ar[d]\\
                        X(\tau)\ar[d, "\pi^\sigma_*"]\ar[r, "(q_\varphi)_*"]&X(\tau_\varphi)\ar[d, "(\pi^\sigma_\varphi)_*"]\\
                        X((\pi^\sigma)_\RR(\tau))\ar[r, "(q^\sigma_\varphi)_*"]&X(\tau'')\\
                    \end{tikzcd}
                \end{equation*}
                where $\sigma_\varphi$ denote $(q_\varphi)_\RR(\sigma)$ and $\tau_\varphi$ denote $(q_\varphi)_\RR(\tau)$. 
                
                Indeed, the upper square is commutative from Lemma \ref{lem:torus-fibration}(e), and the lower one is commutative because $q^\sigma_\varphi\circ\pi^\sigma = \pi^\sigma_\varphi\circ q_\varphi$. 
                \item[Step.3] In this step, we show that the right vertical morphism in the diagram in Step.2 from $\overline{O_{\sigma_\varphi}}\cap X(\tau_\varphi)$ to $X(\tau'')$ is isomorphic. 
                From the definition of $\pi^\sigma_\varphi$, we have $\ker(\pi^\sigma_\varphi) = ((\langle\sigma\rangle\cap N) + N_\varphi)/N_\varphi$.  
                Thus, $\ker(\pi^\sigma_\varphi) = q_\varphi(\langle\sigma\rangle\cap N)$. 
                
                Now, we claim that $q_\varphi(\langle\sigma\rangle\cap N) = \langle \sigma_\varphi\rangle\cap N/N_\varphi$. 
                One direction is obvious, so we only check that $\langle \sigma_\varphi\rangle\cap N/N_\varphi\subset q_\varphi(\langle\sigma\rangle\cap N)$. 
                Indeed, for any $x\in \langle (q_\varphi)_\RR(\sigma)\rangle\cap N/N_\varphi$, there exists $y\in\langle\sigma\rangle$ such that $(q_\varphi)_\RR(y) = x$. 
                Moreover, from the condition (1) in Definition \ref{def:mock-toric}, Proposition \ref{prop: first prop}, and Lemma \ref{lem:take-section}, there exists a section $s$ of $q_\varphi$ such that $s\circ q_\varphi|_{\langle\sigma\rangle\cap N}$ is an identity map of a lattice $\langle\sigma\rangle\cap N$. 
                Thus, from the construction of $s$, $s_\RR(x) = y$. 
                
                On the other hand, $x\in N/N_\varphi$, so that $s_\RR(x) = s(x) = y\in N$. 
                Thus, $x = q_\varphi(y)\in (q_\varphi)_\RR(\langle\sigma\rangle\cap N)$. 
                
                We already checked that $\ker(\pi^\sigma_\varphi) = \langle (q_\varphi)_\RR(\sigma)\rangle\cap N/N_\varphi$. 
                Moreover, We recall that $(\pi^\sigma_\varphi)_\RR(\tau_\varphi) = \tau''$. 
                Hence, from the proof of Proposition \ref{prop:orbit-toric}(d), the morphism from $\overline{O_{\sigma_\varphi}}\cap X(\tau_\varphi)$ to $X(\tau'')$ in the diagram in Step.2 is an isomorphism. 
                \item[Step.4] In this step, we show that the lower morphism from $(Z\cap \overline{O_\sigma})\cap X((\pi^\sigma)_\RR(\tau))$ to $X(\tau'')$ in the following diagram is an open immersion: 
                \begin{equation*}
                    \begin{tikzcd} 
                        Z\cap(\overline{O_\sigma}\cap X(\tau))\ar[dd]\ar[r, "\iota"]&\overline{O_\sigma}\cap X(\tau)\ar[d]\ar[r]&\overline{O_{\sigma_\varphi}}\cap X(\tau_\varphi)\ar[d]\\
                        \ &X(\tau)\ar[d, "\pi^\sigma_*"]\ar[r, "(q_\varphi)_*"]&X(\tau_\varphi)\ar[d, "(\pi^\sigma_\varphi)_*"]\\
                        (Z\cap \overline{O_\sigma})\cap X((\pi^\sigma)_\RR(\tau))\ar[r, "\iota^\sigma"]& X((\pi^\sigma)_\RR(\tau))\ar[r, "(q^\sigma_\varphi)_*"]&X(\tau'')
                    \end{tikzcd}
                \end{equation*}
                From Proposition \ref{prop:orbit-toric}(d), the vertical middle morphism from $\overline{O_\sigma}\cap X(\tau)$ to $X(\pi^\sigma(\tau))$ is isomorphic. 
                Hence, from Step.1, the left vertical morphism from $Z\cap(\overline{O_\sigma}\cap X(\tau))$ to $(Z\cap \overline{O_\sigma})\cap X((\pi^\sigma)_\RR(\tau))$ is isomorphic too. 
                Thus, it is enough to show that the morphism from $Z\cap(\overline{O_\sigma}\cap X(\tau))$ to $X(\tau'')$ in the above diagram is an open immersion. 
                Moreover, it is enough to show that the upper morphism from $Z\cap(\overline{O_\sigma}\cap X(\tau))$ to $\overline{O_{\sigma_\varphi}}\cap X(\tau_\varphi)$ is an open immersion because the right vertical morphism from $\overline{O_{\sigma_\varphi}}\cap X(\tau_\varphi)$ to $X(\tau'')$ is isomorphic from Step.3. 
                
                By the way, we already claimed that this morphism is an open immersion in the proof of Proposition \ref{prop: closure of stratum} because $(\overline{O_\sigma}\cap X(\tau))\times_{X(\tau)} Z(\tau)= Z\cap(\overline{O_\sigma}\cap X(\tau))$. 
                Therefore, the proof is completed. 
            \end{enumerate}
        \end{enumerate}
    \end{proof}
    \subsection{The mock toric structure induced by dominant toric morphisms}
        Let $Z$ be a scheme over $k$ and $(N$, $\Delta$, $\iota$, $\Phi$, $\{N_\varphi\}_{\varphi\in\Phi}$, $\{\Delta_\varphi\}_{\varphi\in\Phi})$ be a mock toric structure of $Z$. 
    
        Let $\pi \colon N'\rightarrow N$ be a surjective group morphism of lattices of finite rank. 
        Let $\Delta'$ be a strongly convex rational polyhedral fan in $N'_\RR$, and we assume that $\pi$ is compatible with the fans $\Delta'$ and $\Delta$. 
        Let $Z'$ denote the pull back of the following diagram and $\iota'\colon Z'\rightarrow X(\Delta')$ be a closed immersion:
        \begin{equation*}
            \begin{tikzcd} 
                Z'\ar[r, hook,"\iota'"]\ar[d]& X(\Delta')\ar[d, "\pi_*"]\\
                Z\ar[r, hook, "\iota"]& X(\Delta)
            \end{tikzcd}
        \end{equation*}
        In this subsection, we construct a mock toric structure of $Z'$. 
    \begin{definition}\label{def: mock induced for dominant}
        We use the above notation. 
        Let $s\colon N\hookrightarrow N'$ be a section of $\pi$. 
        For $\varphi\in\Phi$, let $N'_\varphi$ denote $s(N_\varphi)$ and $\Delta'_\varphi$ denote the subset of $\Delta'$ defined as follows:
        \[
            \Delta'_\varphi = \{\tau\in\Delta'\mid \pi_\RR(\tau)\subset\supp(\Delta_\varphi)\}
        \]
        We can check that $\Delta'_\varphi$ is a sub fan of $\Delta'$. 
    \end{definition} 
    The following proposition shows that $Z'$ has a mock toric structure. 
    \begin{proposition}\label{prop:relative-induced-structure}
        We keep the notation in Definition \ref{def: mock induced for dominant}.  
        Then $(N',$ $\Delta',$ $\iota',$ $\Phi,$ $\{N'_\varphi\}_{\varphi\in\Phi},$ $\{\Delta'_\varphi\}_{\varphi\in\Phi})$ is a mock toric structure of $Z'$. 
        We call that $Z'$ is a mock toric variety induced by $Z$ along $\pi_*\colon X(\Delta')\rightarrow X(\Delta)$ and $s$. 
    \end{proposition}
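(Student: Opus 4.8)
The plan is to verify the five conditions of Definition \ref{def:mock-toric} for the tuple $(N',\Delta',\iota',\Phi,\{N'_\varphi\}_{\varphi\in\Phi},\{\Delta'_\varphi\}_{\varphi\in\Phi})$, transporting each one along $\pi$ from the corresponding property of $Z$ and making repeated use of the section, which splits $N'=s(N)\oplus\ker\pi$ with $\pi\circ s=\mathrm{id}_N$. Two preliminary facts will be used throughout. First: for every $\varphi\in\Phi$ and every $\tau\in\Delta'_\varphi$ there is a cone $\sigma\in\Delta_\varphi$ with $\pi_\RR(\tau)\subseteq\sigma$. Indeed, compatibility gives a minimal $\sigma\in\Delta$ with $\pi_\RR(\relint\tau)\subseteq\relint\sigma$, hence $\pi_\RR(\tau)\subseteq\sigma$; for $p\in\relint\tau$ the point $\pi_\RR(p)$ lies in $\relint\sigma$ and, as $\pi_\RR(\tau)\subseteq\supp(\Delta_\varphi)$, also in some $\gamma\in\Delta_\varphi$, so $\sigma\preceq\gamma$ and thus $\sigma\in\Delta_\varphi$; the same $\sigma$ satisfies $\pi_*(X(\tau))\subseteq X(\sigma)$ and $\pi_*(O_\tau)\subseteq O_\sigma$. (The same reasoning shows $\Delta'_\varphi$ is a subfan.) Second: $\pi$ induces a surjection $\bar\pi\colon N'/N'_\varphi\to N/N_\varphi$ (with section induced by $s$) fitting into a commutative square with $q_\varphi$, $q'_\varphi$, $\pi$, and $\pi$ restricts to an isomorphism $N'_\varphi\xrightarrow{\ \sim\ }N_\varphi$.

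Conditions (2), (3), (4) are then short. Condition (2): every $\tau\in\Delta'$ has $\pi_\RR(\tau)$ inside a cone of $\Delta=\bigcup_\varphi\Delta_\varphi$, hence lies in some $\Delta'_\varphi$. Condition (3): if $x,y\in\supp(\Delta'_\varphi)$ satisfy $(q'_\varphi)_\RR(x)=(q'_\varphi)_\RR(y)$, then $x-y\in s((N_\varphi)_\RR)$, so $\pi_\RR(x)=\pi_\RR(y)$ by condition (3) for $Z$ (both images are in $\supp(\Delta_\varphi)$), whence $x-y\in\ker\pi_\RR\cap s(N_\RR)=\{0\}$. Condition (4): for $\sigma'\in\Delta'$ with $\pi_\RR(\relint\sigma')\subseteq\relint\sigma$, the orbit map $O_{\sigma'}\to O_\sigma$ induced by $\pi_*$ comes from the lattice map $N'/(\langle\sigma'\rangle\cap N')\to N/(\langle\sigma\rangle\cap N)$, which is surjective because $\pi$ is; hence it is faithfully flat, and since $\pi_*|_{O_{\sigma'}}$ factors through it, $Z'\cap O_{\sigma'}=(Z\cap O_\sigma)\times_{O_\sigma}O_{\sigma'}$ is nonempty by condition (4) for $Z$.

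For condition (1), fix $\varphi$ and $\gamma'\in\Delta'_\varphi$, and choose $\gamma\in\Delta_\varphi$ with $\pi_\RR(\gamma')\subseteq\gamma$; I would show that $(\langle\gamma'\rangle\cap N')+N'_\varphi$ is saturated in $N'$. Given $mv'=p+s(d)$ with $m\geq1$, $p\in\langle\gamma'\rangle\cap N'$, $d\in N_\varphi$, apply $\pi$ to obtain $m\pi(v')=\pi(p)+d$ with $\pi(p)\in\langle\gamma\rangle\cap N$, so $m\pi(v')\in(\langle\gamma\rangle\cap N)+N_\varphi$; this sublattice is saturated by condition (1) for $Z$, so $\pi(v')=c+e$ with $c\in\langle\gamma\rangle\cap N$ and $e\in N_\varphi$. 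Set $w'=v'-s(e)$, so $\pi(w')=c$ and $mw'=p+s(d-me)$. Then $\pi(mw'-p)=mc-\pi(p)$ lies in $\langle\gamma\rangle\cap N$ and also equals $\pi(s(d-me))=d-me\in N_\varphi$, so $d-me\in\langle\gamma\rangle\cap N_\varphi=\{0\}$ by Proposition \ref{prop: first prop}(b); hence $mw'=p\in\langle\gamma'\rangle\cap N'$, so $w'\in\langle\gamma'\rangle\cap N'$ and $v'=w'+s(e)\in(\langle\gamma'\rangle\cap N')+N'_\varphi$. (The case $\gamma'=\{0\}$ recovers torsion-freeness of $N'/N'_\varphi$.)

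Condition (5) is where the real work lies, and is the step I expect to be the main obstacle. Since being an open immersion is local on the target, it suffices to treat each affine chart $X(\tau'')\subseteq X(\Delta'(\varphi))$, where $\Delta'(\varphi)=\{(q'_\varphi)_\RR(\tau)\mid\tau\in\Delta'_\varphi\}$. By condition (3), just proved, there is a unique $\tau\in\Delta'_\varphi$ with $(q'_\varphi)_\RR(\tau)=\tau''$; choose $\sigma\in\Delta_\varphi$ with $\pi_\RR(\tau)\subseteq\sigma$ and set $\sigma_\varphi=(q_\varphi)_\RR(\sigma)$. Since $\pi_*(X(\tau))\subseteq X(\sigma)$, we have $Z'\cap X(\tau)=(Z\cap X(\sigma))\times_{X(\sigma)}X(\tau)$, and condition (5) for $Z$ shows that the restriction $Z\cap X(\sigma)\hookrightarrow X(\sigma)\xrightarrow{(q_\varphi)_*}X(\sigma_\varphi)$ is an open immersion onto an open $V\subseteq X(\sigma_\varphi)$. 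Using Lemma \ref{lem:torus-fibration} (applicable by condition (1) and Proposition \ref{prop: first prop}(b)) one gets toric splittings $X(\tau)\cong X(\tau'')\times T_{N'_\varphi}$ and $X(\sigma)\cong X(\sigma_\varphi)\times T_{N_\varphi}$ compatible with $(q'_\varphi)_*$ and $(q_\varphi)_*$, and — since $\pi$ maps $N'_\varphi$ isomorphically onto $N_\varphi$ — one can arrange them so that $\pi_*\colon X(\tau)\to X(\sigma)$ becomes $\bar\pi_*\times\psi$ with $\psi\colon T_{N'_\varphi}\xrightarrow{\ \sim\ }T_{N_\varphi}$ an isomorphism. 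Then $Z\cap X(\sigma)\hookrightarrow X(\sigma)$ is the graph of a morphism $V\to T_{N_\varphi}$, so in the fiber product the $T_{N'_\varphi}$-coordinate is determined by the $X(\tau'')$-coordinate; hence $Z'\cap X(\tau)\cong\bar\pi_*^{-1}(V)$, an open subscheme of $X(\tau'')$, and the morphism it carries is precisely $\iota'_\varphi$ over this chart. The delicate point — the one I expect to cost the most — is making the two splittings from Lemma \ref{lem:torus-fibration} \emph{simultaneously} compatible with $\pi_*$ and tracking the graph through the fiber product; this is a longer variant of the four-step argument in the proof of Proposition \ref{prop:orbit-structure}(5). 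An alternative worth keeping in reserve is to prove directly that the square with vertices $X(\Delta'_\varphi)$, $X(\Delta'(\varphi))$, $X(\Delta_\varphi)$, $X(\Delta(\varphi))$ is Cartesian, and then deduce condition (5) formally from condition (5) for $Z$ and $Z'=Z\times_{X(\Delta)}X(\Delta')$.
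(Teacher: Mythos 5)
Your verification of conditions (1)--(4) is correct and coincides with the paper's own arguments (apply $\pi$, use the corresponding condition for $Z$ together with Proposition \ref{prop: first prop}(b) and the splitting $N'=s(N)\oplus\ker\pi$); you are in fact a bit more careful than the paper in justifying the existence of a cone $\sigma\in\Delta_\varphi$ containing $\pi_\RR(\tau)$ for $\tau\in\Delta'_\varphi$. The only real divergence is in how you finish condition (5). The paper works chart by chart exactly as you do, but instead of your splitting-plus-graph computation it invokes Lemma \ref{lem:relative-torus-fibration}(b): the square with vertices $X(\tau)$, $X(\tau_\varphi)$, $X(\sigma)$, $X(\sigma_\varphi)$ is Cartesian, so composing with the Cartesian square defining $Z'$ and using that open immersions are stable under base change gives $\iota'_\varphi$ an open immersion in one line --- this is precisely your ``reserve'' alternative, carried out on affine charts rather than on the whole fans (which sidesteps having to check a global Cartesian square). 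Your primary route is also sound: the ``delicate point'' you flag --- choosing the two splittings of Lemma \ref{lem:torus-fibration} simultaneously compatible with $\pi_*$ --- is exactly what the paper's Lemma \ref{lem:relative-section} accomplishes (and can be done directly by taking the complement $A'=\pi^{-1}(A)$ of $s(N_\varphi)$ once $N=A\oplus N_\varphi$ with $\langle\sigma\rangle\cap N\subset A$ is fixed, since $\pi(\langle\tau\rangle\cap N')\subset\langle\sigma\rangle\cap N\subset A$), after which your graph argument identifies $Z'\cap X(\tau)$ with the open subscheme $(\pi_\varphi)_*^{-1}(V)$ of $X(\tau'')$. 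So the two approaches rest on the same structural input (the compatible sections coming from $s$); the paper's packaging via the Cartesian-square lemma gives a purely formal conclusion, while yours buys an explicit description of the image of $\iota'_\varphi$ at the cost of redoing the content of Lemmas \ref{lem:relative-section} and \ref{lem:relative-torus-fibration} by hand.
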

    \begin{proof}
        We check that this tuple satisfies the conditions in Definition \ref{def:mock-toric} from (1) to (5) in order. 
        \begin{enumerate}
            \item[(1)]  For any $\varphi\in\Phi$ and $\tau\in\Delta'_\varphi$, we show that $N'/((\langle\tau'\rangle\cap N')+s(N_\varphi))$ is a torsion-free module.  
            Let $x'\in N'$ be an element and $n\in\ZZ_{>0}$ be a positive integer. 
            We assume that $nx'\in (\langle\tau\rangle\cap N')+s(N_\varphi)$. 
            Then there exists $y'\in \langle\tau\rangle\cap N'$ and $z\in N_\varphi$ such that $nx' = y' + s(z)$. 
            This shows that $n\pi(x') = \pi(y') + z$.
            Because $\pi(y')\in\langle\sigma\rangle\cap N$ and $z\in N_\varphi$, there exists $y\in\langle\sigma\rangle\cap N$ and $w\in N_\varphi$ such that $\pi(x') = y + w$ from the condition (1) in Definition \ref{def:mock-toric}. 
            Moreover, from Proposition \ref{prop: first prop}(b), $z = n\cdot w$. 
            This shows that $nx' = y' + ns(w)$ and $x' - s(w)\in\langle\tau\rangle\cap N'$. 
            Thus, $x' = (x' - s(w)) + s(w)\in (\langle\tau\rangle\cap N')+s(N_\varphi)$, and the statement follows. 
            \item[(2)] We show the following equation holds:
            \[
                \Delta' = \bigcup_{\varphi\in\Phi}\Delta'_\varphi.
            \]
            For any $\tau\in\Delta'$, there exists $\sigma\in\Delta$ such that $\pi_\RR(\tau)\subset\sigma$. 
            Thus, from the condition (2) in Definition \ref{def:mock-toric}, there exists $\varphi\in\Phi$ such that $\sigma\in\Delta_\varphi$. 
            Therefore, from the definition of $\Delta'_\varphi$, we have $\tau\in\Delta'_\varphi$. 
            \item[(3)]  For $\varphi\in \Phi$, let $q'_\varphi$ denote the quotient morphism $N'\rightarrow N'/s(N_{\varphi})$. 
            We show that the map $(q'_\varphi)_\RR|_{\supp(\Delta'_\varphi)}$ is injective. 
            Let $x'$ and $y'\in\supp(\Delta'_\varphi)$ be elements, and assume that $(q'_\varphi)_\RR(x') = (q'_\varphi)_\RR(y')$. 
            We show that $x' = y'$. 
            From the assumption, there exists $z\in (N_\varphi)_\RR$ such that $x' - y' = s_\RR(z)$. 
            In particular, $\pi_\RR(x') - \pi_\RR(y') = z$. 
            From the definition of $\Delta'_\varphi$, $\pi_\RR(x')$ and $\pi_\RR(y')\in\supp(\Delta_\varphi)$, so that $\pi_\RR(x') = \pi_\RR(y')$ from the equation $(q_\varphi)_\RR(\pi_\RR(x')) = (q_\varphi)_\RR(\pi_\RR(y'))$ and the condition (3) in Definition \ref{def:mock-toric}. 
            Thus, it follows that $z = 0$ and $x' = y'$. 
            \item[(4)] For any $\tau\in\Delta'$, we show that $Z'\cap O_\tau\neq\emptyset$. 
            There exists $\sigma\in\Delta$ such that $\sigma$ is the smallest cone in $\Delta$ which contains $\pi_\RR(\tau)$. 
            In particular, $\pi_*(O_\tau)\subset O_\sigma$. 
            Because $\pi$ is surjective, $\pi_*(O_\tau) = O_\sigma$. 
            From the condition (4) in Definition \ref{def:mock-toric}, $Z\cap O_\sigma\neq\emptyset$. 
            Thus, from the definition of $Z'$, $Z'\cap O_\tau\neq\emptyset$ too. 
            \item[(5)] Let $\Delta'(\varphi)$ denote the following strongly convex rational polyhedral fan in $(N'/s(N_\varphi))\otimes\RR$: 
            \[
                \Delta'(\varphi) = \{(q'_\varphi)_\RR(\tau)\mid \tau\in\Delta'_{\varphi}\}
            \]
            Then we show that the following composition $\iota'_\varphi$ is an open immersion: 
            \begin{equation*}
                \begin{tikzcd} 
                    Z'\cap X(\Delta'_\varphi)\ar[r, "\iota'"]&X(\Delta'_\varphi)\ar[r, "(q'_\varphi)_*"]&X(\Delta'(\varphi))
                \end{tikzcd}
            \end{equation*}
            where the former is a closed immersion restricted to $Z'\cap X(\Delta'_\varphi)$. 
            The codomain of $\iota'_\varphi$ is a toric variety, so it is enough to show that for any open affine sub toric variety $V\subset X(\Delta'(\varphi))$, the restriction $\iota'_\varphi|_{(\iota'_\varphi)^{-1}(V)}$ is an open immersion. 
            In particular, from the definition of $\Delta'(\varphi)$, it is enough to show that the following morphism is an open immersion for any $\tau\in\Delta'_\varphi$: 
            \begin{equation*}
                \begin{tikzcd} 
                    Z'\cap X(\tau)\ar[r, "\iota'"]& X(\tau)\ar[r, "(q'_\varphi)_*"]&X(\tau_\varphi)
                \end{tikzcd}
            \end{equation*}
            where $\tau_\varphi$ denotes $(q'_\varphi)_\RR(\tau)$. 
            Let $\sigma\in\Delta_\varphi$ be a cone such as $\pi_\RR(\tau)\subset\sigma$ and $\sigma_\varphi$ denote $(q_\varphi)_\RR(\sigma)$. 
            There exists a unique lattice morphism $\pi_\varphi\colon N'/s(N_\varphi)\rightarrow N/N_\varphi$ such that $\pi_\varphi\circ q'_\varphi = q_\varphi \circ \pi$. 
            Moreover, from Lemma \ref{lem:relative-torus-fibration}(a) and (b), the right square of the following diagram is commutative and a Cartesian square: 
            \begin{equation*}
                \begin{tikzcd} 
                    Z'\cap X(\tau)\ar[r,"\iota'"]\ar[d]& X(\tau)\ar[r, "(q'_\varphi)_*"]\ar[d, "\pi_*"]& X(\tau_\varphi)\ar[d, "(\pi_\varphi)_*"]\\
                    Z\cap X(\sigma)\ar[r, "\iota"]& X(\sigma)\ar[r, "(q_\varphi)_*"]& X(\sigma_\varphi)
                \end{tikzcd}
            \end{equation*}
            where $(\pi_\varphi)_*$ is a toric morphism induced by $\pi_\varphi$. 
            On the other hand, the left square of the above diagram is a Cartesian square from the definition of $Z'$, so the largest square of the above diagram is a Cartesian square. 
            From the condition (5) in Definition \ref{def:mock-toric}, the lower morphism from $Z\cap X(\sigma)$ to $X(\sigma_\varphi)$ in the above diagram is an open immersion, so that $\iota'_\varphi$ is an open immersion too. 
        \end{enumerate}
    \end{proof}
    \begin{corollary}
    \label{cor: dominant morphism mock version}
        We keep the notation in Definition \ref{def: mock induced for dominant}.  
        Let $\mu$ denote the restriction  $(\pi_*)|_{Z'}\colon Z'\rightarrow Z$. 
        Then, $\mu$ is a dominant morphism. 
    \end{corollary}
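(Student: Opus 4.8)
The plan is to reduce the assertion to the dense tori and to the standard fact that a surjection of lattices induces a faithfully flat homomorphism of the associated tori. First observe that $Z$ is integral by Proposition~\ref{prop:basic-property1}(a), so $\mu$ will be dominant as soon as its image contains a nonempty open subscheme of $Z$; I will show the image contains $Z^\circ = Z\cap T_N$. This subset is nonempty by condition~(4) of Definition~\ref{def:mock-toric} applied to the cone $\{0\}\in\Delta$ (for which $O_{\{0\}} = T_N$), and it is dense in $Z$ because $Z$ is irreducible. By Proposition~\ref{prop:relative-induced-structure} the scheme $Z'$ also carries a mock toric structure, hence is integral by Proposition~\ref{prop:basic-property1}(a) as well; in particular $Z'\cap T_{N'}$ is a dense open subscheme of $Z'$.

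Next I would describe $\mu$ over $Z^\circ$. The toric morphism $\pi_*$ restricts on the dense tori to the group homomorphism $T_{N'}\to T_N$ induced by $\pi$, so the composite $T_{N'}\hookrightarrow X(\Delta')\xrightarrow{\pi_*} X(\Delta)$ factors through the open immersion $T_N\hookrightarrow X(\Delta)$. Pulling back the Cartesian square defining $Z'$ along $T_{N'}\hookrightarrow X(\Delta')$ and using this factorization yields a canonical identification
\begin{equation*}
    Z'\cap T_{N'}\;\cong\;Z^\circ\times_{T_N}T_{N'},
\end{equation*}
under which $\mu$ restricts to the first projection; in particular $\mu$ carries $Z'\cap T_{N'}$ into $Z^\circ$.

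Finally I would invoke faithful flatness. The section $s\colon N\hookrightarrow N'$ of $\pi$ from Definition~\ref{def: mock induced for dominant} identifies $N'$ with $N\oplus\ker(\pi)$, hence exhibits $k[M']$ as a free module over $k[M]$, so the homomorphism $T_{N'}\to T_N$ is faithfully flat. Faithful flatness is stable under base change, so the first projection $Z^\circ\times_{T_N}T_{N'}\to Z^\circ$ — that is, the restriction of $\mu$ to $Z'\cap T_{N'}$ — is faithfully flat, in particular surjective. Hence $Z^\circ$ lies in the image of $\mu$, and since $Z^\circ$ is dense in $Z$ this shows $\mu$ is dominant. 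The argument is essentially formal; the only step requiring a little care is the identification of $Z'\cap T_{N'}$ with the base change $Z^\circ\times_{T_N}T_{N'}$, which rests on the elementary observation that the toric morphism $\pi_*$ sends the open orbit $T_{N'}$ into the open orbit $T_N$.
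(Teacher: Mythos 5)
Your proposal is correct and follows essentially the same route as the paper: the paper also restricts to the Cartesian square identifying $Z'^\circ$ with $Z^\circ\times_{T_N}T_{N'}$ and uses that $\pi_*\colon T_{N'}\to T_N$ is a (trivial) algebraic torus fibration, hence the restriction of $\mu$ to $Z'^\circ$ is surjective onto the dense subset $Z^\circ$. Your extra details (nonemptiness of $Z^\circ$ via condition (4), faithful flatness via the splitting $N'\cong N\oplus\ker\pi$) just make explicit what the paper leaves implicit.
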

    \begin{proof}
        From the definition of $Z'$, there exists the following Cartesian diagram: 
        \begin{equation*}
            \begin{tikzcd} 
                Z'^\circ\ar[r, hook,"\iota'"]\ar[d]& T_{N'}\ar[d, "\pi_*"]\\
                Z^\circ\ar[r, hook, "\iota"]& T_N
            \end{tikzcd}
        \end{equation*}
        Because $\pi_*\colon T_{N'} \rightarrow T_N$ is an algebraic torus fibration, the restriction $\mu|_{Z'^\circ}$ is so. 
        In particular, $\mu$ is dominant. 
    \end{proof}
    \subsection{Valuations of mock toric varieties}
    Let $Z$ be a scheme over $k$ and $(N$, $\Delta$, $\iota$, $\Phi$, $\{N_\varphi\}_{\varphi\in\Phi}$, $\{\Delta_\varphi\}_{\varphi\in\Phi})$ be a mock toric structure of $Z$. 
    We can regard a lattice point in $\supp(\Delta)$ as a torus invariant discrete valuation on $X(\Delta)$. 
    In this subsection, we try to induce discrete valuations on $Z$ from $\supp(\Delta)\cap N$. 
    In general, a valuation ring does not induce a valuation ring of a closed subscheme. 
    However, $Z$ is covered by open subschemes of toric varieties, and we can induce valuations on $Z$ from valuations on $X(\Delta)$. 

    Let $W$ be a variety over $k$ and $\Val_k(W)$ be a set of all discrete valuations on $W$, which is trivial on $k$. 
    We define a map $\val_Z\colon \supp(\Delta)\cap N\rightarrow \Val_k(Z)$ as follows: 
    For any $w\in \supp(\Delta)\cap N$, from the condition (4) in Definition \ref{def:mock-toric}, there exists $\varphi\in\Phi$ such that $w\in \supp(\Delta_\varphi)$.  
    Then $q_\varphi(w)$ can be regarded as a discrete valuation on $X(\Delta(\varphi))$, which is trivial on $k$. 
    The morphism $\iota_\varphi\colon Z\cap X(\Delta_\varphi)\rightarrow X(\Delta(\varphi))$ is an open immersion, so we define $\val_Z(w)$ as $(\iota_\varphi^{-1})_{\natural}(q_\varphi(w))$.  
    \begin{proposition}\label{prop:property-of-valuations}
    For the above construction of $\val_Z$, the following statements follow: 
        \begin{itemize}
            \item[(a)] The definition of $\val_Z$ is well-defined, i.e., the definition is independent of the choice of $\varphi\in\Phi$. 
            \item[(b)] For any $v\in\supp(\Delta)\cap N$, there unique exists $\sigma\in\Delta$ such that $v\in \sigma^\circ$. 
            Then the valuation ring associated with $\val_Z(v)$ dominates the local ring of the generic point of $Z_\sigma$. 
            \item[(c)] The map $\val_Z$ is injective.  
        \end{itemize}
    \end{proposition}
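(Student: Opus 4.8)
The plan is to deduce (c) from (b), (b) from the construction together with (a), and to spend the real effort on (a). For (a), fix $w\in\supp(\Delta)\cap N$ and let $\sigma\in\Delta$ be the unique cone with $w\in\sigma^\circ$; any $\varphi$ with $w\in\supp(\Delta_\varphi)$ automatically has $\sigma\in\Delta_\varphi$ (take the face of a cone of $\Delta_\varphi$ meeting $w$ in its relative interior; use that $\Delta_\varphi$ is a subfan). If $w=0$ then $q_\varphi(w)=0$ and $\val_Z(w)$ is the trivial valuation for every $\varphi$, so assume $w\neq 0$. I would first \emph{reduce to the case where $\sigma$ is a ray} by a star subdivision. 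Let $\Delta'$ be the star subdivision of $\Delta$ at $\RR_{\geq 0}w$ (same support as $\Delta$), let $Z'=Z\times_{X(\Delta)}X(\Delta')$ with the mock toric structure induced by $\pi=\id_N$ and $s=\id_N$ (Proposition \ref{prop:relative-induced-structure}), and let $\mu\colon Z'\to Z$ be the induced morphism, which is birational by Corollary \ref{cor: dominant morphism mock version} since $Z'^\circ\cong Z^\circ$. For $\varphi$ with $w\in\supp(\Delta_\varphi)$ one has $N'_\varphi=N_\varphi$, $q'_\varphi=q_\varphi$, and $\Delta'_\varphi=\{\tau\in\Delta'\mid\tau\subseteq\supp(\Delta_\varphi)\}$, and $\Delta'(\varphi)$ refines $\Delta(\varphi)$ via the injection $(q_\varphi)_\RR|_{\supp(\Delta_\varphi)}$; let $r_\varphi\colon X(\Delta'(\varphi))\to X(\Delta(\varphi))$ be the induced proper birational toric morphism. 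I would check that $\iota_\varphi\circ\mu=r_\varphi\circ\iota'_\varphi$ on $Z'\cap X(\Delta'_\varphi)$ (both composites into $X(\Delta(\varphi))$ are induced by $q_\varphi$, using $p\circ\iota'=\iota\circ\mu$). Passing to function fields — $\mu$ and $r_\varphi$ are birational — this gives $\iota_\varphi^*={\iota'_\varphi}^*$ under $K(Z)=K(Z')$, so the valuation $(\iota_\varphi^{-1})_\natural(q_\varphi(w))$ computed on $Z$ agrees with the one computed on $Z'$. Hence it suffices to prove that $\val_{Z'}(w)$ is independent of $\varphi$.

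\textbf{The ray case.} In $Z'$ the cone containing $w$ in its relative interior is $\rho=\RR_{\geq 0}w\in\Delta'$. By Proposition \ref{prop:basic-property1}(d) the stratum $Z'_\rho$ has codimension $1$ in the normal integral scheme $Z'$ (Proposition \ref{prop:basic-property1}(a)), so $R:=\OO_{Z',\eta}$, with $\eta$ the generic point of $Z'_\rho$, is a DVR with fraction field $K(Z')$. For any admissible $\varphi$, $q_\varphi(w)$ lies in the relative interior of the ray $(q_\varphi)_\RR(\rho)$, so the center of the torus-invariant valuation $q_\varphi(w)$ on $X(\Delta'(\varphi))$ is the generic point of the corresponding codimension-one orbit closure; as $\iota'_\varphi$ is an open immersion identifying $Z'_\rho$ with a dense open of that orbit (proof of Proposition \ref{prop:basic-property1}(c)), the valuation ring of $\val_{Z'}(w)$ dominates $R$. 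A valuation ring of $K(Z')$ that dominates a DVR coincides with it, so $\val_{Z'}(w)=d_\varphi\cdot\ord_R$ where $d_\varphi\geq 1$ is the index of $q_\varphi(w)$ in $N/N_\varphi$. Since $\langle(q_\varphi)_\RR(\rho)\rangle\cap(N/N_\varphi)$ is saturated, $d_\varphi$ equals the index of $q_\varphi(w)$ inside that rank-one sublattice, which by Proposition \ref{prop: first prop}(d) equals the index of $w$ in $\langle\rho\rangle\cap N$ — a number not involving $\varphi$. Therefore $\val_{Z'}(w)$, hence $\val_Z(w)$, is well-defined; this is (a).

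\textbf{Parts (b) and (c).} For (b): the uniqueness of $\sigma$ with $v\in\sigma^\circ$ is the usual partition of $\supp(\Delta)$ into relative interiors of cones. Choosing any $\varphi$ with $v\in\supp(\Delta_\varphi)$ (so $\sigma\in\Delta_\varphi$) and computing $\val_Z(v)$ through it, $q_\varphi(v)$ lies in the relative interior of $\sigma_\varphi:=(q_\varphi)_\RR(\sigma)$, so the center of $q_\varphi(v)$ on $X(\Delta(\varphi))$ is the generic point of $\overline{O_{\sigma_\varphi}}$; since $\iota_\varphi$ identifies $Z_\sigma$ with a dense open of $O_{\sigma_\varphi}$ (proof of Proposition \ref{prop:basic-property1}(c)), it identifies $\OO_{Z,\eta_{Z_\sigma}}$ with $\OO_{X(\Delta(\varphi)),\eta_{\overline{O_{\sigma_\varphi}}}}$, and the valuation ring of $\val_Z(v)$ dominates the latter. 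For (c): $\val_Z$ sends $0$ to the trivial valuation and each nonzero lattice point of $\supp(\Delta)$ to a nontrivial one — if $v\neq 0$ and $v\in\sigma^\circ$ then $v\in\langle\sigma\rangle\cap N$, and $q_\varphi(v)=0$ would force $v\in\langle\sigma\rangle\cap N_\varphi=\{0\}$ by Proposition \ref{prop: first prop}(b) — so assume $v,w\neq 0$ with $\val_Z(v)=\val_Z(w)$. By (b) the centers on the separated scheme $Z$ are $\eta_{Z_{\sigma(v)}}$ and $\eta_{Z_{\sigma(w)}}$, where $\sigma(v),\sigma(w)$ are the cones containing $v,w$ in their relative interiors; uniqueness of the center gives $\overline{Z_{\sigma(v)}}=\overline{Z_{\sigma(w)}}$, so $Z_{\sigma(w)}\cap\overline{Z_{\sigma(v)}}\neq\emptyset$ and symmetrically, whence $\sigma(v)\subseteq\sigma(w)$ and $\sigma(w)\subseteq\sigma(v)$ by Corollary \ref{cor:stratification-structure2}, i.e. $\sigma(v)=\sigma(w)=:\sigma$. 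Picking a single $\varphi$ with $\sigma\in\Delta_\varphi$ and using (a), $\val_Z(v)=\val_Z(w)$ becomes $q_\varphi(v)=q_\varphi(w)$ as torus-invariant valuations on $X(\Delta(\varphi))$, hence $q_\varphi(v)=q_\varphi(w)$ in $N/N_\varphi$; then $v-w\in\langle\sigma\rangle\cap N_\varphi=\{0\}$ by Proposition \ref{prop: first prop}(b), so $v=w$.

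\textbf{Main obstacle.} The crux is part (a). Two points require care: showing that $\val_Z(w)$ is unchanged by the star subdivision $Z'\to Z$ — that is, the compatibility of the open immersions $\iota_\varphi$ and $\iota'_\varphi$ with the toric refinement $r_\varphi$ — and then matching the normalizations $d_\varphi$ across different $\varphi$ via Proposition \ref{prop: first prop}(d). Once (a) is in hand, (b) and (c) are short.
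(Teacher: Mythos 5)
Your proof is correct and follows essentially the paper's own route: the paper likewise settles the ray case by identifying $\val_Z(v)$ with a multiple of the divisorial valuation along the codimension-one stratum $\overline{Z_\gamma}$ and pinning down the scale by a $\varphi$-independent lattice computation (there via a section of $q_\varphi$ and the value group $v(M)=q_\varphi(v)(M_\varphi)$, in your case via Proposition \ref{prop: first prop}(d)), and it reduces a general lattice point to the ray case through the mock toric variety induced by a refinement of $\Delta$, which is exactly your star-subdivision step, with the compatibility $\iota_\varphi\circ\mu=r_\varphi\circ\iota'_\varphi$ packaged in the paper as Lemma \ref{lem:val-val}. Your treatment of (b) and (c) also matches the paper's argument (center of the valuation at the generic point of $Z_\sigma$, then injectivity from condition (3)/Proposition \ref{prop: first prop}(b)).
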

    \begin{proof}
        We prove these statements from (a) to (c) in order. 
        \begin{enumerate}
            \item[(a)] We divide the proof of (a) into three steps. 
            \begin{enumerate}
                \item[Step.1] Let $\Ray(\Delta)$ denote a subset of $\supp(\Delta)\cap N$ as follows:
                \[
                    \Ray(\Delta) = \{w\in \supp(\Delta)\cap N\mid \RR_{\geq0}\cdot w \in\Delta\}
                \]
                In this step, we show that for any $v\in\Ray(\Delta)$, the definition of $\val_Z(v)$ is well-defined. 
                Let $v\in\Ray(\Delta)$ be an element. 
                If $v = 0$, for any $\varphi$, we have $v = 0\in\supp(\Delta_\varphi)$. 
                However, $(\iota_\varphi^{-1})_{\natural}(q_\varphi(v))$ is a trivial valuation on $Z$, so we may assume that $v\neq 0$. 
                Let $\gamma$ be a 1-dimensional cone generated by $v$, so $\gamma\in\Delta$ from the definition of $\Ray(\Delta)$. 
                Then from Proposition \ref{prop:basic-property1}(d), $\dim(Z_\gamma) = \dim(Z) - 1$. 
                Besides that, from Proposition \ref{prop:stratification-structure}(c), $\overline{Z_\sigma}$ is a prime Weil divisor of a normal variety $Z$. 
                For any $\varphi\in\Phi$ such that $\gamma\in\Delta_\varphi$, let $\gamma_\varphi$ denote $(q_\varphi)_\RR(\gamma)$. 
                Then $q_\varphi(v)$ is a discrete valuation of the local ring of the generic point of $O_{\gamma_\varphi}$. 
                Thus, because of the proof of Proposition \ref{prop:basic-property1}(c), $(\iota_\varphi^{-1})_{\natural}(q_\varphi(v))$ is a discrete valuation of the local ring of the generic point of $\overline{Z_\gamma}$.  
                Therefore, it is enough to show that for any $\varphi\in\Phi$ such that $\gamma\in\Delta_\varphi$, the following subgroup of $\ZZ$ is independent of the choice of $\varphi\in\Phi$:
                \[
                    \{q_\varphi(v)(g)\mid g\in K(X(\Delta(\varphi)))^*\}
                \]
                Because $q_\varphi(v)$ is a torus invariant valuation on $X(\Delta(\varphi))$, the above set is equal to $q_\varphi(v)(M_\varphi)$, where $M_\varphi$ denotes the dual lattice of $N/N_\varphi$. 
                Let $(q_\varphi)^*$ denote a morphism $M_\varphi\rightarrow M$ induced by $q_\varphi$. 
                Then for any $\omega'\in M_\varphi$, the following equation holds:
                \begin{align*}
                    \langle q_\varphi(v), \omega'\rangle &= \langle v, (q_\varphi)^*(\omega')\rangle
                \end{align*}
                This indicates that $q_\varphi(v)(M_\varphi)\subset v(M)$. 
                On the other hand, from Lemma \ref{lem:take-section}, there exists a section $s$ of $q_\varphi\colon N\rightarrow N/N_\varphi$ such that an image of $s$ contains $v$. 
                In particular, $s\circ q_\varphi(v) = v$. 
                Thus, for any $\omega\in M$, the following equation holds: 
                \begin{align*}
                    \langle v, \omega\rangle &= \langle s\circ q_\varphi(v), \omega\rangle\\
                    &= \langle q_\varphi(v), s^*(\omega) \rangle
                \end{align*}
                This shows that $v(M)\subset q_\varphi(v)(M_\varphi)$. 
                Hence, $v(M) = q_\varphi(v)(M_\varphi)$, so that the definition of $\val_Z(v)$ is independent of the choice of $\varphi\in\Phi$. 
                \item[Step.2] Let $\Delta'$ be a refinement of $\Delta$, $\lambda$ denote an identity map of $N$, $Z'$ denote a mock toric variety induced by $Z$ along $\lambda_*\colon X(\Delta')\rightarrow X(\Delta)$ and $\lambda^{-1}$, $\iota'$ denote the closed immersion $Z'\rightarrow X(\Delta')$, and $\mu$ denote a morphism $Z'\rightarrow Z$. 
                In this step, we show that for any $w \in\Ray(\Delta')$, the definition of $\val_Z(w)$ is well-defined. 

                Let $0\neq w\in\Ray(\Delta')$ be an element, $\gamma'$ denote a 1-dimensional cone generated by $w$, and $\varphi\in\Phi$ be an element such that $w = \lambda(w)\in\supp(\Delta_\varphi)$. 
                Then $\gamma'\in\Delta'$ and $\gamma'\in\Delta'_\varphi$. 
                From the proof of Proposition \ref{prop:relative-induced-structure}, the following diagram is commutative:
                \begin{equation*}
                    \begin{tikzcd} 
                        {Z'}^\circ\ar[r,"\iota'"]\ar[d, "\mu"]& T_N \ar[r, "(q_\varphi)_*"]\ar[d, "\lambda_*"]& T_{N/N_\varphi}\ar[d, "(\lambda_\varphi)_*"]\\
                        Z^\circ\ar[r, "\iota"]& T_N\ar[r, "(q_\varphi)_*"]& T_{N/N_\varphi}
                    \end{tikzcd}
                \end{equation*}
                where $(\lambda_\varphi)_*$ is a toric morphism induced by an identitiy map $\lambda_\varphi\colon N/N_\varphi \rightarrow N/N_\varphi$ and $\mu$ denote $\lambda_*|_{Z'^\circ}$. 
                Because the toric morphism $\lambda_*$ is an identity morphism of $T_N$, $\mu$ is an identity morphism, and the induced map $\mu_\natural\colon \Val_{k}(Z')\rightarrow \Val_{k}(Z)$ is an identity map too. 
                From the above diagram, Lemma \ref{lem:val-val}, and the equation $w = \lambda(w)$, we have $(\iota^{-1}_\varphi)_\natural (q_\varphi(v)) = \mu_\natural(\val_{Z'}(w))$. 
                Thus, the definition of $\val_Z(w)$ is independent of the choice of $\varphi\in\Phi$ such that $w\in\supp(\Delta_\varphi)$. 
                \item[Step.3] In this step, for general $w\in\supp(\Delta)\cap N$, the definition of $\val_Z(w)$ is well-defined. 
                Let $w\in\supp(\Delta)\cap N$ be an element. 
                Then there exists a refinement $\Delta'$ of $\Delta$ such that $w\in\Ray(\Delta')$. 
                Thus, from the argument in Step.2, the definition of $\val_Z(w)$ is well-defined. 
            \end{enumerate}
            \item[(b)] From the proof of Proposition \ref{prop:basic-property1}(c) and the definition of $\val_Z$, we can check the statement easily. 
            \item[(c)] Let $v$ and $v'\in\supp(\Delta)\cap N$ be elements such that $\val_Z(v) = \val_Z(v')$. 
            Then, from (b), there exists $\sigma\in\Delta$ such that $\val_Z(v)$ is a valuation ring that dominates a local ring of generic point of $Z_\sigma$. 
            In particular, $v$ and $v'\in\sigma^\circ$. 
            Let $\varphi\in\Phi$ be an element such that $\sigma\in\Delta_\varphi$. 
            Then from the definition of $\val_Z$, $(\iota^{-1}_\varphi)_\natural (q_\varphi(v)) = (\iota^{-1}_\varphi)_\natural (q_\varphi(v'))$, in particular, $q_\varphi(v) = q_\varphi(v')$. 
            Therefore, from the condition (3) in Definition \ref{def:mock-toric}, we have $v = v'$. 
        \end{enumerate}
    \end{proof}
    For any $\sigma\in\Delta$, $Z(\sigma)$ is an affine scheme from Proposition \ref{prop:stratification-structure}(b). 
    The normal affine schme $Z(\sigma)$ contains an affine scheme $Z^\circ$ as an open subscheme. 
    The following proposition shows that the global section ring of $Z(\sigma)$ is characterized by the valuations $\{\val_Z(v)\mid v\in\sigma\cap N\}$ like as affine toric varieties. 
    \begin{proposition}\label{prop:characterization-of-affine mock toric}
        Let $\sigma\in\Delta$ be a cone. 
        We can regard $\Gamma(Z(\sigma), \OO_Z)$ as a subring of $\Gamma(Z^\circ, \OO_Z) = k[Z^\circ]$. 
        Then the following equation holds:
        \[
                \Gamma(Z(\sigma), \OO_Z) = \{f\in k[Z^\circ]\mid \val_Z(w)(f) \geq 0. \quad (\forall w\in \sigma\cap N)\}
        \]  
    \end{proposition}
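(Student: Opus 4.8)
The plan is to exploit that $Z(\sigma)$ is a normal integral affine scheme of finite type over $k$ (Proposition \ref{prop:stratification-structure}(b)) which contains $Z^\circ$ as a dense open subscheme: indeed $\{0\}\subset\sigma$, so $Z^\circ=Z(\{0\})\subseteq Z(\sigma)$ by Proposition \ref{prop:stratification-structure}(a), and $Z^\circ\neq\emptyset$ by condition (4) of Definition \ref{def:mock-toric}. By algebraic Hartogs, a rational function $f\in K(Z)$ lies in $\Gamma(Z(\sigma),\OO_Z)$ if and only if $v_x(f)\ge 0$ for every codimension-one point $x$ of $Z(\sigma)$, where $v_x$ denotes the valuation of the discrete valuation ring $\OO_{Z(\sigma),x}$. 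I will check the two inclusions against this criterion.

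For the inclusion ``$\subseteq$'', take $f\in\Gamma(Z(\sigma),\OO_Z)$; restricting to $Z^\circ$ gives $f\in k[Z^\circ]$. Given $0\neq w\in\sigma\cap N$, let $\tau$ be the cone of $\Delta$ with $w$ in its relative interior (unique by Proposition \ref{prop:property-of-valuations}(b)); since $w\in\sigma$ and $\sigma$ is the disjoint union of the relative interiors of its faces, $\tau\preceq\sigma$, whence $Z_\tau\subseteq Z(\tau)\subseteq Z(\sigma)$ by Proposition \ref{prop:stratification-structure}(a). Thus the generic point $\eta_\tau$ of $Z_\tau$ lies in $Z(\sigma)$ and $\OO_{Z,\eta_\tau}=\OO_{Z(\sigma),\eta_\tau}$, so from the fact that $\val_Z(w)$ dominates $\OO_{Z,\eta_\tau}$ (Proposition \ref{prop:property-of-valuations}(b)) we get $f\in\OO_{Z,\eta_\tau}\subseteq R$, with $R$ the valuation ring of $\val_Z(w)$; hence $\val_Z(w)(f)\ge 0$. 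The case $w=0$ is trivial, as $\val_Z(0)$ is the trivial valuation.

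For the inclusion ``$\supseteq$'', let $f\in k[Z^\circ]$ satisfy $\val_Z(w)(f)\ge 0$ for all $w\in\sigma\cap N$, and let $x$ be a codimension-one point of $Z(\sigma)$; by the criterion above I must show $v_x(f)\ge 0$. If $x\in Z^\circ$ then $\OO_{Z(\sigma),x}=\OO_{Z^\circ,x}\supseteq\Gamma(Z^\circ,\OO_Z)=k[Z^\circ]\ni f$, so there is nothing to prove. Otherwise, since $T_N\subseteq X(\sigma)$ and $Z^\circ=Z(\sigma)\cap T_N$, we have $x\in Z(\sigma)\setminus Z^\circ=Z(\sigma)\cap(X(\sigma)\setminus T_N)$, and $X(\sigma)\setminus T_N$ is set-theoretically the union of the torus-invariant prime divisors $\overline{O_\gamma}$ over the rays $\gamma$ of $\sigma$; by Proposition \ref{prop: closure of stratum}, $Z\cap\overline{O_\gamma}=\overline{Z_\gamma}$, which by Proposition \ref{prop:basic-property1}(d) has codimension $\dim\langle\gamma\rangle=1$ in $Z$, forcing $x$ to be the generic point $\eta_\gamma$ of $\overline{Z_\gamma}$ for some ray $\gamma\preceq\sigma$. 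Taking $v_\gamma\in\sigma\cap N$ to be the primitive generator of $\gamma$ (so $v_\gamma$ lies in the relative interior of $\gamma$), Proposition \ref{prop:property-of-valuations}(b) shows that the valuation ring of $\val_Z(v_\gamma)$ dominates the discrete valuation ring $\OO_{Z,\eta_\gamma}$, hence coincides with it. Therefore $v_{\eta_\gamma}(f)\ge 0$ is equivalent to $\val_Z(v_\gamma)(f)\ge 0$, which holds by hypothesis.

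I expect the ``$\supseteq$'' direction to be the main obstacle, and within it the identification of the boundary codimension-one points of $Z(\sigma)$: one must know the toric description of $X(\sigma)\setminus T_N$ as the union of the invariant prime divisors, transport it to $Z$ via Proposition \ref{prop: closure of stratum}, invoke the dimension count of Proposition \ref{prop:basic-property1}(d) to rule out extra codimension-one boundary components, and use the (routine) fact that a valuation ring dominating a discrete valuation ring coincides with it. Alternatively, the equality $R_{\val_Z(v_\gamma)}=\OO_{Z,\eta_\gamma}$ is already essentially established in Step 1 of the proof of Proposition \ref{prop:property-of-valuations}(a), which could be cited in place of the last remark.
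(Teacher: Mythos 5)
Your proof is correct and follows essentially the same route as the paper: the inclusion $\subseteq$ via Proposition \ref{prop:property-of-valuations}(b) and Proposition \ref{prop:stratification-structure}(a), and the inclusion $\supseteq$ via normality of $Z(\sigma)$ (algebraic Hartogs) together with the identification of the codimension-one boundary points as the generic points of $\overline{Z_\gamma}$ for rays $\gamma\preceq\sigma$, using Proposition \ref{prop: closure of stratum} and Proposition \ref{prop:basic-property1}(d). Your explicit remarks on the divisors lying in $Z^\circ$ and on a valuation ring dominating a DVR coinciding with it only spell out steps the paper leaves implicit.
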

    \begin{proof}
        First, we show that the right-hand side of the above equation contains $\Gamma(Z(\sigma), \OO_Z)$. 
        For any $v\in\sigma\cap N$, there exists a face $\tau\preceq\sigma$ such that $v\in\tau^\circ$. 
        From Proposition \ref{prop:property-of-valuations}(b), The valuation ring of $\val_Z(v)$ dominates the local ring of the generic point of $Z_\tau$. 
        Moreover, from Proposition \ref{prop:stratification-structure}(a), $Z_\tau\subset Z(\sigma)$. 
        Thus, for any $f\in\Gamma(Z(\sigma), \OO_Z)$, we have $\val_Z(v)(f)\geq 0$. 

        Next, we show that $\Gamma(Z(\sigma), \OO_Z)$ contains the right-hand side of the above equation. 
        Let $f\in k[Z^\circ]$ be a polynomial such that $f$ is contained in the right-hand side of the above equation and let $W$ denote $Z(\sigma)\setminus Z^\circ$. 
        For showing $f\in\Gamma(Z(\sigma), \OO_Z)$, it is enough to show that $f\in R$ for all local rings $R$ of the generic points of prime Weil divisors of $Z$ in $W$, because $Z(\sigma)$ is a normal affine scheme of finite type over $k$. 
        From the argument of the toric varieties and Proposition \ref{prop:basic-property1}(d) and Proposition \ref{prop:stratification-structure}(a), the set of all prime Weil divisors in $W$ is the following set:
        \[
            \{\overline{Z_\gamma}\mid \gamma\preceq\sigma, \dim(\gamma) = 1\}
        \]
        Thus, from Proposition \ref{prop:property-of-valuations}(b), for each prime Weil divisor $D$ of $Z$ in $W$, there exists $v_D\in\sigma\cap N$ such that $\val_Z(v_D)$ is a valuation of the local ring $R_D$ of the generic point of $D$. 
        Therefore, from the assumption of $f$, we have $f\in R_D$. 
    \end{proof}
	Let $\omega\in M$ be an element. 
	The following lemma shows that we can compute $\val_Z(v)(\iota^*(\chi^\omega))$ briefly for any $v\in \supp(\Delta)\cap N$. 
    \begin{lemma}\label{lem: supplement-of-valuation1}
        Let $v\in\supp(\Delta)\cap N$ and $\omega\in M$ be elements. 
        Then $\val_Z(v)(\iota^*(\chi^\omega)) = \langle v, \omega\rangle$. 
    \end{lemma}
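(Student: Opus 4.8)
The plan is to prove the identity by splitting $\omega$ along a linear section of $q_\varphi$ adapted to $v$, so that one piece is handled by a purely toric computation transported through the open immersion $\iota_\varphi$, and the other piece is invisible to $\val_Z(v)$. First I would fix the unique cone $\sigma\in\Delta$ with $v\in\sigma^\circ$ (Proposition \ref{prop:property-of-valuations}(b)) and, using condition (2), choose $\varphi\in\Phi$ with $\sigma\in\Delta_\varphi$; then $v\in\supp(\Delta_\varphi)$, so by construction $\val_Z(v) = (\iota_\varphi^{-1})_\natural(q_\varphi(v))$. By condition (1), Proposition \ref{prop: first prop}(b) and Lemma \ref{lem:take-section} there is a section $\theta\colon N/N_\varphi\hookrightarrow N$ of $q_\varphi$ with $\langle\sigma\rangle\cap N\subseteq\theta(N/N_\varphi)$; this gives a splitting $M = A\oplus B$ with $A = (q_\varphi)^*(M_\varphi) = N_\varphi^\perp\cap M$ and $B = \theta(N/N_\varphi)^\perp\cap M$. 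Since both $\omega\mapsto\val_Z(v)(\iota^*(\chi^\omega))$ and $\omega\mapsto\langle v,\omega\rangle$ are group homomorphisms $M\to\ZZ$, it suffices to verify the claim separately for $\omega\in A$ and for $\omega\in B$.

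For $\omega\in A$, write $\omega = (q_\varphi)^*(\omega')$ with $\omega'\in M_\varphi$. Because $\iota_\varphi$ restricts the composite $(q_\varphi)_*\circ\iota$ and is an open immersion, it induces $\iota_\varphi^*\colon k(M_\varphi)\xrightarrow{\ \sim\ }K(Z)$ with $\iota_\varphi^*(\chi^{\omega'}) = \iota^*(\chi^\omega)$; hence $\val_Z(v)(\iota^*(\chi^\omega)) = q_\varphi(v)(\chi^{\omega'}) = \langle q_\varphi(v),\omega'\rangle = \langle v,\omega\rangle$. This is the "toric" case, essentially the computation already built into the definition of $\val_Z$.

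For $\omega\in B$ I would show $\iota^*(\chi^\omega)$ is a unit along the stratum $Z_\sigma$. Since $\langle\sigma\rangle\subseteq\theta((N/N_\varphi)_\RR)$ and $\omega$ vanishes on $\theta(N/N_\varphi)$, we get $\omega\in\sigma^\perp\cap M$; thus $\chi^\omega\in k[\sigma^\vee\cap M]$ maps under $p^\sigma$ to a monomial of $k[M^\sigma]$, i.e. to an invertible element of $\Gamma(O_\sigma,\OO)$. Therefore $\iota^*(\chi^\omega)$ restricts to a unit on $Z_\sigma$, so it lies in $\OO_{Z,\eta}^{*}$ for $\eta$ the generic point of $Z_\sigma$. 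By Proposition \ref{prop:property-of-valuations}(b) the valuation ring of $\val_Z(v)$ dominates $\OO_{Z,\eta}$, so $\val_Z(v)(\iota^*(\chi^\omega)) = 0$, which equals $\langle v,\omega\rangle$ because $v\in\sigma$ and $\omega\in\sigma^\perp$. Adding the $A$- and $B$-contributions finishes the proof.

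The subtle point is the choice of $\theta$. The naive route — pick any $\varphi$ with $v\in\supp(\Delta_\varphi)$ and compute on $X(\Delta(\varphi))$ via $\iota_\varphi$ — only yields the formula for $\omega$ in the sublattice $N_\varphi^\perp\cap M$, which has rank $\dim Z < \operatorname{rank}M$ in general, so it does not pin down the homomorphism $\omega\mapsto\val_Z(v)(\iota^*(\chi^\omega))$. What makes the argument go through is that $\theta$ may be chosen compatibly with the cone $\sigma$ containing $v$ in its relative interior: then the "missing" directions $B$ are exactly the $\sigma^\perp$-directions, along which $\chi^\omega$ is invertible on $O_\sigma$ and hence on $Z_\sigma$, and the valuation $\val_Z(v)$, dominating the local ring of $Z_\sigma$, cannot detect them. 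I expect the only points needing a little care in the writeup are the function-field identity $\iota_\varphi^*(\chi^{\omega'}) = \iota^*(\chi^\omega)$ and the observation that a unit on the locally closed stratum $Z_\sigma$ is a unit in the local ring of its generic point inside $Z$.
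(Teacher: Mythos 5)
Your argument is correct and is essentially the paper's own proof: the splitting $M = q_\varphi^*(M_\varphi)\oplus\theta(N/N_\varphi)^\perp$ is exactly the paper's decomposition $\omega = q_\varphi^*(s^*\omega) + (\omega - q_\varphi^*(s^*\omega))$ with $\omega - q_\varphi^*(s^*\omega)\in\sigma^\perp$, the first piece handled by the definition of $\val_Z$ through $\iota_\varphi$ and the second by noting the corresponding monomial is a unit near the stratum so its valuation vanishes. The only cosmetic difference is that the paper sees the unit in $\Gamma(Z(\sigma),\OO_Z)^*$ directly, while you pass through $O_\sigma$, $Z_\sigma$ and domination of $\OO_{Z,\eta}$ via Proposition \ref{prop:property-of-valuations}(b); both are fine.
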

    \begin{proof}
        There exists $\varphi\in\Phi$ and $\sigma\in\Delta_\varphi$ such that $v\in \sigma$. 
        From Lemma \ref{lem:take-section}, there exists a section $s$ of $q_\varphi\colon N\rightarrow N/N_\varphi$ such that $\langle\sigma\rangle\cap N\subset s(N/N_\varphi)$. 
        Let $s^*$ denotes the morphism $M\rightarrow M_\varphi$ induced by $s$, and $\omega'\in M_\varphi$ denotes $s^*(\omega)$. 
        Then $\omega - q^*_\varphi(\omega')\in\ker(s^*)$ holds. 
        Let $v'\in \sigma\cap N$ be an element. 
        Then there exists $w'\in N/N_\varphi$ such that $s(w') = v'$. 
        Thus, there exists the following equation:
        \begin{align*}
            \langle v', \omega-q^*_\varphi(\omega')\rangle &= \langle s(w'), \omega-q^*_\varphi(\omega')\rangle\\
            &= \langle w', s^*(\omega-q^*_\varphi(\omega'))\rangle\\
            &= 0
        \end{align*}
        Hence, the equation above indicates that $\chi^{\omega-q^*_\varphi(\omega')}\in \Gamma(X(\sigma), \OO_{X(\Delta)})^*$. 
        In particular, $\iota^*(\chi^{\omega-q^*_\varphi(\omega')})\in \Gamma(Z(\sigma), \OO_Z)^*$. 
        Thus, the following equation holds: 
        \begin{align*}
            \val_Z(v)(\iota^*(\chi^\omega)) &= \val_Z(v)(\iota^*(\chi^{q^*_\varphi(\omega')}))\\
            &= \val_Z(v)(\iota^*_\varphi(\chi^{\omega'}))\\
            &=\langle q_\varphi(v), \omega')\rangle\\
            &=\langle v, q^*_\varphi(\omega')\rangle\\
            &= \langle v, \omega\rangle
        \end{align*}
    \end{proof}

    The base change of the field extension preserves the toric structure.  
    The following proposition shows that the mock toric structure is preserved by the base change of the field extension, too. 
    \begin{proposition}\label{prop:ext'n}
        Let $L/K$ be a field extension. 
        Let $Z$ be a scheme over $K$ and $(N$, $\Delta$, $\iota$, $\Phi$, $\{N_\varphi\}_{\varphi\in\Phi}$, $\{\Delta_\varphi\}_{\varphi\in\Phi})$ be a mock toric structure of $Z$. 
        Let $Z_L$ denote a scheme over $L$ defined as the following Cartesian diagram: 
        \begin{equation*}
            \begin{tikzcd} 
                Z_L\ar[r, hook, "\iota_L"]\ar[d, "\alpha"]& X_L(\Delta)\ar[d]\ar[r]& \Spec(L)\ar[d]\\
                Z\ar[r, hook, "\iota"]& X_K(\Delta)\ar[r]& \Spec({K})
            \end{tikzcd}
        \end{equation*}
        where $\alpha$ denote the morphism $Z_L\rightarrow Z$ and $\iota_L$ denote the closed immersion $Z_L\rightarrow X_L(\Delta)$. 
        
        Then the following statements hold: 
        \begin{enumerate}
            \item[(a)] The tuple $(N$, $\Delta$, $\iota_L$, $\Phi$, $\{N_\varphi\}_{\varphi\in\Phi}$, $\{\Delta_\varphi\}_{\varphi\in\Phi})$ is a mock toric structure of $Z_L$. 
            \item[(b)] A morphism $\alpha$ is a dominant morphism. 
            \item[(c)] The following diagram is commutative: 
            \begin{equation*}
                \begin{tikzcd} 
                    \supp(\Delta)\cap N\ar[dr,"\val_{Z}"]\ar[d, "\val_{Z_L}"]& \\
                    \Val_{L}(Z_L)\ar[r, "\alpha_\natural"]& \Val_{K}(Z)
                \end{tikzcd}
            \end{equation*}
        \end{enumerate}
    \end{proposition}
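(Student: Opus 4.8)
The plan is to handle the three parts in order, deriving (b) and (c) from (a).

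\textbf{Part (a).} For this I would run through conditions (1)--(5) of Definition \ref{def:mock-toric}. Conditions (1), (2), (3) — and the assertion of Proposition \ref{prop: first prop}(c) that $\Delta(\varphi)$ is a strongly convex rational polyhedral fan — are statements about the combinatorial data $(N,\Delta,\Phi,\{N_\varphi\}_{\varphi\in\Phi},\{\Delta_\varphi\}_{\varphi\in\Phi})$ alone, and this data is literally unchanged, so these carry over verbatim. For condition (4), the morphism $\Spec L\to\Spec K$ is faithfully flat, hence so are its base change $X_L(\Delta)\to X_K(\Delta)$ and, by the Cartesian square defining $Z_L$, also $\alpha\colon Z_L\to Z$; in particular all three are surjective. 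Since the torus orbit of $X_L(\Delta)$ attached to $\sigma$ is the base change of $O_\sigma\subset X_K(\Delta)$, a point of $Z\cap O_\sigma$ lifts to a point of $Z_L\cap O_\sigma$, so $Z_L\cap O_\sigma\ne\emptyset$. For condition (5), the key observation is that the Cartesian squares force $Z_L=Z\times_K\Spec L$, hence $Z_L\cap X_L(\Delta_\varphi)=(Z\cap X(\Delta_\varphi))\times_K\Spec L$, and the toric morphism $(q_\varphi)_*$ over $L$ is the base change of the one over $K$; therefore the composition $\iota_{L,\varphi}$ is exactly $\iota_\varphi\times_K\id_{\Spec L}$, and since $\iota_\varphi$ is an open immersion and open immersions are stable under base change, $\iota_{L,\varphi}$ is an open immersion.

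\textbf{Part (b).} Granting (a), Proposition \ref{prop:basic-property1}(a) shows $Z_L$ is an integral scheme, so it is a variety over $L$; and $\alpha$ is faithfully flat, hence surjective, hence dominant. In particular $\alpha^*\colon K(Z)\to K(Z_L)$ is defined, and it carries the constant field $K$ into $L$.

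\textbf{Part (c).} Here I would evaluate both valuations on monomials. Fix $v\in\supp(\Delta)\cap N$. Because $\iota_L$ is the base change of $\iota$ and $\alpha$ is compatible with it, for every $\omega\in M$ one has $\alpha^*(\iota^*(\chi^\omega))=\iota_L^*(\chi^\omega)$ inside $K(Z_L)$. Applying Lemma \ref{lem: supplement-of-valuation1} first to $Z_L$ and then to $Z$,
\[
\alpha_\natural\bigl(\val_{Z_L}(v)\bigr)\bigl(\iota^*(\chi^\omega)\bigr)=\val_{Z_L}(v)\bigl(\iota_L^*(\chi^\omega)\bigr)=\langle v,\omega\rangle=\val_Z(v)\bigl(\iota^*(\chi^\omega)\bigr).
\]
Thus the two discrete valuations $\alpha_\natural(\val_{Z_L}(v))$ and $\val_Z(v)$ on $K(Z)$ agree on every $\iota^*(\chi^\omega)$; both are trivial on $K$ (for the first this uses $\alpha^*(K)\subset L$ and triviality of $\val_{Z_L}(v)$ on $L$), and the elements $\iota^*(\chi^\omega)$, $\omega\in M$, generate the coordinate ring $\Gamma(Z^\circ,\OO_Z)$ of $Z^\circ$, which is a subring of $K(Z)$ with fraction field $K(Z)$. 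Since a valuation is determined by its restriction to any subring with that fraction field, the two valuations coincide, which is the commutativity of the diagram.

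\textbf{Where the work is.} None of this is deep; the only step that needs genuine care is condition (5) in part (a), where one must check that forming $Z_L\cap X_L(\Delta_\varphi)$, post-composing with $(q_\varphi)_*$, and passing to the image all commute with the base change $-\times_K\Spec L$, so that $\iota_{L,\varphi}$ really is the base change of $\iota_\varphi$. That is a routine bookkeeping of fibre products, but it is the part one must actually write out; after it, (b) is immediate and (c) reduces to Lemma \ref{lem: supplement-of-valuation1} plus the elementary fact that a valuation is pinned down by its values on a generating subalgebra.
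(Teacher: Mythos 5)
Parts (a) and (b) of your proposal are fine, and in fact more explicit than the paper, which disposes of them by noting that closed and open immersions are stable under base change and that $\Spec(L)\rightarrow\Spec(K)$ is surjective; your identification of $\iota_{L,\varphi}$ with $\iota_\varphi\times_K\mathrm{id}_{\Spec L}$ is exactly the point.

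Part (c), however, has a genuine gap at the last step. You establish that $\alpha_\natural(\val_{Z_L}(v))$ and $\val_Z(v)$ agree on every element $\iota^*(\chi^\omega)$, $\omega\in M$, and are trivial on $K$, and then conclude they agree on all of $k[Z^\circ]$ because these elements generate $k[Z^\circ]$ as a $K$-algebra. But a valuation is \emph{not} determined by its values on algebra generators together with triviality on the base field: $\val(a+b)$ is only bounded below by $\min(\val(a),\val(b))$, and cancellation can occur. The principle you invoke (``a valuation is determined by its restriction to a subring with the same fraction field'') is true, but you have not shown the two restrictions to $k[Z^\circ]$ agree --- only their values on the monomial images. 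A concrete counterexample to the inference you use: on $K(x,y)$, the function field of a two-dimensional torus, let $v_1$ be the order of vanishing along the exceptional divisor of the blow-up of the origin and $v_2$ the order along the exceptional divisor of a further blow-up at the point where the first exceptional divisor meets the strict transform of $\{y=x\}$. Both are discrete, trivial on $K$, and satisfy $v_i(x^ay^b)=a+b$ for all monomials, yet $v_1(y-x)=1\neq 2=v_2(y-x)$. So agreement on all $\iota^*(\chi^\omega)$ does not force equality of the two valuations.

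What is actually needed is agreement on \emph{all} Laurent polynomials with coefficients in $K$, not just monomials, and this is where the paper works: it compares the two valuations through the commutative diagram of base changes $Z_L\cap X_L(\Delta_\varphi)\rightarrow X_L(\Delta(\varphi))$ over $Z\cap X_K(\Delta_\varphi)\rightarrow X_K(\Delta(\varphi))$ and applies Lemma \ref{lem:val-val}, using the fact that the torus-invariant valuation $v_L$ on $T_{N/N_\varphi,L}$ restricts on $K(M_\varphi)\subset L(M_\varphi)$ to the torus-invariant valuation over $K$ (for $f=\sum a_\omega\chi^\omega$ with $a_\omega\in K$ the value is $\min\{\langle q_\varphi(v),\omega\rangle\mid a_\omega\neq 0\}$ over either field, since the set of nonvanishing coefficients is unchanged). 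This comparison over the torus, transported through the open immersions $\iota_\varphi$ and $\iota_{L,\varphi}$, is what identifies $\alpha_\natural(\val_{Z_L}(v))$ with $\val_Z(v)$; your monomial computation via Lemma \ref{lem: supplement-of-valuation1} is a consequence of it but does not imply it.
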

    \begin{proof}
        We prove these statements from (a) to (c) in order. 
        \begin{enumerate}
            \item[(a)] Because closed and open immersions are stable under a base change, we can check it easily.  
            \item[(b)] Because $\Spec(L)\rightarrow \Spec(K)$ is surjective, $\alpha$ is surjective too. 
            \item[(c)] There exists the following commutative diagram:
                \begin{equation*}
                    \begin{tikzcd} 
                        Z_L\cap X_L(\Delta_\varphi)\ar[r,"\iota_L"]\ar[d, "\alpha"]& X_L(\Delta_\varphi)\ar[r, "(q_\varphi)_{*, L}"]\ar[d, "\beta"]& X_L(\Delta(\varphi))\ar[d, "\beta_\varphi"]\\
                        Z\cap X_K(\Delta_\varphi)\ar[r,"\iota'"]& X_K(\Delta_\varphi)\ar[r, "(q_\varphi)_*"]& X_K(\Delta(\varphi))\\
                    \end{tikzcd}
                \end{equation*}
                where $\beta$ and $\beta_\varphi$ are morphisms induced by a base change along $\Spec(L)\rightarrow \Spec(K)$. 
                We remark that $\beta$ and $\beta_\varphi$ are dominant. 
                Let $v\in \supp(\Delta)\cap N$ be an element and $v_L$ denote a torus invariant valuation of $T_{N, L}$ which is trivial on $L$ associated with $v\in N$. 
                Then $\beta_\natural(v_L) = v$. 
                Thus, the statement follows from Lemma \ref{lem:val-val}. 
            \end{enumerate}
    \end{proof}
    \subsection{Extended valuation}
    We use the notation in the subsection 4.2. 
    In subsection 4.3, we define the valuation $\val_Z(v)$ for a mock toric variety $Z$ and any $v\in \supp(\Delta)\cap N$. 
    Let $Z'\rightarrow Z$ be a morphism of mock toric varieties where the mock toric structure of $Z'$ is induced by that of $Z$ along a dominant toric morphism $X(\Delta')\rightarrow X(\Delta)$. 
    Similarly, we can define the valuation $\val_{Z'}(v')$ for any $v'\in \supp(\Delta')\cap N'$. 
    
    In this subsection, we show that $Z'$ has more valuations induced by $Z$. 
    In the case of toric varieties, the above statement can be explained as follows:
    We assume that $N = 0$ and $\Delta' = \{0\}$. 
    Then we have $Z = \Spec(k)$ and $Z' = T_{N'}$. 
    Although $\supp(\Delta')$ is a point, we know there exist infinite torus invariant valuations on $T_{N'}$, which correspond to the lattice points in $N'$. 
    Thus, this subsection aims to define valuations associated with lattice points outside $\supp(\Delta')\cap N'$. 
    \begin{definition}\label{def: extend mock toric}
        We use the following notation:
            \begin{itemize}
                \item Let $Z$ be a scheme over $k$ and $(N$, $\Delta$, $\iota$, $\Phi$, $\{N_\varphi\}_{\varphi\in\Phi}$, $\{\Delta_\varphi\}_{\varphi\in\Phi})$ be a mock toric structure of $Z$. 
                \item Let $N'$ be a lattice of finite rank, $\pi\colon N'\rightarrow N$ be a surjective map, and $s$ be a section of $\pi$. 
                Let $M'$ denote the dual lattice of $N'$. 
                \item Let $N_1$ denote $N'/s(N)$ and $p$ denote the quotient map $N'\rightarrow N_1$. 
                We remark that $\pi\times p\colon N'\rightarrow N\oplus N_1$ is an isomorphism. 
                \item Let $M_1$ denote the dual lattice of $N_1$. 
                We remark that $\pi^*\oplus p^*\colon M\oplus M_1\rightarrow M'$ is an isomorphism.  
                \item Let $p_\varphi$ and $\pi_\varphi$ denote unique group morphisms such that the following diagrams commute: 
                \begin{equation*}
                    \begin{tikzcd}
                        N'\ar[r, "p"]\ar[d, "q'_\varphi"]& N_1\\
                        N'/s(N_\varphi)\ar[ur, "p_\varphi"]
                    \end{tikzcd}
                \end{equation*}
                \begin{equation*}
                    \begin{tikzcd}
                        N'\ar[r, "\pi"]\ar[d, "q'_\varphi"]& N\ar[d, "q_\varphi"]\\
                        N'/s(N_\varphi)\ar[r, "\pi_\varphi"]& N/N_\varphi
                    \end{tikzcd}
                \end{equation*}
                \item Let $\Delta_{Z, \pi}$ denote a strongly convex rational polyhedral cone $\{\{0_{N'_\RR}\}\}$ in $N'_\RR$.  
                \item Let $Z_\pi$ denote the mock toric variety induced by $Z$ along $\pi_*\colon X(\Delta_{Z, \pi})\rightarrow X(\Delta)$ and $s$.
                \item Let $\iota'$ denote the closed immersion $Z_\pi\rightarrow X(\Delta_{Z, \pi})$ and $\iota'_\varphi$ denote a composition $(q'_\varphi)_*\circ \iota'$.
                \item Let $\epsilon$ denote $Z_\pi\rightarrow Z$ induced by $\pi_*$. 
                We remark that $\epsilon$ is a dominant morphism from Corollary \ref{cor: dominant morphism mock version}. 
                \item Let $\epsilon^*$ denote a ring morphism $k[Z^\circ]\rightarrow k[Z_\pi]$ induced by $\epsilon$. 
                \item Let $p_0$ denote the composition $p_*\circ \iota'\colon Z_\pi\rightarrow T_{N_1}$. 
                Let $p^*_0$ denote a ring morphism $k[M_1]\rightarrow k[Z_\pi]$ induced by $p_0$. 
            \end{itemize}
    \end{definition}
    \begin{proposition}\label{prop:extended-valuation}
        For the notation in Definition \ref{def: extend mock toric}, We define a map $\val_{Z, \pi}\colon (\pi_\RR)^{-1}(\supp(\Delta))\cap N'\rightarrow \Val_k(Z_\pi)$ as follows: 
        For $w\in (\pi_\RR)^{-1}(\supp(\Delta))\cap N'$, from the condition (4) in Definition \ref{def:mock-toric}, there exists $\varphi\in\Phi$ such that $w\in (\pi_\RR)^{-1}(\supp(\Delta_\varphi))$.  
        Then $q'_\varphi(w)$ can be regarded as a discrete valuation on $X(\Delta_{Z, \pi}(\varphi)) = T_{N'/s(N_\varphi)}$ which is trivial on $k$. 
        The morphism $\iota'_\varphi$ is an open immersion, so we define an $\val_Z(w)$ as $({\iota'}_\varphi^{-1})_{\natural}(q'_\varphi(w))$. 

        Then the following statements hold: 
        \begin{enumerate}
            \item[(a)] The definition of $\val_{Z, \pi}$ is well-defined. 
            \item[(b)] Let $\Delta'$ denote a strongly convex rational polyhedral fan in $N'_\RR$. 
            We assume that $\pi$ is compatible with the fans $\Delta'$ and $\Delta$. 
            Let $Z'$ denote the mock toric variety induced by $Z$ along $\pi_*\colon X(\Delta')\rightarrow X(\Delta)$ and $s$. 
            Then $\val_{Z'} = \val_{Z, \pi}|_{\supp(\Delta')\cap N'}$. 
            \item[(c)] 
            For any $\chi'\in k[Z_\pi]^*$, there exists $\eta\in k[Z^\circ]^*$ and $\omega\in M_1$ such that $\chi' = \epsilon^*(\eta){p^*_0}(\chi^\omega)$. 
            \item[(d)] For any $v'\in (\pi_\RR)^{-1}(\supp(\Delta))\cap N'$, $f\in k[Z^\circ]$, and $g\in k[M_1]$, the following equations hold: 
            \begin{align*}
                \val_{Z, \pi}(v')(\epsilon^*(f)) &= \val_{Z}(\pi(v'))(f)\\
                \val_{Z, \pi}(v')(p^*_0(g)) &= p(v')(g)\\
            \end{align*}
        \end{enumerate}
    \end{proposition}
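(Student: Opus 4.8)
The plan is to make the product structure hidden inside $Z_\pi$ explicit, after which every part becomes essentially formal. Since $\pi\times p\colon N'\to N\oplus N_1$ is an isomorphism carrying $s(N)$ onto $N\oplus 0$, it identifies $T_{N'}$ with $T_N\times_k T_{N_1}$ and exhibits the toric morphism $\pi_*\colon X(\Delta_{Z,\pi})=T_{N'}\to X(\Delta)$ as the first projection $T_N\times_k T_{N_1}\to T_N$ followed by the open immersion $T_N\hookrightarrow X(\Delta)$. Arguing as in the proof of Corollary \ref{cor: dominant morphism mock version}, this identifies $Z_\pi=Z\times_{X(\Delta)}T_{N'}$ with $Z^\circ\times_k T_{N_1}$, the closed immersion $\iota'$ with $(\iota|_{Z^\circ})\times\id_{T_{N_1}}$, the morphism $\epsilon$ with the composite $Z^\circ\times_k T_{N_1}\to Z^\circ\hookrightarrow Z$, and $p_0$ with the second projection; hence $k[Z_\pi]\cong k[Z^\circ]\otimes_k k[M_1]$ with $\epsilon^*(f)=f\otimes 1$ and $p_0^*(\chi^\omega)=1\otimes\chi^\omega$. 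Part (c) is then immediate: $k[Z^\circ]$ is an integral domain (Proposition \ref{prop:basic-property1}(a), even a UFD by Proposition \ref{prop:stratification-structure}(d)), and the units of a Laurent polynomial ring $k[Z^\circ][M_1]$ over a domain are exactly the products $\eta\,\chi^\mu$ with $\eta\in k[Z^\circ]^*$ and $\mu\in M_1$ (immediate induction on $\rank M_1$); translating through $\epsilon^*$ and $p_0^*$ gives the claim.

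For (a) and (d), fix $v'\in(\pi_\RR)^{-1}(\supp(\Delta))\cap N'$ and any $\varphi\in\Phi$ with $v'\in(\pi_\RR)^{-1}(\supp(\Delta_\varphi))$; then $\pi(v')\in\supp(\Delta_\varphi)\cap N$, so the same $\varphi$ is admissible in the definition of $\val_Z(\pi(v'))$. Since $s(N_\varphi)$ corresponds to $N_\varphi\oplus 0$, the identification above descends to $N'/s(N_\varphi)\cong (N/N_\varphi)\oplus N_1$ with $q'_\varphi$ corresponding to $q_\varphi\times\id_{N_1}$; hence $X(\Delta_{Z,\pi}(\varphi))=T_{N'/s(N_\varphi)}$, the open immersion $\iota'_\varphi$ corresponds to $(\iota_\varphi|_{Z^\circ})\times\id_{T_{N_1}}\colon Z^\circ\times_k T_{N_1}\hookrightarrow T_{N/N_\varphi}\times_k T_{N_1}$, and the torus-invariant valuation $q'_\varphi(v')$ on $T_{N'/s(N_\varphi)}$ is the monomial valuation of weight $q_\varphi(\pi(v'))$ on the first factor and $p(v')$ on the second. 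Transporting $q'_\varphi(v')$ back along $(\iota'_\varphi)^*=(\iota_\varphi|_{Z^\circ})^*\otimes\id$, using the Gauss formula for monomial valuations together with the fact that by definition $\val_Z(\pi(v'))$ is the transport of the weight-$q_\varphi(\pi(v'))$ monomial valuation along $\iota_\varphi|_{Z^\circ}$, one finds that $\val_{Z,\pi}(v')$ is the valuation on $\Frac\bigl(k[Z^\circ]\otimes_k k[M_1]\bigr)$ sending $\sum_\mu f_\mu\otimes\chi^\mu$ to $\min_\mu\bigl(\val_Z(\pi(v'))(f_\mu)+\langle p(v'),\mu\rangle\bigr)$. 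This expression does not involve $\varphi$, which proves (a); evaluating it at $f\otimes 1$ and at $\sum_\mu c_\mu(1\otimes\chi^\mu)$ (using that $\val_Z(\pi(v'))$ is trivial on $k$) yields the two identities of (d).

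For (b), note that $Z_\pi$ and ${Z'}^\circ$ both equal $Z^\circ\times_{T_N}T_{N'}$ by the argument in the proof of Corollary \ref{cor: dominant morphism mock version}, so $K(Z')=K(Z_\pi)$; given $w\in\supp(\Delta')\cap N'$, any $\varphi$ with $w\in\supp(\Delta'_\varphi)$ also satisfies $\pi(w)\in\supp(\Delta_\varphi)$, hence is admissible for $\val_{Z,\pi}(w)$. Both $\val_{Z'}(w)$ and $\val_{Z,\pi}(w)$ are transports of the torus-invariant valuation $q'_\varphi(w)$ along open immersions into toric varieties sharing the dense torus $T_{N'/s(N_\varphi)}$ --- namely $Z'\cap X(\Delta'_\varphi)\hookrightarrow X(\Delta'(\varphi))$ and $Z_\pi\hookrightarrow T_{N'/s(N_\varphi)}$ --- which restrict to the same map on that torus, hence induce the same isomorphism of function fields; therefore $\val_{Z'}(w)=\val_{Z,\pi}(w)$. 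The only genuinely delicate point of the whole argument is the bookkeeping in the second paragraph: one must check carefully that, under $\pi\times p$, all data attached to $\varphi$ (the lattice $N'/s(N_\varphi)$, the quotient $q'_\varphi$, the closed immersion $\iota'$, and hence $\iota'_\varphi$) decompose as the asserted products, so that $\val_{Z,\pi}(v')$ is literally the Gauss-type extension of $\val_Z(\pi(v'))$ by the $M_1$-weight $p(v')$; once this is in place, (a), (b), (d) follow at once and (c) is elementary.
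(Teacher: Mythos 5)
Your proposal is correct, but it organizes the argument differently from the paper for parts (a), (b), (d). The paper proves (a) by choosing an auxiliary strongly convex fan $\Delta''$ in $N'_\RR$ with $\supp(\Delta'') = (\pi_\RR)^{-1}(\supp(\Delta))$ compatible with $\Delta$, forming the induced mock toric variety $W$, checking that any admissible $\varphi$ gives $w\in\supp(\Delta''_\varphi)$, and then invoking the already-established well-definedness of $\val_W$ (Proposition \ref{prop:property-of-valuations}(a)); part (b) is handled by the same reduction, and part (d) is obtained from the commutative diagram together with Lemma \ref{lem:val-val} for the first identity and a short monomial computation for the second. You instead make the splitting $Z_\pi\cong Z^\circ\times_k T_{N_1}$ and its $\varphi$-level refinement $N'/s(N_\varphi)\cong (N/N_\varphi)\oplus N_1$, $q'_\varphi\cong q_\varphi\times\id$, $\iota'_\varphi\cong(\iota_\varphi|_{Z^\circ})\times\id$ fully explicit, and deduce the closed Gauss-type formula $\val_{Z,\pi}(v')\bigl(\sum_\mu f_\mu\otimes\chi^\mu\bigr)=\min_\mu\bigl(\val_Z(\pi(v'))(f_\mu)+\langle p(v'),\mu\rangle\bigr)$, from which (a) and (d) drop out simultaneously; for (b) you observe that $\val_{Z'}(w)$ and $\val_{Z,\pi}(w)$ are transports of the same toric valuation along open immersions that agree on the common dense open part $Z'^\circ=Z_\pi$, so they coincide on $K(Z')=K(Z_\pi)$. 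Part (c) is essentially the paper's own argument. What your route buys is a self-contained, $\varphi$-transparent description of $\val_{Z,\pi}$ and no need to construct the auxiliary fan $\Delta''$ (whose existence the paper uses without comment); what the paper's route buys is economy, since it recycles Proposition \ref{prop:property-of-valuations}(a), Proposition \ref{prop:relative-induced-structure}, and Lemma \ref{lem:val-val} verbatim and the same reduction serves (a) and (b) at once. The only place where you lean on an unproved (though standard and easily verified) fact is the Gauss formula for a toric valuation on $\Frac(k[M_\varphi\oplus M_1])$ applied to elements with coefficients in $\Frac(k[M_\varphi])$; clearing denominators and noting that distinct $M_1$-degrees cannot cancel makes this rigorous, and you still need Proposition \ref{prop:property-of-valuations}(a) to know that the factor $\val_Z(\pi(v'))$ appearing in your formula is itself $\varphi$-independent, which you correctly rely on.
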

    \begin{proof}
        We prove the statements from (a) to (d) in order. 
        \begin{enumerate}
            \item[(a)] Let $\Delta''$ denote a strongly convex rational polyhedral fan in $N'_\RR$ such that $\supp(\Delta'') = (\pi_\RR)^{-1}(\supp(\Delta))$ and $\pi$ is compatible with the fans $\Delta''$ and $\Delta$. 
            Let $W$ denote a mock toric variety induced by $Z$ along $\pi_*\colon X(\Delta'')\rightarrow X(\Delta)$ and $s$. 
            Let $w\in\pi^{-1}_\RR(\supp(\Delta))\cap N'_+$ and $\varphi\in\Phi$ be an element such that $\pi(w)\in\supp(\Delta_\varphi)$. 
            We show that $w\in\supp(\Delta''_\varphi)$. 
            Indeed, there exists $\tau\in\Delta''$ such that $w\in\tau^\circ$ because $\supp(\Delta'') = (\pi_\RR)^{-1}(\supp(\Delta))$. 
            From the assumption of $\pi$, there exists $\sigma\in\Delta$ such that $\pi_\RR(\tau)\subset \sigma$. 
            On the other hand, there exists $\gamma\in\Delta_\varphi$ such that $\pi(w)\in\gamma$. 
            Thus, $\sigma\cap\gamma\in \Delta_\varphi$ and $\sigma\cap\gamma$ is a face of $\sigma$. 
            Because $w\in\tau^\circ$, $\pi_\RR(\tau)\subset \sigma\cap\gamma$. 
            Hence, $\tau\in\Delta''_\varphi$. 
            Thus, $({\iota'_\varphi}^{-1})_\natural(q'_\varphi(w)) = \val_W(w)$ from the definition of $\val_W$. 
            This equation shows that the definition of the $\val_{Z, \pi}(w)$ is independent of the choice of $\varphi\in\Phi$. 
            \item[(b)] As the proof of (a), we can check that the statement holds. 
            \item[(c)] There exists the following diagram whose all squares are Cartesian squares: 
            \begin{equation*}
                \begin{tikzcd}
                    Z_\pi\ar[r, "\iota'"]\ar[d, "\epsilon"]&T_{N'}\ar[r, "p_*"]\ar[d, "\pi_*"]&T_{N_1}\ar[d]\\
                    Z^\circ\ar[r, "\iota"]&T_N\ar[r]&\Spec(k)
                \end{tikzcd}
            \end{equation*}
            Thus, $\epsilon^*\otimes{p^*_0}\colon k[Z^\circ]\otimes_k k[M_1]\rightarrow k[Z_\pi]$ is isomorphic. 
            Because $k[Z]$ is an integral domain, a unit in $k[Z_\pi]$ can be decomposed into a product of that in $k[Z^\circ]$ and that in $k[M_1]$. 
            \item[(d)] For any $\varphi\in\Phi$, there exists the following commutative diagram: 
            \begin{equation*}
                \begin{tikzcd}
                    Z_\pi\ar[r, "\iota'"]\ar[d, "\epsilon"]&T_{N'}\ar[r, "(q'_\varphi)_*"]\ar[d, "\pi_*"]&T_{N'/s(N_\varphi)}\ar[r, "(p_\varphi)_*"]\ar[d, "(\pi_\varphi)_*"]&T_{N_1}\ar[d]\\
                    Z^\circ\ar[r, "\iota"]&T_N\ar[r, "(q_\varphi)_*"]&T_{N/N_\varphi}\ar[r]&\Spec(k)
                \end{tikzcd}
            \end{equation*} 
            Hence, from Lemma \ref{lem:val-val}, and the definition of $\val_{Z, \pi}$ and $\val_Z$, we have $\epsilon_\natural(\val_{Z, \pi}(v')) = \val_Z(\pi(v'))$. 
            Thus, $\val_{Z, \pi}(v')(\epsilon^*(f)) = \val_{Z}(\pi(v'))(f)$. 
            On the other hand, because $p^*(g) = (q'_\varphi)^*(p_\varphi)^*(g)$, the following equation holds:
            \begin{align*}
                \val_{Z, \pi}(v')(p^*_0(g))&= \val_{Z, \pi}(v')({\iota'_\varphi}^*({p_\varphi}^*(g)))\\
                &= ((q'_\varphi)(v'))({p_\varphi}^*(g))\\
                &= p(v')(g)
            \end{align*}
        \end{enumerate}
    \end{proof}
    As Lemma \ref{lem: supplement-of-valuation1}, the following corollary holds. 
    \begin{corollary}\label{cor: supplement-of-valuation2}
        We keep the notation of Proposition \ref{prop:extended-valuation}.  
        Let $w\in (\pi_\RR)^{-1}(\supp(\Delta))\cap N'$ be an element and $\omega'\in M'$ be an element. 
        Then $\val_{Z, \pi}(w)({\iota'}^*(\chi^{\omega'})) = \langle w, \omega'\rangle$ holds. 
    \end{corollary}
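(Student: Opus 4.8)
The plan is to reduce the claim to Lemma \ref{lem: supplement-of-valuation1} together with parts (c) and (d) of Proposition \ref{prop:extended-valuation}, by decomposing $\omega'$ along the splitting of the dual lattice recorded in Definition \ref{def: extend mock toric}. Recall that $\pi^*\oplus p^*\colon M\oplus M_1\rightarrow M'$ is an isomorphism; so I would first write $\omega' = \pi^*(\omega) + p^*(\mu)$ for the unique $\omega\in M$ and $\mu\in M_1$, whence $\chi^{\omega'} = \chi^{\pi^*(\omega)}\cdot\chi^{p^*(\mu)}$ in $k[M']$ and, after applying the ring map ${\iota'}^*$,
\[
    {\iota'}^*(\chi^{\omega'}) = {\iota'}^*(\chi^{\pi^*(\omega)})\cdot{\iota'}^*(\chi^{p^*(\mu)}).
\]

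Next I would identify the two factors using the diagram of Cartesian squares displayed in the proof of Proposition \ref{prop:extended-valuation}(c), in which $\pi_*\circ\iota' = \iota\circ\epsilon$ and $p_*\circ\iota' = p_0$. The first relation gives ${\iota'}^*(\chi^{\pi^*(\omega)}) = \epsilon^*(\iota^*(\chi^{\omega}))$ with $\iota^*(\chi^\omega)\in k[Z^\circ]^*$, and the second gives ${\iota'}^*(\chi^{p^*(\mu)}) = p_0^*(\chi^\mu)$ with $\chi^\mu\in k[M_1]$. Since $\val_{Z, \pi}(w)$ is a discrete valuation on $K(Z_\pi)$, hence additive on products of nonzero elements,
\[
    \val_{Z, \pi}(w)({\iota'}^*(\chi^{\omega'})) = \val_{Z, \pi}(w)(\epsilon^*(\iota^*(\chi^\omega))) + \val_{Z, \pi}(w)(p_0^*(\chi^\mu)).
\]

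Finally I would apply Proposition \ref{prop:extended-valuation}(d) to each summand: the first equals $\val_{Z}(\pi(w))(\iota^*(\chi^\omega))$, which by Lemma \ref{lem: supplement-of-valuation1} is $\langle\pi(w),\omega\rangle = \langle w,\pi^*(\omega)\rangle$; the second equals $p(w)(\chi^\mu) = \langle p(w),\mu\rangle = \langle w,p^*(\mu)\rangle$. Adding these and using $\omega' = \pi^*(\omega)+p^*(\mu)$ yields $\val_{Z, \pi}(w)({\iota'}^*(\chi^{\omega'})) = \langle w,\omega'\rangle$. The only point requiring attention — essentially the sole ``obstacle'' — is to confirm that $w$ lies in the domain of $\val_{Z, \pi}$ and that $\pi(w)\in\supp(\Delta)\cap N$, so that Lemma \ref{lem: supplement-of-valuation1} is applicable to $\pi(w)$; both are immediate from $w\in(\pi_\RR)^{-1}(\supp(\Delta))\cap N'$ and the surjectivity of $\pi$. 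Thus the argument is pure bookkeeping once the lattice splitting is in place.
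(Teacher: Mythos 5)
Your proposal is correct and follows essentially the same route as the paper's proof: decompose $\omega' = \pi^*(\omega) + p^*(\omega_1)$ via the splitting $M\oplus M_1\cong M'$, rewrite ${\iota'}^*(\chi^{\omega'}) = \epsilon^*(\iota^*(\chi^\omega))\,p_0^*(\chi^{\omega_1})$, and then apply Proposition \ref{prop:extended-valuation}(d) together with Lemma \ref{lem: supplement-of-valuation1} to each factor and add. No gaps.
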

    \begin{proof}
        There exists $\omega\in M$ and $\omega_1\in M_1$ such that $\pi^*(\omega) + p^*(\omega_1) = \omega'$. 
        In particular, $\chi^{\omega'} = \pi^*(\chi^\omega)p^*(\chi^{\omega_1})$ and ${\iota'}^*(\chi^{\omega'}) = \epsilon^*(\iota^*(\chi^\omega))p_0^{*}(\chi^{\omega_1})$ hold. 
        From Proposition \ref{prop:extended-valuation}(d) and Lemma \ref{lem: supplement-of-valuation1}, the following two equations hold for any $w\in (\pi_\RR)^{-1}(\supp(\Delta))\cap N'$: 
        \begin{align*}
            \val_{Z, \pi}(w)(\epsilon^*(\iota^*(\chi^\omega)))&= \val_Z(\pi(w))(\iota^*(\chi^\omega))\\
            &= \langle \pi(w), \omega\rangle\\
            &= \langle w, \pi^*(\omega)\rangle
        \end{align*}
        \begin{align*}
            \val_{Z, \pi}(w)(p_0^{*}(\chi^{\omega_1}))&= p(w)(\chi^{\omega_1})\\
            &= \langle p(w), \omega_1\rangle\\
            &= \langle w, p^*(\omega_1)\rangle
        \end{align*}
        Thus, $\val_{Z, \pi}(w)({\iota'}^*(\chi^{\omega'})) = \langle w, \omega'\rangle$ holds. 
    \end{proof}
    \subsection{Mock toric varieties over $\A^1_k$}
    In this article, we construct the scheme over a valuation ring. 
    To construct a scheme over a valuation ring, we first construct a scheme over $\A^1_k$ and subsequently carry out a base change. 
    From now on, we consider the mock toric varieties over $\A^1_k$ in this subsection. 
    The mock toric variety over $\A^1_k$ means the mock toric variety induced by a toric variety which has a dominant toric morphism to $\A^1_k$. 
    \begin{proposition}\label{prop:relative-mock}
        We use the following notation: 
        \begin{itemize} 
            \item Let $Z$ be a scheme over $k$ and $(N$, $\Delta$, $\iota$, $\Phi$, $\{N_\varphi\}_{\varphi\in\Phi}$, $\{\Delta_\varphi\}_{\varphi\in\Phi})$ be a mock toric structure of $Z$.
            \item Let $N'$ be a lattice of finite rank, $\pi$ be a surjective morphism $\pi\colon N'\rightarrow N$, and $\pi^1$ denote $\pi\times \mathrm{id}_\ZZ\colon N'\oplus \ZZ \rightarrow N\oplus \ZZ$. 
            \item Let $s$ be a section of $\pi$ and $s^1$ denote a section of $\pi^1$ such that $s^1 = s\times \id_\ZZ\colon N\oplus\ZZ\rightarrow N'\oplus\ZZ$. 
            \item For $\varphi\in\Phi$, let $q_\varphi$ denote the quotient map $N\rightarrow N/N_\varphi$ and $q'_\varphi$ denote the quotient map $N'\rightarrow N'/s(N_\varphi)$. 
            \item Let $\pr_1$ and $\pr_2$ denote the first and second projection of $N\oplus\ZZ$ or $N'\oplus\ZZ$. 
            \item Let $j$ denote a section of $\pr_1\colon N\oplus\ZZ\rightarrow N$ such that $\pr_2\circ j = 0$. 
            \item Let $j'$ denote a section of $\pr_1\colon N'\oplus\ZZ\rightarrow N'$ such that $\pr_2\circ j' = 0$. 
            \item Let $Z^1$ denote a mock toric variety induced by $Z$ along $(\pr_1)_*\colon X(\Delta\times \Delta_!)\rightarrow X(\Delta)$ and $j$. 
            \item Let $\Delta'$ be a strongly convex rational polyhedral fan in $(N'\oplus \ZZ)_\RR$. 
            We assume that $\pi^1$ is compatible with the fans $\Delta'$ and $\Delta\times \Delta_!$.
            \item Let $W$ denote a  mock toric variety induced by $Z^1$ along $(\pi^1)_*\colon X(\Delta')\rightarrow X(\Delta\times\Delta_!)$ and $s^1$. 
            \item Let $\iota''$ denote the closed immersion $W\rightarrow X(\Delta')$
            \item Let $\Delta'_0$ denote the following sub fan of $\Delta'$:
            \[
                \Delta'_0 = \{\sigma\in\Delta'\mid (\pr_2)_\RR(\sigma) = \{0\}\}
            \]
            \item Let $\Delta'_{(0)}$ denote the following set:
            \[
                \Delta'_{(0)} = \{(\pr_1)_\RR(\sigma)\mid \sigma\in\Delta'_0\}
            \]
            From Lemma \ref{lem: relative lemma}(a), $\Delta'_{(0)}$ is a strongly convex rational polyhedral fan. 
        \end{itemize}
        Then the following statements follow:
        \begin{enumerate}
            \item[(a)] The morphism $\pi$ is compatible with the fans $\Delta'_{(0)}$ and $\Delta$. 
            Let $Y$ denote a mock toric variety induced by $Z$ along $\pi_*\colon X(\Delta'_{(0)})\rightarrow X(\Delta)$ and $s$. 
            \item[(b)] The toric morphism $(\pr_1)_*\times (\pr_2)_*\colon X(\Delta'_0)\rightarrow X(\Delta'_{(0)})\times {\Gm^1}_{,k}$ is isomorphic from Lemma \ref{lem: relative lemma}(b). 
            This isomorphism induces an isomorphism $p\colon W\cap X(\Delta'_0)\rightarrow Y\times {\Gm^1}_{,k}$. 
            Let $\sigma\in\Delta_0$ be a cone and $\sigma_0$ denote $(\pr_1)_\RR(\sigma)$. 
            Then, this isomorphism induces the following three Cartesian diagrams: 
            \begin{equation*}
                \begin{tikzcd}
                    W(\sigma)\ar[r]\ar[d, hook]&Y(\sigma_0)\times {\Gm^1}_{,k}\ar[d, hook]\\
                    W\cap X(\Delta'_0)\ar[r, "p"]&Y\times{\Gm^1}_{,k}
                \end{tikzcd}
            \end{equation*}
            \begin{equation*}
                \begin{tikzcd}
                    \overline{W_\sigma}\cap X(\Delta'_0)\ar[r]\ar[d, hook]&\overline{Y_{\sigma_0}}\times {\Gm^1}_{,k}\ar[d, hook]\\
                    W\cap X(\Delta'_0)\ar[r, "p"]&Y\times{\Gm^1}_{,k}
                \end{tikzcd}
            \end{equation*}
            \begin{equation*}
                \begin{tikzcd}
                    W_\sigma \ar[r]\ar[d]&Y_{\sigma_0}\times {\Gm^1}_{,k}\ar[d]\\
                    W\cap X(\Delta'_0)\ar[r, "p"]&Y\times{\Gm^1}_{,k}
                \end{tikzcd}
            \end{equation*}
            where the upper morphisms in all diagrams are the restriction of $p$, and the vertical morphisms are open immersions, closed immersions, and locally closed immersions from top to bottom, respectively. 
            \item[(c)] Let $\Psi$ denote the composition of the following maps:
            \[
                X_{k(t)}(\Delta'_{(0)})\rightarrow X(\Delta'_{(0)})\times {\Gm^1}_{,k}\rightarrow X(\Delta'_0)\hookrightarrow X(\Delta')
            \]
            , where the first morphism is a base change of the generic fiber $\Spec(k(t))\rightarrow {\Gm^1}_{,k}$ along $(\pr_2)_*\colon X(\Delta'_{(0)})\times{\Gm^1}_{,k}\rightarrow {\Gm^1}_{,k}$, the second one is the inverse morphism of $p$, and the third one is an open immersion.
            Let $\psi$ denote a morphism $Y_{k(t)}\rightarrow W$ defined by the pullback of $\Psi$ along the closed immersion $W\rightarrow X(\Delta')$. 
            Then, $\psi$ is a dominant morphism. 
            Moreover, the large square of the following diagram is a Cartesian diagram: 
            \begin{equation*}
                \begin{tikzcd}
                    Y_{k(t)}\ar[r]\ar[d]&Y \times {\Gm^1}_{,k}\ar[r, "p^{-1}"]\ar[rd, "\pr_2"]&W\cap X(\Delta'_0)\ar[r, hook]\ar[d, "(\pr_2)_*\circ \iota''"]&W\ar[d, "(\pr_2)_*\circ\iota''"]\\
                    \Spec(k(t))\ar[rr]&\ &{\Gm^1}_{,k}\ar[r, hook]&\A^1_k
                \end{tikzcd}
            \end{equation*}
            \item[(d)] We keep the notation in (b) and (c). 
            The morphism $\psi$ induces the following three Cartesian diagrams:
            \begin{equation*}
                \begin{tikzcd}
                    Y_{k(t)}(\sigma_0)\ar[r]\ar[d, hook]&W(\sigma)\ar[d, hook]\\
                    Y_{k(t)}\ar[r, "\psi"]&W
                \end{tikzcd}
            \end{equation*}
            \begin{equation*}
                \begin{tikzcd}
                    \overline{Y_{\sigma_0, k(t)}}\ar[r]\ar[d, hook]&\overline{W_\sigma}\ar[d, hook]\\
                    Y_{k(t)}\ar[r, "\psi"]&W
                \end{tikzcd}
            \end{equation*}
            \begin{equation*}
                \begin{tikzcd}
                    Y_{\sigma_0, k(t)}\ar[r]\ar[d, hook]&W_\sigma\ar[d, hook]\\
                    Y_{k(t)}\ar[r, "\psi"]&W
                \end{tikzcd}
            \end{equation*}
            where the upper morphisms in all diagrams are the restriction of $\psi$, and the vertical morphisms are open immersions, closed immersions, and locally closed immersions from top to bottom, respectively. 
            \item[(e)] Let $\varphi\in \Phi$ be an element. 
            Then there exists a one-to-one correspondence of cones of $\Delta'_0\cap\Delta'_\varphi$ and those of $\Delta'_{(0), \varphi}$ by $(\pr_1)_\RR$. 
            In particular, $\pr_1\colon N'\oplus \ZZ\rightarrow N'$ is compatible with the fans $\Delta'_0\cap\Delta'_\varphi$ and  $\Delta'_{(0), \varphi}$.
            \item[(f)] We keep the notation in (e).  
            Let $\Delta'(\varphi)_0$ denote the following sub fan of $\Delta'(\varphi)$: 
            \[
                \Delta'(\varphi)_0 = \{\sigma_\varphi\in\Delta'(\varphi)\mid (\pr_2)_\RR(\sigma_\varphi) = \{0\}\}
            \]
            Then $\Delta'(\varphi)_0$ is equal to the following set:
            \[
                \{(q'_\varphi\times\mathrm{id}_\ZZ)_\RR(\sigma)\mid \sigma\in\Delta'_0\cap\Delta'_\varphi\}
            \]
            In particular, $q'_\varphi\times\mathrm{id}_\ZZ$ is compatible with the fans $\Delta'_0\cap\Delta'_\varphi$ and $\Delta'(\varphi)_0$. 
            \item[(g)] We keep the notation in (f).  
            Let $\Delta'(\varphi)_{(0)}$ denote the following set: 
            \[
                \{(\pr_1)_\RR(\sigma_\varphi)\mid \sigma_\varphi\in\Delta'(\varphi)_{0} \}
            \]
            Then $\Delta'(\varphi)_{(0)} = \Delta'_{(0)}(\varphi)$. 
            Moreover, $\Delta'(\varphi)_{(0)}$ is a strongly convex rational polyhedral fan in $(N'/s(N_\varphi))_\RR$. 
            Furthermore, $\pr_1\colon N'/s(N_\varphi)\oplus \ZZ\rightarrow N'/s(N_\varphi)$ is compatible with the fans $\Delta'(\varphi)_0$ and $\Delta'_{(0)}(\varphi)$. 
            \item[(h)] The toric morphism $(\pr_1)_*\times (\pr_2)_*\colon X(\Delta'(\varphi)_0)\rightarrow X(\Delta'_{(0)}(\varphi))\times {\Gm^1}_{,k}$ is isomorphic from (f), (g), and Lemma \ref{lem: relative lemma}(b). 
            Let $p_\varphi$ denote this isomorphism and $\Psi_\varphi$ denote the composition of the following maps:
            \[
                X_{k(t)}(\Delta'_{(0)}(\varphi))\rightarrow X(\Delta'_{(0)}(\varphi))\times {\Gm^1}_{,k}\rightarrow X(\Delta'(\varphi)_0)\hookrightarrow X(\Delta'(\varphi)) 
            \]
            , where the first morphism is a base change of the generic fiber $\Spec(k(t))\rightarrow {\Gm^1}_{,k}$ along $(\pr_2)_*\colon X(\Delta'_{(0)}(\varphi))\times{\Gm^1}_{,k}\rightarrow {\Gm^1}_{,k}$, the second one is the inverse morphism of $p_\varphi$, and the third one is an open immersion. 
            Then the following diagram is commutative:
            \begin{equation*}
                \begin{tikzcd}
                    X_{k(t)}(\Delta'_{(0), \varphi})\ar[r, "\Psi"]\ar[d, "(q'_\varphi)_{*, k(t)}"]& X(\Delta'_\varphi)\ar[d, "(q'_\varphi\times\mathrm{id}_\ZZ)_*"]\\
                    X_{k(t)}(\Delta'_{(0)}(\varphi))\ar[r, "\Psi_\varphi"]&X(\Delta'(\varphi))
                \end{tikzcd}
            \end{equation*}
            \item[(i)] 
            We assume $\Delta'$ is $\{\{0\}\}$. 
            Then we can check that $W$ is $Z^1_{\pi^1}$ and $Y$ is $Z_\pi$.  
            Let $\mu$ denote $\pr_1\circ p$ and $p_0$ denote $\pr_2\circ p$.  
            From (b), $\mu\times p_0\colon Z^1_{\pi^1}\rightarrow Z_{\pi}\times{\Gm^1}_{,k}$ induces the isomorphism $\mu^*\times p^*_0\colon k[Z_\pi]\otimes k[t, t^{-1}]\rightarrow k[Z^1_{\pi^1}]$. 
            Thus, for any $\chi'\in k[Z^1_{\pi^1}]^*$, there exists $\chi\in k[Z_\pi]^*$ and an integer $n$ such that $\chi' = \mu^*(\chi)p^*_0(t^n)$. 
            \item[(j)] We keep the notation in (i).  
            Let $f\in k[{Z}_{\pi}]$, $g\in k[t, t^{-1}]$, and $v'\in ({\pi^1_\RR})^{-1}(\supp(\Delta\times\Delta_!))\cap
            (N'\oplus\ZZ)$ be elements.  
            Then $\val_{Z^1, \pi^1}(v')(\mu^*(f) p^*_0(g)) = \val_{Z, \pi}(\pr_1(v'))(f) + \pr_2(v')(g)$. 
            \item[(k)] The following diagram is commutative:
            \begin{equation*}
                \begin{tikzcd}
                    \supp(\Delta'_{(0)})\cap N'\ar[r, "j'"]\ar[d, "\val_{Y_{k(t)}}"]& \supp(\Delta')\cap 
                    (N'\oplus\ZZ)\ar[d, "\val_{W}"]\\
                    \Val_{k(t)}(Y_{k(t)})\ar[r, "\psi_\natural"]&\Val_k(W)
                \end{tikzcd}
            \end{equation*}
        \end{enumerate}
    \end{proposition}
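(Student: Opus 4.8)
Parts (a)--(j) are formal consequences of the toric facts collected in Lemma~\ref{lem: relative lemma}, the constructions of Sections~3--4 (especially Propositions~\ref{prop:relative-induced-structure}, \ref{prop: closure of stratum} and \ref{prop:extended-valuation}), and the stability of (locally) closed immersions and Cartesian squares under base change; so I only sketch (k).

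Fix $v'\in\supp(\Delta'_{(0)})\cap N'$; since $\pr_2\circ j'=0$ we have $j'(v')=(v',0)$. There is $\rho\in\Delta'_0$ with $v'\in(\pr_1)_\RR(\rho)$, and as $\pr_2|_\rho\equiv 0$ and $(\pr_1)_\RR|_\rho$ is injective we get $(v',0)\in\rho\subset\supp(\Delta')$, so $\val_W(j'(v'))$ is defined. By (c), $\psi$ is the generic fibre of the dominant morphism $(\pr_2)_*\circ\iota''\colon W\to\A^1_k$, hence dominant with $\psi^*$ an isomorphism of function fields, and it suffices to identify $\val_W(j'(v'))$ with $\val_{Y_{k(t)}}(v')$ under $\psi^*$. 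By Proposition~\ref{prop:property-of-valuations}(b) there is a unique $\sigma\in\Delta'$ with $(v',0)\in\sigma^\circ$; every cone of $\Delta'$ maps into a cone of $\Delta\times\Delta_!$, so $\pr_2\ge 0$ on $\sigma$, and since $\pr_2(v',0)=0$ lies in the relative interior of $\sigma$ this forces $\pr_2|_\sigma\equiv 0$, i.e.\ $\sigma\in\Delta'_0$; put $\sigma_0:=(\pr_1)_\RR(\sigma)\in\Delta'_{(0)}$, so $v'\in\sigma_0^\circ$.

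Choose $\varphi\in\Phi$ with $\sigma\in\Delta'_\varphi$; then $\sigma\in\Delta'_0\cap\Delta'_\varphi$ and, by (e)--(g), $\sigma_0\in\Delta'_{(0),\varphi}$, so both valuations can be computed on the charts indexed by $\varphi$. Let $w_W$ be the torus-invariant valuation of $X(\Delta'(\varphi))$ attached to $(q'_\varphi\times\id_\ZZ)(v',0)=(q'_\varphi(v'),0)$ and $w_Y$ the torus-invariant valuation of $X_{k(t)}(\Delta'_{(0)}(\varphi))$ attached to $q'_\varphi(v')$, and let $\iota''_\varphi$, $\iota^Y_\varphi$ be the open immersions of condition~(5) of Definition~\ref{def:mock-toric} for $W$ and for $Y_{k(t)}$; by construction of $\val$ one has $(\iota''_\varphi)_\natural(\val_W(j'(v')))=w_W$ and $(\iota^Y_\varphi)_\natural(\val_{Y_{k(t)}}(v'))=w_Y$. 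By (c) and (e) the restriction of $\psi$ to these charts is the pullback of $\Psi\colon X_{k(t)}(\Delta'_{(0),\varphi})\to X(\Delta'_\varphi)$ along the closed immersion $W\cap X(\Delta'_\varphi)\hookrightarrow X(\Delta'_\varphi)$, and combining this with the commutative square of (h) gives $\iota''_\varphi\circ\psi=\Psi_\varphi\circ\iota^Y_\varphi$ on the chart $Y_{k(t)}\cap X_{k(t)}(\Delta'_{(0),\varphi})$. Applying $(-)_\natural$ (which is covariant, and a bijection for $\iota''_\varphi$ and $\iota^Y_\varphi$ since they are open immersions) to this identity, evaluating at $\val_{Y_{k(t)}}(v')$, and using that $(\iota''_\varphi)_\natural$ is a bijection, we reduce (k) to the purely toric identity $(\Psi_\varphi)_\natural(w_Y)=w_W$.

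By (h), $\Psi_\varphi$ is the composite
\[
X_{k(t)}(\Delta'_{(0)}(\varphi))\longrightarrow X(\Delta'_{(0)}(\varphi))\times{\Gm^1}_{,k}\xrightarrow{\ p_\varphi^{-1}\ }X(\Delta'(\varphi)_0)\hookrightarrow X(\Delta'(\varphi)),
\]
with first arrow the generic fibre of $(\pr_2)_*$ and $p_\varphi=(\pr_1)_*\times(\pr_2)_*$. Transporting $w_W$ through $p_\varphi$, its restriction to $X(\Delta'(\varphi)_0)$ becomes the torus-invariant valuation of $X(\Delta'_{(0)}(\varphi))\times{\Gm^1}_{,k}$ attached to $(q'_\varphi(v'),0)$, which on a monomial $\chi^\omega\otimes t^n$ takes the value $\langle q'_\varphi(v'),\omega\rangle$; in particular it vanishes on $t$, hence is trivial on $k(t)$. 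Pulling it back along the generic fibre $X_{k(t)}(\Delta'_{(0)}(\varphi))\to X(\Delta'_{(0)}(\varphi))\times{\Gm^1}_{,k}$, which is an isomorphism on function fields, yields a valuation on $K(X_{k(t)}(\Delta'_{(0)}(\varphi)))$ that is invariant under $T_{N'/s(N_\varphi),k(t)}$ and has value $\langle q'_\varphi(v'),\omega\rangle$ on every character $\chi^\omega$. Since a torus-invariant discrete valuation on a torus is the associated monomial valuation, and hence is determined by its values on characters, this pullback is exactly $w_Y$; therefore $(\Psi_\varphi)_\natural(w_Y)=w_W$ and (k) follows. The one genuinely delicate point is this last step -- the torus-invariant valuation attached to $(u,0)$ on $T\times{\Gm^1}_{,k}$ restricts on the generic fibre over ${\Gm^1}_{,k}$ to the torus-invariant valuation attached to $u$ over $k(t)$ -- together with the bookkeeping of function fields through the Cartesian squares of (b)--(h) and Lemma~\ref{lem:val-val}; granting these, everything else is unwinding definitions.
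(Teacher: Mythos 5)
Your sketch of (k) is correct and follows essentially the same route as the paper: pass to the $\varphi$-chart via condition (5) of Definition \ref{def:mock-toric}, use the commutative square of (h) together with Lemma \ref{lem:val-val}, and identify the valuation transported through the generic-fibre construction with a toric valuation; the paper phrases this last step as the remark that $\Psi_\natural(v')$ equals $(v',0)=j'(v')$ as a torus-invariant valuation on $T_{N'\oplus\ZZ}$, which is your computation carried out before applying $q'_\varphi$ rather than after. One refinement: instead of asserting that the transported valuation is $T_{N'/s(N_\varphi),k(t)}$-invariant and appealing to ``invariant implies monomial'', it is cleaner (and closer to what you actually verified) to check directly that $(\Psi_\varphi)_\natural(w_Y)$ and $w_W$ agree on every Laurent polynomial $f=\sum a_{(\omega,n)}\chi^{(\omega,n)}$: both equal $\min\{\langle q'_\varphi(v'),\omega\rangle \mid a_{(\omega,n)}\neq 0\}$, because $w_Y$ is trivial on $k(t)$ and the second coordinate pairs to zero against $(q'_\varphi(v'),0)$; this avoids having to justify any equivariance of $\Psi_\varphi$.

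The genuine gap is the one-sentence dismissal of (a)--(j). For (a)--(h) this is defensible: the paper's proofs are exactly the diagram chases you gesture at, assembled from Lemma \ref{lem: relative lemma} and Proposition \ref{prop: closure of stratum}. But (i) and (j) are not consequences of base-change stability, and merely citing Proposition \ref{prop:extended-valuation} does not produce them. For (i) one must first recognize $Z^1_{\pi^1}$ as the mock toric variety induced by $Z_\pi$ along $(\pr_1)_*\colon T_{N'\oplus\ZZ}\rightarrow T_{N'}$ and $j'$, and identify $\pr_2$ with the quotient map $N'\oplus\ZZ\rightarrow (N'\oplus\ZZ)/j'(N')$, before Proposition \ref{prop:extended-valuation}(c) applies. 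For (j) the paper must in addition introduce an auxiliary fan $\Delta^\flat$ with $\supp(\Delta^\flat)=\pi_\RR^{-1}(\supp(\Delta))$, invoke Proposition \ref{prop:extended-valuation}(b) to get $\val_{Y^\flat}=\val_{Z,\pi}$, observe that $(\pi^1_\RR)^{-1}(\supp(\Delta\times\Delta_!))\subset (\pr_1)_\RR^{-1}(\supp(\Delta^\flat))$ so that $\val_{Z^1,\pi^1}$ agrees with $\val_{Y^\flat,\pr_1}$ on the relevant lattice points, and only then apply Proposition \ref{prop:extended-valuation}(d) to obtain the additivity formula. These identifications are the actual content of (i)--(j), so as written your proposal does not establish those two parts.
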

    \begin{proof}
        We prove these statements from (a) to (k) in order. 
        \begin{enumerate}
            \item[(a)] Let $\sigma\in\Delta'_{0}$ be a cone and $\sigma_0$ denote $(\pr_1)_\RR(\sigma)$. 
            Then $\pi_\RR(\sigma_0) = (\pr_1)_\RR(\pi^1_\RR(\sigma))$. 
            From the definition of $\Delta'_{0}$, we have $\pi^1_\RR(\sigma)\subset \supp(\Delta)\times\{0\}$. 
            Because $\pi^1$ is compatible with the fans $\Delta'$ and $\Delta\times\Delta_!$, there exists $\tau\in\Delta$ such that $\pi^1_\RR(\sigma)\subset \tau\times\{0\}$. 
            Thus, $\pi_\RR(\sigma_0)\subset \tau$. 
            \item[(b)]
            There exists the following diagram of toric morphisms:
            \begin{equation*}
                \begin{tikzcd}
                    X(\Delta'_0)\ar[r, "\pi^1_*"]\ar[d, "(\pr_1)_*"]&X(\Delta\times\{\{0\}\})\ar[r, "(\pr_2)_*"]\ar[d, "(\pr_1)_*"]&{\Gm^1}_{,k}\ar[d]\\
                    X(\Delta'_{(0)})\ar[r, "\pi_*"]&X(\Delta)\ar[r]&\Spec(k)
                \end{tikzcd}
            \end{equation*}
            In particular, all small squares in the above diagram are Cartesian squares. 
            Thus, from the definition of the mock toric structure of $Y$ and $W$, all small squares in the following diagram are Cartesian squares:
            \begin{equation*}
                \begin{tikzcd}
                    W\cap X(\Delta'_0)\ar[r]\ar[d, "\mu"]&Z^1\cap X(\Delta\times\{\{0\}\})\ar[r]\ar[d]&{\Gm^1}_{,k}\ar[d]\\
                    Y\ar[r]&Z\ar[r]&\Spec(k)
                \end{tikzcd}
            \end{equation*}
            where $\mu$ denote $\pr_1\circ p$. 
            The latter part of the statement holds from Proposition \ref{prop: closure of stratum} and Lemma \ref{lem: relative lemma}(b). 
            \item[(c)] We show that $\psi$ is dominant. 
            Indeed, $\psi$ can be decomposed in the upper dominant morphisms in the following diagram from (b): 
            \begin{equation*}
                \begin{tikzcd}
                    Y_{k(t)}\ar[r]\ar[d]& Y\times{\Gm^1}_{,k}\ar[r, "p^{-1}"]\ar[d]& W\cap X(\Delta'_0)\ar[r, hook]\ar[d]& W\ar[d]\\
                    X_{k(t)}(\Delta'_{(0)})\ar[r]& X(\Delta'_{(0)})\times{\Gm^1}_{,k}\ar[r]& X(\Delta'_0)\ar[r, hook]& X(\Delta')
                \end{tikzcd}
            \end{equation*}
            We remark that all squares in the above diagram are Cartesian squares from (b). 
            Thus, from Lemma \ref{lem: relative lemma}(c), the latter part of the statement follows. 
            \item[(d)] We can check it from Proposition \ref{prop: closure of stratum} and Lemma \ref{lem: relative lemma}(d). 
            \item[(e)] Let $\sigma\in\Delta_0$ be a cone and $\sigma_0$ denote $(\pr_1)_\RR(\sigma)$. 
            From Lemma \ref{lem: injective fan}, $(\pr_1)_\RR$ induces a one-to-one correspondence of cones in $\Delta'_0$ and those in $\Delta'_{(0)}$. 
            Thus, we need to check that $\sigma\in\Delta'_0\cap\Delta'_\varphi$ if and only if $\sigma_0\in\Delta'_{(0), \varphi}$. 
            We recall that the mock toric structure of $W$ and $Y$ are defined by $(\pi^1)_*$ and $s^1$, and $\pi_*$ and $s$ respectively.  
            Moreover, there exists the following diagram:
            \begin{equation*}
                \begin{tikzcd}
                    N'\oplus\ZZ\ar[r, "\pr_1"]\ar[d, "\pi^1"]&N'\ar[d, "\pi"]\\
                    N\oplus\ZZ\ar[r, "\pr_1"]&N
                \end{tikzcd}
            \end{equation*}
            Thus, $\pi_\RR(\sigma_0)\subset \supp(\Delta_\varphi)$ if and only if $\pi^1_\RR(\sigma)\subset\supp((\Delta\times\Delta_!)_\varphi) = \supp(\Delta_\varphi)\times\RR_{\geq 0}$. 
            \item[(f)] We recall that $\Delta'(\varphi)$ is the following set:
            \[
                \Delta'(\varphi) = \{(q'_\varphi\times\id_\ZZ)_\RR(\sigma)\mid \sigma\in\Delta'_\varphi\}
            \]
            Thus, from the following commutative diagram, the statement holds:
            \begin{equation*}
                \begin{tikzcd}
                    N'\oplus\ZZ\ar[r, "q'_\varphi\times\id"]\ar[rd, "\pr_2"]& N'/s(N_\varphi)\oplus\ZZ\ar[d, "\pr_2"]\\
                    \ &\ZZ
                \end{tikzcd}
            \end{equation*}
            \item[(g)] We recall that $\Delta'_{(0)}(\varphi) = \{(q'_\varphi)_\RR(\tau)\mid \tau\in\Delta'_{(0),\varphi}\}$. 
            From (e), (f), and the equation $q'_\varphi\circ\pr_1 = \pr_1\circ(q'_\varphi\times\id_\ZZ)$, we have $\Delta'(\varphi)_{(0)} = \Delta'_{(0)}(\varphi)$. 
            Moreover, we remark that $\Delta'_{(0)}(\varphi)$ is strongly convex rational polyhedral fan from Proposition\ref{prop: first prop}(c). 
            \item[(h)] There exists the following commutative diagram: 
            \begin{equation*}
                \begin{tikzcd}
                    X_{k(t)}(\Delta'_{(0), \varphi})\ar[r]\ar[d, "(q'_\varphi)_{*, k(t)}"]& X(\Delta'_{(0), \varphi})\times{\Gm^1}_{,k}\ar[r, "p^{-1}"]\ar[d, "(q'_\varphi)_*\times\id_{{\Gm^1}_{,k}}"]& X(\Delta'_0\cap \Delta'_\varphi)\ar[r, hook]\ar[d, "(q'_\varphi\times\mathrm{id}_\ZZ)_*"]& X(\Delta'_\varphi)\ar[d, "(q'_\varphi\times\mathrm{id}_\ZZ)_*"]\\
                    X_{k(t)}(\Delta'_{(0)}(\varphi))\ar[r]& X(\Delta'_{(0)}(\varphi))\times{\Gm^1}_{,k}\ar[r, "p_\varphi^{-1}"]& X(\Delta'(\varphi)_0)\ar[r, hook]& X(\Delta'(\varphi))
                \end{tikzcd}
            \end{equation*}
            Thus, from the definition of $\Psi$ and $\Psi_\varphi$, the statement holds. 
            \item[(i)] From the mock toric structure of $Z^1_{\pi^1}$ and $Z_\pi$, we can recognize $Z^1_{\pi^1}$ as a mock toric variety induced by $Z_\pi$ along $(\pr_1)_*\colon T_{N'\oplus\ZZ}\rightarrow T_{N'}$ and $j'$. 
            We remark that we can identify with the second projection $\pr_2$ of $N'\oplus\ZZ$ and the quotient map $N'\oplus\ZZ\rightarrow (N'\oplus\ZZ)/j'(N')$. 
            Thus, the statement follows from Proposition \ref{prop:extended-valuation}(c). 
            \item[(j)] Let $\Delta^\flat$ be a strongly convex rational polyhedral fan in $N'_\RR$ such that $\supp(\Delta^\flat) = (\pi_\RR)^{-1}(\supp(\Delta))$ and $\pi$ is compatible with the fans $\Delta^\flat$ and $\Delta$. 
            Let $Y^\flat$ denote a mock toric variety by $Z$ along $\pi_*\colon X(\Delta^\flat)\rightarrow X(\Delta)$ and $s$. 
            Then from Proposition \ref{prop:extended-valuation}(b), $\val_{Y^\flat} = \val_{Z, \pi}$. 
            On the other hand, we can recognize $Z^1_{\pi^1}$ as a mock toric variety induced by $Y^\flat$ along $(\pr_1)_*\colon T_{N'\oplus\ZZ}\rightarrow X(\Delta^\flat)$ and $j'$. 
            We remark that ${(\pr_1)_\RR}^{-1}(\supp(\Delta^\flat)) = {(\pi\circ\pr_1)_\RR}^{-1}(\supp(\Delta))\supset {(\pi^1)_\RR}^{-1}(\supp(\Delta\times\Delta_!))$ so that $\val_{Y^\flat, \pr_1}(v') = \val_{Z^1, \pi^1}(v')$ for any $v'\in ({(\pi_1)_\RR}^{-1}(\supp(\Delta\times\Delta_!)))\cap (N'\oplus\ZZ)$. 
            Thus, from the remark in the proof of (i) and Proposition \ref{prop:extended-valuation}(d), $\val_{Z^1, \pi^1}(v')(\mu^*(f) p^*_0(g)) = \val_{Z, \pi}(\pr_1(v'))(f) + \pr_2(v')(g) = \val_{Y^\flat}(\pr_1(v'))(f) + \pr_2(v')(g)$ for any $v'\in ({(\pi_1)_\RR}^{-1}(\supp(\Delta\times\Delta_!)))\cap (N'\oplus\ZZ)$.   
            \item[(k)] Let $v'\in\supp(\Delta'_{(0)})\cap N'$ be an element. 
            We regard $v'_{k(t)}$ as a torus invariant valuation on $T_{N', k(t)}$, which is trivial on $k(t)$. 
            Thus, we remark that $\Psi_\natural(v')$ is equal to $(v', 0) = j'(v')$ as a torus invariant valuation on $T_{N'\oplus\ZZ}$ which is trivial on $k$ from the construction of $\Psi$. 
            Let $\varphi\in\Phi$ be an element such that $\pi(v')\in\supp(\Delta_\varphi)$. 
            We remark that $(v', 0)\in \supp(\Delta'_\varphi)$. 
            From (h), there exists the following diagram:
            \begin{equation*}
                \begin{tikzcd}
                    Y_{k(t)}\cap X_{k(t)}(\Delta'_{(0), \varphi})\ar[r]\ar[d, "\psi"]&X_{k(t)}(\Delta'_{(0), \varphi})\ar[r, "(q'_\varphi)_{*, k(t)}"]\ar[d, "\Psi"]&X_{k(t)}(\Delta'_{(0)}(\varphi))\ar[d, "\Psi_\varphi"]\\
                    W\cap X(\Delta'_\varphi)\ar[r]&X(\Delta'_\varphi)\ar[r, "(q'_\varphi\times\id_\ZZ)_*"]&X(\Delta'(\varphi))
                \end{tikzcd}
            \end{equation*}
            where two left horizontal morphisms are closed immersions, and we can check that both horizontal composition morphisms are open immersion from the condition (5) in Definition \ref{def:mock-toric}. 
            Hence, from Lemma \ref{lem:val-val}, and the definition of $\val_{Y_{k(t)}}$ and $\val_{W}$, we have $\psi_\natural(\val_{Y_{k(t)}}(v')) = \val_{W}(\Psi_\natural(v'))$.
            Therefore, from the above remark, the statement follows. 
        \end{enumerate}
    \end{proof}
\section{Example of mock toric varieties}
		We defined mock toric variety in Section 3. 
		In this section, we give some new examples of mock toric varieties. 
		This new example is given by ``base point free'' hyperplane arrangements. 
		In particular, we give a new non-toric mock toric structure of projective spaces and show that every Del Pezzo surface has a mock toric structure. 
	    In this section, We use the following notation: 
    \begin{itemize}
        \item Let $n$ and $d$ be positive integers, and $k$ be a field. 
        \item Let $\mathcal{B} = \{f_0, f_1, \ldots, f_d\}$ denote a finite subset of $\Gamma(\PP^n_k, \OO_{\PP^n_k}(1))\setminus\{0\}$. 
        We assume that $\mathcal{B}$ generates $\Gamma(\PP^n_k, \OO_{\PP^n_k}(1))$ as a $k-$vector space. 
        We regard $f_i$ as a homogeneous polynomial with $\deg(f_i) = 1$. 
        \item Let $\iota_0$ be a rational map $\PP^n_k\dashrightarrow \PP^d_k$ defined by $\PP^n_k\ni a\mapsto [f_0(a):f_1(a):\cdots:f_d(a)]\in\PP^d_k$. 
        Then $\iota_0$ is a closed embedding by the assumption of $\mathcal{B}$. 
        \item Let $\mathcal{V}$ denote the following set: 
        $$\mathcal{V} = \{V\subset \Gamma(\PP^n_k, \OO_{\PP^n_k}(1))\mid V = \sum_{i; f_i\in V} k\cdot f_i\}$$
        \item For $V\in\mathcal{V}$, let $\rho(V)$ denote a set $\{0\leq i\leq d\mid f_i\in V\}$. 
        \item Let $\{e^0, e^1, \ldots, e^d\}$ denote a canonical basis of $\ZZ^{d+1}$, and $\mathbf{1}\in \ZZ^{d+1}$ be $\sum_{0\leq i\leq d}e_i$. 
        \item Let $N$ denote $\ZZ^{d+1}/\ZZ\mathbf{1}$, $p$ denote the quotient morphism $\ZZ^{d+1}\rightarrow N$, and $e_i\in N$ denote $p(e^i)$ for $0\leq i\leq d$. 
        \item Let $\Delta_0$ denote a complete strongly convex rational polyhedral fan in $N_\RR$ such that all rays of $\Delta_0$ are $\{e_i\}_{0\leq i\leq d}$. We regard $\PP^n_k$ as a toric variety $X(\Delta_0)$.  
        \item For $V\in \mathcal{V}$, let $e_{V}$ denote $\sum_{i\in \rho(V)}e_i\in N$. 
        \item Let $\mathcal{C}$ denote the following set: 
        \begin{multline*}
            \mathcal{C} = \{(V_1, V_2, \ldots, V_s)\mid s\in\ZZ_{>0},\\
            0\subsetneq V_1\subsetneq V_2\subsetneq\cdots\subsetneq V_s\subsetneq\Gamma(\PP^n_k, \OO_{\PP^n_k}(1)), V_i\in\mathcal{V}, 1\leq\forall i\leq s\}
        \end{multline*}
        
        \item For $c = (V_1, V_2, \ldots, V_s)\in\mathcal{C}$, $\sigma_c$ denote a strongly convex rational polyhedral cone in $N_\RR$ generated by $\{e_{V_i}\}_{1\leq j\leq s}$. Let $\Delta(\mathcal{B})$ denote $\{\sigma_c\mid c\in\mathcal{C}\}$. 
    \end{itemize}
    From \cite{MS15}, we mention a basic and important result for $\Delta(\mathcal{B})$ in the following proposition.
    \begin{proposition}\cite[Theorem 4.2.6]{MS15}
        For the above notation, $\Delta(\mathcal{B})$ is a strongly convex rational simplicial fan in $N_\RR$. 
    \end{proposition}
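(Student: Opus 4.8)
The plan is to deduce this from \cite[Theorem 4.2.6]{MS15} by making explicit the matroid-theoretic dictionary, and then (for self-containedness) to verify the three things a fan must satisfy: every $\sigma_c$ is strongly convex and simplicial, the collection is closed under passing to faces, and the intersection of two of its cones is a common face. Rationality requires no work, since each $\sigma_c$ is generated by the lattice points $e_{V_i}\in N$.

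First I would identify $\mathcal V$ with the lattice of flats of the (loopless) linear matroid $M$ on the ground set $\{0,1,\dots,d\}$ whose dependent sets are the linearly dependent subfamilies of $\mathcal B$ inside the $(n+1)$-dimensional space $\Gamma(\PP^n_k,\OO_{\PP^n_k}(1))$. Indeed, for $V\in\mathcal V$ the set $\rho(V)$ is closed in $M$ and $V=\mathrm{span}\{f_i: i\in\rho(V)\}$, so $V\mapsto\rho(V)$ is an inclusion-preserving bijection from $\mathcal V$ onto the flats of $M$, sending $0$ to $\varnothing$ and $\Gamma(\PP^n_k,\OO_{\PP^n_k}(1))$ to $\{0,\dots,d\}$. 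Under this identification $\mathcal C$ is exactly the set of chains of proper nonempty flats of $M$, and $\sigma_c=\mathrm{cone}(e_{V_1},\dots,e_{V_s})$ is the corresponding cone of the Bergman fan of $M$ with its fine subdivision; since $M$ is a matroid, $\mathcal V$ is a geometric lattice, which is the hypothesis of the cited theorem.

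Next, for simpliciality (hence also strong convexity) I would show that for $c=(V_1\subsetneq\cdots\subsetneq V_s)\in\mathcal C$ the points $e_{V_1},\dots,e_{V_s}\in N_\RR$ are $\RR$-linearly independent. Lifting to $\ZZ^{d+1}$, it is enough to see that the indicator vectors $\mathbf 1_{\rho(V_1)},\dots,\mathbf 1_{\rho(V_s)}$ together with $\mathbf 1=\mathbf 1_{\{0,\dots,d\}}$ are linearly independent, which is clear because the \emph{difference sets} $\rho(V_1),\ \rho(V_2)\setminus\rho(V_1),\ \dots,\ \rho(V_s)\setminus\rho(V_{s-1}),\ \{0,\dots,d\}\setminus\rho(V_s)$ are nonempty and pairwise disjoint, so the change of basis between the two families is unitriangular. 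Closure under faces is then formal: a face of the simplicial cone $\sigma_c$ is $\mathrm{cone}(e_{V_{i_1}},\dots,e_{V_{i_t}})$ for some $i_1<\cdots<i_t$, and $(V_{i_1}\subsetneq\cdots\subsetneq V_{i_t})$ again lies in $\mathcal C$.

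The main obstacle is the intersection property. The idea is that a point determines its chain. Given $x\in\sigma_c$, write $x=\sum_i\lambda_i e_{V_i}$ with $\lambda_i\ge 0$ (unique, by simpliciality) and pick a lift $\tilde x\in\RR^{d+1}$, so that $\tilde x_j$ equals a constant plus $\sum_{i:\,j\in\rho(V_i)}\lambda_i$. Because $\rho(V_1)\subsetneq\cdots\subsetneq\rho(V_s)\subsetneq\{0,\dots,d\}$ (here one uses $V_s\subsetneq\Gamma(\PP^n_k,\OO_{\PP^n_k}(1))$), for each $i$ with $\lambda_i>0$ the flat $\rho(V_i)$ is recovered as the upper level set $\{j:\tilde x_j\ge \lambda_i+\cdots+\lambda_s\}$, and these are precisely the proper nonempty upper level sets of $\tilde x$ — a collection that does not change if $\tilde x$ is shifted by $\mathbf 1$, hence is intrinsic to $x$. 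Consequently the face of $\sigma_c$ whose relative interior contains $x$ depends only on $x$; applying this to both $c$ and $c'$ gives $\{V_i:\lambda_i>0\}=\{V'_j:\lambda'_j>0\}$, a sub-chain of each of $c$ and $c'$. Therefore $\sigma_c\cap\sigma_{c'}\subseteq\sigma_{c\cap c'}$, where $c\cap c'$ is the chain $\{V_1,\dots,V_s\}\cap\{V'_1,\dots,V'_t\}$; since this chain is a sub-chain of both, $\sigma_{c\cap c'}$ is a common face of $\sigma_c$ and $\sigma_{c'}$, and the reverse inclusion is trivial. This shows $\Delta(\mathcal B)$ is a (rational, simplicial, strongly convex) fan; alternatively, once the first paragraph is in place one simply invokes \cite[Theorem 4.2.6]{MS15}.
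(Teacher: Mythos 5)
Your proposal is correct, but it does something the paper does not: the paper offers no argument at all for this proposition, simply citing \cite[Theorem 4.2.6]{MS15}, whereas you both make the citation usable (by spelling out the dictionary $V\mapsto\rho(V)$ identifying $\mathcal V$ with the lattice of flats of the linear matroid on $\{0,\dots,d\}$, so that $\Delta(\mathcal{B})$ is the fine subdivision of the Bergman fan) and give a self-contained verification. Your direct argument is sound: the unitriangular change of basis to indicators of the pairwise disjoint, nonempty difference sets $\rho(V_1),\rho(V_2)\setminus\rho(V_1),\dots,\{0,\dots,d\}\setminus\rho(V_s)$ (nonemptiness of the last set using $V_s\subsetneq\Gamma(\PP^n_k,\OO_{\PP^n_k}(1))$ and the fact that $\mathcal B$ spans) gives linear independence of $e_{V_1},\dots,e_{V_s}$ in $N_\RR$, hence simpliciality, strong convexity, rationality, and closure under faces; and the level-set argument correctly recovers $\{\rho(V_i)\mid\lambda_i>0\}$ as the collection of proper nonempty upper level sets of any lift $\tilde x$, which is unchanged under shifting by $\mathbf 1$, so the support chain of a point is intrinsic and $\sigma_c\cap\sigma_{c'}=\sigma_{c\cap c'}$ is a common face. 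The only point worth flagging is a convention you inherit from the paper: $\mathcal C$ requires $s\geq 1$, so $\Delta(\mathcal B)$ as literally defined does not contain the zero cone; when the chains $c$ and $c'$ share no subspace your common face is $\{0\}$, which one should either adjoin to $\Delta(\mathcal B)$ or accept as the trivial face, but this is a cosmetic issue rather than a gap. Compared with the paper's bare citation, your write-up buys a proof readable without unwinding the tropical-geometry background, at the cost of about a page of matroid bookkeeping.
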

    The following proposition is also important for defining a new mock toric structure. 
    \begin{proposition}
        An identity morphism $\id_N$ is compatible with the fans $\Delta(\mathcal{B})$ and $\Delta_0$. 
    \end{proposition}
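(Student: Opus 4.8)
The plan is to unwind the definition of compatibility and then, for each cone of $\Delta(\mathcal{B})$, exhibit an explicit cone of $\Delta_0$ containing it. By the definition recalled in Section 2, since $(\id_N)_\RR$ is the identity it suffices to show that for every $c = (V_1\subsetneq V_2\subsetneq\cdots\subsetneq V_s)\in\mathcal{C}$ the cone $\sigma_c$ is contained in some cone of $\Delta_0$; the remaining cone $\{0\}$ of $\Delta(\mathcal{B})$ is contained in every (maximal) cone of $\Delta_0$, so it requires nothing.

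Next I would record the face structure of $\Delta_0$. As $X(\Delta_0)=\PP^d_k$ is the complete toric variety whose rays are exactly $\{e_i\}_{0\leq i\leq d}$ with $e_0+\cdots+e_d=0$ the unique relation, the cones of $\Delta_0$ are precisely the cones $\sigma_I:=\sum_{i\in I}\RR_{\geq 0}e_i$ indexed by the proper subsets $I\subsetneq\{0,1,\ldots,d\}$, and every proper subset occurs. Now fix $c=(V_1\subsetneq\cdots\subsetneq V_s)$ and put $I:=\rho(V_s)$. I claim $I$ is a proper subset of $\{0,\ldots,d\}$: if not, then $f_i\in V_s$ for all $i$, and since $\mathcal{B}=\{f_0,\ldots,f_d\}$ spans $\Gamma(\PP^n_k,\OO_{\PP^n_k}(1))$ as a $k$-vector space this forces $V_s=\Gamma(\PP^n_k,\OO_{\PP^n_k}(1))$, contradicting $V_s\subsetneq\Gamma(\PP^n_k,\OO_{\PP^n_k}(1))$. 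Hence $\sigma_I\in\Delta_0$. Finally, for each $1\leq i\leq s$ we have $\rho(V_i)\subseteq\rho(V_s)=I$ because $V_i\subseteq V_s$, so the generator $e_{V_i}=\sum_{j\in\rho(V_i)}e_j$ lies in $\sigma_I$; therefore $\sigma_c=\sum_{i=1}^s\RR_{\geq 0}e_{V_i}\subseteq\sigma_I$, which is the required containment.

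I do not anticipate a real obstacle here: the argument is a direct matching of the combinatorics of the flags parametrizing $\mathcal{C}$ with the faces of the $\PP^d_k$-fan. The only point needing care is the use of the spanning hypothesis on $\mathcal{B}$, which is exactly what guarantees that $\rho(V_s)$ never exhausts $\{0,\ldots,d\}$ and hence that $\sigma_{\rho(V_s)}$ is a genuine cone of $\Delta_0$ rather than all of $N_\RR$.
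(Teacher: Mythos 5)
Your proof is correct and follows essentially the same route as the paper: show $\rho(V_s)\neq\{0,\ldots,d\}$ using the spanning hypothesis on $\mathcal{B}$, and then observe that all generators $e_{V_i}$ of $\sigma_c$ lie in the cone $\sum_{i\in\rho(V_s)}\RR_{\geq 0}e_i$ of $\Delta_0$. Your additional remarks (the explicit description of the cones of $\Delta_0$ as $\sigma_I$ for proper $I$, and the trivial case of the zero cone) are just fleshed-out details of the same argument.
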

    \begin{proof}
        Let $c = (V_1, V_2, \ldots, V_s)\in\mathcal{C}$ be an element. 
        Because $\mathcal{B}$ generates $\Gamma(\PP^n_k, \OO_{\PP^n_k}(1))$ and $V_s\neq \Gamma(\PP^n_k, \OO_{\PP^n_k}(1))$, $\rho(V_s)\neq\{0, 1, \ldots, d\}$. 
        Thus, from the definition of $\sigma_c$, $\sigma_c$ is contained in $\sum_{i\in\rho(V_s)} \RR_{\geq 0}e_i$. 
        This cone is in $\Delta_0$, and hence the statement follows. 
    \end{proof}
    From now on, we will define some data from $\mathcal{B}$. 
    \begin{definition}
        We will use the following notation:
        \begin{itemize}
            \item Let $Z$ denote the strictly transform of $\PP^n_k$ along a toric morphism $X(\Delta(\mathcal{B}))\rightarrow X(\Delta_0)$. 
            In other words, $Z$ is a scheme theoretic closure of $\iota_0(\PP^n_k)\cap T_N$ in $X(\Delta(\mathcal{B}))$. 
            \item Let $\iota$ denote a closed immersion from $Z$ to $X(\Delta(\mathcal{B}))$. 
            \item Let $\Phi$ denote the following subset of $\{0, \dots, d\}$: $$\Phi = \{\{i_0, i_1, \ldots, i_{n}\}\subset \{0, \ldots, d\}\mid \Gamma(\PP^n_k, \OO_{\PP^n_k}(1)) = \sum_{0\leq j\leq n}k\cdot f_{i_j}\}$$
            \item For $\varphi = \{i_0, i_1, \ldots, i_{n}\}\in \Phi$, let $N_\varphi$ denote $\sum_{j\notin \varphi}\ZZ e_j$. 
            \item For $\varphi\in\Phi$, let $\mathcal{C}_\varphi$ denote the following subset of $\mathcal{C}$: 
            $$\mathcal{C}_\varphi = \{(V_1, V_2, \ldots, V_s)\in\mathcal{C}\mid |\varphi\cap\rho(V_j)| = \dim(V_j)\quad (1\leq\forall j\leq s)\}$$
            Let $\Delta(\mathcal{B})_\varphi$ denote $\{\sigma_c\in\Delta(\mathcal{B})\mid c\in\mathcal{C}_\varphi\}$. 
            We remark that $\Delta(\mathcal{B})_\varphi$ is a sub fan of $\Delta(\mathcal{B})$ from the definition of $\mathcal{C}_\varphi$ and simplicialness of $\Delta(\mathcal{B})$. 
        \end{itemize}
    \end{definition}
    The following proposition shows that there are many non-toric examples of mock toric varieties. 
    \begin{proposition}\label{prop: hyper-example}
        From the above notation, $(N, \Delta(\mathcal{B}), \iota, \Phi, \{N_\varphi\}_{\varphi\in\Phi}, \{\Delta(\mathcal{B})_\varphi\}_{\varphi\in\Phi})$ is a mock toric structure of $Z$. 
        Moreover, $Z$ is proper over $k$. 
    \end{proposition}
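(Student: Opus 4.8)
The plan is to verify the five axioms of Definition~\ref{def:mock-toric} for the displayed tuple and then deduce properness. The basic bookkeeping device is that for $\varphi=\{i_0,\dots,i_n\}\in\Phi$ the relation $e_0+\dots+e_d=0$ in $N$ makes any $n$ of the $e_i$ a $\ZZ$-basis of $N$; hence $N_\varphi=\sum_{j\notin\varphi}\ZZ e_j$ is a primitive sublattice, $N/N_\varphi\cong\ZZ^\varphi/\ZZ\mathbf 1_\varphi$ is free of rank $n$, $q_\varphi$ kills every $e_j$ with $j\notin\varphi$ and sends $e_i$ ($i\in\varphi$) to a generator, and for a chain $c=(V_1\subsetneq\dots\subsetneq V_s)\in\mathcal{C}_\varphi$, writing $A_j=\varphi\cap\rho(V_j)$, one gets a strictly increasing tower $\emptyset\subsetneq A_1\subsetneq\dots\subsetneq A_s\subsetneq\varphi$ (strict because $|A_j|=\dim V_j$ is strictly increasing and $\dim V_s\le n$) with $q_\varphi(e_{V_j})=\sum_{i\in A_j}q_\varphi(e_i)$.

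Axiom~(2) is a flag-refinement argument: given $c=(V_1\subsetneq\dots\subsetneq V_s)\in\mathcal{C}$, pick a basis of $V_1$ inside $\{f_i:i\in\rho(V_1)\}$, extend it inside $\{f_i:i\in\rho(V_2)\}$ to a basis of $V_2$, and so on, finally to a basis of $\Gamma(\PP^n_k,\OO_{\PP^n_k}(1))$; the index set $\varphi$ thus obtained lies in $\Phi$ and satisfies $|\varphi\cap\rho(V_j)|=\dim V_j$, so $c\in\mathcal{C}_\varphi$. For axiom~(1), fix $\varphi$ and $\sigma=\sigma_c$ with $c\in\mathcal{C}_\varphi$, and let $L\subseteq N/N_\varphi$ be the sublattice generated by the $q_\varphi(e_{V_j})$. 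First one shows $(N/N_\varphi)/L$ is torsion-free: lifting to $\ZZ^\varphi$, the lattice $\ZZ\mathbf 1_\varphi$ together with the lifts of the $q_\varphi(e_{V_j})$ is exactly the lattice spanned by the block-sum vectors of the partition of $\varphi$ into the nonempty blocks $A_1,\ A_2\setminus A_1,\ \dots,\ A_s\setminus A_{s-1},\ \varphi\setminus A_s$, and $\ZZ^\varphi$ modulo such a lattice is free. Hence $L$ is saturated, so $q_\varphi(\langle\sigma_c\rangle\cap N)=L$ and axiom~(1) follows. The same computation shows the $q_\varphi(e_{V_j})$ are $\RR$-linearly independent, so $(q_\varphi)_\RR$ is injective on each $\sigma_c$, $c\in\mathcal{C}_\varphi$, and maps it isomorphically to a simplicial cone. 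To upgrade this to injectivity on all of $\supp(\Delta(\mathcal{B})_\varphi)$ (axiom~(3)) and to identify the fan $\Delta(\varphi)$, I would use that $A\mapsto\rho(V)$ — where $V\in\mathcal V$ is the $k$-span of $\{f_i:i\in A\}$ — is a bijection from the subsets of the independent set $\varphi$ onto $\{\rho(V):V\in\mathcal V,\ |\varphi\cap\rho(V)|=\dim V\}$; thus the cones of $\Delta(\mathcal{B})_\varphi$ are indexed by chains of proper nonempty subsets of $\varphi$, and $q_\varphi$ carries $\Delta(\mathcal{B})_\varphi$ isomorphically (as a fan) onto the order-complex subdivision $\Delta(\varphi)$ of the fan of $\PP^n$ in the coordinates $q_\varphi(e_i)$, $i\in\varphi$, after which Lemma~\ref{lem: injective fan} applies.

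For axiom~(5) the idea is that the basis $\{f_i:i\in\varphi\}$ trivializes the picture over the $\varphi$-chart. Writing $f_j=\sum_{i\in\varphi}c_{ji}f_i$ for $j\notin\varphi$, in the affine coordinates $u_l=f_{i_l}/f_{i_0}$ the complement $Z^\circ$ becomes the complement in $\A^n$ of the hyperplanes $\{u_l=0\}$ and $\{\ell_j(u)=0\}$ with $\ell_j(u)=c_{j,i_0}+\sum_l c_{j,i_l}u_l$, and $(q_\varphi)_*$ is the projection $(f_i/f_{i_0})_{i\ne i_0}\mapsto(u_1,\dots,u_n)$, which is an open immersion on $Z^\circ$. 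Working one affine chart $X(\sigma_c)$ at a time, the monoid $\sigma_c^\vee\cap M$ is generated by $(q_\varphi)^*$ of $(q_\varphi)_\RR(\sigma_c)^\vee\cap M_\varphi$ together with the monomials in the $f_j/f_{i_0}$ ($j\notin\varphi$); modulo the ideal of $Z$ these last become the $\ell_j(u)$ — invertible on $Z\cap X(\sigma_c)$ — times monomials in the $u_l$, so the coordinate-ring map exhibits $Z\cap X(\sigma_c)$ as a principal open of $X((q_\varphi)_\RR(\sigma_c))$; gluing over $c\in\mathcal{C}_\varphi$ yields the open immersion $\iota_\varphi$. I expect this axiom~(5) verification to be the main obstacle: one must pin down the monoid $\sigma_c^\vee\cap M$ and track precisely which monomials become units modulo the ideal of $Z$, and this is where the matroid combinatorics of $\mathcal B$ is used in earnest.

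Finally, axiom~(4) and properness. Since $Z=\overline{Z^\circ}$ in $X(\Delta(\mathcal{B}))$ and the ideal of $Z^\circ\subseteq T_N$ is generated by the linear relations among the $f_i$, the tropicalization $\operatorname{trop}(Z^\circ)$ is the Bergman fan of the matroid of $\mathcal B$, whose support coincides with $\supp(\Delta(\mathcal{B}))$ (classical; cf.\ \cite{MS15}). For any $\sigma\in\Delta(\mathcal{B})$ one then has $\sigma^\circ\subseteq\operatorname{trop}(Z^\circ)$, so choosing $w\in\sigma^\circ\cap N$ the limit along the corresponding one-parameter degeneration of a general point of $Z^\circ$ exists in $X(\Delta(\mathcal{B}))$ and lies in $Z\cap O_\sigma$, giving $Z\cap O_\sigma\neq\emptyset$, i.e.\ axiom~(4). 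Once the mock toric structure is established, $Z$ is a normal variety closed in $X(\Delta(\mathcal{B}))$ with $\supp(\Delta(\mathcal{B}))=\operatorname{trop}(Z^\circ)$, so it is a tropical compactification of $Z^\circ$ and in particular complete over $k$; alternatively, the morphism $Z\to\iota_0(\PP^n_k)\cong\PP^n_k$ induced by $X(\Delta(\mathcal{B}))\to X(\Delta_0)$ is proper by the valuative criterion checked on the charts $Z\cap X(\Delta(\mathcal{B})_\varphi)$, and $\PP^n_k$ is proper.
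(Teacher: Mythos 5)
Your overall strategy is the same as the paper's: condition (2) by refining bases along the flag, conditions (1) and (3) by identifying $\Delta(\mathcal{B})_\varphi$ with the fan attached to the basis sub-arrangement $\mathcal{B}(\varphi)=\{f_i\mid i\in\varphi\}$ under $q_\varphi$ (your block-partition computation for torsion-freeness and the bijection $A\mapsto\rho(V_A)$ are correct and fill in what the paper calls ``easy''), condition (5) by a chart-by-chart coordinate-ring computation, and condition (4) plus properness via the Bergman fan and tropical compactification from \cite{MS15}. However, two steps are genuinely deficient. First, condition (5), which you yourself flag as the main obstacle, is the heart of the paper's proof and your sketch contains an actual error: the ratios $f_j/f_{i_0}$ are not regular on $X(\sigma_c)$ in general. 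Indeed, for $l\in\rho(V_j)\setminus\rho(V_{j-1})$ with $j\geq 2$ one has $\langle e_{V_1},\omega_l-\omega_{i_0}\rangle=-1$ (since $i_0\in\rho(V_1)$ but $l\notin\rho(V_1)$), so $\omega_l-\omega_{i_0}\notin\sigma_c^\vee$ and your description of the monoid $\sigma_c^\vee\cap M$ fails. The correct invertible coordinates are the level-referenced ratios $f_l/f_{\lambda(j)}$, where $\lambda(j)$ is the last basis index of $V_j$: the paper writes $k[\sigma_c^\vee\cap M]$ as a Laurent-polynomial ring over $k[\sigma_\varphi^\vee\cap M_\varphi]$ in these ratios, uses the relations $f_l=\sum_m a^{(l)}_m f_{i_m}$ to get a surjection $k[\sigma_\varphi^\vee\cap M_\varphi]_g\rightarrow\Gamma(Z(\sigma_c),\OO_Z)$ with $g=\prod_l g_l$, and concludes it is an isomorphism because both sides are $n$-dimensional integral domains; this identification of $Z(\sigma_c)$ with the principal open $D(g)$ is the content you would still need to supply.

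Second, your argument for condition (4) is incorrect as stated. For $w\in\sigma^\circ\cap N$ and a \emph{general} point $z\in Z^\circ(k)$, the limit $\lim_{t\to 0}\lambda_w(t)\cdot z$ does lie in $O_\sigma$, but it need not lie in $\overline{Z^\circ}$: already for the line $x+y+1=0$ in $\Gm^2$ and the ray $w=(1,0)$ of its tropicalization, the limit of a general point is $(0,y_0)$ with $y_0\neq -1$, which is not on the closure. What you need is exactly \cite[Theorem 6.4.7(ii)]{MS15} (equivalently: since $w\in\mathrm{trop}(Z^\circ)$, there is a point of $Z^\circ$ over $\Kt$ with valuation $w$, and its reduction, obtained from the valuative criterion applied to $X(\sigma)$, lies in $Z\cap O_\sigma$); this is what the paper cites. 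Relatedly, your ``alternative'' properness argument does not work as written: $X(\Delta(\mathcal{B}))\rightarrow X(\Delta_0)$ is not proper because the Bergman fan is not complete, so properness of $Z$ cannot be checked chart-by-chart without again using the tropical input $\mathrm{trop}(Z^\circ)=\supp(\Delta(\mathcal{B}))$; your primary route via \cite[Theorem 6.4.7(i)]{MS15} is the correct one and coincides with the paper's.
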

    \begin{proof}
         We check that this tuple satisfies the conditions in Definition \ref{def:mock-toric} from (1) to (5) in order:
         \begin{enumerate}
             \item[(1)] We can check the condition (1) easily from the definition of $\Delta(\mathcal{B})$ and $\{N_\varphi\}_{\varphi\in\Phi}$. 
             \item[(2)] It is enough to show that $\mathcal{C} = \cup_{\varphi\in\Phi}\mathcal{C}_\varphi$. 
             Let $c = (V_1, V_2, \ldots, V_s)\in \mathcal{C}$ be an element. 
             Then there exists $\{f_{i_0}, \ldots, f_{i_n}\}\subset \mathcal{B}$ be a basis of $\Gamma(\PP^n_k, \OO_{\PP^n_k}(1))$ such that $\{f_{i_0}, \ldots, f_{i_{\dim(V_j) -1}}\}$ is a basis of $V_j$ for any $0\leq j\leq s$. 
             Let $\varphi$ denote $\{i_0, i_1, \ldots, i_n\}$. 
             In particular, $\varphi\in\Phi$ and $c\in\mathcal{C}_\varphi$ holds. 
             \item[(3)] Let $\varphi\in\Phi$ be an element. 
             Let $\mathcal{B}(\varphi)$ denote $\{f_i\mid i\in\varphi\}$. 
             From the definition of $\varphi$, $\mathcal{B}(\varphi)$ generates $\Gamma(\PP^n_k, \OO_{\PP^n_k}(1))$. 
             Let $\iota_0(\varphi), \mathcal{V}(\varphi), N(\varphi), \Delta_0(\varphi), \mathcal{C}(\varphi), \Delta(\mathcal{B}(\varphi))$ denote the notation above when we replace $\mathcal{B}$ with $\mathcal{B}(\varphi)$. 
             A projection $\ZZ^{d+1}\rightarrow \ZZ^{n + 1}$ whose kernel is generated by $\{e^j\}_{j\notin\varphi}$ induces a surjective morphism $N\rightarrow N(\varphi)$. 
             We can check that the kernel of this morphism is $N_\varphi$. 
             Now, we identify $N(\varphi)$ with $N/N_\varphi$. 
             Let $c\in(V_1, \ldots, V_s)\in\mathcal{C}_\varphi$ be an element. 
             From the definition of $\mathcal{C}_\varphi$, $V_j\in\mathcal{V}(\varphi)$ for any $1\leq j\leq s$. 
             Thus, $\mathcal{C}_\varphi = \mathcal{C}(\varphi)$. 
             Let $\Delta(\mathcal{B})(\varphi)$ denote $\{(q_\varphi)_\RR(\sigma_c)\mid c\in\mathcal{C}_\varphi\}$. 
             From the fact that $\mathcal{C}_\varphi = \mathcal{C}(\varphi)$, $(q_\varphi)_\RR$ induces a one-to-one correspondence of cones in $\Delta(\mathcal{B})_\varphi$ and those in $\Delta(\mathcal{B}(\varphi))$. 
             In particular, $\Delta(\mathcal{B})(\varphi) = \Delta(\mathcal{B}(\varphi))$ over the identification with $N/N_\varphi$ and $N(\varphi)$. 
             We can check that $(q_\varphi)_\RR|_{\sigma_c}$ is injective for any $c\in\mathcal{C}_\varphi$. 
             Therefore, $(q_\varphi)_\RR|_{\supp(\Delta(\mathcal{B})_\varphi)}$ is injective from the above one-to-one correspondence. 
             \item[(4)] First, we show the properness of $Z$. 
             From \cite[Theorem. 4.1.11]{MS15}, the support of $\mathrm{trop(Z^\circ)}$ is $\supp(\Delta(\mathcal{B}))$. 
             Hence, from \cite[Theorem. 6.4.7(i)]{MS15}, $Z$ is proper over $k$. 
             Next, we check the condition (4). 
             Indeed, from \cite[Theorem. 6.4.7(ii)]{MS15}, $Z\cap O_{\sigma_c} \neq \emptyset$ for any $c\in\mathcal{C}$. 
             \item[(5)] Let $\varphi = \{i_0, \ldots, i_n\}\in\Phi$ and $c = (V_1, \ldots, V_s)\in \mathcal{C}_\varphi$ be elements. 
             We may change the index of elements in $\varphi$ such that $\varphi\cap\rho(V_j) = \{i_0, i_1, \ldots, i_{\dim(V_j) - 1}\}$ for any $1 \leq j\leq s$. 
             Let $\lambda(j)$ denote $i_{\dim(V_j) - 1}$ for $1\leq j\leq s$, $\lambda(s + 1)$ denote $i_n$, $V_{s+1}$ denote $\Gamma(\PP^n_{k}, \OO_{\PP^n_k}(1))$, and $V_0$ denote $\{0\}$. 
             Then $\sigma^\vee_c$ is generated by the following elements:
             \[
                \{\omega_{\lambda(j)} - \omega_{\lambda(j + 1)}\}_{1\leq j\leq s}\cup\bigcup_{1\leq j\leq s+1}\{\omega_l - \omega_{\lambda(j)}, \omega_{\lambda(j)} - \omega_l\mid l\in \rho(V_j)\setminus\rho(V_{j-1})\}
             \]
             Let $\sigma_\varphi$ denote $(q_\varphi)_\RR(\sigma_c)\subset (N/N_\varphi)_\RR$, $M_\varphi$ denote the dual lattice of $N/N_\varphi$, and $q^*_\varphi$ denote the morphism $M_\varphi\rightarrow M$ induced by $q_\varphi$. 
             Then $k[\sigma^\vee_c\cap M]$ is isomorphic to the following ring as $k[\sigma^\vee_\varphi\cap M_\varphi]$-algebra: 
             \[
                k[\sigma^\vee_\varphi\cap M_\varphi]\otimes_k k[\bigcup_{1\leq j\leq s+1}\{(\frac{\chi^{\omega_l}}{\chi^{\omega_{\lambda(j)}}})^{\pm}\}_{l\notin\varphi, l\in\rho(V_j)\setminus\rho(V_{j-1})}]
             \]
             Let $I_c$ denote the kernel of $\iota^*\colon k[\sigma^\vee_c\cap M]\rightarrow \Gamma(Z(\sigma_c), \OO_Z)$ and we take  $l\notin\varphi$ and $1\leq j\leq s + 1$ such that $l\in \rho(V_{j})\setminus \rho(V_{j-1})$. 
             Because $f_l\in V_j$ and $\{f_{i_0}, \ldots, f_{\lambda(j)}\}$ is a basis of $V_{j}$, there exists $0\neq (a^{(l)}_m)_{0\leq m\leq\dim(V_j) - 1}\in k^{\dim(V_j)}$ such that $f_l = \sum_{0\leq m\leq\dim(V_j) - 1}a^{(l)}_m f_{i_m}$. 
             Let $h_l\in k[\sigma^\vee_c\cap M]$ denote the following element for $l\notin\varphi$:  
             \[
                \frac{\chi^{\omega_l}}{\chi^{\omega_{\lambda(j)}}} - (\sum_{0\leq m\leq\dim(V_j) - 1}a^{(l)}_m\frac{\chi^{\omega_{i_m}}}{\chi^{\omega_{\lambda(j)}}}), 
             \]
             where $j$ is an integer such that $1\leq j\leq s+1$ and $l\in \rho(V_{j})\setminus \rho(V_{j-1})$. 
             Let $g_l\in k[\sigma^\vee_\varphi\cap M_\varphi]$ denote the second term of $h_l$. 
             Then we can check that $h_l\in I_c$ for any $l\notin\varphi$. 
             Let $g\in k[\sigma^\vee_\varphi\cap M_\varphi]$ denote $\prod_{l\notin\varphi}g_l$. 
             We remark that $g$ is not a nilpotent element in $k[\sigma^\vee_\varphi\cap M_\varphi]$. 
             Then there exists the following commutative diagram: 
            \begin{equation*}
                \begin{tikzcd} 
                    k[\sigma^\vee_\varphi\cap M_\varphi]\ar[r, "q^*_\varphi"]\ar[d]& k[\sigma^\vee_c\cap M]\ar[d, "\iota^*"]\\
                    k[\sigma^\vee_\varphi\cap M_\varphi]_g\ar[r]& \Gamma(Z(\sigma_c), \OO_Z)
               \end{tikzcd}
            \end{equation*}
            where the left morphism is a localization map, and the bottom morphism is induced by the universality of a localization. 
            For each $l\notin\varphi$, $h_l = \frac{\chi^{\omega_l}}{\chi^{\omega_{\lambda(j)}}} - g_l\in I_c$ where $1\leq j\leq s+1$ is an integer such that $l\in\rho(V_j)\setminus \rho(V_{j - 1})$. 
            Thus, the bottom morphism is surjective because of the ring structure of $k[\sigma^\vee_c\cap M]$ as a  $k[\sigma^\vee_\varphi\cap M_\varphi]$-algebra. 
            The both ring $k[\sigma^\vee_\varphi\cap M_\varphi]_g$ and $\Gamma(Z(\sigma_c), \OO_Z)$ are $n$-dimentional integral domains because $Z(\sigma_c)$ is a scheme theoretic closure of an integral scheme $Z^\circ$ in $X(\sigma_c)$. 
            Thus, the bottom morphism is isomorphic. 
            Therefore, $\iota_\varphi$ is an open immersion. 
         \end{enumerate}
         In (4), we already have shown that $Z$ is proper over $k$. 
    \end{proof}
    We already know a projective space is a toric variety. 
    The following proposition shows that there are many non-toric mock toric structures of a projective space: 
    \begin{proposition}\label{prop:basic-example-hyperplane}
        For the notation above, we assume that $\{f_{i_0}, \ldots, f_{i_n}\}$ is a basis of $\Gamma(\PP^n_k, \OO_{\PP^n_k})$ for any integers $0\leq i_0 < \ldots < i_n\leq d$. 
        Then we remark that $\Phi$ is the following set:
        \[
            \Phi = \{\varphi\subset\{0, \ldots, d\}\mid |\varphi| = n + 1\}
        \]
        Let $\Delta_1$ denote a sub fan of $\Delta_0$, which consists of all cones whose dimension is less than $n + 1$. 
        From the assumption of $\mathcal{B}$, $\iota_0$ passes through $X(\Delta_1)$. 
        Let $\iota_1$ denote the closed immersion from $\PP^n_k$ to $X(\Delta_1)$ and $\Delta_{1, \varphi}$ denote $\{\sigma\in\Delta_1\mid \sigma\subset \sum_{i\in\varphi}\RR_{\geq 0}e_i\}$ for $\varphi\in\Phi$. 
        Then $(N, \Delta_1, \iota_1, \Phi, \{N_\varphi\}_{\varphi\in\Phi}, \{\Delta_{1, \varphi}\}_{\varphi\in\Phi}\})$ is a mock toric structure of $\PP^n_k$. 
    \end{proposition}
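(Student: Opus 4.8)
The plan is to verify conditions (1)--(5) of Definition~\ref{def:mock-toric} directly, using the explicit combinatorics. Throughout, write $\sigma_S=\sum_{i\in S}\RR_{\geq 0}e_i$ for $S\subseteq\{0,\dots,d\}$, so that $\Delta_1=\{\sigma_S: |S|\leq n\}$ and $\Delta_{1,\varphi}=\{\sigma_S: S\subseteq\varphi,\ |S|\leq n\}$ for $\varphi\in\Phi$. We may assume $d\geq n+1$, the case $d=n$ being trivial (then $\Delta_1=\Delta_0$, $\Phi=\{\{0,\dots,d\}\}$, $N_\varphi=0$, and $\iota_1=\iota_0$ is an isomorphism onto $X(\Delta_1)\cong\PP^n_k$). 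The facts I use repeatedly are: any subset of $\{e_0,\dots,e_d\}$ of size $\leq d$ extends to a $\ZZ$-basis of $N$ (the only relation among all the $e_i$ being $\sum_{i=0}^de_i=0$); and, by the general position hypothesis on $\mathcal B$, every subset of $\{0,\dots,d\}$ of size $\leq n+1$ extends to a member of $\Phi$ and the corresponding $f_i$ are linearly independent in $\Gamma(\PP^n_k,\OO_{\PP^n_k}(1))$.

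Conditions (1), (2), (3) are bookkeeping. For (1): if $\sigma_S\in\Delta_{1,\varphi}$ then $\langle\sigma_S\rangle\cap N=\sum_{i\in S}\ZZ e_i$, so $(\langle\sigma_S\rangle\cap N)+N_\varphi=\sum_{i\in S\cup(\{0,\dots,d\}\setminus\varphi)}\ZZ e_i$, a sum over a disjoint union of index sets; this is spanned by $\leq|S|+(d-n)\leq d$ of the $e_i$ and hence is a saturated sublattice. For (2): any $\sigma_S\in\Delta_1$ has $|S|\leq n$, so $S$ extends to some $\varphi\in\Phi$ and $\sigma_S\in\Delta_{1,\varphi}$. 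For (3): a direct computation in $\ZZ^{d+1}/\ZZ\mathbf 1$ shows $\ker\!\big((q_\varphi)_\RR|_{\sum_{i\in\varphi}\RR e_i}\big)=\RR\cdot\sum_{i\in\varphi}e_i$; any point of $\supp(\Delta_{1,\varphi})$ is a nonnegative combination of $\{e_i:i\in\varphi\}$ vanishing on at least one of these coordinates, and if two such points differ by $c\sum_{i\in\varphi}e_i$, comparing the coordinate that vanishes in each forces $c=0$; hence $(q_\varphi)_\RR$ is injective on $\supp(\Delta_{1,\varphi})$.

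For condition (4), fix $\sigma_S\in\Delta_1$. Via $\iota_1$, the scheme $\PP^n_k\cap O_{\sigma_S}$ is the locally closed subscheme $L\setminus\bigcup_{j\notin S}\{f_j=0\}$, where $L=\{f_i=0:i\in S\}$ is a linear subspace of dimension $n-|S|\geq 0$; since $\{f_i:i\in S\cup\{j\}\}$ is linearly independent for each $j\notin S$, no $f_j$ vanishes identically on $L$, so this subscheme is nonempty (a dense open of the irreducible $L$ if $\dim L\geq 1$, and $L=\Spec k$ itself if $\dim L=0$). Alternatively this follows from Proposition~\ref{prop: hyper-example}(4) via the proper birational toric morphism $X(\Delta(\mathcal B))\to X(\Delta_1)$ induced by $\id_N$, which carries $Z$ onto $\iota_1(\PP^n_k)$ and restricts to isomorphisms on the orbits lying over each $O_{\sigma_S}$.

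The main point is condition (5). The images $q_\varphi(e_i)$, $i\in\varphi$, are $n+1$ primitive vectors of $N/N_\varphi$ summing to $0$, any $n$ of them forming a basis, and $\Delta_1(\varphi)$ is the fan generated by the cones on the subsets of size $\leq n$ of these vectors; hence $X(\Delta_1(\varphi))\cong\PP^n_k$ as a toric variety. Dually, $M_\varphi=N_\varphi^\perp\cap M$ consists of the characters supported on the $\varphi$-coordinates, so the toric morphism $(q_\varphi)_*\colon X(\Delta_{1,\varphi})\to X(\Delta_1(\varphi))$ is the coordinate projection $[x_0:\dots:x_d]\mapsto[x_i]_{i\in\varphi}$, which is a genuine morphism on $X(\Delta_{1,\varphi})$ because that open subset misses $\{x_i=0:i\in\varphi\}$. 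Composing with $\iota_1$, the morphism $\iota_{1,\varphi}$ restricts on the torus $\PP^n_k\cap T_N$ to $a\mapsto[f_i(a)]_{i\in\varphi}$, which agrees there with the linear automorphism $\ell_\varphi$ of $\PP^n_k$ attached to the basis $\{f_i:i\in\varphi\}$; since $\PP^n_k\cap T_N$ is dense in the reduced scheme $\PP^n_k\cap X(\Delta_{1,\varphi})$ and $X(\Delta_1(\varphi))$ is separated, $\iota_{1,\varphi}=\ell_\varphi|_{\PP^n_k\cap X(\Delta_{1,\varphi})}$, the restriction of an isomorphism to an open subscheme, hence an open immersion. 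I expect this last step---matching the abstract toric morphism $(q_\varphi)_*$ with an honest coordinate projection and recognizing $\iota_{1,\varphi}$ as a linear change of coordinates---to be the only place requiring genuine care with the toric dictionary; everything else is combinatorial bookkeeping with $\{0,\dots,d\}$.
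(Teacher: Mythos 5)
Your proof is correct and follows essentially the same route as the paper's: conditions (1)--(3) by direct lattice bookkeeping, (4) from the general position of the hyperplane arrangement, and (5) by identifying $(q_\varphi)_*$ with the linear coordinate projection so that $\iota_{1,\varphi}$ becomes the restriction of the linear automorphism of $\PP^n_k$ attached to the basis $\{f_i\}_{i\in\varphi}$, hence an open immersion. Your write-up is merely more detailed than the paper's, which cites the simple normal crossing property for (4) and asserts the projection facts for (5), but the underlying argument is the same.
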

    \begin{proof}
        We check that this tuple satisfies the conditions in Definition \ref{def:mock-toric} from (1) to (5) in order. 
        \begin{enumerate}
            \item[(1)] We can check the condition (3) from the definition of $\{N_\varphi\}_{\varphi\in\Phi}$ and $\{\Delta_{1, \varphi}\}_{\varphi\in\Phi}$. 
            \item[(2)] We can check the condition (2) from the definition of $\Delta_1$ and $\{\Delta_{1, \varphi}\}_{\varphi\in\Phi}$. 
            \item[(3)] We can check the condition (3) from the definition of $\{N_\varphi\}_{\varphi\in\Phi}$ and $\{\Delta_{1, \varphi}\}_{\varphi\in\Phi}$. 
            \item[(4)] From the assumption of $\mathcal{B}$, $\PP^n_k\setminus T_N$ is a simple normal crossing divisor of $\PP^n_k$.  
            Thus, the condition (4) holds. 
            \item[(5)] We fix $\varphi\in\Phi$. 
            Let $\Delta_1(\varphi)$ denote $\{(q_\varphi)_\RR(\sigma)\mid \sigma\in\Delta_{1, \varphi}\}$. 
            We can check that $X(\Delta_1(\varphi))$ is isomorphic to $\PP^n_k$ and the rational map $\PP^d_k\dashrightarrow X(\Delta_1(\varphi))$ which induced by $q_\varphi\colon N\rightarrow N/N_\varphi$ is only a linear projection. 
            Let $P_\varphi$ denote this linear projection. 
            From the assumption of $\mathcal{B}$, $\iota(\PP^n_k)$ doesn't intersect the indeterminant locus of $P_\varphi$. 
            Moreover, the restriction $P_\varphi|_{\iota_0(\PP^n_k)}$ is an isomorphism. 
            Thus, the composition $(q_\varphi)_*\circ \iota_1|_{\PP^n_k\cap X(\Delta_{1, \varphi})}$ is an open immersion.  
        \end{enumerate}
    \end{proof}
    As a corollary of \ref{prop:basic-example-hyperplane}, we show that Del Pezzo surfaces have a mock toric structure. 
    \begin{corollary}\label{cor: Del-Pezzo-model}
        Let $\{p_1, \ldots, p_m\}\subset \PP^2_k(k)$ be finite $k-$rational points on $\PP^2_k$, and $X$ denote $\mathrm{Bl}_{\{p_1, \ldots, p_m\}}\PP^2_k$. 
        Then, $X$ has a mock toric structure. 
        In particular, Del Pezzo surfaces have a mock toric structure. 
    \end{corollary}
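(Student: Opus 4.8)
The plan is to realise $X$ as the pullback of $\PP^2_k$ — equipped with one of the non-toric mock toric structures produced by Proposition~\ref{prop:basic-example-hyperplane} — along an explicit toric modification, and then to apply Proposition~\ref{prop:relative-induced-structure}. We may assume $m\ge 1$, the case $m=0$ being trivial since $\PP^2_k$ is toric.

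First I would fix a finite set $\mathcal{B}=\{f_0,\dots,f_d\}$ of nonzero linear forms on $\PP^2_k$ such that every three of them are linearly independent and such that, for each $1\le j\le m$, exactly two of them, say $f_{a_j}$ and $f_{b_j}$, vanish at $p_j$, with the $2m$ indices $a_1,b_1,\dots,a_m,b_m$ pairwise distinct. To obtain such a $\mathcal{B}$ one chooses, for each $j$, two general lines through $p_j$, discarding the finitely many lines through two of the $p_i$ and the choices that make some triple of chosen lines concurrent, and (only when $m=1$) adjoining one further general line so that $\mathcal{B}$ still spans $\Gamma(\PP^2_k,\OO_{\PP^2_k}(1))$; a dimension count shows this is possible. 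Proposition~\ref{prop:basic-example-hyperplane} then equips $\PP^2_k$ with a mock toric structure $(N,\Delta_1,\iota_1,\Phi,\{N_\varphi\}_{\varphi\in\Phi},\{\Delta_{1,\varphi}\}_{\varphi\in\Phi})$ whose ambient toric variety is the smooth toric variety $X(\Delta_1)$, where $\iota_1$ is the embedding $[f_0:\cdots:f_d]$.

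Next I would produce the toric modification. For each $j$ the cone $\tau_j:=\RR_{\ge0}e_{a_j}+\RR_{\ge0}e_{b_j}$ is a smooth two-dimensional, hence maximal, cone of $\Delta_1$; because $f_{a_j},f_{b_j}$ are exactly the members of $\mathcal{B}$ vanishing at $p_j$, the point $\iota_1(p_j)$ lies on the orbit $O_{\tau_j}$, which is a closed smooth codimension-two subvariety of $X(\Delta_1)$, and $\iota_1(\PP^2_k)$ meets $O_{\tau_j}$ transversally at the single point $\iota_1(p_j)$ (since the two independent linear forms $f_{a_j},f_{b_j}$ cut out $p_j$ transversally). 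Since the $\tau_j$ are pairwise distinct maximal cones, the iterated star subdivision of $\Delta_1$ at $\tau_1,\dots,\tau_m$ is well defined and yields a unimodular subdivision $\Delta_1'$ for which $X(\Delta_1')\to X(\Delta_1)$ is the blow-up along the disjoint centres $O_{\tau_1},\dots,O_{\tau_m}$. Taking $N'=N$, $\pi=s=\id_N$ and $\Delta'=\Delta_1'$ in Proposition~\ref{prop:relative-induced-structure}, the pullback $Z':=\PP^2_k\times_{X(\Delta_1)}X(\Delta_1')$ inherits a mock toric structure; in particular $Z'$ is an integral scheme by Proposition~\ref{prop:basic-property1}(a).

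Finally I would identify $Z'$ with $X$: since blowing up a smooth centre commutes with transverse base change, the pullback of $\iota_1(\PP^2_k)$ along the blow-up of $X(\Delta_1)$ at $O_{\tau_1}\sqcup\cdots\sqcup O_{\tau_m}$ is exactly $\mathrm{Bl}_{\{p_1,\dots,p_m\}}\PP^2_k$, so $Z'\cong X$ and $X$ acquires a mock toric structure. For the last assertion, every Del Pezzo surface is $\PP^2_k$, or $\PP^1_k\times\PP^1_k$ — both toric, hence mock toric by the example in Section~3 — or a blow-up of $\PP^2_k$ in $m\le 8$ points, covered by the above. The step I expect to be the main obstacle is the geometric bookkeeping: checking that a suitable $\mathcal{B}$ exists realising each $p_j$ as a transverse intersection point with a distinct codimension-two torus orbit, that the iterated star subdivision has no unintended interactions, and — most delicately — that the scheme-theoretic pullback $\PP^2_k\times_{X(\Delta_1)}X(\Delta_1')$ is already the irreducible blow-up rather than a total transform with spurious components; these reduce to routine but slightly delicate local computations in affine toric charts.
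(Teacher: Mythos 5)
Your proposal is correct and follows essentially the same route as the paper: equip $\PP^2_k$ with the hyperplane-arrangement mock toric structure of Proposition \ref{prop:basic-example-hyperplane} using two general lines through each $p_j$ (so each $p_j$ becomes a codimension-two stratum), and then apply Proposition \ref{prop:relative-induced-structure} to a toric subdivision to obtain the blow-up. The extra details you supply (the explicit star subdivisions, the transversality check identifying the fiber product with $\mathrm{Bl}_{\{p_1,\ldots,p_m\}}\PP^2_k$, and the adjustment for small $m$) are points the paper's proof leaves implicit, not a different argument.
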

    \begin{proof}
        Let $\{L^{0, i}, L^{1, i}\}_{1\leq i\leq m}$ be a family of lines in $\PP^2_k$ such that the following two conditions hold: 
        \begin{itemize}
            \item[(a)] For any $1\leq i\leq m$, $L^{0, i}\cap L^{1, i} = \{p_i\}$. 
            \item[(b)] These $2m$ lines are all distinct and all three lines of $2m$ lines has no common point. 
        \end{itemize}
        Thus, from Proposition \ref{prop:basic-example-hyperplane}, $\PP^2_k$ has a mock toric structure associated with the hyperplane arrangement. 
        In particular, $p_1, \ldots, p_m$ are strata of this mock toric structure. 
        Thus, from Proposition \ref{prop:relative-induced-structure}, $\mathrm{Bl}_{p_1, \ldots, p_m}\PP^2_k$ has a mock toric structure. 
    \end{proof}

\section{Hypersurfaces of mock toric varieties}
        Let $Z$ be a scheme over $k$ and $(N$, $\Delta$, $\iota$, $\Phi$, $\{N_\varphi\}_{\varphi\in\Phi}$, $\{\Delta_\varphi\}_{\varphi\in\Phi})$ be a mock toric structure of $Z$. 
        Let $f$ denote a function in $k[Z^\circ]$ and $H$ denote the hypersurface in $Z^\circ$ defined by $f = 0$. 
        We have an interest in the structure of the scheme theoretic closure $\overline{H}$ of $H$ in $Z$. 
        When $Z$ is a toric variety, its structure is important to compute the stable birational volume of $H$. 
        Although $H$ and $\overline{H}$ are birational, it is important to take a compactification of $H$ when we calculate it (cf.\cite{NO22}). 
        For expanding on the previous point, the following two conditions are important for any $\sigma\in\Delta$:
        \begin{itemize}
            \item $Z_\sigma\not\subset\overline{H}$
            \item $Z_\sigma\cap \overline{H}$ is smooth over $k$. 
        \end{itemize}
        In this subsection, we give the criterion of the first condition.
        Moreover, by considering an appropriate subdivision $\Delta'$ of $\Delta$ and the induced mock toric variety $Z'\subset X(\Delta')$, we observe that the first condition is satisfied for all cones in $\Delta'$. 
        For checking these aspects, the valuations introduced in subsection 4.3 are valuable. 
        In the latter part of this section, we explore the inheritance of the first condition for the mock toric varieties defined by $Z$, as discussed in Section 4, even when assuming $Z$ and $f$ satisfy the first condition.
        
        In this section, we use the following notation:
        \begin{itemize}
            \item Let $M$ denote the dual lattice of $N$ and $k[M]$ denote a global section ring of an algebraic torus $T_N$. 
            \item For $\varphi\in\Phi$, let $M_\varphi$ denote the dual lattice of $N/N_\varphi$ and $q_\varphi^*\colon M_\varphi\hookrightarrow M$ be a group morphism induced by $q_\varphi\colon N\rightarrow N/N_\varphi$. 
            \item For $\omega\in M$, let $\chi^\omega$ denote a torus invariant monomial in $k[M]$ associated with $\omega$. 
            \item Let $\langle \cdot, \cdot\rangle\colon N\times M\rightarrow \ZZ$ be a natural pairing. 
            We identify elements in $N$ as torus invariant valuations so that we can check that $\langle v, \omega\rangle = v(\chi^\omega)$. 
            \item Let $k[Z^\circ]$ denote a global section ring $\Gamma(Z^\circ, \mathscr{O}_Z)$. 
            \item For $f\in k[Z^\circ]$, let $H^\circ_{Z, f}$ denote a closed subscheme of $Z^\circ$ defined by $f = 0$. 
            \item For $f\in k[Z^\circ]$, let $H_{Z, f}$ denote a scheme theoretic closure of $H^\circ_{Z, f}$ in $Z$. 
        \end{itemize}
        \subsection{Fineness and non-degeneracy of $f$ for $Z$}
        In this subsection, we consider two conditions of $f\in k[Z^\circ]$ for $\Delta$. 
        In particular, for one condition, we provide a criterion using the valuations defined in subsection 4.3.
        
        We prepare the notation for the hypersurfaces of the toric varieties. 
        The second condition in the following definition is called \textbf{Newton non-degenerate}. 
        \begin{definition}\label{def:recall-fine,non-dege}
            Let $N$ be a lattice of a finite rank, $\Delta$ be a strongly convex rational polyhedral fan in $N_\RR$, $M$ be the dual lattice of $N$, $f$ be a polynomial in $k[M]$, $H^\circ_{X(\Delta), f}$ denote a closed subscheme of $T_N$ defined by $f = 0$, and $H_{X(\Delta), f}$ denote a scheme theoretic closure of $H^\circ_{X(\Delta), f}$ in $X(\Delta)$.

            Then we define the following conditions about $f\in k[M]$: 
            \begin{enumerate}
                \item[(1)] If $O_\sigma\not\subset H_{X(\Delta), f}$ for all $\sigma\in\Delta$, then we call that $f$ is \textbf{fine} for $\Delta$. 
                \item[(2)] If $f$ is fine for $\Delta$ and $H_{X(\Delta)}\cap O_\sigma$ is smooth over $k$ for all $\sigma\in\Delta$, then we call that $f$ is \textbf{non-degenerate} for $\Delta$. 
            \end{enumerate}
        \end{definition}
        Mock toric varieties are covered by some open subsets of toric varieties. 
        The following proposition indicates that hypersurfaces of the mock toric variety are also covered by some open subsets of hypersurfaces of toric varieties.  
        \begin{proposition}\label{prop:fundamental hypersurface}
            Let $\sigma\in\Delta$ be a cone, $f\in k[Z^\circ]$ be a function, and $\varphi\in\Phi$ be an element such that $\sigma\in\Delta_\varphi$. 
            Then the following statements follow:
            \begin{enumerate}
                \item[(a)] There exists $g_\varphi\in k[M_\varphi]$ such that $f/\iota^*_\varphi(g_\varphi)\in k[Z^\circ]^*$. 
                \item[(b)] For such $g_\varphi\in k[M_\varphi]$, the following equation holds:
                \[
                    H_{X(\Delta(\varphi)),g_\varphi} \times_{X(\Delta(\varphi))} (Z\cap X(\Delta_\varphi)) = H_{Z, f}\cap (Z\cap X(\Delta_\varphi))
                \]
            \end{enumerate}
        \end{proposition}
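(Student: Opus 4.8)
\emph{Proof proposal.} Fix $\varphi$ with $\sigma\in\Delta_\varphi$. The plan is to first pin down the function theory of $Z^\circ$ in terms of the torus $T_{N/N_\varphi}$. Applying the Cartesian squares in the proof of Proposition \ref{prop:basic-property1}(c) to the cone $\{0\}\in\Delta_\varphi$ shows that $\iota_\varphi$ restricts to an open immersion $Z^\circ=Z\cap T_N\hookrightarrow T_{N/N_\varphi}$ and that $\iota_\varphi^{-1}(T_{N/N_\varphi})=Z^\circ$ inside $Z\cap X(\Delta_\varphi)$. Since $Z^\circ$ is a nonempty open subscheme of the irreducible scheme $T_{N/N_\varphi}$, it is dense, so $\iota^*_\varphi$ identifies the function field $K(Z^\circ)$ with $\Frac(k[M_\varphi])$. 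Thus $f\in k[Z^\circ]$ is a rational function on $T_{N/N_\varphi}$ that is regular on the dense open $Z^\circ$.

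For (a) I would argue as follows. The ring $k[M_\varphi]$ is a Laurent polynomial ring over $k$, hence a UFD; for each height one prime $\mathfrak p\subset k[M_\varphi]$ the localization is a DVR, with valuation $v_{\mathfrak p}$ and uniformizer a generator $q_{\mathfrak p}$ of $\mathfrak p$. First I claim that $v_{\mathfrak p}(f)<0$ forces $V(\mathfrak p)\cap Z^\circ=\emptyset$: otherwise $V(\mathfrak p)\cap Z^\circ$ is a nonempty open subset of the irreducible $V(\mathfrak p)$, hence contains its generic point $\eta$, and $f\in\Gamma(Z^\circ,\OO_Z)\subseteq\OO_{Z^\circ,\eta}=\OO_{T_{N/N_\varphi},\eta}$ gives $v_{\mathfrak p}(f)=v_\eta(f)\geq 0$, a contradiction. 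Let $S$ be the finite set of height one primes with $v_{\mathfrak p}(f)<0$ and set $g_\varphi:=f\cdot\prod_{\mathfrak p\in S}q_{\mathfrak p}^{-v_{\mathfrak p}(f)}$. A valuation check at every height one prime gives $v(g_\varphi)\geq 0$, so $g_\varphi\in k[M_\varphi]$ because a UFD is the intersection of its localizations at height one primes. Finally $f/\iota^*_\varphi(g_\varphi)=\iota^*_\varphi\bigl(\prod_{\mathfrak p\in S}q_{\mathfrak p}^{\,v_{\mathfrak p}(f)}\bigr)$, and each $\iota^*_\varphi(q_{\mathfrak p})$ vanishes nowhere on $Z^\circ$ (since $V(q_{\mathfrak p})\cap Z^\circ=\emptyset$), hence is a unit in $k[Z^\circ]$ because on an integral scheme a global function that is nowhere zero is invertible in the ring of global sections; therefore $f/\iota^*_\varphi(g_\varphi)\in k[Z^\circ]^*$.

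For (b), given such a $g_\varphi$, write $U:=Z\cap X(\Delta_\varphi)$, viewed via the open immersion $\iota_\varphi$ as an open subscheme of $X(\Delta(\varphi))$ with $\iota_\varphi^{-1}(T_{N/N_\varphi})=Z^\circ$. Both sides of the claimed identity are closed subschemes of $U$. On the one hand, $H^\circ_{Z,f}\subseteq Z^\circ\subseteq U$ and $H_{Z,f}$ is the scheme theoretic closure of $H^\circ_{Z,f}$ in $Z$, so by Lemma \ref{lem:open-image} (compatibility of scheme theoretic closure with base change along an open immersion) $H_{Z,f}\cap U$ is the scheme theoretic closure of $H^\circ_{Z,f}$ in $U$; and by (a), $f$ and $\iota^*_\varphi(g_\varphi)$ differ by a unit of $k[Z^\circ]$, so $H^\circ_{Z,f}=H^\circ_{Z,\iota^*_\varphi(g_\varphi)}$ as closed subschemes of $Z^\circ$. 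On the other hand, $H_{X(\Delta(\varphi)),g_\varphi}$ is by definition the scheme theoretic closure of $H^\circ_{X(\Delta(\varphi)),g_\varphi}\subseteq T_{N/N_\varphi}$ in $X(\Delta(\varphi))$, so applying Lemma \ref{lem:open-image} to the open immersion $U\hookrightarrow X(\Delta(\varphi))$ and using $\iota_\varphi^{-1}(H^\circ_{X(\Delta(\varphi)),g_\varphi})=H^\circ_{Z,\iota^*_\varphi(g_\varphi)}$ (which holds because $\iota_\varphi^{-1}(T_{N/N_\varphi})=Z^\circ$), we get that $H_{X(\Delta(\varphi)),g_\varphi}\times_{X(\Delta(\varphi))}U$ is the scheme theoretic closure of $H^\circ_{Z,\iota^*_\varphi(g_\varphi)}$ in $U$. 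Hence both sides coincide with the scheme theoretic closure of $H^\circ_{Z,\iota^*_\varphi(g_\varphi)}=H^\circ_{Z,f}$ in $U$, which proves (b).

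I expect the main obstacle to be part (a): the substance is the commutative-algebra observation that, written in lowest terms over the Laurent polynomial ring $k[M_\varphi]$, the denominator of $f$ can only vanish on the complement of $Z^\circ$, together with the (routine but necessary) remark that a nowhere-vanishing global function on an integral scheme is a unit in its ring of global sections. Once (a) is available, (b) is purely formal, using only Lemma \ref{lem:open-image} and the fact that multiplying a section by a unit does not change the ideal it generates.
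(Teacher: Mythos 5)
Your proof is correct and takes essentially the same route as the paper: part (a) rests on condition (5) making $\iota_\varphi\colon Z^\circ\hookrightarrow T_{N/N_\varphi}$ an open immersion together with the UFD property of $k[M_\varphi]$ and normality of $k[Z^\circ]$ (you clear poles via height-one valuations where the paper writes $f$ as a coprime fraction, which is the same argument in substance). Part (b) matches the paper's proof: both sides are identified with the scheme-theoretic closure of $H^\circ_{Z,f}=H^\circ_{Z,\iota^*_\varphi(g_\varphi)}$ in $Z\cap X(\Delta_\varphi)$ using Lemma \ref{lem:open-image} and condition (5).
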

        \begin{proof}
            We prove the statements from (a) to (b). 
            \begin{enumerate}
                \item[(a)] From the condition (5) in Definition \ref{def:mock-toric}, $\iota_\varphi\colon Z^\circ\hookrightarrow T_{N/N_\varphi}$ is an open immersion of affine schemes. 
                Moreover, $k[M_\varphi]$ is a UFD, and $k[Z^\circ]$ is a Noetherian normal integral scheme from Proposition\ref{prop:stratification-structure}(b), so the statement follows. 
                \item[(b)] There exists the following diagram whose two small squares are Cartesian squares:
                \begin{equation*}
                    \begin{tikzcd} 
                        H^\circ_{Z, f}\ar[r,"\iota'"]\ar[d]& Z^\circ\ar[d, "\iota_\varphi"]\ar[r]& Z\ar[d, "\iota_\varphi"]\cap X(\Delta_\varphi)\\
                        H^\circ_{X(\Delta(\varphi)), g_\varphi}\ar[r]& T_{N/N_\varphi}\ar[r]& X(\Delta(\varphi))
                    \end{tikzcd}
                \end{equation*}
                Thus, from Lemma \ref{lem:open-image} and the condition (5) in Definition \ref{def:mock-toric}, the statement holds.
            \end{enumerate}
        \end{proof}
        In toric varieties, the values $v(\chi^\omega)$ has additivity for $v\in N$ where $\omega\in M$. 
        The following proposition shows that the values $val_Z(-)(\chi)$ also have additivity for lattice points in each cone in $\Delta$, where $\chi\in k[Z^\circ]^*$.  
        \begin{proposition}\label{prop: units correspondence}
            Let $\chi\in k[Z^\circ]^*$ be a unit, $\sigma\in\Delta$ be a cone and $\varphi\in\Phi$ be an element such that $\sigma\in\Delta_\varphi$. 
            Then there exists $\omega\in M_\varphi$ such that $\val_Z(v)(\chi) = \val_Z(v)(\iota^*_\varphi(\chi^\omega))$ for any $v \in \sigma\cap N$. 
        \end{proposition}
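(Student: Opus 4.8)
The plan is to move everything to the toric chart $X(\Delta(\varphi))$ and exploit that, by condition (5) in Definition \ref{def:mock-toric}, $\iota_\varphi$ restricts to an open immersion $Z^\circ\hookrightarrow T_{N/N_\varphi}$, so that it identifies $K(Z)$ with $k(M_\varphi)$ and $k[Z^\circ]$ with $\Gamma(U,\OO_{T_{N/N_\varphi}})$ for the non-empty affine open set $U:=\iota_\varphi(Z^\circ)$. Since $k[M_\varphi]$ is a UFD and so is $k[Z^\circ]$ (Proposition \ref{prop:stratification-structure}(d)), the argument in the proof of Proposition \ref{prop:stratification-structure}(d) shows that the prime divisors of $T_{N/N_\varphi}$ appearing in the divisor of $\chi$ are disjoint from $U$, and that, writing $g_1,\dots,g_r\in k[M_\varphi]$ for their irreducible equations, one has
\[
    \chi = c\cdot\iota_\varphi^*(\chi^{\omega_0})\cdot\prod_{i=1}^{r}\iota_\varphi^*(g_i)^{a_i}
\]
with $c\in k^*$, $\omega_0\in M_\varphi$, $a_i\in\ZZ$; moreover each $\iota_\varphi^*(g_i)$ is already a unit of $k[Z^\circ]$, being regular and non-vanishing on $U$. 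As $\val_Z(v)$ is trivial on $k$ and $\val_Z(v)(\iota_\varphi^*(\chi^{\omega'}))=\langle v,q_\varphi^*(\omega')\rangle$ for $\omega'\in M_\varphi$ (Lemma \ref{lem: supplement-of-valuation1} applied to $q_\varphi^*(\omega')\in M$, together with the identity $\iota_\varphi^*(\chi^{\omega'})=\iota^*(\chi^{q_\varphi^*(\omega')})$), it suffices to find, for each $i$, an $\omega_i\in M_\varphi$ with $\val_Z(v)(\iota_\varphi^*(g_i))=\langle v,q_\varphi^*(\omega_i)\rangle$ for all $v\in\sigma\cap N$; then $\omega:=\omega_0+\sum_i a_i\omega_i$ answers the question.

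Next I would fix $g:=g_i$, put $\sigma_\varphi:=(q_\varphi)_\RR(\sigma)\in\Delta(\varphi)$, and let $V_\sigma:=\iota_\varphi(Z(\sigma))$, which by the proof of Proposition \ref{prop:basic-property1}(c) is an open subscheme of the affine chart $X(\sigma_\varphi)$ meeting every torus orbit of $X(\sigma_\varphi)$ (combine that proof with Proposition \ref{prop:stratification-structure}(a), which gives $Z_\gamma\subseteq Z(\sigma)$ for every face $\gamma\preceq\sigma$, hence $\iota_\varphi(Z_\gamma)\subseteq V_\sigma$ is a non-empty open of $O_{(q_\varphi)_\RR(\gamma)}$). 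I claim the scheme-theoretic closure $H_{X(\sigma_\varphi),g}$ of $\{g = 0\}$ in $X(\sigma_\varphi)$ contains no torus orbit. Indeed $H_{X(\sigma_\varphi),g}$ is irreducible, and it meets the open torus $T_{N/N_\varphi}$, so it is not a torus-invariant divisor; but a codimension-one component of $H_{X(\sigma_\varphi),g}\cap V_\sigma$, if it were non-empty, would be disjoint from $T_{N/N_\varphi}$, hence would lie in the toric boundary of $V_\sigma$ and therefore coincide with a torus-invariant boundary divisor of $V_\sigma$, forcing $H_{X(\sigma_\varphi),g}$ itself to be torus invariant --- a contradiction. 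Thus $H_{X(\sigma_\varphi),g}\cap V_\sigma=\emptyset$, and since $V_\sigma$ meets every orbit, $H_{X(\sigma_\varphi),g}$ contains no orbit of $X(\sigma_\varphi)$.

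Now I would apply the dictionary between orbit containment and the defining Laurent polynomial (Lemma \ref{lem: orbit and function}, equivalently a direct computation with initial forms): as $H_{X(\sigma_\varphi),g}$ contains no orbit of $X(\sigma_\varphi)$, there is $\omega_i\in M_\varphi$ with $g=\chi^{\omega_i}g_0$, where $g_0\in k[\sigma_\varphi^\vee\cap M_\varphi]$ has a non-zero constant term. Then $\iota_\varphi^*(g_0)$ is regular on the affine scheme $Z(\sigma)$ (Proposition \ref{prop:stratification-structure}(b)) and non-vanishing on $Z^\circ$, being the ratio $\iota_\varphi^*(g)/\iota_\varphi^*(\chi^{\omega_i})$ of units of $k[Z^\circ]$; if it vanished at some point of $Z(\sigma)$ its zero locus would contain a prime divisor lying in $Z(\sigma)\setminus Z^\circ$, hence dense in some orbit $O_{\gamma_\varphi}$, contradicting that $g_0$ restricts to a non-zero function on every orbit of $X(\sigma_\varphi)$ (its constant term survives). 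Hence $\iota_\varphi^*(g_0)\in\Gamma(Z(\sigma),\OO_Z)^*$, so Proposition \ref{prop:characterization-of-affine mock toric}, applied to $\iota_\varphi^*(g_0)$ and to its inverse, yields $\val_Z(v)(\iota_\varphi^*(g_0))=0$ for all $v\in\sigma\cap N$, and therefore $\val_Z(v)(\iota_\varphi^*(g))=\val_Z(v)(\iota_\varphi^*(\chi^{\omega_i}))=\langle v,q_\varphi^*(\omega_i)\rangle$, which completes the reduction of the first paragraph.

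The step I expect to be the real obstacle is the normal-form statement in the third paragraph: passing from "$H_{X(\sigma_\varphi),g}$ contains no torus orbit" to "$g=\chi^{\omega_i}g_0$ with $g_0\in k[\sigma_\varphi^\vee\cap M_\varphi]$ of non-zero constant term". On a possibly non-simplicial affine chart this is a genuine assertion about the Newton polytope of $g$ relative to $\sigma_\varphi$, and it is exactly what the orbit/defining-function lemma supplies; everything else is bookkeeping with valuations, the UFD property of $k[Z^\circ]$, Proposition \ref{prop:characterization-of-affine mock toric}, and the Cartesian diagrams already available for mock toric varieties.
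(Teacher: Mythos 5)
Your argument is correct and, at its core, takes the same route as the paper's proof: present $\chi$ by Laurent polynomials on the chart $T_{N/N_\varphi}$ using that $k[M_\varphi]$ is a UFD, check that the corresponding toric hypersurfaces miss the image of $Z$ and hence contain no torus orbit, and then apply Lemma \ref{lem: orbit and function} to produce the monomial. The differences are in execution. The paper writes $\chi=\iota_\varphi^*(a_\varphi)/\iota_\varphi^*(b_\varphi)$ with $a_\varphi,b_\varphi$ coprime and obtains fineness of $a_\varphi,b_\varphi$ for all of $\Delta(\varphi)$ at once from Proposition \ref{prop:fundamental hypersurface}(b) combined with Proposition \ref{prop:basic-property1}(c), whereas you re-derive the needed non-containment by hand inside $V_\sigma=\iota_\varphi(Z(\sigma))$ via the boundary-divisor argument; both work. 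Your third paragraph, however, is redundant: by the very definition $\val_Z(v)=(\iota_\varphi^{-1})_\natural(q_\varphi(v))$ one has $\val_Z(v)(\iota_\varphi^*(h))=q_\varphi(v)(h)$ for every nonzero $h\in k[M_\varphi]$, so the conclusion of Lemma \ref{lem: orbit and function}, namely $q_\varphi(v)(g\chi^{\omega})=0$ for all $v\in\sigma\cap N$, already yields $\val_Z(v)(\iota_\varphi^*(g))=\val_Z(v)(\iota_\varphi^*(\chi^{-\omega}))$ without passing through $\Gamma(Z(\sigma),\OO_Z)^*$ and Proposition \ref{prop:characterization-of-affine mock toric}. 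Finally, the normal form you flag as the real obstacle is stated slightly more strongly than the lemma literally gives: Lemma \ref{lem: orbit and function} produces $g_0=g\chi^{\omega}$ with exponents in $\sigma_\varphi^\vee$ and $w(g_0)=0$ for all $w\in\sigma_\varphi\cap(N/N_\varphi)$, which (evaluating at a lattice point $w\in\sigma_\varphi^\circ$) guarantees only that some exponent of $g_0$ lies in $\sigma_\varphi^\perp$, not that the constant term is non-zero; this weaker fact is all your orbit-restriction argument needs, since a monomial with exponent in $\sigma_\varphi^\perp$ survives restriction to every orbit closure of $X(\sigma_\varphi)$, and the constant-term form does follow after replacing $\omega$ by $\omega-\omega'$ for such an exponent $\omega'$, so this is an imprecision rather than a gap.
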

        \begin{proof}
            Because $k[M_\varphi]$ is a UFD, there exists $a_\varphi, b_\varphi\in k[M_\varphi]$ such that these are co-prime and $\chi=\iota^*_\varphi(a_\varphi)/\iota^*_\varphi(b_\varphi)$. 
            In particular, $\iota^*_\varphi(a_\varphi)$ and $\iota^*_\varphi(b_\varphi)$ are units in $k[Z^\circ]^*$. 
            Thus, $H_{Z, \iota^*(a_\varphi)} = \emptyset$, so that $H_{X(\Delta(\varphi)), a_\varphi}\cap \iota_\varphi(Z\cap X(\Delta_\varphi)) = \emptyset$ from Proposition \ref{prop:fundamental hypersurface}(b). 
            Hence, $a_\varphi$  is fine for $\Delta(\varphi)$ from Proposition \ref{prop:basic-property1}(c). 
            Therefore, from Lemma \ref{lem: orbit and function}, there exists $\omega_1\in M_\varphi$ such that $(q_\varphi (v))(a_\varphi\chi^{\omega_1})= 0$ for any $v\in\sigma\cap N$. 
            In particular, $\val_Z(v)(\iota^*_\varphi(a_\varphi)) = \val_Z(v)(\iota^*_\varphi(\chi^{-\omega_1}))$ for any $v\in\sigma\cap N$. 
            The same argument follows for $b_\varphi$, so the statement holds.
        \end{proof}
        The following proposition shows the criterion for the conditions on the scheme theoretic closure of the hypersurface of $Z^\circ$ in $Z$, as described in the initial part of the section. 
        \begin{proposition}\label{prop: condition of fine}
            Let $f\in k[Z^\circ]$ be a function and $\sigma\in\Delta$ be a cone. 
            Then the following statements are equivalent:  
            \begin{enumerate}
                \item[(a)] There exists $\chi\in k[Z^\circ]^*$ such that $\val_Z(v)(\chi f) = 0$ for any $v\in \sigma\cap N$. 
                \item[(b)] 
                $Z_\sigma \not\subset H_{Z, f}$. 
            \end{enumerate}
            In particular, if these conditions hold, an ideal associated with a closed subscheme $Z(\sigma)\cap H_{Z, f}$ of an affine scheme $Z(\sigma)$ is generated by $\chi f$. 
        \end{proposition}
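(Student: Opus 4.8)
The strategy is to transfer both conditions to the toric setting via the open immersion $\iota_\varphi$, where the analogous statement is Lemma~\ref{lem: orbit and function}, and to bridge between $f$ and a torus monomial using Proposition~\ref{prop:fundamental hypersurface} together with the additivity of $\val_Z$ along a cone supplied by Proposition~\ref{prop: units correspondence} and Lemma~\ref{lem: supplement-of-valuation1}. After disposing of the trivial case $f = 0$ (where both (a) and (b) fail), I would fix $\varphi\in\Phi$ with $\sigma\in\Delta_\varphi$, use Proposition~\ref{prop:fundamental hypersurface}(a) to choose $g_\varphi\in k[M_\varphi]$ with $u := f/\iota^*_\varphi(g_\varphi)\in k[Z^\circ]^*$ (so $g_\varphi\neq 0$), and set $\sigma_\varphi := (q_\varphi)_\RR(\sigma)$. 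The first step is the geometric translation
\[
    Z_\sigma\subset H_{Z, f}\iff O_{\sigma_\varphi}\subset H_{X(\Delta(\varphi)), g_\varphi},
\]
obtained from Proposition~\ref{prop:fundamental hypersurface}(b) and the fact (proof of Proposition~\ref{prop:basic-property1}(c)) that $\iota_\varphi$ carries $Z_\sigma$ onto a dense open subset of the irreducible torus $O_{\sigma_\varphi}$, so containment in the closed set $H_{X(\Delta(\varphi)), g_\varphi}$ can be tested on either. I would also record that $\sigma_\varphi\cap(N/N_\varphi) = q_\varphi(\sigma\cap N)$ (from condition~(3) of Definition~\ref{def:mock-toric} and Proposition~\ref{prop: first prop}(d)) and the identity $\val_Z(v)(\iota^*_\varphi(h)) = (q_\varphi(v))(h)$ for $h\in k[M_\varphi]$ and $v\in\sigma\cap N$ (immediate from the construction of $\val_Z$, the monomial case being Lemma~\ref{lem: supplement-of-valuation1} since $\iota^*_\varphi(\chi^\omega) = \iota^*(\chi^{q^*_\varphi(\omega)})$).

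With these in place the equivalence (a)$\iff$(b) is a short computation. For (b)$\Rightarrow$(a): the first step and Lemma~\ref{lem: orbit and function} give $\omega\in M_\varphi$ with $(q_\varphi(v))(g_\varphi\chi^\omega) = 0$ for all $v\in\sigma\cap N$; then $\chi := u^{-1}\iota^*_\varphi(\chi^\omega)\in k[Z^\circ]^*$ satisfies $\chi f = \iota^*_\varphi(g_\varphi\chi^\omega)$, whence $\val_Z(v)(\chi f) = (q_\varphi(v))(g_\varphi\chi^\omega) = 0$. For (a)$\Rightarrow$(b): given such a $\chi$, apply Proposition~\ref{prop: units correspondence} to the units $\chi$ and $u^{-1}$ to obtain $\omega,\omega'\in M_\varphi$ with $\val_Z(v)(\chi) = \langle q_\varphi(v),\omega\rangle$ and $\val_Z(v)(u^{-1}) = \langle q_\varphi(v),\omega'\rangle$ on $\sigma\cap N$; expanding $0 = \val_Z(v)(\chi f) = \val_Z(v)(\chi) + \val_Z(v)(u) + \val_Z(v)(\iota^*_\varphi(g_\varphi))$ then rewrites as $(q_\varphi(v))(g_\varphi\chi^{\omega-\omega'}) = 0$ for all $v\in\sigma\cap N$, so Lemma~\ref{lem: orbit and function} and the first step yield $Z_\sigma\not\subset H_{Z,f}$.

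For the final assertion, assume the conditions hold with $\chi$ as in (a). Since $\val_Z(v)(\chi f)\geq 0$ for all $v\in\sigma\cap N$, Proposition~\ref{prop:characterization-of-affine mock toric} places $\chi f$ in $A := \Gamma(Z(\sigma),\OO_Z)$, and because $\chi$ is invertible on $Z^\circ$ the closed subscheme $H_{Z,f}\cap Z(\sigma)$ is the scheme-theoretic closure in $Z(\sigma)$ of the subscheme of $Z^\circ$ cut out by $\chi f$ (compatibility of scheme-theoretic closure with open restriction, cf.\ Lemma~\ref{lem:open-image}). Now $A$ is a normal Noetherian domain (Proposition~\ref{prop:stratification-structure}(b)), so the nonzero principal ideal $(\chi f)$ has no embedded primes and all its associated primes have height $1$; the only height-$1$ primes of $A$ meeting $Z(\sigma)\setminus Z^\circ$ are the $\overline{Z_\gamma}$ with $\gamma\preceq\sigma$, $\dim\gamma = 1$ (as in the proof of Proposition~\ref{prop:characterization-of-affine mock toric}), and the divisorial valuation of each such $\overline{Z_\gamma}$ is $\val_Z(v_\gamma)$ for a generator $v_\gamma$ of $\gamma\cap N$, for which $\val_Z(v_\gamma)(\chi f) = 0$ by (a); hence no component of the subscheme cut out by $\chi f$ lies outside $Z^\circ$, so it already equals the scheme-theoretic closure of its restriction, i.e.\ the ideal of $H_{Z,f}\cap Z(\sigma)$ in $Z(\sigma)$ is $(\chi f)$. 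The argument is essentially bookkeeping on top of machinery already established; the one delicate point is this last paragraph, where condition (a) is needed not merely to put $\chi f$ into $A$ but, through the vanishing of $\val_Z$ along \emph{all} of $\sigma\cap N$ (in particular along the rays), to rule out spurious divisorial components of the vanishing locus of $\chi f$ inside $Z(\sigma)\setminus Z^\circ$.
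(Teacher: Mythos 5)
Your proof of the equivalence (a)$\Leftrightarrow$(b) is correct and follows essentially the same route as the paper: fix $\varphi$ with $\sigma\in\Delta_\varphi$, pass to $g_\varphi$ via Proposition \ref{prop:fundamental hypersurface}, translate (b) into $O_{\sigma_\varphi}\not\subset H_{X(\Delta(\varphi)),g_\varphi}$ using Proposition \ref{prop:basic-property1}(c), translate (a) into the existence of $\omega\in M_\varphi$ with $q_\varphi(v)(\chi^\omega g_\varphi)=0$ for all $v\in\sigma\cap N$ via Proposition \ref{prop: units correspondence}, and conclude with Lemma \ref{lem: orbit and function}; your explicit bookkeeping with $u$, $\omega$, $\omega'$ and the equality $q_\varphi(\sigma\cap N)=\sigma_\varphi\cap(N/N_\varphi)$ only makes visible steps the paper leaves implicit. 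Where you genuinely diverge is the final assertion that the ideal of $Z(\sigma)\cap H_{Z,f}$ is generated by $\chi f$. The paper argues directly: this ideal is $(f)k[Z^\circ]\cap\Gamma(Z(\sigma),\OO_Z)$, and any $g$ in it factors as $(\chi f)h$ with $h\in k[Z^\circ]$; since $\val_Z(v)(\chi f)=0$ and $\val_Z(v)(g)\geq 0$ on $\sigma\cap N$, additivity gives $\val_Z(v)(h)\geq 0$, so $h\in\Gamma(Z(\sigma),\OO_Z)$ by Proposition \ref{prop:characterization-of-affine mock toric}. You instead use that $\Gamma(Z(\sigma),\OO_Z)$ is a normal Noetherian domain, so $(\chi f)$ has only height-one, non-embedded associated primes, classify the prime divisors contained in $Z(\sigma)\setminus Z^\circ$ as the $\overline{Z_\gamma}$ for rays $\gamma\preceq\sigma$, and use (a) at the ray generators to exclude such components, whence the subscheme $V(\chi f)$ coincides with the scheme-theoretic closure of its restriction to $Z^\circ$. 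This is correct (domination from Proposition \ref{prop:property-of-valuations}(b) already suffices to rule out $\chi f\in\mathfrak{p}$ for such a divisor), and it is a nice, more geometric complement; the trade-off is that it invokes Serre-type input ($S_2$, absence of embedded primes) and the boundary-divisor classification where the paper's two-line valuation estimate suffices. One small phrasing slip: the height-one primes you need to list are those whose divisors are \emph{contained in} $Z(\sigma)\setminus Z^\circ$, not merely ``meeting'' it, which is indeed what your argument actually uses.
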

        \begin{proof}
            Let $\varphi\in\Phi$ be an element such that $\sigma\in\Delta_\varphi$ and $\sigma_\varphi$ denote $(q_\varphi)_\RR(\sigma)$. 
            From Proposition \ref{prop:fundamental hypersurface}(a) and (b) and Proposition \ref{prop: units correspondence}, there exists $g_\varphi\in k[M_\varphi]$ such that $\val_Z(v)(f) = (q_\varphi(v))(g_\varphi)$ for any $v\in \sigma\cap N$, and the following equation holds:
            \[
                H_{X(\Delta(\varphi)), g_\varphi}\times_{X(\Delta_\varphi)} Z(\sigma) = H_{Z, f}\cap Z(\sigma)
            \]
            Thus, from Proposition \ref{prop: units correspondence}, the condition of (a) is equivalent to the following condition (a'):
            \begin{enumerate}
                \item[(a')] There exists $\omega\in M_\varphi$ such that $q_\varphi(v)(\chi^\omega g_\varphi) = 0$ for any $v\in \sigma\cap N$. 
            \end{enumerate}
            On the other hand, from the above equation and Proposition \ref{prop:basic-property1}(c), the condition of (b) is equivalent to the following condition (b'):
            \begin{enumerate}
                \item[(b')] 
                $O_{\sigma_\varphi}\not\subset H_{X(\Delta(\varphi)), g_\varphi}$ 
            \end{enumerate}
            Therefore, from Lemma \ref{lem: orbit and function}, The condition of (a') and that of (b') are equivalent, so that of (a) and that of (b) are so.
            
            In this case, let $I_f$ denote an ideal of $k[Z^\circ]$ generated by $f$. 
            We show that $I_f\cap \Gamma(Z(\sigma), \OO_Z)$ is generated by $\chi f$. 
            If this claim holds, then the statement of the last part holds. 
            Let $g\in I_f\cap \Gamma(Z(\sigma), \OO_Z)$ be a function. 
            Then there exists $h\in k[Z^\circ]$ such that $g = (\chi f)h$. 
            Because $\val_Z(v)(\chi f) = 0$ and $\val_Z(v)(g)\geq 0$ for any $v\in\sigma\cap N$ from Proposition \ref{prop:characterization-of-affine mock toric}, $\val_Z(v)(h)\geq 0$ for any $v\in \sigma\cap N$ from the additivity of valuations. 
            Thus, $h\in \Gamma(Z(\sigma), \OO_Z)$ from Proposition \ref{prop:characterization-of-affine mock toric}  and this shows that $I_f\cap \Gamma(Z(\sigma), \OO_Z)$ is generated by $\chi f$. 
        \end{proof}
        As defined in Definition\ref{def:recall-fine,non-dege}, we define two properties of $f\in k[Z^\circ]$ for $\Delta$ in the following definition. 
        \begin{definition}\label{def:fine and non-degenerate}
            Let $f\in k[Z^\circ]$ be a function. 
            If $Z_\sigma\not\subset H_{Z, f}$ for any $\sigma\in\Delta$, then we call that $f$ is \textbf{fine} for $\Delta$. 
            If $f$ is fine for $\Delta$ and $H_{Z, f}\cap Z_\sigma$ is smooth over $k$ for any $\sigma\in\Delta$, then we call that $f$ is \textbf{non-degenerate} for $\Delta$. 
        \end{definition}
        \subsection{Valuation functions}
        For $f\in k[Z^\circ]$ and $v\in \supp(\Delta)\cap N$, we can compute an integer $\val_{Z}(v)(f)$. 
        In this subsection, we define a one map from $\supp(\Delta)$ to $\RR$ whose values is equal to $\val_{Z}(v)(f)$ at any $v\in \supp(\Delta)\cap N$. 
        This extended map is useful to construct canonical refinement $\Delta_f$ of $\Delta$ such that $f$ is fine for $\Delta_f$. 
        
        At first, we define it when $Z$ is affine toric variety. 
        \begin{definition}\label{def:Recall-val function for polynomial}
            We keep the notation in Definition \ref{def:recall-fine,non-dege}.  
            Let $\sigma\in\Delta$ be a cone. 
            \begin{enumerate}
                \item Let $\val^\sigma_f$ denote a function from $\sigma$ to $\RR$ which satisfies the following three conditions 
                \begin{enumerate}
                    \item[(i)] A function $\val^\sigma_f$ is continuous. 
                    \item[(ii)] A function $\val^\sigma_f$ is preserved by the scalar product of the non-negative real numbers, i.e., $\val^\sigma_f(rv) = r\val^\sigma_f(v)$ for any $v\in\sigma$ and $r\in\RR_{\geq 0}$. 
                    \item[(iii)] 
                    For any $v\in \sigma\cap N$, $\val^\sigma_f(v) = v(f)$. 
                \end{enumerate}
                We remark that such a function exists and is unique from Lemma \ref{lem: uniqueness of the function} and Lemma \ref{lem: property of val function}(c). 
                \item Let $\Omega^\sigma_f$ be a following subset of $M_\Q$: 
                \[
                    \Omega^\sigma_f = \{\omega\in M_\Q\mid \val^\sigma_f(v)\leq \langle v, \omega\rangle\quad\forall v\in\sigma\}
                \]
                \item For $\omega\in\Omega^\sigma_f$, let $C^\sigma_f(\omega)$ denote a following subset of $\sigma$:
                \[
                    C^\sigma_f(\omega) = \{v\in\sigma\mid \val^\sigma_f(v) = \langle v, \omega\rangle\}
                \]
            \end{enumerate}
        \end{definition}
        From now on, we define the map $\val_f\colon \supp(\Delta)\rightarrow \RR$ as follows and consider the basic property of $\val_f$.   
        \begin{proposition}\label{prop: mock val function}
            The following statements follow:
            \begin{enumerate}
                \item[(a)] There unique exists a function $\val_f\colon \supp(\Delta)\rightarrow \RR$ such that it satisfies the following three conditions:
                \begin{enumerate}
                    \item[(1)] A function $\val_f$ is continuous. 
                    \item[(2)] A function $\val_f$ preserves the scalar product of non-negative real numbers, i.e. $\val_f(rv) = r\val_f(v)$ for any $v\in\supp(\Delta)$ and $r\in\RR_{\geq 0}$.
                    \item[(3)] For any $v\in \supp(\Delta)\cap N$, $\val_f(v) = \val_Z(v)(f)$.
                \end{enumerate}
                \item[(b)] Let $f$ and $g\in k[Z^\circ]$ be functions. 
                Then $\val_{fg} = \val_{f} + \val_{g}$. 
                \item[(c)] Let $\sigma\in\Delta$ be a cone and $\val^\sigma_f$ denote the restriction of $\val_f$ for $\sigma$. 
                Then $\val^\sigma_f$ is an upper convex function. 
                \item[(d)] Let $\sigma\in\Delta$ be a cone and $\chi\in k[Z^\circ]^*$ be a unit. 
                Then $\val^\sigma_\chi$ is a linear function over $\sigma$. 
            \end{enumerate}
        \end{proposition}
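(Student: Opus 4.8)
The plan is to reduce every assertion to the affine toric situation treated in Definition~\ref{def:Recall-val function for polynomial}, Lemma~\ref{lem: uniqueness of the function} and Lemma~\ref{lem: property of val function}, and then to obtain the global function $\val_f$ by gluing its restrictions to the cones of $\Delta$. The backbone is the uniqueness assertion of Lemma~\ref{lem: uniqueness of the function}: on a single cone a continuous, positively homogeneous function is determined by its values at the lattice points. Once $\val_f$ is produced, statements (b) and (d) fall out formally, and (c) follows from a convexity property of the toric valuation functions.

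For (a), I would fix $\sigma\in\Delta$ and $\varphi\in\Phi$ with $\sigma\in\Delta_\varphi$, and put $\sigma_\varphi=(q_\varphi)_\RR(\sigma)\in\Delta(\varphi)$. By Proposition~\ref{prop:fundamental hypersurface}(a) one can write $f=\iota^*_\varphi(g_\varphi)\cdot\chi$ with $g_\varphi\in k[M_\varphi]$ and $\chi\in k[Z^\circ]^*$, and by Proposition~\ref{prop: units correspondence} together with Lemma~\ref{lem: supplement-of-valuation1} there is $\omega\in M_\varphi$ with $\val_Z(v)(\chi)=\langle v,q^*_\varphi(\omega)\rangle$ for all $v\in\sigma\cap N$. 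Since $(q_\varphi)_\RR|_\sigma$ is injective by condition~(3), it is a linear isomorphism of $\sigma$ onto $\sigma_\varphi$, so I would set
\[
    \val^\sigma_f(v)\;=\;\val^{\sigma_\varphi}_{g_\varphi}\bigl((q_\varphi)_\RR(v)\bigr)\;+\;\langle v,\,q^*_\varphi(\omega)\rangle ,\qquad v\in\sigma .
\]
This is continuous and positively homogeneous because $\val^{\sigma_\varphi}_{g_\varphi}$ is (Definition~\ref{def:Recall-val function for polynomial}) and the remaining terms are linear, and it agrees with $v\mapsto\val_Z(v)(f)$ on $\sigma\cap N$: unwinding the definition of $\val_Z$ gives $\val_Z(v)(\iota^*_\varphi(g_\varphi))=q_\varphi(v)(g_\varphi)=\val^{\sigma_\varphi}_{g_\varphi}(q_\varphi(v))$ for $v\in\sigma\cap N$, hence $\val^\sigma_f(v)=\val_Z(v)(\iota^*_\varphi(g_\varphi))+\val_Z(v)(\chi)=\val_Z(v)(f)$. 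By Lemma~\ref{lem: uniqueness of the function} the function $\val^\sigma_f$ does not depend on the choices of $\varphi$, $g_\varphi$, $\chi$ and $\omega$, and $\val^\tau_f=\val^\sigma_f|_\tau$ whenever $\tau\preceq\sigma$; therefore the $\val^\sigma_f$ glue to a function $\val_f\colon\supp(\Delta)\to\RR$, which is continuous and positively homogeneous (these being local and conical conditions) and restricts to $v\mapsto\val_Z(v)(f)$ on $\supp(\Delta)\cap N$. Uniqueness of $\val_f$ follows from that of the $\val^\sigma_f$.

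Granting (a), statement (b) is immediate: $\val_f+\val_g$ and $\val_{fg}$ are both continuous and positively homogeneous and agree on $\supp(\Delta)\cap N$ by additivity of the valuations $\val_Z(v)$, so they coincide by uniqueness. For (d), Proposition~\ref{prop: units correspondence} applied to $\chi\in k[Z^\circ]^*$ produces $\omega\in M_\varphi$ with $\val_Z(v)(\chi)=\langle v,q^*_\varphi(\omega)\rangle$ on $\sigma\cap N$; the linear function $v\mapsto\langle v,q^*_\varphi(\omega)\rangle$ on $\sigma$ then satisfies the three defining conditions, so it equals $\val^\sigma_\chi$, which is therefore linear. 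For (c), I would invoke that the toric function $\val^{\sigma_\varphi}_{g_\varphi}$ is upper convex on $\sigma_\varphi$: on the lattice it equals the minimum of the finitely many linear functionals $\langle\cdot,\omega'\rangle$, with $\omega'$ running over the exponents occurring in $g_\varphi$, since no cancellation can occur among the monomials of minimal value, and this persists on all of $\sigma_\varphi$ by continuity (cf.\ Lemma~\ref{lem: property of val function}). Precomposition with the linear map $(q_\varphi)_\RR|_\sigma$ and addition of the linear function $\langle\cdot,q^*_\varphi(\omega)\rangle$ both preserve upper convexity, so $\val^\sigma_f$ is upper convex.

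The hard part will be the bookkeeping in the second paragraph: one has to check that the rational function on $X(\Delta(\varphi))$ corresponding to $f$ under the open immersion $\iota_\varphi$ is treated consistently — i.e.\ that the factorization $f=\iota^*_\varphi(g_\varphi)\cdot\chi$ of Proposition~\ref{prop:fundamental hypersurface}(a) combined with Proposition~\ref{prop: units correspondence} really yields $\val^\sigma_f(v)=\val_Z(v)(f)$ on $\sigma\cap N$, and that the cone functions $\val^\sigma_f$ restrict compatibly to faces. Everything else is a formal consequence of the uniqueness in Lemma~\ref{lem: uniqueness of the function}.
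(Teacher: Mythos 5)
Your proposal is correct and follows essentially the same route as the paper: construct $\val_f$ cone by cone as the pullback of the affine toric function $\val^{\sigma_\varphi}_{g_\varphi}$ along $(q_\varphi)_\RR|_\sigma$ (the paper absorbs the unit's monomial into $g_\varphi$ where you keep an explicit linear correction term), then use Lemma \ref{lem: uniqueness of the function} for well-definedness, face compatibility, gluing, and for (b) and (d), with (c) coming from the convexity of the toric valuation function (the paper cites Lemma \ref{lem: property of val function}(d) directly where you re-derive it as a minimum of linear functionals). No gaps beyond these cosmetic differences.
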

        \begin{proof}
            We prove the statements from (a) to (d) in order. 
            \begin{enumerate}
                \item[(a)] The uniqueness of $\val_f$ follows from Lemma \ref{lem: uniqueness of the function}. 
                First, we show that $\val_f|_{\sigma}$ exists for any $\sigma\in\Delta$. 
                Let $\varphi\in\Phi$ be an element such that $\sigma\in\Delta_\varphi$ and $\sigma_\varphi$ denote $(q_\varphi)_\RR(\sigma)$. 
                From the proof of Proposition \ref{prop: condition of fine}, there exists $g_\varphi\in k[M_\varphi]$ such that $\val_Z(v)(f) = (q_\varphi(v))(g_\varphi)$ for any $v\in\sigma\cap N$. 
                Let $G_\sigma$ denote $\val^{\sigma_\varphi}_{g_\varphi}\circ (q_\varphi)_\RR|_{\sigma}$. 
                From Lemma \ref{lem: property of val function}(c), $G_\sigma$ is a continuous map and is preserved by the scalar product of non-negative real numbers because $(q_\varphi)_\RR$ is an $\RR-$linear map. 
                Moreover, $G_\sigma(v) = \val_Z(v)(f)$ for any $v\in\sigma\cap N$. 
                The definition of $G_\sigma$ is independent of the choice of $\varphi\in\Phi$ such that $\sigma\in\Delta_\varphi$ because of Lemma \ref{lem: uniqueness of the function}. 
                Thus, $\val_f|_\sigma$ exists. 
                
                In addition to this, $\{G_\sigma\}_{\sigma\in\Delta}$ can be glued from Lemma \ref{lem: uniqueness of the function} and this function is continuous because each $\sigma$ is a closed subset of $\supp(\Delta)$ and $|\Delta|$ is finite. 
                \item[(b)] The functions $\val_f$ and $\val_g$ are continuous and preserved by the scalar product of non-negative real numbers so that $\val_{f} + \val_{g}$ has the same conditions. 
                Moreover, for any $v\in\supp(\Delta)\cap N$, the following equations hold, so that the statement holds from Lemma \ref{lem: uniqueness of the function}:
                \begin{align*}
                    \val_{fg}(v) &= \val_Z(v)(fg)\\
                    &= \val_Z(v)(f) + \val_Z(v)(g) \\
                    &= \val_{f}(v) + \val_{g}(v)\\
                    &= (\val_{f} + \val_{g})(v)
                \end{align*}
                \item[(c)] We use the notation in the proof of (a).  
                We have already shown  that $\val^\sigma_f = \val^{\sigma_\varphi}_{g_\varphi}\circ(q_\varphi)_\RR|_{\sigma}$. 
                Thus, from Lemma \ref{lem: property of val function}(d), $\val^\sigma_f$ is an upper convex function because $(q_\varphi)_\RR$ is an $\RR$-linear map. 
                \item[(d)] Let $\varphi\in\Phi$ be an element such that $\sigma\in\Delta_\varphi$. 
                From Proposition \ref{prop: units correspondence}, there exists $\omega\in M_\varphi$ such that $\val_\chi(v) = \langle q_\varphi(v), \omega\rangle$ for any $v\in\sigma\cap N$. 
                Thus, $\val_{\chi}(v) = \langle(q_\varphi)_\RR(v), \omega\rangle$ for any $v\in\sigma$ from Lemma \ref{lem: uniqueness of the function}. 
                Therefore, $\val^\sigma_\chi$ is a linear function over $\sigma$. 
            \end{enumerate}
        \end{proof}
        In the following proposition, we construct the refinement $\Delta_f$ of $\Delta$ from $\val_f$. 
        \begin{proposition}\label{prop: refinement along function}
            Let $f\in k[Z^\circ]$ be a function and $\sigma\in\Delta$ be a cone. 
            \begin{itemize}
                \item Let $\Omega^\sigma_f$ denote the following set:
                \[
                    \Omega^\sigma_f = \{(\chi, b)\in k[Z^\circ]^*\times\ZZ_{>0}\mid val_f(v) \leq \frac{1}{b}\val_\chi(v)\quad(\forall v\in\sigma)\}
                \]
                \item For $(\chi, b)\in \Omega^\sigma_f$, let $C^\sigma_f(\chi, b)$ denote the following set:
                \[
                    C^\sigma_f(\chi, b) = \{v\in \sigma\mid val_f(v) = \frac{1}{b}\val_\chi(v)\}
                \]
            \end{itemize}
            Then the following statements follow:
            \begin{enumerate}
                \item[(a)] Let $\sigma\in\Delta$ be a cone and $\varphi\in\Phi$ be an element such that $\sigma\in\Delta_\varphi$. 
                Let $\sigma_\varphi$ denote $(q_\varphi)_\RR(\sigma)$. 
                Then there exists $g_\varphi\in k[M_\varphi]$ such that $\val^\sigma_f(v) = \val^{\sigma_\varphi}_{g_\varphi}((q_\varphi)_\RR(v))$ for any $v\in\sigma$. 
                \item[(b)] Let $g_\varphi\in k[M_\varphi]$ be a function that satisfies the condition in (a). 
                For any $(\chi, b)\in\Omega^\sigma_f$, there exists $\omega\in\Omega^{\sigma_\varphi}_{g_\varphi}$ such that the following equation holds:
                \[
                    (q_\varphi)_\RR(C^\sigma_f(\chi, b)) = C^{\sigma_\varphi}_{g_\varphi}(\omega)
                \]
                Conversely, for any $\omega\in\Omega^{\sigma_\varphi}_{g_\varphi}$, there exists $(\chi, b)\in\Omega^\sigma_f$ such that the following equation holds:
                \[
                    C^{\sigma_\varphi}_{g_\varphi}(\omega) = (q_\varphi)_\RR(C^\sigma_f(\chi, b))
                \]
                \item[(c)] The set $\{C^\sigma_f(\chi, b)\}_{(\chi, b)\in\Omega^\sigma_f}$ is a strongly convex rational polyhedral fan and a refinement of $\sigma$.
                \item[(d)] Let $\tau$ be a face of $\sigma$. 
                Then $\Omega^\sigma_f\subset\Omega^\tau_f$. 
                Moreover, for any $(\chi, b)\in\Omega^\sigma_f$, we have $C^\sigma_f(\chi, b)\cap\tau = C^\tau_f(\chi, b)$. 
                In particular, $C^\tau_f(\chi, b)\preceq C^\sigma_f(\chi, b)$.  
                \item[(e)] The following set $\Delta_f$ is a strongly convex rational polyhedral fan in $N_\RR$ and a refinement of $\Delta$:
                \[
                    \Delta_f = \bigcup_{\sigma\in\Delta}\{C^\sigma_f(\chi, b)\}_{(\chi, b)\in\Omega^\sigma_f}
                \]
                \item[(f)] For any $(\chi, b)\in\Omega^\sigma_f$, there exists $\chi'\in k[Z^\circ]^*$ such that $\val_{f}(v) = \val_{\chi'}(v)$ for any $v\in C^\sigma_f(\chi, b)$. 
            \end{enumerate}
        \end{proposition}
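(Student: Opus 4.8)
The plan is to transport each of (a)--(f) to the toric situation of Definition~\ref{def:Recall-val function for polynomial} through the $\RR$-linear isomorphism $(q_\varphi)_\RR|_\sigma\colon\sigma\xrightarrow{\sim}\sigma_\varphi$ provided by condition~(3) of Definition~\ref{def:mock-toric}, and then to glue the resulting local subdivisions along the cones of $\Delta$ by means of the uniqueness statement of Lemma~\ref{lem: uniqueness of the function}.

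For (a) I would take the $g_\varphi\in k[M_\varphi]$ produced in the proof of Proposition~\ref{prop: condition of fine}, which satisfies $\val_Z(v)(f)=(q_\varphi(v))(g_\varphi)$ for all $v\in\sigma\cap N$; then $\val^\sigma_f$ and $\val^{\sigma_\varphi}_{g_\varphi}\circ(q_\varphi)_\RR|_\sigma$ are both continuous, positively homogeneous, and agree on $\sigma\cap N$, hence coincide by Lemma~\ref{lem: uniqueness of the function}. This is essentially the computation already carried out for Proposition~\ref{prop: mock val function}(a). For (b), I would first use Proposition~\ref{prop: units correspondence} and Lemma~\ref{lem: uniqueness of the function} to write $\val^\sigma_\chi=\langle(q_\varphi)_\RR(\,\cdot\,),\omega_\chi\rangle$ on $\sigma$ for some $\omega_\chi\in M_\varphi$, set $\omega=\tfrac1b\omega_\chi\in(M_\varphi)_\Q$, and check — using (a) and the surjectivity of $(q_\varphi)_\RR|_\sigma$ onto $\sigma_\varphi$ — that $(\chi,b)\in\Omega^\sigma_f$ is equivalent to $\omega\in\Omega^{\sigma_\varphi}_{g_\varphi}$ and that $C^\sigma_f(\chi,b)=\big((q_\varphi)_\RR|_\sigma\big)^{-1}\!\big(C^{\sigma_\varphi}_{g_\varphi}(\omega)\big)$, whence $(q_\varphi)_\RR(C^\sigma_f(\chi,b))=C^{\sigma_\varphi}_{g_\varphi}(\omega)$. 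For the converse I would write $\omega=\tfrac1b\omega_1$ with $\omega_1\in M_\varphi$ and take $\chi=\iota^*_\varphi(\chi^{\omega_1})=\iota^*(\chi^{q_\varphi^*(\omega_1)})\in k[Z^\circ]^*$, whose valuation function is $\langle q_\varphi(\,\cdot\,),\omega_1\rangle$ by Lemma~\ref{lem: supplement-of-valuation1}, and run the same translation backwards.

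The remaining items should then be formal. For (c), by (b) the family $\{C^\sigma_f(\chi,b)\}_{(\chi,b)\in\Omega^\sigma_f}$ is the image under the linear isomorphism $\big((q_\varphi)_\RR|_\sigma\big)^{-1}$ of $\{C^{\sigma_\varphi}_{g_\varphi}(\omega)\}_{\omega\in\Omega^{\sigma_\varphi}_{g_\varphi}}$, the correspondence $(\chi,b)\leftrightarrow\omega$ being onto $(M_\varphi)_\Q$ (any $\tfrac1b\omega_1$ with $\omega_1\in M_\varphi$ is hit by $(\iota^*_\varphi(\chi^{\omega_1}),b)$), so no toric cone is lost; since a linear isomorphism preserves fans, rationality and strong convexity, (c) reduces to the toric statement of Lemma~\ref{lem: property of val function}. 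For (d), $\Omega^\sigma_f\subset\Omega^\tau_f$ is immediate from $\tau\subset\sigma$; since $\tau\in\Delta_\varphi$ too one keeps the same $g_\varphi$, $C^\sigma_f(\chi,b)\cap\tau=C^\tau_f(\chi,b)$ is tautological, and $C^\tau_f(\chi,b)\preceq C^\sigma_f(\chi,b)$ transports, through $(q_\varphi)_\RR$, the toric fact that $C^{\tau_\varphi}_{g_\varphi}(\omega)=C^{\sigma_\varphi}_{g_\varphi}(\omega)\cap\tau_\varphi$ is a face of $C^{\sigma_\varphi}_{g_\varphi}(\omega)$ (Lemma~\ref{lem: property of val function}). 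For (e) one checks the two fan axioms for $\Delta_f$: closure under faces inside a single $\sigma$ is (d); for two cones $C^\sigma_f(\chi,b)$ and $C^{\sigma'}_f(\chi',b')$ one sets $\rho=\sigma\cap\sigma'\in\Delta$, notes by (d) that their intersection equals $C^\rho_f(\chi,b)\cap C^\rho_f(\chi',b')$, and observes that inside any chart $\psi$ with $\rho\in\Delta_\psi$ these are two cones of a single subdivision of $\rho$, hence meet in a common face, which by (d) is again a face of each. Finally, for (f), on $C^\sigma_f(\chi,b)$ the function $\val_f$ equals $v\mapsto\langle(q_\varphi)_\RR(v),\omega\rangle$ by (b); this functional is integer-valued on the lattice points of the rational cone $C^{\sigma_\varphi}_{g_\varphi}(\omega)$, hence is the restriction of some $\omega'\in M_\varphi$, and $\chi':=\iota^*_\varphi(\chi^{\omega'})\in k[Z^\circ]^*$ then satisfies $\val_{\chi'}(v)=\langle q_\varphi(v),\omega'\rangle=\val_f(v)$ for all $v\in C^\sigma_f(\chi,b)$ by Lemmas~\ref{lem: supplement-of-valuation1} and~\ref{lem: uniqueness of the function}.

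The hard part will be (e): ensuring that the local toric subdivisions — each computed in some chart $\varphi$ and inside some maximal cone — patch into an honest fan. Everything rests on Lemma~\ref{lem: uniqueness of the function}, which forces the subdivision induced on a cone $\rho\in\Delta$ to be intrinsic to $\rho$, independent both of the chart and of the surrounding cone $\sigma\supseteq\rho$; once this independence is isolated, the fan axioms for $\Delta_f$ collapse to (d) together with the known toric case.
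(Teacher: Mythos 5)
Your proposal is correct and follows essentially the same route as the paper's proof: part (a) is the computation from Proposition \ref{prop: mock val function}(a), part (b) translates $\Omega^\sigma_f$ and the cones $C^\sigma_f(\chi,b)$ into $\Omega^{\sigma_\varphi}_{g_\varphi}$ and $C^{\sigma_\varphi}_{g_\varphi}(\omega)$ via Proposition \ref{prop: units correspondence} exactly as in the paper, and (c)--(f) are deduced by transporting Lemma \ref{lem: property of val function} through the chart and gluing with (d), which is the paper's argument for (e) as well (your appeal to integer-valuedness in (f) is precisely the content of Lemma \ref{lem: property of val function}(g)).
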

        \begin{proof}
            We prove the statements from (a) to (f) in order. 
            \begin{enumerate}
                \item[(a)] The argument about this statement was done in the proof of Proposition \ref{prop: mock val function}(a). 
                \item[(b)] We remark that the restriction $(q_\varphi)_\RR|_\sigma$ is bijective from the condition (3) in Definition \ref{def:mock-toric}. 
                Let $\chi\in k[Z^\circ]^*$ be a unit  and $b\in\ZZ_{>0}$ be a positive integer. 
                Then from Proposition \ref{prop: units correspondence}, there exists $\omega\in M_\varphi$ such that $\val^\sigma_\chi(v) = \val^{\sigma_\varphi}_{\chi^\omega}\circ (q_\varphi)_\RR(v)$ for any $v\in \sigma$.  
                In particular,  $\frac{1}{b}\val^\sigma_\chi(v) = \langle(q_\varphi)_\RR(v), 
                \frac{\omega}{b}\rangle$ for any $v\in \sigma$. 
                Thus, if $(\chi, b)\in\Omega^\sigma_f$, then $\frac{\omega}{b}\in\Omega^{\sigma_\varphi}_{g_\varphi}$ and $(q_\varphi)_\RR(C^\sigma_f(\chi, b)) = C^{\sigma_\varphi}_{g_\varphi}(\frac{\omega}{b})$. 

                Conversely, let $\omega'\in (M_\varphi)_\Q$ be an element. 
                Then there exists $\omega\in M_\varphi$ and $b\in\ZZ_{>0}$ such that $\omega' = \frac{\omega}{b}\in (M_\varphi)_\Q$. 
                Let $\chi\in k[Z^\circ]^*$ denote $\iota^*_\varphi(\chi^\omega)$. 
                We can check that $\frac{1}{b}\val^\sigma_\chi(v) = \langle(q_\varphi)_\RR(v), \omega'\rangle$ for any $v\in \sigma$.
                Thus, if $\omega'\in \Omega^{\sigma_\varphi}_{g_\varphi}$, then $(\chi, b)\in\Omega^\sigma_f$ and $C^{\sigma_\varphi}_{g_\varphi}(\omega) = (q_\varphi)_\RR(C^\sigma_f(\chi, b))$. 
                \item[(c)] We already have shown that $(q_\varphi)|_{\langle\sigma\rangle\cap N}\colon \langle\sigma\rangle\cap N \rightarrow \langle\sigma_\varphi\rangle\cap (N/N_\varphi)$ is isomorphic in Proposition\ref{prop: first prop}(d). 
                Thus, from (b) and Lemma \ref{lem: property of val function}(b), $\{C^\sigma_f(\chi, b)\}_{(\chi, b)\in\Omega^\sigma_f}$ is a rational polyhedral convex fan in $N_\RR$ and it is a refinement of $\sigma$. 
                Because $\sigma$ is strongly convex, this fan is also strongly convex. 
                \item[(d)] From the definition of $\Omega^\sigma_f$ and $\Omega^\tau_f$, we can check easily that $\Omega^\sigma_f\subset \Omega^\tau_f$ and $C^\sigma_f(\chi, b)\cap\tau = C^\tau_f(\chi, b)$ for any $(\chi, b)\in\Omega^\sigma_f$. 
                Let $(\chi, b)\in\Omega^\sigma_f$ and $\omega\in \sigma^\vee$ be elements such that $\sigma\cap \omega^\perp = \tau$. 
                Because $C^\sigma_f(\chi, b)\subset \sigma$, we have $\omega\in(C^\sigma_f(\chi, b))^\vee$ too. 
                Thus, $C^\sigma_f(\chi, b)\cap \omega^\perp = C^\sigma_f(\chi, b)\cap \tau$ is a face of $C^\sigma_f(\chi, b)$. 
                \item[(e)] For each $\sigma\in\Delta$, we have already shown that $\{C^\sigma_f(\chi, b)\}_{(\chi, b)\in\Omega^\sigma_f}$is a strongly convex rational polyhedral fan in $N_\RR$ and it is a refinement of $\sigma$.  
                Thus, it is enough to show that for any two cones in $\Delta_f$, the intersection of those is a common face of both cones. 
                Let $\sigma$ and $\tau\in\Delta$ be cones, and $(\chi_1, b_1)\in\Omega^\sigma_f$ and $(\chi_2, b_2)\in\Omega^\tau_f$ be elements. 
                Then from (d), $\{(\chi_1, b_1), (\chi_2, b_2)\}\subset \Omega^{\sigma\cap\tau}_f$. 
                Moreover, $C^\sigma_f(\chi_1, b_1)\cap \tau = C^{\sigma\cap\tau}_f(\chi_1, b_1)$ and $C^\tau_f(\chi_2, b_2)\cap \sigma = C^{\sigma\cap\tau}_f(\chi_2, b_2)$ from (d). 
                Furthermore, $C^{\sigma\cap\tau}_f(\chi_1, b_1)\preceq C^\sigma_f(\chi_1, b_1)$ and $C^{\sigma\cap\tau}_f(\chi_2, b_2)\preceq C^\tau_f(\chi_2, b_2)$. 
                Thus, $C^\sigma_f(\chi_1, b_1)\cap C^\tau_f(\chi_2, b_2) = C^{\sigma\cap\tau}_f(\chi_1, b_1)\cap C^{\sigma\cap\tau}_f(\chi_2, b_2)$. 
                The cone $C^{\sigma\cap\tau}_f(\chi_1, b_1)\cap C^{\sigma\cap\tau}_f(\chi_2, b_2)$ is a common face of $C^{\sigma\cap\tau}_f(\chi_1, b_1)$ and $C^{\sigma\cap\tau}_f(\chi_2, b_2)$ from (b). 
                Therefore, this cone is a common face of $C^\sigma_f(\chi_1, b_1)$ and $C^\tau_f(\chi_2, b_2)$. 
                \item[(f)]  We use the notation in the proof of (a) and (b).  
                Then there exists $\omega\in \Omega^{\sigma_\varphi}_{g_\varphi}$ such that $(q_\varphi)_\RR(C^\sigma_f(\chi, b)) = C^{\sigma_\varphi}_{g_\varphi}(\omega)$. 
                Thus, from Lemma \ref{lem: property of val function}(g), there exists $\omega_0\in M_\varphi$ such that $\val^{\sigma_\varphi}_{g_\varphi}(v') = \val^{\sigma_\varphi}_{\chi^{\omega_0}}(v')$ for any $v'\in C^{\sigma_\varphi}_{g_\varphi}(\omega)$. 
                Let $\chi'$ denote $\iota^*_\varphi(\chi^{\omega_0})\in k[Z^\circ]^*$. 
                Then $\val^\sigma_{\chi'} = \val^{\sigma_\varphi}_{\chi^{\omega_0}}\circ (q_\varphi)_\RR|_\sigma$. 
                Hence, from the definition of $g_\varphi$, $\val^\sigma_f(v) = \val^\sigma_{\chi'}(v)$ for any $v\in C^\sigma_f(\chi, b)$. 
            \end{enumerate}
        \end{proof}
        The following corollary indicates that for any $f\in k[Z^\circ]$, there exists a refinement $\Delta'$ of $\Delta$ such that $f$ is fine for $\Delta'$. 
        \begin{corollary}\label{cor: example of fine fan}
            We keep the notation in Proposition \ref{prop: refinement along function}.  
            Let $\Delta'$ be a refinement of $\Delta_f$. 
            Then $f$ is fine for $\Delta'$. 
        \end{corollary}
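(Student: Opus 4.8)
\emph{Proof proposal.} The plan is to fix an arbitrary cone $\sigma\in\Delta'$ and to verify the valuative criterion of Proposition \ref{prop: condition of fine}, working on the mock toric variety $Z'$ induced by $Z$ along the toric morphism $X(\Delta')\to X(\Delta)$ attached to $\id_N$; this morphism is compatible with the fans since $\Delta'$ refines $\Delta_f$, which refines $\Delta$. As it is an isomorphism on the open tori, one has $Z'^\circ=Z^\circ$ and $k[Z'^\circ]^*=k[Z^\circ]^*$, and by Proposition \ref{prop:extended-valuation}(b) applied with $\pi=\id_N$ (equivalently, via the identification $Z'\xrightarrow{\sim}Z$ used to construct $\val_{Z'}$) one gets $\val_{Z'}(v)=\val_Z(v)$ for every $v\in\supp(\Delta')\cap N=\supp(\Delta)\cap N$. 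In particular the valuation function $\val_f$ of Proposition \ref{prop: mock val function}, which lives on $\supp(\Delta)=\supp(\Delta')$, still satisfies $\val_f(v)=\val_{Z'}(v)(f)$ on the lattice points of every cone of $\Delta'$.

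Next I would localize $\sigma$ inside $\Delta_f$. By Proposition \ref{prop: refinement along function}(e) we have $\Delta_f=\bigcup_{\tau\in\Delta}\{C^\tau_f(\chi,b)\}_{(\chi,b)\in\Omega^\tau_f}$, so the hypothesis that $\Delta'$ refines $\Delta_f$ gives some $\tau\in\Delta$ and $(\chi,b)\in\Omega^\tau_f$ with $\sigma\subset C^\tau_f(\chi,b)$. Then Proposition \ref{prop: refinement along function}(f) produces a unit $\chi'\in k[Z^\circ]^*$ with $\val_f(v)=\val_{\chi'}(v)$ for all $v\in C^\tau_f(\chi,b)$, hence for all $v\in\sigma$. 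Setting $\chi'':=(\chi')^{-1}\in k[Z^\circ]^*$ and using $\val_f(v)=\val_{Z'}(v)(f)$, $\val_{\chi'}(v)=\val_{Z'}(v)(\chi')$ on $\sigma\cap N$ (Proposition \ref{prop: mock val function}(a)(3)) together with additivity of the valuation, we obtain $\val_{Z'}(v)(\chi'' f)=\val_{Z'}(v)(f)-\val_{Z'}(v)(\chi')=\val_f(v)-\val_{\chi'}(v)=0$ for every $v\in\sigma\cap N$. Applying Proposition \ref{prop: condition of fine} to $Z'$ and the cone $\sigma\in\Delta'$ yields $Z'_\sigma\not\subset H_{Z',f}$, and since $\sigma\in\Delta'$ was arbitrary, $f$ is fine for $\Delta'$ in the sense of Definition \ref{def:fine and non-degenerate}.

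The only delicate point, and the one I would take care with, is the bookkeeping relating $Z$ to the induced mock toric variety $Z'$: refining the fan must change neither $k[Z^\circ]$ nor its unit group nor the valuations $\val_Z(v)$ for $v\in\supp(\Delta)\cap N$, so that the valuation function $\val_f$ computed from $(\Delta,Z)$ may be legitimately inserted into Proposition \ref{prop: condition of fine} for $(\Delta',Z')$. All of this is supplied by Section 4, so once it is invoked the remainder is a direct concatenation of Proposition \ref{prop: refinement along function}(f) with Proposition \ref{prop: condition of fine}, with no genuinely hard estimate involved.
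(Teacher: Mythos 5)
Your argument is correct and is essentially the paper's own proof: the paper's one-line justification invokes exactly Proposition \ref{prop: condition of fine} together with Proposition \ref{prop: refinement along function}(e) and (f), which is the concatenation you carry out. The only difference is that you make explicit the bookkeeping identifying $k[Z'^\circ]$, its units, and the valuations $\val_{Z'}(v)$ with those of $Z$ (via Proposition \ref{prop:extended-valuation}(b) with $\pi=\id_N$), a step the paper leaves implicit but which is consistent with how it handles refinements elsewhere (e.g.\ Step~2 in the proof of Proposition \ref{prop:property-of-valuations}(a)).
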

        \begin{proof}
            We can check it easily from Proposition\ref{prop: condition of fine} and Corollary \ref{prop: refinement along function}(e) and (f).
        \end{proof}
        \subsection{The inheritance of fineness and non-degeneracy}
        In this subsection, we assume that $f$ is fine for $\Delta$, and we consider what holds under the assumption. 

        In the previous subsection, we analyze $H_{Z, f}$ through the intermediary of the hypersurfaces in open subschemes of toric varieties. 
        The following lemma shows that if $f$ is fine for $\Delta$, the fineness also holds for these hypersurfaces of toric varieties.   
        \begin{lemma}\label{lem:fine for varphi}
            We keep the notation in Proposition \ref{prop: refinement along function}. 
            We assume that $f$ is fine for $\Delta$.  
            Let $\varphi\in\Phi$ be an element and $g_\varphi\in k[M_\varphi]$ be a function such that $f/\iota^*_\varphi(g_\varphi)\in k[Z^\circ]^*$.  
            Then $g_\varphi$ is fine for $\Delta(\varphi)$. 
        \end{lemma}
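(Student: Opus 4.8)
The plan is to prove the contrapositive at the level of generic points: if some torus orbit of $X(\Delta(\varphi))$ is contained in $H_{X(\Delta(\varphi)), g_\varphi}$, then a corresponding stratum of $Z$ lies in $H_{Z, f}$, contradicting the hypothesis that $f$ is fine for $\Delta$. The first step is to recall, from condition (3) of Definition \ref{def:mock-toric} together with Lemma \ref{lem: injective fan}(b), that $(q_\varphi)_\RR$ induces a bijection $\sigma\mapsto\sigma_\varphi:=(q_\varphi)_\RR(\sigma)$ between the cones of $\Delta_\varphi$ and those of $\Delta(\varphi)$; hence it suffices to fix $\sigma\in\Delta_\varphi$ and show $O_{\sigma_\varphi}\not\subset H_{X(\Delta(\varphi)), g_\varphi}$.

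Then I would argue as follows: suppose $O_{\sigma_\varphi}\subset H_{X(\Delta(\varphi)), g_\varphi}$, and let $\xi$ be its generic point. Since $\sigma\in\Delta_\varphi$ we have $O_\sigma\subset X(\Delta_\varphi)$, so the stratum $Z_\sigma$ is a (locally closed) subscheme of $Z\cap X(\Delta_\varphi)$, and by the computation carried out in the proof of Proposition \ref{prop:basic-property1}(c) the open immersion $\iota_\varphi$ restricts to an open immersion $Z_\sigma\hookrightarrow O_{\sigma_\varphi}$ with nonempty image; in particular the generic point $\eta$ of $Z_\sigma$ satisfies $\iota_\varphi(\eta)=\xi$. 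Now Proposition \ref{prop:fundamental hypersurface}(b) identifies $H_{Z,f}\cap(Z\cap X(\Delta_\varphi))$ with the closed subscheme $\iota_\varphi^{-1}(H_{X(\Delta(\varphi)), g_\varphi})$ of $Z\cap X(\Delta_\varphi)$, so $\iota_\varphi(\eta)=\xi\in H_{X(\Delta(\varphi)), g_\varphi}$ forces $\eta\in H_{Z,f}$. As $H_{Z,f}$ is closed and $Z_\sigma$ is integral (Proposition \ref{prop:stratification-structure}(c)) with generic point $\eta$, this gives $Z_\sigma\subset H_{Z,f}$, contradicting fineness of $f$ for $\Delta$; this completes the argument.

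Everything here is routine bookkeeping with the structure maps, so I do not anticipate a genuine obstacle: the lemma is essentially a formal consequence of Proposition \ref{prop:fundamental hypersurface}(b) together with the identification (from the proof of Proposition \ref{prop:basic-property1}(c)) of $\iota_\varphi$ on strata with the torus-orbit inclusions. The only points worth stating explicitly are that $g_\varphi\neq 0$ — a fine function is nonzero, since $f=0$ would give $H_{Z,f}=Z\supset Z_{\{0\}}=Z^\circ$, violating fineness at $\{0\}\in\Delta$, and $f/\iota_\varphi^*(g_\varphi)$ is a unit in the domain $k[Z^\circ]$ — so that $H_{X(\Delta(\varphi)), g_\varphi}$ is an honest hypersurface, and that the Cartesian square of Proposition \ref{prop:fundamental hypersurface}(b) is taken over the open immersion $\iota_\varphi\colon Z\cap X(\Delta_\varphi)\to X(\Delta(\varphi))$, which is exactly what makes the preimage description above legitimate.
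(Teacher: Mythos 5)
Your proposal is correct and follows essentially the same route as the paper, which proves the lemma in one line by combining Proposition \ref{prop:basic-property1}(c) (the image of $\iota_\varphi$ meets every torus orbit of $X(\Delta(\varphi))$ in a dense open subset) with Proposition \ref{prop:fundamental hypersurface}(b) (the pullback identification of $H_{X(\Delta(\varphi)),g_\varphi}$ with $H_{Z,f}$ on $Z\cap X(\Delta_\varphi)$); your version merely spells out the generic-point bookkeeping and the harmless checks ($g_\varphi\neq 0$, the fiber product being a preimage along the open immersion) that the paper leaves implicit.
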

        \begin{proof}
            From Proposition \ref{prop:basic-property1}(c) and Proposition \ref{prop:fundamental hypersurface}(b), 
            $H_{X(\Delta(\varphi)), g_\varphi}$ does not contain any torus orbits of $X(\Delta(\varphi))$. 
        \end{proof} 
        If $f$ is fine for $\Delta$, then $H_{Z, f}\cap Z_\sigma$ is a hypersurface of $Z_\sigma$ for any $\sigma\in\Delta$. 
        In the following proposition, this hypersurface also has fineness. 
        \begin{proposition}\label{prop: heredity-finess-for-orbit}
            Let $\sigma\in\Delta$ be a cone and $f\in k[Z^\circ]$ be a function such that $f$ is fine for $\Delta$. 
            From Proposition \ref{prop: condition of fine}, let $\chi\in k[Z^\circ]^*$ be a unit such that $\val_Z(v)(\chi f) = 0$ for any $v\in\sigma\cap N$. 
            Let $p^\sigma$ denote the quotient morphism $\Gamma(Z(\sigma), \OO_Z)\rightarrow \Gamma(Z_\sigma, \OO_{\overline{Z_\sigma}})$ associated with the closed immersion $Z_\sigma\rightarrow Z(\sigma)$, and $f^\sigma$ denote $p^\sigma(\chi f)$. 
            Then the following statements follow:
            \begin{enumerate}
                \item[(a)] As a closed subschme of $Z_\sigma$, $H_{Z, f}\cap Z_\sigma = H^\circ_{\overline{Z_\sigma}, f^\sigma}$. 
                \item[(b)] We use the notation in Definition \ref{def: mock induced for orbit}. Then $f^\sigma$ is fine for $\Delta[\sigma]$. 
                \item[(c)] As a closed subschme of $\overline{Z_\sigma}$, $H_{Z, f}\cap \overline{Z_\sigma} = H_{\overline{Z_\sigma}, f^\sigma}$. 
                \item[(d)]Let ${}_f\Delta$ denote the following subset of $\Delta$:
                \[
                    {}_f\Delta = \{\tau\in\Delta\mid H_{Z, f}\cap Z_\tau\neq \emptyset\}
                \]
                Then ${}_f\Delta$ is a subfan of $\Delta$. 
                \item[(e)] If $f$ is non-degenerate for $\Delta$, then $f^\sigma$ is non-degenerate for $\Delta[\sigma]$ too. 
            \end{enumerate}
        \end{proposition}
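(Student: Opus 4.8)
First I would prove (a) as a purely ideal-theoretic statement. Since $f$ is fine for $\Delta$ we have $Z_\sigma\not\subset H_{Z,f}$, so Proposition~\ref{prop: condition of fine} applies and tells us that $\chi f\in\Gamma(Z(\sigma),\OO_Z)$ and that the ideal cutting out $H_{Z,f}\cap Z(\sigma)$ in $\Gamma(Z(\sigma),\OO_Z)$ is the principal ideal $(\chi f)$. Pulling this ideal back along the closed immersion $Z_\sigma\hookrightarrow Z(\sigma)$, i.e. applying $p^\sigma$, shows that $H_{Z,f}\cap Z_\sigma$ is cut out in $\Gamma(Z_\sigma,\OO_{\overline{Z_\sigma}})$ by $p^\sigma(\chi f)=f^\sigma$. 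Since $Z_\sigma$ is the dense torus stratum of the mock toric variety $\overline{Z_\sigma}$, this says precisely $H_{Z,f}\cap Z_\sigma=H^\circ_{\overline{Z_\sigma},f^\sigma}$.

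Next I would isolate the two structural inputs used for the rest. By Definition~\ref{def: mock induced for orbit} the cones of $\Delta[\sigma]$ are the $\gamma'=\pi^\sigma_\RR(\tau)$ for $\sigma\subset\tau\in\Delta$, the stratum $(\overline{Z_\sigma})_{\gamma'}$ of $\overline{Z_\sigma}$ is canonically $Z_\tau$, and by Proposition~\ref{prop: closure of stratum} its closure inside $\overline{Z_\sigma}$ equals $\overline{Z_\tau}=Z\cap\overline{O_\tau}$ (closures in $\overline{Z_\sigma}$ and in $Z$ coincide, as $\overline{Z_\sigma}$ is closed in $Z$). Secondly, fixing $\varphi\in\Phi^\sigma$ and $g_\varphi\in k[M_\varphi]$ with $f/\iota^*_\varphi(g_\varphi)\in k[Z^\circ]^*$ (Proposition~\ref{prop:fundamental hypersurface}(a)), Lemma~\ref{lem:fine for varphi} gives that $g_\varphi$ is fine for $\Delta(\varphi)$, while the proof of Proposition~\ref{prop:orbit-structure} identifies $\overline{Z_\sigma}\cap X(\Delta[\sigma]_\varphi)$ with an open subscheme of the toric orbit closure $\overline{O_{\sigma_\varphi}}\cong X(\Delta(\varphi)[\sigma_\varphi])$, under which $f^\sigma$ corresponds, up to a unit on that chart, to the toric restriction of $g_\varphi$ to $\overline{O_{\sigma_\varphi}}$.

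With these in hand I would establish (c) chart by chart over the $\varphi$-cover: invoking the toric case of the statement, namely that the restriction of a fine polynomial to an orbit closure is fine and its scheme theoretic closure is computed on that orbit closure (cf.\ \cite{NO22} and Lemma~\ref{lem: property of val function}), and transporting along the identification above, one gets $H_{Z,f}\cap\overline{Z_\sigma}=H_{\overline{Z_\sigma},f^\sigma}$ on each chart, hence globally by density. Part (b) then follows: for $\gamma'=\pi^\sigma_\RR(\tau)$ we have $Z_\tau\subset\overline{Z_\sigma}$, so by (c) $H_{\overline{Z_\sigma},f^\sigma}\cap(\overline{Z_\sigma})_{\gamma'}=H_{Z,f}\cap Z_\tau$, which is a proper closed subscheme of $Z_\tau$ because $f$ is fine for $\Delta$; hence $(\overline{Z_\sigma})_{\gamma'}\not\subset H_{\overline{Z_\sigma},f^\sigma}$ (alternatively, (b) can be proved directly by applying the same toric input to $g_\varphi$, without (c)). For (d), if $\gamma$ is a face of $\tau\in{}_f\Delta$ then $Z_\tau\subset\overline{Z_\gamma}$ by Proposition~\ref{prop:stratification-structure}(a), so $\emptyset\neq H_{Z,f}\cap Z_\tau\subset H_{Z,f}\cap\overline{Z_\gamma}=H_{\overline{Z_\gamma},f^\gamma}$, and by (a) and the definition of $H_{\overline{Z_\gamma},f^\gamma}$ the latter is the scheme theoretic closure of $H_{Z,f}\cap Z_\gamma$ in $\overline{Z_\gamma}$; a nonempty closure forces $H_{Z,f}\cap Z_\gamma\neq\emptyset$, i.e.\ $\gamma\in{}_f\Delta$, so ${}_f\Delta$ is closed under faces and hence a subfan. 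Finally (e): for $\gamma'=\pi^\sigma_\RR(\tau)$ the identity used in (b) gives $H_{\overline{Z_\sigma},f^\sigma}\cap(\overline{Z_\sigma})_{\gamma'}=H_{Z,f}\cap Z_\tau$, which is smooth over $k$ since $f$ is non-degenerate for $\Delta$; together with (b) this is exactly non-degeneracy of $f^\sigma$ for $\Delta[\sigma]$.

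The step I expect to be the main obstacle is (c): one must keep the fans $\Delta[\sigma]$, $\Delta(\varphi)[\sigma_\varphi]$ and $\Delta[\sigma](\varphi)$, together with their lattices and the quotient maps, carefully aligned --- most of this alignment is already carried out inside the proof of Proposition~\ref{prop:orbit-structure} --- and one must check that the class $f^\sigma$ genuinely matches the toric restriction of $g_\varphi$ up to a unit on each chart, so that the toric statement transports cleanly. It is also important to prove (c) (or at least (b)) before using it in (d) and (e), so as not to argue in a circle with Proposition~\ref{prop: condition of fine} applied to $\overline{Z_\sigma}$.
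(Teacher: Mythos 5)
Your proposal is correct and follows essentially the same route as the paper: (a) via Proposition \ref{prop: condition of fine}, (c) chart by chart over the $\varphi$-cover by transporting to the toric statement (which in the paper is Lemma \ref{lem: heredity of fineness for orbit}, not Lemma \ref{lem: property of val function}) using Proposition \ref{prop:fundamental hypersurface} and Lemma \ref{lem:fine for varphi}, and then (d), (e) from (a), (c) and the stratification. The only cosmetic difference is the order: the paper proves (b) before (c) by the direct containment $H_{\overline{Z_\sigma},f^\sigma}\subset H_{Z,f}\cap\overline{Z_\sigma}$ together with fineness of $f$, exactly the alternative you mention, whereas you deduce (b) from (c); both are non-circular and valid.
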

        \begin{proof}
            We prove the statements from (a) to (e) in order. 
            \begin{enumerate}
                \item[(a)] From Proposition \ref{prop: condition of fine}, the ideal of $\Gamma(Z(\sigma), \OO_Z)$ associated with a closed subscheme $H_{Z, f}\cap Z(\sigma)\subset Z(\sigma)$ is generated by $\chi f$. Thus, the statement follows. 
                \item[(b)] From (a), we remark that $H_{\overline{Z_\sigma}, f^\sigma}\subset H_{Z, f}\cap \overline{Z_\sigma}$. 
                Let $\tau'\in\Delta[\sigma]$ be a cone. 
                Then there exists $\tau\in\Delta$ such that $\sigma\subset\tau$ and $(\pi^\sigma)_\RR(\tau) = \tau'$. 
                Let $Z^\sigma$ denote $\overline{Z_\sigma} = Z\cap \overline{O_\sigma}$. 
                Then $(Z^\sigma)_{\tau'} = Z_\tau$. 
                Thus, $H_{\overline{Z_\sigma}, f^\sigma}$ does not contain $(Z^\sigma)_{\tau'}$ from the assumption of $f$, the above remark, and Proposition \ref{prop:stratification-structure}(a). 
                Therefore, $f^\sigma$ is fine for $\Delta[\sigma]$. 
                \item[(c)] Let $\varphi\in\Phi$ be an element such that $\sigma\in\Delta_\varphi$. 
                From Proposition \ref{prop:fundamental hypersurface}(a) and Lemma \ref{lem:fine for varphi}, there exists $g_\varphi\in k[M_\varphi]$ such that $f/\iota^*_\varphi(g_\varphi)\in k[Z^\circ]^*$ and $g_\varphi$ is fine for $\Delta(\varphi)$. 
                Let $\eta\in k[Z^\circ]^*$ denote $f/\iota^*_\varphi(g_\varphi)$. 
                Then $H_{Z, f} = H_{Z, \eta^{-1} f}$ and $\chi f = (\chi\eta)(\eta^{-1} f)$, so we may assume that $\eta = 1$, i.e. $f = \iota^*_\varphi(g_\varphi)$. 
                Moreover, from Proposition \ref{prop: units correspondence}, there exists $\omega\in M_\varphi$ such that $\val_Z(v)(\chi) = \val_Z(v)(\iota^*_\varphi(\chi^\omega))$ for any $v\in\sigma\cap N$. 
                In particular, $\val_Z(v)(\chi^{-1}\iota^*_\varphi(\chi^\omega)) = 0$ for any $v\in\sigma\cap N$. 
                Thus, from Proposition \ref{prop:characterization-of-affine mock toric}, $\chi^{-1}\iota^*_\varphi(\chi^\omega)\in\Gamma(Z(\sigma), \OO_Z)^*$. 
                Thus, we may assume that $\chi = \iota^*_\varphi(\chi^\omega)$ for some $\omega\in M_\varphi$. 
                Especially, $q_\varphi(v)(\chi^\omega g_\varphi) = 0$ for any $v \in \sigma\cap N$ and $\chi f = \iota^*_\varphi(\chi^\omega g_\varphi)$. 

                By the way, let $\sigma_\varphi$ denote $(q_\varphi)_\RR(\sigma)$ and there exists the following commutative diagram:
                \begin{equation*}
                    \begin{tikzcd} 
                        \Gamma(X(\sigma_\varphi), \OO_{X(\Delta(\varphi))})\ar[d, hook, "\iota^*_\varphi"]\ar[r, "p^\sigma_\varphi"]& \Gamma(O_{\sigma_\varphi}, \OO_{\overline{O_{\sigma_\varphi}}})\ar[d, hook, "(\iota^\sigma)^*_\varphi"]\\
                        \Gamma(Z(\sigma), \OO_{Z})\ar[r, "p^\sigma"] & \Gamma(Z_\sigma, \OO_{\overline{Z_\sigma}})
                    \end{tikzcd}
                \end{equation*}
                where horizontal maps are surjective ring morphisms induced by closed immersions, and vertical maps are injective ring morphisms induced by open immersions in the condition (5) in Definition \ref{def:mock-toric}. 
                Let $g^\sigma_\varphi$ denote $p^\sigma_\varphi(\chi^\omega g_\varphi)$ $\in$ $\Gamma(O_{\sigma_\varphi},$ $\OO_{\overline{O_{\sigma_\varphi}}})$. 
                Then, from the above diagram, there exists the following diagram whose all small squares are Cartesian squares:
                \begin{equation*}
                    \begin{tikzcd} 
                        H^\circ_{\overline{Z_\sigma}, f^\sigma}\ar[d, hook]\ar[r]&Z_\sigma \ar[d]\ar[r]& \overline{Z_\sigma}\cap X(\Delta_\varphi)\ar[d, hook, "(\iota^\sigma)_\varphi"]\\
                        H^\circ_{\overline{O_{\sigma_\varphi}}, g^\sigma_\varphi}\ar[r]& O_{\sigma_\varphi}\ar[r] & \overline{O_{\sigma_\varphi}}
                    \end{tikzcd}
                \end{equation*}
                Because $(\iota^\sigma)_\varphi$ is an open immersion, the following equation holds from Lemma \ref{lem:open-image}:
                \[
                    H_{\overline{Z_\sigma}, f^\sigma}\cap X(\Delta_\varphi) = H_{\overline{O_{\sigma_\varphi}}, g^\sigma_\varphi}\times_{\overline{O_{\sigma_\varphi}}} (\overline{Z_\sigma}\cap X(\Delta_\varphi))
                \]
                On the other hand, we recall that $g_\varphi$ is fine for $\Delta(\varphi)$. 
                Thus, from Lemma \ref{lem: heredity of fineness for orbit}(c), $H_{\overline{O_{\sigma_\varphi}}, g^\sigma_\varphi} = H_{X(\Delta(\varphi)), g_\varphi}\cap \overline{O_{\sigma_\varphi}}$.  
                Hence, from Proposition \ref{prop:fundamental hypersurface}(b), the following equation holds:
                \begin{align*}
                    H_{\overline{Z_\sigma}, f^\sigma}\cap X(\Delta_\varphi) &= H_{\overline{O_{\sigma_\varphi}}, g^\sigma_\varphi}\times_{\overline{O_{\sigma_\varphi}}} (\overline{Z_\sigma}\cap X(\Delta_\varphi))\\
                    &= (H_{X(\Delta(\varphi)), g_\varphi}\cap \overline{O_{\sigma_\varphi}})\times_{\overline{O_{\sigma_\varphi}}} (\overline{Z_\sigma}\cap X(\Delta_\varphi))\\
                    &= (H_{X(\Delta(\varphi)), g_\varphi}\times_{X(\Delta(\varphi))} \overline{O_{\sigma_\varphi}})\times_{\overline{O_{\sigma_\varphi}}} (\overline{Z_\sigma}\cap X(\Delta_\varphi))\\
                    &= (H_{X(\Delta(\varphi)), g_\varphi}\times_{X(\Delta(\varphi))} (Z\cap X(\Delta_\varphi)))\times_{(Z\cap X(\Delta_\varphi))} (\overline{Z_\sigma}\cap X(\Delta_\varphi))\\
                    &= (H_{Z, f}\cap X(\Delta_\varphi))\times_{(Z\cap X(\Delta_\varphi))} (\overline{Z_\sigma}\cap X(\Delta_\varphi))\\
                    &= (H_{Z, f}\cap\overline{Z_\sigma})\cap X(\Delta_\varphi)
                \end{align*}
                Because $\overline{Z_\sigma}$ is covering by a family of open subsets $\{X(\Delta_\varphi)\}_{\sigma\in\Delta_\varphi}$ of $X(\Delta)$, we have $H_{Z, f}\cap \overline{Z_\sigma} = H_{\overline{Z_\sigma}, f^\sigma}$. 
                \item[(d)] We assume that $H_{Z, f}\cap Z_\sigma = \emptyset$. 
                Then $f^\sigma\in\Gamma(Z_\sigma, \OO_{\overline{Z_\sigma}})^*$ from (a). 
                Thus, from (c), $H_{Z, f}\cap \overline{Z_\sigma} = \emptyset$. 
                In particular, from Proposition \ref{prop:stratification-structure}(a), $H_{Z, f}\cap Z_\tau = \emptyset$ for any $\tau\in\Delta$ such that $\sigma\subset\tau$. 
                This shows that ${}_f\Delta$ is a subfan of $\Delta$. 
                \item[(e)] From the definition of a mock toric structure of $\overline{Z_\sigma} = Z\cap\overline{O_\sigma}$ in Definition \ref{def: mock induced for orbit}, the strata of $\overline{Z_\sigma}$ can be identified with the strata of $Z$ which is contained in $\overline{Z_\sigma}$. 
                Thus, from (c), $f^\sigma$ is non-degenerate for $\Delta[\sigma]$. 
            \end{enumerate}
        \end{proof}
        The following proposition shows that if $f$ is decomposed in some non-units in $k[Z^\circ]$, then all factors are fine for $\Delta$. 
        \begin{proposition}\label{prop:val func of multiple poly}
            Let $f, f_1, \ldots, f_r\in k[Z^\circ]$ be nonzero functions. 
            We assume that $f = f_1\cdot f_2\cdot \cdots\cdot f_r$. 
            Then the following statement follows:  
            \begin{enumerate}
                \item[(a)] The following statements are equivalent:
                    \begin{enumerate}
                        \item[(1)] A function $f$ is fine for $\Delta$.
                        \item[(2)] For any $1\leq i\leq r$, $f_i$ is fine for  $\Delta$.
                    \end{enumerate}
                \item[(b)] If $f$ is non-degenerate for $\Delta$, then $f_i$ is so for any $1\leq i\leq r$. 
                \item[(c)] We assume that $f$ is non-degenerate for $\Delta$ and $f = f_1\cdot f_2\cdot \cdots\cdot f_r$ is an irreducible decomposition of $f$. 
                Then, as a closed subscheme of $Z$, the following equation holds:
                \[
                    H_{Z, f} = \coprod_{1\leq i\leq r} H_{Z, f_i}
                \]
                Moreover, the above equation is a connected decomposition of $H_{Z, f}$. 
            \end{enumerate} 
        \end{proposition}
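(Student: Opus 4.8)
The plan is to extract everything from the valuation function $\val_f$ and the local description of $H_{Z,f}$ on the affine charts $Z(\sigma)$. For part (a), the implication $(2)\Rightarrow(1)$ is immediate: fixing $\sigma\in\Delta$ and choosing, via Proposition \ref{prop: condition of fine}, units $\chi_i\in k[Z^\circ]^*$ with $\val_Z(v)(\chi_i f_i)=0$ for all $v\in\sigma\cap N$, the product $\chi:=\prod_i\chi_i$ satisfies $\val_Z(v)(\chi f)=\sum_i\val_Z(v)(\chi_i f_i)=0$ on $\sigma\cap N$, so $Z_\sigma\not\subset H_{Z,f}$ by the same proposition, and $f$ is fine. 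For $(1)\Rightarrow(2)$ I would argue cone by cone. Fixing $\sigma$, Proposition \ref{prop: condition of fine} together with Proposition \ref{prop: mock val function}(d) shows that $\val^\sigma_f$ is linear on $\sigma$; by Proposition \ref{prop: mock val function}(b),(c) we have $\val^\sigma_f=\sum_i\val^\sigma_{f_i}$ with each $\val^\sigma_{f_i}$ upper convex and positively homogeneous, and the elementary fact that a sum of concave functions which is affine on a convex set has every summand affine (compare the concavity inequalities with the equality coming from linearity of the sum) forces each $\val^\sigma_{f_i}$ to be linear on $\sigma$. A linear, $\ZZ$-valued valuation function on $\sigma$ is realized by a unit: passing through the open immersion $\iota_\varphi$ for some $\varphi$ with $\sigma\in\Delta_\varphi$ and using Proposition \ref{prop:fundamental hypersurface}, Proposition \ref{prop: units correspondence}, Lemma \ref{lem: supplement-of-valuation1} and the toric dictionary of Lemma \ref{lem: orbit and function}, one produces $\chi_i\in k[Z^\circ]^*$ with $\val_Z(v)(\chi_i f_i)=0$ on $\sigma\cap N$, hence $Z_\sigma\not\subset H_{Z,f_i}$; so each $f_i$ is fine.

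For part (b) I would again localize at a cone. By (a) each $f_i$ is fine, so for $\sigma\in\Delta$ we may pick units $\chi,\chi_i$ with $\val_Z(v)(\chi f)=0=\val_Z(v)(\chi_i f_i)$ on $\sigma\cap N$. Then $c:=\chi/\prod_i\chi_i$ has $\val_Z(v)(c)=0$ on $\sigma\cap N$, so $c\in\Gamma(Z(\sigma),\OO_Z)^*$ by Proposition \ref{prop:characterization-of-affine mock toric}, and applying the restriction $p^\sigma$ gives $f^\sigma=p^\sigma(c)\cdot\prod_i f_i^\sigma$ with $p^\sigma(c)$ a unit, where by Proposition \ref{prop: heredity-finess-for-orbit}(a) one has $H_{Z,f}\cap Z_\sigma=V(f^\sigma)$ and $H_{Z,f_i}\cap Z_\sigma=V(f_i^\sigma)$ inside the smooth variety $Z_\sigma$ (Proposition \ref{prop:stratification-structure}(c)). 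The Jacobian criterion then finishes it: at a point of $Z_\sigma$ at which two of the factors $f_i^\sigma$ vanish, $d\!\left(\prod_i f_i^\sigma\right)$ vanishes, so smoothness of $H_{Z,f}\cap Z_\sigma=V\!\left(\prod_i f_i^\sigma\right)$ forces the $V(f_i^\sigma)$ to be pairwise disjoint and each smooth. In particular $H_{Z,f_i}\cap Z_\sigma$ is smooth for every $\sigma$, so each $f_i$ is non-degenerate.

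For part (c), note first that $k[Z^\circ]$ is a UFD (Proposition \ref{prop:stratification-structure}(d)), so each irreducible $f_i$ generates a prime ideal, $H^\circ_{Z,f_i}$ is integral and nonempty, and hence so is its closure $H_{Z,f_i}$; in particular each $H_{Z,f_i}$ is connected. Disjointness of the $H_{Z,f_i}$ is inherited from (b): a point of $H_{Z,f_i}\cap H_{Z,f_j}$ with $i\neq j$ would lie in some stratum $Z_\sigma$ (the strata cover $Z$), i.e. in $V(f_i^\sigma)\cap V(f_j^\sigma)$, contradicting the pairwise disjointness established above. Finally, on each affine chart $Z(\sigma)$ the ideal of $H_{Z,f}\cap Z(\sigma)$ is generated by $\chi f$ (last line of Proposition \ref{prop: condition of fine}), which equals, up to a unit of $\Gamma(Z(\sigma),\OO_Z)$, the product $\prod_i(\chi_i f_i)$; since the $V(\chi_i f_i)=H_{Z,f_i}\cap Z(\sigma)$ are pairwise disjoint we have $(\chi_i f_i)+(\chi_j f_j)=(1)$, hence $\bigl(\prod_i\chi_i f_i\bigr)=\bigcap_i(\chi_i f_i)$ and $H_{Z,f}\cap Z(\sigma)=\coprod_i\bigl(H_{Z,f_i}\cap Z(\sigma)\bigr)$ as closed subschemes. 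Gluing over the cover $\{Z(\sigma)\}_{\sigma\in\Delta}$ gives $H_{Z,f}=\coprod_i H_{Z,f_i}$ scheme-theoretically, and since each $H_{Z,f_i}$ is clopen in $H_{Z,f}$ and connected, this is precisely the decomposition into connected components.

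I expect the main obstacle to be the closing step of $(1)\Rightarrow(2)$ in part (a): converting linearity of $\val^\sigma_{f_i}$ back into the geometric statement $Z_\sigma\not\subset H_{Z,f_i}$ relies on the toric input identifying linearity of the valuation function on a cone with the corresponding orbit not being contained in the hypersurface closure. If the earlier material already records the equivalence ``$f$ fine for $\Delta$ $\iff$ $\val^\sigma_f$ is linear for every $\sigma\in\Delta$'', this step collapses to bookkeeping, and the genuinely new content of the proposition is then just the convexity lemma used in (a) and the Jacobian computation in (b).
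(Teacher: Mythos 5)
Your proposal is correct and follows essentially the same route as the paper: work cone by cone, use Proposition \ref{prop: condition of fine} to turn fineness into linearity of $\val^\sigma_f$, deduce linearity of each $\val^\sigma_{f_i}$ (the paper does this with supporting pairs from Proposition \ref{prop: refinement along function}(c) at an interior point, you with the fact that concave summands of an affine function are affine), realize each linear valuation function by a unit, and then get disjointness and smoothness stratum by stratum (the paper via coprimality in the UFD $k[Z_\sigma]$, you via the Jacobian criterion) before gluing with comaximal ideals in (c). The step you flag as the main obstacle is exactly what Proposition \ref{prop: refinement along function}(f) (equivalently Lemma \ref{lem: property of val function}(g) on the charts) supplies, so it is indeed only bookkeeping.
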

        \begin{proof}
            We prove these statements from (a) to (c) in order. 
            \begin{enumerate}
                \item[(a)] To show one direction is easy from Proposition \ref{prop: condition of fine}, so we assume that the condition (1) holds. 
                Let $\sigma\in\Delta$ be a cone and $v_0\in \sigma^\circ$ be an element. 
                From the assumption and Proposition \ref{prop: condition of fine}, there exists $\chi\in k[Z^\circ]^*$ such that $\val^\sigma_f + \val^\sigma_\chi \equiv 0$. 
                We repalce $f$ with $\chi f$ and $f_1$ with $\chi f_1$, and we may assume that $\val^\sigma_f \equiv 0$. 
                From Proposition \ref{prop: refinement along function}(c), for any $1\leq i\leq r$, there exists $(\chi_i, b_i)\in\Omega^\sigma_{f_i}$ such that $v_0\in C^\sigma_{f_i}(\chi_i, b_i)$. 
                Then the following equation and inequality hold for any $v\in\sigma$:
                \begin{align*}
                    \sum_{1\leq i\leq r}\val^\sigma_{f_i}(v)&\leq \sum_{1\leq i\leq r} \frac{1}{b_i}\val^\sigma_{\chi_i}(v)\\
                    \sum_{1\leq i\leq r}\val^\sigma_{f_i}(v_0)&= \sum_{1\leq i\leq r} \frac{1}{b_i}\val^\sigma_{\chi_i}(v_0)\\
                \end{align*} 
                From Proposition \ref{prop: mock val function}(b) and the assumption of $f$, the left-hand sides of the above equation and inequality are equal to $0$, respectively. 
                Moreover, from Proposition \ref{prop: mock val function} (d), $\sum_{1\leq i\leq r}\frac{1}{b_i}\val^\sigma_{\chi_i}$ is a linear function on $\sigma$. 
                Thus, $\sum_{1\leq i\leq r}\frac{1}{b_i}\val^\sigma_{\chi_i} \equiv 0$ because $v_o\in\sigma^\circ$.
                 
                Next, we show that $\bigcap_{1\leq i\leq r}C^\sigma_{f_i}(\chi_i, b_i) = \sigma$ by a contradiction. 
                We assume that $w\in\sigma\setminus\bigcap_{1\leq i\leq r}C^\sigma_{f_i}(\chi_, b_i)$ exists. 
                From the definition of $C^\sigma_{f_i}(\chi_i, b_i)$, we have $\sum_{1\leq i\leq r}\val^\sigma_{f_i}(w)< \sum_{1\leq i\leq r}\frac{1}{b_i}\val^\sigma_{\chi_i}(w) = 0$. 
                However, $\sum_{1\leq i\leq r} val^\sigma_{f_i}(w)= 0$, so it is a contradiction. 
            
                Thus, for any $1\leq i\leq r$, we have $C^\sigma_{f_i}(\chi_i, b_i) = \sigma$. 
                Hence, from Proposition \ref{prop: refinement along function} (f), for any $1\leq i\leq r$, there exists $\chi'_i\in k[Z^\circ]^*$ such that $\val^\sigma_{f_i} = \val^\sigma_{\chi'_i}$. 
                In particular, $\val_Z(v)(f_i\cdot{\chi'_i}^{-1}) = 0$ for any $v\in \sigma\cap N$. 
                \item[(b)] Let $\sigma\in\Delta$ be a cone. 
                From (a) and Proposition \ref{prop: condition of fine}, there exists $\chi_1, \cdots, \chi_r\in k[Z^\circ]^*$ such that $\val^\sigma_{f_i} + \val^\sigma_{\chi_i} \equiv 0$ for any $1\leq i\leq r$. 
                Thus, for any $1\leq i\leq r$, the ideal of $\Gamma(Z(\sigma), \OO_Z)$ associated with the closed subscheme $H_{Z, f_i}\cap Z(\sigma)$ of $Z(\sigma)$ is generated by $\chi_i f_i$. 
                Let $\chi$ denote $\chi_1\chi_2\cdots\chi_r\in k[Z^\circ]^*$. 
                Then from Proposition \ref{prop: mock val function} (b), $\val^\sigma_f + \val^\sigma_\chi \equiv 0$. 
                In particular,  the ideal of $\Gamma(Z(\sigma), \OO_Z)$ associated with the closed subscheme $H_{Z, f}\cap Z(\sigma)$ of $Z(\sigma)$ is generated by $\chi f$. 

                Let $p^\sigma$ denote a quotient ring morphism $\Gamma(Z(\sigma), \OO_Z)\rightarrow 
                \Gamma(Z_\sigma, \OO_{Z_\sigma})$ associated with the closed immersion $Z_\sigma\rightarrow Z(\sigma)$. 
                Let $f^\sigma, f^\sigma_{1}, \cdots, f^\sigma_{r}$ denote $p^\sigma(\chi f)$, $p^\sigma(\chi_1 f_1), \cdots ,p^\sigma(\chi_r f_r)$ respectively. 
                From the construction of $\chi$, we have $f^\sigma = f^\sigma_{1} f^\sigma_{2}\cdots f^\sigma_{r}$. 
                From Proposition \ref{prop: heredity-finess-for-orbit}(a), for any $1\leq i\leq r$, the closed subscheme $H_{Z, f_i}\cap Z_\sigma$ of $Z_\sigma$ is defined by $f^\sigma_i$. 
                Similarly, the closed subscheme $H_{Z, f}\cap Z_\sigma$ of $Z_\sigma$ is defined by $f^\sigma$. 
                From the assumption, $H_{Z, f}\cap Z_\sigma$ is smooth over $k$. 
                We recall that $k[Z_\sigma]$ is a UFD from Proposition\ref{prop:stratification-structure}(d). 
                Thus, $f^\sigma_1, f^\sigma_2, \cdots, f^\sigma_r$ are co-prime, and the following equation holds as a closed subscheme of $Z_\sigma$. 
                \[
                    H_{Z, f}\cap Z_\sigma = \coprod_{1\leq i\leq r} H_{Z, f_i}\cap Z_\sigma
                \]
                In particular, $H_{Z, f_i}\cap Z_\sigma$ is smooth over $k$ for any $1\leq i\leq r$. 
                Therefore, $f_i$ is non-degenerate for $\Delta$ for any $1\leq i\leq r$. 
                \item[(c)] We use the notation of the proof of (b). 
                From the proof of (b), $H_{Z, f_i}\cap H_{Z, f_j} = \emptyset$ for any $1\leq i< j\leq r$. 
                In particular, $\chi_i f$ and $\chi_j f$ generates a unit ideal of $\Gamma(Z(\sigma), \OO_Z)$. 
                Thus, the following equation holds as a closed subscheme of $Z(\sigma)$ for any $\sigma\in\Delta$: 
                \[
                    H_{Z, f}\cap Z(\sigma) = \coprod_{1\leq i\leq r} H_{Z, f_i}\cap Z(\sigma)
                \]
                Hence, the first part of the statement holds. 
                
                For any $1\leq i\leq r$, $H^\circ_{Z, f_i}$ is irreducible. 
                Thus, from the definition of $H_{Z, f_i}$, $H_{Z, f_i}$ is irreducible too. 
                Therefore, the right-hand side of the equation in the statement is a connected decomposition of $H_{Z, f}$.
                \end{enumerate}
        \end{proof}
        The following proposition gives sufficient conditions when $H_{Z, f}$ is smooth over $k$. 
        \begin{proposition}\label{prop:unimodular+nondegenerate=smooth}
            Let $f\in k[Z^\circ]$ be a function.  
            We assume that $f$ is non-degenerate for $\Delta$ and $\Delta$ is unimodular. 
            Then $H_{Z, f}$ is smooth over $k$. 
        \end{proposition}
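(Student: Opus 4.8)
The plan is to verify smoothness of $H_{Z,f}$ at an arbitrary point $x\in H_{Z,f}$ by a Jacobian-criterion computation performed in a well-chosen smooth toric chart of $Z$. Let $\tau\in\Delta$ be the unique cone with $x\in Z_\tau$, pick $\varphi\in\Phi$ with $\tau\in\Delta_\varphi$, and set $\tau_\varphi=(q_\varphi)_\RR(\tau)$. Since $\Delta$ is unimodular, $\tau_\varphi$ is a unimodular cone by Proposition~\ref{prop:fan-varphi-smooth}(a), so $X(\tau_\varphi)$ is smooth over $k$; moreover, restricting the Cartesian diagrams in the proof of Proposition~\ref{prop:basic-property1}(c) to the chart $X(\tau)\subset X(\Delta_\varphi)$ shows that $\iota_\varphi$ identifies $Z(\tau)$ with an open subscheme $V$ of $X(\tau_\varphi)$, under which $Z_\tau$ corresponds to $V\cap O_{\tau_\varphi}$ and $O_{\tau_\varphi}$ is the closed torus orbit of $X(\tau_\varphi)$. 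Because $f$ is non-degenerate, hence fine for $\Delta$, Proposition~\ref{prop: condition of fine} furnishes a unit $\chi\in k[Z^\circ]^*$ with $\val_Z(v)(\chi f)=0$ for all $v\in\tau\cap N$, and its last assertion says that the ideal of $H_{Z,f}\cap Z(\tau)$ in $\Gamma(Z(\tau),\OO_Z)$ is generated by $h:=\chi f$ (which is a nonzerodivisor, as $Z$ is integral and $f\neq0$). Thus, near $x$, the subscheme $H_{Z,f}$ is the Cartier divisor $V(h)$ inside the smooth scheme $Z(\tau)$.

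Next I would identify the restriction of $h$ to the stratum. Write $d=\dim Z$ and $r=\dim\tau=\dim\tau_\varphi$; by unimodularity $X(\tau_\varphi)\cong\A^r_{z_1,\dots,z_r}\times\Gm^{\,d-r}$, the closed orbit $O_{\tau_\varphi}$ being cut out by the regular sequence $z_1,\dots,z_r$. Hence modulo the ideal of $Z_\tau$ the function $h$ reduces to $f^\tau:=p^\tau(h)\in\Gamma(Z_\tau,\OO_{\overline{Z_\tau}})$, which by Proposition~\ref{prop: heredity-finess-for-orbit}(a) is a defining function of $H_{Z,f}\cap Z_\tau$. Fineness gives $Z_\tau\not\subset H_{Z,f}$, so $f^\tau\neq0$; and non-degeneracy of $f$ says $H_{Z,f}\cap Z_\tau=V(f^\tau)$ is smooth over $k$. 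Since $x\in V(f^\tau)$ and $Z_\tau$ is smooth over $k$ (Proposition~\ref{prop:stratification-structure}(c)), the Jacobian criterion yields $df^\tau(x)\neq0$ in $\Omega^1_{Z_\tau/k}\otimes\kappa(x)$.

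Finally I would propagate this upward. The conormal surjection $\Omega^1_{Z(\tau)/k}\otimes\kappa(x)\twoheadrightarrow\Omega^1_{Z_\tau/k}\otimes\kappa(x)$ sends $dh(x)$ to $d(h|_{Z_\tau})(x)=df^\tau(x)\neq0$; therefore $dh(x)\neq0$ already in $\Omega^1_{Z(\tau)/k}\otimes\kappa(x)$ — in coordinate terms, the $z$-free part of $h$ at $x$ has nonzero differential along $O_{\tau_\varphi}$. As $Z(\tau)$ is smooth over $k$ and $H_{Z,f}=V(h)$ near $x$, the Jacobian criterion shows $H_{Z,f}$ is smooth over $k$ at $x$. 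Since $x$ was arbitrary, $H_{Z,f}$ is smooth over $k$.

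The only step with real content is the first one: making precise that $\iota_\varphi$ genuinely turns $Z(\tau)$ into an open subscheme of the \emph{smooth} affine toric variety $X(\tau_\varphi)$ with $Z_\tau$ lying in its closed orbit, and that under this identification the ideal of $H_{Z,f}$ is generated by a single function whose restriction to that closed orbit computes $H_{Z,f}\cap Z_\tau$. All of these compatibilities are already packaged in the Cartesian-square arguments used in the proofs of Propositions~\ref{prop:basic-property1} and~\ref{prop: heredity-finess-for-orbit}, so once they are invoked the differential computation is immediate; in particular no induction on $\dim\tau$ is needed, because the non-degeneracy hypothesis is applied directly to the single stratum $Z_\tau$ containing $x$.
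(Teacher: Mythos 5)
Your proposal is correct, and its skeleton is the same as the paper's: reduce to the affine charts $Z(\tau)\subset X(\tau_\varphi)$ given by condition (5), use Proposition~\ref{prop:fundamental hypersurface} / Proposition~\ref{prop: condition of fine} to write $H_{Z,f}\cap Z(\tau)$ as the zero locus of a single function $\chi f$, use Proposition~\ref{prop:fan-varphi-smooth}(a) to make the ambient chart smooth, and feed in non-degeneracy on the stratum through Proposition~\ref{prop: heredity-finess-for-orbit}(a). The difference lies in how the last step is carried out. The paper appeals to Lemma~\ref{lem: for 7-1} (whose proof rests on Corollary~\ref{cor:fundamental smoothness of small open}): it produces an open neighborhood of $H_{Z,f}\cap Z_\sigma$ on which the hypersurface is smooth \emph{relatively} over the affine toric base $\Spec(k[S])$, and unimodularity is then used only to know that this base is smooth; this relative formulation is deliberately reusable in the non-unimodular situation of Lemma~\ref{lem: for 7-4} and Proposition~\ref{prop:reduced+nondegenerate=strictly toroidal}. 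You instead argue pointwise inside the smooth chart: smoothness of the codimension-one subscheme $H_{Z,f}\cap Z_\tau\subset Z_\tau$ at $x$ forces $df^\tau(x)\neq 0$, functoriality of differentials along $Z_\tau\hookrightarrow Z(\tau)$ then gives $d(\chi f)(x)\neq 0$, and the Jacobian criterion concludes. This bypasses the appendix lemmas entirely and is a legitimate, somewhat more self-contained finish for this particular statement; the one point that must be (and in your write-up is) secured is that $H_{Z,f}\cap Z_\tau$ has pure codimension one in the integral smooth stratum $Z_\tau$ (fineness gives $f^\tau\neq 0$), since otherwise smoothness of the stratum hypersurface would not force the conormal direction to be spanned by the nonzero class of $df^\tau(x)$. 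What you lose relative to the paper's route is precisely the relative statement over $\Spec(k[S])$ that the later strictly toroidal construction needs.
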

        \begin{proof}
            Let $\sigma\in\Delta$ be a cone and $\varphi\in\Phi$ be an element such that $\sigma\in\Delta_\varphi$. 
            Let $\sigma_\varphi$ denote $(q_\varphi)_\RR(\sigma)$. 
            Then from Proposition \ref{prop:fundamental hypersurface}(a) and (b), there exists $g_\varphi\in k[M_\varphi]$ such that $H_{X(\sigma_\varphi),g_\varphi} \times_{X(\sigma_\varphi)} Z(\sigma) = H_{Z, f}\cap Z(\sigma)$. 
            We remark that $\iota_\varphi(Z(\sigma))$ is an open subscheme of $X(\sigma_\varphi)$ and $H_{X(\sigma_\varphi),g_\varphi}$ does not contain $O_{\sigma_\varphi}$ from Lemma \ref{lem:fine for varphi}. 
            Moreover, from the proof of Proposition \ref{prop:basic-property1}(c), $(H_{X(\sigma_\varphi),g_\varphi}\cap O_{\sigma_\varphi})\times_{X(\sigma_\varphi)} Z(\sigma) = H_{Z, f}\cap Z_\sigma$. 
            Because $f$ is nondegenerate for $\Delta$, $H_{Z, f}\cap Z_\sigma\cong H_{X(\sigma_\varphi),g_\varphi}\cap O_{\sigma_\varphi}\cap \iota_\varphi(Z(\sigma))$  is smooth over $k$. 
            Thus, from Lemma \ref{lem: for 7-1}, there exists 
            \begin{itemize}
                \item A toric monoid $S$
                \item A morphism $p\colon X(\sigma_\varphi)\rightarrow \Spec(k[S])$ over $\Spec(k)$
                \item An open subset $U$ of $\iota_\varphi(Z(\sigma))$
            \end{itemize}
            such that
            \begin{enumerate}
                \item[(1)] $H_{X(\sigma_\varphi), g_\varphi}\cap O_{\sigma_\varphi}\cap\iota_\varphi(Z(\sigma))\subset U$
                \item[(2)] The restriction $p|_{H_{X(\sigma_\varphi), f}\cap U}\colon H_{X(\sigma_\varphi), f}\cap U\rightarrow \Spec(k[S])$ is smooth.
            \end{enumerate}
            Let $U_\sigma$ denote an open subset ${(\iota_\varphi)}^{-1}(U)$ of $Z(\sigma)$. 
            Then $H_{Z, f}\cap Z_\sigma\subset U_\sigma$ and the restriction $(p\circ \iota_\varphi)|_{H_{Z, f}\cap U_\sigma}\colon {H_{Z, f}\cap U_\sigma}\rightarrow \Spec(k[S])$ is smooth. 
            From the assumption that $\sigma$ is unimodular, $\sigma_\varphi$ is unimodular from Proposition \ref{prop:fan-varphi-smooth}(a). 
            Thus, from the proof of Lemma \ref{lem: for 7-1}, we can take $S$ such that $\Spec(k[S])$ is smooth over $k$. 
            Therefore, $H_{Z, f}\cap U_\sigma$ is smooth over $k$. 
            We recall that $H_{Z, f}\cap Z_\sigma\subset U_\sigma$. 
            Thus, by considering for all $\sigma\in\Delta$, we can check that $H_{Z, f}$ is smooth over $k$. 
        \end{proof}
        We use the notation in the subsection 4.2.
        We have already shown that a toric resolution of $X(\Delta')\rightarrow X(\Delta)$ induces a new mock toric variety $Z'$. 
        The following proposition shows the structure of $H_{Z', f}$ explicitly. 
        \begin{proposition}\label{prop: heredity-finess-for-dominant}
             Let $N'$ be a lattice of finite rank, $\pi\colon N'\rightarrow N$ be a surjection, $s$ be a section of $\pi$, $\Delta$ and $\Delta'$ be strongly convex rational polyhedral fans in $N_\RR$ and $N'_\RR$ respectively, and $f\in k[Z^\circ]$ be a function. 
            We assume that $\pi$ is compatible with the fans $\Delta'$ and $\Delta$.
            Let $Z'$ denote a mock toric variety induced by $Z$ along $\pi_*\colon X(\Delta')\rightarrow X(\Delta)$ and $s$, $\mu$ denote the restriction $\pi_*|_{Z'}\colon Z'\rightarrow Z$, and  $\mu^*\colon k[Z^\circ]\rightarrow k[{Z'}^\circ]$ be a ring morphism induced by $\mu$. 
            Then the following statements hold:
            \begin{enumerate}
                \item[(a)] The following diagram is commutative: 
                    \begin{equation*}
                        \begin{tikzcd} 
                            \supp(\Delta')\cap N'\ar[r, "\pi"]\ar[d, "\val_{Z'}"]& \supp(\Delta)\cap N\ar[d, "\val_{Z}"]\\
                            \Val_k(Z')\ar[r, "\mu_\natural"]& \Val_{k}(Z)
                        \end{tikzcd}
                    \end{equation*}
                \item[(b)] $\val_f\circ\pi_\RR|_{\supp(\Delta')} = \val_{\mu^*(f)}$
                \item[(c)] If $f$ is fine for $\Delta$, then $\mu^*(f)$ is fine for $\Delta'$. 
                \item[(d)] If $f$ is fine for $\Delta$, then $H_{Z, f}\times_{Z}Z' = H_{Z', \mu^*(f)}$.
                \item[(e)] We assume that $f$ is fine for $\Delta$. 
                Then $f$ is non-degenerate for $\Delta$ if and only if $\mu^*(f)$ is non-degenerate for $\Delta'$.
            \end{enumerate}
        \end{proposition}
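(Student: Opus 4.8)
\emph{Proof proposal.} The plan is to push every assertion down to the affine strata $Z(\sigma)$, $Z'(\tau)$ and to the torus-invariant valuations, using the compatibility diagrams built in the proof of Proposition~\ref{prop:relative-induced-structure}. For (a): fix $v'\in\supp(\Delta')\cap N'$ and choose $\varphi\in\Phi$ with $v'\in\supp(\Delta'_\varphi)$, which exists since $\Delta'=\bigcup_\varphi\Delta'_\varphi$; the definition of $\Delta'_\varphi$ then forces $\pi(v')\in\supp(\Delta_\varphi)\subset\supp(\Delta)$, so both composites in the square are defined. Restricting the commutative diagram from the proof of Proposition~\ref{prop:relative-induced-structure}(5) (the one relating $\pi_*$, $(q'_\varphi)_*$, $(q_\varphi)_*$ and $(\pi_\varphi)_*$, with $\pi_\varphi\circ q'_\varphi=q_\varphi\circ\pi$) to the dense open tori, and using that $(\pi_\varphi)_*$ carries the torus-invariant valuation $q'_\varphi(v')$ to $q_\varphi(\pi(v'))$, Lemma~\ref{lem:val-val} gives $\mu_\natural(\val_{Z'}(v'))=\val_Z(\pi(v'))$. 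For (b): both $\val_f\circ\pi_\RR|_{\supp(\Delta')}$ and $\val_{\mu^*(f)}$ are continuous and $\Rp$-homogeneous on $\supp(\Delta')$, and on a lattice point $v'$ they coincide, since $\val_{\mu^*(f)}(v')=\val_{Z'}(v')(\mu^*(f))=\bigl(\mu_\natural\val_{Z'}(v')\bigr)(f)=\val_Z(\pi(v'))(f)=\val_f(\pi_\RR(v'))$ by (a) and Proposition~\ref{prop: mock val function}(a); uniqueness of such a function (Lemma~\ref{lem: uniqueness of the function}, cone by cone, or density of $\QQ$-points plus continuity) yields the equality.

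For (c) and (d): fix $\tau\in\Delta'$ and, by compatibility, a cone $\sigma\in\Delta$ with $\pi_\RR(\tau)\subset\sigma$. As $f$ is fine for $\Delta$, Proposition~\ref{prop: condition of fine} gives $\chi\in k[Z^\circ]^*$ with $\val_Z(v)(\chi f)=0$ for all $v\in\sigma\cap N$. Then $\mu^*(\chi)\in k[{Z'}^\circ]^*$, and for every $v'\in\tau\cap N'$ we have $\pi(v')\in\sigma\cap N$, so by (a) $\val_{Z'}(v')(\mu^*(\chi f))=\val_Z(\pi(v'))(\chi f)=0$. Proposition~\ref{prop: condition of fine} applied on $Z'$ then gives $Z'_\tau\not\subset H_{Z',\mu^*(f)}$, which is (c), and identifies the ideal of $H_{Z',\mu^*(f)}\cap Z'(\tau)$ in $\Gamma(Z'(\tau),\OO_{Z'})$ as $(\mu^*(\chi f))$. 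On the other hand $\mu$ restricts to $Z'(\tau)\to Z(\sigma)$, so $\bigl(H_{Z,f}\times_Z Z'\bigr)\cap Z'(\tau)=(H_{Z,f}\cap Z(\sigma))\times_{Z(\sigma)}Z'(\tau)$, whose defining ideal is the extension along $\mu^*$ of the ideal $(\chi f)$ of $H_{Z,f}\cap Z(\sigma)$, i.e.\ again $(\mu^*(\chi f))$; since $\{Z'(\tau)\}_{\tau\in\Delta'}$ covers $Z'$, the two closed subschemes agree, proving (d). (If a different witness $\chi'$ is used on the $Z'$ side, then $\mu^*(\chi)/\chi'\in\Gamma(Z'(\tau),\OO_{Z'})^*$ by Proposition~\ref{prop:characterization-of-affine mock toric}, so the ideals still match.)

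For (e): assume $f$ fine, hence $\mu^*(f)$ fine by (c). Given $\tau\in\Delta'$, let $\sigma$ be the smallest cone of $\Delta$ containing $\pi_\RR(\tau)$; then $\pi_*$ restricts to a surjection $O_\tau\to O_\sigma$ and, via the orbit-compatibility Cartesian squares (Lemma~\ref{lem:relative-torus-fibration}), to a faithfully flat smooth morphism $Z'_\tau\to Z_\sigma$. By (d), $H_{Z',\mu^*(f)}\cap Z'_\tau=(H_{Z,f}\cap Z_\sigma)\times_{Z_\sigma}Z'_\tau$, so $H_{Z,f}\cap Z_\sigma$ is smooth over $k$ if and only if its base change $H_{Z',\mu^*(f)}\cap Z'_\tau$ is (smoothness ascends along smooth morphisms and descends along faithfully flat ones). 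Letting $\tau$ range over $\Delta'$, and noting that every cone of $\Delta$ occurs as such a $\sigma$, gives the equivalence. The main obstacle I anticipate is (d): scheme-theoretic closure does not obviously commute with the base change $\mu$ (which is flat but not proper), so the argument must be carried out affine-locally through Proposition~\ref{prop: condition of fine}, and the real work is the bookkeeping with units in $\Gamma(Z'(\tau),\OO_{Z'})$ that makes the two defining ideals literally equal; once (d) is in place, the smooth/faithfully flat descent in (e) is routine.
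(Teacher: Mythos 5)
Your argument is correct, and for parts (a), (b), (c) and (e) it is essentially the paper's proof: (a) via the commutative diagram from Proposition~\ref{prop:relative-induced-structure} and Lemma~\ref{lem:val-val}, (b) via Lemma~\ref{lem: uniqueness of the function}, (c) via Proposition~\ref{prop: condition of fine}, and (e) via the fact that $Z'_\tau\rightarrow Z_\sigma$ is a (trivial) algebraic torus fibration, so smoothness transfers both ways. Where you genuinely diverge is (d): the paper does not argue with the ideals $(\chi f)$ and $(\mu^*(\chi f))$ on the charts $Z(\sigma)$, $Z'(\tau)$ directly; it instead writes $H_{Z,f}\cap X(\Delta_\varphi)$ as a pullback of the toric hypersurface $H_{X(\Delta(\varphi)),g_\varphi}$ (Proposition~\ref{prop:fundamental hypersurface}, Lemma~\ref{lem:fine for varphi}), invokes the purely toric statement Lemma~\ref{lem: heredity of fineness for dominant}(b) for $g' = \pi_\varphi^*(g_\varphi)$, and then concludes by a chain of base-change identities. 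Your route stays entirely on the mock toric side: Proposition~\ref{prop: condition of fine} identifies the ideal of $H_{Z',\mu^*(f)}\cap Z'(\tau)$ as $(\mu^*(\chi f))$, which you match against the extension of $(\chi f)$ defining $(H_{Z,f}\times_Z Z')\cap Z'(\tau)$; this is shorter, avoids the toric lemma, and makes explicit exactly the unit bookkeeping that the paper hides inside the $g_\varphi$'s, at the cost of not reusing the already-established toric result. One shared caveat: in (e) your closing remark that ``every cone of $\Delta$ occurs as such a $\sigma$'' is what the only-if direction needs, and it holds only when $\pi_\RR(\supp(\Delta'))$ meets the relative interior of every cone of $\Delta$ (true in the situations where the proposition is applied, e.g.\ when $\supp(\Delta')=\pi_\RR^{-1}(\supp(\Delta))$ or $\Delta'$ refines $\Delta$); the paper's proof is cone-by-cone and leaves this point equally implicit, so it is not a defect of your argument relative to the paper's.
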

        \begin{proof}
            We prove the statements from (a) to (e) in order. 
            \begin{enumerate}
                \item[(a)] Let $\tau\in\Delta'$ and $\sigma\in\Delta$ be cones, and $\varphi\in\Phi$ be an element such that $\pi_\RR(\tau)\subset \sigma$ and $\sigma\in\Delta_\varphi$. 
                We recall that $\tau\in\Delta'_\varphi$. 
                Let $\sigma_\varphi$ denote $(q_\varphi)_\RR(\sigma)$ and $\tau_\varphi$ denote $(q'_\varphi)_\RR(\tau)$. 
                We remark that there exists the following commutative diagram:
                \begin{equation*}
                    \begin{tikzcd} 
                        Z'(\tau)\ar[r, "\iota'"]\ar[d, "\mu"]& X(\tau)\ar[r, "(q'_\varphi)_*"]\ar[d, "\pi_*"]& X(\tau_\varphi)\ar[d, "(\pi_\varphi)_*"]\\
                        Z(\sigma)\ar[r, "\iota"]&X(\sigma)\ar[r, "(q_\varphi)_*"]&X(\sigma_\varphi)
                    \end{tikzcd}
                \end{equation*}
                where $\pi_\varphi$ denote a group morphism $N'/s(N_\varphi)\rightarrow N/N_\varphi$ such that $\pi_\varphi\circ q'_\varphi= q_\varphi\circ \pi$. 
                Let $w\in\tau\cap N'$ be an element. 
                Then $\mu_\natural(\val_{Z'}(w)) = \val_Z(\pi(w))$ from Lemma \ref{lem:val-val}. 
                \item[(b)] Because $\pi_\RR$ is a linear map, both functions are continuous maps and are preserved by the scalar product of non-negative real numbers. 
                Moreover, from (a), the following equation holds for any $w\in\supp(\Delta')\cap N'$:
                \begin{align*}
                    \val_f(\pi_\RR(w)) &= \val_Z(\pi(w))(f)\\
                    &= (\mu_\natural(\val_{Z'}(w)))(f)\\
                    &= \val_{Z'}(w)(\mu^*(f))\\
                    &= \val_{\mu^*(f)}(w)
                \end{align*}
                Thus, from Lemma \ref{lem: uniqueness of the function}, the statement holds.
                \item[(c)] 
                Let $\tau\in\Delta'$ and $\sigma\in\Delta$ be cones such that $\pi_\RR(\tau)\subset \sigma$. 
                Then there exists $\chi\in k[Z^\circ]^*$ such that $\val^\sigma_f(v) = \val^\sigma_\chi(v)$ for any $v\in\sigma$ from Proposition \ref{prop: condition of fine}. 
                From (b), $\val_f\circ\pi_\RR = \val_{\mu^*(f)}$ and $\val_\chi\circ\pi_\RR = \val_{\mu^*(\chi)}$. 
                Thus, the following equation holds for any $w\in\tau$:
                \begin{align*}
                    \val_{\mu^*(f)}(w) &= \val_f(\pi_\RR(w))\\
                    &=\val_\chi(\pi_\RR(w))\\
                    &=\val_{\mu^*(\chi)}(w)
                \end{align*}
                Therefore, from Proposition \ref{prop: condition of fine}, $H_{Z', \mu^*(f)}$ does not contain $Z'_\tau$, so $\mu^*(f)$ is fine for $\Delta'$. 
                \item[(d)] From Proposition \ref{prop:fundamental hypersurface}(a) and Lemma \ref{lem:fine for varphi}, there exists $g_\varphi\in k[M_\varphi]$ such that $f/\iota^*_\varphi(g_\varphi)\in k[Z^\circ]^*$ and $g_\varphi$ is fine for $\Delta(\varphi)$. 
                Then from Proposition \ref{prop:fundamental hypersurface}(b), the following equations hold:
                \[
                    H_{Z, f}\cap X(\Delta_\varphi) = H_{X(\Delta(\varphi)), g_\varphi}\times_{X(\Delta(\varphi))}(Z\cap X(\Delta_\varphi))
                \]
                Let $\pi_\varphi^*\colon \Gamma(X(\sigma_\varphi), \OO_{X(\Delta(\varphi))})\rightarrow \Gamma(X(\tau_\varphi), \OO_{X(\Delta'(\varphi))})$ denote a ring morphism induced by a toric morphism $(\pi_\varphi)_*\colon X(\tau_\varphi)\rightarrow X(\sigma_\varphi)$. 
                Let $f'$ denote $\mu^*(f)$ and $g'$ denote $\pi_\varphi^*(g_\varphi)$. 
                Then $f'/(\iota'_\varphi)^*(g')\in k[{Z'}^\circ]^*$ and the following equation holds:
                \[
                    H_{Z', f'}\cap X(\Delta'_\varphi) = H_{X(\Delta'(\varphi)), g'}\times_{X(\Delta'(\varphi))}(Z'\cap X(\Delta'_\varphi))
                \]
                On the other hand, from Lemma \ref{lem: heredity of fineness for dominant}(b), the following equations hold:
                \[
                    H_{X(\Delta'(\varphi)), g'} = H_{X(\Delta(\varphi)), g_\varphi}\times_{X(\Delta(\varphi))}X(\Delta'(\varphi))
                \]
                Thus, the following equation holds:
                \begin{align*}
                    H_{Z', f'}\cap X(\Delta'_\varphi)&= H_{X(\Delta'(\varphi)), g'}\times_{X(\Delta'(\varphi))}
                    (Z'\cap X(\Delta'_\varphi))\\
                    &= (H_{X(\Delta(\varphi)), g_\varphi}\times_{X(\Delta(\varphi))}X(\Delta'(\varphi)))\times_{X(\Delta'(\varphi))}(Z'\cap X(\Delta'_\varphi))\\
                    &= (H_{X(\Delta(\varphi)), g_\varphi}\times_{X(\Delta(\varphi))}(Z\cap X(\Delta_\varphi)))\times_{Z\cap X(\Delta_\varphi)}(Z'\cap X(\Delta'_\varphi))\\
                    &= (H_{Z, f}\cap X(\Delta_\varphi))\times_{Z\cap X(\Delta_\varphi)}(Z'\cap X(\Delta'_\varphi))\\
                    &= (H_{Z, f}\times_Z Z')\times_{Z'}(Z'\cap X(\Delta'_\varphi))\\
                    &= (H_{Z, f}\times_{Z}Z')\cap X(\Delta'_\varphi)\\
                \end{align*}
                Therefore, $H_{Z', f'} = H_{Z, f}\times_{Z}Z'$ because $\{Z'(\tau)\}_{\tau\in\Delta'}$ is an open covering of $Z'$. 
                \item[(e)] We use the notation in the proof of (a).  
                There exists $\sigma\in\Delta$ such that $\pi_\RR(\tau)\cap \sigma^\circ\neq\emptyset$. 
                Thus, we remark that $(\pi_\varphi)_\RR(\tau_\varphi)\cap\sigma^\circ_\varphi\neq\emptyset$ too. 
                Hence, there exists the following commutative diagram whose all small squares are Cartesian squares:
                \begin{equation*}
                    \begin{tikzcd} 
                        Z'_\tau\ar[r, hook, "\iota'"]\ar[d, "\mu"]&O_\tau\ar[r, "(q'_\varphi)_*"]\ar[d, "\pi_*"]& O_{\tau_\varphi}\ar[d, "(\pi_\varphi)_*"]\\
                        Z_\sigma\ar[r, hook, "\iota"]&O_\sigma\ar[r, "(q_\varphi)_*"] & O_{\sigma_\varphi}
                    \end{tikzcd}
                \end{equation*}
                In particular, $(\pi_\varphi)_*\colon O_{\tau_\varphi}\rightarrow O_{\sigma_\varphi}$ is a trivial algebraic torus fibration, so $\mu\colon Z'_\tau\rightarrow Z_\sigma$ is also a  trivial algebraic torus fibration. 
                Therefore, from (d), $H_{Z', \mu^*(f)}\cap Z'_\tau$ is isomorphic to a trivial algebraic torus fibration of $H_{Z, f}\cap Z_\sigma$, so the smoothness of $H_{Z', \mu^*(f)}\cap Z_\tau$ and the smoothness of $H_{Z, f}\cap Z_\sigma$ are equivalent. 
            \end{enumerate}
        \end{proof}
        In \ref{prop:ext'n}, we considered the relation between mock toric varieties and the field extension. 
        From now on, we consider the relation with $H_{Z, f}$ and the field extension. 
        \begin{proposition}\label{prop: heredity-finess-for-basechange}
            We use the notation in Proposition \ref{prop:ext'n}. 
            Let $f\in K[Z^\circ]$ be a function. 
            Then the following statements follow:
            \begin{enumerate}
                \item[(a)] $\val_f = \val_{\alpha^*(f)}$
                \item[(b)] If $f$ is fine for $\Delta$, then $\alpha^*(f)$ is fine for $\Delta$. 
                \item[(c)] If $f$ is fine for $\Delta$, then $H_{Z, f}\times_{Z}{Z_L} = H_{Z_L, \alpha^*(f)}$.
                \item[(d)] If $f$ is non-degenerate for $\Delta$, then $\alpha^*(f)$ is non-degenerate for $\Delta$. 
            \end{enumerate}
        \end{proposition}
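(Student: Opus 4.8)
The plan is to deduce all four parts from Proposition \ref{prop:ext'n}, using in particular the commuting triangle relating $\val_{Z}$, $\val_{Z_L}$ and $\alpha_\natural$ in Proposition \ref{prop:ext'n}(c), together with the flatness of the base change $Z_L\to Z$. Throughout, the mock toric structure of $Z_L$ is carried by the same tuple $(N,\Delta,\dots)$ by Proposition \ref{prop:ext'n}(a), so $\val_f$ and $\val_{\alpha^*(f)}$ are functions on the one fan $\supp(\Delta)$; moreover $\alpha$ is dominant, hence $\alpha^\ast\colon K[Z^\circ]\to L[Z_L^\circ]$ is injective and carries units to units, and $\alpha^\ast(f)\neq 0$ whenever $f\neq 0$.

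For (a), both $\val_f$ and $\val_{\alpha^\ast(f)}$ are continuous and positively homogeneous on $\supp(\Delta)$, so by the uniqueness statement Lemma \ref{lem: uniqueness of the function} it suffices to check that they agree on $\supp(\Delta)\cap N$. For $v$ there one has $\val_{\alpha^\ast(f)}(v)=\val_{Z_L}(v)(\alpha^\ast(f))=\bigl(\alpha_\natural(\val_{Z_L}(v))\bigr)(f)$, and this equals $\val_Z(v)(f)=\val_f(v)$ by Proposition \ref{prop:ext'n}(c). For (b), by Proposition \ref{prop: condition of fine} fineness of $f$ for $\Delta$ means that for each $\sigma\in\Delta$ there is $\chi\in K[Z^\circ]^\ast$ with $\val^\sigma_f+\val^\sigma_\chi\equiv 0$ on $\sigma$. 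Applying (a) to $f$ and to $\chi$ gives $\val_{\alpha^\ast(f)}=\val_f$ and $\val_{\alpha^\ast(\chi)}=\val_\chi$, so $\alpha^\ast(\chi)\in L[Z_L^\circ]^\ast$ satisfies $\val_{Z_L}(v)(\alpha^\ast(\chi)\,\alpha^\ast(f))=0$ for all $v\in\sigma\cap N$; by the converse direction of Proposition \ref{prop: condition of fine} this yields $Z_{L,\sigma}\not\subset H_{Z_L,\alpha^\ast(f)}$, and ranging over $\sigma$ gives fineness of $\alpha^\ast(f)$ for $\Delta$.

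For (c), on the torus part one has tautologically $H^\circ_{Z_L,\alpha^\ast(f)}=H^\circ_{Z,f}\times_Z Z_L$, since both are cut out by $\alpha^\ast(f)=0$ inside $Z_L^\circ=Z^\circ\times_Z Z_L$. The morphism $Z_L\to Z$ is flat, being the base change of $\Spec(L)\to\Spec(K)$, and $Z^\circ\hookrightarrow Z$ is a quasi-compact open immersion; hence the formation of the scheme-theoretic closure commutes with this base change, so $H_{Z_L,\alpha^\ast(f)}=\overline{H^\circ_{Z_L,\alpha^\ast(f)}}=\overline{H^\circ_{Z,f}}\times_Z Z_L=H_{Z,f}\times_Z Z_L$. (Alternatively one can argue chart by chart through the open immersions $\iota_\varphi$ and the analogous statement for hypersurfaces in toric varieties, as in the proof of Proposition \ref{prop:fundamental hypersurface}.) I expect this flat-base-change point, ensuring the identification holds scheme-theoretically and not merely set-theoretically, to be the step deserving the most care; note that fineness is not actually needed here, but the hypothesis keeps the statement parallel to Proposition \ref{prop: heredity-finess-for-dominant}(d). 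Finally, for (d), non-degeneracy of $f$ for $\Delta$ adds to fineness the smoothness of $H_{Z,f}\cap Z_\sigma$ over $K$ for every $\sigma\in\Delta$. Using (c) together with the identity $Z_{L,\sigma}=Z_\sigma\times_Z Z_L$ (a base change, from the construction in Definition \ref{def: stratification} and Proposition \ref{prop:ext'n}), one obtains $H_{Z_L,\alpha^\ast(f)}\cap Z_{L,\sigma}=(H_{Z,f}\cap Z_\sigma)\times_K L$; since smoothness is stable under base field extension, the left-hand side is smooth over $L$. Combined with part (b), this is precisely non-degeneracy of $\alpha^\ast(f)$ for $\Delta$.
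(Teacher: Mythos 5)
Your proposal is correct, and parts (a), (b), (d) follow essentially the same path as the paper: (a) via Lemma \ref{lem: uniqueness of the function} and Proposition \ref{prop:ext'n}(c), (b) via Proposition \ref{prop: condition of fine} and (a), and (d) via (c), the identification $Z_{L,\sigma} = Z_\sigma\times_Z Z_L$, and stability of smoothness under base field extension.

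Part (c) is where you genuinely diverge. The paper works cone by cone: for each $\sigma\in\Delta$ it uses the fineness hypothesis through Proposition \ref{prop: condition of fine} to exhibit a unit $\chi$ so that the ideal of $H_{Z,f}\cap Z(\sigma)$ in $\Gamma(Z(\sigma),\OO_Z)$ is principal, generated by $\chi^{-1}f$, and likewise the ideal of $H_{Z_L,\alpha^*(f)}\cap Z_L(\sigma)$ is generated by $\alpha^*(\chi^{-1}f)$; comparing these principal ideals under the base change $\Gamma(Z(\sigma),\OO_Z)\otimes_K L$ gives the equality chart by chart. You instead observe that $H^\circ_{Z_L,\alpha^*(f)} = H^\circ_{Z,f}\times_Z Z_L$ tautologically and invoke compatibility of scheme-theoretic image with flat base change for quasi-compact (here Noetherian, hence also quasi-separated) morphisms, since $Z_L\to Z$ is flat. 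That fact is standard and valid in this setting, so your argument is sound; it is also slightly more general, since, as you note, it does not use fineness at all, whereas the paper's route needs fineness to produce the principal generators. The trade-off is that the flat-base-change statement for scheme-theoretic images is not among the lemmas the paper proves in its appendix (Lemmas \ref{lem:open-image} and \ref{lem:covering-image} only cover restriction to opens and gluing), so the paper's chart-wise argument — or your parenthetical alternative through the $\iota_\varphi$ charts — is what stays inside the article's self-contained toolkit.
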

        \begin{proof}
            We prove the statements from (a) to (d) in order. 
            \begin{enumerate}
                \item[(a)] Both functions are continuous and are preserved by the scalar product of non-negative real numbers. 
                Let $v\in\supp(\Delta)\cap N$ be an element. 
                Then from Proposition \ref{prop:ext'n}(c), the following equation holds:
                \begin{align*}
                    \val_f(v) &= \val_{Z}(v)(f)\\
                    &= (\alpha_\natural(\val_{Z_L}(v)))(f)\\
                    &= \val_{Z_L}(v)(\alpha^*(f))\\
                    &= \val_{\alpha^*(f)}(v)
                \end{align*}
                Thus, from Lemma \ref{lem: uniqueness of the function}, the statement holds.
                \item[(b)] Let $\sigma\in\Delta$ be a cone. 
                Then from Proposition \ref{prop: condition of fine}, there exists $\chi\in K[Z^\circ]^*$ such that $\val^\sigma_f = \val^\sigma_\chi$. 
                From (a), $\val^\sigma_{\alpha^*(f)} = \val^\sigma_{\alpha^*(\chi)}$. 
                Thus, from Proposition \ref{prop: condition of fine}, $\alpha^*(f)$ is fine for $\Delta$.   
                \item[(c)] Let $\sigma\in\Delta$ be a cone. 
                Then from Proposition \ref{prop: condition of fine}, there exists $\chi\in K[Z^\circ]^*$ such that $\val^\sigma_f = \val^\sigma_\chi$. 
                We remark that  $\val^\sigma_{\alpha^*(f)} = \val^\sigma_{\alpha^*(\chi)}$ too from (a). 
                Thus, the ideal of $\Gamma(Z_L(\sigma), \OO_{Z_L})$ associated with the closed subscheme $H_{Z_L, \alpha^*(f)}\cap Z_L(\sigma)$ of $Z_L(\sigma)$ is generated by $\alpha^*(\chi^{-1}f)$ and the ideal of $\Gamma(Z(\sigma), \OO_{Z})$ associated with the closed subscheme $H_{Z, f}\cap Z(\sigma)$ of $Z(\sigma)$ is generated by $\chi^{-1}f$. 
                This shows that $H_{Z_L, \alpha^*(f)}\cap Z_L(\sigma) = (H_{Z, f}\times_{Z}Z_L)\cap Z_L(\sigma)$, so that the statement holds. 
                \item[(d)] Let $\sigma\in\Delta$ be a cone. 
                From (c), $H_{Z_L, \alpha^*(f)}\cap (Z_L)_\sigma = (H_{Z, f}\cap Z_\sigma)\times_{\Spec({K})}
                \Spec(L)$. 
                From the assumption, $H_{Z, f}\cap Z_\sigma$ is smooth over $K$, so $H_{Z_L, \alpha^*(f)}\cap (Z_L)_\sigma$ is smooth over $L$. 
                Thus, $\alpha^*(f)$ is non-degenerate for $\Delta$. 
            \end{enumerate}
        \end{proof}
        \subsection{Extended valuation function}
        In this subsection, we use the notation in Definition \ref{def: extend mock toric}. 
        
        In Proposition \ref{prop:extended-valuation}, we define $\val_{Z, \pi}\colon\pi^{-1}_\RR(\supp(\Delta))\cap N'\rightarrow \Val_k{Z_\pi}$. 
        Let $f\in k[Z_\pi]$ be a function. 
        First, we define a map $\val_{f, \pi}\colon \pi^{-1}_\RR(\supp(\Delta))\rightarrow \RR$ as in Proposition \ref{prop: mock val function}. 
        \begin{proposition}\label{prop: extended val function}
            Let $f\in k[Z_\pi]$ be a function. 
            Then there unique exists a function $\val_{f, \pi}\colon$ $\pi^{-1}_\RR(\supp(\Delta))\rightarrow \RR$ such that it satisfies the following three conditions:
                \begin{enumerate}
                    \item[(1)] A function $\val_{f, \pi}$ is continuous.
                    \item[(2)] A function $\val_{f, \pi}$ is preserved by the scalar product of non-negative real numbers, i.e., for any $v\in\pi^{-1}_\RR(\supp(\Delta))$ and $r\in\RR_{\geq 0}$, we have $\val_{f, \pi}(rv) = r\val_{f, \pi}(v)$.
                    \item[(3)] For any $v\in \pi^{-1}_\RR(\supp(\Delta))\cap N'$, we have $\val_{f, \pi}(v) = \val_{Z, \pi}(v)(f)$.
                \end{enumerate}
        \end{proposition}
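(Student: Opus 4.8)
The plan is to reduce the statement to Proposition \ref{prop: mock val function}(a), which already furnishes a function of this kind for an honest mock toric variety; the point is that $Z_\pi$ is, up to a harmless identification, the torus part of a mock toric variety whose ambient fan has support $\pi^{-1}_\RR(\supp(\Delta))$.

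First I would dispose of uniqueness: any two functions on $\pi^{-1}_\RR(\supp(\Delta))$ satisfying (1)--(3) are continuous, positively homogeneous, and agree on the lattice $\pi^{-1}_\RR(\supp(\Delta))\cap N'$, hence coincide by Lemma \ref{lem: uniqueness of the function}. This also shows in advance that the function to be constructed will not depend on any auxiliary choices.

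Next, as in the proof of Proposition \ref{prop:extended-valuation}(a), I would fix a strongly convex rational polyhedral fan $\Delta''$ in $N'_\RR$ with $\supp(\Delta'') = \pi^{-1}_\RR(\supp(\Delta))$ and with $\pi$ compatible with $\Delta''$ and $\Delta$, and let $W$ be the mock toric variety induced by $Z$ along $\pi_*\colon X(\Delta'')\rightarrow X(\Delta)$ and $s$. Since $\Delta_{Z,\pi}=\{\{0_{N'_\RR}\}\}$ we have $X(\Delta_{Z,\pi}) = T_{N'}$, and $T_{N'}$ is the dense open torus orbit of $X(\Delta'')$; pulling the open immersion $T_{N'}\hookrightarrow X(\Delta'')$ back along $X(\Delta'')\rightarrow X(\Delta)$ identifies $W^\circ = W\cap T_{N'}$ with $Z_\pi = T_{N'}\times_{X(\Delta)}Z$. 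In particular $k[W^\circ] = k[Z_\pi]$, so $f$ may be regarded as a function in $k[W^\circ]$, and Proposition \ref{prop: mock val function}(a) applied to $W$ yields a continuous, positively homogeneous map $\val_f\colon \supp(\Delta'')\rightarrow\RR$ with $\val_f(v) = \val_W(v)(f)$ for all $v\in\supp(\Delta'')\cap N'$. I would then set $\val_{f,\pi}:=\val_f$. Conditions (1) and (2) are immediate since $\supp(\Delta'') = \pi^{-1}_\RR(\supp(\Delta))$; for (3), the proof of Proposition \ref{prop:extended-valuation}(a) shows $\val_{Z,\pi}(v)=\val_W(v)$ for every $v\in\pi^{-1}_\RR(\supp(\Delta))\cap N'$ (both equal $(({\iota'_\varphi})^{-1})_\natural(q'_\varphi(v))$ for a suitable $\varphi$), whence $\val_{f,\pi}(v)=\val_W(v)(f)=\val_{Z,\pi}(v)(f)$.

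The only thing to be careful about is the identification $Z_\pi\cong W^\circ$ and the matching of $\val_{Z,\pi}|_{N'}$ with $\val_W$; both are essentially already contained in Definition \ref{def: mock induced for dominant} and in the proof of Proposition \ref{prop:extended-valuation}(a), so I expect no genuinely new difficulty — the remainder is a direct transcription of the proof of Proposition \ref{prop: mock val function}(a).
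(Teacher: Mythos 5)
Your proposal is correct and follows essentially the same route as the paper: uniqueness via Lemma \ref{lem: uniqueness of the function}, then choosing a fan $\Delta''$ with support $\pi^{-1}_\RR(\supp(\Delta))$ compatible with $\pi$, forming the induced mock toric variety $W$, identifying $k[Z_\pi]$ with $k[W^\circ]$, and taking $\val_{f,\pi}$ to be the $\val_f$ of Proposition \ref{prop: mock val function}(a). The only cosmetic difference is that you invoke the argument in the proof of Proposition \ref{prop:extended-valuation}(a) to match $\val_{Z,\pi}$ with $\val_W$ on lattice points, whereas the paper simply cites the statement of Proposition \ref{prop:extended-valuation}(b), which records exactly that fact.
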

        \begin{proof}
            From Lemma \ref{lem: uniqueness of the function}, we can check the uniqueness of $\val_{f, \pi}$. 
            We show the existence of $\val_{f, \pi}$. 
            Let $\Delta'$ be a strongly convex rational polyhedral fan in $N'_\RR$ such that $\supp(\Delta') = \pi^{-1}_\RR(\supp(\Delta))$ and $\pi$ is compatible with the fans $\Delta'$ and $\Delta$.  
            Let $W$ be a mock toric variety induced by $Z$ along $\pi_*\colon X(\Delta')\rightarrow X(\Delta)$ and $s$. 
            Then from Proposition \ref{prop:extended-valuation}(b), $\val_{Z, \pi}(v') = \val_W(v')$ for any $v'\in \pi^{-1}_\RR(\supp(\Delta))\cap N'$. 
            We remark that $k[Z_\pi] = k[W^\circ]$ and we already have defined the map $\val_f\colon \supp(\Delta')\rightarrow \RR$ by a mock toric variety $W$ in Proposition \ref{prop: mock val function}(a). 
            Thus, $\val_f(v') = \val_{Z, \pi}(v')(f)$ for any $v'\in \pi^{-1}_\RR(\supp(\Delta))\cap N'$. 
            Therefore, $\val_f$ satisfies all conditions in the statement. 
        \end{proof}
        For $A\subset\pi^{-1}_\RR(\supp(\Delta))$, let $\val^A_{f, \pi}$ denote the restriction $\val_{f, \pi}|_{A}$. 
        The following proposition is analogous to Proposition\ref{prop: units correspondence}. 
        \begin{proposition}\label{prop:extend-unit}
            Let $\chi'\in k[Z_\pi]^*$ be a unit, $\sigma\in\Delta$ be a cone, and $\varphi\in\Phi$ be an element such that $\sigma\in\Delta_\varphi$. 
            Let $\sigma^\pi$ denote $\pi_\RR^{-1}(\sigma)$, and $M'_\varphi$ denote the dual lattice of $N'/s(N_\varphi)$.  
            Then there exists $\omega\in M'_\varphi$ such that $\val^{\sigma^\pi}_{\chi', \pi} =\val^{\sigma^\pi}_{{\iota'}^*_\varphi(\chi^\omega), \pi}$. 
            In particular, $\val^{\sigma^\pi}_{\chi, \pi}$ is a linear map on $\sigma^\pi$. 
        \end{proposition}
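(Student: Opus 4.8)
The plan is to reduce to the multiplicative decomposition of units on $Z_\pi$ from Proposition~\ref{prop:extended-valuation} and then recognise the resulting linear form as pairing against a single element of $M'_\varphi$. Concretely, by Proposition~\ref{prop:extended-valuation}(c) I would first write $\chi'=\epsilon^*(\eta)\,p^*_0(\chi^{\omega_1})$ with $\eta\in k[Z^\circ]^*$ and $\omega_1\in M_1$. Since $\sigma^\pi=\pi_\RR^{-1}(\sigma)\subseteq\pi_\RR^{-1}(\supp(\Delta))$, the function $\val^{\sigma^\pi}_{\chi',\pi}$ is defined by Proposition~\ref{prop: extended val function}, and $\pi_\RR(\sigma^\pi)=\sigma$. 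For a lattice point $v'\in\sigma^\pi\cap N'$, additivity of $\val_{Z,\pi}(v')$ together with Proposition~\ref{prop:extended-valuation}(d) gives $\val_{\chi',\pi}(v')=\val_Z(\pi(v'))(\eta)+\langle p(v'),\omega_1\rangle=\val_\eta(\pi_\RR(v'))+\langle p_\RR(v'),\omega_1\rangle$; both sides are continuous and positively homogeneous on $\sigma^\pi$, so by Lemma~\ref{lem: uniqueness of the function} the identity $\val^{\sigma^\pi}_{\chi',\pi}=\val^\sigma_\eta\circ(\pi_\RR|_{\sigma^\pi})+\langle p_\RR(\cdot),\omega_1\rangle$ holds on all of $\sigma^\pi$.

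Next, since $\eta$ is a unit in $k[Z^\circ]$ and $\sigma\in\Delta_\varphi$, Proposition~\ref{prop: units correspondence} (combined with Lemma~\ref{lem: uniqueness of the function}, cf.\ Proposition~\ref{prop: mock val function}(d)) produces $\omega_0\in M_\varphi$ with $\val^\sigma_\eta(w)=\langle(q_\varphi)_\RR(w),\omega_0\rangle$ for all $w\in\sigma$. Substituting this into the formula above and using the relations $\pi_\varphi\circ q'_\varphi=q_\varphi\circ\pi$ and $p_\varphi\circ q'_\varphi=p$ from Definition~\ref{def: extend mock toric}, I obtain, for all $v'\in\sigma^\pi$,
\[
\val^{\sigma^\pi}_{\chi',\pi}(v')=\langle(\pi_\varphi)_\RR(q'_\varphi(v')),\omega_0\rangle+\langle(p_\varphi)_\RR(q'_\varphi(v')),\omega_1\rangle=\langle q'_\varphi(v'),\omega\rangle,
\]
where $\omega:=\pi_\varphi^*(\omega_0)+p_\varphi^*(\omega_1)\in M'_\varphi$.

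Finally, ${\iota'}^*_\varphi(\chi^\omega)={\iota'}^*(\chi^{(q'_\varphi)^*(\omega)})$, so Corollary~\ref{cor: supplement-of-valuation2} gives $\val_{Z,\pi}(v')({\iota'}^*_\varphi(\chi^\omega))=\langle q'_\varphi(v'),\omega\rangle$ for all $v'\in\pi_\RR^{-1}(\supp(\Delta))\cap N'$; hence $\val^{\sigma^\pi}_{\chi',\pi}$ and $\val^{\sigma^\pi}_{{\iota'}^*_\varphi(\chi^\omega),\pi}$ agree on $\sigma^\pi\cap N'$, and by Lemma~\ref{lem: uniqueness of the function} they coincide on $\sigma^\pi$. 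Linearity of $\val^{\sigma^\pi}_{\chi',\pi}$ is then immediate, since $v'\mapsto\langle q'_\varphi(v'),\omega\rangle$ is the restriction of a linear functional. I expect no genuine obstacle beyond the bookkeeping with the four lattices $N'$, $N'/s(N_\varphi)$, $N/N_\varphi$, $N_1$ and their duals, plus checking at each step that $\sigma^\pi$ (and its images) lies in the domain on which the valuation functions $\val_{\cdot,\pi}$ and $\val_{\cdot}$ are defined.
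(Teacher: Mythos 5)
Your proposal is correct and follows essentially the same route as the paper's proof: decompose $\chi'=\epsilon^*(\eta)p_0^*(\chi^{\omega_1})$ via Proposition \ref{prop:extended-valuation}(c), apply Proposition \ref{prop: units correspondence} to the $k[Z^\circ]^*$-factor, use Proposition \ref{prop:extended-valuation}(d) together with the compatibilities $q_\varphi\circ\pi=\pi_\varphi\circ q'_\varphi$ and $p=p_\varphi\circ q'_\varphi$ to assemble $\omega=\pi_\varphi^*(\omega_0)+p_\varphi^*(\omega_1)\in M'_\varphi$, and conclude with Lemma \ref{lem: uniqueness of the function}. The only differences are cosmetic (order of the two reductions and your explicit citation of Corollary \ref{cor: supplement-of-valuation2}, which the paper uses implicitly).
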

        \begin{proof} 
            From Proposition \ref{prop:extended-valuation}(c), there exists $\chi\in k[Z^\circ]^*$ and $\omega_1\in M_1$ such that $\chi' = \epsilon^*(\chi)p^*_0(\chi^{\omega_1})$. 
            Let $M_\varphi$ denote the dual lattice of $N/N_\varphi$. 
            From Proposition \ref{prop: units correspondence}, there exists $\omega_2\in M_\varphi$ such that $\val^\sigma_\chi = \val^\sigma_{\iota^*_\varphi(\chi^{\omega_2})}$. 
            Hence, from Proposition \ref{prop:extended-valuation}(d), $\val_{Z, \pi}(v')(\epsilon^*(\chi))
            = \val_{Z, \pi}(v')(\epsilon^*(\iota^*_\varphi(\chi^{\omega_2})))$ for any $v'\in \sigma^\pi\cap N'$. 
            We recall that $\iota_\varphi\circ\epsilon = \pi_\varphi\circ\iota'_\varphi$. 
            Thus, $\val_{Z, \pi}(v')(\chi')= \val_{Z, \pi}(v')({\iota'_\varphi}^*(\pi^*_\varphi(\chi^{\omega_2}))) + \val_{Z, \pi}(v')({\iota'_\varphi}^*(p^*_\varphi(\chi^{\omega_1}))$ for any $v'\in \sigma^\pi\cap N'$. 
            Let $\omega$ denote $p^*_\varphi(\omega_1)+\pi^*_\varphi(\omega_2)\in M'_\varphi$. 
            Therefore, from Lemma\ref{lem: uniqueness of the function}, $\val^{\sigma^\pi}_{\chi, \pi} =  \val^{\sigma^\pi}_{{\iota'_\varphi}^*(\chi^\omega), \pi}$. 
        \end{proof}
        \subsection{Hypersurfaces of mock toric varieties over $\A^1_k$}
        In subsection 4.5, we considered the mock toric varieties over $\A^1_k$. 
        In this subsection, we consider hypersurfaces of them. 
        This analysis is crucial for the construction of strictly toroidal models of the hypersurfaces in mock toric varieties that we conduct in the next Section. 
        \begin{proposition}\label{prop:heredity-finess-for-relative}
            We use the notation in Proposition \ref{prop:relative-mock}.  
            Let $f\in k[W^\circ]$ be a function. 
            We assume that $\charac(k) = 0$. 
            Then the following statements hold:
            \begin{enumerate}
                \item[(a)] $\val_f\circ j'_\RR = \val_{\psi^*(f)}$
                \item[(b)] If $f$ is fine for $\Delta'$, then $\psi^*(f)$ is fine for $\Delta'_{(0)}$. 
                \item[(c)] If $f$ is fine for $\Delta'$, then $H_{W, f}\times_{W}{Y_{k(t)}} = H_{Y_{k(t)}, \psi^*(f)}$. 
                \item[(d)] If $f$ is non-degenerate for $\Delta'$, then $\psi^*(f)$ is non-degenerate for $\Delta'_{(0)}$. 
            \end{enumerate}
        \end{proposition}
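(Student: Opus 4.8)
The plan is to run the arguments of Propositions~\ref{prop: heredity-finess-for-dominant} and~\ref{prop: heredity-finess-for-basechange} with the morphism $\psi\colon Y_{k(t)}\to W$ of Proposition~\ref{prop:relative-mock}(c) in the role of the comparison morphism. The two facts that make this work are Proposition~\ref{prop:relative-mock}(k), which gives $\psi_\natural\circ\val_{Y_{k(t)}}=\val_W\circ j'$ on $\supp(\Delta'_{(0)})\cap N'$, and the three families of Cartesian squares of Proposition~\ref{prop:relative-mock}(d): for $\sigma_0\in\Delta'_{(0)}$ and the unique cone $\sigma\in\Delta'_0$ with $(\pr_1)_\RR(\sigma)=\sigma_0$ (equivalently $\sigma=j'_\RR(\sigma_0)$, since $(\pr_2)_\RR(\sigma)=\{0\}$), these compare $Y_{k(t)}(\sigma_0)$, $\overline{Y_{\sigma_0,k(t)}}$, $Y_{\sigma_0,k(t)}$ with $W(\sigma)$, $\overline{W_\sigma}$, $W_\sigma$. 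I will use freely that $j'_\RR$ maps $\supp(\Delta'_{(0)})$ bijectively onto $\supp(\Delta'_0)\subset\supp(\Delta')$, that $\psi$ is dominant so $\psi^*\colon k[W^\circ]\to k[Y_{k(t)}^\circ]$ is injective and carries units to units, and that $Y_{k(t)}$ is the mock toric variety with fan $\Delta'_{(0)}$ obtained from $Y$ by the base change $k\to k(t)$ (Proposition~\ref{prop:relative-mock}(a), Proposition~\ref{prop:ext'n}).

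For (a) I would note that both $\val_f\circ j'_\RR$ and $\val_{\psi^*(f)}$ are continuous and $\RR_{\geq 0}$-homogeneous on $\supp(\Delta'_{(0)})$, so Lemma~\ref{lem: uniqueness of the function} reduces the claim to lattice points, where for $v'\in\supp(\Delta'_{(0)})\cap N'$ one computes $\val_{\psi^*(f)}(v')=\val_{Y_{k(t)}}(v')(\psi^*(f))=(\psi_\natural(\val_{Y_{k(t)}}(v')))(f)=\val_W(j'(v'))(f)=\val_f(j'_\RR(v'))$ by Proposition~\ref{prop:relative-mock}(k). For (b), given $\sigma_0$ I would take $\chi\in k[W^\circ]^*$ with $\val^\sigma_f=\val^\sigma_\chi$ (Proposition~\ref{prop: condition of fine}, with $\sigma=j'_\RR(\sigma_0)\in\Delta'_0\subset\Delta'$), compose with $j'_\RR$ and invoke (a) to get $\val^{\sigma_0}_{\psi^*(f)}=\val^{\sigma_0}_{\psi^*(\chi)}$ with $\psi^*(\chi)$ a unit, so Proposition~\ref{prop: condition of fine} gives $Y_{\sigma_0,k(t)}\not\subset H_{Y_{k(t)},\psi^*(f)}$; as $\sigma_0$ is arbitrary, $\psi^*(f)$ is fine for $\Delta'_{(0)}$.

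For (c) I would use the same $\chi$: by the ``in particular'' clause of Proposition~\ref{prop: condition of fine}, $\chi^{-1}f$ generates the ideal of $H_{W,f}\cap W(\sigma)$ in $\Gamma(W(\sigma),\OO_W)$ and $\psi^*(\chi^{-1}f)$ generates the ideal of $H_{Y_{k(t)},\psi^*(f)}\cap Y_{k(t)}(\sigma_0)$ in $\Gamma(Y_{k(t)}(\sigma_0),\OO_{Y_{k(t)}})$. Since the first square of Proposition~\ref{prop:relative-mock}(d) exhibits $Y_{k(t)}(\sigma_0)=W(\sigma)\times_W Y_{k(t)}$ with horizontal map the restriction of $\psi$, the closed subschemes $(H_{W,f}\times_W Y_{k(t)})\cap Y_{k(t)}(\sigma_0)$ and $H_{Y_{k(t)},\psi^*(f)}\cap Y_{k(t)}(\sigma_0)$ agree, and gluing over the open cover $\{Y_{k(t)}(\sigma_0)\}_{\sigma_0\in\Delta'_{(0)}}$ of $Y_{k(t)}$ yields (c).

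For (d), fineness is already (b), so the point is smoothness of $H_{Y_{k(t)},\psi^*(f)}\cap Y_{\sigma_0,k(t)}$ over $k(t)$. Combining (c), the third square of Proposition~\ref{prop:relative-mock}(d), and the isomorphism $W_\sigma\cong Y_{\sigma_0}\times{\Gm^1}_{,k}$ of Proposition~\ref{prop:relative-mock}(b) (under which $W_\sigma\to W\to\A^1_k$ becomes the second projection to ${\Gm^1}_{,k}$), this scheme is the fibre of $H_{W,f}\cap W_\sigma\to{\Gm^1}_{,k}$ over the generic point $\Spec(k(t))$. Non-degeneracy of $f$ for $\Delta'$ makes $H_{W,f}\cap W_\sigma$ smooth over $k$, and here the hypothesis $\charac(k)=0$ enters: by generic smoothness, each irreducible component of $H_{W,f}\cap W_\sigma$ either fails to dominate ${\Gm^1}_{,k}$, contributing the empty generic fibre, or dominates it and is smooth over a dense open, hence over $\Spec(k(t))$; so the generic fibre is smooth over $k(t)$, proving (d). The main obstacle I anticipate is purely bookkeeping — chasing the identifications of strata through Proposition~\ref{prop:relative-mock}(b)--(d) and the isomorphism $p$ so that the ideal generators of (c) and the generic-fibre description of (d) genuinely line up — with the sole non-formal ingredient being the use of generic smoothness in (d).
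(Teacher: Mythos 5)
Your proposal is correct and follows essentially the same route as the paper's proof: (a) via Proposition \ref{prop:relative-mock}(k) and Lemma \ref{lem: uniqueness of the function}, (b) and (c) via Proposition \ref{prop: condition of fine} together with the Cartesian squares of Proposition \ref{prop:relative-mock}(d) and gluing over the cover $\{Y_{k(t)}(\sigma_0)\}$, and (d) by exhibiting $H_{Y_{k(t)},\psi^*(f)}\cap Y_{\sigma_0,k(t)}$ as the generic fibre of $H_{W,f}\cap W_\sigma\rightarrow\A^1_k$ and invoking generic smoothness in characteristic $0$. The only cosmetic difference is that in (d) you route the generic-fibre identification through the isomorphism of Proposition \ref{prop:relative-mock}(b) and argue componentwise, whereas the paper reads it off directly from the Cartesian diagram built from Proposition \ref{prop:relative-mock}(c) and (d); the substance is the same.
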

        \begin{proof}
            We prove the statements from (a) to (d) in order. 
            \begin{enumerate}
                \item[(a)] Both functions are continuous and are preserved by the scalar product of 
                non-negative real numbers because $j'_\RR$ is a linear function. 
                Let $v'\in\supp(\Delta'_{(0)})\cap N'$ be an element. 
                Then from Proposition \ref{prop:relative-mock}(k), the following equation holds:
                \begin{align*}
                    \val_f\circ j'_\RR(v') &= \val_{W}(j'(v'))(f)\\
                    &= (\psi_\sharp(\val_{Y_{k(t)}}(v')))(f)\\
                    &= \val_{Y_{k(t)}}(v')(\psi^*(f))\\
                    &= \val_{\psi^*(f)}(v')
                \end{align*}
                Therefore, from Lemma \ref{lem: uniqueness of the function}, the statement holds.
                \item[(b)] Let $\tau\in\Delta'_{(0)}$ be a cone. 
                Then there exists $\sigma\in\Delta'_{0}$ such that $(\pr_1)_\RR(\sigma) = \tau$. 
                Moreover, from Proposition \ref{prop: condition of fine}, there exists $\chi\in k[W^\circ]^*$ such that $\val^\sigma_f = \val^\sigma_\chi$. 
                From (a), $\val^\tau_{\psi^*(f)} = \val^\tau_{\psi^*(\chi)}$ because $j'_\RR(\tau) = \sigma$.  
                Therefore, from Proposition \ref{prop: condition of fine}, $\psi^*(f)$ is fine for $\Delta'_{(0)}$.   
                \item[(c)] We use the notation of the proof of (b). 
                From the definition of $\psi$, the following diagram is a Cartesian diagram:
                \begin{equation*}
                    \begin{tikzcd} 
                        Y_{k(t)}(\tau)\ar[r, hook]\ar[d, "\psi"]&X_{k(t)}(\tau)\ar[d, "\Psi"]\\
                        W(\sigma)\ar[r, hook]& X(\sigma)
                    \end{tikzcd}
                \end{equation*}
                Thus, the ideal of $\Gamma(W(\sigma), \OO_{W})$ associated with the closed subscheme $H_{W, f}\cap W(\sigma)$ of $W(\sigma)$ is generated by $\chi^{-1}f$, and the ideal of $\Gamma(Y_{k(t)}(\tau), \OO_{Y_{k(t)}})$ associated with the closed subscheme $H_{Y_{k(t)}, \psi^*(f)}\cap Y_{k(t)}(\tau)$ of $Y_{k(t)}(\tau)$ is generated by $\psi^*(\chi^{-1}f)$. 
                This shows that the following equation holds from Proposition \ref{prop:relative-mock}(d):
                \begin{align*}
                    H_{Y_{k(t)}, \psi^*(f)}\cap Y_{k(t)}(\tau) &= (H_{W, f}\cap W(\sigma))\times_{W(\sigma)}Y_{k(t)}(\tau) \\
                    &= (H_{W, f}\times_W Y_{k(t)})\cap Y_{k(t)}(\tau)
                \end{align*}
                Therefore, the statement holds. 
                \item[(d)] We use the notation in the proof of (c).  
                Then all small squares in the following diagram are Cartesian squares from Proposition \ref{prop:relative-mock}(c) and (d):
                \begin{equation*}
                    \begin{tikzcd} 
                        H_{Y_{k(t)}, \psi^*(f)}\cap (Y_{k(t)})_{\tau}\ar[r, hook]\ar[d]&Y_{k(t)}(\tau)\ar[r]\ar[d, "\psi"]&\Spec(k(t))\ar[d]\\
                        H_{W, f}\cap W_{\sigma}\ar[r, hook]&W(\sigma)\ar[r, "(\pr_2)*"] & \A^1_k
                    \end{tikzcd}
                \end{equation*}
                Thus, from the assumption, $H_{W, f}\cap W_{\sigma}$ is smooth over $k$. 
                Hence, from the generic smoothness, the lower morphism $H_{W, f}\cap W_{\sigma}\rightarrow  \A^1_k$ in the above diagram is smooth over the generic point of $\A^1_k$. 
                Therefore, $H_{Y_{k(t)}, \psi^*(f)} \cap (Y_{k(t)})_{\tau}$ is smooth over $\Spec(k(t))$. 
            \end{enumerate}
        \end{proof}
\section{Strictly toroidal models of hypersurfaces of mock toric varieties}
        At first, we consider the following definition.  
        \begin{definition}[\cite{NO21}]
                Let $\mathscr{X}$ be a flat and separated $\Rt$-scheme of finite type, and let $x\in\mathscr{X}_k$ be a point. 
                We say that $\mathscr{X}$ is strictly toroidal at $x$ if there exist a toric monoid $S$, an open neighbourhood $U$ of $x$ in $\mathscr{X}$, and a smooth morphism of $\Rt$-schemes $U \rightarrow \Spec(\Rt[S]/(\chi^\omega - t^q))$, where $q$ is a positive rational number, $\omega$ is an element of $S$, and $\chi^\omega$ is a torus invariant monomial in $k[S]$ associated with $\omega$ such that $k[S]/(\chi^\omega)$ is reduced. 
                We call that $\mathscr{X}$ is \textbf{strictly toroidal} if it is so at any $x\in \mathscr{X}_k$. 
        \end{definition}
        For example, if the closed fiber $\mathcal{X}_k$ is a simple normal crossing divisor of $\mathcal{X}$, $\mathcal{X}$ is strictly toroidal. 
        A strictly toroidal model of a smooth proper $\Kt$-variety $X$ plays an important role in computing a stable birational volume of $X$(cf. \cite[Lemma 3.3.5]{NO21}). 
        We can construct a strictly toroidal model of a Newton non-degenerate hypersurface of a toric variety combinatorially and compute a stable birational volume of it(\cite[Theorem 3.14]{NO22}). 
        
        In this section, our goal is to consider sufficient conditions for constructing a strictly toroidal model of a hypersurface of a mock toric variety and construct it. 
        We remark that in \cite{NO22}, they consider the hypersurfaces of toric varieties over $\Rt$. 
        However, in this article, we first construct a hypersurface of a mock toric variety over $\A^1_k$ and subsequently carry out a base change from $\A^1_k$ to $\Spec(\Rt)$. 
        
        Let $k$ be an algebraically closed field with $\charac(k) = 0$. 
        In this section, we use the notation in Proposition \ref{prop:relative-mock} and the following notation. 
        \begin{itemize} 
            \item From Proposition \ref{prop:relative-mock}, $\pi$ is compatible with the fans $\Delta'_{(0)}$ and $\Delta$. 
            Let $Y$ denote a mock toric variety induced by $Z$ along $\pi_*\colon X(\Delta'_{(0)})\rightarrow X(\Delta)$ and $s$.
            \item We remark that if $\Delta' = \{\{0_{(N'\oplus\ZZ)_\RR}\}\}$, we have $W = Z^1_{\pi^1}$ and $Y = Z_\pi$. 
            \item Let $M$ and $M'$ be dual lattices of $N$ and $N'$. 
            \item For $\varphi\in\Phi$, let $M_\varphi$ and $M'_\varphi$ denote the dual lattice of $N/N_\varphi$ and $N'/s(N_\varphi)$ respectively. 
            \item Let $\langle \cdot, \cdot\rangle$ denote a natural pairing of $(N, M)$, $(N', M')$, $(N\oplus\ZZ, M\oplus\ZZ)$, or $(N'\oplus\ZZ, M'\oplus\ZZ)$. 
            In particular, the pairings of $(N\oplus\ZZ, M\oplus\ZZ)$, or $(N'\oplus\ZZ, M'\oplus\ZZ)$ are defined as follows: 
            \[
                (N\oplus\ZZ) \times (M\oplus\ZZ)\ni ((v, a), (\omega, b)) \mapsto \langle v, \omega\rangle + ab\in\ZZ
            \]
            \[
                (N'\oplus\ZZ) \times (M'\oplus\ZZ)\ni ((v', a), (\omega', b)) \mapsto \langle v', \omega'\rangle + ab\in\ZZ
            \]
            We identify elements in $N$ as torus invariant valuations on $T_N$ so that we can check that $\langle v, \omega\rangle = v(\chi^\omega)$. 
            \item We use the notation in Proposition \ref{prop:relative-mock}(i).  
            Then there exists isomorphisms $\mu\times p_0\colon Z^1_{\pi^1}\rightarrow Z_\pi\times{\Gm^1}_{,k}$ and $\mu^*\otimes p^*_0\colon k[Z_\pi]\otimes_k k[t, t^{-1}]\rightarrow k[Z^1_{\pi^1}]$. 
            We regard $k[Z_\pi]$ and $k[t, t^{-1}]$ as sub rings of $k[Z^1_{\pi^1}]$ by $\mu^*$ and $p^*_0$, so we sometimes omit $\mu^*$ and $p^*_0$. 
            \item Let $\iota''$ denote the closed immersion $W\rightarrow X(\Delta')$. 
            \item For $\varphi\in\Phi$, let $\iota''_\varphi$ denote $(q'_\varphi\times\id_\ZZ)_*\circ\iota''|_{W\cap X(\Delta'_\varphi)}$. 
            \item Let $\Rt$ be a valuation ring defined as follows:
            \[
                \Rt = \bigcup_{n\in\ZZ_{>0}} k[[t^{\frac{1}{n}}]]
            \]
            \item Let $\Kt$ be a fraction field of R. 
            We remark that $\Kt$ is written as follows:
            \[
                \Kt = \bigcup_{n\in\ZZ_{>0}} k((t^{\frac{1}{n}}))
            \]
            \item Let $\Spec(\Rt)\rightarrow \Spec(k[t])$ be a morphism of affine scheme induced from a $k$-morphism $k[t]\hookrightarrow \Rt$ whose image of $t$ is $t$.  
        \end{itemize}
        The following proposition concerns units in $k[Z^1_{\pi^1}]$. 
        \begin{proposition}\label{prop:fundamental-relative1}
            Let $w \in (\pi^1_\RR)^{-1}(\supp(\Delta\times\Delta_!))$ be an element and $\chi'\in k[Z^1_{\pi^1}]$ be an unit. 
            From Proposition \ref{prop:relative-mock}(i), there exists $\chi\in k[Z_\pi]^*$ and $n\in \ZZ$ such that $\chi' = \chi t^n$. 
            Then $\val_{\chi', \pi^1}(w) = \val_{\chi, \pi}((\pr_1)_\RR(w)) + n(\pr_2)_\RR(w)$. 
        \end{proposition}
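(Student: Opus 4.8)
The plan is to reduce the claim to Proposition~\ref{prop:relative-mock}(j) by the standard uniqueness argument for continuous, positively homogeneous functions on $(\pi^1_\RR)^{-1}(\supp(\Delta\times\Delta_!))$, exactly as in the proofs of the ``\emph{val-function}'' lemmas earlier in the section.

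First I would check that both sides of the asserted identity are continuous functions on $(\pi^1_\RR)^{-1}(\supp(\Delta\times\Delta_!))$ that are preserved by multiplication by non-negative reals. For the left-hand side this is part of the definition of $\val_{\chi',\pi^1}$ in Proposition~\ref{prop: extended val function}. For the right-hand side $w\mapsto \val_{\chi,\pi}((\pr_1)_\RR(w)) + n\,(\pr_2)_\RR(w)$ it follows from the same two properties of $\val_{\chi,\pi}$ together with the $\RR$-linearity of $(\pr_1)_\RR$ and $(\pr_2)_\RR$; here one also notes that $(\pr_1)_\RR(w)$ really does lie in $\pi^{-1}_\RR(\supp(\Delta))$, since $\pr_1\circ\pi^1=\pi\circ\pr_1$ and $\supp(\Delta\times\Delta_!)=\supp(\Delta)\times\RR_{\geq 0}$, so the right-hand side is well defined. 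By Lemma~\ref{lem: uniqueness of the function} it then suffices to verify the identity on lattice points $w=v'\in(\pi^1_\RR)^{-1}(\supp(\Delta\times\Delta_!))\cap(N'\oplus\ZZ)$.

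On a lattice point $v'$ I would use the decomposition of $\chi'$: by Proposition~\ref{prop:relative-mock}(i) we may write $\chi'=\mu^*(\chi)\,p^*_0(t^n)$ with $\chi\in k[Z_\pi]^*$ and $t^n\in k[t,t^{-1}]$. Applying Proposition~\ref{prop:relative-mock}(j) with $f=\chi$ and $g=t^n$ gives
\[
    \val_{Z^1,\pi^1}(v')(\chi') = \val_{Z,\pi}(\pr_1(v'))(\chi) + \pr_2(v')(t^n).
\]
Now $\val_{Z,\pi}(\pr_1(v'))(\chi)=\val_{\chi,\pi}(\pr_1(v'))$ by property (3) of $\val_{\chi,\pi}$ in Proposition~\ref{prop: extended val function}, while $\pr_2(v')(t^n)=n\,\pr_2(v')$ since, under the identification $k[t,t^{-1}]=k[M_1]$ with $M_1=\ZZ$, the valuation $\pr_2(v')$ sends $t=\chi^1$ to $\langle\pr_2(v'),1\rangle=\pr_2(v')$. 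Combining this with $\val_{\chi',\pi^1}(v')=\val_{Z^1,\pi^1}(v')(\chi')$, which is property (3) of $\val_{\chi',\pi^1}$, yields the claimed equality at $v'$, and the uniqueness lemma finishes the proof.

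The argument is essentially bookkeeping; the only point that demands care is keeping the various valuation notations straight ($\val_{\chi',\pi^1}$ versus $\val_{Z^1,\pi^1}(-)(\chi')$, and likewise over $Z_\pi$) and confirming that the decomposition $\chi'=\mu^*(\chi)p^*_0(t^n)$ together with the hypothesis $w\in(\pi^1_\RR)^{-1}(\supp(\Delta\times\Delta_!))$ places every term in the domain where Proposition~\ref{prop:relative-mock}(j) and Proposition~\ref{prop: extended val function}(3) apply. I do not anticipate a genuine obstacle beyond this.
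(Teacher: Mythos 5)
Your proposal is correct and follows essentially the same route as the paper: both sides are continuous and positively homogeneous (Proposition \ref{prop: extended val function}), so by Lemma \ref{lem: uniqueness of the function} it suffices to check lattice points, where Proposition \ref{prop:relative-mock}(j) applied to the decomposition $\chi'=\mu^*(\chi)p^*_0(t^n)$ gives the identity. Your extra bookkeeping (that $(\pr_1)_\RR(w)\in\pi^{-1}_\RR(\supp(\Delta))$ and that $\pr_2(v')(t^n)=n\,\pr_2(v')$) is exactly what the paper leaves implicit.
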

        \begin{proof}
            Both functions $\val_{\chi', \pi^1}$ and $\val_{\chi, \pi}\circ (\pr_1)_\RR + n(\pr_2)_\RR$ are continuous and are preserved by the scalar product of non-negative real numbers by Proposition \ref{prop: extended val function}. 
            Thus, it is enough to show that $\val_{\chi', \pi^1}(w) = \val_{\chi, \pi}((\pr_1)_\RR(w)) + n(\pr_2)_\RR(w)$ for any $w\in (\pi^1_\RR)^{-1}(\supp(\Delta\times\Delta_!))\cap (N'\oplus\ZZ)$. 
            Let $w\in (\pi^1_\RR)^{-1}(\supp(\Delta\times\Delta_!))\cap (N'\oplus\ZZ)$ be an element. 
            Then from Proposition \ref{prop:relative-mock}(j), the following equation follows:
            \begin{align*}
                \val_{\chi', \pi^1}(w) &= \val_{Z^1, \pi^1}(w)(\chi')\\
                &= \val_{Z, \pi}(\pr_1(w))(\chi) + \pr_2(w)(t^n)\\
                &= \val_{\chi, \pi}((\pr_1)_\RR(w)) + n(\pr_2)_\RR(w)
            \end{align*}
        \end{proof}
        We note some properties of $\Delta'$, and these properties are needed to construct strictly toroidal models of hypersurfaces of mock toric varieties. 
        \begin{definition}\label{def: type of polytope}
            We define some properties of $\Delta'$ as follows:   
            \begin{itemize}
                \item Let $\Delta'_{\spe}$ denote a subset of $\Delta'$ defined as follows:
                \[
                    \Delta'_{\spe} = \{\sigma\in\Delta'\mid\sigma\cap(N'_\RR\times\{1\})\neq\emptyset\}
                \]
                We remark that the following equation holds:
                \[
                    \Delta'_{\spe} = \{\sigma\in\Delta'\mid(\pr_2)_\RR(\sigma)\neq \{0\}\}
                \]
                \item Let $\Delta'_{\bdd}$ denote a subset of $\Delta'_{\spe}$ defined as follows:
                \[
                    \Delta'_{\bdd} = \{\sigma\in\Delta'_{\spe}\mid\sigma\cap(N'_\RR\times\{1\}) \mathrm{\ \ is\ bounded.}\}
                \]
                \item We call that $\Delta'$ is \textbf{compactly\ arranged} if for every $\sigma_1, \sigma_2\in\Delta'_{\bdd}$, and $\tau\in\Delta'$ such that $\sigma_1\cup \sigma_2\subset \tau$, there exists $\sigma_3\in\Delta'_{\bdd}$ such that $\sigma_1\cup \sigma_2\subset \sigma_3$. 
                \item We call that $\Delta'$ is \textbf{generically\ unimodular} if every $\sigma\in\Delta'$ such that $(\pr_2)_\RR(\sigma) = \{0\}$ is unimodular. 
                \item We call that $\Delta'$ is \textbf{specifically\ reduced} if for every $\gamma\in\Delta'_{\bdd}$ such that $\dim(\gamma) = 1$, we have $\gamma\cap (N'\times\{1\}) \neq \emptyset$. 
            \end{itemize}
        \end{definition}
        The following proposition gives sufficient conditions for some conditions in Definition \ref{def: type of polytope}.  
        \begin{proposition}\label{prop:example of type of polytope}
            We keep the notation in Definition \ref{def: type of polytope}.  
            Then the following statements follow:
            \begin{enumerate}
                \item[(a)] If $\Delta'$ is a simplicial fan, then $\Delta'$ is compactly arranged. 
                \item[(b)] If $\Delta'$ is a unimodular fan, then $\Delta'$ is compactly arranged and generically unimodular. 
            \end{enumerate}
        \end{proposition}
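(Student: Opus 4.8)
The plan is to obtain (b) at once from (a): a unimodular fan has all its cones unimodular, hence simplicial, so (a) applies and gives that $\Delta'$ is compactly arranged, while \emph{generically unimodular} is immediate since \emph{every} cone of $\Delta'$ is then unimodular. So the real content is (a), which I would prove as follows.

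Let $\sigma_1,\sigma_2\in\Delta'_{\bdd}$ and $\tau\in\Delta'$ with $\sigma_1\cup\sigma_2\subset\tau$. Since $\Delta'$ is a fan, $\sigma_i=\sigma_i\cap\tau$ is a face of $\tau$ for $i=1,2$. As $\Delta'$ is simplicial, $\tau$ is a simplicial cone, say $\tau=\langle v_1,\dots,v_r\rangle$ with $v_1,\dots,v_r$ linearly independent (primitive) ray generators. Then $\sigma_1=\langle v_i : i\in I_1\rangle$ and $\sigma_2=\langle v_i : i\in I_2\rangle$ for some $I_1,I_2\subset\{1,\dots,r\}$, and I would set $\sigma_3:=\langle v_i : i\in I_1\cup I_2\rangle$. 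For a simplicial cone the cone generated by any subset of its rays is a face, so $\sigma_3$ is a face of $\tau$, hence $\sigma_3\in\Delta'$, and obviously $\sigma_1\cup\sigma_2\subset\sigma_3$. It then remains only to check $\sigma_3\in\Delta'_{\bdd}$.

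Write $\ell:=(\pr_2)_\RR$. Since $\pi^1=\pi\times\id_\ZZ$ is compatible with $\Delta'$ and $\Delta\times\Delta_!$ and $\ell\circ\pi^1_\RR=\ell$, each cone of $\Delta'$ is sent by $\pi^1_\RR$ into some $\rho\times\{0\}$ or $\rho\times\RR_{\geq 0}$; hence $\ell(\tau)\subset\RR_{\geq 0}$ and in particular $\ell(v_i)\geq 0$ for all $i$. When nonempty, the slice $\sigma\cap(N'_\RR\times\{1\})$ is a polyhedron whose recession cone is $\sigma\cap\ker\ell$, so it is bounded if and only if $\sigma\cap\ker\ell=\{0\}$. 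Now if $\sigma_1\in\Delta'_{\bdd}$ and some $i\in I_1$ had $\ell(v_i)=0$, then $\RR_{\geq 0}v_i\subset\sigma_1\cap\ker\ell$ would give a nonzero recession direction of $\sigma_1\cap(N'_\RR\times\{1\})$ --- a set that is nonempty, because $\sigma_1\in\Delta'_{\spe}$ together with $\ell(\sigma_1)\subset\RR_{\geq 0}$ forces $\ell(\sigma_1)=\RR_{\geq 0}$ --- contradicting boundedness; so $\ell(v_i)>0$ for every $i\in I_1$, and similarly for every $i\in I_2$. Hence $\ell(v_i)>0$ for all $i\in I_1\cup I_2$, whence $\sigma_3\cap\ker\ell=\{0\}$. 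Finally $\sigma_1\subset\sigma_3$ yields $\ell(\sigma_3)\supset\ell(\sigma_1)\neq\{0\}$, so $\sigma_3\in\Delta'_{\spe}$ and $\sigma_3\cap(N'_\RR\times\{1\})$ is nonempty with trivial recession cone, hence bounded; thus $\sigma_3\in\Delta'_{\bdd}$, and $\Delta'$ is compactly arranged.

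I do not expect a serious obstacle here; the single delicate point is the equivalence ``$\sigma\cap(N'_\RR\times\{1\})$ bounded $\iff$ $\ell$ positive on every ray of $\sigma$'', which I would justify via the recession-cone description of the affine slice together with the sign condition $\ell\geq 0$ inherited from the standing compatibility of $\pi^1$ with $\Delta\times\Delta_!$. The only reason simpliciality (resp. unimodularity) is needed is to exhibit the join $\sigma_3$ of $\sigma_1$ and $\sigma_2$ inside $\tau$ as a cone of $\Delta'$: in a general fan the smallest common overcone could only be $\tau$ itself, which need not belong to $\Delta'_{\bdd}$.
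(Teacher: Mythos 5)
Your proof is correct and follows essentially the same route as the paper: take the face $\sigma_3$ of the simplicial cone $\tau$ spanned by the rays of $\sigma_1$ and $\sigma_2$, check it lies in $\Delta'_{\bdd}$, and deduce (b) from (a) plus the observation that unimodularity of $\Delta'$ gives unimodularity of the cones with $(\pr_2)_\RR(\sigma)=\{0\}$. The only difference is that you spell out, via the recession-cone criterion and the positivity of $\pr_2$ on the relevant ray generators, the step ``$\sigma_3\in\Delta'_{\bdd}$'' which the paper asserts without detail; this is a welcome elaboration, not a divergence.
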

        \begin{proof}
            We show these statements from (a) to (b). 
            \begin{enumerate}
                \item[(a)] Let $\sigma_1, \sigma_2\in\Delta'_{\bdd}$, and $\tau\in\Delta'$ be cones such that $\sigma_1\cup \sigma_2\subset \tau$. 
                Then $\sigma_1$ and $\sigma_2$ are faces of $\tau$. 
                Because $\tau$ is a simplicial cone, there exists a face $\sigma_3$ of $\tau$ such that $\sigma_3$ is generated by $\sigma_1$ and $\sigma_2$. 
                From the definition of $\sigma_3$, we have $\sigma_3\in\Delta'_{\bdd}$ and $\sigma_1\cup \sigma_2\subset \sigma_3$. 
                \item[(b)] The sub fan $\Delta'_{0}$ of $\Delta'$ is unimodular too. 
                Thus, $\Delta'$ is generically unimodular. 
                Moreover, $\Delta'$ is compactly arranged from (a). 
            \end{enumerate}
        \end{proof}
        When $\Delta'$ is specifically reduced, $\Delta'(\varphi)$ has the "same" conditions for any $\varphi\in\Phi$. 
        \begin{proposition}\label{prop:varphi-fan, special}
            We keep the notation in Definition \ref{def: type of polytope}.
            We assume that $\Delta'$ is specifically reduced. 
            Then for any $\varphi\in\Phi$ and any $\gamma\in\Delta'(\varphi)$ with $\dim(\gamma) = 1$ and $\gamma\not\subset (N'/s(N_\varphi))_\RR\times\{0\}$, we have $\gamma\cap ((N'/s(N_\varphi))\times\{1\}) \neq \emptyset$. 
        \end{proposition}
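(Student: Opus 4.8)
The plan is to pull the one-dimensional cone $\gamma\in\Delta'(\varphi)$ back to a one-dimensional cone of $\Delta'$, observe that this cone lies in $\Delta'_{\bdd}$, apply the hypothesis that $\Delta'$ is specifically reduced there, and then push the resulting height-one lattice point forward along $q'_\varphi\times\id_\ZZ$.

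First I would fix $\varphi\in\Phi$ and $\gamma\in\Delta'(\varphi)$ with $\dim\gamma=1$ and $\gamma\not\subset(N'/s(N_\varphi))_\RR\times\{0\}$. Recall from Proposition~\ref{prop:relative-mock} that $\Delta'(\varphi)=\{(q'_\varphi\times\id_\ZZ)_\RR(\sigma)\mid\sigma\in\Delta'_\varphi\}$ and that, since $W$ is a mock toric variety, condition (3) of Definition~\ref{def:mock-toric} together with Lemma~\ref{lem: injective fan} yields the usual one-to-one correspondence of cones under $(q'_\varphi\times\id_\ZZ)_\RR$; hence there is a unique $\tau\in\Delta'_\varphi$ with $(q'_\varphi\times\id_\ZZ)_\RR(\tau)=\gamma$, and by Proposition~\ref{prop: first prop}(d) the map $q'_\varphi\times\id_\ZZ$ restricts to an isomorphism $\langle\tau\rangle\cap(N'\oplus\ZZ)\to\langle\gamma\rangle\cap((N'/s(N_\varphi))\oplus\ZZ)$, so $\dim\tau=1$. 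Since $(\pr_2)_\RR$ factors through $q'_\varphi\times\id_\ZZ$, we get $(\pr_2)_\RR(\tau)=(\pr_2)_\RR(\gamma)\neq\{0\}$, so $\tau\in\Delta'_{\spe}$; and writing $\tau=\RR_{\geq0}w$ for a primitive $w\in N'\oplus\ZZ$ with $\pr_2(w)>0$, the slice $\tau\cap(N'_\RR\times\{1\})$ is the single point $\tfrac{1}{\pr_2(w)}w$, hence bounded, so in fact $\tau\in\Delta'_{\bdd}$.

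Next I would invoke the hypothesis: since $\Delta'$ is specifically reduced and $\tau\in\Delta'_{\bdd}$ with $\dim\tau=1$, there is a lattice point $w_1\in\tau\cap(N'\times\{1\})$. Writing $w_1=c\,w$ with $c\in\RR_{>0}$ and using primitivity of $w$ in $N'\oplus\ZZ$, the factor $c$ is a positive integer, so $1=\pr_2(w_1)=c\,\pr_2(w)$ forces $c=1$ and $\pr_2(w)=1$. Finally $(q'_\varphi\times\id_\ZZ)(w)$ is a lattice point of $(N'/s(N_\varphi))\oplus\ZZ$ lying on $\gamma$ (because $w\in\tau$) whose second coordinate equals $\pr_2(w)=1$, so $\gamma\cap((N'/s(N_\varphi))\times\{1\})\neq\emptyset$, which is the claim.

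I do not anticipate a genuine obstacle; the only points demanding care are bookkeeping ones: identifying the correct quotient map and sub fan in the mock toric structure of $W$ (it is $q'_\varphi\times\id_\ZZ$ on $N'\oplus\ZZ$ with the sub fan $\Delta'_\varphi$), checking that the preimage of a ray is again a ray and that it is automatically bounded once it is in $\Delta'_{\spe}$, and the elementary observation that a primitive generator of a bounded one-dimensional cone meeting $N'\times\{1\}$ has height exactly $1$.
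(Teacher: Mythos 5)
Your proposal is correct and follows essentially the same route as the paper: pull the ray back through the one-to-one correspondence of cones under $(q'_\varphi\times\id_\ZZ)_\RR$ (condition (3) of Definition \ref{def:mock-toric} plus Lemma \ref{lem: injective fan}), note that the preimage ray lies in $\Delta'_{\bdd}$, apply the specifically reduced hypothesis, and push the resulting height-one lattice point forward. The only difference is your extra detour showing the primitive generator itself has height $1$, which is harmless but unnecessary — the paper simply maps the lattice point $(v',1)$ forward to $(q'_\varphi(v'),1)\in\gamma$.
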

        \begin{proof}
            Let $\varphi\in\Phi$ be an element. 
            From the condition (3) in Definition \ref{def:mock-toric} and Lemma \ref{lem: injective fan}, there is a one-to-one correspondence of the following two sets by $(q'_\varphi\times\id_\ZZ)_\RR$: 
            \[
                \{\gamma'\in\Delta'_\varphi\mid \dim(\gamma') = 1, \gamma'\not\subset N'_\RR\times\{0\}\}
            \]
            \[
                \{\gamma\in\Delta'(\varphi)\mid \dim(\gamma) = 1, \gamma\not\subset (N'/s(N_\varphi))_\RR\times\{0\}\}
            \]
            Let $\gamma\in\Delta'(\varphi)$ be a cone such that $\dim(\gamma) = 1$ and $\gamma\not\subset (N'/s(N_\varphi))\times\{0\}$. 
            Then there exists $\gamma'\in\Delta'_\varphi$ such that $(q'_\varphi\times\id_\ZZ)_\RR(\gamma') = \gamma$, $\dim(\gamma') = 1$, and $\gamma'\not\subset N'_\RR\times\{0\}$ from the above argument. 
            In particular, $\gamma'\in\Delta'_{\bdd}$. 
            Thus, from the assumption, $\gamma'\cap (N'\times\{1\}) \neq\emptyset$. 
            Hence, there exists $v'\in N'$ such that $(v', 1)\in\gamma'$. 
            Because $(q'_\varphi\times\id_\ZZ)_\RR(\gamma') = \gamma$, we have $(q'_\varphi(v'), 1)\in\gamma$. 
            Therefore, $\gamma\cap ((N'/s(N_\varphi))\times\{1\}) \neq \emptyset$. 
        \end{proof}
        The reason for introducing the definition of specifically reduced fans is evident from the following proposition. 
        While we later introduce strictly toroidal schemes over a valuation ring, the definition of specifically reduced fans is crucial for the construction of strictly toroidal models of hypersurfaces of mock toric varieties.
        \begin{proposition}
        \label{prop:reduced+nondegenerate=strictly toroidal} 
            Let $f\in k[W^\circ]$ be a function. 
            We assume that $\Delta'$ is specifically reduced and $f$ is non-degenerate for $\Delta'$. 
            Let $x\in H_{W, f}\times_{\A^1} \{0\}$ be a point. 
            Then there exists 
            \begin{enumerate}
                \item[(1)] An open neighborhood $U\subset W$ of $x$
                \item[(2)] A toric monoid $S$
                \item[(3)] An element $\omega\in S$
                \item[(4)] An $\A^1_k$-morphism $p\colon U\rightarrow \Spec(k[t][S]/(t-\chi^\omega))$
            \end{enumerate}
            such that 
            \begin{enumerate}
                \item[(i)] The restriction $p|_{U\cap H_{W, f}}\colon U\cap H_{W, f}\rightarrow \Spec(k[t][S]/(t-\chi^\omega))$ is a smooth morphism. 
                \item[(ii)] A ring $k[S]/(\chi^\omega)$ is reduced. 
            \end{enumerate}
        \end{proposition}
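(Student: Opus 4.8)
The plan is to pull everything back to a toric chart of $W$, apply the toric construction over $\A^1_k$, and then push the resulting morphism down along the chart map. Let $\sigma\in\Delta'$ be the unique cone with $x\in W_\sigma$. Since $x$ lies over $0\in\A^1_k$, the pullback ${\iota''}^*(\chi^{(0,1)})$ of the coordinate $t$ of $\A^1_k=X(\Delta_!)$ vanishes at $x$; because $\Delta'_{\spe}$ is upward closed and a cone lies in $\Delta'_{\spe}$ exactly when it contains a ray in $\Delta'_{\spe}$, the zero locus of $t$ in $W$ is set-theoretically $\bigcup_{\tau\in\Delta'_{\spe}}W_\tau$, whence $\sigma\in\Delta'_{\spe}$. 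Pick $\varphi\in\Phi$ with $\sigma\in\Delta'_\varphi$; then $\iota''_\varphi\colon W\cap X(\Delta'_\varphi)\hookrightarrow X(\Delta'(\varphi))$ is an open immersion over $\A^1_k$ by condition~(5) of Definition~\ref{def:mock-toric}, and $\sigma'':=(q'_\varphi\times\id_\ZZ)_\RR(\sigma)$ still satisfies $(\pr_2)_\RR(\sigma'')\neq\{0\}$.

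Next I would transport $f$ to the toric world. By Proposition~\ref{prop:fundamental hypersurface}(a) there is $g_\varphi\in k[M'_\varphi\oplus\ZZ]$ with $f/{\iota''_\varphi}^*(g_\varphi)\in k[W^\circ]^*$, and by (b), over $W\cap X(\Delta'_\varphi)$ the scheme $H_{W,f}$ is the pullback of $H_{X(\Delta'(\varphi)),g_\varphi}$ along $\iota''_\varphi$, so $g_\varphi$ serves as the toric local model of $f$ (multiplying $g_\varphi$ by ${\iota''_\varphi}^*$ of a unit changes nothing). Fineness of $f$ for $\Delta'$ makes $g_\varphi$ fine for $\Delta'(\varphi)$ (Lemma~\ref{lem:fine for varphi}); non-degeneracy of $f$ makes $H_{W,f}\cap W_\sigma$ smooth over $k$, and since $\iota''_\varphi$ identifies $W_\sigma$ with a dense open of $O_{\sigma''}$ and $H_{W,f}\cap W_\sigma$ with $H_{X(\Delta'(\varphi)),g_\varphi}\cap O_{\sigma''}$ over it (proof of Proposition~\ref{prop:basic-property1}(c)), the toric hypersurface is smooth along $O_{\sigma''}$ at $\iota''_\varphi(x)$. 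Finally, Proposition~\ref{prop:varphi-fan, special} transfers ``specifically reduced'' from $\Delta'$ to $\Delta'(\varphi)$: every $1$-dimensional cone of $\Delta'(\varphi)$ not contained in $(N'/s(N_\varphi))_\RR\times\{0\}$ has its primitive generator on the level set $(N'/s(N_\varphi))\times\{1\}$.

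Then I would invoke the toric, relative-over-$\A^1_k$ analogue of Lemma~\ref{lem: for 7-1} (the local content of the construction in \cite[Theorem 3.14]{NO22}): for the fine hypersurface $H_{X(\Delta'(\varphi)),g_\varphi}$ in the toric $\A^1_k$-variety $X(\Delta'(\varphi))$ (structure morphism $\chi^{(0,1)}$), smooth along $O_{\sigma''}$ at a point $y$, with $\Delta'(\varphi)$ specifically reduced, one obtains a toric monoid $S$, an element $\omega\in S$ with $k[S]/(\chi^\omega)$ reduced, an open $U_0\ni y$ of $X(\Delta'(\varphi))$, and an $\A^1_k$-morphism $p_0\colon U_0\to\Spec(k[t][S]/(t-\chi^\omega))$ whose restriction to $U_0\cap H_{X(\Delta'(\varphi)),g_\varphi}$ is smooth; here reducedness of $k[S]/(\chi^\omega)$ is exactly where the level-$1$ condition on the bounded rays is used, since $\omega$ corresponds to the $t$-direction $(0,1)$ and its pairings with the primitive ray generators of $\sigma''$ all lie in $\{0,1\}$. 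Shrinking $U_0$ into $\iota''_\varphi(W\cap X(\Delta'_\varphi))$ and putting $U:=\iota_\varphi''^{-1}(U_0)$, $p:=p_0\circ\iota''_\varphi|_U$, Proposition~\ref{prop:fundamental hypersurface}(b) identifies $U\cap H_{W,f}$ with $U_0\cap H_{X(\Delta'(\varphi)),g_\varphi}$, so $p|_{U\cap H_{W,f}}$ is smooth and $(U,S,\omega,p)$ is as required.

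The hard part is the toric relative model over $\A^1_k$: \cite{NO22} work over $\Rt$, so one must specialize or re-run their construction over $k[t]$, and the delicate step is choosing $S$ and $\omega$ so that $k[S]/(\chi^\omega)$ is reduced while $p_0$ still restricts to a smooth morphism on the hypersurface --- this is precisely what ``specifically reduced'' (transported to $\Delta'(\varphi)$ via Proposition~\ref{prop:varphi-fan, special}) is engineered to guarantee, and checking that it does the job after passing through the chart $\iota''_\varphi$ is the crux. The remaining items --- that $\iota''_\varphi$ is an $\A^1_k$-morphism, that the squares relating $H_{W,f}$, $H_{X(\Delta'(\varphi)),g_\varphi}$, $O_{\sigma''}$ and $W_\sigma$ are Cartesian, and that restricting to affine opens preserves smoothness and the hypersurface identifications --- are routine.
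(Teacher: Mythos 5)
Your proposal is correct and follows essentially the same route as the paper: localize at a chart via condition (5), replace $f$ by a toric local model $g_\varphi$ (Proposition \ref{prop:fundamental hypersurface}, Lemma \ref{lem:fine for varphi}), transfer the specifically-reduced condition by Proposition \ref{prop:varphi-fan, special}, and apply a toric lemma over $\A^1_k$ before pulling everything back along $\iota''_\varphi$. The ``relative-over-$\A^1_k$ analogue of Lemma \ref{lem: for 7-1}'' that you flag as the unproven crux is exactly the paper's Lemma \ref{lem: for 7-4} (established in the appendix via Lemma \ref{lem: for 7-3}, and applied to the single cone $\sigma_\varphi$ rather than to all of $\Delta'(\varphi)$), and your sketch of why $k[S]/(\chi^\omega)$ is reduced --- the primitive generators of the bounded rays lying at level $1$ --- is precisely the mechanism of that lemma.
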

        \begin{proof}
            Let $\sigma\in\Delta'$ be a cone and $\varphi\in\Phi$ be an element such that $x\in H_{W, f}\cap W_\sigma$ and $\sigma\in\Delta'_\varphi$. 
            From the assumption of $x$, we have $\sigma\not\subset N'_\RR\times\{0\}$. 
            Let $\sigma_\varphi$ denote $(q'_\varphi\times\id_{\ZZ})_\RR(\sigma)$. 
            Then from Proposition \ref{prop:fundamental hypersurface}, there exists $g_\varphi\in k[M'_\varphi\oplus\ZZ]$ such that $H_{X(\sigma_\varphi),g_\varphi} \times_{X(\sigma_\varphi)} W(\sigma) = H_{W, f}\cap W(\sigma) $. 
            We remark that $\iota''_{\varphi}(W(\sigma))$ is an open subscheme of $X(\sigma_\varphi)$ and $H_{X(\Delta'_\varphi), g_\varphi}\cap O_{\sigma_\varphi}\not\subset O_{\sigma_\varphi}$ from Lemma \ref{lem:fine for varphi}. 
            Moreover, from the proof of Proposition \ref{prop:basic-property1}(c), $(H_{X(\sigma_\varphi),g_\varphi}\cap O_{\sigma_\varphi})\times_{X(\sigma_\varphi)} W(\sigma) = H_{W, f}\cap W_\sigma$. 
            Because $f$ is nondegenerate for $\Delta'$, $H_{W, f}\cap W_\sigma\cong H_{X(\sigma_\varphi),g_\varphi}\cap O_{\sigma_\varphi}\cap \iota''_{\varphi}(W(\sigma))$  is smooth over $k$. 
            
            From the assumption and Proposition \ref{prop:varphi-fan, special}, $\sigma_\varphi\subset (N'/s(N_\varphi))_\RR\times\RR_{\geq0}$ and $\sigma_\varphi \not\subset (N'/s(N_\varphi))_\RR\times\{0\}$, and $\gamma\cap ((N'/s(N_\varphi))\times\{1\})\neq\emptyset$ for every 1-dimensional ray $\gamma\preceq\sigma_\varphi$ with $\gamma\cap ((N'/s(N_\varphi))_\RR\times\{1\})\neq\emptyset$. 

            Thus, from Lemma \ref{lem: for 7-4}, there exists 
            \begin{itemize}
                \item A toric monoid $S$
                \item An alement $\omega\in S$
                \item A morphism $\theta\colon X(\sigma_\varphi)\rightarrow \Spec(k[t][S]/(t-\chi^\omega))$ over $\Spec(k[t]) = \A^1$
                \item An open subset $U'$ of $\iota''_{\varphi}(W(\sigma))$
            \end{itemize}
            such that
            \begin{enumerate}
                \item[(1)] $H_{X(\sigma_\varphi), g_\varphi}\cap O_{\sigma_\varphi}\cap \iota''_{\varphi}(W'(\sigma))\subset U'$
                \item[(2)] The restriction $\theta|_{H_{X(\sigma_\varphi), g_\varphi}\cap U'}\colon 
                H_{X(\sigma_\varphi), g_\varphi}\cap U'\rightarrow \Spec(k[t][S]/(t-\chi^\omega))$ is smooth. 
                \item[(3)] A ring $k[S]/(\chi^\omega)$ is reduced. 
            \end{enumerate}
            Let $U$ denote an open subset ${\iota''}_{\varphi}^{-1}(U')$ of $W$. 
            Then $H_{W, f}\cap W_\sigma\subset U$.  
            In particular, $x\in U$. 
            Let $p$ denote a composition $\theta\circ\iota''_{\varphi}|_U$. 
            We remark that $\iota''_{\varphi}$ is an $\A^1_k$-morphism because of the mock toric structure of $W$, so $p$ is an $\A^1_k$-morphism. 
            Moreover, we can check that the restriction $p|_{H_{W, f}\cap U}\colon {H_{W, f}\cap U}\rightarrow \Spec(k[t][S]/(t-\chi^\omega))$ is smooth. 
        \end{proof}
        
        The following lengthy proposition provides sufficient conditions for the construction of a strictly toroidal model of a hypersurface of a mock toric variety, which is one of the aims of this article. 
        \begin{proposition}\label{prop: irreducible computation}
            We create an itemized list for the notation.
            \begin{itemize}
                \item Let $f\in k[W^\circ]$ be a function.  
                We assume that $f$ is fine for $\Delta'$. 
                \item We assume that $\supp(\Delta') = (\pi^1_\RR)^{-1}(\supp(\Delta\times\Delta_!))$.
                \item Let $p'$ denote the following composition:
                \begin{equation*}
                    \begin{tikzcd}
                        W\ar[r, hook,"\iota''"]& X(\Delta')\ar[r, "\pi^1_*"]& X(\Delta\times\Delta_!)\ar[r, "(\pr_2)_*"]& \A^1_k
                    \end{tikzcd}
                \end{equation*}
                \item Let $q'$ denote the restriction $p'|_{H_{W, f}}\colon H_{W, f}\rightarrow \A^1_k$.
                \item Let $p'_{\Rt}\colon \mathscr{W}\rightarrow \Spec(\Rt)$ and $q'_{\Rt}\colon \mathscr{H}\rightarrow \Spec(\Rt)$ denote the base change morphisms of $p'$ and $q'$ along $\Spec(\Rt)\rightarrow\A^1_k$. 
                \item $p'_{\Kt}\colon \WW_{\Kt}\rightarrow \Spec(\Kt)$ and $q'_{\Kt}\colon\HH_{\Kt}\rightarrow \Spec(\Kt)$ denote the base change morphisms of $p'_{\Rt}$ and $q'_{\Rt}$ along $\Spec(\Kt)\rightarrow\Spec(\Rt)$. 
                We remark that natural morphisms $\WW_{\Kt}\hookrightarrow \WW$ and $\HH_{\Kt}\hookrightarrow \HH$ are open immersions.  
                \item Let $\alpha\colon Y_{\Kt}\rightarrow Y_{k(t)}$ be a dominant morphism defined by a base change along a field extension $\Kt/{k(t)}$. 
                We regard $Y_{\Kt}$ as a mock toric over $\Kt$ induced by $Y$ along a field extension $\Kt/{k(t)}$ (cf. Proposition \ref{prop:ext'n}). 
                \item Let $\HH^\circ$ denote a Cartesian product  $H^\circ_{W, f}\times_{{\Gm^1}_{,k}}\Spec(\Kt)$. 
            \end{itemize}
            Then the following statements follow:
            \begin{enumerate}
                \item[(a)] The morphism $p'_{\Rt}$ is flat and $q'_{\Rt}$ is separated and of finite presentation over $\Spec(\Rt)$. 
                \item[(b)] As a closed subscheme of $X(\Delta')\times_{\A^1_k}\Spec(\Kt) = X(\Delta'_{(0)})\times_k \Spec(\Kt)$, $\WW_{\Kt}$ is equal to $Y_{\Kt}$.
                \item[(c)] If $Z$ is proper over $k$, then $q'_{\Rt}$ is proper. 
                \item[(d)] If $H^\circ_{W, f}$ is reduced and every irreducible components of $H^\circ_{W, f}$ dominates $\Gm^1$, then $q'_{\Rt}$ is flat. 
                \item[(e)] The scheme theoretic closure of $\mathscr{H}^\circ$ in $\WW_{\Kt}$ is equal to $\HH_{\Kt}$. 
                \item[(f)] If the assumption of (d) holds, then the scheme theoretic closure of $\HH_{\Kt}$ in $\WW$ is equal to $\HH$. 
                \item[(g)] If $\Delta'$ is generically unimodular and $f$ is non-degenerate for $\Delta'$, then $\HH_{\Kt}$ is smooth over $\Spec(\Kt)$. 
                \item[(h)] Let $y\in \HH_k$ be a point. 
                If $\Delta'$ is specifically reduced and $f$ is non-degenerate for $\Delta'$, then there exists
                \begin{enumerate}
                    \item[(1)] An open neighborhood $U\subset \HH$ of $y$
                    \item[(2)] A toric monoid $S$
                    \item[(3)] An element $\omega\in S$
                    \item[(4)] A $\Spec(\Rt)$-morphism $\beta\colon U\rightarrow \Spec(\Rt[S]/(t-\chi^\omega))$
                \end{enumerate}
                such that 
                \begin{enumerate}
                    \item[(I)] The morphism $\beta$ is a smooth morphism. 
                    \item[(II)] A ring $k[S]/(\chi^\omega)$ is reduced. 
                \end{enumerate}
                \item[(i)] We assume that $f$ is non-degenerate for $\Delta'$ and $f\in k[W^\circ]$ is irreducible. 
                Let $f_1, f_2, \ldots, f_r\in \Kt[Y^\circ_{\Kt}]$ be a prime decomposition of $\alpha^*(\psi^*(f))\in \Kt[Y^\circ_{\Kt}]$. 
                Then through the identification with $\WW_{\Kt}$ and $Y_{\Kt}$, the following equation holds as a closed subscheme of $Y^\circ_{\Kt}$:
                \[
                    \HH^\circ = \coprod_{1\leq i\leq r} H^\circ_{Y_{\Kt}, f_i}
                \]
                Similarly, the following equation as a closed subscheme of $Y_{\Kt}$ too:
                \[
                    \HH_{\Kt} = \coprod_{1\leq i\leq r} H_{Y_{\Kt}, f_i}
                \]
                \item[(j)] We assume that the assumptions of (d), (h), and (i) hold. 
                We use the notation in (i). 
                For $1\leq i\leq r$, let $\HH_i$ denote the scheme theoretic closure of $H_{Y_{\Kt}, f_i}$ in $\WW$. 
                Then $\HH_i$ is flat over $\Spec(\Rt)$ and $(\HH_i)_{\Kt} = H_{Y_{\Kt}, f_i}$ for any $1\leq i\leq r$. 
                Moreover, as a closed subscheme of $\WW$, the following equation holds:
                \[
                    \HH = \coprod_{1\leq i\leq r} \HH_i
                \]
                \item[(k)] We assume that the assumption of (c), (g), and (j) holds. 
                We use the notation in (j). 
                Then $\HH_i\rightarrow\Spec(\Rt)$ is a proper strict toroidal model of a smooth variety $(\HH_i)_{\Kt} = H_{Y_{\Kt}, f_i}$ for any $1\leq i\leq r$.
            \end{enumerate}
        \end{proposition}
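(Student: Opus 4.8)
The plan is to prove (a)--(k) in the stated order, at each step reducing to the structural results on mock toric varieties and their hypersurfaces proved above together with standard facts about schemes flat over a valuation ring; throughout, finite presentation (needed since $\Rt$ is non-Noetherian) is preserved by all the base changes involved.

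For (a): $W$ is integral by Proposition \ref{prop:basic-property1}(a) and $p'$ is dominant, because $W \to Z^1 = Z\times\A^1_k \to \A^1_k$ is surjective (the toric morphism $X(\Delta')\to X(\Delta\times\Delta_!)$ is surjective since $\supp(\Delta') = (\pi^1_\RR)^{-1}(\supp(\Delta\times\Delta_!))$ and $\pi^1$ is surjective); hence $W$ is torsion-free, so flat, over $k[t]$, and $p'_\Rt$ is flat by base change, while $H_{W,f}$ is a finite-type closed subscheme of the Noetherian scheme $W$, so $q'_\Rt$ is separated and of finite presentation over $\Spec(\Rt)$. For (b) I would chase the Cartesian diagrams of Proposition \ref{prop:relative-mock}(b),(c): over $\Gm^1 = \A^1_k\setminus\{0\}$ one has $X(\Delta')|_{\Gm^1} = X(\Delta'_0) \cong X(\Delta'_{(0)})\times\Gm^1$, so $X(\Delta')\times_{\A^1_k}\Spec(\Kt) = X(\Delta'_{(0)})\times_k\Spec(\Kt)$, and the large Cartesian square of \ref{prop:relative-mock}(c) base-changed along $\Kt/k(t)$ identifies $\WW_\Kt = W\times_{\A^1_k}\Spec(\Kt)$ with $Y_{k(t)}\times_{k(t)}\Kt = Y_\Kt$. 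For (c): $Z$ proper over $k$ makes $Z^1 = Z\times\A^1_k$ proper over $\A^1_k$, and $X(\Delta')\to X(\Delta\times\Delta_!)$ is proper under the support hypothesis, so $W\to Z^1\to\A^1_k$ is proper; base change gives $q'_\Rt$ proper. For (d): $H^\circ_{W,f}$ reduced forces $H_{W,f}$ reduced, and if each component of $H^\circ_{W,f}$ dominates $\Gm^1$ then each component of $H_{W,f}$ dominates $\A^1_k$, so $H_{W,f}$ is torsion-free, hence flat, over $k[t]$; base change gives $q'_\Rt$ flat. For (e): $\Spec(\Kt)\to\A^1_k$ is flat, flat base change commutes with scheme-theoretic closure, and $H^\circ_{W,f}\times_{\A^1_k}\Spec(\Kt) = H^\circ_{W,f}\times_{\Gm^1}\Spec(\Kt) = \HH^\circ$. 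For (f), granting (d), use that over the valuation ring $\Rt$ the scheme-theoretic closure of the generic fibre is the unique $\Rt$-flat closed subscheme with that generic fibre; since $\HH$ is $\Rt$-flat with generic fibre $\HH_\Kt$, it is that closure.

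For (g), first compute, using (b), Proposition \ref{prop:heredity-finess-for-relative}(b),(c) and Proposition \ref{prop: heredity-finess-for-basechange}(c),
\[
    \HH_\Kt = H_{W,f}\times_W \WW_\Kt = H_{W,f}\times_W Y_\Kt = H_{Y_{k(t)},\psi^*(f)}\times_{k(t)}\Kt = H_{Y_\Kt,\alpha^*\psi^*(f)};
\]
since $\Delta'$ generically unimodular gives $\Delta'_{(0)}$ unimodular (the bijection $(\pr_1)_\RR\colon\Delta'_0\to\Delta'_{(0)}$ preserves lattices by \ref{prop:relative-mock}(b)) and $\alpha^*\psi^*(f)$ is non-degenerate for $\Delta'_{(0)}$ by \ref{prop:heredity-finess-for-relative}(d) and \ref{prop: heredity-finess-for-basechange}(d), Proposition \ref{prop:unimodular+nondegenerate=smooth} applied to $Y_\Kt$ gives smoothness of $\HH_\Kt$ over $\Kt$. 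For (h), apply Proposition \ref{prop:reduced+nondegenerate=strictly toroidal} to the image $x\in H_{W,f}\times_{\A^1_k}\{0\}$ of $y$: it produces $U_0\subset W$, a toric monoid $S$, $\omega\in S$ and an $\A^1_k$-morphism $p_0\colon U_0\to\Spec(k[t][S]/(t-\chi^\omega))$ with $p_0|_{U_0\cap H_{W,f}}$ smooth and $k[S]/(\chi^\omega)$ reduced; base-changing along $\Spec(\Rt)\to\A^1_k$ and taking $U = (U_0\cap H_{W,f})\times_{\A^1_k}\Spec(\Rt)$ (an open of $\HH$ through $y$) and $\beta$ the base change of $p_0|_{U_0\cap H_{W,f}}$ yields (I) and (II). For (i): the computation of (g) also gives $\HH^\circ = H^\circ_{Y_\Kt,\alpha^*\psi^*(f)}$ and $\HH_\Kt = H_{Y_\Kt,\alpha^*\psi^*(f)}$ (neither uses unimodularity), $\alpha^*\psi^*(f)$ is non-degenerate for $\Delta'_{(0)}$, and $f_1\cdots f_r$ is its irreducible decomposition in the UFD $\Kt[Y^\circ_\Kt]$ (Proposition \ref{prop:stratification-structure}(d)), so Proposition \ref{prop:val func of multiple poly}(c) gives $\HH_\Kt = \coprod_i H_{Y_\Kt,f_i}$ and likewise for the $\circ$-versions.

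For (j), granting the hypotheses of (d),(h),(i): by (f) and (i), $\HH = \overline{\HH_\Kt} = \overline{\coprod_i H_{Y_\Kt,f_i}} = \bigcup_i \HH_i$ scheme-theoretically, and each $\HH_i$, being the scheme-theoretic closure of a generic-fibre subscheme, is $\Rt$-flat with $(\HH_i)_\Kt = H_{Y_\Kt,f_i}$. The remaining point is that the $\HH_i$ are pairwise disjoint: $\HH_i\cap\HH_j$ has empty generic fibre (the $H_{Y_\Kt,f_i}$ are disjoint), so it lies in the special fibre; but along the special fibre (h) shows $\HH$ is smooth over $\Spec(\Rt[S]/(t-\chi^\omega))$, which is normal because $k[S]/(\chi^\omega)$ is reduced (cf. \cite{NO21}), so $\HH$ is locally irreducible there, and the distinct irreducible components $\HH_i,\HH_j$ (distinct since their generic fibres differ) cannot meet; hence $\HH = \coprod_i\HH_i$. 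For (k), granting the hypotheses of (c),(g),(j): each $\HH_i$ is open and closed in $\HH$, hence proper over $\Spec(\Rt)$ by (c), flat over $\Spec(\Rt)$ by (j), separated and of finite presentation by (a), with generic fibre $H_{Y_\Kt,f_i}$ smooth over $\Kt$ by (g); and for each $y\in(\HH_i)_k$ the chart of (h), shrunk to lie in $\HH_i$, is precisely a chart as in Definition \ref{NONO21} with $q=1$. Thus $\HH_i\to\Spec(\Rt)$ is a proper strictly toroidal model of $H_{Y_\Kt,f_i}$.

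The main obstacle is the disjointness of the $\HH_i$ in (j): flat base change and the identity $\overline{A\cup B} = \overline A\cup\overline B$ give $\HH = \bigcup_i\HH_i$ with the correct generic fibres for free, but two distinct irreducible components of a reduced $\Rt$-flat scheme may perfectly well meet along the special fibre, and ruling this out forces one to invoke the genuine local structure of (h)---normality of $\Rt[S]/(t-\chi^\omega)$ when $k[S]/(\chi^\omega)$ is reduced---which is exactly where the hypotheses that $\Delta'$ be specifically reduced and that $f$ be non-degenerate do essential work.
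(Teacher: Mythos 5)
Your argument for (a)--(i) and (k) is essentially the paper's own, with one pleasant shortcut in (e): you get the closure statement from the fact that scheme-theoretic closure commutes with the flat base change $\Spec(\Kt)\rightarrow\A^1_k$, whereas the paper identifies $\HH_{\Kt}$ with $H_{Y_{\Kt},\alpha^*\psi^*(f)}$ via Propositions \ref{prop:heredity-finess-for-relative} and \ref{prop: heredity-finess-for-basechange} (an identification you recover in (g), where it is needed anyway). The real divergence is (j), and you correctly isolate disjointness of the $\HH_i$ as the crux, but your mechanism differs from the paper's. The paper takes a connected component $E$ of $\HH$: it is open and closed, flat and locally of finite presentation over $\Rt$, and $E_k$ is reduced (the only consequence of (h) it uses); Lemma \ref{lem: stacks exchange}(b), proved in the appendix by descending idempotents to the discrete valuation rings $R_l$, then says $E_{\Kt}$ is connected, hence equal to a single $H_{Y_{\Kt},f_i}$, and Lemma \ref{lem:Gubler} gives $E=\HH_i$; this yields the flatness claims and the disjoint decomposition simultaneously. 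You instead argue that $\HH$ is normal, hence locally irreducible, along $\HH_k$, because by (h) it is smooth over $\Spec(\Rt[S]/(t-\chi^\omega))$ and this model is normal when $k[S]/(\chi^\omega)$ is reduced. That route is viable, but its key input is nowhere proved in this paper and is not formal over the non-Noetherian ring $\Rt$: one has to show that $k[S][u]/(u^n-\chi^\omega)$ is normal (R1 uses $\charac(k)=0$ together with the multiplicity-one condition expressed by reducedness of $k[S]/(\chi^\omega)$, and S2 holds because it is a hypersurface in a Cohen--Macaulay toric variety), ascend along the regular map $k[u]\rightarrow k[[u]]$ to get normality of $R_n[S]/(t-\chi^\omega)$, and then pass to the filtered colimit over $n$ --- or else replace your ``cf.'' by a precise reference containing exactly this statement. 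In short, your (j) trades the paper's self-contained connectedness lemma, which needs only reducedness of $\HH_k$, for a stronger external normality fact; with that fact proved or cited precisely, your proof is complete and otherwise matches the paper.
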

        \begin{proof}
            We prove the statements from (a) to (q) in order. 
            \begin{enumerate}
                \item[(a)] From the definition of $W$, the restriction $(\pi^1)_*|_{W}\colon W\rightarrow Z^1$ is dominant from Corollary \ref{cor: dominant morphism mock version}. 
                Because $Z^1$ is isomorphic to $Z\times\A^1$, $p'$ is a dominant morphism. 
                Moreover, from Proposition \ref{prop:basic-property1}(a), $W$ is an integral scheme . 
                Thus, $p'$ is a flat morphism, so $p'_{\Rt}$ is flat too. 

                We show that $q'$ is separated and of finite presentation. 
                Indeed, toric varieties are separated and of finite type over $k$. 
                Thus, the toric morphism $(\pr_2\circ\pi^1)_*\colon X(\Delta')\rightarrow \A^1$ is separated and of finite type over $\A^1_k$, in particular, of finite presentation over $\A^1_k$. 
                Moreover, a closed immersion $H_{W, f}\hookrightarrow X(\Delta')$ is separated and of finite presentation because $X(\Delta')$ is a Noetherian scheme. 
                Thus, $q'$ is separated and of finite presentation, so $q'_{\Rt}$ is so. 
                \item[(b)] From Proposition \ref{prop:relative-mock}(c), we have already shown that the isomorphism $X(\Delta')\times_{\A^1_k}\Spec(k(t)) \cong X(\Delta'_{(0)})\times_{\Spec(k)} \Spec(k(t))$ induces the isomorphism $W\times_{\A^1_k}\Spec(k(t)) \cong Y_{k(t)}$. 
                Thus, by the base change along $\Spec(\Kt)\rightarrow\Spec(k(t))$, the statement holds. 
                \item[(c)] We show that $q'$ is proper. 
                Indeed, there exists the following diagram whose all small squares are Cartesian squares:
                \begin{equation*}
                    \begin{tikzcd} 
                        W\ar[r, hook]\ar[d]& X(\Delta')\ar[d, "\pi^1_*"]&\ \\
                        Z^1\ar[r, hook]\ar[d]& X(\Delta\times\Delta_!)\ar[r, "(\pr_2)_*"]\ar[d, "(\pr_1)_*"]& \A^1_k\ar[d]\\
                        Z\ar[r, hook]& X(\Delta)\ar[r]& \Spec(k)
                    \end{tikzcd}
                \end{equation*}
                Because $Z$ is proper over $k$, $Z^1$ is proper over $\A^1_k$. 
                Moreover, from the assumption of $\Delta'$, $\pi^1_*\colon X(\Delta')\rightarrow X(\Delta\times\Delta_!)$ is proper. 
                Thus, $W$ is proper over $\A^1_k$.  
                Therefore, $p'$ and $q'$ are proper morphisms because $H_{W, f}\hookrightarrow W$ is a closed immersion. 
                Therefore, the statement holds. 
                \item[(d)] We show that $q'$ is flat. 
                We recall that $k$ is algebraically closed, and $\A^1_k$ is integral and regular. 
                Moreover, $H_{W, f}$ is a scheme theoretic closure of $H^\circ_{W, f}$, in particular, $H^\circ_{W, f}$ is a dense open subset of $H_{W, f}$.  
                Thus, $H_{W, f}$ is reduced and for any $y\in H_{W, f}$, there exists an irreducible components $E$ of $H^\circ_{W, f}$ such that $y\in\overline{E}$. 
                Therefore, from the assumption, $q'$ is a flat morphism. 
                In particular, $q'_\Rt$ is also flat. 
                \item[(e)] From (b), We can identify with $\WW_{\Kt}$ and $Y_{\Kt}$. 
                Thus, there exists the following commutative diagram whose all squares are Cartesian squares:
                \begin{equation*}
                    \begin{tikzcd} 
                        Y^\circ_{\Kt}\ar[r, "\alpha"]\ar[d]&Y^\circ_{k(t)}\ar[r, "\psi"]\ar[d]&W^\circ\ar[d]\\
                        \Spec(\Kt)\ar[r]& \Spec(k(t))\ar[r]& {\Gm^1}_{,k}
                    \end{tikzcd}
                \end{equation*}
                We use the notation Proposition \ref{prop:heredity-finess-for-relative} and Proposition \ref{prop: heredity-finess-for-basechange}.  
                Then we have $\HH^\circ = H^\circ_{Y_{\Kt}, \alpha^*\psi^*(f)}$ on the identification with $\WW_{\Kt}$ and $Y_{\Kt}$. 
                In particular, the scheme theoretic closure of $\HH^\circ$ in $\WW_{\Kt}$ is equal to $H_{Y_{\Kt}, \alpha^*\psi^*(f)}$. 
                Moreover, from the assumption of $f$, Proposition \ref{prop:heredity-finess-for-relative}(b) and (c), Proposition \ref{prop: heredity-finess-for-basechange}(b) and (c), and Proposition \ref{prop:relative-mock}(c), there exists the following diagram whose all small squares are Cartesian squares:
                \begin{equation*}
                    \begin{tikzcd} 
                        H_{Y_{\Kt}, \alpha^*\psi^*(f)}\ar[r]\ar[d]&H_{Y_{k(t)}, \psi^*(f)}\ar[r]\ar[d]&H_{W, f}\ar[d]\\
                        Y_{\Kt}\ar[r, "\alpha"]\ar[d]&Y_{k(t)}\ar[r, "\psi"]\ar[d]&W\ar[d]\\
                        \Spec(\Kt)\ar[r]&\Spec(k(t))\ar[r]&\A^1_k
                    \end{tikzcd}
                \end{equation*}
                The above diagram indicates that $H_{Y_{\Kt}, \alpha^*\psi^*(f)}$ is equal to $\HH_{\Kt}$ as the closed subscheme $Y_{\Kt} = \WW_{\Kt}$. 
                Therefore, the statement holds. 
                \item[(f)] From (a) and (d), $\HH$ and $\WW$ are flat over $\Spec(\Rt)$. 
                Thus, from Lemma \ref{lem:Gubler}, $\HH$ is a scheme theoretic closure of $\HH_{\Kt}$ in $\WW$. 
                \item[(g)] We use the notation in (e).  
                Then $\alpha^*\psi^*(f)$ is non-degenerate for $\Delta'_{(0)}$ from Proposition \ref{prop:heredity-finess-for-relative} (d) and Proposition \ref{prop: heredity-finess-for-basechange}(d). 
                In the proof of (e), we have already checked that $H_{Y_{\Kt}, \alpha^*\psi^*(f)} = \HH_{\Kt}$. 
                Thus, from Proposition \ref{prop:unimodular+nondegenerate=smooth}, $\HH_{\Kt}$ is smooth over $\Kt$. 
                \item[(h)] We remark that $H_{W, f}\times_{\A^1}\{0\} = \HH_k$ as a $k$-scheme. 
                Thus, from Proposition \ref{prop:reduced+nondegenerate=strictly toroidal}, there exists 
                \begin{enumerate}
                    \item[(1)] An open neighborhood $U_0\subset W$ of $y$
                    \item[(2)] A toric monoid $S$
                    \item[(3)] An element $\omega\in S$
                    \item[(4)] An $\A^1_k$-morphism $\theta\colon U_0\rightarrow \Spec(k[t][S]/(t-\chi^\omega))$
                \end{enumerate}
                such that 
                \begin{enumerate}
                    \item[(I)] The restriction $\theta|_{U_0\cap H_{W, f}}\colon U_0\cap H_{W, f}\rightarrow \Spec(k[t][S]/(t-\chi^\omega))$ is a smooth morphism. 
                    \item[(II)] A ring $k[S]/(\chi^\omega)$ is reduced. 
                \end{enumerate}
                By the base change along $\Spec(\Rt)\rightarrow\A^1$, let $V$ denote an open subscheme $U_0\times_{\A^1}\Spec(\Rt)$ of $\WW$ and $\beta$ denote a morphism $\theta\times\id_{\Spec(\Rt)}\colon V\rightarrow \Spec(\Rt[S]/(t-\chi^\omega))$ over $\Spec(\Rt)$. 
                Then the restriction $\beta|_{\HH\cap V}\colon \HH\cap V\rightarrow \Spec(\Rt[S]/(t-\chi^\omega))$ is a smooth morphism. 
                Let $U$ denote an open subset $\HH\cap V$ of $\HH$. 
                We remark that $y\in U$. 
                \item[(i)] We use the notation in (e). 
                From the argument in (g), $\alpha^*\psi^*(f)$ is non-degenerate for $\Delta'_{(0)}$. 
                Thus, from Proposition \ref{prop:val func of multiple poly} (b), $f_1, \ldots, f_r$ are non-degenerate for $\Delta'_{(0)}$. 
                In particular, $H^\circ_{Y_{\Kt}, f_i}$ is smooth over $\Kt$ for any $1\leq i\leq r$. 
                Moreover, from the proof of (e), we can identify with $\HH^\circ$ and  $H^\circ_{Y_{\Kt}, \alpha^*\psi^*(f)}$. 
                Thus, the following equation is the connected decomposition of $\HH^\circ$:
                \[
                    \HH^\circ = \coprod_{1\leq i\leq r} H^\circ_{Y_{\Kt}, f_i}
                \]
                From (e) and Proposition \ref{prop:val func of multiple poly} (c), we can identify with  $\HH_{\Kt}$ and $H_{Y_{\Kt}, \alpha^*\psi^*f}$, and the following equation is the connected decomposition of $\HH_{\Kt}$:
                \[
                    \HH_{\Kt} = \coprod_{1\leq i\leq r} H_{Y_{\Kt}, f_i}
                \]
                \item[(j)] We use the notation in (i). 
                From (f) and (i), we remark that the number of connected components of $\HH$ is less than and equal to $r$.  
                In addition to this, we remark that the generic fiber of a connected component of $\HH$ is non-empty and can be decomposed into a disjoint union of some $H_{Y_{\Kt}, f_i}$. 
                Let $E$ be a connected component of $\HH$. 
                From the first remark, $E$ is an open and closed subscheme of $\HH$. 
                In particular, $E$ is flat and locally of finite presentation over $\Spec(\Rt)$ from (a) and (d). 
                Moreover, from (h), $\HH_k$ is reduced, so $E_k$ is reduced too.  
                Thus, from Lemma \ref{lem: stacks exchange}(b), $E_\Kt$ is connected. 
                This shows that there exists a unique $1\leq i\leq r$ such that $E_\Kt = H_{Y_{\Kt}, f_i}$. 
                We recall that $E$ and $\WW$ is flat over $\Spec(\Rt)$ and $E$ is a closed subscheme of $\WW$. 
                Thus, from Lemma \ref{lem:Gubler}, $E = \HH_i$. 
                Hence, for any $1\leq i\leq r$, $\HH_i$ is flat over $\Spec(\Rt)$, $(\HH_i)_{\Kt} = H_{Y_{\Kt}, f_i}$, and $\HH_i$ is a connected component of $\HH$. 
                
                \item[(k)] From (c) and (j), $\HH$ is proper over $\Spec(\Rt)$, and $\HH_i$ is a closed subscheme of $\HH$ for any $1\leq i\leq r$. 
                Thus, $\HH_i$ is proper over $\Spec(\Rt)$ for any $1\leq i\leq r$. 
                From (g) and (i), $\HH_{\Kt}$ is smooth over $\Spec(\Kt)$ and $(\HH_i)_{\Kt}$ is an open subscheme of $\HH_\Kt$ for any $1\leq i\leq r$. 
                Thus, $(\HH_i)_{\Kt}$ is smooth over $\Spec(\Kt)$ for any $1\leq i\leq r$. 
                From (j), $\HH_i$ is flat over $\Spec(\Rt)$ for any $1\leq i\leq r$. 
                Thus, from (h), $\HH_i$ is a proper strictly toroidal model of a smooth $\Kt$-variety $H_{Y_{\Kt}, f_i}$ for any $1\leq i\leq r$. 
            \end{enumerate}
        \end{proof}
\section{Appendix}
        In this section, we prove the lemmas needed in this article. 
        We use the notation in Section 2. 
        The lemmas presented here are all fundamental and well-known, but they are frequently used in the proofs of propositions in the main article. 
        Therefore, they are compiled here as an appendix.
        \subsection{Toric Varieties Part.1}
        In this subsection, we prove some lemmas about toric varieties. 
        While some lemmas have already been published in the literature(\cite{CLS11} and \cite{Ful93}), the author has decided to describe them here because he thought the details of the proofs are crucial. 
        \vskip\baselineskip
        The following lemma is elementary, but it is important for the main article. 
        \begin{lemma}\label{lem: injective fan}
            Let $V, V'$ be $\RR$-linear spaces and $f\colon V\rightarrow V'$ be an $\RR$-linear map. 
            Let $\Delta$ denote a polyhedral convex fan in $V$ and assume that the restriction of $f$ to $\supp(\Delta)$ is injective. 
            Then the following statements follow: 
            \begin{enumerate}
                \item[(a)] For any $\sigma\in\Delta$, $f(\sigma)$ is a polyhedral convex cone. 
                \item[(b)] There exists a one-to-one correspondence with faces of $\sigma$ and those of $f(\sigma)$ 
                by $f$. 
                \item[(c)] The following set $\Delta'$ is a polyhedral convex fan in $V'$:
                \[
                    \Delta' = \{f(\sigma)\mid \sigma\in\Delta\}
                \]
                \item[(d)] If $\Delta$ is a strongly convex polyhedral fan, $\Delta'$ is so. 
            \end{enumerate}
        \end{lemma}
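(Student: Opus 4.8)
The plan is to deduce all four parts from one linear-algebra observation: the hypothesis forces $f$ to be injective not merely on each cone $\sigma\in\Delta$ but on its linear span $\langle\sigma\rangle$. Granting this, $f$ restricts to a linear isomorphism $\langle\sigma\rangle\to f(\langle\sigma\rangle)=\langle f(\sigma)\rangle$ carrying $\sigma$ bijectively onto $f(\sigma)$, and since faces, relative interiors and strong convexity are all intrinsic to a cone, parts (a), (b), (d) will follow formally; part (c) needs in addition that $f$ commutes with the relevant intersections, which again comes straight from injectivity on $\supp(\Delta)$. For (a) itself no hypothesis on $f$ is needed: $\sigma$ is generated by finitely many vectors $v_1,\dots,v_n$, so $f(\sigma)$ is the cone generated by $f(v_1),\dots,f(v_n)$, hence polyhedral and convex.

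For the key step, fix $\sigma\in\Delta$ and set $W=\langle\sigma\rangle$. I would argue $f|_W$ is injective by contradiction: if $0\neq w\in W$ with $f(w)=0$, choose $v$ in the relative interior of $\sigma$ (nonempty, and spanning $W$); then $v\pm\varepsilon w\in\sigma$ for all sufficiently small $\varepsilon>0$, while $f(v+\varepsilon w)=f(v)=f(v-\varepsilon w)$, contradicting that $f$ is injective on $\sigma\subseteq\supp(\Delta)$. Hence $f|_W\colon W\to\langle f(\sigma)\rangle$ is a linear isomorphism whose restriction $\sigma\to f(\sigma)$ is a bijection; a linear isomorphism takes faces to faces bijectively, so $\tau\mapsto f(\tau)$ is a one-to-one correspondence between the faces of $\sigma$ and those of $f(\sigma)$, which is (b). For (d), strong convexity of $\sigma$ means $\{0\}$ is a face of $\sigma$, so by (b) $\{0\}=f(\{0\})$ is a face of $f(\sigma)$; equivalently, if $y\in f(\sigma)\cap(-f(\sigma))$, writing $y=f(x_1)$ and $-y=f(x_2)$ with $x_i\in\sigma$ gives $f(x_1+x_2)=0=f(0)$, and injectivity on $\supp(\Delta)$ forces $x_2=-x_1\in\sigma\cap(-\sigma)=\{0\}$, so $y=0$.

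For (c) I would verify the fan axioms for $\Delta'=\{f(\sigma)\mid\sigma\in\Delta\}$: each member is a polyhedral convex cone by (a); every face of $f(\sigma)$ is $f(\tau)$ for some $\tau\preceq\sigma$ by (b), and $\tau\in\Delta$ since $\Delta$ is a fan, so the face lies in $\Delta'$; and $\Delta'$ is finite because $\Delta$ is. The remaining point is that $f(\sigma_1)\cap f(\sigma_2)$ is a common face of $f(\sigma_1)$ and $f(\sigma_2)$; here the key identity is $f(\sigma_1)\cap f(\sigma_2)=f(\sigma_1\cap\sigma_2)$, where $\subseteq$ holds because $f(x_1)=f(x_2)$ with $x_i\in\sigma_i\subseteq\supp(\Delta)$ forces $x_1=x_2\in\sigma_1\cap\sigma_2$, and $\supseteq$ is trivial. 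Since $\sigma_1\cap\sigma_2$ is a common face of $\sigma_1$ and $\sigma_2$ ($\Delta$ being a fan), applying (b) to each of $\sigma_1$ and $\sigma_2$ shows $f(\sigma_1\cap\sigma_2)$ is a common face of $f(\sigma_1)$ and $f(\sigma_2)$. The only genuine obstacle in the whole argument is the relative-interior computation that promotes injectivity on $\sigma$ to injectivity on $\langle\sigma\rangle$; everything downstream of that is bookkeeping.
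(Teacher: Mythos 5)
Your proof is correct and follows essentially the same route as the paper: promote injectivity on $\supp(\Delta)$ to injectivity on $\langle\sigma\rangle$, deduce the face correspondence from the resulting linear isomorphism of spans, verify the fan axioms via the identity $f(\sigma_1)\cap f(\sigma_2)=f(\sigma_1\cap\sigma_2)$, and check strong convexity directly. The only cosmetic difference is that the paper spells out the face correspondence in (b) with explicit dual functionals ($g_\sigma^*$ and $f^*$), where you invoke the standard fact that a linear isomorphism of spans carries faces to faces; the content is the same.
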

        \begin{proof}
            We prove the statements from (a) to (d) in order. 
            \begin{enumerate}
                \item[(a)] Because $f$ is a linear map, $f(\sigma)$ is a polyhedral convex cone in $V'$. 
                \item[(b)] From the assumption, $\langle\sigma\rangle\cap\ker(f) = 0$, so that there exists a linear map $g_\sigma\colon V'\rightarrow V$ such that $(g_\sigma\circ f)|_{\langle\sigma\rangle}$ is the identity map of $\langle\sigma\rangle$. 
                Let $W$ and $W'$ denote the dual spaces of $V$ and $V'$, respectively. 
                Then for any $\omega\in\sigma^\vee$, we claim that $g^*_\sigma(\omega) \in (f(\sigma))^\vee$ and $f(\sigma\cap\omega^\perp) = f(\sigma)\cap g^*_\sigma(\omega)^\perp$. 
                Indeed, for any $v\in \sigma = g_\sigma(f(\sigma))$, the following equation holds:
                \begin{align*}
                    \langle f(v), g^*_\sigma(\omega)\rangle 
                            &= \langle g_\sigma(f(v)), \omega\rangle \\
                            &= \langle v, \omega\rangle
                \end{align*}
                On the other hand, for any $\omega'\in(f(\sigma))^\vee$, we claim that $f^*(\omega')\in\sigma^\vee$ and $f(\sigma\cap f^*(\omega')^\perp) = f(\sigma)\cap (\omega')^\perp$. 
                Indeed, for any $v\in \sigma$, the following equation holds: 
                \begin{align*}
                    \langle v, f^*(\omega')\rangle &= \langle f(v), \omega'\rangle 
                \end{align*}
                Thus, $f$ induces a one-to-one correspondence faces of $\sigma$ and those of $f(\sigma)$ because the restriction of $f$ for $\langle\sigma\rangle$ is injective. 
                \item[(c)] For any $\sigma\in\Delta$, all faces of $f(\sigma)$ are contained in $\Delta'$ from (b). 
                In addition, for any $\sigma$ and  $\tau\in\Delta$, $f(\sigma)\cap f(\tau) = f(\sigma\cap\tau)$ from the injectivity of $f$. 
                From (b), $f(\sigma\cap\tau)$ is a common face of $f(\sigma)$ and $f(\tau)$, so that $\Delta'$ is a fan. 
                \item[(d)] Because $f$ is an injective linear map, the statement holds. 
            \end{enumerate}
        \end{proof}
        The following elementary lemma is needed to prove Lemma\ref{lem:relative-torus-fibration}. 
        \begin{lemma}\label{lem:take-section}
            Let $N, N'$ be lattices of finite rank and $\pi\colon N\rightarrow N'$ be a surjective morphism. 
            Let $N_0$ be a sublattice of $N$ and assume that $N'/\pi(N_0)$ is torsion free and $N_0\cap \mathrm{ker}(\pi) = \{0\}$. 
            Then there is a section $s$ of $\pi$ such that $s(N')$ contains $N_0$. 
        \end{lemma}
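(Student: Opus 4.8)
The plan is to build the section one summand at a time, after first splitting $N'$ along the subgroup $\pi(N_0)$. First I would record what the two hypotheses say. The condition $N_0\cap\ker(\pi)=\{0\}$ means precisely that the restriction $\pi|_{N_0}\colon N_0\to\pi(N_0)$ is injective, hence an isomorphism of lattices onto its image $\pi(N_0)$; write $\theta\colon\pi(N_0)\to N_0\subseteq N$ for its inverse. The condition that $N'/\pi(N_0)$ is torsion-free means that $\pi(N_0)$ is a saturated sublattice of $N'$.

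Next I would use saturation to split the short exact sequence
\[
0\longrightarrow \pi(N_0)\longrightarrow N'\longrightarrow N'/\pi(N_0)\longrightarrow 0.
\]
Since $N'/\pi(N_0)$ is finitely generated and torsion-free, it is free, hence projective, so the sequence splits; concretely, choose a sublattice $N_1\subseteq N'$ with $N'=\pi(N_0)\oplus N_1$. Now define $s\colon N'\to N$ on each summand: on $\pi(N_0)$ set $s:=\theta$; on $N_1$, pick a $\ZZ$-basis $f_1,\dots,f_r$ and, using surjectivity of $\pi$, choose $n_i\in N$ with $\pi(n_i)=f_i$, then put $s(f_i):=n_i$ and extend $\ZZ$-linearly. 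Because $N'=\pi(N_0)\oplus N_1$ is a direct sum, these assemble into a well-defined homomorphism $s\colon N'\to N$.

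Finally I would verify the two required properties on generators. For $\pi\circ s=\id_{N'}$: on $x\in\pi(N_0)$ we have $\pi(s(x))=\pi(\theta(x))=x$ by the definition of $\theta$, and on each $f_i$ we have $\pi(s(f_i))=\pi(n_i)=f_i$; since $\pi(N_0)$ together with the $f_i$ generate $N'$, this gives $\pi\circ s=\id_{N'}$. For $N_0\subseteq s(N')$: given $y\in N_0$, we have $\pi(y)\in\pi(N_0)$, so $s(\pi(y))=\theta(\pi(y))=y$, whence $y\in s(N')$. This completes the construction. The only step needing care — more a routine justification than a genuine obstacle — is the splitting of the displayed sequence, i.e.\ that a finitely generated torsion-free abelian group is free; the remaining verifications are immediate checks on generators.
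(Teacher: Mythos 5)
Your proof is correct and rests on the same mechanism as the paper's: the hypothesis $N_0\cap\ker(\pi)=\{0\}$ gives $N_0\cong\pi(N_0)$, and the torsion-freeness (hence freeness) of $N'/\pi(N_0)$ supplies the splitting from which the section is assembled. The only cosmetic difference is that you split downstairs, writing $N'=\pi(N_0)\oplus N_1$ and defining $s$ piecewise, while the paper splits upstairs, taking a section $s'$ of $N\to N'/\pi(N_0)$, decomposing $N=\ker(\pi)\oplus N_0\oplus s'(N'/\pi(N_0))$, and defining $s$ as the inverse of $\pi$ restricted to $N_0\oplus s'(N'/\pi(N_0))$; both routes are immediate once the splitting is in hand.
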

        \begin{proof}
            Let $\pi'\colon N\rightarrow N'/(\pi(N_0))$ be a composition of $\pi$ and the quotient map from $N'$ to $N'/\pi(N_0)$. 
            From the assumption, there is a section $s'$ of $\pi'$. 
            Then $N$ can be decomposed into a direct sum $\mathrm{ker}(\pi')\oplus s'(N'/\pi(N_0))$. 
            By the assumption of $\pi$, $\mathrm{ker}(\pi')$ can be decomposed into a direct sum $\mathrm{ker}(\pi)\oplus N_0$, so that we would take a section $s$ of $\pi$ as the inverse morphism of the following composition:
            \begin{equation*}
                \begin{tikzcd} 
                    N_0\oplus s'(N'/\pi(N_0))\ar[r, hook]& N\ar[r, "\pi"]& N'
                \end{tikzcd}
            \end{equation*}
            Then $N_0\subset s(N')$. 
        \end{proof}
        In this article, there are many cases when toric morphisms are algebraic torus fibrations. 
        Therefore, the author has decided to summarize this morphism here. 
        \begin{lemma}\label{lem:torus-fibration}
            Let $N, N'$ be lattices of finite rank, $\pi\colon N\rightarrow N'$ be a surjective morphism, $s\colon N'\rightarrow N$ be a section of $\pi$, and $\sigma\subset N_\RR$ be a strongly convex rational polyhedral cone. 
            We assume that $\langle\sigma\rangle\cap N\subset s(N')$. 
            Then the following statements follow:  
            \begin{itemize}
                \item[(a)] Let $\sigma'$ denote $\pi_\RR(\sigma)$. 
                Then, $\sigma'$ is a strongly convex rational polyhedral cone in $N'$. 
                \item[(b)] For any $\tau\preceq\sigma$, we have $\pi_\RR(\tau)\preceq\sigma'$. 
                \item[(c)] Conversely, for any $\tau'\preceq\sigma'$, there exists a unique $\tau\preceq\sigma$ such that $\tau' = \pi_\RR(\tau)$. 
                \item[(d)] 
                For any $\tau\preceq\sigma$, let $\tau'$ denote $\pi_\RR(\tau)$. 
                Then, the following diagram of toric morphisms is a Cartesian diagram. 
                \begin{equation*}
                    \begin{tikzcd} 
                        X(\tau)\ar[r, "\pi_*"]\ar[d, hook]& X(\tau')\ar[d, hook]\\
                        X(\sigma)\ar[r, "\pi_*"]& X(\sigma')
                    \end{tikzcd}
                \end{equation*}
                \item[(e)] For the notation in (d), $\pi_*(O_\tau) = O_{\pi_{\tau'}}$ and the following diagram induced by toric morphisms is a Cartesian diagram: 
                \begin{equation*}
                    \begin{tikzcd} 
                        \overline{O_\tau}\ar[r, "\pi_*"]\ar[d, hook]&\overline{O_{\tau'}}\ar[d, hook]\\
                        X(\sigma)\ar[r, "\pi_*"]& X(\sigma')
                    \end{tikzcd}
                \end{equation*}
            \end{itemize}
        \end{lemma}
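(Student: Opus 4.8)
The plan is to reduce the entire statement to an evident product situation. The first step I would carry out is to observe that $\pi_\RR$ is injective on $\langle\sigma\rangle$: since $\sigma$ is a rational polyhedral cone the lattice $\langle\sigma\rangle\cap N$ spans $\langle\sigma\rangle$ over $\RR$, so the hypothesis $\langle\sigma\rangle\cap N\subset s(N')$ forces $\langle\sigma\rangle\subset s_\RR(N'_\RR)$; as $s_\RR$ is a section of $\pi_\RR$ it is injective, and $\pi_\RR$ restricted to $s_\RR(N'_\RR)$ is its inverse, hence is injective on $\langle\sigma\rangle$. In particular $v=s_\RR(\pi_\RR(v))$ for every $v\in\langle\sigma\rangle$.

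Next I would put $N_0:=\ker(\pi)$, a lattice of finite rank, and use $s$ to split the exact sequence $0\to N_0\to N\xrightarrow{\pi}N'\to 0$, so that $v\mapsto(v-s(\pi(v)),\pi(v))$ is an isomorphism $N\xrightarrow{\sim}N_0\oplus N'$ carrying $\pi$ to the second projection. Because $v=s_\RR(\pi_\RR(v))$ on $\langle\sigma\rangle$, this isomorphism sends $\sigma$ to $\{0\}\times\sigma'$ and, for $\tau\preceq\sigma$, sends $\tau$ to $\{0\}\times\pi_\RR(\tau)$. Since the faces of the product cone $\{0\}\times\sigma'$ are precisely the cones $\{0\}\times\tau'$ with $\tau'\preceq\sigma'$, parts (a), (b) and (c) then follow immediately: $\sigma'$ is strongly convex rational polyhedral because $\{0\}\times\sigma'\cong\sigma$ is; $\pi_\RR(\tau)\preceq\sigma'$; and $\tau\mapsto\pi_\RR(\tau)$ is a bijection from the faces of $\sigma$ onto the faces of $\sigma'$, which is exactly the existence-and-uniqueness assertion in (c). (One could alternatively deduce (a)--(c) by applying Lemma~\ref{lem: injective fan} to the fan of faces of $\sigma$ under the map $\pi_\RR|_\sigma$, but the splitting is needed for (d) and (e) in any case, so I would not take that route.)

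For (d) and (e) I would transport toric varieties along $N\cong N_0\oplus N'$: it identifies $X(\sigma)$ with $X(\{0\}_{N_0}\times\sigma')=T_{N_0}\times X(\sigma')$, the open immersion $X(\tau)\hookrightarrow X(\sigma)$ with $T_{N_0}\times X(\tau')\hookrightarrow T_{N_0}\times X(\sigma')$, and the toric morphism $\pi_*$ with the projection $\pr_2\colon T_{N_0}\times X(\sigma')\to X(\sigma')$. Under these identifications the square in (d) is the tautological pullback square of $\pr_2$ along $X(\tau')\hookrightarrow X(\sigma')$, hence Cartesian. For (e), the orbit $O_\tau$ corresponds to $O_{\{0\}\times\tau'}=T_{N_0}\times O_{\tau'}$, so $\pi_*(O_\tau)=\pr_2(T_{N_0}\times O_{\tau'})=O_{\tau'}$; its closure in $T_{N_0}\times X(\sigma')$ is $T_{N_0}\times\overline{O_{\tau'}}$, and the square in (e) is once more the pullback square of $\pr_2$ along the closed immersion $\overline{O_{\tau'}}\hookrightarrow X(\sigma')$, hence Cartesian.

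The argument is essentially bookkeeping once the product decomposition is in place, so I do not anticipate a real obstacle; the one point requiring care is the first step, verifying that $\langle\sigma\rangle\cap N\subset s(N')$ really does push $\sigma$ entirely into the $N'$-factor of the chosen splitting. It is precisely this that makes $\pi_*$ a genuine projection rather than merely a surjection with toric fibres, and hence makes all the relevant squares Cartesian on the nose.
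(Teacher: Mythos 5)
Your proof is correct and follows essentially the same route as the paper: the paper also uses the section $s$ to split $N\cong\ker(\pi)\oplus N'$ (via $\lambda(v',w)=s(v')+w$, the inverse of your map), checks that the hypothesis $\langle\sigma\rangle\cap N\subset s(N')$ forces $\sigma$ into the $N'$-factor, and then reads off (d) and (e) from the resulting identification of $\pi_*$ with a projection $T_{\ker\pi}\times X(\sigma')\to X(\sigma')$. The only cosmetic difference is that the paper deduces (a)--(c) by citing its Lemma~\ref{lem: injective fan}, the alternative you mention and set aside.
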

        \begin{proof}
            We prove these statements from (a) to (e) in order.
            \begin{itemize}
                \item[(a)] From the assumption, the restriction of $\pi_\RR$ to $\langle\sigma\rangle$ is injective. 
                Thus, from Lemma \ref{lem: injective fan}(a) and (d), $\sigma'$ is a strongly convex polyhedral cone. 
                Because $\pi$ is a morphism of lattices, $\sigma'$ is a rational polyhedral cone. 
                \item[(b), (c)] The statements follow from the injectivity of the restriction $\pi_\RR|_{\langle\sigma\rangle}$ and Lemma \ref{lem: injective fan} (b). 
                \item[(d), (e)] Let $\lambda\colon N'\oplus\mathrm{ker}(\pi)\rightarrow N$ be a lattice morphism defined as follows:
                \[
                    N'\oplus \mathrm{ker}(\pi) \ni (v', w)\mapsto s(v') + w\in N
                \]
                Because $s$ is a section of $\pi$, $\lambda$ is isomorphic. 
                In addition, from the assumption of $s$, $\lambda_\RR(\sigma'\times \{0\}) = \sigma$, so that $\lambda$ induces the toric isomorphism $\lambda_*\colon X(\sigma')\times \Gm^{\mathrm{rank}(\mathrm{ker}(\pi))} = X(\sigma'\times \{0\})\rightarrow X(\sigma)$ and there exists the following diagram of toric morphisms:
                \begin{equation*}
                    \begin{tikzcd} 
                        X(\sigma')\times \Gm^{\mathrm{rank}(\mathrm{ker}(\pi))}\ar[r, "\lambda_*"]\ar[rd, "\mathrm{pr}_1"]&X(\sigma)\ar[d, "\pi_*"]\\
                        & X(\sigma')
                    \end{tikzcd}
                \end{equation*}
                Hence, we may replace $N$ with $N'\oplus\ker(\pi)$, $\pi$ with $\pr_1$, and $\sigma$ with $\sigma'\times\{0\}$. 
                Therefore, statements (d) and (e) follow immediately from the diagram above. 
            \end{itemize}
        \end{proof}
        The following lemma is needed for Lemma\ref{lem:relative-torus-fibration}. 
        \begin{lemma}\label{lem:relative-section}
            Let $N, N', L$ be lattices of finite rank, $f\colon N\rightarrow N'$ and $\pi\colon L\rightarrow N$ be surjective morphisms, and $s\colon N'\rightarrow N$ and $u\colon N\rightarrow L$ be sections of $f$ and $\pi$ respectively. 
            Let $L'$ denote a lattice $L/u(\ker(f))$ and $g$ denote the quotient morphism from $L$ to $L'$. 

            Then the following statements follow:
            \begin{itemize}
                \item[(a)] There exist unique morphisms $\pi'\colon L'\rightarrow N'$ and $u'\colon N'\rightarrow L'$ such that the following diagram commutes and $\pi'\circ u' = \mathrm{id}_{N'}$: 
                \begin{equation*}
                    \begin{tikzcd} 
                        N\ar[r, "u"]\ar[d, "f"]& L\ar[r, "\pi"]\ar[d, "g"] &N\ar[d, "f"]\\
                        N'\ar[r, "u'"]& L'\ar[r, "\pi'"]&N'
                    \end{tikzcd}
                \end{equation*}
                \item[(b)] There exists a unique section $t$ of $g$ such that the following diagram commutes: 
                \begin{equation*}
                    \begin{tikzcd} 
                        N'\ar[r, "u'"]\ar[d, "s"]& L'\ar[r, "\pi'"]\ar[d, "t"] &N'\ar[d, "s"]\\
                        N\ar[r, "u"]& L\ar[r, "\pi"]&N
                    \end{tikzcd}
                \end{equation*}
                \item[(c)] Let $\lambda\colon N'\oplus \ker(f)\rightarrow N$ and $\mu\colon L'\oplus \ker(f)\rightarrow L$ be morphisms defined as follows:
                \[
                    N'\oplus \mathrm{ker}(f) \ni (x', x)\mapsto s(x') + x\in N
                \]
                \[
                    L'\oplus \mathrm{ker}(f) \ni (y', x)\mapsto t(y') + u(x)\in L
                \]
                Then $\lambda$ and $\mu$ are isomorphic and the following diagram commutes:
                \begin{equation*}
                    \begin{tikzcd} 
                        L'\oplus \ker(f)\ar[rrr, "(\pi'{,}\mathrm{id})"]\ar[rd, "\mu"]\ar[dd, "\mathrm{pr}_1"]& & & N'\oplus \ker(f)\ar[ld,"\lambda"]\ar[dd, "\mathrm{pr}_1"]\\
                        & L\ar[r, "\pi"]\ar[ld,"g"] & N\ar[rd, "f"] &\\
                        L'\ar[rrr, "\pi'"] & & & N'
                    \end{tikzcd}
                \end{equation*}
            \end{itemize}
        \end{lemma}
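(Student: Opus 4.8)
The plan is to build each of the new maps by the evident universal property and then to verify all the required identities by bookkeeping with the two splittings $N = s(N')\oplus\ker(f)$ and $L = u(N)\oplus\ker(\pi)$, which combine into the threefold decomposition $L = u(s(N'))\oplus u(\ker(f))\oplus\ker(\pi)$ (using that $u$ is injective). The key structural fact underlying everything is that $\ker(g) = u(\ker(f))$ by the definition of $L'$.

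For (a), I would first observe that $u(\ker(f))\subset\ker(f\circ\pi)$: for $y\in\ker(f)$ one has $\pi(u(y)) = y$, hence $f(\pi(u(y))) = f(y) = 0$. So the universal property of the quotient $g$ yields a unique $\pi'\colon L'\to N'$ with $\pi'\circ g = f\circ\pi$. For $u'$, surjectivity of $f$ forces $u'\circ f = g\circ u$ to have at most one solution, and $u' := g\circ u\circ s$ is a solution: writing $x = s(f(x)) + (x-s(f(x)))$ with $x-s(f(x))\in\ker(f)$, and using $g\circ u|_{\ker(f)} = 0$, one gets $g(u(x)) = g(u(s(f(x)))) = u'(f(x))$. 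Finally $\pi'(u'(x')) = \pi'(g(u(s(x')))) = f(\pi(u(s(x')))) = f(s(x')) = x'$, so $\pi'\circ u' = \id_{N'}$.

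For (b), let $p\colon L\to L$ be the projection onto $u(s(N'))\oplus\ker(\pi)$ with kernel $u(\ker(f)) = \ker(g)$; since $\ker(g)\subset\ker(p)$, the map $p$ factors as $p = t\circ g$ for a unique $t\colon L'\to L$, and from $g\circ p = g$ together with surjectivity of $g$ one deduces $g\circ t = \id_{L'}$, so $t$ is a section. The two compatibility squares are direct computations: $t(u'(x')) = p(u(s(x'))) = u(s(x'))$ gives $t\circ u' = u\circ s$, and for a general element $g(a+b+c)$ of $L'$ with $a\in u(s(N'))$, $b\in u(\ker(f))$, $c\in\ker(\pi)$ one computes $\pi(t(g(a+b+c))) = \pi(a)$ and $s(\pi'(g(a+b+c))) = s(f(\pi(a))) = \pi(a)$ since $\pi(a)\in s(N')$, whence $\pi\circ t = s\circ\pi'$. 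Uniqueness: if $\tilde t$ is another section of $g$ with $\pi\circ\tilde t = s\circ\pi'$, then $\tilde t - t$ takes values in $\ker(g) = u(\ker(f))$, and applying $\pi$ (which is injective on $u(\ker(f))$ because $\pi\circ u = \id$) forces $\tilde t = t$.

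For (c), the map $\lambda$ is exactly the isomorphism $N'\oplus\ker(f)\xrightarrow{\sim}N$ coming from the section $s$, while $\mu$ is an isomorphism since $g\circ t = \id_{L'}$ yields $L = t(L')\oplus\ker(g) = t(L')\oplus u(\ker(f))$. Commutativity of the faces of the displayed prism is then substitution: $\pi(\mu(y',x)) = \pi(t(y')) + \pi(u(x)) = s(\pi'(y')) + x = \lambda(\pi'(y'),x)$ via $\pi\circ t = s\circ\pi'$ and $\pi\circ u = \id$; $g(\mu(y',x)) = y' = \pr_1(y',x)$ via $g\circ t = \id$ and $g\circ u|_{\ker(f)} = 0$; $f(\lambda(x',x)) = x' = \pr_1(x',x)$ via $f\circ s = \id$ and $f|_{\ker(f)} = 0$; the remaining faces are $\pi'\circ g = f\circ\pi$ from (a) and the trivial identity $\pi'\circ\pr_1 = \pr_1\circ(\pi',\id)$. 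There is no genuinely hard step here — everything is elementary linear algebra over $\ZZ$ — and the only point demanding care is the bookkeeping in (b): one must use the full three-term splitting of $L$, check that $t$ is compatible not merely with $g$ but also with $\pi$ and $u$, and confirm that the maps constructed out of quotients are well defined, i.e. that the relevant kernels are nested as claimed.
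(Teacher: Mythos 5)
Your proof is correct and follows essentially the same route as the paper's: you construct $\pi'$ and $u'$ by the universal property of the quotient $g$ (with the same verification that the relevant kernels are nested), and you build $t$ from the three-term splitting $L = u(s(N'))\oplus u(\ker(f))\oplus\ker(\pi)$, which is exactly the paper's $t_0$, before checking the same compatibilities in (b) and (c). The only (harmless) deviations are that you spell out $\pi'\circ u' = \mathrm{id}_{N'}$ explicitly and prove uniqueness of $t$ by noting that the difference of two such sections lands in $\ker(g) = u(\ker(f))$, on which $\pi$ is injective, rather than by the paper's elementwise computation — both arguments rest on the same fact that $\ker(g)\cap\ker(\pi) = 0$.
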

        \begin{proof}
            We prove these statements from (a) to (c) in order. 
            \begin{itemize}
                \item[(a)] From the definition of $L'$, $g\circ u(\ker(f)) = 0$ and $f\circ\pi(\ker(g)) = 0$. 
                Thus, $u'$ and $\pi'$ are induced so that $g\circ u = u'\circ f$ and $f\circ \pi = \pi'\circ g$. 
                \item[(b)] From sections $u$ and $s$, $L$ can be decomposed into a direct sum $u(s(N'))\oplus \ker(\pi)\oplus u(\ker(f))$. 
                Thus, we would take a section $t_0$ of $g$ as the inverse morphism of the following isomorphism:
                \begin{equation*}
                    \begin{tikzcd} 
                    u(s(N'))\oplus \ker(\pi)\ar[r, hook]& L\ar[r, "g"]&L'
                    \end{tikzcd}
                \end{equation*}
                Then we claim that $t_0\circ u' = u\circ s$ and $\pi\circ t_0 = s\circ \pi'$. 
                First, because $t_0\circ g|_{u\circ s(N')} = \id_{u\circ s(N')}$, the following equation holds:
                \begin{align*}
                    t_0\circ u' &= t_0\circ u' \circ f\circ s\\
                    &= t_0\circ g\circ u\circ s\\
                    &= u\circ s
                \end{align*}
                Second, for any $y'\in L'$, there exists $x'\in N'$ and $y\in\ker(\pi)$ such that $y' = g(u\circ s(x') + y)$ and $t_0(y') = u\circ s(x') + y$ from the definition of $t_0$. 
                Then the following equation holds:
                \begin{align*}
                    \pi\circ t_0(y') &= \pi(u\circ s(x') + y)\\
                    &= \pi\circ u \circ s(x')\\
                    &= s(x')
                \end{align*}
                On the other hand, the following equation holds:
                \begin{align*}
                    s\circ \pi'(y') &= s\circ \pi'\circ g (u\circ s(x') + y)\\
                    &= s\circ f\circ \pi(u\circ s(x') + y)\\
                    &= s\circ f(s(x'))\\
                    &= s(x')
                \end{align*}
                Thus, $\pi\circ t_0 = s\circ\pi'$. 
                Finally, we show the uniqueness of such $t$. 
                For any $y'\in L'$, there exists $x'\in N'$ and $y\in\ker(\pi)$ such that $y' = g(u\circ s(x') + y)$ from the direct sum decomposition of $L$ above. 
                Then for such $t$,
                \begin{align*}
                    t(g(u\circ s(x'))) &= t\circ g \circ t \circ u'(x')\\
                    &= t\circ u'(x')\\
                    &= u\circ s(x')
                \end{align*}
                On the other hand, for such $t$, we have $t\circ g(y) - y\in\ker(g)$. 
                In addition, the following equation holds:
                \begin{align*}
                    \pi(t\circ g(y) - y) &= \pi(t\circ g(y))\\
                    &= s\circ\pi'\circ g(y)\\
                    &= s\circ f\circ\pi(y)\\
                    &= 0
                \end{align*}
                Thus, $t\circ g(y) - y\in\ker(g)\cap\ker(\pi)$. 
                Because $u$ is a section of $\pi$, we have that $\ker(g)\cap\ker(\pi) = u(\ker(f))\cap \ker(\pi)$ 
                $= 0$.
                In particular, $t\circ g(y) = y$. 
                Therefore, for such $t$, we have $t(y') = u\circ s(x') + y$, so that this shows that $t$ is unique. 
                \item[(c)] Because $s$ is a section of $f$, $\lambda$ is isomorphic. 
                Likewise, because $\ker(f)$ and $\ker(g)$ is isomorphic by the restriction of $u$ to $\ker(f)$ and $t$ is a section of $g$, $\mu$ is ismorphic too. 
                We can check the commutativity of the diagram because $s\circ \pi' = \pi\circ t$. 
            \end{itemize}
        \end{proof}
        When we consider the structure of mock toric varieties induced by a dominant toric morphism, the following lemma is very important. 
        \begin{lemma}\label{lem:relative-torus-fibration}
            Let $N, N', L$ be lattices of finite rank, $f\colon N\rightarrow N'$ and $\pi\colon L\rightarrow N$ be surjective morphisms, $u\colon N\rightarrow L$ be sections of $\pi$. 
            Let $L'$ denote a lattice $L/u(\ker(f))$ and $g$ denote the quotient morphism from $L$ to $L'$.
            Let $\sigma$ and $\tau$ be strongly convex rational polyhedral cones in $N_\RR$ and $L_\RR$, respectively. 
            We assume that $\ker(f)\cap (\langle\sigma\rangle \cap N) = 0$, $N'/f(\langle\sigma\rangle \cap N)$ is torsion free, and $\pi_\RR(\tau)\subset \sigma$. 
            
            Then the following statements follow:
            \begin{itemize}
                \item[(a)] 
                Let $\sigma'$ denote $f_\RR(\sigma)$ and $\tau'$ denote $g_\RR(\tau)$. 
                Then $\sigma'$ and $\tau'$ are strongly convex rational polyhedral cones in $N'_\RR$ and $L'_\RR$, respectively. 
                In addition, $\pi'$ is compatible with the cones $\tau'$ and $\sigma'$. 
                \item[(b)] The following diagram of toric varieties commutes and is a Cartesian diagram. 
                \begin{equation*}
                    \begin{tikzcd} 
                        X(\tau)\ar[r, "g_*"]\ar[d, "\pi_*"]& X(\tau')\ar[d, "\pi'_*"]\\
                        X(\sigma)\ar[r, "f_*"]& X(\sigma')
                    \end{tikzcd}
                \end{equation*}
            \end{itemize}
        \end{lemma}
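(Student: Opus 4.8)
The plan is to reduce both parts to a ``split'' product situation by chaining together Lemma \ref{lem:take-section}, Lemma \ref{lem:torus-fibration}, and Lemma \ref{lem:relative-section}. First I would apply Lemma \ref{lem:take-section} to $f$ with $N_0=\langle\sigma\rangle\cap N$ — its two hypotheses, $N'/f(\langle\sigma\rangle\cap N)$ torsion free and $\ker(f)\cap(\langle\sigma\rangle\cap N)=\{0\}$, are exactly what is assumed — to obtain a section $s\colon N'\to N$ of $f$ with $\langle\sigma\rangle\cap N\subset s(N')$. Feeding this $s$ and the given section $u$ of $\pi$ into Lemma \ref{lem:relative-section} produces $\pi'\colon L'\to N'$, sections $u'$ of $\pi'$ and $t$ of $g$, and lattice isomorphisms $\lambda\colon N'\oplus\ker(f)\to N$ and $\mu\colon L'\oplus\ker(f)\to L$ together with the relations $f\circ\lambda=\pr_1$, $g\circ\mu=\pr_1$, $\pi\circ\mu=\lambda\circ(\pi'\times\id)$, and $\pi'\circ g=f\circ\pi$. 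Moreover, Lemma \ref{lem:torus-fibration}(a) applied to $f$, $s$, $\sigma$ already says that $\sigma':=f_\RR(\sigma)$ is a strongly convex rational polyhedral cone, and the equality $\lambda_\RR(\sigma'\times\{0\})=\sigma$ is the one proved inside that lemma.

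For (a) it remains to treat $\tau'$. The key step is the inclusion $\mu_\RR^{-1}(\tau)\subset L'_\RR\times\{0\}$: for $(y',x)\in\mu_\RR^{-1}(\tau)$ one has $\pi_\RR(\mu_\RR(y',x))\in\pi_\RR(\tau)\subset\sigma$, hence, using $\pi\circ\mu=\lambda\circ(\pi'\times\id)$, $(\pi'_\RR(y'),x)=\lambda_\RR^{-1}(\pi_\RR(\mu_\RR(y',x)))\in\lambda_\RR^{-1}(\sigma)=\sigma'\times\{0\}$, which forces $x=0$. Since $\mu$ is a lattice isomorphism, $\mu_\RR^{-1}(\tau)$ is strongly convex rational polyhedral, hence so is $\gamma:=\pr_1(\mu_\RR^{-1}(\tau))$; and from $g\circ\mu=\pr_1$ we get $\tau':=g_\RR(\tau)=\pr_1(\mu_\RR^{-1}(\tau))=\gamma$, so $\tau'$ is a strongly convex rational polyhedral cone with $\mu_\RR(\tau'\times\{0\})=\tau$. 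Compatibility of $\pi'$ with $\tau'$ and $\sigma'$ then follows from $\pi'\circ g=f\circ\pi$, since $\pi'_\RR(\tau')=f_\RR(\pi_\RR(\tau))\subset f_\RR(\sigma)=\sigma'$ and likewise for the faces of $\tau'$.

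For (b) I would transport the square along the toric isomorphisms $\mu_*\colon X(\tau'\times\{0\})\to X(\tau)$ and $\lambda_*\colon X(\sigma'\times\{0\})\to X(\sigma)$ (isomorphisms, since $\mu_\RR$, $\lambda_\RR$ carry $\tau'\times\{0\}$, $\sigma'\times\{0\}$ onto $\tau$, $\sigma$). The relations $g\circ\mu=\pr_1$, $f\circ\lambda=\pr_1$, $\pi\circ\mu=\lambda\circ(\pi'\times\id)$ assemble the given square and the square
\begin{equation*}
    \begin{tikzcd}
        X(\tau'\times\{0\})\ar[r, "(\pr_1)_*"]\ar[d, "(\pi'\times\id)_*"]& X(\tau')\ar[d, "\pi'_*"]\\
        X(\sigma'\times\{0\})\ar[r, "(\pr_1)_*"]& X(\sigma')
    \end{tikzcd}
\end{equation*}
into a commutative cube whose four connecting arrows $\mu_*$, $\lambda_*$, $\id_{X(\tau')}$, $\id_{X(\sigma')}$ are isomorphisms; thus the original square is Cartesian if and only if this one is. Finally, by the product formula $X(\Delta_1\times\Delta_2)=X(\Delta_1)\times X(\Delta_2)$ the displayed square is canonically the square of projections $X(\tau')\times T_{\ker(f)}\to X(\tau')$ and $X(\sigma')\times T_{\ker(f)}\to X(\sigma')$ over $\pi'_*$, with left vertical map $\pi'_*\times\id$, which is plainly a pullback because $X(\tau')\times T_{\ker(f)}=X(\tau')\times_{X(\sigma')}(X(\sigma')\times T_{\ker(f)})$. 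The main difficulty is purely bookkeeping: keeping the many maps of Lemma \ref{lem:relative-section} consistently aligned so that one genuinely gets a commutative cube, and pinning down the two fan equalities $\lambda_\RR(\sigma'\times\{0\})=\sigma$ and $\mu_\RR^{-1}(\tau)=\tau'\times\{0\}$ with care.
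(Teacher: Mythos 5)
Your proposal is correct and follows essentially the same route as the paper: obtain $s$ from Lemma \ref{lem:take-section}, feed $s$ and $u$ into Lemma \ref{lem:relative-section} to get $t$, $\pi'$, $\lambda$, $\mu$, and reduce both (a) and (b) to the split product situation where the square becomes the projection square $X(\tau')\times T_{\ker(f)}\to X(\sigma')\times T_{\ker(f)}$ over $\pi'_*$. The only cosmetic difference is that you establish strong convexity of $\tau'$ by computing $\mu_\RR^{-1}(\tau)=\tau'\times\{0\}$ directly, whereas the paper checks $\langle\tau\rangle\cap L\subset t(L')$ and cites Lemma \ref{lem:torus-fibration}(a); these are the same calculation in different packaging.
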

        \begin{proof}
            We prove these statements from (a) to (b). 
            \begin{itemize}
                \item[(a)] From Lemma \ref{lem:take-section}, there exists a section $s$ of $f$ such that $\langle\sigma\rangle\cap N\subset s(N')$. 
                Moreover, from Lemma \ref{lem:relative-section}(b), there exists a section $t$ of $g$ such that $s\circ \pi' = \pi\circ t$ and $t\circ u' = u\circ s$. 
                In particular, $t(L') = u\circ s(N')\oplus\ker(\pi)$. 
                From the assumption of $\tau$, $\langle\tau\rangle\cap L\subset u(\langle\sigma\rangle \cap N)\oplus\ker(\pi)$, so that $\langle\tau\rangle\cap L\subset t(L')$. 
                Then from Lemma \ref{lem:torus-fibration}(a), the sets $f_\RR(\sigma)$ and $g_\RR(\tau)$ are strongly convex rational polyhedral cones in $N'_\RR$ and $L'_\RR$ respectively. 
                In addition, because $\pi'\circ g = f\circ \pi$ and $\pi_\RR(\tau)\subset \sigma$, we have  $\pi'_\RR(\tau')\subset \sigma'$. 
                Thus, $\pi'$ is compatible with the cones with $\tau'$ and $\sigma'$. 
                \item[(b)] The commutativity of the diagram follows because $\pi'\circ g = f\circ \pi$. 
                From the proof of Lemma \ref{lem:torus-fibration}(d) and (e) and Lemma \ref{lem:relative-section}(c), there exists the following commutative diagram of toric morphisms:
                \begin{equation*}
                    \begin{tikzcd} 
                        X(\tau')\times\Gm^d\ar[rrr, "(\pi_*'{,}\mathrm{id})"]\ar[rd, "\mu_*"]\ar[dd, "\mathrm{pr}_1"]& & & X(\sigma')\times\Gm^d\ar[ld,"\lambda_*"]\ar[dd, "\mathrm{pr}_1"]\\
                        & X(\tau)\ar[r, "\pi_*"]\ar[ld,"g_*"] & X(\sigma)\ar[rd, "f_*"] &\\
                        X(\tau')\ar[rrr, "\pi'_*"] & & & X(\sigma')
                    \end{tikzcd}
                \end{equation*}
                ,where $d$ denotes $\mathrm{rank}(\ker(f))$ and $\lambda\colon N'\oplus \ker(f)\rightarrow N$ and $\mu\colon L'\oplus \ker(f)\rightarrow L$ are isomorphisms defined by the proof of Lemma \ref{lem:torus-fibration}(d) and (e). 
                In particular, the largest diagram is a Cartesian diagram, and $\lambda_*$ and $\mu_*$ are isomorphic.
                Thus, the diagram in the statement is a Cartesian diagram. 
            \end{itemize}
        \end{proof}
        The following lemma is a basic well-known result about the torus orbit decomposition of the toric varieties. 
        \begin{lemma}\cite[Prop. 3.2.6]{CLS11}\label{lem: basic-includ-cone}
            Let $N$ be a lattice of finite rank, $\Delta$ be a strongly convex rational polyhedral fan in $N_\RR$, $M$ be the dual lattice of $N$, $\sigma$ be a cone in $\Delta$. 
            We recall that a global section ring of $X(\sigma)$ is $k[\sigma^\vee\cap M]$. 
            Let $\mathfrak{p}_\sigma$ denote a prime ideal of $k[\sigma^\vee\cap M]$ as follows: 
            \[
                \mathfrak{p}_\sigma = \bigoplus_{\omega\in (\sigma^\vee\cap M)\setminus \sigma^\perp} k\chi^\omega
            \]
            Let $O_\sigma$ denote a closed subvariety of $X(\sigma)$ associated with $\mathfrak{p}$. 
            Through an open immersion $X(\sigma)\hookrightarrow X(\Delta)$, we regard $O_\sigma$ as a locally closed subscheme of $X(\Delta)$. 

            Then the following statements follow: 
            \begin{enumerate}
                \item[(a)] Let $\tau\in\Delta$ be a cone such that $\sigma\not\subset \tau$. 
                Then $O_\sigma\cap X(\tau) = \emptyset$. 
                \item[(b)] $X(\sigma) = O_\sigma \amalg \bigcup_{\tau\precneqq\sigma} X(\tau)$
                \item[(c)] $X(\Delta) = \coprod_{\sigma\in\Delta} O_{\sigma}$
                \item[(d)] $\overline{O_\sigma} = \coprod_{\sigma\preceq\tau} O_{\tau}$
            \end{enumerate}
        \end{lemma}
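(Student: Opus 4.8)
The plan is to reduce all four statements to the affine dictionary for the chart $X(\sigma)=\Spec(k[\sigma^\vee\cap M])$. I would freely use the following standard facts from \cite{CLS11}, \S1.2--1.3: every face $\gamma\preceq\sigma$ has the form $\gamma=\sigma\cap\omega^\perp$ for some $\omega\in\sigma^\vee\cap M$; for such an $\omega$ one has $\gamma^\vee\cap M=(\sigma^\vee\cap M)+\ZZ_{\geq0}(-\omega)$, so that the open immersion $X(\gamma)\hookrightarrow X(\sigma)$ identifies $X(\gamma)$ with the principal open set $\Spec(k[\sigma^\vee\cap M]_{\chi^\omega})$; the affine charts glue along $X(\sigma)\cap X(\tau)=X(\sigma\cap\tau)$; and $O_\sigma=V(\mathfrak p_\sigma)$ is stable under the $T_N$-action, with distinguished point $x_\sigma\in O_\sigma$ whose $T_N$-orbit is all of $O_\sigma$.

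For (a): by the gluing identity and the fan axiom, $O_\sigma\cap X(\tau)=O_\sigma\cap X(\sigma)\cap X(\tau)=O_\sigma\cap X(\sigma\cap\tau)$, and $\sigma\cap\tau$ is a face of $\sigma$ which is proper because $\sigma\not\subset\tau$; so it suffices to show $O_\sigma\cap X(\gamma)=\emptyset$ for every proper face $\gamma\precneqq\sigma$. Writing $\gamma=\sigma\cap\omega^\perp$ with $\omega\in\sigma^\vee\cap M$, properness forces $\omega\notin\sigma^\perp$, hence $\chi^\omega\in\mathfrak p_\sigma$; since $\chi^\omega$ is invertible on $X(\gamma)=\Spec(k[\sigma^\vee\cap M]_{\chi^\omega})$ while it vanishes on $V(\mathfrak p_\sigma)$, the intersection is empty. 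For (b): on the affine chart, $X(\sigma)\setminus O_\sigma=X(\sigma)\setminus V(\mathfrak p_\sigma)$ is covered by the principal opens $\Spec(k[\sigma^\vee\cap M]_{\chi^\omega})=X(\sigma\cap\omega^\perp)$ for $\omega\in(\sigma^\vee\cap M)\setminus\sigma^\perp$, each $\sigma\cap\omega^\perp$ being a proper face of $\sigma$; conversely each $X(\tau)$ with $\tau\precneqq\sigma$ misses $O_\sigma$ by (a). This gives $X(\sigma)=O_\sigma\amalg\bigcup_{\tau\precneqq\sigma}X(\tau)$.

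Statement (a) already forces the orbits to be pairwise disjoint: if $O_\sigma\cap O_\tau\neq\emptyset$, then $O_\sigma$ meets $X(\tau)$ and $O_\tau$ meets $X(\sigma)$, so $\sigma\subset\tau$ and $\tau\subset\sigma$, i.e.\ $\sigma=\tau$. Then (c) follows by induction on $\dim\sigma$ from (b): the base case is $X(\{0\})=T_N=O_{\{0\}}$, and the inductive step gives $X(\sigma)=\coprod_{\gamma\preceq\sigma}O_\gamma$; taking the union over all $\sigma\in\Delta$ and invoking the disjointness just proved yields $X(\Delta)=\coprod_{\sigma\in\Delta}O_\sigma$.

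For (d): $\overline{O_\sigma}$ is closed and $T_N$-invariant (being the closure of the $T_N$-invariant set $O_\sigma$), so by (c) it is a union of orbits, say $\overline{O_\sigma}=\coprod_{\tau\in A}O_\tau$. If $\sigma\not\subset\tau$, then $O_\sigma\cap X(\tau)=\emptyset$ by (a), and since $X(\tau)$ is open the closure of $O_\sigma$ still misses $X(\tau)$, so $O_\tau\not\subset\overline{O_\sigma}$; combined with the remark that $\sigma\subseteq\tau$ implies $\sigma=\sigma\cap\tau\preceq\tau$, this shows $A\subseteq\{\tau\in\Delta:\sigma\preceq\tau\}$. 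For the reverse inclusion, fix $\tau$ with $\sigma\preceq\tau$ and choose $v\in\relint(\tau)\cap N$; the one-parameter subgroup $\lambda^v$ gives the curve $t\mapsto\lambda^v(t)\cdot x_\sigma$ lying in $O_\sigma$, and computing its limit in the chart $X(\tau)$ monomial-by-monomial ($\chi^\omega\mapsto t^{\langle v,\omega\rangle}\chi^\omega(x_\sigma)$, which tends to $1$ for $\omega\in\tau^\perp$ and to $0$ for $\omega\in(\tau^\vee\cap M)\setminus\tau^\perp$) shows $\lim_{t\to0}\lambda^v(t)\cdot x_\sigma=x_\tau$. Hence $x_\tau\in\overline{O_\sigma}$ and, by $T_N$-invariance, $O_\tau\subseteq\overline{O_\sigma}$. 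I expect this final limit computation — realizing $x_\tau$ as a degeneration of $x_\sigma$ along a generic one-parameter subgroup of $\tau$, together with the verification that the limit exists and equals the distinguished point — to be the only step requiring genuine care; parts (a)--(c) are essentially formal once the affine dictionary is in place.
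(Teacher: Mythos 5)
Your argument is correct, and parts (a)--(c) follow essentially the same route as the paper: the same ideal-theoretic computation with $\chi^\omega\in\mathfrak p_\sigma$ invertible on $X(\sigma\cap\omega^\perp)$ for (a), the same covering of $X(\sigma)\setminus V(\mathfrak p_\sigma)$ by the principal opens $D(\chi^\omega)=X(\sigma\cap\omega^\perp)$ for (b), and for (c) your induction on $\dim\sigma$ is just a repackaging of the paper's ``take the minimal cone containing the point'' argument. The genuine divergence is in (d): for the inclusion $O_\tau\subset\overline{O_\sigma}$ when $\sigma\preceq\tau$, the paper stays entirely at the level of semigroup ideals, observing that $\mathfrak p_\sigma\cap k[\tau^\vee\cap M]=\bigoplus_{\omega\in(\tau^\vee\cap M)\setminus\sigma^\perp}k\chi^\omega\subset\mathfrak p_\tau$, so that $O_\tau=V(\mathfrak p_\tau)\subset V(\mathfrak p_\sigma\cap k[\tau^\vee\cap M])=\overline{O_\sigma}\cap X(\tau)$; you instead run the classical degeneration argument, $\lim_{t\to 0}\lambda^v(t)\cdot x_\sigma=x_\tau$ for $v\in\relint(\tau)\cap N$, followed by $T_N$-invariance of the closure. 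Your limit computation is correct (including the needed fact that $\langle v,\omega\rangle>0$ for $\omega\in(\tau^\vee\cap M)\setminus\tau^\perp$ and $v$ in the relative interior, and that $\tau^\perp\subset\sigma^\perp$), and it works over an arbitrary field since the ``limit'' is just the extension of the morphism $\Gm\to X(\tau)$ to $\A^1$. The trade-off: your route imports additional standard inputs that the paper deliberately avoids --- the identification of $V(\mathfrak p_\sigma)$ with the $T_N$-orbit of the distinguished point $x_\sigma$ and the invariance of closures of invariant sets --- and it produces only the set-theoretic statement (which is all the lemma asserts), whereas the paper's ideal containment is self-contained, needs no group action at all, and has the side benefit of exhibiting the defining ideal of $\overline{O_\sigma}$ on each chart $X(\tau)$, which the paper reuses later (e.g.\ in the proof that $\overline{O_\sigma}\cap X(\tau)\rightarrow X((\pi^\sigma)_\RR(\tau))$ is an isomorphism in Proposition \ref{prop:orbit-toric}).
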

        \begin{proof}
            We prove the statement from (a) to (d) in order. 
            \begin{enumerate}
                \item[(a)] Let $\gamma$ denote $\sigma\cap \tau\in\Delta$. 
                Because $\sigma\not\subset \tau$, $\gamma\neq\sigma$. 
                Let $\omega\in\sigma^\vee$ be an element such that $\gamma = \sigma\cap \omega^\perp$. 
                We remark that $\omega\notin\sigma^\perp$ because $\gamma \neq \sigma$. 
                In particular, $\chi^\omega\in\mathfrak{p}_\sigma$. 
                On the other hand, $\mathfrak{p}_\sigma k[\gamma^\vee\cap M] = k[\gamma^\vee\cap M]$ because $k[\gamma^\vee\cap M] = k[\sigma^\vee\cap M]_{\chi^\omega}$. 
                Thus, $O_\sigma\times_{X(\sigma)}X(\gamma) = \emptyset$. 
                Therefore, $O_\sigma\cap X(\tau) = \emptyset$ because $X(\gamma) = X(\sigma)\cap X(\tau)$. 
                \item[(b)] From (a), for any $\tau\precneqq\sigma$, $O_\sigma\cap X(\tau)=\emptyset$. 
                Thus, $O_\sigma\cap \bigcup_{\tau\precneqq\sigma} X(\tau) = \emptyset$. 
                It is enough to show that $X(\sigma)\subset O_\sigma \amalg \bigcup_{\tau\precneqq\sigma} X(\tau)$. 
                Let $\mathfrak{p}\subset k[\sigma^\vee\cap M]$ be a prime ideal. 
                We assume that $\mathfrak{p}\subset \mathfrak{p}_\sigma$. 
                From the definition of $\mathfrak{p}_\sigma$, there exists $\omega\in (\sigma^\vee\cap M)\setminus\sigma^\perp$ such that $\chi^\omega\notin\mathfrak{p}$. 
                In particular, $\mathfrak{p}\in D(\chi^\omega)$ where $D(\chi^\omega)$ is a fundamental open subset of an affine scheme $X(\sigma)$ associated with $\chi^\omega\in k[\sigma^\vee\cap M]$. 
                Let $\tau$ be a denote a face $\sigma\cap \omega^\perp$ of $\sigma$.  
                Then $D(\chi^\omega) = X(\tau)$ as an open subset of $X(\sigma)$. 
                Thus, $\mathfrak{p}\in X(\tau)$. 
                Because $\omega\not\in\sigma^\perp$, $\tau \neq \sigma$. 
                Therefore, $X(\sigma)\subset O_\sigma \amalg \bigcup_{\tau\precneqq\sigma} X(\tau)$. 
                \item[(c)] First, we show that $O_\sigma\cap O_\tau = \emptyset$ for any distinct $\sigma, \tau\in \Delta$. 
                Because $\sigma\neq\tau$, $\sigma\not\subset\tau$ or $\tau\not\subset\sigma$ holds. 
                We may assume $\sigma\not\subset\tau$. 
                Then $O_\sigma\cap X(\tau)$ from (a). 
                Because $O_\tau\subset X(\tau)$, $O_\sigma\cap O_\tau = \emptyset$. 
                Next, we show that $X(\Delta)\subset \bigcup_{\sigma\in\Delta} O_\sigma$. 
                Let $x\in X(\Delta)$ be a point. 
                Then there exists minimal cone $\sigma\in\Delta$ such that $x\in X(\sigma)$ because $\Delta$ is a fan. 
                If $x\notin O_\sigma$, there exists $\tau\precneqq\sigma$ such that $x\in X(\tau)$. 
                However, it is a contradiction to the minimality of $\sigma$. 
                Thus, $x\in O_\sigma$. 
                \item[(d)] 
                Let $\tau\in\Delta$ be a cone. 
                First, we assume that $\sigma\not\subset \tau$. 
                Then $O_\sigma\cap X(\tau) = \emptyset$ from (a). 
                Thus, $\overline{O_\sigma}\cap X(\tau)=\emptyset$. 
                Second, we assume that $\sigma\subset \tau$. 
                Let $\mathfrak{q}$ denote $\mathfrak{p}_\sigma\cap k[M\cap\tau^\vee]$. 
                From the definition of $\mathfrak{p}_\sigma$, $\mathfrak{q} = \bigoplus_{\omega\in (\tau^\vee\cap M)\setminus \sigma^\perp} k\chi^\omega$. 
                Thus, $\mathfrak{q}\subset \mathfrak{p}_\tau$. 
                Hence, $O_\tau\subset \overline{O_\sigma}\cap X(\tau)$. 
                Therefore, from (c), the statement holds. 
            \end{enumerate}
        \end{proof}
        The following lemma is a basic well-known result about the relation between a toric morphism and torus orbits. 
        \begin{lemma}\label{lem: basic-toric-morphism-inclusion}
            Let $N$ and $N'$ be a lattice, $\Delta$ and $\Delta'$ be a strongly convex rational polyhedral fan in $N_\RR$ and $N'_\RR$ respectively. 
            Let $\pi$ be a lattice morphism from $N'$ to $N$, and we assume that $\pi$ is compatible with the fans $\Delta'$ and $\Delta$. 
            Then the following statements hold:
            \begin{enumerate}
                \item[(a)] Let $\tau\in\Delta'$ be a cone and $\sigma\in\Delta$ be a minimal cone  such that $\pi_\RR(\tau)\subset \sigma$. 
                Then $\pi_*(O_\tau)\subset O_\sigma$. 
                \item[(b)] We use the notation of (a). 
                If $\pi$ is surjective, $\pi_*(O_\tau) = O_\sigma$. 
                \item[(c)] Let $\sigma\in\Delta$ be a cone and $U$ be an open subscheme $X(\Delta')\times_{X(\Delta)}X(\sigma)$ of $X(\Delta')$. 
                Then $U = \bigcup_{\tau\in\Delta', \pi_\RR(\tau)\subset\sigma} X(\tau)$. 
            \end{enumerate}
        \end{lemma}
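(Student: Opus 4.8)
The plan is to track the monomial primes $\mathfrak{p}_\sigma=\bigoplus_{\omega\in(\sigma^\vee\cap M)\setminus\sigma^\perp}k\chi^\omega$ of Lemma \ref{lem: basic-includ-cone} along the semigroup maps underlying $\pi_*$, and then feed the outcome into the orbit--cone correspondence of that same lemma. Whenever $\tau\in\Delta'$ and $\sigma\in\Delta$ satisfy $\pi_\RR(\tau)\subseteq\sigma$, the morphism $\pi_*$ restricts to $X(\tau)\to X(\sigma)$, given on coordinate rings by $\pi^*\colon k[\sigma^\vee\cap M]\to k[\tau^\vee\cap M']$, $\chi^\omega\mapsto\chi^{\pi^*(\omega)}$.

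For (a) I first note that the minimality of $\sigma$ forces $\pi_\RR(\tau)$ to meet the relative interior $\sigma^\circ$: a convex cone contained in $\sigma$ but disjoint from $\sigma^\circ$ lies inside a proper face of $\sigma$, contradicting minimality. Fix $v_0\in\tau$ with $\pi_\RR(v_0)\in\sigma^\circ$. Then for $\omega\in\sigma^\vee\cap M$ we have $\chi^\omega\in(\pi^*)^{-1}(\mathfrak{p}_\tau)$ iff $\pi^*(\omega)\notin\tau^\perp$, i.e. iff $\langle\pi_\RR(v),\omega\rangle>0$ for some $v\in\tau$ (the pairings being $\geq 0$ since $\omega\in\sigma^\vee$ and $\pi_\RR(v)\in\sigma$); because $\langle\pi_\RR(v_0),\omega\rangle=0$ precisely when $\omega\in\sigma^\perp$, this is equivalent to $\omega\notin\sigma^\perp$. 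As both ideals are monomial, $(\pi^*)^{-1}(\mathfrak{p}_\tau)=\mathfrak{p}_\sigma$; hence the composite $k[\sigma^\vee\cap M]\to k[\tau^\vee\cap M']\to k[\tau^\vee\cap M']/\mathfrak{p}_\tau$ annihilates $\mathfrak{p}_\sigma$ and factors through $k[\sigma^\vee\cap M]/\mathfrak{p}_\sigma$, which is exactly the assertion $\pi_*(O_\tau)\subseteq O_\sigma$.

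For (b), from $\pi_\RR(\tau)\subseteq\sigma$ we get $\pi(\langle\tau\rangle\cap N')\subseteq\langle\sigma\rangle\cap N$, so $\pi$ descends to a surjection of lattices $\bar\pi\colon N'/(\langle\tau\rangle\cap N')\twoheadrightarrow N/(\langle\sigma\rangle\cap N)$. Identifying $O_\tau$ and $O_\sigma$ with the tori of these lattices (cf. the notation section and \cite{CLS11}), the map induced in (a) on coordinate rings $k[\sigma^\perp\cap M]\to k[\tau^\perp\cap M']$ is the one dual to $\bar\pi$, so $\pi_*|_{O_\tau}$ is the torus homomorphism attached to $\bar\pi$; a surjection of lattices yields a faithfully flat, in particular surjective, torus morphism. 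Together with (a) this gives $\pi_*(O_\tau)=O_\sigma$.

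For (c), since $X(\sigma)\hookrightarrow X(\Delta)$ is open we have $U=\pi_*^{-1}(X(\sigma))$, and both $U$ and $\bigcup_{\pi_\RR(\tau')\subseteq\sigma}X(\tau')$ are open in $X(\Delta')$, so it suffices to compare underlying sets. Given $x\in X(\Delta')$, let $\tau\in\Delta'$ be the unique cone with $x\in O_\tau$ and $\gamma\in\Delta$ the minimal cone with $\pi_\RR(\tau)\subseteq\gamma$, so $\pi_*(x)\in O_\gamma$ by (a). Using $X(\sigma)=\coprod_{\delta\preceq\sigma}O_\delta$ (iterate Lemma \ref{lem: basic-includ-cone}(b)) and disjointness of orbits, $x\in U\iff\gamma\preceq\sigma\iff\pi_\RR(\tau)\subseteq\sigma$ (forwards $\pi_\RR(\tau)\subseteq\gamma\subseteq\sigma$, backwards by minimality of $\gamma$). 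On the other side $x\in X(\tau')$ forces $\tau\preceq\tau'$ by Lemma \ref{lem: basic-includ-cone}(a), so membership of $x$ in the right-hand union is also equivalent to $\pi_\RR(\tau)\subseteq\sigma$ (take $\tau'=\tau$ for one direction, monotonicity of $\pi_\RR$ for the other); the two conditions coincide. I expect the ideal computation in (a) — in particular the use of the minimality of $\sigma$ through $\pi_\RR(\tau)\cap\sigma^\circ\neq\emptyset$ — to be the only genuinely delicate point, with (b) and (c) following formally from it and from Lemma \ref{lem: basic-includ-cone}.
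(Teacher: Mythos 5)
Your proposal is correct and takes essentially the same route as the paper: for (a) the interior-point computation with the monomial primes $\mathfrak{p}_\sigma$, $\mathfrak{p}_\tau$ of Lemma \ref{lem: basic-includ-cone} (the paper likewise picks $v'\in\tau^\circ$ with $\pi(v')\in\sigma^\circ$ by minimality), for (b) faithful flatness coming from the split injection $\sigma^\perp\cap M\hookrightarrow\tau^\perp\cap M'$ (equivalently your surjection $\bar\pi$ of quotient lattices), and for (c) the orbit decomposition together with Lemma \ref{lem: basic-includ-cone}(a). One minor overstatement: without surjectivity of $\pi$ the preimage $(\pi^*)^{-1}(\mathfrak{p}_\tau)$ need not be a monomial ideal and can be strictly larger than $\mathfrak{p}_\sigma$ (characters with the same image under $\pi^*$ can cancel), but your argument only uses the inclusion $\mathfrak{p}_\sigma\subset(\pi^*)^{-1}(\mathfrak{p}_\tau)$, which your monomial computation does establish, so nothing breaks.
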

        \begin{proof}
            Let $M$ and $M'$ be the dual lattices of $N$ and $N'$, respectively. 
            Let $\pi^*$ denote a lattice morphism $M'\rightarrow M$ or a ring morphism $k[M']\rightarrow k[M]$ induced by $\pi$. 
            We prove the statements from (a) to (c) in order. 
            \begin{enumerate}
                \item[(a)] From the assumption, $\pi$ induces the ring morphism $\pi^*\colon k[\sigma^\vee\cap M]\rightarrow k[\tau^\vee\cap M']$. 
                Thus, it is enough to show that $\mathfrak{p}_\sigma\subset (\pi^*)^{-1}(\mathfrak{p}_\tau)$. 
                Let $\omega\in(\sigma^\vee\cap M)\setminus\sigma^\perp$ be an element and $v'\in \tau^\circ\cap N'$ be an element. 
                Then $\pi(v')\in\sigma^\circ\cap N$ from the minimally of $\sigma$. 
                Because $\omega\notin\sigma^\perp$ and $\pi(v')\in\sigma^\circ\cap N$, we can check that $\langle v', \pi^*(\omega)\rangle = \langle \pi(v'), \omega\rangle >0$. 
                Thus, $\pi^*(\omega)\notin \tau^\perp$. 
                Therefore, $\mathfrak{p}_\sigma\subset (\pi^*)^{-1}(\mathfrak{p}_\tau)$. 
                \item[(b)] Because $\pi$ is surjective, $\pi^*\colon k[\sigma^\vee\cap M]\rightarrow k[\tau^\vee\cap M']$ is injective. 
                Let $f = \sum_{\omega\in\sigma^\vee\cap M}a_\omega\chi^\omega\in (\pi^*)^{-1}(\mathfrak{p}_\tau)$ be a Laurant polynomial. 
                From the injectivity of $\pi^*$, $\sum_{\omega\in\sigma^\vee\cap M}a_\omega\chi^{\pi^*(\omega)}$ is a $T_{N'}$-invariant decomposition of $\pi^*(f)$. 
                In particular, for any $\omega\in \sigma^\vee\cap M$ such that $a_\omega\neq 0$, $\pi^*(\omega)\notin\tau^\perp$ because $\pi^*(f)\in\mathfrak{p}_\tau$. 
                Let $\omega\in \sigma^\vee\cap M$ be an element such that $a_\omega\neq 0$. 
                Then there exists $v'\in \tau\cap N'$ such that $\langle v', \pi^*(\omega)\rangle > 0$. 
                Thus, $\langle \pi(v'), \omega\rangle = \langle v', \pi^*(\omega)\rangle > 0$. 
                Because $\pi_\RR(\tau)\subset \sigma$, $\pi(v')\in\sigma\cap N$. 
                Hence, $\omega\notin \sigma^\perp$ and $\chi^\omega\in \mathfrak{p}_\sigma$. 
                Therefore, $f\in\mathfrak{p}_\sigma$ and $\mathfrak{p}_\sigma = (\pi^*)^{-1}(\mathfrak{p}_\tau)$ from (a). 

                The equation $\mathfrak{p}_\sigma = (\pi^*)^{-1}(\mathfrak{p}_\tau)$ induces an injective ring morphism $\pi^*\colon k[\sigma^\perp \cap M]\rightarrow k[\tau^\perp \cap M']$. 
                We can check that this ring morphism is faithfully flat because lattice morphism $\pi^*|_{M\cap \sigma^\perp}\colon M\cap \sigma^\perp\rightarrow M'\cap \tau^\perp$ is split injection. 
                Thus, $\pi_*(O_\tau) = O_\sigma$.
                \item[(c)] Let $\tau\in\Delta'$ be a cone. 
                First, we assume that $\pi_\RR(\tau)\subset \sigma$. 
                Then from (a), $\pi_*(O_\tau)\subset O_\sigma$ so that $O_\tau\subset U$ 
                
                Second, we assume that $\pi_\RR(\tau)\not\subset \sigma$. 
                From the assumption of $\pi$, there exists a minimal cone $\gamma\in\Delta$ such that $\pi_\RR(\tau)\subset \gamma$. 
                From the assumption of $\tau$, $\gamma\not\subset\sigma$. 
                Thus, from Lemma\ref{lem: basic-includ-cone}(a), $O_\gamma\cap X(\sigma) = \emptyset$. 
                From (a), $\pi_*(O_\tau)\subset O_\gamma$ so that $O_\tau\cap U = \emptyset$. 

                In conclusion, from Lemma\ref{lem: basic-includ-cone}(b) and (c), the following equation holds:
                \begin{align*}
                    U &= \bigcup_{\pi_\RR(\tau)\subset \sigma, \tau\in\Delta'} O_\tau\\
                    &= \bigcup_{\pi_\RR(\tau)\subset \sigma, \tau\in\Delta'}X(\tau)
                \end{align*}
            \end{enumerate}
        \end{proof}
        The following lemma is a basic well-known result. 
        We used it when we consider the mock toric structure of the closure of a stratum of a mock toric variety. 
        \begin{proposition}\cite[Prop.3.2.7]{CLS11}\label{prop:orbit-toric}
            Let $N$ be a lattice of finite rank and $\Delta$ be a strongly convex rational polyhedral fan in $N_\RR$.
            For $\sigma\in\Delta$, let $\Delta^{\sigma}$ denote the following sub fan of $\Delta$. 
            \[
                \Delta^{\sigma} = \{\gamma\in\Delta\mid \exists\tau\in\Delta\ \ \mathrm{s.t. }\ \ \gamma\cup\sigma\subset\tau\}
            \]
            Let $\pi^\sigma$ denote the quotient map $N\rightarrow N/(\langle\sigma\rangle\cap N)$ and $\Delta[\sigma]$ denote the following set:
            \[
                \Delta[\sigma] = \{(\pi^\sigma)_\RR(\tau)\mid\sigma\subset\tau\in\Delta\}  
            \]
            Then the following statements follow:
            \begin{itemize}
                \item[(a)] The set $\Delta[\sigma]$ is a strongly convex rational polyhedral fan in $(N/(\langle\sigma\rangle\cap N))_\RR$. 
                \item[(b)] The morphism $\pi^\sigma$ is compatible with the fans $\Delta^\sigma$ and $\Delta[\sigma]$. 
                \item[(c)] A closed immersion $\overline{O_\sigma}\hookrightarrow X(\Delta)$ passes through $X(\Delta^\sigma)\subset X(\Delta)$. 
                \item[(d)] The following composition is isomorphic:
                \begin{equation*}
                    \begin{tikzcd} 
                        \overline{O_\sigma}\ar[r, hook]& X(\Delta^\sigma)\ar[r, "(\pi^\sigma)_*"]& X(\Delta[\sigma])
                    \end{tikzcd}
                \end{equation*}
                , where the former morphism is a closed immersion, and the latter one is a toric morphism induced by $\pi^\sigma$. 
            \end{itemize}
        \end{proposition}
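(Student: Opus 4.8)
The plan is to prove the combinatorial assertions (a) and (b) first, deduce (c) set-theoretically, and finally obtain the isomorphism (d) by a chart-by-chart computation glued together via the orbit decompositions. The combinatorial heart of everything is the following \emph{face correspondence}: for $\sigma\preceq\tau\in\Delta$ (note that $\sigma\subseteq\tau$ with both in $\Delta$ forces $\sigma=\sigma\cap\tau\preceq\tau$), the linear map $(\pi^\sigma)_\RR$ restricts to an inclusion-preserving bijection between the faces of $\tau$ containing $\sigma$ and the faces of $(\pi^\sigma)_\RR(\tau)$. I would prove this using that $(\pi^\sigma)^*$ identifies $M^\sigma$ with $\langle\sigma\rangle^\perp\cap M$ and the elementary identity $(\pi^\sigma)_\RR(\tau\cap\omega^\perp)=(\pi^\sigma)_\RR(\tau)\cap\bar\omega^\perp$ whenever $\omega=(\pi^\sigma)^*(\bar\omega)$: for a face $F=(\pi^\sigma)_\RR(\tau)\cap\bar\omega^\perp$ with $\bar\omega$ in the dual cone, its lift $\omega=(\pi^\sigma)^*(\bar\omega)$ then lies in $\tau^\vee\cap\sigma^\perp$, so $\gamma:=\tau\cap\omega^\perp$ is a face of $\tau$ containing $\sigma$ with $(\pi^\sigma)_\RR(\gamma)=F$; conversely the lift is unique because $(\gamma+\langle\sigma\rangle)\cap\tau=\gamma$ for every face $\gamma$ of $\tau$ with $\sigma\preceq\gamma$ (pair against the functional cutting out $\gamma$, which annihilates both $\gamma$ and $\langle\sigma\rangle$).

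With this in hand, (a) is routine. Rationality of each $(\pi^\sigma)_\RR(\tau)$ is clear since $\pi^\sigma$ is a lattice map; strong convexity follows because if $\pm\bar v$ both lie in $(\pi^\sigma)_\RR(\tau)$, choosing $v_1,v_2\in\tau$ with $(\pi^\sigma)_\RR(v_i)=(-1)^{i+1}\bar v$ gives $v_1+v_2\in\tau\cap\langle\sigma\rangle=\sigma$, whence $v_1,v_2\in\sigma$ (a face absorbs summands) and $\bar v=0$. That $\Delta[\sigma]$ is closed under faces is the face correspondence, and for $\tau_1,\tau_2\in\Delta$ with $\sigma\preceq\tau_i$ one checks $(\pi^\sigma)_\RR(\tau_1)\cap(\pi^\sigma)_\RR(\tau_2)=(\pi^\sigma)_\RR(\tau_1\cap\tau_2)$: the inclusion $\supseteq$ is trivial, and if $(\pi^\sigma)_\RR(v_1)=(\pi^\sigma)_\RR(v_2)$ with $v_i\in\tau_i$ write $v_1-v_2=s_1-s_2$ with $s_i\in\sigma$, so $v_1+s_2=v_2+s_1\in\tau_1\cap\tau_2$ has the same image; since $\tau_1\cap\tau_2$ is a common face of the $\tau_i$ containing $\sigma$, its image is a common face of $(\pi^\sigma)_\RR(\tau_1)$ and $(\pi^\sigma)_\RR(\tau_2)$. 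For (b): given $\gamma\in\Delta^\sigma$, pick $\tau\in\Delta$ with $\gamma\cup\sigma\subseteq\tau$; then $\sigma\preceq\tau$, $(\pi^\sigma)_\RR(\tau)\in\Delta[\sigma]$, and $(\pi^\sigma)_\RR(\gamma)\subseteq(\pi^\sigma)_\RR(\tau)$.

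For (c), by Lemma \ref{lem: basic-includ-cone}(d) we have $\overline{O_\sigma}=\coprod_{\sigma\preceq\tau}O_\tau$, and each such $\tau$ lies in $\Delta^\sigma$ (take $\tau$ itself), so the image of the closed immersion $\overline{O_\sigma}\hookrightarrow X(\Delta)$ is contained in the open subscheme $X(\Delta^\sigma)$ and hence factors through it. For (d) it suffices to work over each affine chart $X((\pi^\sigma)_\RR(\tau))$ of $X(\Delta[\sigma])$ with $\sigma\preceq\tau$: these cover the target, and by Lemma \ref{lem: basic-toric-morphism-inclusion}, the face correspondence, and the identity $(\pi^\sigma)_*(O_\gamma)=O_{(\pi^\sigma)_\RR(\gamma)}$ for $\sigma\preceq\gamma$, the preimage of $X((\pi^\sigma)_\RR(\tau))$ meets $\overline{O_\sigma}$ in exactly $\coprod_{\sigma\preceq\gamma\preceq\tau}O_\gamma=\overline{O_\sigma}\cap X(\tau)$. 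On this chart, $\overline{O_\sigma}\cap X(\tau)=\Spec\bigl(k[\tau^\vee\cap M]/(\mathfrak p_\sigma\cap k[\tau^\vee\cap M])\bigr)$, which by the description of $\mathfrak p_\sigma\cap k[\tau^\vee\cap M]$ in the proof of Lemma \ref{lem: basic-includ-cone}(d) is $\Spec\bigl(k[\tau^\vee\cap\sigma^\perp\cap M]\bigr)$; since $(\pi^\sigma)^*$ carries $(\pi^\sigma)_\RR(\tau)^\vee\cap M^\sigma$ bijectively onto $\tau^\vee\cap\sigma^\perp\cap M$, the ring map induced by $\overline{O_\sigma}\cap X(\tau)\hookrightarrow X(\tau)\xrightarrow{(\pi^\sigma)_*}X((\pi^\sigma)_\RR(\tau))$ is an isomorphism. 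These chart isomorphisms are all restrictions of the single morphism in the statement, so they glue to the desired global isomorphism.

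The step I expect to be the main obstacle is the face correspondence of the first paragraph: it is the combinatorial engine on which strong convexity in (a), compatibility in (b), and — crucially — the identification of the preimage of each affine chart with $\overline{O_\sigma}\cap X(\tau)$ in (d) all depend. The secondary difficulty is purely organizational: keeping the orbit decomposition $\overline{O_\sigma}=\coprod_{\sigma\preceq\gamma}O_\gamma$ in lockstep with the toric stratification of $X(\Delta[\sigma])$ under $(\pi^\sigma)_*$, so that the affine isomorphisms are seen to agree on overlaps.
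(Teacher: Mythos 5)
Your proposal is correct and follows essentially the same route as the paper: the same lift-and-absorb combinatorics for strong convexity, closure under faces, and intersections in (a)--(b) (your ``face correspondence'' packages the paper's dual-cone arguments), the same orbit-decomposition argument for (c), and the same identification $(\pi^\sigma)^*\bigl((\pi^\sigma)_\RR(\tau)^\vee\cap M^\sigma\bigr)=\tau^\vee\cap\sigma^\perp\cap M$ yielding the chart-by-chart ring isomorphism in (d). The only divergence is organizational: where the paper establishes the Cartesian square over $X(\Delta^\sigma)$ by showing that $(\pi^\sigma)_\RR(\gamma)\subset(\pi^\sigma)_\RR(\tau)$ forces $\gamma\subset\tau$ for every $\gamma\in\Delta^\sigma$ (using an auxiliary cone $\beta\supset\sigma\cup\gamma$), you identify the chart preimages only inside $\overline{O_\sigma}$ via the orbit decomposition, which works just as well.
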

        \begin{proof}
            We prove these statements from (a) to (e) in order. 
            \begin{itemize}
                \item[(a)] 
                Let $\tau\in\Delta$ be a cone such that $\sigma\preceq\tau$. 
                Because $\pi^\sigma$ is a lattice morphism, $(\pi^\sigma)_\RR(\tau)$ is a rational polyhedral convex cone. 
                Let $x$ and $y\in \tau$ be elements such that $(\pi^\sigma)_\RR(x) + (\pi^\sigma)_\RR(y) = 0$. 
                Then there exists $z$ and $w\in\sigma$ such that $x + y = z - w$ because $\ker((\pi^\sigma)_\RR) = \langle\sigma\rangle$. 
                Thus, $x$ and $y\in\sigma$ because $x, y, w\in\tau$, $\sigma\preceq\tau$, and $x + y + w\in\sigma$. 
                Therefore, $(\pi^\sigma)_\RR(\tau)$ is strongly convex. 
                
                Next, we show that $\Delta[\sigma]$ is a fan. 
                Let $M$ and $M^\sigma$ be dual lattices of $N$ and $N/(\langle\sigma\rangle\cap N)$ respectively, and $(\pi^\sigma)^*\colon M^\sigma\rightarrow M$ be a morphism induced by $\pi^\sigma$. 
                Let $\tau\in\Delta$ be a cone such that $\sigma\preceq\tau$. 
                Then we can check that for any  $\omega\in(\pi^\sigma)_\RR(\tau)^\vee$, $(\pi^\sigma)^*_\RR(\omega)\in\tau^\vee$ and $(\pi^\sigma)_\RR(\tau\cap (\pi^\sigma)^*_\RR(\omega)^\perp) = (\pi^\sigma)_\RR(\tau)\cap\omega^\perp$. 
                In fact, for any $v\in\tau$, the following equation holds: 
                \begin{align*}
                    \langle v, (\pi^\sigma)^*_\RR(\omega)\rangle &= \langle (\pi^\sigma)_\RR(v), \omega\rangle 
                \end{align*}
                In addition to this, we can check that $(\pi^\sigma)^*_\RR(M^\sigma_\RR) = \langle\sigma\rangle^\perp$.  
                This implies that for any $\omega\in(\pi^\sigma)_\RR(\tau)^\vee$, $\sigma\subset\tau\cap(\pi^\sigma)^*_\RR(\omega)^\perp$. 
                Thus, $(\pi^\sigma)_\RR(\tau\cap (\pi^\sigma)^*_\RR(\omega)^\perp)\in\Delta[\sigma]$. 
                Therefore, all faces of $(\pi^\sigma)_\RR(\tau)$ are contained in $\Delta[\sigma]$. 
                
                Thirdly, we show that for any $\tau$ and  $\gamma\in\Delta$ such that $\sigma\subset\tau\cap\gamma$, we have that $(\pi^\sigma)_\RR(\tau)\cap(\pi^\sigma)_\RR(\gamma) = (\pi^\sigma)_\RR(\tau\cap\gamma)$. 
                Because one direction is easy, we only show that $(\pi^\sigma)_\RR(\tau)\cap(\pi^\sigma)_\RR(\gamma) \subset (\pi^\sigma)_\RR(\tau\cap\gamma)$. 
                Let $x\in\tau$ and $y\in\gamma$ be elements such that $(\pi^\sigma)_\RR(x) = (\pi^\sigma)_\RR(y)$. 
                Then there exist $z$ and $w\in\sigma$ such that $x - y = z - w$. 
                The equation $x + w = y + z$ indicates that $x + w$ and $y + z\in\tau\cap\gamma$ because $\sigma\subset \tau\cap\gamma$. 
                Thus, $(\pi^\sigma)_\RR(x) = (\pi^\sigma)_\RR(x + w)\in (\pi^\sigma)_\RR(\tau\cap\gamma)$. 
                This implies that $(\pi^\sigma)_\RR(\tau)\cap(\pi^\sigma)_\RR(\gamma) \subset (\pi^\sigma)_\RR(\tau\cap\gamma)$. 
                
                Finally, we show that for any $\tau$ and  $\gamma\in\Delta$ such that $\sigma\subset\gamma\subset\tau$, we have that $(\pi^\sigma)_\RR(\gamma)\preceq(\pi^\sigma)_\RR(\tau)$. 
                Let $\omega\in\tau^\vee$ be an element such that $\gamma = \tau\cap \omega^\perp$. 
                Because $\langle\sigma\rangle\subset\langle\gamma\rangle\subset\omega^\perp$, there is an element $\omega'\in M^\sigma_\RR$ such that $(\pi^\sigma)^*_\RR(\omega') = \omega$. 
                Then for any $v\in\tau$, the following equation holds: 
                \begin{align*}
                    \langle (\pi^\sigma)_\RR(v), \omega'\rangle &= \langle v, (\pi^\sigma)^*_\RR(\omega')\rangle\\
                    &= \langle v, \omega\rangle
                \end{align*}
                Thus $\omega'\in(\pi^\sigma_\RR(\tau))^\vee$ and $(\pi^\sigma)_\RR(\gamma) = (\pi^\sigma)_\RR(\tau)\cap (\omega')^\perp$.
                Therefore,  $(\pi^\sigma)_\RR(\gamma)\preceq (\pi^\sigma)_\RR(\tau)$. 
                \item[(b)] 
                For $\gamma\in\Delta^\sigma$, there exists $\tau\in\Delta$ such that $\gamma\cup\sigma\subset\tau$. 
                Then $(\pi^\sigma)_\RR(\gamma)\subset(\pi^\sigma)_\RR(\tau)$ and $(\pi^\sigma)_\RR(\tau)\in\Delta[\sigma]$, so that $\pi^\sigma$ is compatible with the fans $\Delta^\sigma$ and $\Delta[\sigma]$.
                
                \item[(c)] From Lemma\ref{lem: basic-includ-cone}(d), $\overline{O_\sigma}$ is decomposed into torus orbits $O_\tau$ where $\tau\in\Delta$ is a cone which contains $\sigma$. 
                Thus, the following inclusion holds:
                \[
                    \overline{O_\sigma}\subset \bigcup_{\sigma\subset\tau\in\Delta} X(\tau) = X(\Delta^\sigma)
                \]
                \item[(d)] Let $\tau\in\Delta$ be a cone such that $\sigma\subset\tau$.  
                First, we show that the following diagram is commutative and a Cartesian diagram:
                \begin{equation*}
                    \begin{tikzcd} 
                        X(\tau)\ar[d]\ar[r, "(\pi^\sigma)_*"]& X((\pi^\sigma)_\RR(\tau))\ar[d]\\
                        X(\Delta^\sigma)\ar[r, "(\pi^\sigma)_*"] & X(\Delta[\sigma])
                    \end{tikzcd}
                \end{equation*}
                In fact, for any $\gamma\in\Delta^\sigma$, we can show that if $(\pi^\sigma)_\RR(\gamma)\subset (\pi^\sigma)_\RR(\tau)$, then $\gamma\subset\tau$ as follows: 
                Let $x\in\gamma$ and $y\in\tau$ be elements such that $(\pi^\sigma)_\RR(x) = (\pi^\sigma)_\RR(y)$. 
                Then there exists $z$ and $w\in \sigma$ such that $x - y = z - w$. 
                From the definition of $\Delta^\sigma$, there exists $\beta\in\Delta$ such that $\sigma\cup\gamma\subset\beta$. 
                The equation $x + w = y + z$ indicates that $x + w\in \tau$ because $\sigma\subset\tau$. 
                Because $x$ and $w\in\beta$, and $\beta\cap\tau\preceq\beta$, we have $x$ and $w\in\beta\cap\tau$. 
                In particular, $x\in\tau$. 
                Thus, the diagram above is a Cartesian diagram from Lemma\ref{lem: basic-toric-morphism-inclusion}(c). 

                Next, we show that for any $\tau\in\Delta$ such that $\sigma\subset\tau$, the upper composition from $\overline{O_\sigma}\cap X(\tau)$ to $X((\pi^\sigma)_\RR(\tau))$ of the following diagram whose all squares are Cartesian squares is an isomorphism:
                \begin{equation*}
                    \begin{tikzcd} 
                        \overline{O_\sigma}\cap X(\tau)\ar[d]\ar[r, hook]& X(\tau)\ar[d]\ar[r, "(\pi^\sigma)_*"] &X((\pi^\sigma)_\RR(\tau))\ar[d]\\
                        \overline{O_\sigma}\ar[r, hook]& X(\Delta^\sigma)\ar[r, "(\pi^\sigma)_*"] &X(\Delta[\sigma])
                    \end{tikzcd}
                \end{equation*}
                If this claim holds, then the statement (c) follows because $X(\Delta[\sigma])$ is covered by $\{X((\pi^\sigma)_\RR(\tau))\}_{(\sigma\subset)\ \tau\in\Delta}$. 
                
                Once the argument comes to affine toric varieties, we only check it algebraically. 
                We use the notation in the proof of statement (a), and we can show that $(\pi^\sigma)^*_\RR((\pi^\sigma)_\RR(\tau)^\vee) = \langle\sigma\rangle^\perp\cap\tau^\vee$. 
                Moreover, $(\pi^\sigma)^*((\pi^\sigma)_\RR(\tau)^\vee\cap M^\sigma) = \langle\sigma\rangle^\perp\cap\tau^\vee\cap M$ because $\pi^\sigma(M^\sigma) = \langle\sigma\rangle^\perp\cap M$. 
                By the way, the ring morphism associated with $(\pi^\sigma)_*$ is a natural inclusion $k[(\pi^\sigma)_\RR(\tau)^\vee\cap M^\sigma]\hookrightarrow k[\tau^\vee\cap M]$. 
                On the other hand, the ring morphism associated with the closed immersion $\overline{O_\sigma}\cap X(\tau)\rightarrow X(\tau)$ is $k[\tau^\vee\cap M]\rightarrow k[\tau^\vee\cap \langle\sigma\rangle^\perp\cap M]$ whose kernel is generated by $\{\chi^\omega\}_{\omega\in (\tau^\vee\cap M)\setminus \langle\sigma\rangle^\perp}$. 
                Therefore, the morphism $\overline{O_\sigma}\cap X(\tau)\rightarrow X((\pi^\sigma)_\RR(\tau))$ is an isomorphism. 
            \end{itemize}
        \end{proof}
        \begin{corollary}\label{cor: cor-orbit-toric}
            We keep the notation in Proposition \ref{prop:orbit-toric}, let $\sigma, \tau,$ and $\gamma$ be a cone in $\Delta$ such that $\sigma\subset\tau\cap\gamma$. 
            the following statements follow:
            \begin{enumerate}
                \item[(a)] If $(\pi^\sigma)_\RR(\gamma)\subset(\pi^\sigma)_\RR(\tau)$, then $\gamma\subset\tau$. 
                \item[(b)] $(\pi^\sigma)_\RR(\gamma)\cap(\pi^\sigma)_\RR(\tau) = (\pi^\sigma)_\RR(\gamma\cap\tau)$. 
            \end{enumerate}
        \end{corollary}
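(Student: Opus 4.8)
The plan is to extract both statements from computations already carried out inside the proof of Proposition \ref{prop:orbit-toric}. The single fact doing all the work is that $\ker((\pi^\sigma)_\RR) = \langle\sigma\rangle$, together with the face property of convex cones: if $a+b$ lies in a face of a cone containing both $a$ and $b$, then $a$ and $b$ themselves lie in that face. I would also use that, since $\tau,\gamma\in\Delta$ and $\Delta$ is a fan, $\tau\cap\gamma$ is a common face of $\tau$ and of $\gamma$, not merely their set-theoretic intersection.

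For (a), I would take an arbitrary $x\in\gamma$ and show $x\in\tau$. Since $(\pi^\sigma)_\RR(x)\in(\pi^\sigma)_\RR(\gamma)\subset(\pi^\sigma)_\RR(\tau)$ by hypothesis, there is $y\in\tau$ with $(\pi^\sigma)_\RR(x)=(\pi^\sigma)_\RR(y)$; hence $x-y\in\langle\sigma\rangle$, so $x-y=z-w$ for some $z,w\in\sigma$. Then $x+w=y+z$. Because $\sigma\subset\tau$ one gets $y+z\in\tau$, so $x+w\in\tau$; because $\sigma\subset\gamma$ one gets $w\in\gamma$, so $x+w\in\gamma$. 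Thus $x+w\in\tau\cap\gamma$, which is a face of $\gamma$, and $x,w\in\gamma$, so the face property yields $x\in\tau\cap\gamma\subset\tau$. Since $x\in\gamma$ was arbitrary, $\gamma\subset\tau$.

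For (b), the inclusion $(\pi^\sigma)_\RR(\gamma\cap\tau)\subset(\pi^\sigma)_\RR(\gamma)\cap(\pi^\sigma)_\RR(\tau)$ is immediate from $\gamma\cap\tau\subset\gamma$ and $\gamma\cap\tau\subset\tau$. For the reverse inclusion, take $v\in(\pi^\sigma)_\RR(\gamma)\cap(\pi^\sigma)_\RR(\tau)$, write $v=(\pi^\sigma)_\RR(x)=(\pi^\sigma)_\RR(y)$ with $x\in\gamma$ and $y\in\tau$, and argue as above: $x-y=z-w$ with $z,w\in\sigma$, so $x+w=y+z$ lies in both $\gamma$ and $\tau$ (using $\sigma\subset\gamma\cap\tau$), hence in $\gamma\cap\tau$, while $(\pi^\sigma)_\RR(x+w)=(\pi^\sigma)_\RR(x)=v$ since $w\in\sigma=\ker((\pi^\sigma)_\RR)$. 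Therefore $v\in(\pi^\sigma)_\RR(\gamma\cap\tau)$.

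As these arguments are entirely elementary, I do not anticipate a genuine obstacle; the only point requiring a little care is invoking that $\tau\cap\gamma$ is a face of $\gamma$ (and of $\tau$), which is precisely where the hypotheses $\tau,\gamma\in\Delta$ and $\sigma\subset\tau\cap\gamma$ are used, and which allows the face property of strongly convex cones to be applied.
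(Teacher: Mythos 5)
Your proof is correct and is essentially the argument the paper itself uses: the paper's proof of Corollary \ref{cor: cor-orbit-toric} just points back to the proof of Proposition \ref{prop:orbit-toric}, where statement (b) is the ``thirdly'' step of part (a) and statement (a) is the argument in part (d) (with your $\gamma$ playing the role of the cone $\beta$ containing $\sigma\cup\gamma$), both via the same $x+w=y+z$ computation and the face property of $\tau\cap\gamma$. Only a cosmetic slip: in part (b) you write $w\in\sigma=\ker((\pi^\sigma)_\RR)$, whereas the kernel is $\langle\sigma\rangle$; since $w\in\sigma\subset\langle\sigma\rangle$ the conclusion is unaffected.
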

        \begin{proof}
            Both statements follow from the proof of Proposition \ref{prop:orbit-toric}. 
        \end{proof}
        The following lemma concerns toric varieties, which have a dominant toric morphism to $\A^1_k$. 
        \begin{lemma}\label{lem: relative lemma}
            Let $N$ be a lattice of finite rank and $\Delta$ be a strongly convex rational polyhedral fan in $(N\oplus\ZZ)_\RR$. 
            We assume that $\pr_2\colon N\oplus\ZZ\rightarrow \ZZ$ is compatible with the fans $\Delta$ and $\Delta_!$. 
            Let $\Delta_0$ denote the following sub fan of $\Delta$:
            \[
                \Delta_0 = \{\sigma\in\Delta\mid (\pr_2)_\RR(\sigma) = \{0\}\}
            \]
            Let $\Delta_{(0)}$ denote the following set:
            \[
                \Delta_{(0)} = \{(\pr_1)_\RR(\sigma)\mid \sigma\in\Delta_0\}
            \]
            Then the following statements follow:
            \begin{enumerate}
                \item[(a)] The set $\Delta_{(0)}$ is a strongly convex rational polyhedral fan in $N_\RR$. 
                \item[(b)] A toric morphism $(\pr_1)_*\times(\pr_2)_*\colon X(\Delta_0)\rightarrow X(\Delta_{(0)})\times {\Gm^1}_{,k}$ is an isomorphism. 
                Let $\sigma\in\Delta_0$ be a cone and $\sigma_0$ denote $(\pr_1)_\RR(\sigma)$. 
                Then, the above isomorphism induces the following three Cartesian diagrams:
                \begin{equation*}
                    \begin{tikzcd}
                        X(\sigma)\ar[r]\ar[d, hook]&X(\sigma_0)\times {\Gm^1}_{,k}\ar[d, hook]\\
                        X(\Delta_0)\ar[r, "p"]&X(\Delta_{(0)})\times{\Gm^1}_{,k}
                    \end{tikzcd}
                \end{equation*}
                \begin{equation*}
                    \begin{tikzcd}
                        \overline{O_\sigma}\cap X(\Delta_0)\ar[r]\ar[d, hook]&\overline{O_{\sigma_0}}\times {\Gm^1}_{,k}\ar[d, hook]\\
                        X(\Delta_0)\ar[r, "p"]&X(\Delta_{(0)})\times{\Gm^1}_{,k}
                    \end{tikzcd}
                \end{equation*}
                \begin{equation*}
                    \begin{tikzcd}
                        O_\sigma \ar[r]\ar[d]&O_{\sigma_0}\times {\Gm^1}_{,k}\ar[d]\\
                        X(\Delta_0)\ar[r, "p"]&X(\Delta_{(0)})\times{\Gm^1}_{,k}
                    \end{tikzcd}
                \end{equation*}
                where $p$ denote $(\pr_1)_*\times(\pr_2)_*$.
                \item[(c)] Let $\Psi$ denote the composition of the following maps:
                \[
                    X_{k(t)}(\Delta_{(0)})\rightarrow X(\Delta_{(0)})\times {\Gm^1}_{,k}\rightarrow X(\Delta_0)\hookrightarrow X(\Delta), 
                \]
                where the first morphism is a base change of the generic fiber $\Spec(k(t))\rightarrow \Gm^1$ along $(\pr_2)_*\colon X(\Delta_{(0)})\times{\Gm^1}_{,k}\rightarrow \Gm^1$, the second one is the inverse morphism of the morphism in (b), and the third one is an open immersion. 
                Then, the large square of the following diagram is a Cartesian square:
                \begin{equation*}
                    \begin{tikzcd}
                        X_{k(t)}(\Delta_{(0)})\ar[r]\ar[d]&X(\Delta_{(0)})\times {\Gm^1}_{,k}\ar[r, "\sim"]\ar[rd, "\pr_2"]&X(\Delta_0)\ar[r, hook]\ar[d]&X(\Delta)\ar[d, "(\pr_2)_*"]\\
                        \Spec(k(t))\ar[rr]&\ &\Gm^1\ar[r, hook]&\A^1
                    \end{tikzcd}
                \end{equation*}
                \item[(d)] A morphism $\Psi$ induces the following three Cartesian diagrams:
                \begin{equation*}
                    \begin{tikzcd}
                        X_{k(t)}(\sigma_0)\ar[r]\ar[d, hook]&X(\sigma)\ar[d, hook]\\
                        X_{k(t)}(\Delta_{(0)})\ar[r, "\Psi"]&X(\Delta)
                    \end{tikzcd}
                \end{equation*}
                \begin{equation*}
                    \begin{tikzcd}
                        \overline{O_{\sigma_0, k(t)}}\ar[r]\ar[d, hook]&\overline{O_\sigma}\ar[d, hook]\\
                        X_{k(t)}(\Delta_{(0)})\ar[r, "\Psi"]&X(\Delta)
                    \end{tikzcd}
                \end{equation*}
                \begin{equation*}
                    \begin{tikzcd}
                        O_{\sigma_0, k(t)}\ar[r]\ar[d, hook]&O_\sigma\ar[d, hook]\\
                        X_{k(t)}(\Delta_{(0)})\ar[r, "\Psi"]&X(\Delta)
                    \end{tikzcd}
                \end{equation*}
            \end{enumerate}
        \end{lemma}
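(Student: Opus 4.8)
The plan is to handle the four parts in order, with (a) and (b) carrying the real content and (c), (d) following formally by pasting Cartesian squares. First I would record two elementary observations that make everything run. For any $\sigma\in\Delta_0$, the condition $(\pr_2)_\RR(\sigma)=\{0\}$ says that every vector of $\sigma$ has vanishing last coordinate, i.e. $\sigma\subset N_\RR\times\{0\}$; hence $\supp(\Delta_0)\subset N_\RR\times\{0\}$ and the restriction of $(\pr_1)_\RR$ to $\supp(\Delta_0)$ is injective, being the restriction of the linear isomorphism $N_\RR\times\{0\}\xrightarrow{\sim} N_\RR$. Also $\Delta_0$ is a subfan of $\Delta$: a face of a cone $\sigma\in\Delta_0$ lies in $\Delta$ and is mapped by $(\pr_2)_\RR$ into $(\pr_2)_\RR(\sigma)=\{0\}$. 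With this, part (a) is immediate from Lemma \ref{lem: injective fan}(c),(d) applied to $(\pr_1)_\RR$ and the fan $\Delta_0$. I would moreover note the sharper statement that $\sigma\mapsto(\pr_1)_\RR(\sigma)$ identifies $\Delta_0$, as a fan, with $\Delta_{(0)}\times\{\{0\}\}$ inside $N_\RR\times\RR$, since each $\sigma\in\Delta_0$ equals $(\pr_1)_\RR(\sigma)\times\{0\}$.

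For (b), under this identification one has $X(\Delta_0)=X(\Delta_{(0)}\times\{\{0\}\})=X(\Delta_{(0)})\times X(\{0_\ZZ\})=X(\Delta_{(0)})\times\Gm$, and the toric morphism induced by $\pr_1\times\pr_2$ is precisely this isomorphism $p$. The three Cartesian diagrams then express the compatibility of the torus-orbit decomposition with a product: for $\sigma\in\Delta_0$ corresponding to $\sigma_0\times\{0\}$, I would check $O_\sigma=O_{\sigma_0}\times\Gm$, $\overline{O_\sigma}\cap X(\Delta_0)=\overline{O_{\sigma_0}}\times\Gm$, and $X(\sigma)=X(\sigma_0)\times\Gm$ as open, locally closed, and closed subschemes of $X(\Delta_{(0)})\times\Gm$ respectively, each obtained by pulling back along $p$ the corresponding subscheme of $X(\Delta_{(0)})$; the orbit-closure identity uses Lemma \ref{lem: basic-includ-cone}(d). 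This is the step where the most care is needed.

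For (c), the open immersion $X(\Delta_0)\hookrightarrow X(\Delta)$ is the pullback of $\Gm\hookrightarrow\A^1_k=X(\Delta_!)$ along $(\pr_2)_*\colon X(\Delta)\to\A^1_k$: by Lemma \ref{lem: basic-toric-morphism-inclusion}(c) applied to $\pr_2$, the fibre product $X(\Delta)\times_{\A^1_k}\Gm$ is the union of the $X(\tau)$ with $\tau\in\Delta$ and $(\pr_2)_\RR(\tau)\subset\{0\}$, which is exactly $X(\Delta_0)$. Composing with the base change $\Spec(k(t))\to\Gm$ and using the pasting lemma for fibre products, $X(\Delta)\times_{\A^1_k}\Spec(k(t))=X(\Delta_0)\times_\Gm\Spec(k(t))=(X(\Delta_{(0)})\times\Gm)\times_\Gm\Spec(k(t))=X_{k(t)}(\Delta_{(0)})$, and tracing the maps shows the induced identification is $\Psi$; hence the large square is Cartesian. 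Finally, part (d) follows by pasting: each diagram in (d) is obtained by stacking the corresponding Cartesian square from (b) on top of the base change along $\Spec(k(t))\to\Gm$ and then along the open immersion into $X(\Delta)$, and a composite of Cartesian squares is Cartesian.

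The hard part will be the bookkeeping in (b): one must justify cleanly that the orbit, orbit-closure, and affine-chart decompositions all commute with the product decomposition $X(\Delta_0)\cong X(\Delta_{(0)})\times\Gm$, so that the three squares hold on the nose rather than merely up to isomorphism. Once that is nailed down, (a), (c), (d) are routine applications of Lemma \ref{lem: injective fan}, Lemma \ref{lem: basic-includ-cone}, Lemma \ref{lem: basic-toric-morphism-inclusion}, and formal fibre-product manipulation.
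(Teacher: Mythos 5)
Your proposal is correct and follows essentially the same route as the paper: (a) via the injectivity of $(\pr_1)_\RR$ on $\supp(\Delta_0)$ and Lemma \ref{lem: injective fan}, (b) via the identification of $\Delta_0$ with the product fan $\Delta_{(0)}\times\{\{0\}\}$ so that $p$ becomes the product isomorphism $X(\Delta_{(0)})\times{\Gm^1}_{,k}$, and (c), (d) by recognizing $X(\Delta_0)=X(\Delta)\times_{\A^1_k}{\Gm^1}_{,k}$ and pasting Cartesian squares. Your explicit appeal to Lemma \ref{lem: basic-toric-morphism-inclusion}(c) in (c) only makes precise a step the paper asserts directly, so there is no substantive difference.
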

        \begin{proof}
            We prove the statements from (a) to (d) in order. 
            \begin{enumerate}
                \item[(a)] Because $(\pr_1)_\RR|_{\supp(\Delta_0)}$ is injective, $\Delta_{(0)}$ is a polyhedral convex fan in $N_\RR$ from Lemma \ref{lem: injective fan}(c). 
                Because $\Delta$ is a rational convex fan and $\pr_1\colon (N\oplus\ZZ)\rightarrow N$ is a morphism of lattices, 
                $\Delta_{(0)}$ is a rational convex fan. 
                Moreover, $\Delta_{(0)}$ is strongly convex from Lemma \ref{lem: injective fan}(d). 
                \item[(b)] We can regard $\Delta_0$ as a direct product of fans $\Delta_{(0)}$ and $\{\{0\}\}$, so $(\pr_1)_*\times(\pr_2)_*$ is isomorphic. 
                The latter part of the statement follows from this identification.
                \item[(c)] The right square is a Cartesian square, and the morphism $X(\Delta_{(0)})\times {\Gm^1}_{,k}\rightarrow X(\Delta_0)$ is an isomorphism from (b). 
                Thus, the statement holds. 
                \item[(d)] Let $\theta$ denote a morphism $X_{k(t)}(\Delta_{(0)})\rightarrow X(\Delta_{(0)})\times{\Gm^1}_{,k}$ in the diagram in (c). 
                Then there exists the following Cartesian diagram:
                \begin{equation*}
                    \begin{tikzcd}
                        X_{k(t)}(\sigma_0)\ar[r]\ar[d, hook]&X(\sigma_0)\times{\Gm^1}_{,k}\ar[d, hook]\\
                        X_{k(t)}(\Delta_{(0)})\ar[r, "\theta"]&X(\Delta_{(0)})\times{\Gm^1}_{,k}
                    \end{tikzcd}
                \end{equation*}
                Thus, from (b), all small squares in the following diagram are Cartesian squares: 
                \begin{equation*}
                    \begin{tikzcd}
                        X_{k(t)}(\sigma_0)\ar[r]\ar[d, hook]&X(\sigma_0)\times{\Gm^1}_{,k}\ar[d, hook]\ar[r, "\sim"]&X(\sigma)\ar[d, hook]\ar[r]&X(\sigma)\ar[d, hook]\\
                        X_{k(t)}(\Delta_{(0)})\ar[r, "\theta"]&X(\Delta_{(0)})\times{\Gm^1}_{,k}\ar[r, "p^{-1}"]&X(\Delta_{0})\ar[r, hook]&X(\Delta)
                    \end{tikzcd}
                \end{equation*}
                where $p$ denotes  $(\pr_1)_*\times(\pr_2)_*$. 
                Thus, the first diagram in the statement is a Cartesian diagram. 
                Similarly, the second one and the third one in the statement are Cartesian diagrams. 
            \end{enumerate}
        \end{proof}
        \subsection{Toric varieties Part 2}
        Let $N$ be a lattice of finite rank, $M$ be the dual lattice of $N$, $\sigma$ be a strongly convex rational polyhedral cone in $N_\RR$, $f$ be a Laurent polynomial in $k[M]$, $H^\circ_{X(\sigma), f}$ denote a hypersurface in $T_N$ defined by $f = 0$, and $H_{X(\sigma), f}$ denote the scheme theoretic closure of $H^\circ_{X(\sigma), f}$ in $X(\sigma)$. 
        In this subsection, we consider how to determine whether $O_\sigma\subset H_{X(\sigma), f}$ or not. 
        In fact, this condition can be determined by torus invariant valuations(cf. Lemma\ref{lem: orbit and function}). 
        The method of determination itself is likely to be a well-known fact, as the proof is not particularly challenging. 
        However, the author was unable to find the literature on it. 

        A summary of this subsection follows: 
        In Definition \ref{def:val cone} and Lemma \ref{lem: property of val cone1}, we construct a cone $C^\sigma_f$ and consider its property. 
        The cone $C^\sigma_f$ is important to define the function $\val^\sigma_f$ at Definition\ref{def:val function for polynomial} and construct the refinement of $\sigma$ along $f$ at Lemma\ref{lem: refinement of the toric fan along function}. 
        Next, we show Lemma\ref{lem: orbit and function}. 
        For the proof, the refinement of $\sigma$ along $f$ plays an important role. 
        After showing Lemma\ref{lem: orbit and function}, we define two new properties of $f$ for a strongly convex rational polyhedral fan $\Delta$ at Definition \ref{def: fine toric fan for function}. 
        \vskip\baselineskip
        For $v\in\sigma\cap N$, we can define $v(f)$. 
        At first, we consider the configuation of $\{(v, v(f))\}_{v\in \sigma\cap N}$ in $(N\oplus\ZZ)_\RR$ by constructing a new cone $C^\sigma_f$. 
        \begin{definition}\label{def:val cone}
            Let $N$ be a lattice of finite rank, $M$ be the dual lattice of $N$, and $\sigma$ be a rational polyhedral convex cone in $N_\RR$. 
            Let $f\in k[M]$ be a nonzero Laurant polynomial as follows:
            \[
                f = \sum_{\omega\in M} d_\omega\chi^\omega, 
            \]
            where $d_\omega\in k$ for $\omega\in M$. 
            Let $D^\sigma_f$ be a rational polyhedral convex cone in  $(M\oplus\ZZ)_\RR$ generated by the following set:
            \[
                \{(\omega', 0)\in(M\oplus\ZZ)_\RR\mid\omega'\in\sigma^\vee\} \cup \{(\omega, 1)\in(M\oplus\ZZ)_\RR\mid d_\omega \neq 0\}
            \]
            Let $C^\sigma_f$ denote a dual cone of $D^\sigma_f$ in $(N\oplus\ZZ)_\RR$. 
            We remark that $C^\sigma_f$ is also a rational polyhedral convex cone in $(N\oplus\ZZ)_\RR$. 
        \end{definition}
        From Lemma\ref{lem: property of val cone1}(c), we can compute the configuation of $\{(v, v(f))\}_{v\in \sigma\cap N}$ in $(N\oplus\ZZ)_\RR$. 
        In fact, they are on the boundary of a cone. 
        Moreover, the cone $C^\sigma_f$ constructs the refinement of $\sigma$. 
        This refinement plays an important role in showing Lemma\ref{lem: orbit and function}. 
        \begin{lemma}\label{lem: property of val cone1}
            We keep the notation in Definition \ref{def:val cone}, the following statements follow:
            \begin{enumerate}
                \item[(a)] For any $(v, a)\in C^\sigma_f$, we have that $v\in\sigma$. 
                \item[(b)] Let $v\in\sigma$ be an element. 
                Then the following subset of $\RR$ is non-empty and has the minimum value:
                \[
                    \{a\in\RR\mid (v, a)\in C^\sigma_f\}
                \]
                From now on, let $s(v)$ denote the minimum value of the set above. 
                \item[(c)] For any $v\in\sigma\cap N$, we have $v(f) + s(v) = 0$. 
                \item[(d)] Let $\tau\preceq C^\sigma_f$ be a face. 
                We assume that $(0, 1)\notin \tau$. 
                Then for any $(v, a)\in \tau$, we have $a = s(v)$. 
                \item[(e)] For any $v\in\sigma$, there exists a face $\tau\preceq C^\sigma_f$ such that $(v, s(v))\in\tau$ and $(0, 1)\notin\tau$. 
                \item[(f)] For any $r\in\RR$ and $v\in\sigma$, we have $r\cdot s(v) = s(r\cdot v)$. 
                \item[(g)] Let $\Delta^{\sigma, f}$ denote the following set:
                \[
                    \Delta^{\sigma, f} = \{\tau\preceq C^\sigma_f\mid (0, 1) \notin\tau\}
                \]
                We remark that $\Delta^{\sigma, f}$ is a rational polyhedral fan in $N_\RR\oplus \RR$. 
                Let $\pi$ denote a first projection from $N\oplus\ZZ$ to $N$. 
                Then the following subset $\Delta^\sigma_f$ is a rational polyhedral fan in $N_\RR$ and a refinement of a fan which consists of all faces of $\sigma$:
                \[
                    \Delta^\sigma_f = \{\pi_\RR(\tau)\mid \tau\in\Delta^{\sigma, f}\}
                \]
                \item[(h)] Let $E^\sigma_f$ denote a convex cone in $(N\oplus\ZZ)_\RR$ generated by the following set:
                \[
                    \{(v, s(v))\mid v\in\sigma\}\cup\{(0, 1)\}
                \]
                Then $E^\sigma_f$ is $C^\sigma_f$. 
                \item[(i)] The cone $C^\sigma_f$ is strongly convex if and only if $\Delta^\sigma_f$ is a strongly convex fan 
                in $N_\RR$.  
                In particular, if $\sigma$ is strongly convex, $\Delta^\sigma_f$ is a strongly convex fan in $N_\RR$. 
            \end{enumerate}
        \end{lemma}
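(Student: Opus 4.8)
The plan is to reduce both occurrences of ``strongly convex'' to the absence of a line, and then to play the cone $C^\sigma_f\subset(N\oplus\ZZ)_\RR$ and its ``slices'' $\pi_\RR(\tau)\subset N_\RR$ off against each other using parts~(a), (b), (d), (f) and~(g) above. I will use freely the standard facts that a convex cone $C$ fails to be strongly convex exactly when $C\cap(-C)\neq\{0\}$, that this lineality space is a face of $C$, and that a fan is strongly convex exactly when each of its cones is.

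First I would prove the implication ``$\Delta^\sigma_f$ strongly convex $\Rightarrow$ $C^\sigma_f$ strongly convex'' by contraposition. Assume $C^\sigma_f$ is not strongly convex and choose $0\neq(v,a)\in C^\sigma_f\cap(-C^\sigma_f)$. Applying part~(a) to $\pm(v,a)$ gives $v,-v\in\sigma$. If $v=0$, then $(0,\pm a)\in C^\sigma_f$; since part~(b) together with part~(f) (case $r=0$) yields $s(0)=0$, this forces $a\geq 0$ and $-a\geq 0$, so $(v,a)=0$, a contradiction; hence $v\neq 0$. Now part~(b) gives $(v,s(v)),(-v,s(-v))\in C^\sigma_f$, and part~(f) with $r=-1$ --- legitimate because $-v\in\sigma$ --- gives $s(-v)=-s(v)$, so $\pm(v,s(v))\in C^\sigma_f$ with $(v,s(v))\neq 0$. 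Therefore $(v,s(v))$ lies in the face $L:=C^\sigma_f\cap(-C^\sigma_f)$ of $C^\sigma_f$, and $(0,1)\notin L$ because $(0,-1)\notin C^\sigma_f$ (again by part~(b) and $s(0)=0$). Hence $L\in\Delta^{\sigma,f}$, so $\pi_\RR(L)\in\Delta^\sigma_f$, and $\pi_\RR(L)$ contains both $v$ and $-v$, so it is not strongly convex.

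For the converse, suppose $\Delta^\sigma_f$ is not strongly convex; then some $\pi_\RR(\tau)$ with $\tau\in\Delta^{\sigma,f}$ contains a nonzero $v$ together with $-v$. Picking lifts $(v,a),(-v,b)\in\tau$ and using $\tau\preceq C^\sigma_f$ with $(0,1)\notin\tau$, part~(d) forces $a=s(v)$ and $b=s(-v)$; combined with $s(-v)=-s(v)$ from part~(f), this gives $\pm(v,s(v))\in\tau\subset C^\sigma_f$ with $(v,s(v))\neq 0$, so $C^\sigma_f$ contains a line and is not strongly convex. This proves the equivalence. Finally, the ``in particular'' clause follows at once: if $\sigma$ is strongly convex then $C^\sigma_f$ is strongly convex (a nonzero element of $C^\sigma_f\cap(-C^\sigma_f)$ would give $v,-v\in\sigma$ by part~(a), hence $v=0$, hence $a=0$ as above), so by the equivalence $\Delta^\sigma_f$ is a strongly convex fan. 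I do not anticipate a real obstacle; the only delicate points are checking that part~(f) may be applied with the scalar $-1$ (which is fine, since in each use both $v$ and $-v$ already lie in $\sigma$) and invoking the elementary fact that the lineality space of a polyhedral cone is one of its faces.
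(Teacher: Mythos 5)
Your proposal addresses only item (i) of the lemma: items (a)--(h) are invoked as available facts (``parts (a), (b), (d), (f) and (g) above'') but never proved, even though they are part of the statement and carry most of its content --- in particular the existence of the minimum defining $s$ in (b), the face analysis in (d)--(e), and the fan structure in (g) that your argument leans on. As a proof of the statement as posed this is therefore incomplete, independently of the merits of the argument for (i).

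Within (i) there is also a concrete flaw: you apply (f) with the scalar $r=-1$, asserting $s(-v)=-s(v)$ whenever both $v$ and $-v$ lie in $\sigma$, and you flag this as unproblematic. It is not: the proof of (b) yields $s(v)=\max_{d_\omega\neq 0}\{-\langle v,\omega\rangle\}$, which is only positively homogeneous. For $N=\ZZ$, $\sigma=\RR$ (Definition \ref{def:val cone} does not require $\sigma$ strongly convex, and in the equivalence of (i) you cannot assume it) and $f=1+\chi^1$ one has $s(1)=0$ but $s(-1)=1$, so the negative-scalar case of (f) fails; the ``for any $r\in\RR$'' in (f) can only be substantiated for $r\in\RR_{\geq 0}$, which is all the paper ever uses. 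Your two uses can be repaired without (f): in the forward direction, $\pm(v,a)\in C^\sigma_f$ gives $s(v)\leq a\leq -s(-v)$, while $(v,s(v))+(-v,s(-v))=(0,\,s(v)+s(-v))\in C^\sigma_f$ and $s(0)=0$ give $s(v)+s(-v)\geq 0$, forcing $a=s(v)$ and $-a=s(-v)$; in the converse direction, $(0,\,s(v)+s(-v))\in\tau$ together with $(0,1)\notin\tau$ forces $s(v)+s(-v)=0$. With these repairs your lineality-space argument for (i) is sound, and it is an acceptable alternative to the paper's shorter route, which observes that the minimal face of $C^\sigma_f$ is a linear subspace not containing $(0,1)$, hence belongs to $\Delta^{\sigma,f}$, transfers strong convexity between $\Delta^{\sigma,f}$ and $\Delta^\sigma_f$ via injectivity of $\pi_\RR$ on $\supp(\Delta^{\sigma,f})$ (Lemma \ref{lem: injective fan}(d)), and gets the ``in particular'' clause from the fact that $D^\sigma_f$ is a full cone when $\sigma$ is strongly convex. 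But as submitted, the missing items (a)--(h) and the unjustified negative-scalar use of (f) are genuine gaps.
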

        \begin{proof}
            We prove the statements from (a) to (i) in order. 
            \begin{enumerate}
                \item[(a)] Let $(v, a)\in C^\sigma_f$ be an element. 
                For any $\omega'\in\sigma^\vee$, we have $(\omega', 0)\in D^\sigma_f$. 
                Thus, $\langle v, \omega'\rangle = \langle (v, a), (\omega', 0)\rangle\geq 0$. 
                In particular, $v\in\sigma$. 
                \item[(b)] Let $a_v\in\RR$ denote the following number:
                \[
                    a_v = \max_{d_\omega\neq 0}\{-\langle v, \omega\rangle\}
                \]
                Then we can check that $(v, a_v)\in C^\sigma_f$ and $a\geq a_v$ for any $(v, a)\in C^\sigma_f$ because of the definition of $C^\sigma_f$. 
                \item[(c)] Let $v\in\sigma\cap N$ be an element. 
                Because $v$ is a torus invariant valuation, $v(f) = \min_{d_\omega\neq 0}\{\langle v, \omega\rangle\}$. 
                Thus, $v(f) + s(v) = 0$ from the proof of (b). 
                \item[(d)] Let $(v, a)\in\tau$ be an element. 
                From (b), $a\geq s(v)$. 
                Moreover, $(v, s(v)),$ and $(0, a - s(v))\in C^\sigma_f$. 
                Because $\tau$ is a face of $C^\sigma_f$, we have that $(v, s(v))$ and $(0, a - s(v))\in\tau$. 
                Thus, from the assumption of $\tau$, we have that $a = s(v)$. 
                \item[(e)] Let $\tau\preceq C^\sigma_f$ be a face such that $(v, s(v))\in\tau^\circ$. 
                If $\tau$ contains $(0, 1)$, then there exists $\epsilon>0$ such that $(v, s(v) - \epsilon)\in\tau$ because $(v, s(v))\in\tau^\circ$. 
                However, it is a contradiction to the definition of $s(v)$ from (b). 
                Thus, we have that $(0, 1)\notin\tau$. 
                \item[(f)] From the proof of (b), we can check it easily. 
                \item[(g)] We remark that $\pi_\RR|_{\supp(\Delta^{\sigma, f})}$ is injective from (d). 
                Then, from Lemma \ref{lem: injective fan}(c), $\Delta^{\sigma, f}$ is a polyhedral convex fan. 
                A morphism $\pi$ is a lattice morphism, and $C^\sigma_f$ is a rational polyhedral fan so that $\Delta^{\sigma}_f$ is a rational polyhedral convex fan.  
                From (a) and (e), we have that $\supp(\Delta^{\sigma}_f) = \sigma$. 
                \item[(h)] We can check easily that $E^\sigma_f\subset C^\sigma_f$ from (b). 
                Let $(v, a)\in C^\sigma_f$ be an element.
                From (a), $v\in\sigma$. 
                Moreover, from (b), $a - s(v)\geq 0$. 
                Thus, $(v, a) = (v, s(v)) + (0, a - s(v))\in E^\sigma_f$. 
                \item[(i)] We remark that if $\sigma$ is strongly convex, then $D^\sigma_f$ is a full cone from its construction, so that $C^\sigma_f$ is strongly convex. 
                Because $\pi_\RR|_{\supp(\Delta^{\sigma, f})}$ is injective, the strong convexity of $\Delta^{\sigma, f}$ is equivalent to that of $\Delta^\sigma_f$ from Lemma \ref{lem: injective fan}(d). 
                Let $\tau$ denote the minimal face of $C^\sigma_f$. 
                We remark that $\tau$ is a linear subspace of $N_\RR\oplus\RR$. 
                Because $C^\sigma_f$ does not contain $(0, -1)$, $\tau$ does not contain $(0, 1)$. 
                Thus, $\tau\in \Delta^{\sigma, f}$ and the statement follows. 
            \end{enumerate}
        \end{proof}
        The following lemma is cited many times in the main argument.
        \begin{lemma}\label{lem: uniqueness of the function}
            Let $N$ be a lattice of finite rank and $\sigma$ be a rational polyhedral convex cone in $N_\RR$. 
            Let $s$ and $t$ be maps from $\sigma$ to $\RR$. 
            We assume that $s$ and $t$ satisfy the following 3 conditions:
            \begin{enumerate}
                \item These two maps are continuous. 
                \item For any $r\in\RR$ and $v\in\sigma$, $r\cdot s(v) = s(r\cdot v)$ and $r\cdot t(v) = t(r\cdot v)$. 
                \item For any $v\in\sigma\cap N$, we have that $s(v) = t(v)$. 
            \end{enumerate}
            Then we have $s = t$.
        \end{lemma}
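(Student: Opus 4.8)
The plan is to prove that $s$ and $t$ agree on a dense subset of $\sigma$ and then conclude by continuity. First I would record that, since $\sigma$ is a rational polyhedral convex cone, there are finitely many lattice vectors $v_1,\dots,v_k\in\sigma\cap N$ with $\sigma=\{\sum_{i=1}^{k}\lambda_i v_i\mid \lambda_1,\dots,\lambda_k\in\RR_{\geq 0}\}$. The surjection $\RR_{\geq 0}^{k}\to\sigma$ sending $(\lambda_i)_i$ to $\sum_i\lambda_i v_i$ is continuous, and $\Q_{\geq 0}^{k}$ is dense in $\RR_{\geq 0}^{k}$; hence the image of $\Q_{\geq 0}^{k}$ is dense in $\sigma$, and this image is contained in the set $\sigma\cap(N\otimes\Q)$ of rational points of $\sigma$. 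Therefore it suffices to show $s(q)=t(q)$ for every $q\in\sigma\cap(N\otimes\Q)$.

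Next I would clear denominators. Fix $q\in\sigma\cap(N\otimes\Q)$ and choose $m\in\ZZ_{>0}$ such that $w:=mq$ lies in $N$; since $\sigma$ is a cone we also have $w\in\sigma$, so $w\in\sigma\cap N$. Applying condition (2) twice --- once with $r=m$ and $v=q$, once with $r=\tfrac1m$ and $v=w$ (note $q=\tfrac1m w$) --- gives $m\,s(q)=s(w)$ and $m\,t(q)=t(w)$. By condition (3), $s(w)=t(w)$, so $s(q)=\tfrac1m s(w)=\tfrac1m t(w)=t(q)$, as required.

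Finally, $s-t$ is a continuous real-valued function on $\sigma$ that vanishes on the dense subset $\sigma\cap(N\otimes\Q)$ (in particular it vanishes at $0$, which also follows directly from condition (2) with $r=0$), hence $s-t\equiv 0$, i.e. $s=t$. The whole argument is formal once one has the density of the rational points of $\sigma$; that density is the only place the hypothesis ``$\sigma$ is a \emph{rational} polyhedral cone'' is used, and it is the one step a careful write-up must not skip, but it is entirely standard.
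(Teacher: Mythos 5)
Your proof is correct and follows essentially the same route as the paper's: both establish $s=t$ on the rational points $\sigma\cap N_\Q$ (via positive scaling and agreement on lattice points), invoke density of rational points in the rational polyhedral cone $\sigma$, and conclude by continuity. You merely spell out the density and denominator-clearing steps that the paper leaves implicit.
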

        \begin{proof}
            From the conditions (2) and (3), $s(v) = t(v)$ for any $v\in\sigma\cap N_\Q$. 
            Because $\sigma$ is a rational polyhedral convex cone, $\sigma\cap N_\Q$ is a dense subset of $\sigma$. 
            Thus, $s = t$ because $\RR$ is a Hausdorff space and continuity of $s$ and $t$. 
        \end{proof}
        For any $v\in \sigma$, we define the virtual valuation $\val^\sigma_f(v)$. 
        \begin{definition}\label{def:val function for polynomial}
            We keep the notation in Definition \ref{def:val cone}. 
            \begin{enumerate}
                \item Let $\val^\sigma_f$ denote a function from $\sigma$ to $\RR$ defined as $-s$. 
                \item Let $\Omega^\sigma_f$ be the following subset of $M_\Q$: 
                \[
                    \Omega^\sigma_f = \{\omega\in M_\Q\mid \val^\sigma_f(v)\leq \langle v, \omega\rangle\quad\forall v\in\sigma\}
                \]
                \item For $\omega\in\Omega^\sigma_f$, let $C^\sigma_f(\omega)$ denote the following subset of $\sigma$:
                \[
                    C^\sigma_f(\omega) = \{v\in\sigma\mid \val^\sigma_f(v) = \langle v, \omega\rangle\}
                \]
            \end{enumerate}
        \end{definition}
        From the following lemma, $\{C^\sigma_f(\omega)\}_{\omega\in\Omega^\sigma_f}$ is a rational polyhedral convex fan in $N_\RR$ and this is a refinement of the fan which consists of all faces of $\sigma$. 
        \begin{lemma}\label{lem: property of val function}
            We keep the notation in Definition \ref{def:val cone} and Definition \ref{def:val function for polynomial}, the following statements follow:
            \begin{enumerate}
                \item[(a)] For any $\omega\in\Omega^\sigma_f$, there exists $\tau\in\Delta^{\sigma, f}$ such that $C^\sigma_f(\omega) = \pi_\RR(\tau)$. 
                In particular, $C^\sigma_f(\omega)$ is a rational polyhedral convex cone in $N_\RR$. 
                \item[(b)] Conversely, for any $\tau\in\Delta^{\sigma, f}$, there exists $\omega\in\Omega^\sigma_f$ such that $\pi_\RR(\tau) = C^\sigma_f(\omega)$. 
                In particular, $\{C^\sigma_f(\omega)\}_{\omega\in\Omega^\sigma_f}$ is a rational polyhedral convex fan in $N_\RR$ and this is a refinement of the fan which consists of all faces of $\sigma$. 
                \item[(c)] The function $\val^\sigma_f$ is a continuous map from $\sigma$ to $\RR$. 
                In particular, $\val^\sigma_f$ is a map that satisfies conditions (1) and (2) in Lemma \ref{lem: uniqueness of the function}. 
                Moreover, $\val^\sigma_f(v) = v(f)$ for any $v\in \sigma\cap N$. 
                \item[(d)] For any $\sigma\in\Delta$, $\val^\sigma_f$ is an upper convex function. 
                \item[(e)] Let $\tau$ be a face of $\sigma$.  
                Then $val^\sigma_f|_\tau = val^\tau_f$. 
                \item[(f)] Let $\tau\preceq\sigma$ be a face and $\omega\in\Omega^\sigma_f$ be an element. 
                Then $\omega\in\Omega^\tau_f$ and $C^\sigma_f(\omega)\cap\tau = C^\tau_f(\omega)$. 
                In particular, $C^\tau_f(\omega)\preceq C^\sigma_f(\omega)$. 
                \item[(g)] For any $\omega\in\Omega^\sigma_f$, there exists $\omega_0\in M$ such that $\val^\sigma_f(v) = \langle v, \omega_0\rangle$ for any $v\in C^\sigma_f(\omega)$. 
            \end{enumerate}
        \end{lemma}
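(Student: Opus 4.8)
The heart of the matter is the duality between the cones $C^\sigma_f=(D^\sigma_f)^\vee$ and $D^\sigma_f$ of Definition~\ref{def:val cone}, so I would first set up two dictionaries. From the proof of Lemma~\ref{lem: property of val cone1}(b) one has the explicit description $\val^\sigma_f(v)=-s(v)=\min\{\langle v,\omega\rangle: d_\omega\neq 0\}$ for $v\in\sigma$; being a finite minimum of $\RR$-linear functionals, $\val^\sigma_f$ is automatically continuous, positively homogeneous, and upper convex on $\sigma$, and it equals $v(f)$ on $\sigma\cap N$ by Lemma~\ref{lem: property of val cone1}(c). This already yields (c) and (d). For (e) I would simply invoke Lemma~\ref{lem: uniqueness of the function}: $\val^\sigma_f|_\tau$ and $\val^\tau_f$ are both continuous and positively homogeneous and agree with $v(f)$ on $\tau\cap N$, hence coincide.

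Next I would record that for $\omega\in M_\Q$ one has $\omega\in\Omega^\sigma_f$ if and only if $(\omega,1)\in D^\sigma_f$. The forward direction uses $C^\sigma_f=E^\sigma_f$ (Lemma~\ref{lem: property of val cone1}(h)): the inequality $\langle v,\omega\rangle+s(v)\ge 0$ for all $v\in\sigma$, together with $\langle(0,1),(\omega,1)\rangle=1\ge 0$, says that $(\omega,1)$ pairs nonnegatively with every generator of $E^\sigma_f=C^\sigma_f$, i.e. $(\omega,1)\in(C^\sigma_f)^\vee=D^\sigma_f$; the converse is the same computation read backwards, testing against the points $(v,s(v))\in C^\sigma_f$. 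Granting this, for $\omega\in\Omega^\sigma_f$ the face $\tau_\omega:=C^\sigma_f\cap(\omega,1)^\perp$ does not contain $(0,1)$, so $\tau_\omega\in\Delta^{\sigma, f}$, and the relation $\val^\sigma_f(v)\le\langle v,\omega\rangle$ forces any $(v,a)\in\tau_\omega$ to satisfy $\langle v,\omega\rangle=\val^\sigma_f(v)$ and $a=s(v)$; since conversely $(v,s(v))\in\tau_\omega$ whenever $v\in C^\sigma_f(\omega)$, we get $\pi_\RR(\tau_\omega)=C^\sigma_f(\omega)$, which is (a). For the converse (b), given $\tau\in\Delta^{\sigma, f}$ I would pass to the dual face $\tau^\diamond=D^\sigma_f\cap\tau^\perp$; because $(0,1)\in C^\sigma_f$, the hypothesis $(0,1)\notin\tau$ means precisely that the last-coordinate functional (which is $\ge 0$ on $D^\sigma_f$, hence on $\tau^\diamond$) does not vanish identically on $\tau^\diamond$, so $\tau^\diamond$ contains a rational point $(\omega',c)$ in its relative interior with $c>0$. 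Setting $\omega:=\omega'/c\in\Omega^\sigma_f$, the standard correspondence between faces of a polyhedral cone and of its dual gives $C^\sigma_f\cap(\omega,1)^\perp=\tau$, whence $C^\sigma_f(\omega)=\pi_\RR(\tau)\in\Delta^\sigma_f$. Combining (a) and (b), $\{C^\sigma_f(\omega)\}_{\omega\in\Omega^\sigma_f}=\Delta^\sigma_f$, a rational polyhedral fan refining the faces of $\sigma$ by Lemma~\ref{lem: property of val cone1}(g).

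Part (f) is then formal: $\Omega^\sigma_f\subset\Omega^\tau_f$ since the defining inequality is demanded only on the smaller set $\tau$ and $\val^\tau_f=\val^\sigma_f|_\tau$ by (e), and the same identity gives $C^\sigma_f(\omega)\cap\tau=C^\tau_f(\omega)$; writing $\tau=\sigma\cap\eta^\perp$ for some $\eta\in\sigma^\vee$ and noting $\eta\in(C^\sigma_f(\omega))^\vee$ because $C^\sigma_f(\omega)\subset\sigma$ displays $C^\tau_f(\omega)$ as a face of $C^\sigma_f(\omega)$. For (g), $C^\sigma_f(\omega)$ is a rational polyhedral cone by (a), so the subgroup $L\subseteq N$ it generates is saturated; the functional $\omega\in M_\Q$ takes the integer values $v\mapsto v(f)$ on $C^\sigma_f(\omega)\cap N$, hence restricts to an element of $\Hom(L,\ZZ)$, which extends to some $\omega_0\in M$ as $L$ is a direct summand of $N$, and then $\langle v,\omega_0\rangle=\langle v,\omega\rangle=\val^\sigma_f(v)$ on all of $C^\sigma_f(\omega)$ by $\RR$-linearity.

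The only genuinely delicate point is step (b): one must argue that the dual face $\tau^\diamond$ of a cone $\tau\in\Delta^{\sigma, f}$ carries a rational relative-interior vector with strictly positive last coordinate, and that rescaling it to height $1$ recovers $\tau$ through $C^\sigma_f\cap(\omega,1)^\perp=\tau$. This is exactly where the condition $(0,1)\notin\tau$ enters, via the elementary fact that a nonnegative linear functional on a cone which vanishes at a relative-interior point vanishes on the whole cone; the remaining verifications, namely (c)--(e) and the bookkeeping in (a), (f) and (g), are routine once the $C^\sigma_f$--$D^\sigma_f$ duality and the explicit minimum formula for $\val^\sigma_f$ are in hand.
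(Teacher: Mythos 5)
Your proposal is correct and follows essentially the same route as the paper: the duality between $C^\sigma_f$ and $D^\sigma_f$ together with Lemma \ref{lem: property of val cone1}(h) to identify $\Omega^\sigma_f$ with the rational height-one elements of $D^\sigma_f$, the faces $C^\sigma_f\cap(\omega,1)^\perp$ for (a)--(b), the restriction argument for (e)--(f), and the saturated-sublattice extension for (g). The only cosmetic differences are that you obtain (c)--(d) directly from the min-of-linear-forms formula for $\val^\sigma_f$ (the paper instead glues linearity over the pieces of $\Delta^\sigma_f$ and uses convexity of $C^\sigma_f$), and in (b) you produce the cutting functional via the dual-face correspondence and a relative-interior point, where the paper simply takes a rational $(\omega,b)\in D^\sigma_f$ defining the face and rules out $b=0$; both are sound.
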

        \begin{proof}
            We prove the statements from (a) to (g) in order. 
            \begin{enumerate}
                \item[(a)] Let  $\omega\in\Omega^\sigma_f$ be an element. 
                From the definition of $\Omega^\sigma_f$ and Lemma \ref{lem: property of val cone1}(h), $(\omega, 1)\in D^\sigma_f$. 
                Let $\tau$ denote $C^\sigma_f\cap (\omega, 1)^\perp$. 
                We remark that $\tau\preceq C^\sigma_f$ and $(0, 1)\notin\tau$. 
                Thus $C^\sigma_f(\omega) = \pi_\RR(\tau)$.  
                Indeed, for any $v\in\sigma$, the equation $\langle (v, s(v)), (\omega, 1)\rangle = 0$ is equivalent to the equation $\val^\sigma_f(v) = \langle v, \omega\rangle$ because of the definition of $\val^\sigma_f$. 
                \item[(b)] Let $\tau\in\Delta^{\sigma, f}$ be a cone. 
                Then there exists $(\omega, b)\in D^\sigma_f$ such that $\omega\in M_\Q$, $b\in\Q_{\geq 0}$ and $\tau = C^\sigma_f\cap (\omega, b)^\perp$. 
                If $b = 0$, then $(0, 1)\in\tau$. 
                However, it is a contradiction to the definition of $\Delta^{\sigma, f}$. 
                Thus, we may assume that $b = 1$. 
                From the definition of $\Omega^\sigma_f$ and Lemma \ref{lem: property of val cone1} (h), we have $\omega\in\Omega^\sigma_f$. 
                Thus, from the same argument in the proof of (a), 
                $\pi_\RR(\tau) = C^\sigma_f(\omega)$. 
                Therefore, from Lemma \ref{lem: property of val cone1}(g), $\Delta^\sigma_f = \{C^\sigma_f(\omega)\}_{\omega\in\Omega^\sigma_f}$ and $\{C^\sigma_f(\omega)\}_{\omega\in\Omega^\sigma_f}$ is a rational polyhedral convex fan which is a refinement of $\sigma$. 
                \item[(c)] From (b), the following equation holds:
                \[
                    \sigma = \bigcup_{\omega\in\Omega^\sigma_f} C^\sigma_f(\omega)
                \]
                For each $\omega\in\Omega^\sigma_f$, $\val^\sigma_f$ is a linear map on $C^\sigma_f(\omega)$. 
                In addition to this, $C^\sigma_f(\omega)$ is a closed subset of $\sigma$. 
                Thus, $\val^\sigma_f$ is a continuous map. 
                From Lemma \ref{lem: property of val cone1}(f), $\val^\sigma_f$ satisfies conditions (2) in Lemma \ref{lem: uniqueness of the function}. 
                From \ref{lem: property of val cone1}(c), the latter part of the statement follows. 
                \item[(d)] From the definition of $\val^\sigma_f$, it is enough to show that $s$ is a lower convex function. 
                From the convexity of  $C^\sigma_f$ and \ref{lem: property of val cone1}(b), $s$ is a lower convex function. 
                \item[(e)] From (c), both functions satisfy conditions (1) and (2) in Lemma \ref{lem: uniqueness of the function}. 
                Thus, it is enough to show that $val^\sigma_f(v) = val^\tau_f(v)$ for any $v\in\tau\cap N$. 
                It follows from Lemma \ref{lem: property of val cone1} (c). 
                \item[(f)] From (e), $\Omega^\sigma_f\subset \Omega^\tau_f$ and $C^\sigma_f(\omega)\cap\tau = C^\tau_f(\omega)$ for any $\omega\in \Omega^\sigma_f$. 
                Let $\omega_\tau\in\sigma^\vee$ be an element such that $\tau = \sigma\cap\omega_\tau^\perp$. 
                Then $\omega_\tau\in (C^\sigma_f(\omega))^\vee$ and $C^\sigma_f(\omega)\cap \omega_\tau^\perp \preceq C^\sigma_f(\omega)$. 
                Thus, $C^\sigma_f(\omega)\cap \omega_\tau^\perp = C^\sigma_f(\omega)\cap \tau = C^\tau_f(\omega)$ is a face of $C^\sigma_f(\omega)$. 
                \item[(g)] Let $v\in\langle C^\sigma_f(\omega)\rangle\cap N$ be an element. 
                Then there exists $v_1$ and $v_2\in C^\sigma_f(\omega)\cap N$ such that $v = v_1 - v_2$ because $C^\sigma_f(\omega)$ is a rational polyhedral cone. 
                From the definition of $C^\sigma_f(\omega)$ and (c), the following equation holds: 
                \begin{align*}
                    \langle v, \omega\rangle &= \langle v_1, \omega\rangle- \langle v_2, \omega\rangle\\
                    &= \val^\sigma_f(v_1) - \val^\sigma_f(v_2)\\
                    &= v_1(f) - v_2(f)\in\ZZ
                \end{align*} 
                Let $M'$ be the dual lattice of $\langle C^\sigma_f(\omega)\rangle\cap N$ and $\iota^*$ denote a group morphism $M\rightarrow M'$ induced by an inclusion $\iota\colon \langle C^\sigma_f(\omega)\rangle\cap N\hookrightarrow N$. 
                Then we can identify $\langle -,\omega\rangle|_{\langle C^\sigma_f(\omega)\rangle\cap N}$ as an element in $M'$.
                We remark that $\iota^*$ is surjective because $N/(\langle C^\sigma_f(\omega)\rangle\cap N)$ is torsion free. 
                Thus, there exists $\omega_0\in M$ such that $\iota^*(\omega_0) = \langle -,\omega\rangle|_{\langle C^\sigma_f(\omega)\rangle\cap N}$. 
                Hence, the following equation holds for any $v'\in C^\sigma_f(\omega)\cap N$:
                \begin{align*}
                    \val^\sigma_f(v') &=\langle v', \omega\rangle\\
                    &= \langle v', \iota^*(\omega_0)\rangle\\
                    &= \langle v', \omega_0\rangle
                \end{align*}
                Thus, from Lemma \ref{lem: uniqueness of the function}, $\val^\sigma_f(v') = \langle v', \omega_0\rangle$ for any $v'\in C^\sigma_f(\omega)$. 
            \end{enumerate}
        \end{proof}
        For a strongly convex rational polyhedral fan $\Delta$, we can construct a refinement of $\Delta$ along $f$.  
        \begin{lemma}\label{lem: refinement of the toric fan along function}
            Let $N$ be a lattice of finite rank, $\Delta$ be a strongly convex rational polyhedral fan in $N_\RR$, and $f\in k[M]$ be a polynomial. 
            Then the following set $\Delta_f$ is a strongly convex rational polyhedral fan and a refinement of $\Delta$: 
            \[
                \Delta_f = \bigcup_{\sigma\in\Delta}\{C^\sigma_f(\omega)\}_{\omega\in\Omega^\sigma_f}
            \]
        \end{lemma}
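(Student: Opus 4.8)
The plan is to verify directly the two defining axioms of a fan for $\Delta_f$ — closure under passing to faces, and that the intersection of any two of its cones is a common face of each — reducing everything to the local statements already proved in Lemma \ref{lem: property of val function}. Fix notation: for $\sigma\in\Delta$ write $\Delta_f^\sigma = \{C^\sigma_f(\omega)\}_{\omega\in\Omega^\sigma_f}$, so $\Delta_f = \bigcup_{\sigma\in\Delta}\Delta_f^\sigma$. First I would recall from Lemma \ref{lem: property of val function}(a),(b) that each $\Delta_f^\sigma$ is a rational polyhedral convex fan refining the fan of faces of $\sigma$; in particular $\supp(\Delta_f^\sigma) = \sigma$ and every cone of $\Delta_f^\sigma$ is a rational polyhedral cone contained in $\sigma$, hence strongly convex because $\sigma$ is. Taking the union over $\sigma\in\Delta$ gives $\supp(\Delta_f) = \supp(\Delta)$ and shows every cone of $\Delta_f$ lies in a cone of $\Delta$; these are precisely the conditions for $\Delta_f$ to refine $\Delta$, once it is known to be a fan. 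Closure under faces is then immediate: a face of $C^\sigma_f(\omega)$ is a face of a cone in the fan $\Delta_f^\sigma$, hence again of the form $C^\sigma_f(\omega')$ with $\omega'\in\Omega^\sigma_f$, so it lies in $\Delta_f$.

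The one step requiring care is the intersection axiom. Given $\sigma,\tau\in\Delta$ and $\omega_1\in\Omega^\sigma_f$, $\omega_2\in\Omega^\tau_f$, set $\gamma = \sigma\cap\tau$, which is a common face of $\sigma$ and $\tau$ since $\Delta$ is a fan. Applying Lemma \ref{lem: property of val function}(f) to $\gamma\preceq\sigma$ and to $\gamma\preceq\tau$ yields $\omega_1,\omega_2\in\Omega^\gamma_f$ together with the relations
\[
    C^\sigma_f(\omega_1)\cap\gamma = C^\gamma_f(\omega_1)\preceq C^\sigma_f(\omega_1), \qquad C^\tau_f(\omega_2)\cap\gamma = C^\gamma_f(\omega_2)\preceq C^\tau_f(\omega_2).
\]
Since $C^\sigma_f(\omega_1)\subset\sigma$ and $C^\tau_f(\omega_2)\subset\tau$, their intersection lies in $\gamma$, so
\[
    C^\sigma_f(\omega_1)\cap C^\tau_f(\omega_2) = \bigl(C^\sigma_f(\omega_1)\cap\gamma\bigr)\cap\bigl(C^\tau_f(\omega_2)\cap\gamma\bigr) = C^\gamma_f(\omega_1)\cap C^\gamma_f(\omega_2).
\]
Because $\Delta_f^\gamma$ is a fan, the right-hand side is a common face of $C^\gamma_f(\omega_1)$ and $C^\gamma_f(\omega_2)$; combining this with the two face relations above and transitivity of the face relation, it is a common face of $C^\sigma_f(\omega_1)$ and $C^\tau_f(\omega_2)$. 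This gives the intersection axiom, so $\Delta_f$ is a fan, and by the first paragraph it is a strongly convex rational polyhedral fan refining $\Delta$.

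I expect the main obstacle to be purely bookkeeping: ensuring that each invocation of Lemma \ref{lem: property of val function}(f) is made on the correct ambient cone and that "a face of a face is a face" is applied cleanly when transporting $C^\gamma_f(\omega_1)\cap C^\gamma_f(\omega_2)$ up to $C^\sigma_f(\omega_1)$ and $C^\tau_f(\omega_2)$. There is no genuinely new geometric input beyond the local fan structure and the compatibility under restriction to faces already established; this argument is the toric analogue of the gluing step carried out in the proof of Proposition \ref{prop: refinement along function}(e).
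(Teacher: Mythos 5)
Your proposal is correct and follows essentially the same route as the paper's proof: strong convexity and closure under faces come from the local fans $\{C^\sigma_f(\omega)\}_{\omega\in\Omega^\sigma_f}$ (Lemma \ref{lem: property of val cone1}(i), Lemma \ref{lem: property of val function}(b)), and the intersection axiom is handled by restricting to $\gamma=\sigma\cap\tau$ via Lemma \ref{lem: property of val function}(f) and transporting the common face back up, exactly as in the paper. No gaps to report.
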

        \begin{proof}
            For any $\sigma\in\Delta$, $\{C^\sigma_f(\omega)\}_{\omega\in\Omega^\sigma_f}$ is a strongly convex polyhedral fan in $N_\RR$ from Lemma \ref{lem: property of val cone1} (i) and Lemma \ref{lem: property of val function} (b). 
            Thus, all faces of a cone in $\Delta_f$ are contained in $\Delta_f$. 
            Let $\sigma$ and $\tau\in\Delta$ be cones, $\omega_1\in\Omega^\sigma_f$ and $\omega_2\in \Omega^\tau_f$ be elements. 
            We show that $C^\sigma_f(\omega_1)\cap C^\tau_f(\omega_2)$ is a common face of $C^\sigma_f(\omega_1)$ and $C^\tau_f(\omega_2)$. 
            Indeed, $C^\sigma_f(\omega_1)\cap \tau = C^{\sigma\cap\tau}_f(\omega_1)$,  $C^\tau_f(\omega_2)\cap \sigma = C^{\sigma\cap\tau}_f(\omega_2)$, and $\{\omega_1, \omega_2\}\subset \Omega^{\sigma\cap\tau}_f$ from Lemma \ref{lem: property of val function}(f). 
            Thus, $C^\sigma_f(\omega_1)\cap C^\tau_f(\omega_2) = C^{\sigma\cap\tau}_f(\omega_1)\cap C^{\sigma\cap\tau}_f(\omega_2)$. 
            Moreover, the cone $C^{\sigma\cap\tau}_f(\omega_1)\cap C^{\sigma\cap\tau}_f(\omega_2)$ is a common face of $C^{\sigma\cap\tau}_f(\omega_1)$ and $C^{\sigma\cap\tau}_f(\omega_2)$ from Lemma \ref{lem: property of val function}(b). 
            Thus, from Lemma \ref{lem: property of val function}(f), this cone is a common face of $C^\sigma_f(\omega_1)$ and $C^\tau_f(\omega_2)$. 
        \end{proof}
        From the definition of $\val^\sigma_f$, we can check that $\val^\sigma_f$ is a piecewise linear function on $\sigma$. 
        From now on, let's consider the case when $\val^\sigma_f$ is a linear map on $\sigma$. 
        The following lemma reveals that the linearity of $\val^\sigma_f$ is also preserved for the factors of $f$.
        \begin{lemma}\label{lem: val-func-of-multiple-poly}
            We keep the notation in Definition \ref{def:val cone} and Definition \ref{def:val function for polynomial}. 
            Let $f, f_1, \ldots, f_r\in k[M]$ be nonzero Laurant polynomials. 
            We assume that $f = f_1\cdot f_2\cdot \cdots\cdot f_r$ and $\sigma$ is strongly convex.
            Then the following statements are equivalent:
            \begin{enumerate}
                \item[(a)] There exists $\omega\in M$ such that $v(f\cdot\chi^\omega) = 0$ for any $v\in\sigma\cap N$. 
                \item[(b)] For any $1\leq i\leq r$, there exists $\omega_i\in M$ such that $v(f_i\cdot\chi^{\omega_i}) = 0$ for any $v\in\sigma\cap N$. 
            \end{enumerate}
        \end{lemma}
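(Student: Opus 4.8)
The statement is the toric analogue of Proposition~\ref{prop:val func of multiple poly}(a), and the strategy is to reuse the machinery of $\val^\sigma_f$ and the refinement $\Delta^\sigma_f$ already developed in Lemma~\ref{lem: property of val cone1} and Lemma~\ref{lem: property of val function}. The implication (b)$\Rightarrow$(a) is immediate: if $v(f_i\cdot\chi^{\omega_i})=0$ for all $v\in\sigma\cap N$, put $\omega=\omega_1+\cdots+\omega_r$; then by additivity of the torus-invariant valuation $v$ we get $v(f\cdot\chi^\omega)=\sum_i v(f_i\cdot\chi^{\omega_i})=0$ for all $v\in\sigma\cap N$. So the real content is (a)$\Rightarrow$(b).

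For (a)$\Rightarrow$(b), first I would translate the hypothesis into the language of the functions $\val^\sigma_{f_i}$. By Lemma~\ref{lem: property of val function}(c) and Lemma~\ref{lem: property of val cone1}(c), $\val^\sigma_{f_i}(v)=v(f_i)$ on $\sigma\cap N$, and by Proposition-style additivity (which in the toric setting is just $v(fg)=v(f)+v(g)$ extended continuously via Lemma~\ref{lem: uniqueness of the function}) one has $\val^\sigma_f=\sum_{i=1}^r\val^\sigma_{f_i}$ on all of $\sigma$. The hypothesis (a) says precisely that $\val^\sigma_f(v)=-\langle v,\omega\rangle$ on $\sigma\cap N$, hence, again by Lemma~\ref{lem: uniqueness of the function}, $\val^\sigma_f\equiv -\langle\cdot,\omega\rangle$ is linear on $\sigma$. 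So we are reduced to: if a sum $\sum_i \val^\sigma_{f_i}$ of upper convex functions (each upper convex by Lemma~\ref{lem: property of val function}(d)) on the strongly convex cone $\sigma$ is linear, then each summand is linear on $\sigma$; and moreover a linear function that takes integer values on $\sigma\cap N$ and whose negative is a ``valuation-like'' function is given by an element of $M$.

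The key step is the convexity argument. Pick $v_0\in\sigma^\circ$ (the relative interior — note $\sigma$ is full-dimensional only after passing to $\langle\sigma\rangle$, but the argument is internal to $\langle\sigma\rangle$). For each $i$, by Lemma~\ref{lem: property of val function}(a),(b) there is $\omega_i'\in\Omega^\sigma_{f_i}\subset M_\Q$ with $v_0\in C^\sigma_{f_i}(\omega_i')$, i.e. $\val^\sigma_{f_i}(v)\le\langle v,\omega_i'\rangle$ for all $v\in\sigma$ with equality at $v_0$. Summing, $\val^\sigma_f(v)\le\langle v,\sum_i\omega_i'\rangle$ with equality at $v_0$; but $\val^\sigma_f$ is linear, so the difference $\langle\cdot,\sum_i\omega_i'\rangle-\val^\sigma_f$ is a linear functional on $\langle\sigma\rangle$ that is $\ge 0$ on $\sigma$ and vanishes at the interior point $v_0$, hence vanishes identically on $\langle\sigma\rangle$. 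Therefore each inequality $\val^\sigma_{f_i}(v)\le\langle v,\omega_i'\rangle$ must in fact be an equality for all $v\in\sigma$ (a sum of non-negative quantities vanishing forces each to vanish), i.e. $C^\sigma_{f_i}(\omega_i')=\sigma$ and $\val^\sigma_{f_i}$ is linear on $\sigma$. Finally, to get an honest $\omega_i\in M$ rather than $\omega_i'\in M_\Q$: apply Lemma~\ref{lem: property of val function}(g) with $C^\sigma_{f_i}(\omega_i')=\sigma$, which yields $\omega_{0,i}\in M$ with $\val^\sigma_{f_i}(v)=\langle v,\omega_{0,i}\rangle$ for all $v\in\sigma$; then $v(f_i\cdot\chi^{-\omega_{0,i}})=v(f_i)+\langle v,-\omega_{0,i}\rangle=\val^\sigma_{f_i}(v)-\langle v,\omega_{0,i}\rangle=0$ for all $v\in\sigma\cap N$, so $\omega_i:=-\omega_{0,i}$ works.

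The main obstacle is making the ``sum of non-negative functions is zero $\Rightarrow$ each is zero'' step rigorous at the level of the convex functions rather than just pointwise, and in particular handling the case where $\sigma$ is not full-dimensional in $N_\RR$ — one must work inside the linear span $\langle\sigma\rangle$ throughout, use that $v_0\in\sigma^\circ$ is a relative-interior point so that a linear functional non-negative on $\sigma$ and zero at $v_0$ is zero on $\langle\sigma\rangle$, and be careful that $\Omega^\sigma_{f_i}$ lives in $M_\Q$ (the rationality is then restored only via Lemma~\ref{lem: property of val function}(g), which is exactly why that lemma was proved). Everything else is a routine application of the uniqueness lemma (Lemma~\ref{lem: uniqueness of the function}) to pass between the combinatorial functions on $\sigma$ and their values on $\sigma\cap N$.
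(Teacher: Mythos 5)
Your proposal is correct and follows essentially the same route as the paper's proof: both reduce to the linearity of $\val^\sigma_f$, pick a relative-interior point $v_0\in\sigma^\circ$, choose $\omega_i'\in\Omega^\sigma_{f_i}$ with $v_0\in C^\sigma_{f_i}(\omega_i')$, use the additivity $\val^\sigma_f=\sum_i\val^\sigma_{f_i}$ together with the "sum of non-negative terms vanishes" argument to conclude $C^\sigma_{f_i}(\omega_i')=\sigma$, and then invoke Lemma \ref{lem: property of val function}(g) to replace the rational $\omega_i'$ by an integral $\omega_i\in M$. The only cosmetic difference is that the paper first normalizes $\omega=0$ (so $\val^\sigma_f\equiv 0$) and phrases the equality step as a contradiction, whereas you keep $\val^\sigma_f$ linear and argue directly with the difference functional.
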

        \begin{proof}
            One of the directions is obvious, so we show the statement of (b) holds when the statement of (a) holds. 
            If it is necessary, we replace $f$ with $\chi^\omega f$ and $f_1$ with $\chi^\omega f_1$ and we may assume that $\omega = 0$.
            Then $val^\sigma_f \equiv 0$ because of Lemma \ref{lem: uniqueness of the function} and Lemma \ref{lem: property of val function}(c). 
            Let $v_0\in \sigma^\circ \cap N$ be an element. 
            Then from Lemma \ref{lem: property of val function}(b), for any $1\leq i\leq r$, there exists $\omega_i\in \Omega^\sigma_{f_i}$ such that $v_0\in C^\sigma_{f_i}(\omega_i)$. 
            From Lemma \ref{lem: uniqueness of the function} and Lemma \ref{lem: property of val function}(c), $val^\sigma_f = \sum_{1\leq i\leq r} val^\sigma_{f_i}$. 
            Then the following equation and inequality holds:
            \begin{align*}
                0 =\sum_{1\leq i\leq r}\val^\sigma_{f_i}(v)&\leq \langle v, \omega'\rangle\quad(v\in\sigma)\\
                0 =\sum_{1\leq i\leq r}\val^\sigma_{f_i}(v_0)&= \langle v_0, \omega'\rangle\\, 
            \end{align*}
            where $\omega'$ denotes $\sum_{1\leq i\leq r}\omega_i\in M_\Q$. 
            These equation and inequality show that $\omega'\in\sigma^\vee$ and $\omega'\in\sigma^\perp$ because $v_0\in
            \sigma^\circ$. 
            Thus, $\langle v, \omega'\rangle = 0$ for any $v\in\sigma$. 
            
            We show that $\bigcap_{1\leq i\leq r}C^\sigma_{f_i}(\omega_i) = \sigma$ by contradiction. 
            We assume that $w\in\sigma\setminus\bigcap_{1\leq i\leq r}C^\sigma_{f_i}(\omega_i)$ exists. 
            Then from the definition of $C^\sigma_{f_i}(\omega_i)$, $\sum_{1\leq i\leq r}\val^\sigma_{f_i}(w)<\langle w, \omega'\rangle = 0$. 
            However, $\sum_{1\leq i\leq r} val^\sigma_{f_i}(w) = \val^\sigma_f(w) = 0$, so it is a contradiction. 
            
            Thus, for any $1\leq i\leq r$, $C^\sigma_{f_i}(\omega_i) = \sigma$. 
            Hence, from Lemma \ref{lem: property of val function}(g), for any $1\leq i\leq r$, there exists $\omega'_i\in M$ such that $\val^\sigma_{f_i}(v) = \langle v, \omega'_i\rangle$ for any $v\in\sigma$. 
            In particular, $v(f_i\cdot\chi^{-\omega'_i}) = 0$ for any $v\in \sigma\cap N$. 
        \end{proof}
        The problem that was stated at the beginning of the subsection can be characterized by the linearity of $\val^\sigma_f$ on $\sigma$ from Lemma\ref{lem: orbit and function}. 
        \begin{lemma}\label{lem: orbit and function}
            We keep the notation in Definition \ref{def:val cone}.  
            We assume that $\sigma$ is strongly convex. 
            Let $H_0$ be a closed subscheme of $T_N$ defined by $f = 0$ and $H$ be a scheme theoretic closure of $H_0$ in $X(\sigma)$. 
            Then the following statements are equivalent:
            \begin{enumerate}
                \item[(a)] A closed subscheme $H$ does not contain $O_\sigma$
                \item[(b)] There exists $\omega\in M$ such that $v(f\cdot\chi^\omega) = 0$ for any $v\in\sigma\cap N$.
            \end{enumerate}
            In this case, the ideal of $\Gamma(X(\sigma), \OO_{X(\sigma)})$ associated with a closed subscheme $H$ of $X(\sigma)$ is generated by $f\cdot\chi^\omega$. 
        \end{lemma}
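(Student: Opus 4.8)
The plan is to reformulate $O_\sigma\subset H$ as a statement about ideals in $R:=\Gamma(X(\sigma),\OO_{X(\sigma)})=k[\sigma^\vee\cap M]$ and then feed it into the combinatorial machinery of this subsection. Note that $k[M]$ is the localization of $R$ at all monomials $\chi^{\omega'}$ with $\omega'\in\sigma^\vee\cap M$, that $H_0$ is the closed subscheme of the open set $T_N\subset X(\sigma)$ cut out by $(f)$, and hence that its scheme-theoretic closure is $H=\Spec(R/I)$ with $I=(f)k[M]\cap R$. By Lemma \ref{lem: basic-includ-cone} one has $O_\sigma=\Spec(R/\mathfrak{p}_\sigma)$, so $O_\sigma\subset H$ if and only if $I\subset\mathfrak{p}_\sigma$. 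Thus statement (a) is equivalent to the existence of some $g\in (f)k[M]\cap R$ which is not in $\mathfrak{p}_\sigma$, i.e.\ which has at least one monomial whose exponent lies in $\sigma^\perp\cap M$ (it cannot have exponents outside $\sigma^\vee$, since $g\in R$).

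Next I would record the valuation dictionary. For a nonzero $g\in k[M]$, the function $\val^\sigma_g$ is continuous and homogeneous on $\sigma$ and agrees with $v\mapsto\min_{\omega\in\supp(g)}\langle v,\omega\rangle$ on $\sigma\cap N$ (this is the computation in the proof of Lemma \ref{lem: property of val cone1}(b),(c) together with Lemma \ref{lem: property of val function}(c)); since the latter map is also continuous and homogeneous, the two coincide on all of $\sigma$ by Lemma \ref{lem: uniqueness of the function}. Consequently $g\in R$ if and only if $\val^\sigma_g(v)\ge 0$ for all $v\in\sigma\cap N$ (each $\langle v,\omega\rangle\ge\val^\sigma_g(v)\ge 0$ forces $\supp(g)\subset\sigma^\vee$). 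Moreover, if such a $g$ has a monomial with exponent in $\sigma^\perp$ then $\val^\sigma_g\equiv 0$ on $\sigma$; conversely, if $\val^\sigma_g\equiv 0$ and $\supp(g)\subset\sigma^\vee$, then evaluating at a point $v_0$ of the relative interior $\sigma^\circ$ (where $\langle v_0,\omega\rangle>0$ for every $\omega\in(\sigma^\vee\cap M)\setminus\sigma^\perp$) shows some monomial of $g$ must have exponent in $\sigma^\perp$.

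With these in hand both implications are short. For (b)$\Rightarrow$(a): given $\omega\in M$ with $v(f\chi^\omega)=0$ for all $v\in\sigma\cap N$, put $F=f\chi^\omega$; then $\val^\sigma_F\equiv 0$, so $F\in R$, while $F\in (f)k[M]\cap R=I$, and by the last remark of the previous paragraph $F$ has a monomial in $\sigma^\perp$, hence $F\notin\mathfrak{p}_\sigma$ and $I\not\subset\mathfrak{p}_\sigma$. For (a)$\Rightarrow$(b): choose $g\in I\setminus\mathfrak{p}_\sigma$ and write $g=fh$ with $h\in k[M]$ (note $h\ne 0$ since $g\ne 0$); as $g\in R$ has a monomial with exponent in $\sigma^\perp$ we get $\val^\sigma_g\equiv 0$, i.e.\ $v(g)=0$ for all $v\in\sigma\cap N$. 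Applying Lemma \ref{lem: val-func-of-multiple-poly} to the factorization $g=f\cdot h$ (with the exponent $0$) produces $\omega_1\in M$ with $v(f\chi^{\omega_1})=0$ for all $v\in\sigma\cap N$, which is (b).

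Finally, for the last assertion, suppose (b) holds with witness $\omega$ and set $F=f\chi^\omega\in R$; I claim $I=(F)$ in $R$. The inclusion $(F)\subset I$ is clear. Conversely, for $g\in I$ write $g=fh'$ with $h'\in k[M]$, so $g=F\cdot(\chi^{-\omega}h')$; since $v(F)=0$ for all $v\in\sigma\cap N$ and valuations are additive, $v(\chi^{-\omega}h')=v(g)\ge 0$ for all such $v$ (as $g\in R$), whence $\chi^{-\omega}h'\in R$ and $g\in(F)$ — exactly the bookkeeping from the end of the proof of Proposition \ref{prop: condition of fine}. The only point needing genuine care is the translation in the first paragraph (scheme-theoretic closure $\leftrightarrow$ ideal contraction $\leftrightarrow$ orbit containment); once that is pinned down, everything else is a direct application of Lemmas \ref{lem: property of val cone1}, \ref{lem: uniqueness of the function} and \ref{lem: val-func-of-multiple-poly}.
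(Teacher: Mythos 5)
Your proposal is correct and follows essentially the same route as the paper: both translate (a) into the containment $(f)k[M]\cap k[\sigma^\vee\cap M]\subset\mathfrak{p}_\sigma$, deduce $\val^\sigma_g\equiv 0$ for a witness $g=fh$ and invoke Lemma \ref{lem: val-func-of-multiple-poly} for (a)$\Rightarrow$(b), and use additivity of valuations for the generator statement. The only cosmetic difference is that you conclude $\val^\sigma_g\equiv 0$ directly from the presence of a $\sigma^\perp$-monomial, whereas the paper gets it from positivity plus upper convexity (Lemma \ref{lem: property of val function}(d)); this does not change the argument.
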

        \begin{proof}
            Let $k[\sigma^\vee\cap M]$ denote $\Gamma(X(\sigma), \OO_{X(\Delta)})$, $I_f\subset k[M]$ denote an ideal generated by $f$, and $\mathfrak{p}_\sigma\subset k[\sigma^\vee\cap M]$ denote the prime ideal associated to $O_\sigma\subset X(\sigma)$. 
            
            First, we assume that the condition of (a) holds. 
            Then there exists $g\in I_f\cap k[\sigma^\vee\cap M]$ such that $g\notin\mathfrak{p}_\sigma$. 
            In particular, for  $v_0\in\sigma^\circ\cap N$, we have $v_0(g) = 0$. 
            Moreover, for any $v\in\sigma\cap N$, $v(g)\geq 0$ because $g\in k[\sigma^\vee\cap M]$. 
            Therefore, $\val^\sigma_g \equiv 0$ from Lemma \ref{lem: property of val function}(d). 
            On the other hand, there exists $h\in k[M]$ such that $g = f h$ because $g\in I_f$. 
            Thus, from Lemma \ref{lem: val-func-of-multiple-poly}, the condition of (b) holds.

            Next, we assume that the condition of (b) holds. 
            From the assumption, $f\cdot\chi^\omega\in I_f\cap k[\sigma^\vee\cap M]$. 
            Thus,  $f\cdot\chi^\omega\notin\mathfrak{p}_\sigma$ and the condition of (a) holds. 
            
            In this case, for any $g\in I_f\cap k[\sigma^\vee\cap M]$, there exists $h\in k[M]$ such that $g = (f\cdot\chi^\omega)h$. 
            Because, $v(f\cdot \chi^\omega) = 0$ and $v(g)\geq 0$ for any $v\in\sigma\cap N$. 
            Hence, for any $v\in \sigma\cap N$, $v(h)\geq 0$ from the additivity of valuations. 
            Thus, $h\in k[\sigma^\vee\cap M]$ and this shows that $I_f\cap k[\sigma^\vee\cap M]$ is generated by $f\cdot\chi^\omega$. 
        \end{proof}
        After showing Lemma\ref{lem: orbit and function}, we define two properties of $f$ for $\Delta$. 
        These two properties are important for hypersurfaces of mock toric varieties. 
        \begin{definition}\label{def: fine toric fan for function}
            Let $N$ be a lattice of a finite rank, $\Delta$ be a strongly convex rational polyhedral fan in $N_\RR$, $f\in k[M]$ be a Laurent polynomial, $H^\circ_{X(\Delta), f}$ denote the closed subscheme of $T_N$ defined by $f = 0$ and $H_{X(\Delta), f}$ denote the scheme theoretic closure of $H^\circ_{X(\Delta), f}$ in $X(\Delta)$. 
            If for any $\sigma\in\Delta$, $H_{X(\Delta),f}$ does not contain $O_\sigma$, then we call that $f$ is fine for $\Delta$. 
        \end{definition}
        \begin{definition}\label{def: toric nondegenerate}
            We keep the notation in Definition \ref{def: fine toric fan for function}, and we assume that $f$ is fine for $\Delta$. 
            If $H_{X(\Delta), f}\cap O_\sigma$ is smooth over $k$ for any $\sigma\in\Delta$, we call that $f$ is non-degenerate for $\Delta$. 
        \end{definition}

        For the notation above, the fineness or non-degeneracy of $f$ is inherited for the closure of the orbit.
        
        \begin{lemma}\label{lem: heredity of fineness for orbit}
            We keep the notation in Definition \ref{def: fine toric fan for function}, and we assume that $f$ is fine for $\Delta$. 
            Let $\sigma\in\Delta$ be a cone. 
            Then there exists $\omega\in M$ such that $v(\chi^\omega f) = 0$ for any $v\in\sigma\cap N$ from Lemma \ref{lem: orbit and function}.
            Let $p^\sigma\colon\Gamma(X(\sigma), \mathscr{O}_{X(\Delta)})\rightarrow \Gamma(O_\sigma, \OO_{O_\sigma})$ be a quotient map associated with the closed immersion $O_\sigma\hookrightarrow X(\sigma)$ and let  $f^\sigma$ denote $p^\sigma(\chi^\omega f)$. 
            Then the following statements follow:
            \begin{enumerate}
                \item[(a)] As a closed subscheme of $O_\sigma$, $H_{X(\Delta), f}\cap O_\sigma = H^\circ_{\overline{O_\sigma}, f^\sigma}$. 
                \item[(b)] We use the notation in Proposition \ref{prop:orbit-toric}, $f^\sigma$ is fine for $\Delta[\sigma]$. 
                \item[(c)] As a closed subscheme of $\overline{O_\sigma}$, $H_{X(\Delta), f}\cap \overline{O_\sigma} = H_{\overline{O_\sigma}, f^\sigma}$. 
                \item[(d)] If $f$ is non-degenerate for $\Delta$, then $f^\sigma$ is non-degenerate for $\Delta[\sigma]$. 
            \end{enumerate}
        \end{lemma}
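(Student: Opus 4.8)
The plan is to establish (a) first by a direct ideal computation, and then deduce (b), (c), (d) from it, working throughout under the identification $\overline{O_\sigma}\cong X(\Delta[\sigma])$ of Proposition \ref{prop:orbit-toric}(d); under this identification the dense torus of $X(\Delta[\sigma])$ is $O_\sigma$, and for each $\tau\in\Delta$ with $\sigma\subseteq\tau$ the orbit $O_{(\pi^\sigma)_\RR(\tau)}$ of $X(\Delta[\sigma])$ is precisely the orbit $O_\tau$ of $X(\Delta)$ (cf.\ the proof of Proposition \ref{prop:orbit-toric}).

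For (a): since $f$ is fine, $H_{X(\Delta),f}$ does not contain $O_\sigma$, so Lemma \ref{lem: orbit and function} gives that the ideal of $H_{X(\Delta),f}\cap X(\sigma)$ in $\Gamma(X(\sigma),\OO)$ is generated by $\chi^\omega f$. Intersecting scheme-theoretically with the closed subscheme $O_\sigma\hookrightarrow X(\sigma)$, whose ideal is $\ker(p^\sigma)$, the ideal of $H_{X(\Delta),f}\cap O_\sigma$ in $k[M^\sigma]=\Gamma(O_\sigma,\OO_{O_\sigma})$ is generated by $p^\sigma(\chi^\omega f)=f^\sigma$. As $O_\sigma\not\subseteq H_{X(\Delta),f}$, the element $f^\sigma$ is nonzero in the domain $k[M^\sigma]$, so this closed subscheme is exactly $H^\circ_{\overline{O_\sigma},f^\sigma}$, proving (a). For (b): by (a) and the universal property of the scheme-theoretic closure (the closure of an open subscheme of a closed subscheme is contained in that closed subscheme), $H_{\overline{O_\sigma},f^\sigma}\subseteq H_{X(\Delta),f}\cap\overline{O_\sigma}$ as closed subschemes of $\overline{O_\sigma}$. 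Given $\tau'\in\Delta[\sigma]$, choose $\tau\in\Delta$ with $\sigma\subseteq\tau$ and $(\pi^\sigma)_\RR(\tau)=\tau'$; then $O_{\tau'}=O_\tau\not\subseteq H_{X(\Delta),f}$ by fineness, hence $O_{\tau'}\not\subseteq H_{\overline{O_\sigma},f^\sigma}$, which is (b).

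For (c) I intend to compare $D_1:=H_{\overline{O_\sigma},f^\sigma}$ and $D_2:=H_{X(\Delta),f}\cap\overline{O_\sigma}$ at codimension-one points. On the affine chart $\overline{O_\sigma}\cap X(\tau)\cong X((\pi^\sigma)_\RR(\tau))$ (for $\sigma\subseteq\tau\in\Delta$), $D_2$ is cut out by the image of $\chi^{\omega_\tau}f$ ($\omega_\tau$ from Lemma \ref{lem: orbit and function} applied to $\tau$), which is nonzero since otherwise $O_\sigma\subseteq H_{X(\Delta),f}$; likewise, using (b) and Lemma \ref{lem: orbit and function} applied to $f^\sigma$ and $\Delta[\sigma]$, $D_1$ is cut out there by a nonzero element. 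Thus $D_1\subseteq D_2$ are effective Cartier divisors on the normal integral variety $\overline{O_\sigma}$, with local defining functions $g_1\mid g_2$. At a codimension-one point $x$: if $x\in O_\sigma$, then $v_x(g_1)=v_x(g_2)$ because $D_1|_{O_\sigma}=D_2|_{O_\sigma}$ by (a); if $x\notin O_\sigma$, then $\overline{\{x\}}$ is an invariant prime divisor $\overline{O_{\tau_0}}$ of $\overline{O_\sigma}$ with $\tau_0$ covering $\sigma$, and $v_x(g_2)=0$ (otherwise $O_{\tau_0}\subseteq H_{X(\Delta),f}$, contradicting fineness), whence $0\le v_x(g_1)\le v_x(g_2)=0$. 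So $g_2/g_1$ has trivial divisor and is therefore a unit by normality, giving $D_1=D_2$, i.e.\ (c). This reduction — ensuring local principality, ruling out divisors disjoint from $O_\sigma$ by fineness, and patching — is the step I expect to require the most care; an alternative is a chart-by-chart comparison of the two generators directly, but that forces one to track the extra monomial factors and seems messier.

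For (d): assume $f$ is non-degenerate for $\Delta$; by (b), $f^\sigma$ is fine for $\Delta[\sigma]$. For $\tau'=(\pi^\sigma)_\RR(\tau)\in\Delta[\sigma]$, using (c) together with $O_\tau\subseteq\overline{O_\sigma}$ (Lemma \ref{lem: basic-includ-cone}(d)) and the compatibility of the scheme structure on $O_\tau$ inside $\overline{O_\sigma}$ and inside $X(\Delta)$, one gets $H_{\overline{O_\sigma},f^\sigma}\cap O_{\tau'}=H_{X(\Delta),f}\cap\overline{O_\sigma}\cap O_\tau=H_{X(\Delta),f}\cap O_\tau$, which is smooth over $k$ by non-degeneracy of $f$. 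Hence $f^\sigma$ is non-degenerate for $\Delta[\sigma]$, completing the proof.
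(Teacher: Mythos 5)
Your proof is correct, and for parts (a), (b) and (d) it follows the paper's own argument almost verbatim: (a) by pushing the generator $\chi^\omega f$ of the ideal of $H_{X(\Delta),f}\cap X(\sigma)$ given by Lemma \ref{lem: orbit and function} through $p^\sigma$, (b) from the inclusion $H_{\overline{O_\sigma},f^\sigma}\subset H_{X(\Delta),f}\cap\overline{O_\sigma}$ together with fineness of $f$, and (d) from the identification $H_{\overline{O_\sigma},f^\sigma}\cap O_{\tau'}=H_{X(\Delta),f}\cap O_\tau$. Where you genuinely diverge is (c). The paper argues chart by chart on $\overline{O_\sigma}\cap X(\tau)\cong X((\pi^\sigma)_\RR(\tau))$: it applies Lemma \ref{lem: orbit and function} to $\tau$ to get a second normalization $\chi^{\omega'}f$, notes that $\chi^{\omega-\omega'}$ is a unit on $X(\sigma)$ so that $f^\sigma$ and $f^\tau:=p^\sigma(\chi^{\omega'}f)$ define the same subscheme of $O_\sigma$, shows $\val^{\tau'}_{f^\tau}\equiv 0$ by fineness, and then invokes Lemma \ref{lem: orbit and function} once more to identify both the closure of $H^\circ_{\overline{O_\sigma},f^\sigma}$ and $H_{X(\Delta),f}\cap\overline{O_\sigma}$ on that chart with the principal subscheme cut out by $f^\tau$, gluing by Lemma \ref{lem:covering-image}. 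You instead observe that both sides are effective Cartier divisors $D_1\subset D_2$ on the normal integral variety $\overline{O_\sigma}$ (local principality of $D_2$ coming from Lemma \ref{lem: orbit and function} applied to $\tau$, and of $D_1$ from (b) plus the same lemma applied to $f^\sigma$ on $X(\Delta[\sigma])$), and compare them at codimension-one points: over $O_\sigma$ they agree by (a), while the codimension-one points outside $O_\sigma$ are generic points of invariant divisors $\overline{O_{\tau_0}}$ with $\sigma\precneqq\tau_0$, where $v_x(g_2)=0$ by fineness; hence $g_2/g_1$ is regular with trivial divisor, so a unit by normality, and $D_1=D_2$. Both routes are sound and of comparable length. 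The paper's stays entirely within the explicit monomial bookkeeping it has already set up ($\val$-functions and generators), which it then reuses in the mock-toric analogue (Proposition \ref{prop: heredity-finess-for-orbit}); yours replaces that bookkeeping with the standard valuative comparison of divisors on a normal variety, which makes the role of fineness (no invariant divisor of $\overline{O_\sigma}$ can be absorbed into either hypersurface) more transparent, at the cost of invoking normality of $\overline{O_\sigma}\cong X(\Delta[\sigma])$ from Proposition \ref{prop:orbit-toric} as a structural input.
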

        \begin{proof}
            We show the statements from (a) to (d) in order. 
            \begin{enumerate}
                \item[(a)] From Lemma \ref{lem: orbit and function}, $\chi^\omega f$ is a generator of an ideal of $k[X(\sigma)]$ associated with a closed subscheme $H_{X(\sigma), f}$ of $X(\sigma)$. 
                Thus, $f^\sigma$ is a generator of an ideal of $k[O_\sigma]$ associated with a closed subscheme $H_{X(\sigma), f}\cap O_\sigma$ of $O_\sigma$. 
                \item[(b)] From the definition of $H_{\overline{O_\sigma}, f^\sigma}$ and (a), $H_{\overline{O_\sigma}, f^\sigma}\subset H_{X(\Delta), f}\cap \overline{O_\sigma}$. 
                Thus, $H_{\overline{O_\sigma}, f^\sigma}$ does not contain $O_\tau$ for any $\sigma\subset \tau\in\Delta$ because $f$ is fine for $\Delta$. 
                \item[(c)] The closed subscheme $\overline{O_\sigma}\subset X(\Delta)$ is covered by $\{X(\tau)\}_{\sigma\subset\tau\in\Delta}$. 
                Thus, for each such $\tau$, it is enough to show that the scheme theoretic closure of $H^\circ_{\overline{O_\sigma}, f^\sigma}$ in $\overline{O_\sigma}\cap X(\tau)$ is equal to $H_{X(\Delta), f}\cap\overline{O_\sigma}\cap X(\tau)$ from Lemma \ref{lem:covering-image}. 
                For such $\tau$, there exists the following commutative diagram: 
                \begin{equation*}
                    \begin{tikzcd} 
                        \Gamma(X(\tau), \OO_{X(\Delta)})\ar[d, hook]\ar[r, "p^\sigma"]& \Gamma(\overline{O_\sigma}\cap X(\tau), \OO_{\overline{O_\sigma}})\ar[d, hook]\\
                        \Gamma(X(\sigma), \OO_{X(\Delta)})\ar[r, "p^\sigma"] & \Gamma(O_\sigma, \OO_{\overline{O_\sigma}})
                    \end{tikzcd}
                \end{equation*}
                , where horizontal maps are induced by two closed immersions $O_\sigma\hookrightarrow X(\sigma)$ and $\overline{O_\sigma}\cap X(\tau)\hookrightarrow X(\tau)$. 
                From Lemma \ref{lem: orbit and function}, there exists $\omega'\in M$ such that $w(\chi^{\omega'}f) = 0$ for any $w\in \tau\cap N$ because $f$ is fine for $\Delta$. 
                Because $\sigma\subset\tau$, $v(\chi^{\omega-\omega'}) = 0$ for any $v\in\sigma\cap N$. 
                Thus, $\chi^{\omega-\omega'} \in \Gamma(X(\sigma), \OO_{X(\Delta)})^*$. 
                Let $\eta$ denote $p^\sigma(\chi^{\omega-\omega'})$ and $f^\tau$ denote $p^\sigma(\chi^{\omega'}f)$. 
                Then $f^\tau\in\Gamma(\overline{O_\sigma}\cap X(\tau), \OO_{\overline{O_\sigma}})$ and $f^\sigma = \eta f^\tau$. 
                In particular, $\eta\in\Gamma(O_\sigma, \OO_{\overline{O_\sigma}})^*$. 
                Thus, $H^\circ_{\overline{O_\sigma}, f^\sigma} = H^\circ_{\overline{O_\sigma}, f^\tau}$. 
                
                On the other hand, the ideal of $\Gamma(\overline{O_\sigma}\cap X(\tau), \OO_{\overline{O_\sigma}})$ associated with a closed subscheme $H_{X(\Delta), f}\cap\overline{O_\sigma}\cap X(\tau)\subset \overline{O_\sigma}\cap X(\tau)$ is generated by $f^\tau$ because the ideal of $\Gamma(X(\tau), \OO_{X(\Delta)})$ associated with a closed subscheme $H_{X(\Delta), f}\cap X(\tau)\subset X(\tau)$ is generated by $\chi^{\omega'}f$. 
                Let $\tau'$ denote $\pi^\sigma_\RR(\tau)$ in the notation of Proposition \ref{prop:orbit-toric}. 
                We remark that $X(\tau') = \overline{O_\sigma}\cap X(\tau)$. 
                If $\val^{\tau'}_{f^\tau}(w')>0$ for some $w'\in {\tau'}^\circ\cap N[\sigma]$, then $H_{X(\Delta), f}\cap\overline{O_\sigma}\cap X(\tau)$ should contain $O_\tau$. 
                However, it is a contradiction to the assumption that $f$ is fine for $\Delta$. 
                Thus, $\val^{\tau'}_{f^\tau}\equiv 0$, so that the ideal of $\Gamma(\overline{O_\sigma}\cap X(\tau), \OO_{\overline{O_\sigma}})$ associated with the scheme theoretic closure of $H^\circ_{\overline{O_\sigma}, f^\tau}$ in $\overline{O_\sigma}\cap X(\tau)$ is generated by $f^\tau$ too from Lemma \ref{lem: orbit and function}. 
                Therefore, the scheme theoretic closure of $H^\circ_{\overline{O_\sigma}, f^\sigma}$ in $\overline{O_\sigma}\cap X(\tau)$ is equal to $H_{X(\Delta), f}\cap\overline{O_\sigma}\cap X(\tau)$ as closed subschemes of $\overline{O_\sigma}\cap X(\tau)$. 
                \item[(d)] We use the notation in the proof of (c), and we have that $H_{\overline{O_\sigma}, f^\sigma}\cap O_{\tau'} = H_{X(\Delta), f}\cap \overline{O_\sigma}\cap O_{\tau'} = H_{X(\Delta), f}\cap O_\tau$. 
                Thus, from the assumption, $H_{\overline{O_\sigma}, f^\sigma}\cap O_{\tau'}$ is smooth over $k$. 
            \end{enumerate}
        \end{proof}
        From the following lemma, we can check that the fineness or non-degeneracy of $f$ is inherited for the toric resolution.
        \begin{lemma}\label{lem: heredity of fineness for dominant}
            Let $N$ and $N'$ be lattices of finite rank, $\alpha\colon N'\rightarrow N$ be a surjective morphism, $\Delta$ and $\Delta'$ be strongly convex rational polyhedral fans in $N_\RR$ and $N'_\RR$ respectively. 
            We assume that $\alpha$ is compatible with the fans $\Delta'$ and $\Delta$. 
            Let $f\in k[M]$ be a polynomial and let $\alpha^*$ denote a ring morphism $k[M]\rightarrow k[M']$ induced by $\alpha$. 
            We assume that $f$ is fine for $\Delta$. 
            Then the following statements follow:
            \begin{enumerate}
                \item[(a)] The polynomial $\alpha^*(f)$ is fine for $\Delta'$. 
                \item[(b)] As a closed subscheme of $X(\Delta')$, $H_{X(\Delta), f}\times_{X(\Delta)}X(\Delta') = H_{X(\Delta'), \alpha^*(f)}$. 
                \item[(c)] If $f$ is non-degenerate for $\Delta$, $\alpha^*(f)$ is non-degenerate for $\Delta'$. 
            \end{enumerate}
        \end{lemma}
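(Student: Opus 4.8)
The plan is to treat the three parts in order, using two earlier results as the main engines: Lemma \ref{lem: orbit and function} (the criterion $O_\sigma\not\subset H$ in terms of torus‑invariant valuations, together with its last clause, which pins down the generator $f\cdot\chi^\omega$ of the defining ideal once the valuation condition holds) and Lemma \ref{lem: basic-toric-morphism-inclusion} (the behaviour of affine charts and orbits under a compatible toric morphism). The one elementary observation I will use throughout is that, for $w\in N'$ viewed as a torus‑invariant valuation and any $g\in k[M]$, one has $w(\alpha^*(g))=\alpha(w)(g)$: this follows from $\langle w,\alpha^*(\omega)\rangle=\langle\alpha(w),\omega\rangle$ together with the injectivity of the dual map $\alpha^*\colon M\hookrightarrow M'$ (so $\alpha^*(g)$ has no monomial cancellation). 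For (a), I fix $\tau\in\Delta'$ and let $\sigma\in\Delta$ be the minimal cone with $\alpha_\RR(\tau)\subset\sigma$ (it exists by compatibility). Fineness of $f$ and Lemma \ref{lem: orbit and function} on $X(\sigma)$ give $\omega\in M$ with $v(f\chi^\omega)=0$ for all $v\in\sigma\cap N$; then for $w\in\tau\cap N'$ we have $\alpha(w)\in\sigma\cap N$, hence $w\big(\alpha^*(f)\chi^{\alpha^*(\omega)}\big)=w(\alpha^*(f\chi^\omega))=\alpha(w)(f\chi^\omega)=0$, and Lemma \ref{lem: orbit and function} on $X(\tau)$ shows $O_\tau\not\subset H_{X(\tau),\alpha^*(f)}$. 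Since $H_{X(\Delta'),\alpha^*(f)}$ restricts to $H_{X(\tau),\alpha^*(f)}$ on the chart $X(\tau)$ (Lemma \ref{lem:covering-image}), this proves $\alpha^*(f)$ is fine for $\Delta'$.

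For (b), it suffices, by Lemma \ref{lem:covering-image}, to check the equality after restriction to each affine chart $X(\tau)$, $\tau\in\Delta'$. With $\sigma$ as above, $X(\tau)\to X(\Delta)$ factors through $X(\sigma)$ by Lemma \ref{lem: basic-toric-morphism-inclusion}(c), so
\[
    \big(H_{X(\Delta),f}\times_{X(\Delta)}X(\Delta')\big)\cap X(\tau)=\big(H_{X(\Delta),f}\cap X(\sigma)\big)\times_{X(\sigma)}X(\tau).
\]
By Lemma \ref{lem: orbit and function}, $H_{X(\Delta),f}\cap X(\sigma)$ has defining ideal $(f\chi^\omega)$ in $\Gamma(X(\sigma),\OO_{X(\Delta)})$, so the right‑hand side has defining ideal generated by the image $\alpha^*(f\chi^\omega)=\alpha^*(f)\chi^{\alpha^*(\omega)}$ in $\Gamma(X(\tau),\OO_{X(\Delta')})$. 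By part (a) and the last clause of Lemma \ref{lem: orbit and function}, that is precisely the defining ideal of $H_{X(\tau),\alpha^*(f)}=H_{X(\Delta'),\alpha^*(f)}\cap X(\tau)$. Gluing over $\tau$ gives (b).

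For (c), assume $f$ is non‑degenerate for $\Delta$ and fix $\tau\in\Delta'$ with $\sigma\in\Delta$ minimal as above. Since $\alpha_*(O_\tau)\subset O_\sigma$ (Lemma \ref{lem: basic-toric-morphism-inclusion}(a)) and $O_\sigma\hookrightarrow X(\Delta)$ is a locally closed immersion, part (b) yields
\[
    H_{X(\Delta'),\alpha^*(f)}\cap O_\tau=\big(H_{X(\Delta),f}\cap O_\sigma\big)\times_{O_\sigma}O_\tau.
\]
Because $\alpha$ is surjective, the morphism $O_\tau\to O_\sigma$ is a trivial algebraic torus fibration — this is exactly the faithful‑flatness/split‑injection computation carried out in the proof of Lemma \ref{lem: basic-toric-morphism-inclusion}(b) — in particular it is smooth. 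As $H_{X(\Delta),f}\cap O_\sigma$ is smooth over $k$ by non‑degeneracy, so is its pullback to $O_\tau$. Hence $\alpha^*(f)$ is non‑degenerate for $\Delta'$.

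I expect the one genuinely delicate point to be in (b): one must know that the defining ideal of the scheme‑theoretic closure $H_{X(\tau),\alpha^*(f)}$ is \emph{equal to} $(\alpha^*(f)\chi^{\alpha^*(\omega)})$ and not merely contains it, and this is where the ``in this case'' assertion of Lemma \ref{lem: orbit and function} is essential — which is also why part (a) has to be established first so that the valuation hypothesis of that lemma is available on $X(\tau)$. Everything else is routine bookkeeping with fiber products and the orbit–chart dictionary.
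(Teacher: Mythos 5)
Your proposal is correct and follows essentially the same route as the paper: part (a) by transporting the valuation criterion of Lemma \ref{lem: orbit and function} along $\alpha$ via $w(\alpha^*(g))=\alpha(w)(g)$, part (b) by comparing the principal defining ideals $(\chi^\omega f)$ and $(\chi^{\alpha^*(\omega)}\alpha^*(f))$ chart by chart and gluing with Lemma \ref{lem:covering-image}, and part (c) by the Cartesian square over orbits together with the fact that $O_\tau\to O_\sigma$ is a trivial algebraic torus fibration. The point you flag as delicate (that the ideal of the closure is exactly $(\alpha^*(f)\chi^{\alpha^*(\omega)})$, via the final clause of Lemma \ref{lem: orbit and function} once (a) is in place) is precisely how the paper argues as well.
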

        \begin{proof}
            We show the statements from (a) to (c) in order. 
            \begin{enumerate}
                \item[(a)] Let $\tau\in\Delta'$ and $\sigma\in\Delta$ be cones such that $\alpha_\RR(\tau)\subset \sigma$. 
                Then there exists $\omega\in M$ such that $v(\chi^\omega f) = 0$ for any $v\in\sigma\cap N$ from Lemma \ref{lem: orbit and function}. 
                We remark that for any $w\in \tau\cap N'$, $w(\alpha^*(\chi^\omega f)) = \alpha(w)(\chi^\omega f)$.  
                Thus, $w(\chi^{\alpha^*(\omega)}\alpha^*(f)) = 0$ for any $w\in\tau\cap N'$ because $\alpha(w)\in \sigma\cap N$. 
                Therefore, from Lemma \ref{lem: orbit and function}, $\alpha^*(f)$ is fine for $\Delta'$. 
                \item[(b)] We use the notation in the proof of (a), and the ideal of $k[X(\tau)]$ associated with the closed subscheme $H_{X(\Delta'), \alpha^*(f)}\cap X(\tau)$ of $X(\tau)$ is generated by $\chi^{\alpha^*(\omega)}\alpha^*(f)$ from Lemma \ref{lem: orbit and function}. 
                Thus, the following diagram is a Cartesian diagram:
                \begin{equation*}
                    \begin{tikzcd} 
                        H_{X(\Delta'), \alpha^*(f)}\cap X(\tau)\ar[d, "\alpha_*"]\ar[r, hook]& X(\tau)\ar[d, "\alpha_*"]\\
                        H_{X(\Delta), f}\cap X(\sigma)\ar[r, hook] & X(\sigma)
                    \end{tikzcd}
                \end{equation*}
                Then the statement follows from Lemma \ref{lem:covering-image}. 
                \item[(c)] We use the notation in the proof of (a).
                There exists $\sigma\in\Delta$ such that $\alpha_\RR(\tau)\cap\sigma^\circ \neq\emptyset$. 
                We remark that $\alpha_*(O_\tau) = O_\sigma$ because $\alpha$ is surjective. 
                Thus, there exists the following Cartesian diagram from the diagram in the proof of (b): 
                \begin{equation*}
                    \begin{tikzcd} 
                        H_{X(\Delta'), \alpha^*(f)}\cap O_\tau\ar[d, "\alpha_*"]\ar[r, hook]& O_\tau\ar[d, "\alpha_*"]\\
                        H_{X(\Delta), f}\cap O_\sigma\ar[r, hook] & O_\sigma
                    \end{tikzcd}
                \end{equation*}
                Because $\alpha_*\colon O_\tau\rightarrow O_\sigma$ is a trivial algebraic torus fibration, the smoothness of $H_{X(\Delta'), \alpha^*(f)}\cap O_\tau$ and the smoothness of $H_{X(\Delta), f}\cap O_\sigma$ are equivalent. 
            \end{enumerate}
        \end{proof}
        In general, $H_{X(\Delta), f}$ contains some torus orbits of $X(\Delta)$. 
        The following lemma indicates that a toric resolution $X(\Delta_f)\rightarrow X(\Delta)$ removes this obstacle.
        \begin{lemma}\label{lem:example of toric fine fan}
            We use the notation in Lemma \ref{lem: refinement of the toric fan along function}. 
            Let $\Delta'$ be a refinement of $\Delta_f$. 
            Then $f$ is fine for $\Delta'$. 
            Moreover, if $f$ is non-degenerate for $\Delta_f$, $f$ is non-degenerate for $\Delta'$ too. 
        \end{lemma}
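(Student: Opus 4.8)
The plan is to reduce the statement about refinements of $\Delta_f$ directly to the key criterion for fineness, Lemma~\ref{lem: orbit and function}, using the fact that $\val^\sigma_f$ is linear on each cone of $\Delta_f$. First I would recall that, by construction in Lemma~\ref{lem: refinement of the toric fan along function}, every cone of $\Delta_f$ is of the form $C^\sigma_f(\omega)$ for some $\sigma\in\Delta$ and $\omega\in\Omega^\sigma_f$, and by Lemma~\ref{lem: property of val function}(g) there exists $\omega_0\in M$ with $\val^\sigma_f(v)=\langle v,\omega_0\rangle$ for all $v\in C^\sigma_f(\omega)$. Now let $\Delta'$ be any refinement of $\Delta_f$ and let $\gamma\in\Delta'$ be a cone. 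Since $\Delta'$ refines $\Delta_f$, there is a cone $C^\sigma_f(\omega)\in\Delta_f$ containing $\gamma$; pick the corresponding $\omega_0\in M$. By Lemma~\ref{lem: property of val function}(c), $\val^\sigma_f(v)=v(f)$ for $v\in\sigma\cap N$, and since $C^\sigma_f(\omega)\supseteq\gamma$ we get $v(f)=\langle v,\omega_0\rangle=v(\chi^{\omega_0})$ for all $v\in\gamma\cap N$. Hence $v(f\cdot\chi^{-\omega_0})=0$ for all $v\in\gamma\cap N$, so by Lemma~\ref{lem: orbit and function} (applied with the cone $\gamma$, which is strongly convex because $\Delta'$ is), $H_{X(\Delta'),f}$ does not contain $O_\gamma$. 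As $\gamma\in\Delta'$ was arbitrary, $f$ is fine for $\Delta'$.

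For the non-degeneracy part, assume $f$ is non-degenerate for $\Delta_f$; in particular $f$ is fine for $\Delta_f$, hence fine for $\Delta'$ by the first part. Fix $\gamma\in\Delta'$ and, as above, a cone $\tau:=C^\sigma_f(\omega)\in\Delta_f$ with $\gamma\subseteq\tau$. I would then compare $H_{X(\Delta'),f}\cap O_\gamma$ with $H_{X(\Delta_f),f}\cap O_\tau$. Let $\lambda\colon N'\to N$ be the identity on $N$, viewed as compatible with the fans $\Delta'$ and $\Delta_f$ (a refinement is a special case of a compatible map); but since both fans live in the same $N_\RR$, it is cleaner to argue directly: the open immersion $X(\gamma)\hookrightarrow X(\tau)$ of affine toric varieties (coming from $\gamma\preceq\tau$ if $\gamma$ is a face of $\tau$, and more generally from $\gamma\subseteq\tau$ together with the fact that $O_\gamma$ lies in the toric variety $X(\tau)$) identifies $O_\gamma$ with an orbit inside $X(\tau)$. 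More precisely, since $\val^\tau_f$ is linear on $\tau$, Lemma~\ref{lem: orbit and function} shows that on $X(\tau)$ the ideal of $H_{X(\Delta_f),f}$ is principal, generated by $\chi^{\omega_0}f$; restricting this generator along $X(\gamma)\hookrightarrow X(\tau)$ shows the ideal of $H_{X(\Delta'),f}\cap X(\gamma)$ is generated by the same element, and passing to the orbit $O_\gamma\subseteq O_\tau$ (an open or closed subscheme according to whether $\dim\gamma=\dim\tau$ or not) exhibits $H_{X(\Delta'),f}\cap O_\gamma$ as a locally closed subscheme of $H_{X(\Delta_f),f}\cap O_\tau$, which is smooth over $k$ by assumption. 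Smoothness descends to locally closed subschemes that are unions of connected components of fibers of the torus-orbit stratification, and in any case $O_\gamma\hookrightarrow \overline{O_\tau}$ meets $O_\tau$ in an open subset; so $H_{X(\Delta'),f}\cap O_\gamma$ is smooth over $k$.

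The main obstacle I anticipate is bookkeeping the precise relationship between the orbit $O_\gamma$ of $\Delta'$ and the orbit $O_\tau$ of $\Delta_f$ when $\gamma$ is a cone of the refinement that is \emph{not} a face of $\tau$ but merely contained in it. In that situation $O_\gamma$ need not be contained in $O_\tau$; rather, $\gamma^\circ\subseteq\tau^\circ$ is impossible unless $\dim\gamma$ matches up correctly, and one should instead take $\tau$ to be the unique cone of $\Delta_f$ with $\gamma^\circ\subseteq\tau^\circ$, so that $O_\gamma$ really does map into $O_\tau$. I would phrase this carefully: choose $\tau\in\Delta_f$ with $\relint(\gamma)\subseteq\relint(\tau)$; then $\val^\tau_f$ restricted to $\relint(\gamma)$ agrees with the linear function $\langle-,\omega_0\rangle$, so $v(f\cdot\chi^{-\omega_0})=0$ for $v$ in a dense subset of $\gamma\cap N$, hence for all of $\gamma\cap N$ by Lemma~\ref{lem: uniqueness of the function}. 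With that correct choice of $\tau$, the identification $H_{X(\Delta'),f}\cap O_\gamma\hookrightarrow H_{X(\Delta_f),f}\cap O_\tau$ as (open, if $\dim\gamma=\dim\tau$) subscheme is clean, and smoothness transfers. The only genuinely delicate point is verifying that being an open subscheme of a smooth $k$-scheme suffices, which is immediate, so overall the lemma follows by assembling Lemmas~\ref{lem: orbit and function}, \ref{lem: property of val function}(c),(g) and \ref{lem: uniqueness of the function}.
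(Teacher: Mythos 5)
Your first half (fineness) is correct and is essentially the paper's computation: on each cone $C^\sigma_f(\omega)$ of $\Delta_f$ the function $\val^\sigma_f$ agrees with $\langle-,\omega_0\rangle$ for some $\omega_0\in M$ by Lemma~\ref{lem: property of val function}(g), hence $v(\chi^{-\omega_0}f)=0$ on any subcone $\gamma\in\Delta'$, and Lemma~\ref{lem: orbit and function} gives $O_\gamma\not\subset H_{X(\Delta'),f}$. (The paper does this only for the cones of $\Delta_f$ and then quotes Lemma~\ref{lem: heredity of fineness for dominant}(a), with $\alpha=\mathrm{id}_N$ compatible with $\Delta'$ and $\Delta_f$, to pass to the refinement; your direct version is equivalent.)

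The non-degeneracy half has a genuine gap. First, your claim of an ``open immersion $X(\gamma)\hookrightarrow X(\tau)$'' for $\gamma\subseteq\tau$ is false unless $\gamma\preceq\tau$; in general one only gets the dominant affine toric morphism dual to $k[\tau^\vee\cap M]\subset k[\gamma^\vee\cap M]$. More seriously, when $\gamma\in\Delta'$ satisfies $\gamma^\circ\subset\tau^\circ$ with $\dim\gamma<\dim\tau$ (the typical situation for a refinement, e.g.\ a new ray through the interior of $\tau$), the orbit $O_\gamma$ of $X(\Delta')$ has \emph{larger} dimension than $O_\tau$, so $H_{X(\Delta'),f}\cap O_\gamma$ is not an open, closed, or locally closed subscheme of $H_{X(\Delta_f),f}\cap O_\tau$ at all; it is a trivial algebraic torus fibration over it (fiber a torus of dimension $\dim\tau-\dim\gamma$), since the identity map induces a surjection $O_\gamma\twoheadrightarrow O_\tau$ with split torsion-free cokernel on character lattices. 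Your sentence ``smoothness descends to locally closed subschemes\ldots'' is not a valid principle (closed subschemes of smooth schemes need not be smooth), so even in the cases where an inclusion exists the transfer is not justified as written. The correct repair is exactly the content of Lemma~\ref{lem: heredity of fineness for dominant}(b)--(c) applied with $\alpha=\mathrm{id}_N$: fineness gives $H_{X(\Delta'),f}=H_{X(\Delta_f),f}\times_{X(\Delta_f)}X(\Delta')$, and then $H_{X(\Delta'),f}\cap O_\gamma\cong (H_{X(\Delta_f),f}\cap O_\tau)\times T$ for a torus $T$, so smoothness of the two is equivalent; this is how the paper disposes of both assertions at once after proving fineness for $\Delta_f$.
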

        \begin{proof}
            From Lemma \ref{lem: heredity of fineness for dominant}(a) and (c), it is enough to show that $f$ is fine for $\Delta_f$. 
            Let $\tau\in\Delta_f$ be a cone. 
            Then there exists $\sigma\in\Delta$ and $\omega'\in \Omega^\sigma_f$ such that $\tau = C^\sigma_f(\omega')$.  
            From Lemma \ref{lem: property of val function}(g), there exists $\omega\in M$ such that $\val^\sigma_f(v) = \langle v, \omega\rangle$ for any $v\in\tau$. 
            In particular, $v(\chi^{-\omega}f) = 0$ for any $v\in\tau\cap N$. 
            Thus, from Lemma \ref{lem: orbit and function}, $H_{X(\tau), f}$ does not contain $O_\tau$. 
            Therefore, $f$ is fine for $\Delta_f$. 
        \end{proof}
        \subsection{Toric varieties Part. 3}
        In Definition \ref{def: toric nondegenerate}, the non-degeneracy of $f$ is determined by orbit-wise. 
        In this subsection, we connect the "smoothness" of $H_{X(\Delta), f}$ and this non-degeneracy of $f$. 
        We remark that this "smoothness" does not indicate true smoothness.
        \begin{definition}
            Let $S$ be a monoid. 
            If there exists a lattice $N$ of finite rank and a strongly convex rational polyhedral cone $\sigma$ in $N_\RR$ such that $S$ is isomorphic to $\sigma^\vee\cap M$, where $M$ is the dual lattice, then we call that $S$ is a \textbf{toric monoid}. 
            In this situation, let $k[S]$ denote a $k$-algebra generated by $S$. 
            For $\omega\in S$, let $\chi^\omega$ denote an element of $k[S]$ associated with $\omega$. 
        \end{definition}
        The "smoothness" mentioned at the beginning of this subsection is explained in the following lemma. 
        \begin{lemma}\label{lem: for 7-1}
            Let $N$ denote a lattice of finite rank, $\sigma$ be a strongly convex rational polyhedral cone in $N_\RR$, $f\in k[M]$ be a Laurent polynomial, and $V$ be a non-empty open subset of $X(\sigma)$. 
            We assume that $H_{X(\sigma), f}$ does not contain $O_\sigma$ and $H_{X(\sigma), f}\cap O_\sigma\cap V$ is smooth over $k$. 
            Then there exist 
            \begin{itemize}
                \item A toric monoid $S$
                \item A morphism $p\colon X(\sigma)\rightarrow \Spec(k[S])$ over $\Spec(k)$
                \item A open subset $U$ of $V$
            \end{itemize}
            such that
            \begin{enumerate}
                \item[(1)] $H_{X(\sigma), f}\cap O_\sigma\cap V\subset U$.
                \item[(2)] The restriction $p|_{H_{X(\sigma), f}\cap U}\colon H_{X(\sigma), f}\cap U\rightarrow \Spec(k[S])$ is smooth. 
            \end{enumerate}
        \end{lemma}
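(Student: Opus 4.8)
The plan is to reduce the statement to a concrete local model by choosing a monomial that trivializes the valuation function $\val^\sigma_f$ on $\sigma$ as much as possible, and then to shrink $V$ so that the defining equation of $H_{X(\sigma),f}$ becomes a coordinate function plus a nonvanishing piece along $O_\sigma$. First I would invoke Lemma \ref{lem: orbit and function}: since $H_{X(\sigma),f}$ does not contain $O_\sigma$, there exists $\omega_0\in M$ such that $v(\chi^{\omega_0} f)=0$ for all $v\in\sigma\cap N$, and the ideal of $\Gamma(X(\sigma),\OO_{X(\sigma)})$ cutting out $H_{X(\sigma),f}$ is generated by $g:=\chi^{\omega_0} f\in k[\sigma^\vee\cap M]$. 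Thus $H_{X(\sigma),f}=V(g)$ inside $X(\sigma)$, and replacing $f$ by $g$ we may as well assume $f\in k[\sigma^\vee\cap M]$ with $\val^\sigma_f\equiv 0$. Writing $f^\sigma:=p^\sigma(f)\in k[M^\sigma]=\Gamma(O_\sigma,\OO_{O_\sigma})$ for the image under the quotient $p^\sigma$ associated to $O_\sigma\hookrightarrow X(\sigma)$, the hypothesis says $H_{X(\sigma),f}\cap O_\sigma = V(f^\sigma)\subset O_\sigma$ is nonempty (as $f^\sigma$ is a nonunit, since $O_\sigma\not\subset H_{X(\sigma),f}$ is consistent with this but $f^\sigma\neq 0$ because $\val^\sigma_f\equiv 0$ forces $f^\sigma$ to be a nonzero Laurent polynomial) and smooth on $V$.

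Next I would use the smoothness of $H_{X(\sigma),f}\cap O_\sigma\cap V$ over $k$ together with the fact that $O_\sigma\cong T_{N^\sigma}$ is a torus: on a torus, a smooth hypersurface $V(f^\sigma)$ has, at each point, a nonvanishing partial derivative $\partial f^\sigma/\partial x_i$ in some toric coordinate, equivalently one of the monomials $\chi^{e}$ appearing in $f^\sigma$ can be used to solve for a coordinate after an invertible change of monomial variables. More precisely, after a monoid automorphism I would arrange that, on a suitable torus-invariant open $U_0\subset V$, $g$ (the pullback of $f$) has the shape $\chi^{\omega} \cdot u - h$ where $u\in\Gamma(U_0,\OO)^*$ is a unit, $\omega\in\sigma^\vee\cap M$ lies on a ray where $\val^\sigma_f$ vanishes, and $h$ involves only the remaining variables, with $dh$ nonvanishing along $H_{X(\sigma),f}\cap O_\sigma$. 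The candidate for $S$ and $p$ is then built from the sublattice/submonoid of $\sigma^\vee\cap M$ generated by the monomials other than $\chi^\omega$: set $S$ to be that toric submonoid (it is toric because it is the intersection of $M$ with a face-cut of $\sigma^\vee$, cf. the description of $k[\sigma^\vee\cap M]$ in Section 2), and let $p\colon X(\sigma)\to\Spec(k[S])$ be the morphism induced by the inclusion $S\hookrightarrow\sigma^\vee\cap M$. Restricting to $U:=U_0\cap\{u\neq 0\}$, the equation $g=\chi^\omega u - h$ expresses $\chi^\omega$ as a function of the $S$-variables on $H_{X(\sigma),f}\cap U$, so $p|_{H_{X(\sigma),f}\cap U}$ is (étale-)smooth of relative dimension $0$ onto its image, and it is smooth as a morphism since $\chi^\omega$ becomes a free parameter transverse to the fibers; I would verify smoothness via the Jacobian criterion using that $H_{X(\sigma),f}\cap O_\sigma\cap V$ is smooth and that the extra $\chi^\omega$ direction is unobstructed. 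Condition (1), $H_{X(\sigma),f}\cap O_\sigma\cap V\subset U$, is then arranged by taking $U_0$ to be a finite union of the charts covering the (quasi-compact) smooth locus and intersecting appropriately.

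The main obstacle I anticipate is the passage from "smooth hypersurface in a torus" to the clean normal form $\chi^\omega u - h$ with $u$ a unit on a torus-invariant open: a priori the nonvanishing partial derivative varies from point to point, so one cannot globally single out one monomial, and the open set $U$ where a fixed monomial works need not be torus-invariant. The fix is to localize: cover $H_{X(\sigma),f}\cap O_\sigma\cap V$ by finitely many principal opens on each of which a fixed monomial serves, prove the lemma on each piece producing a possibly different $(S,p,U)$, and then — since the statement only asks for existence of \emph{one} triple containing the whole smooth locus — either observe that we are free to work near a fixed point (the lemma as used downstream in Proposition \ref{prop:unimodular+nondegenerate=smooth} is applied pointwise), or glue by taking $S$ to be a common toric monoid dominating all the pieces. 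I expect the write-up will actually prove a pointwise version (fix $x\in H_{X(\sigma),f}\cap O_\sigma\cap V$ and produce $U\ni x$), which is what the applications need, and note that the unimodular case lets one choose $S$ with $\Spec(k[S])$ smooth; the second, harder variant (Lemma \ref{lem: for 7-4}) adds the $\A^1$-parameter $t=\chi^{\text{something}}$ and the reducedness of $k[S]/(\chi^\omega)$, which is handled by choosing $\omega$ to be a primitive ray generator so that $k[S]/(\chi^\omega)$ is a localized polynomial ring.
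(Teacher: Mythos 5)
Your first step coincides with the paper's: use Lemma \ref{lem: orbit and function} to replace $f$ by $\chi^{\omega_0}f\in k[\sigma^\vee\cap M]$ generating the ideal of $H_{X(\sigma),f}$. After that, however, your construction of $(S,p)$ has a genuine gap. You take $S$ to be the submonoid of $\sigma^\vee\cap M$ generated by ``the monomials other than $\chi^\omega$'' and $p$ the induced map $X(\sigma)\to\Spec(k[S])$; but such a submonoid is in general not a toric monoid (it need not be saturated, and it need not be of the form $\tau^\vee\cap M'$ for any cone), and the parenthetical justification ``intersection of $M$ with a face-cut of $\sigma^\vee$'' is not available because the monomials occurring in $f$ are not face data of $\sigma^\vee$. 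Moreover, the smoothness of $p|_{H\cap U}$ is asserted from the normal form $g=\chi^\omega u-h$, but producing that normal form on a single open set containing \emph{all} of $H_{X(\sigma),f}\cap O_\sigma\cap V$ is exactly the difficulty you flag, and your proposed retreat to a pointwise statement does not prove the lemma as stated (one triple $(S,p,U)$ with $H_{X(\sigma),f}\cap O_\sigma\cap V\subset U$ is required, and it is this global containment that Propositions \ref{prop:unimodular+nondegenerate=smooth} and \ref{prop:reduced+nondegenerate=strictly toroidal} use when they cover $H_{Z,f}$ by the $U_\sigma$'s). The claim that the restriction is ``smooth of relative dimension $0$'' is also off: the relevant relative dimension is governed by the torus factor of $X(\sigma)$, not by eliminating one monomial.

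The paper avoids all of this by choosing the target first, not the equation's normal form: pick a complement $N_0$ with $(\langle\sigma\rangle\cap N)\oplus N_0=N$, so that $\pi_*\colon X(\sigma)\to X(\sigma')$ (with $\sigma'=\pi_\RR(\sigma)$ full and strongly convex) is a trivial algebraic torus fibration by Lemma \ref{lem:torus-fibration}; then $S=\sigma'^\vee\cap M'$ is manifestly a toric monoid, $p=\pi_*$, and $O_\sigma$ is precisely the fiber over the unique closed point $x=O_{\sigma'}$ of $X(\sigma')$. The hypothesis that $H_{X(\sigma),f}\cap O_\sigma\cap V$ is a smooth, proper hypersurface of $O_\sigma\cap V$ then feeds into Lemma \ref{lem: fundamental smoothness}/Corollary \ref{cor:fundamental smoothness of small open}: one removes from $V$ the common zero locus of the partial derivatives in the torus-fiber directions, which misses the whole fiber over $x$ by the Jacobian criterion, and obtains a single open $U$ containing $H_{X(\sigma),f}\cap O_\sigma\cap V$ on which $\pi_*|_{H\cap U}$ is smooth over $X(\sigma')=\Spec(k[S])$. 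No choice of a distinguished monomial, no torus-invariant normal form, and no gluing of local charts is needed; if you want to salvage your approach, you should replace your $S$ by this dual monoid of the full cone $\sigma'$ and run the Jacobian argument relatively over $X(\sigma')$ rather than trying to solve for one coordinate.
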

        \begin{proof}
            From Lemma\ref{lem: orbit and function}, there exists $\omega\in M$ such that $\chi^\omega f\in k[\sigma^\vee\cap M]$ and $H_{X(\sigma), f} = H_{X(\sigma), \chi^\omega f}$. 
            Thus, we may replace $f$ with $\chi^\omega f$ and assume that $f\in k[\sigma^\vee\cap M]$. 
            There exists a sublattice $N_0$ of $N$ such that $(\langle\sigma\rangle\cap N)\oplus N_0 = N$. 
            Let $\pi$ denote the quotient morphism $N\rightarrow N/N_0$ and $s$ denote a section of $\pi$ such that $s\circ\pi(v) = v$ for any $v\in\langle\sigma\rangle\cap N$. 
            Let $\sigma'$ denote $\pi_\RR(\sigma)$. 
            Then from Lemma \ref{lem:torus-fibration}, $\sigma'$ is a full and strongly convex rational polyhedral cone in $(N/N_0)_\RR$ and the toric morphism $\pi_*\colon X(\sigma)\rightarrow X(\sigma')$ is a trivial algebraic torus fibration. 
            Because $\sigma'$ is a full cone in $(N/N_0)_\RR$, a closed orbit $O_{\sigma'}\subset X(\sigma')$ is a closed point of $X(\sigma')$. 
            Let $x$ denote the closed point $O_{\sigma'}\in X(\sigma')$. 
            Then $(\pi_*)^{-1}(x) = O_\sigma$. 
            In particular, $(H_{X(\sigma), f})_{\kappa(x)}\cap V_{\kappa(x)} = H_{X(\sigma), f}\cap O_\sigma\cap V$. 
            Moreover, $H_{X(\sigma), f}\cap O_\sigma\cap V\neq O_\sigma\cap V$ because $H_{X(\sigma), f}$ does not contain $O_\sigma$. 
            We remark that $X(\sigma')$ is an affine scheme of finite type over $k$. 
            Thus, from  Corollary \ref{cor:fundamental smoothness of small open}, there exists an open subset $U$ of $V$ such that 
            \begin{itemize}
                \item  $H_{X(\sigma), f}\cap O_\sigma\cap V\subset U$.
                \item The restriction $\pi_*|_{H_{X(\sigma), f}\cap U}\colon H_{X(\sigma), f}\cap U\rightarrow X(\sigma')$ is smooth. 
            \end{itemize}
            Therefore, the statement holds. 
        \end{proof}
        The following elementary lemma is needed in Lemma \ref{lem: for 7-3}. 
        \begin{lemma}\label{lem:for 7-2}
            Let $N$ be a lattice of finite rank and $N'$ be a sublattice of $N\oplus\ZZ$. 
            We assume that $(N\oplus\ZZ)/N'$ is tosion free and there exists $v\in N$ such that $(v, 1)\in N'$. 
            Then there exists a sub lattice $N''$ of $N$ such that $N'\oplus (N''\times\{0\}) = N\oplus\ZZ$. 
        \end{lemma}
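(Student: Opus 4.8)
The plan is to exploit the element $(v,1)\in N'$ by producing an explicit retraction of $N\oplus\ZZ$ onto $N$ whose kernel is contained in $N'$, and then to use the hypothesis that $(N\oplus\ZZ)/N'$ is torsion-free to split off a complement.

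First I would define the group homomorphism $\phi\colon N\oplus\ZZ\to N$ by $\phi(w,a)=w-av$. It is surjective, since $\phi(w,0)=w$, and its kernel is exactly $\ZZ\cdot(v,1)$: indeed $\phi(w,a)=0$ forces $w=av$. By hypothesis $\ZZ\cdot(v,1)\subseteq N'$, so $\phi$ induces a surjection $(N\oplus\ZZ)/N'\twoheadrightarrow N/\phi(N')$ which is also injective (if $\phi(x)\in\phi(N')$, say $\phi(x)=\phi(y)$ with $y\in N'$, then $x-y\in\ker\phi\subseteq N'$, so $x\in N'$). Hence $(N\oplus\ZZ)/N'\cong N/\phi(N')$, and therefore $N/\phi(N')$ is torsion-free. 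Consequently $\phi(N')$ is a direct summand of $N$, so I may choose a sublattice $N''\subseteq N$ with $N=\phi(N')\oplus N''$.

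Next I would check that $N''$ (viewed as $N''\times\{0\}\subseteq N\oplus\ZZ$) is the desired complement, i.e.\ $N'\oplus(N''\times\{0\})=N\oplus\ZZ$. For the intersection: if $(w,0)\in N'$ with $w\in N''$, then $w=\phi(w,0)\in\phi(N')\cap N''=\{0\}$. For the sum, the key observation is that $(p,0)\in N'$ whenever $p\in\phi(N')$ — writing $p=\phi(\xi)$ with $\xi=(\xi_1,\xi_2)\in N'$, one has $\xi-\xi_2\cdot(v,1)=(\xi_1-\xi_2v,0)=(p,0)$, which lies in $N'$ because $(v,1)\in N'$. Then for an arbitrary $(w,a)\in N\oplus\ZZ$, decompose $\phi(w,a)=w-av=p+q$ with $p\in\phi(N')$ and $q\in N''$; the identity $(w,a)=a\cdot(v,1)+(p,0)+(q,0)$ exhibits $(w,a)$ as a sum of an element of $N'$ and an element of $N''\times\{0\}$.

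Everything here is elementary; the only slightly delicate point — and the step I would write out most carefully — is the claim that $(p,0)\in N'$ for $p\in\phi(N')$, which is exactly what makes the shear map $\phi$ (rather than the naive projection $N\oplus\ZZ\to N$) the right tool and ultimately forces the direct-sum decomposition. The torsion-freeness hypothesis is used only to guarantee the splitting $N=\phi(N')\oplus N''$.
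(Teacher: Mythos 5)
Your proof is correct and is essentially the paper's argument in different packaging: your $\phi(N')$ coincides with the paper's sublattice $N_1=\{w\in N\mid (w,0)\in N'\}$ (your observation that $(p,0)\in N'$ for $p\in\phi(N')$ is exactly this identification), and both proofs then complement it inside $N$ using torsion-freeness and recombine with $\ZZ(v,1)$. The only cosmetic improvement is that your isomorphism $(N\oplus\ZZ)/N'\cong N/\phi(N')$ makes explicit the torsion-freeness step that the paper merely asserts.
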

        \begin{proof}
            Let $\iota$ denote a canonical inclusion $N\rightarrow N\oplus\ZZ$ such that $\iota(w) = (w, 0)$ for any $w\in N$. 
            Let $N_1$ be an inverse image of $N'$ along $\iota$. 
            We claim that $(N_1\times\{0\})\oplus \ZZ(v, 1) = N'$. 
            Indeed, for any $(w, a)\in N'$, we have $(av, a)\in \ZZ(v, 1)$ and $(w-av, 0)\in (N_1\times\{0\})$. 
            Moreover, $N/N_1$ is torsion free because $(N\oplus\ZZ)/N'$ is torsion free. 
            Thus, there exists a sub lattice $N_2$ of $N$ such that $N_1\oplus N_2 = N$. 
            Therefore, $(N_1\times\{0\})\oplus(N_2\times\{0\})\oplus\ZZ(v, 1) = N\oplus\ZZ$, so let $N''$ denote $N_2$. 
        \end{proof}

        Let $X(\Delta)\rightarrow \A^1$ be a toric morphism. 
        The following lemma indicates how to check the reduced-ness of the fiber of the closed orbit of $\A^1_k$ combinatorially. 
        \begin{lemma}\label{lem: for 7-3}
            Let $\sigma$ be a strongly convex rational polyhedral cone in $(N\oplus\ZZ)_\RR$. 
            We assume that $\sigma\subset N_\RR\times\RR_{\geq0}$ and $\sigma \not\subset N_\RR\times\{0\}$, and for every 1-dimensional ray $\gamma\preceq\sigma$ with $\gamma\cap (N_\RR\times\{1\})\neq\emptyset$, we have $\gamma\cap (N\times\{1\})\neq\emptyset$. 
            Then the following statements follow:
            \begin{enumerate}
                \item[(a)] There exists a sublattice $N'$ of $N$ such that $N/N'$ is torsion free and the restriction $(\pi, \mathrm{id})|_{\langle\sigma\rangle\cap (N\oplus\ZZ)}\colon \langle\sigma\rangle\cap (N\oplus\ZZ) \rightarrow N/N'\oplus\ZZ$ is isomorphic, where $\pi\colon N\rightarrow N/N'$ be the quotient map and $(\pi, \mathrm{id}_\ZZ)\colon N\oplus\ZZ\rightarrow N/N'\oplus\ZZ$ is a direct product morphism $\pi$ and ${\mathrm{id}}_\ZZ$. 
                \item[(b)] For such $N'$ in (a), let $\sigma'$ denote $(\pi, \mathrm{id}_\ZZ)_\RR(\sigma)$. 
                Let $F$ denote $(\pr_2)_*^{-1}(0)$ which is the fiber of the toric morphism $(\pr_2)_*\colon X(\sigma')\rightarrow\A^1$ at a closed orbit of $\A^1_k$. 
                Then $\sigma'$ is a strongly convex rational polyhedral cone in $(N/N'\oplus\ZZ)_\RR$ and a second projection $\pr_2\colon (N/N'\oplus\ZZ)\rightarrow\ZZ$ is compatible with the cones $\sigma'$ and $[0, \infty)$. 
                Moreover, $F$ is reduced. 
            \end{enumerate}
        \end{lemma}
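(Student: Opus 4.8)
The plan is to prove (a) by taking $N'$ to be a lattice complement in $N$ of the ``horizontal part'' of the span $\langle\sigma\rangle$, and to deduce (b) from a ray-by-ray computation of the multiplicity of the monomial $\chi^{(0,1)}$ along the toric prime divisors of $X(\sigma')$; the hypothesis on the rays of $\sigma$ will enter exactly twice, once in each part. For part (a): since $\sigma$ is strongly convex it is generated by its extreme rays, and as $\sigma\not\subset N_\RR\times\{0\}$ while $\sigma\subset N_\RR\times\RR_{\geq 0}$, some extreme ray $\gamma_0$ of $\sigma$ meets $N_\RR\times\{1\}$; by hypothesis it then contains a lattice point $(v,1)$ with $v\in N$, and $(v,1)\in\langle\sigma\rangle$. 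Put $L=\langle\sigma\rangle\cap(N\oplus\ZZ)$, a saturated sublattice of $N\oplus\ZZ$, and $L_0=L\cap(N\times\{0\})=\langle\sigma\rangle\cap(N\times\{0\})$. Identifying $N\times\{0\}$ with $N$, the image $N_1$ of $L_0$ is the intersection of a linear subspace of $N_\RR$ with $N$, hence a saturated sublattice; choose $N'$ with $N_1\oplus N'=N$, so $N/N'$ is torsion free and $\pi\colon N\to N/N'$ restricts to an isomorphism $N_1\to N/N'$. The map $(\pi,\mathrm{id}_\ZZ)_\RR$ is injective on $\langle\sigma\rangle$ because $\langle\sigma\rangle\cap\ker(\pi,\mathrm{id}_\ZZ)_\RR=\langle\sigma\rangle\cap(N'_\RR\times\{0\})\subseteq\bigl((N_1)_\RR\cap N'_\RR\bigr)\times\{0\}=0$. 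By the proof of Lemma \ref{lem:for 7-2} applied to $L\subset N\oplus\ZZ$ we get $L=L_0\oplus\ZZ(v,1)$, whence $(\pi,\mathrm{id}_\ZZ)(L)=(N/N')\times\{0\}+\ZZ\bigl(\pi(v),1\bigr)=N/N'\oplus\ZZ$; combined with injectivity this shows $(\pi,\mathrm{id}_\ZZ)|_L\colon L\to N/N'\oplus\ZZ$ is a lattice isomorphism, proving (a). In particular $\dim\sigma=\rank(N/N'\oplus\ZZ)$.

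For part (b), injectivity of $(\pi,\mathrm{id}_\ZZ)_\RR|_{\langle\sigma\rangle}$ and Lemma \ref{lem: injective fan} show $\sigma'=(\pi,\mathrm{id}_\ZZ)_\RR(\sigma)$ is a strongly convex rational polyhedral cone and that $(\pi,\mathrm{id}_\ZZ)_\RR$ gives a bijection between the rays of $\sigma$ and those of $\sigma'$; by the dimension count above, $\sigma'$ is full-dimensional in $(N/N'\oplus\ZZ)_\RR$. Compatibility of $\pr_2$ with $\sigma'$ and $[0,\infty)$ is immediate from $\sigma\subset N_\RR\times\RR_{\geq0}$ together with $\pr_2\circ(\pi,\mathrm{id}_\ZZ)=\pr_2$. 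Write $e=(0,1)$ in the dual lattice $Q$ of $N/N'\oplus\ZZ$, so $e\in\sigma'^\vee$ and $(\pr_2)_*$ sends $t$ to $\chi^e$; then $F=\Spec\bigl(k[\sigma'^\vee\cap Q]/(\chi^e)\bigr)$, and it remains to show that $\chi^e$ cuts out a reduced subscheme of $X(\sigma')$. Recalling that toric varieties are Cohen--Macaulay and that $\chi^e$ is a nonzerodivisor, the zero scheme of $\chi^e$ has no embedded components, so $F$ is reduced if and only if $\ord_{D_\rho}(\chi^e)=\langle u_\rho,e\rangle\in\{0,1\}$ for every ray $\rho$ of $\sigma'$, where $u_\rho$ is the primitive generator and $D_\rho$ the corresponding prime divisor.

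The final step is the ray computation. Let $\rho$ correspond to the ray $\gamma$ of $\sigma$ and let $u_\gamma$ be the primitive generator of $\gamma$ with respect to the lattice $L$; then $u_\rho=(\pi,\mathrm{id}_\ZZ)(u_\gamma)$ and $\langle u_\rho,e\rangle=\pr_2(u_\gamma)$. If $\gamma\subset N_\RR\times\{0\}$ this equals $0$. Otherwise $\gamma$ meets $N_\RR\times\{1\}$, so the hypothesis provides a lattice point $(w,1)\in\gamma$; since $(w,1)$ is a positive integer multiple of $u_\gamma$ and $\pr_2(u_\gamma)$ is a nonnegative integer, this forces $u_\gamma=(w,1)$ and $\pr_2(u_\gamma)=1$. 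Hence every divisorial multiplicity of $\chi^e$ is $0$ or $1$, and $F$ is reduced.

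I expect the main obstacle to be the bookkeeping in part (a): correctly pinning down the horizontal sublattice $N_1$, verifying it is saturated in $N$, and confirming that $(\pi,\mathrm{id}_\ZZ)$ restricts to an isomorphism of \emph{lattices} on $L$ (not merely of $\RR$-vector spaces on $\langle\sigma\rangle$) — this is exactly where $L$ being saturated and the existence of the height-one lattice point $(v,1)$ are both needed. Once this is secured, part (b) rests only on the standard fact that a monomial defines a reduced Cartier divisor on a normal toric variety precisely when all its divisorial multiplicities are at most $1$, so the role of the specifically-reduced-type hypothesis on $\sigma$ is simply to bound those multiplicities by $1$.
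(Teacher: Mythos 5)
Your proposal is correct and takes essentially the same route as the paper: part (a) is the splitting supplied by (the proof of) Lemma \ref{lem:for 7-2} applied to $\langle\sigma\rangle\cap(N\oplus\ZZ)$ together with a complement of the horizontal sublattice, and part (b) is the paper's argument in divisorial language — every ray of $\sigma'$ not in $(N/N')_\RR\times\{0\}$ has primitive generator with last coordinate $1$, so $\chi^{(0,1)}$ has all multiplicities in $\{0,1\}$, and Cohen–Macaulayness supplies $(S_1)$ while the multiplicity bound supplies $(R_0)$. No gaps.
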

        \begin{proof}
            We prove the statements from (a) to (b). 
            \begin{enumerate}
                \item[(a)] From the assumption of $\sigma$, there exists a 1-dimensional ray $\gamma\preceq\sigma$ such that $\gamma\not\subset N_\RR\times \{0\}$. 
                Because $\gamma\cap (N\times\{1\})\neq \emptyset$, there exists $v\in N$ such that $(v, 1)\in\sigma$.  
                Thus, from Lemma \ref{lem:for 7-2}, there exists a sublattice $N_0$ of $N$ such that $(\langle\sigma\rangle\cap(N\oplus\ZZ))\oplus (N_0\times\{0\}) = N\oplus\ZZ$. 
                Therefore, the statement holds. 
                \item[(b)] From Lemma \ref{lem:torus-fibration}(a), $\sigma'$ is a full and strongly convex rational polyhedral cone in $(N/N'\oplus\ZZ)_\RR$. 
                Because $\sigma\subset N_\RR\times\RR_{\geq 0}$, we have $\sigma'\subset (N/N')_\RR\times \RR_{\geq 0}$. 
                Thus, a second projection $\pr_2\colon (N/N'\oplus\ZZ)\rightarrow\ZZ$ is compatible with the cones $\sigma'$ and $[0, \infty)$. 
                From Lemma \ref{lem:torus-fibration}(b) and (c), there is a one-to-one correspondence with 1-dimensional rays of $\sigma$ and those of $\sigma'$. 
                Thus, from the assumption of $\sigma$ and $N'$, for every 1-dimensional ray $\gamma'\preceq\sigma'$ with $\gamma'\cap (N/N')_\RR\times\{1\}\neq\emptyset$, we have $\gamma'\cap (N/N')\times\{1\}\neq\emptyset$. 
                
                We show that $F$ is reduced. 
                Let $t\in k[\ZZ]$ be a torus invariant monomial associated with $1\in\ZZ$. 
                The irreducible component of the fiber of the toric morphism $X(\sigma')\rightarrow\A^1$ at a closed orbit of $\A^1_k$ is an orbit closure of $X(\sigma')$ associated with a 1-dimensional ray of $\sigma'$ such that it intersects $(N/N')_\RR\times\{1\}$. 
                Let $\gamma'\preceq\sigma'$ be a 1-dimensional ray with $\gamma'\cap (N/N')_\RR\times\{1\}\neq\emptyset$ and $\eta'\in X(\sigma)$ denote the generic point of $O_{\gamma'}$. 
                Let $v'\in(N/N')\oplus\ZZ$ denote a minimal generator of $\gamma'$. 
                Then $v'$ is a unique torus invariant valuation of $X(\sigma')$ such that the valuation ring associated with $v'$ is $\OO_{X(\sigma'), \eta}$ and the image of $v'$ is $\ZZ$. 
                Now, we show that $v'(t) = 1$. 
                Indeed, there exists $v''\in N/N'$ such that $v' = (v'', 1)$ from the above argument. 
                Because the second component of $v'$ is $1$, we have $v'(t) = 1$. 
                In particular, $\OO_{F, \eta}$ is reduced. 
                Thus, $F$ has a property $(R_0)$. 
                Moreover, $X(\sigma')$ is a Cohen-Macauray and an integral scheme. 
                Hence, $F$ is a Cohen-Macauray scheme. 
                In particular, $F$ has a property $(S_1)$. 
                Therefore, $F$ is reduced. 
            \end{enumerate}
        \end{proof}
        When we compute the stable birational volume, it is important to construct a strictly toroidal scheme (cf. Definition\ref{NONO21}). 
        The following lemma indicates that the sufficient conditions for the reduced-ness of the fiber of the closed orbit can be given by the combinatorial setting. 
        \begin{lemma}\label{lem: for 7-4}
            We keep the notation and the assumption in Lemma \ref{lem: for 7-3}. 
            We identify with $k[M\oplus \ZZ]$ and $k[M]\otimes_k k[t, t^{-1}]$, where $t$ is a torus invariant monomial associated with $1\in\ZZ$. 
            Let $f\in k[M\oplus \ZZ]$ be a polynomial and $V$ be an open subscheme of $X(\sigma)$. 
            We assume that $H_{X(\sigma), f}$ does not contain $O_\sigma$ and $H_{X(\sigma), f}\cap O_\sigma\cap V$ is smooth over $k$. 
            We regard $X(\sigma)$ is a scheme over $\Spec(k[t]) = \A^1$ by the toric morphism $(\pr_2)_*\colon X(\sigma)\rightarrow \A^1$. 
            
            Then there exist 
            \begin{itemize}
                \item A toric monoid $S$
                \item An alement $\omega\in S$
                \item A morphism $p\colon X(\sigma)\rightarrow \Spec(k[t][S]/(t-\chi^\omega))$ over $\Spec(k[t])$
                \item An open subset $U$ of $V$
            \end{itemize}
            such that
            \begin{enumerate}
                \item[(1)] $H_{X(\sigma), f}\cap O_\sigma\cap V\subset U$
                \item[(2)] The restriction $p|_{H_{X(\sigma), f}\cap U}\colon H_{X(\sigma), f}\cap U\rightarrow \Spec(k[t][S]/(t-\chi^\omega))$ is smooth. 
                \item[(3)] A ring $k[S]/(\chi^\omega)$ is reduced. 
            \end{enumerate}
        \end{lemma}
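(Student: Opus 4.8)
The plan is to reduce Lemma~\ref{lem: for 7-4} to the combinatorial structure established in Lemma~\ref{lem: for 7-3} together with the smoothness statement of Lemma~\ref{lem: for 7-1}. First I would apply Lemma~\ref{lem: for 7-3}(a) to obtain a sublattice $N'$ of $N$ such that $N/N'$ is torsion free and $(\pi,\mathrm{id})|_{\langle\sigma\rangle\cap(N\oplus\ZZ)}$ is an isomorphism onto $N/N'\oplus\ZZ$, where $\pi\colon N\rightarrow N/N'$ is the quotient map. By Lemma~\ref{lem:torus-fibration}, applied to the surjection $(\pi,\mathrm{id}_\ZZ)\colon N\oplus\ZZ\rightarrow N/N'\oplus\ZZ$ and (any section extending the chosen splitting), the toric morphism $(\pi,\mathrm{id}_\ZZ)_*\colon X(\sigma)\rightarrow X(\sigma')$ is a trivial algebraic torus fibration, where $\sigma' = (\pi,\mathrm{id}_\ZZ)_\RR(\sigma)$ is full and strongly convex and $\pr_2$ is compatible with $\sigma'$ and $[0,\infty)$. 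Since the fibers of a trivial torus fibration over $\Spec k$ are preserved, $H_{X(\sigma),f}$ does not contain $O_\sigma$ and $H_{X(\sigma),f}\cap O_\sigma\cap V$ is smooth over $k$ if and only if the corresponding statements hold after pushing forward (for an appropriate $f$, $V$), so I would first treat the case $N' = 0$, i.e., $\langle\sigma\rangle = N_\RR\oplus\RR$ and $\sigma$ full in $(N\oplus\ZZ)_\RR$.

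In that full case the construction is essentially direct: take $S = \sigma^\vee\cap(M\oplus\ZZ)$, which is a toric monoid, so that $X(\sigma) = \Spec(k[S])$. The element $\omega\in S$ should be the image of $t$, i.e., $\omega = (0,1)\in M\oplus\ZZ$, which lies in $\sigma^\vee$ because $\sigma\subset N_\RR\times\RR_{\geq0}$; then the identity $t = \chi^\omega$ already holds in $k[S]$, so $k[t][S]/(t-\chi^\omega) \cong k[S]$ and the $\A^1$-morphism structure on $\Spec(k[S])$ is exactly $(\pr_2)_*$. The reducedness of $k[S]/(\chi^\omega)$ — condition~(3) — is precisely the reducedness of the fiber $F = (\pr_2)_*^{-1}(0)$, which is Lemma~\ref{lem: for 7-3}(b); here the hypothesis that every $1$-dimensional ray $\gamma\preceq\sigma$ meeting $N_\RR\times\{1\}$ meets $N\times\{1\}$ is exactly what forces the minimal generator of such a ray to have second coordinate $1$, giving the $(R_0)$ property, and Cohen--Macaulayness of $X(\sigma)$ supplies $(S_1)$. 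For the smoothness condition~(2), I would invoke Lemma~\ref{lem: for 7-1} with $X(\sigma)$, $f$, $V$ as given: it produces a further toric monoid $S_0$, a morphism $X(\sigma)\rightarrow\Spec(k[S_0])$ over $\Spec k$, and an open $U\subset V$ containing $H_{X(\sigma),f}\cap O_\sigma\cap V$ with $p|_{H_{X(\sigma),f}\cap U}$ smooth. The point is that in the proof of Lemma~\ref{lem: for 7-1} the base $\Spec(k[S_0])$ arises as $X(\sigma')$ for the quotient $\sigma' = \pi_\RR(\sigma)$ along a complement of $\langle\sigma\rangle\cap N$ — but one must arrange this quotient to be compatible with the $\A^1$-structure, i.e., to factor through $(\pr_2)_*$. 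This is where the combinatorial hypotheses enter again: one chooses the complementary sublattice $N_0$ inside $N\times\{0\}$ (possible by Lemma~\ref{lem:for 7-2}, using the ray through $(v,1)$), so that the quotient map respects the second coordinate and the resulting $S$ contains the image of $t$.

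The general case then follows by descending along the trivial torus fibration $(\pi,\mathrm{id}_\ZZ)_*\colon X(\sigma)\rightarrow X(\sigma')$: the target is the full-cone situation just handled, and pulling back the data $(S,\omega,p',U')$ along $(\pi,\mathrm{id}_\ZZ)_*$ gives $(S,\omega,p,U)$ for $X(\sigma)$, with smoothness of $p|_{H\cap U}$ preserved because the fibration is smooth and $H_{X(\sigma),f}$ is (up to the torus factor) pulled back from $X(\sigma')$ — here I would use that $f$ can be normalized, via Lemma~\ref{lem: orbit and function}, to lie in $k[{\sigma'}^\vee\cap(M/?\oplus\ZZ)]$ pulled back, so that $H_{X(\sigma),f} = H_{X(\sigma'),f'}\times_{X(\sigma')}X(\sigma)$. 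I expect the main obstacle to be precisely this last compatibility: ensuring that the lattice-splitting used to trivialize the torus fibration and the lattice-splitting used inside Lemma~\ref{lem: for 7-1} to produce the smooth base can be chosen \emph{simultaneously} and \emph{both respecting the projection to $\ZZ$}, so that the monoid $S$ one ends up with genuinely carries an element $\omega$ with $t = \chi^\omega$ and $k[S]/(\chi^\omega)$ reduced. Once the bookkeeping of the two splittings is organized — choosing $N_0\subset N\times\{0\}$ first and then a complement of $\langle\sigma\rangle\cap(N/N_0\oplus\ZZ)$ inside $(N/N_0)\times\{0\}$, invoking Lemma~\ref{lem:for 7-2} at the appropriate step — the rest is a routine assembly of Lemma~\ref{lem: for 7-1}, Lemma~\ref{lem: for 7-3}(b), and the openness/smoothness descent along trivial torus fibrations.
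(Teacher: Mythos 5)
There is a genuine gap in the descent step that your whole architecture rests on. You reduce to the case of a full cone and then claim that, after normalizing via Lemma \ref{lem: orbit and function}, one has $H_{X(\sigma),f} = H_{X(\sigma'),f'}\times_{X(\sigma')}X(\sigma)$. That is false in general: Lemma \ref{lem: orbit and function} only multiplies $f$ by a monomial unit so that it lies in $k[\sigma^\vee\cap(M\oplus\ZZ)]$, and since $\sigma$ spans only $\langle\sigma\rangle$, the cone $\sigma^\vee$ contains the linear space $\sigma^\perp$ of rank $\rank(N')$; hence $k[\sigma^\vee\cap(M\oplus\ZZ)]$ is $k[(\sigma')^\vee\cap (M'\oplus\ZZ)]$ tensored with Laurent polynomials in the fiber coordinates, and a general $f$ genuinely involves those coordinates, so $H_{X(\sigma),f}$ is not pulled back from $X(\sigma')$. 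Indeed, if it were, then because $O_{\sigma'}$ is a single closed point ($\sigma'$ is full), $H_{X(\sigma),f}\cap O_\sigma$ would be either empty or all of $O_\sigma$, contradicting the situation the lemma is meant for, where $H_{X(\sigma),f}\cap O_\sigma\cap V$ is a nonempty smooth hypersurface of $O_\sigma\cap V$. Relatedly, the ``full case'' you reduce to carries no content: when $\sigma$ is full, $O_\sigma$ is a point, so the hypothesis $O_\sigma\not\subset H_{X(\sigma),f}$ forces $H_{X(\sigma),f}\cap O_\sigma\cap V=\emptyset$, and condition (2) there can only be met by making $H\cap U$ empty; moreover in that case the monoid you use for condition (3), namely $\sigma^\vee\cap(M\oplus\ZZ)$, and the monoid produced by invoking Lemma \ref{lem: for 7-1} for condition (2) need not be the same, whereas the statement requires a single triple $(S,\omega,p)$ satisfying (1)--(3) simultaneously.

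The correct route (the paper's) does not reduce to the full case at all: normalize $f\in k[\sigma^\vee\cap(M\oplus\ZZ)]$, take $S=(\sigma')^\vee\cap(M'\oplus\ZZ)$ for the full cone $\sigma'=(\pi,\id_\ZZ)_\RR(\sigma)$ given by Lemma \ref{lem: for 7-3}(a) (whose proof is exactly your correct observation that the complement must be chosen inside $N\times\{0\}$ via Lemma \ref{lem:for 7-2}, so the quotient respects $\pr_2$), let $\omega\in S$ correspond to $t$, note $k[t][S]/(t-\chi^\omega)\cong k[S]$, and let $p$ be the trivial torus fibration $(\pi,\id_\ZZ)_*\colon X(\sigma)\to X(\sigma')$ followed by this identification. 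Smoothness of $p|_{H\cap U}$ is then proved upstairs, not by pullback: $H_{X(\sigma),f}\cap O_\sigma\cap V$ is the fiber of $H_{X(\sigma),f}\cap V$ over the closed point $O_{\sigma'}$ of $X(\sigma')$, it is smooth and a proper subset of $O_\sigma\cap V$ by hypothesis, so the Jacobian-criterion argument of Lemma \ref{lem: for 7-1} (Corollary \ref{cor:fundamental smoothness of small open}) produces an open $U\subset V$ containing it on which $H_{X(\sigma),f}\cap U\to X(\sigma')$ is smooth; condition (3) is Lemma \ref{lem: for 7-3}(b). Your proposal contains these ingredients, but as assembled it does not prove the general case, because the pullback claim it hinges on fails.
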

        \begin{proof}
            From Lemma\ref{lem: orbit and function}, there exists $\eta\in M\oplus\ZZ$ such that $\chi^\eta f\in k[\sigma^\vee\cap (M\oplus\ZZ)]$ and $H_{X(\sigma), f} = H_{X(\sigma), \chi^\eta f}$. 
            Thus, we may replace $f$ with $\chi^\eta f$ and assume that $f\in k[\sigma^\vee\cap (M\oplus\ZZ)]$. 
            We use the notation in Lemma \ref{lem: for 7-3}.  
            Let $S$ be a toric monoid associated with $X(\sigma')$, $g\colon X(\sigma')\rightarrow \Spec(k[S])$ be a natural isomorphism, and $\omega\in S$ be an element associated with $t$. We remark that $(\pi, \mathrm{id})_*\colon X(\sigma)\rightarrow X(\sigma')$ is a trivial algebraic torus fibration.  
            We regard $k[S]$ as a $k[t]$-algebra by a $k$-morphism $q\colon k[t]\rightarrow k[S]$ such that $q(t) = \chi^\omega$. 
            Then there exists the following commutative diagram:
            \begin{equation*}
                \begin{tikzcd}
                    X(\sigma')\ar[r, "g"]\ar[d, "(\pr_2)_*"]& \Spec(k[S])\ar[ld, "q^*"]\\
                    \Spec(k[t])
                \end{tikzcd}
            \end{equation*}
            where $q^*$ is a morphism induced by $q$. 

            Let $r$ denote $q\otimes\mathrm{id}_{k[S]}\colon k[t][S]\rightarrow k[S]$. 
            We regard $k[t][S]$ as a $k[t]$-algebra by a $k$-morphism $s\colon k[t]\rightarrow k[t][S]$ such that $s(t) = t$. 
            We remark that $r$ is a $k[t]$-morphism. 
            Let $r_0$ denote the quotient map $k[t][S]/(t - \chi^\omega)\rightarrow k[S]$ induced by $r$ and $s_0$ denote a morphism $k[t]\rightarrow k[t][S]/(t - \chi^\omega)$ induced by $s$. 
            Because of the definition of $r$, $r_0$ is an isomorphism. 
            Thus, there exists the following commutative diagram:
            \begin{equation*}
                \begin{tikzcd}
                    X(\sigma')\ar[r, "g"]\ar[d, "(\pr_2)_*"]& \Spec(k[S])\ar[ld, "q^*"]\ar[d, "r^*_0"]\\
                    \Spec(k[t])& \Spec(k[t][S]/(t-\chi^\omega))\ar[l, "s^*_0"]
                \end{tikzcd}
            \end{equation*}
            where $r^*_0$ and $s^*_0$ are morphisms induced by $r_0$ and $s_0$. 
            Let $p$ be a composition $r^*_0\circ g\circ(\pi, \mathrm{id}_\ZZ)_*\colon X(\sigma)\rightarrow \Spec(k[t][S]/(t-\chi^\omega))$. 
            Then from the proof of Lemma \ref{lem: for 7-1}, there exists an open subset of $U$ of $V$ such that $H_{X(\sigma), f}\cap O_\sigma\cap V\subset U$ and the restriction $p|_{H_{X(\sigma), f}\cap U}\colon H_{X(\sigma), f}\cap U\rightarrow \Spec(k[t][S]/(t-\chi^\omega))$ is smooth. 
            Moreover, $k[S]/(\chi^\omega)$ is isomorphic to a global section ring of the fiber of $(\pr_2)_*\colon X(\sigma')\rightarrow\A^1$ at a closed orbit of $\A^1_k$. 
            Thus, from Lemma \ref{lem: for 7-3}, $k[S]/(\chi^\omega)$ is reduced. 
        \end{proof}
        
        \subsection{Scheme theoretic image of quasi-compact morphism}
        In this subsection, we prove some lemmas about the scheme theoretic image which we use in this article. 
        \begin{lemma}\label{lem:open-image}
            Let $f\colon X\rightarrow Y$ be a quasi-compact morphism of schemes, and $Z$ be a scheme theoretic image of $f$, and $V$ be an open subscheme of $Y$. 
            Then $Z\cap V$ is a scheme theoretic image of $f|_{f^{-1}(V)}\colon f^{-1}(V)\rightarrow V$. 
        \end{lemma}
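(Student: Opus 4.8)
The plan is to reduce to an affine-local computation and invoke the standard description of the scheme theoretic image via the kernel of the structure sheaf map. Recall that since $f$ is quasi-compact (and we may also assume quasi-separated, or simply note that the scheme theoretic image of a quasi-compact morphism is computed as the closed subscheme cut out by the kernel $\mathscr{I} = \ker(\mathscr{O}_Y \to f_*\mathscr{O}_X)$, which is quasi-coherent precisely because $f$ is quasi-compact), the scheme theoretic image $Z \hookrightarrow Y$ is the closed subscheme associated to $\mathscr{I}$. Similarly, $f|_{f^{-1}(V)} \colon f^{-1}(V) \to V$ is again quasi-compact (being a base change of $f$ along the open immersion $V \hookrightarrow Y$), so its scheme theoretic image is the closed subscheme of $V$ cut out by $\ker(\mathscr{O}_V \to (f|_{f^{-1}(V)})_*\mathscr{O}_{f^{-1}(V)})$.

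First I would observe that the formation of $f_*\mathscr{O}_X$ restricted to the open subscheme $V$ agrees with $(f|_{f^{-1}(V)})_*\mathscr{O}_{f^{-1}(V)}$; this is immediate from the definition of pushforward on open sets, since for $W \subset V$ open we have $(f_*\mathscr{O}_X)(W) = \mathscr{O}_X(f^{-1}(W))$ and $f^{-1}(W) \subset f^{-1}(V)$. Consequently the map $\mathscr{O}_Y \to f_*\mathscr{O}_X$ restricts on $V$ to the map $\mathscr{O}_V \to (f|_{f^{-1}(V)})_*\mathscr{O}_{f^{-1}(V)}$, and therefore $\mathscr{I}|_V = \ker(\mathscr{O}_V \to (f|_{f^{-1}(V)})_*\mathscr{O}_{f^{-1}(V)})$. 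Since restriction of a quasi-coherent ideal sheaf to an open subscheme commutes with passing to the associated closed subscheme, the closed subscheme of $V$ defined by $\mathscr{I}|_V$ is exactly $Z \cap V$. Comparing, the closed subscheme of $V$ defined by $\ker(\mathscr{O}_V \to (f|_{f^{-1}(V)})_*\mathscr{O}_{f^{-1}(V)})$ is on one hand $Z \cap V$ and on the other hand the scheme theoretic image of $f|_{f^{-1}(V)}$, which gives the claim.

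The one genuinely substantive point — and the step I would be most careful about — is verifying that the kernel ideal sheaf is quasi-coherent, i.e. that the scheme theoretic image really is computed by this kernel; this is where quasi-compactness of $f$ is used, and it is exactly the hypothesis in the statement. Everything else is a formal manipulation of pushforwards and restrictions to opens, with no serious obstacle. If one prefers to avoid invoking the global quasi-coherence statement, an alternative is to work affine-locally: cover $V$ by affine opens $\operatorname{Spec} B \subset V$, cover $f^{-1}(\operatorname{Spec} B)$ by finitely many affine opens $\operatorname{Spec} A_i$ (possible by quasi-compactness), and check directly that the ideal $\ker(B \to \prod_i A_i)$ defines both $Z \cap \operatorname{Spec} B$ and the scheme theoretic image of $f^{-1}(\operatorname{Spec} B) \to \operatorname{Spec} B$; the two descriptions visibly coincide. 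Either route closes the argument.
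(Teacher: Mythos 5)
Your proposal is correct and follows essentially the same route as the paper: identify the scheme theoretic image with the closed subscheme cut out by $\ker(\mathscr{O}_Y\to f_*\mathscr{O}_X)$ (quasi-coherent by quasi-compactness of $f$), note that this kernel restricts on $V$ to $\ker(\mathscr{O}_V\to (f|_{f^{-1}(V)})_*\mathscr{O}_{f^{-1}(V)})$, and conclude that $Z\cap V$ is the scheme theoretic image of the restriction. The extra affine-local remark is fine but not needed; the argument matches the paper's.
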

        \begin{proof}
            Because $f$ is quasi-compact, $\mathscr{I} = \ker(\OO_Y\rightarrow f_*\OO_X)$ is a quasi-coherent ideal sheaf associated with the closed subscheme $Z$ of $Y$. 
            Because $\mathscr{I}|_V = \ker(\OO_{V}\rightarrow (f|_{f^{-1}(V)})_*\OO_{f^{-1}(V)})$ and $\mathscr{I}|_V$ is a quasi-coherent ideal sheaf associated with the closed subscheme $Z\cap V$ of $V$, $Z\cap V$ is a scheme theoretic image of $f|_{f^{-1}(V)}$. 
        \end{proof}
        \begin{lemma}\label{lem:covering-image}
            Let $f\colon X\rightarrow Y$ be a quasi-compact morphism of schemes, $Z$ be a closed subscheme of $Y$, and $\{V_i\}_{i\in I}$ be an open covering of $Y$. 
            Then, the following statements are equivalent. 
            \begin{itemize}
                \item[(a)] A closed subscheme $Z$ of $Y$ is a scheme theoretic image of $f$. 
                \item[(b)] For any $i\in I$, a closed subscheme $Z\cap V_i$ of $V_i$ is a scheme theoretic image of $f|_{f^{-1}(V_i)}$. 
            \end{itemize}
        \end{lemma}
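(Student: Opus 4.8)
This lemma is a standard "scheme theoretic image is local on the target" statement, and the plan is to reduce it to Lemma \ref{lem:open-image}. Since scheme theoretic images behave well with respect to quasi-compact morphisms, the essential point is that the defining quasi-coherent ideal sheaf can be checked on an open cover.

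\begin{proof}
    Because $f$ is quasi-compact, the sheaf $\mathscr{I} := \ker(\OO_Y\rightarrow f_*\OO_X)$ is a quasi-coherent ideal sheaf, and by definition the scheme theoretic image of $f$ is the closed subscheme of $Y$ cut out by $\mathscr{I}$.
    First, assume (a) holds, so that $Z$ is the closed subscheme associated with $\mathscr{I}$.
    For any $i\in I$, Lemma \ref{lem:open-image} applied to the open subscheme $V_i$ of $Y$ shows that $Z\cap V_i$ is the scheme theoretic image of $f|_{f^{-1}(V_i)}\colon f^{-1}(V_i)\rightarrow V_i$, which is (b).

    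Conversely, assume (b) holds.
    Let $\mathscr{J}$ denote the quasi-coherent ideal sheaf on $Y$ associated with the closed subscheme $Z$.
    By Lemma \ref{lem:open-image}, for each $i\in I$ the scheme theoretic image of $f|_{f^{-1}(V_i)}$ is the closed subscheme of $V_i$ cut out by $\mathscr{I}|_{V_i} = \ker(\OO_{V_i}\rightarrow (f|_{f^{-1}(V_i)})_*\OO_{f^{-1}(V_i)})$; by assumption this closed subscheme is $Z\cap V_i$, which is cut out by $\mathscr{J}|_{V_i}$.
    Hence $\mathscr{I}|_{V_i} = \mathscr{J}|_{V_i}$ for all $i\in I$.
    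Since $\{V_i\}_{i\in I}$ is an open covering of $Y$ and the equality of two quasi-coherent subsheaves of $\OO_Y$ is local on $Y$, we conclude $\mathscr{I} = \mathscr{J}$.
    Therefore $Z$ is the closed subscheme of $Y$ cut out by $\mathscr{I}$, i.e. the scheme theoretic image of $f$, which is (a).
\end{proof}

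There is no real obstacle here; the only subtlety is making sure one uses the quasi-compactness hypothesis correctly, namely that it is exactly what guarantees $\mathscr{I}$ is quasi-coherent and that scheme theoretic image commutes with the restriction to each $V_i$ (which is precisely the content of Lemma \ref{lem:open-image}). After that, the equivalence is just the statement that two quasi-coherent ideal sheaves agreeing on an open cover agree globally.
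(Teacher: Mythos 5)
Your proof is correct and follows essentially the same route as the paper: the forward direction is Lemma \ref{lem:open-image}, and the converse identifies the ideal sheaf of $Z$ with $\ker(\OO_Y\rightarrow f_*\OO_X)$ on each $V_i$ and glues, using quasi-compactness only to ensure quasi-coherence. No issues.
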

        \begin{proof}
            When the statement (a) holds, the statement (b) holds from Lemma \ref{lem:open-image}. 
            Then we assume the statement (b). 
            Let $\mathscr{I}_Z\subset \OO_Y$ be a quasi-coherent ideal sheaf associated with $Z$ of $Y$. 
            From the assumption, for $i\in I$, $\mathscr{I}_Z|_{V_i} = \ker(\OO_{V_i}\rightarrow (f|_{f^{-1}(V_i)})_*\OO_{f^{-1}(V_i)})$.
            Thus, $\mathscr{I}_Z = \ker(\OO_Y\rightarrow f_*\OO_X)$ because $\{V_i\}_{i\in I}$ is an open covering of $Y$. 
            Therefore, $Z$ is a scheme theoretic image of $f$. 
        \end{proof}
        \subsection{Some lemmas about valuation}
        \begin{lemma}\label{lem:val-val}
            Let $X, X', Y, Y', Z$, and $Z'$ be integral schemes, $\iota\colon Z\rightarrow X$, $q\colon X\rightarrow Y$, $\iota'\colon Z'\rightarrow X'$, $q'\colon X'\rightarrow Y'$, $g\colon X'\rightarrow X$, $h\colon Y'\rightarrow Y$, and $f\colon Z'\rightarrow Z$ are morphisms such that the following diagram is commutative: 
            \begin{equation*}
                \begin{tikzcd}
                    Z'\ar[r, "\iota'"]\ar[d, "f"]& X'\ar[r, "q'"]\ar[d, "g"]& Y'\ar[d, "h"]\\
                    Z\ar[r, "\iota"]& X\ar[r, "q"]& Y
                \end{tikzcd}
            \end{equation*}
            Let $r$ denote $q\circ\iota$ and $r'$ denote $q'\circ\iota'$.
            We assume that $f, g, h, q,$ and $q'$ are dominant and $r$ and $r'$ are open immersions. 
            Then the following equation follows as a map from $\Val(X')$ to $\Val(Z)$: 
            \[
                f_\natural\circ({r'}^{-1})_\natural\circ(q')_\natural = (r^{-1})_\natural\circ q_\natural\circ g_\natural
            \]
        \end{lemma}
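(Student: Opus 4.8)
The plan is to unwind both sides of the claimed equation by evaluating them on an arbitrary discrete valuation $v \in \Val(X')$ and a function, and to reduce everything to the naturality of $(-)_\natural$ under composition of dominant rational maps, together with the observation that for an open immersion $r$ the induced map $r_\natural$ on valuations is simply the identification of $\Val(Z)$ with $\Val(X)$ via the isomorphism of function fields $r^* \colon K(X) \xrightarrow{\sim} K(Z)$. The key formal facts I would invoke are: (i) $(\phi \circ \psi)_\natural = \psi_\natural \circ \phi_\natural$ for composable dominant rational maps (immediate from $(\phi\circ\psi)^* = \psi^* \circ \phi^*$ and the definition $\phi_\natural(v) = v \circ \phi^*$); (ii) if $r$ is an open immersion of integral schemes then $r^*$ is an isomorphism on function fields, so $r_\natural \colon \Val(X) \to \Val(Z)$ is a bijection with inverse $(r^{-1})_\natural$ in the evident sense; and (iii) the commutativity of the given diagram, which yields the corresponding commutativity of the induced field homomorphisms, e.g. $g^* \circ q^* \circ \iota^* $-type identities among $K(Y) \to K(X) \to K(Z)$ and their primed analogues.

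First I would set up notation: write $r = q \circ \iota$ and $r' = q' \circ \iota'$ as in the statement, note that $r$ and $r'$ being open immersions of integral schemes forces $K(Z) = K(X)$ and $K(Z') = K(X')$ canonically, and that $q, q', f, g, h$ all being dominant makes all the relevant pullback maps on function fields well-defined. Then I would record the two function-field diagrams obtained from the two commuting squares of the original diagram, together with the relations $r^* = \iota^* \circ q^*$ and $(r')^* = (\iota')^* \circ (q')^*$. The right-hand side $(r^{-1})_\natural \circ q_\natural \circ g_\natural$ applied to $v \in \Val(X')$ is $ (r^{-1})_\natural\big( v \circ g^* \circ q^* \big)$, which, tracing through the identification $(r^{-1})_\natural$, is the valuation on $K(Z)$ corresponding to $v \circ g^* \circ q^*$ under $r^* \colon K(X) \xrightarrow{\sim} K(Z)$. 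The left-hand side $f_\natural \circ ((r')^{-1})_\natural \circ (q')_\natural$ applied to $v$ is $f_\natural\big( ((r')^{-1})_\natural (v \circ (q')^*) \big)$, i.e. the valuation $v \circ (q')^*$ transported along $(r')^* \colon K(X') \xrightarrow{\sim} K(Z')$ and then precomposed with $f^*$.

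The core of the argument is then a single diagram chase at the level of function fields: I must check that composing "transport along $(r')^*$, then precompose with $f^*$" agrees with "precompose with $g^* \circ q^*$, then transport along $r^*$". This follows because the square $\iota \circ f = g \circ \iota'$ (the left square of the original diagram, read as $f$ lifting to $Z'$) gives $f^* \circ \iota^* = (\iota')^* \circ g^*$ on function fields, and combining with $r^* = \iota^* \circ q^*$, $(r')^* = (\iota')^* \circ (q')^*$, and $q \circ g = h \circ q'$ one obtains the commutativity of the relevant pentagon of field homomorphisms; evaluating $v$ on the common composite gives the equality of the two valuations. I expect the only real subtlety — and the main obstacle — to be bookkeeping: making sure that the maps $(r^{-1})_\natural$ and $((r')^{-1})_\natural$ are interpreted consistently (they are not literal inverses of maps between the same sets, but rather the transport isomorphisms $\Val(X) \cong \Val(Z)$ and $\Val(X') \cong \Val(Z')$ induced by the open immersions), and that dominance of every morphism involved is genuinely used so that all precompositions with pullback maps land in the correct valuation rings. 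Once that is pinned down the identity is purely formal, so I would present it as a short chase rather than a computation.
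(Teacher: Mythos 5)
Your argument is correct and is essentially the paper's own proof: the paper just records the identities $h\circ q' = q\circ g$ and $f\circ (r')^{-1} = r^{-1}\circ h$ and appeals to functoriality of $(-)_\natural$, and your function-field chase (using $f^*\circ\iota^* = (\iota')^*\circ g^*$, $r^* = \iota^*\circ q^*$, $(r')^* = (\iota')^*\circ (q')^*$, $g^*\circ q^* = (q')^*\circ h^*$) is precisely the verification the paper leaves implicit. One bookkeeping slip to fix: since $r = q\circ\iota\colon Z\rightarrow Y$, the open immersion identifies $K(Z)$ with $K(Y)$, not with $K(X)$ ($\iota$ need not be dominant), so $r^*$ is an isomorphism $K(Y)\rightarrow K(Z)$ and $(r^{-1})_\natural$ is the transport $\Val(Y)\cong\Val(Z)$; with that correction your pentagon chase goes through verbatim.
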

        \begin{proof}
            From the commutativity of the above diagram, $h\circ q' = q\circ g$ and $f\circ (r')^{-1} = r^{-1}\circ h$. 
            Thus, we can check that the statement follows. 
        \end{proof}
        \subsection{Some lemmas about valuation ring}
        We use the notation as follows:
        \begin{itemize}
            \item Let $k$ be an algebraically closed field with $\charac(k) = 0$. 
            \item Let $t$ be an indeterminate invariant.
            \item Let $l$ be a positive integer and $R_l$ be a valuation ring defined as follows:
            \[
                R_l = k[[t^{\frac{1}{l}}]]
            \]
            \item Let $l$ be a positive integer and $K_l$ be a fraction field of $R_l$. 
            We remark that $K$ is written as follows:
            \[
                K_l = k((t^{\frac{1}{l}}))
            \]
            \item Let $\Rt$ be a valuation ring defined as follows:
            \[
                \Rt = \bigcup_{n\in\ZZ_{>0}} R_n
            \]
            \item Let $\Kt$ be a fraction field of $\Rt$. 
            A field $\Kt$ is a field of Puiseux series over $k$. 
            We remark that $\Kt$ is written as follows:
            \[
                \Kt = \bigcup_{n\in\ZZ_{>0}} K_n
            \]
        \end{itemize}
        In this subsection, we prove some lemmas about schemes over $\Spec(\Rt)$. 
        \vskip\baselineskip
        The following lemma is referenced in \cite[Remark. 4.6]{G13}; however, for the sake of thoroughness, a detailed proof is provided here.
        \begin{lemma}\label{lem:Gubler}
        \cite[Remark. 4.6]{G13}
            Let $\mathfrak{X}$ be a flat scheme over $\Spec(\Rt)$, $X$ denote a base change of $\mathfrak{X}$ along $\Spec(\Kt)\rightarrow \Spec(\Rt)$, and $Y$ be a closed subscheme of $X$. 
            Then there exists a unique closed subscheme $\mathfrak{Y}$ of $\mathfrak{X}$ such that the following conditions hold:
            \begin{enumerate}
                \item[(1)] A scheme $\mathfrak{Y}$ is flat over $\Spec(\Rt)$. 
                \item[(2)] A base change of $\mathfrak{Y}$ along $\Spec(\Kt)\rightarrow \Spec(\Rt)$ is equal to $Y$ as a closed subscheme of $X$. 
            \end{enumerate}
            In fact, such $\mathfrak{Y}$ is a scheme theoretic closure of $Y$ in $\mathfrak{X}$. 
        \end{lemma}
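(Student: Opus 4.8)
The plan is to reduce the statement to the standard affine picture and then invoke the usual characterisation of flatness over a valuation ring. Since $\Rt$ is a valuation ring, flatness over $\Spec(\Rt)$ is equivalent to being torsion-free as an $\Rt$-module, and for a closed subscheme this means that the defining ideal sheaf has no $\Rt$-torsion in its sections. First I would work locally: choose an affine open $\Spec(A)\subset\mathfrak{X}$, so that $A$ is a flat, hence torsion-free, $\Rt$-algebra, and $\Spec(A\otimes_{\Rt}\Kt)\subset X$ is the corresponding affine open, with $Y$ cut out on it by some ideal $J\subset A\otimes_{\Rt}\Kt$. The candidate for $\mathfrak{Y}$ is the scheme theoretic closure of $Y$ in $\mathfrak{X}$; concretely, over $\Spec(A)$ it is defined by the ideal $I=J\cap A$, where $A$ is viewed inside $A\otimes_{\Rt}\Kt$ via the (injective, by torsion-freeness) localisation map $A\hookrightarrow A\otimes_{\Rt}\Kt = A_t$ (note $\Kt=\Rt_t$, so the base change to $\Kt$ is just inverting $t$).

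Next I would verify the two required properties for this $\mathfrak{Y}$. For flatness: $A/I = A/(J\cap A)$ embeds into $(A\otimes_{\Rt}\Kt)/J = A_t/J$, which is a $\Kt$-vector space and in particular a torsion-free $\Rt$-module; a submodule of a torsion-free module is torsion-free, so $A/I$ is torsion-free over the valuation ring $\Rt$, hence flat. For the base change condition: I must check $I\otimes_{\Rt}\Kt = I_t = J$ as ideals of $A_t$. The inclusion $I_t\subset J$ is clear since $I\subset J$ and $J$ is already saturated with respect to inverting $t$ (it lives in $A_t$). Conversely, any element of $J$ can be multiplied by a suitable power of $t$ to land in $A$, hence in $J\cap A = I$, so it lies in $I_t$. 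This gives $I_t=J$, i.e.\ $\mathfrak{Y}\times_{\Spec(\Rt)}\Spec(\Kt)=Y$. Then I would check that these local constructions glue: $I=J\cap A$ is exactly the kernel of $A\to (j_*\OO_Y)(\Spec(A))$ where $j\colon Y\hookrightarrow X$, and since $j$ is a closed immersion it is quasi-compact, so by Lemma~\ref{lem:open-image} and Lemma~\ref{lem:covering-image} the scheme theoretic closure commutes with restriction to opens and is determined by its restrictions to an affine cover; thus the $I$'s patch to a quasi-coherent ideal sheaf on $\mathfrak{X}$.

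Finally, uniqueness. Suppose $\mathfrak{Y}'$ is another closed subscheme satisfying (1) and (2), defined locally by an ideal $I'\subset A$. Flatness of $A/I'$ means $A/I'$ is $\Rt$-torsion-free, so $A/I'\hookrightarrow (A/I')\otimes_{\Rt}\Kt = A_t/I'_t$; condition (2) says $I'_t = J$. Hence $I'$ is the kernel of $A\to A_t/J$, which is precisely $J\cap A=I$. Therefore $I'=I$ and $\mathfrak{Y}'=\mathfrak{Y}$, and moreover this identifies $\mathfrak{Y}$ with the scheme theoretic closure of $Y$ in $\mathfrak{X}$, as asserted in the last sentence. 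I do not anticipate a serious obstacle here: the only point requiring a little care is the compatibility of the local ideals $J\cap A$ with localisation and with the gluing, which is handled by torsion-freeness (flatness of $\mathfrak{X}$) and by the quasi-compactness lemmas already available; everything else is the standard "closure = torsion-free quotient" argument over a valuation ring.
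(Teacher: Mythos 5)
Your proposal is correct, and it follows the same global skeleton as the paper (reduce to an affine open cover, use the quasi-compactness of $Y\rightarrow\mathfrak{X}$ together with Lemmas \ref{lem:open-image} and \ref{lem:covering-image} to identify the candidate with the scheme theoretic closure and to glue), but it differs in how the affine case is handled: the paper simply cites \cite[Proposition 4.4]{G13} as a black box for the statement on an affine open $\Spec(A)$, whereas you prove that affine statement from scratch, using that $\Rt$ is a valuation ring so flat is equivalent to torsion-free, setting $I=J\cap A$ inside $A_t=A\otimes_{\Rt}\Kt$, and checking $A/I\hookrightarrow A_t/J$ (flatness), $I_t=J$ (correct generic fiber, via clearing a power of $t$), and the kernel characterization that forces uniqueness. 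Your route is therefore self-contained and more elementary, at the cost of a little more bookkeeping; the paper's is shorter by outsourcing exactly this computation to Gubler. One small point worth making explicit (the paper does): for the gluing step the morphism whose scheme theoretic image you take is the composite $Y\rightarrow X\rightarrow\mathfrak{X}$, so you need quasi-compactness of the open immersion $X\rightarrow\mathfrak{X}$ as well as of the closed immersion $Y\rightarrow X$; this is immediate since $X\rightarrow\mathfrak{X}$ is the base change of the affine morphism $\Spec(\Kt)\rightarrow\Spec(\Rt)$ (indeed $X$ is the principal open $D(t)$ locus of $\mathfrak{X}$, consistent with your identification $\Kt=\Rt_t$), but as written your appeal to quasi-compactness of $j$ alone does not literally cover it.
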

        \begin{proof}
            We remark that the open immersion $X\hookrightarrow \mathfrak{X}$ is quasi-compact because $\Spec(\Kt)\rightarrow\Spec(\Rt)$ is quasi-compact. 
            Thus, the composition $Y\hookrightarrow X\rightarrow \mathfrak{X}$ is quasi-compact too.
            We already know that the statement holds in the case when $\mathfrak{X}$ is an affine scheme from \cite[Proposition. 4.4]{G13}. 

            First, we show the existence of $\mathfrak{Y}$. 
            Let $\mathfrak{Y}_0$ be a scheme theoretic closure of $Y$ in $\mathfrak{X}$. 
            Thus, for any affine open subset $U\subset \mathfrak{X}$, the scheme theoretic closure of $U\cap Y$ in $U$ is $U\cap\mathfrak{Y}_0$ from Lemma \ref{lem:open-image}. 
            For such $U$, $U$ is flat over $\Spec(\Rt)$, so that $(U\cap\mathfrak{Y}_0)_{\Kt} = U\cap Y$ and $U\cap\mathfrak{Y}_0$ is flat over $\Spec(\Rt)$ from \cite[Proposition. 4.4]{G13}. 
            By considering one affine open covering of $\mathfrak{X}$, $\mathfrak{Y}_0$ is flat over $\Spec(\Rt)$ and the generic fiber of $\mathfrak{Y}_0$ is equal to $Y$ as a closed subscheme of $X$. 

            Second, we show the uniqueness of $\mathfrak{Y}$. 
            Let $\mathfrak{Y}_1$ be a closed subscheme of $\mathfrak{X}$ such that $\mathfrak{Y}_1$ is flat over $\Spec(\Rt)$ and the generic fiber of $\mathfrak{Y}_1$ is equal to $Y$ as a closed subscheme of $X$. 
            Let $U$ be an affine open subset of $\mathfrak{X}$. 
            Then $U\cap \mathfrak{Y}_1$ is flat over $\Spec(\Rt)$ and the generic fiber of $U\cap \mathfrak{Y}_1$ is equal to $U\cap Y$ as a closed subscheme of $U\times_\Rt \Kt$.
            Because $U$ is flat over $\Spec(\Rt)$, $U\cap \mathfrak{Y}_1$ is the scheme theoretic closure of $U\cap Y$ in $U$ 
            from \cite[Proposition. 4.4]{G13}. 
            Thus, by considering one affine open covering of $\mathfrak{X}$, $\mathfrak{Y}_1$ is the scheme theoretic closure of $Y$ in $\mathfrak{X}$ from Lemma \ref{lem:covering-image}.
        \end{proof}
        The following lemma is referenced in \cite[\href{https://stacks.math.columbia.edu/tag/055C}{Tag 055C}]{SP}. 
        We remark that if $S$ in the following lemma is replaced with $\Rt$, then the same proof does not hold. 
        \begin{lemma}\label{lem: DVR-connected}
        \cite[\href{https://stacks.math.columbia.edu/tag/055C}{Tag 055C}]{SP}
            Let $S$ be a DVR, $u$ be its uniformizer, $Q$ be the fraction field of $S$, and $\kappa$ be the residue field of $S$. 
            Let $B$ be a flat $S$-algebra and $x\in B\otimes_S Q$ be an idempotent element. 
            We remark that extension morphism $B\rightarrow B\otimes_S Q$ is injective, and $u$ is a non-zero divisor of $B$ because $B$ is flat over $S$. 
            We assume that $B\otimes_S \kappa$ is reduced. 
            Then we have $x\in B$.
        \end{lemma}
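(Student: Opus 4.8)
The plan is to reduce the statement to the situation where $B$ is $u$-adically separated. Write $x = b/u^n$ with $b \in B$ and $n \geq 0$ chosen minimal; the goal is to show $n = 0$, i.e.\ $x \in B$. The idempotency $x^2 = x$ in $B \otimes_S Q$ translates, after clearing denominators, into the identity $b^2 = u^n b$ in $B$ (using that $B \to B \otimes_S Q$ is injective, which holds since $B$ is $S$-flat and $u \neq 0$). First I would reduce modulo $u$: the equation $b^2 = u^n b$ gives $\bar b^2 = 0$ in $B \otimes_S \kappa = B/uB$ whenever $n \geq 1$. Since $B \otimes_S \kappa$ is reduced by hypothesis, this forces $\bar b = 0$, i.e.\ $b \in uB$.

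Now the key step: write $b = u c$ for some $c \in B$. Substituting into $b^2 = u^n b$ yields $u^2 c^2 = u^{n+1} c$, and since $u$ is a non-zero divisor on $B$ (again by flatness) we may cancel: if $n \geq 2$ we get $c^2 = u^{n-1} c$ and $x = b/u^n = c/u^{n-1}$, contradicting the minimality of $n$ unless $n - 1 = 0$. If $n = 1$ we get $uc^2 = u^2 c$, hence $c^2 = uc$, so $c$ is idempotent in $B \otimes_S Q$ and $x = uc/u = c \in B$ directly. Either way we conclude $n = 0$ or are reduced to a strictly smaller $n$, so by descending induction on $n$ (or simply by taking $n$ minimal as above) we obtain $x \in B$.

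The main obstacle — and the reason the lemma fails for $\widetilde R$ in place of a genuine DVR — is precisely the cancellation/descent argument in the second paragraph: it relies on $u$ being a single non-zero-divisor element such that $B/uB$ is the special fibre, and on the valuation being discrete so that denominators $u^n$ are controlled by a single integer exponent $n$ that can be decreased step by step. Over $\widetilde R$ there is no uniformizer and the value group is $\mathbb{Q}$, so this inductive descent has no analogue. I expect the flatness inputs (injectivity of $B \to B \otimes_S Q$ and $u$ being a non-zero divisor on $B$) to be entirely routine, and the reducedness hypothesis enters only at the single point $\bar b^2 = 0 \Rightarrow \bar b = 0$; so the whole proof is short once the bookkeeping with the minimal exponent $n$ is set up carefully.
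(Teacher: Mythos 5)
Your proposal is correct and follows essentially the same argument as the paper: take the minimal exponent $n$ in $x=b/u^n$, use injectivity of $B\to B\otimes_S Q$ to get $b^2=u^nb$, reduce mod $u$ and invoke reducedness of $B\otimes_S\kappa$ to conclude $b\in uB$, contradicting minimality when $n\geq 1$. The extra case split between $n=1$ and $n\geq 2$ is unnecessary (once $b\in uB$ you can lower the exponent and contradict minimality uniformly), but this is only a cosmetic redundancy, not a gap.
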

        \begin{proof}
            Let $a\in B$ be an element and $n\in\ZZ_{\geq 0}$ be a non-negative integer such that $x = \frac{a}{u^n}$ in $B\otimes_S Q$. 
            We can take $n$ minimal. 
            Because $x$ is an idempotent element, $\frac{a}{u^n} = \frac{a^2}{u^{2n}}$  in $B\otimes_S Q$. 
            Then $a^2 = au^n$  in $B$ because $u$ is a non-zero divisor of $B$. 
            If $n \geq 1$, this shows that $a + (u)B$ is a nilpotent element in $B\otimes_S \kappa$. 
            However, from the assumption, $a\in (u)B$ and this is a contradiction to the minimality of $n$. 
            Thus, we can take $n$ as $0$, so that $x\in B$.  
        \end{proof}
        It is not trivial whether Lemma \ref{lem: DVR-connected} holds when $S = \Rt$. 
        In fact, if $B$ is flat and of finite presentation over $\Rt$, then Lemma \ref{lem: DVR-connected} holds. 
        This claim is proven in Lemma \ref{lem: stacks exchange}. 
        The following lemma serves as a preparation for the proof of Lemma \ref{lem: stacks exchange}. 
        \begin{lemma}\label{lem:R-of finite presentation}
            Let $A$ be a flat and of finite presentation ring over $\Rt$ and $x\in A\otimes_\Rt \Kt$ be an element. 
            Then there exists a positive integer $l$ and a sub $R_l$-algebra $C$ of $A$ such that
            \begin{itemize}
                \item A $R_l$-algebra $C$ is flat over $R_l$.
                \item All morphisms in the following diagram are injective:
                \begin{equation*}
                    \begin{tikzcd}
                        C\ar[r]\ar[d, hook] & C\otimes_{R_l} K_l\ar[d]\\
                        A\ar[r]&A\otimes_{\Rt} \Kt
                    \end{tikzcd}
                \end{equation*}
                where the right one is induced by the base of $R_l\rightarrow K_l$ along $C\hookrightarrow A$
                \item A subring $C\otimes_{R_l} K_l$ of $A\otimes_{\Rt} \Kt$ contains $x$. 
                \item The inclusion $C\hookrightarrow A$ induce the isomorphism $C\otimes_{R_l}k \cong A\otimes_\Rt{k}$
            \end{itemize}
        \end{lemma}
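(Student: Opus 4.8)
The plan is to descend $A$, together with its flatness, to a finite level $R_{l_0}$, exploiting that $\Rt=\bigcup_n R_n$ and $\Kt=\bigcup_n K_n$ are filtered colimits over $\ZZ_{>0}$ ordered by divisibility (note $R_n\subseteq R_m$ and $K_n\subseteq K_m$ whenever $n\mid m$, since $t^{1/n}=(t^{1/m})^{m/n}$).

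First I would apply the standard limit arguments for finitely presented algebras (see \cite{SP}): since $A$ is of finite presentation over $\Rt=\varinjlim_n R_n$, there are a positive integer $l_0$ and a finitely presented $R_{l_0}$-algebra $A_0$ with $A_0\otimes_{R_{l_0}}\Rt\cong A$ as $\Rt$-algebras; and since $A$ is flat over $\Rt$, after replacing $l_0$ by a suitable multiple we may moreover assume $A_0$ is flat over $R_{l_0}$ (if a finitely presented algebra over a filtered colimit of rings is flat over the colimit, it is already flat over some stage). For each positive multiple $l$ of $l_0$ put $C_l:=A_0\otimes_{R_{l_0}}R_l$; this is a flat, finitely presented $R_l$-algebra with $C_l\otimes_{R_l}\Rt\cong A$.

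Next I would record the injectivity statements. Since $R_l\hookrightarrow\Rt$ is injective and $C_l$ is flat over $R_l$, applying $C_l\otimes_{R_l}(-)$ shows that $C_l\to C_l\otimes_{R_l}\Rt=A$ is injective, so $C_l$ may be viewed as a sub $R_l$-algebra of $A$. Likewise $C_l\hookrightarrow C_l\otimes_{R_l}K_l$ (as $C_l$ is torsion-free over the valuation ring $R_l$), $A\hookrightarrow A\otimes_\Rt\Kt$ (as $A$ is flat over $\Rt$), and, via $A\otimes_\Rt\Kt\cong(C_l\otimes_{R_l}K_l)\otimes_{K_l}\Kt$ together with $\Kt$ being a nonzero $K_l$-vector space, $C_l\otimes_{R_l}K_l\hookrightarrow A\otimes_\Rt\Kt$. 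These four maps form the commutative square of the statement with all arrows injective. The induced map $C_l\otimes_{R_l}k\to A\otimes_\Rt k$ is an isomorphism because both sides are canonically $A_0\otimes_{R_{l_0}}k$, the residue field of $R_l$ and of $\Rt$ being $k$.

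Finally, to accommodate the given $x\in A\otimes_\Rt\Kt$, I would write $A\otimes_\Rt\Kt=C_{l_0}\otimes_{R_{l_0}}\Kt=\varinjlim_l\bigl(C_{l_0}\otimes_{R_{l_0}}K_l\bigr)=\varinjlim_l\bigl(C_l\otimes_{R_l}K_l\bigr)$ as a filtered colimit over the multiples $l$ of $l_0$, so that $x$ already lies in $C_l\otimes_{R_l}K_l$ for some such $l$; then $C:=C_l$ for that $l$ has all four required properties. The only genuinely non-formal input is the descent of $A$ and of its flatness to a finite level $R_{l_0}$, which is where finite presentation is really used; I expect this to be the crux, and I would be careful that the enlargement of $l_0$ needed to capture $x$ is made after, and is compatible with, the flatness descent. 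Everything else is bookkeeping with base change, and I would cite the limit results rather than reprove them.
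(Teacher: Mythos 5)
Your proof is correct, and at bottom it uses the same idea as the paper: descend everything to a finite stage $R_l$ of the filtered union $\Rt=\bigcup_n R_n$, base change to get $C$, and then check the four bullet points by flatness/torsion-freeness. The difference is purely in implementation. The paper argues by hand: it fixes a presentation $\Rt[X_1,\ldots,X_N]/(f_1,\ldots,f_r)\cong A$, chooses $l$ so that the finitely many coefficients of the $f_i$ lie in $R_l$ and those of a polynomial representative of $x$ lie in $K_l$, sets $C=R_l[X_1,\ldots,X_N]/(f_1,\ldots,f_r)$, and proves flatness of $C$ over $R_l$ directly (torsion-freeness over the DVR $R_l$, checked after the faithfully flat base change $R_l\to\Rt$ using flatness of $A$); the element $x$ is then visibly in $C\otimes_{R_l}K_l$. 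You instead invoke the general limit formalism: descent of finite presentation and of flatness to a finite stage of a filtered colimit, followed by $A\otimes_\Rt\Kt=\varinjlim_l\bigl(C_l\otimes_{R_l}K_l\bigr)$ to capture $x$; your injectivity and special-fiber checks are the same base-change bookkeeping as in the paper and are correct. What your route buys is brevity and a uniform argument (no choice of presentation, and the order of operations you flag — flatness descent first, then enlarging $l$ for $x$ — is handled automatically since flatness is preserved by the further base changes $R_{l_0}\to R_l$); what the paper's route buys is self-containedness, replacing the nontrivial descent-of-flatness theorem by an elementary DVR argument tailored to this situation.
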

        \begin{proof}
            From the assumption, there exist $N\in\ZZ_{> 0}$ and $f_1, \ldots, f_r\in \Rt[X_1, \ldots, X_N]$ such that $\Rt[X_1, \ldots, X_N]/(f_1, \ldots, f_r)\cong A$ as an $\Rt$-algebra. 
            Moreover, there exists $g\in \Kt[X_1, \ldots, X_N]$ such that $g + (f_1, \ldots, f_r) = x$ by the identification with $\Kt[X_1, \ldots, X_N]/(f_1, \ldots, f_r)$ and $A\otimes_{\Rt} \Kt$. 
            Thus, from the definition of $\Rt$ and $\Kt$, there exists $l\in\ZZ_{>0}$ such that $f_1, \ldots, f_r\in R_l[X_1, \ldots, X_N]$ and $g\in K_l[X_1, \ldots, X_N]$. 
            Let $C$ denote a $R_l$-algebra $R_l[X_1, \ldots, X_N]/(f_1, \ldots, f_r)$. 
            We remark that $C\otimes_{R_l} \Rt\cong \Rt[X_1, \ldots, X_N]/(f_1, \ldots, f_r)$ as an $\Rt$-algebra. 
            We remark that $\Rt\otimes_{R_l}K_l = \Rt[t^{-\frac{1}{l}}] = \Kt$. 

            First, we show that $C$ is flat over $R_l$. 
            Because $R_l$ is a DVR, it is enough to show that $C$ is a torsion-free $R_l$-module. 
            Let $f\colon C\rightarrow C$ denote an endomorphism of $C$ defined by the scalar product of $t^\frac{1}{l}$. 
            Then, it is enough to show that $f$ is injective. 
            Because $\Rt$ is a faithfully flat $R_l$-module, it is enough to show that $f\otimes\id_{\Rt}$, which is induced by $f$, is injective. 
            From the assumption, $\Rt[X_1, \ldots, X_N]/(f_1, \ldots, f_r)$ is flat over $\Rt$. 
            Thus, $f\otimes\id_{\Rt}$ is injective. 

            Second, the extension morphism $C\rightarrow \Rt[X_1, \ldots, X_N]/(f_1, \ldots, f_r)$ is injective because $C$ is flat over $R_l$. 
            Thus, we can regard $C$ as an $R_l$-sub algebra of $A$ on the identification with $\Rt[X_1, \ldots, X_N]/(f_1, \ldots, f_r)$ and $A$. 
            We can easily check that this inclusion $C\hookrightarrow A$ induces an isomorphism $C\otimes_{R_l}k\cong A\otimes_{\Rt}k$ because both rings are isomorphic to $k[X_1, \ldots, X_N]/(\overline{f_1}, \ldots, \overline{f_r})$, where $\overline{f_i}\in k[X_1, \ldots, X_N]$ is a polynomial whose each coefficient is replaced by constant terms of coefficients of $f_i$. 
            
            Third, in the above diagram, we show that all morphisms are injective. 
            Indeed, the left one is injective from the above argument. 
            The upper one is injective because $C$ is flat over $R_l$. 
            The right one is injective because $K_l$ is flat over $R_l$. 
            At this point, we remark that $A\otimes_\Rt \Kt = A\otimes_\Rt \Rt\otimes_{R_l}K_l = A\otimes_{R_l}K_l$. 
            The lower one is injective because $A$ is flat over $\Rt$. 

            Finally, we show that $x\in C\otimes_{R_l} K_l$. 
            Indeed, $g + (f_1, \ldots, f_r)\in C\otimes_{R_l} K_l$ and this element go to $x$ along the right morphism in the above diagram. 
        \end{proof}
        From now on, we prove the following lemma, which is derived from the Lemma\ref{lem: DVR-connected} by replacing the condition that $S$ is a DVR with the condition that $S = \Rt$ and adding one condition. 
        \begin{lemma}\label{lem: stacks exchange}
            Let $\eta$ denote a generic point of $\Spec(\Rt)$. 
            Let $X$ be a flat and locally of finite presentation scheme over $\Spec(\Rt)$. 
            For an open subscheme $U\subset X$, let $U_\eta$ denote a generic fiber of $U$. 
            We assume that $X$ is a connected scheme and the closed fiber $X_k$ is reduced.
            Then the following statements follow:
            \begin{enumerate}
                \item[(a)] Let $U$ be an open subscheme of $X$. 
                Then, the following ring morphism induced by an open immersion $U_\eta\rightarrow U$ is injective. 
                \[
                    \Gamma(U, \OO_{X})\rightarrow \Gamma(U_\eta, \OO_{X})
                \]
                \item[(b)] A scheme $X_\eta$ is connected. 
            \end{enumerate}
        \end{lemma}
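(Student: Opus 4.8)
The plan is to reduce both statements to the classical DVR situation (Lemma \ref{lem: DVR-connected}) via the approximation result Lemma \ref{lem:R-of finite presentation}. The two statements are linked: (b) follows from (a) applied to $U = X$, since an idempotent of $\Gamma(X_\eta, \OO_X)$ other than $0$ and $1$ would witness a disconnection of $X_\eta$, and injectivity of $\Gamma(X,\OO_X) \to \Gamma(X_\eta,\OO_X)$ together with connectedness of $X$ forces $\Gamma(X,\OO_X)$ to have no nontrivial idempotents; so the real content is (a), plus the observation that (a) must be upgraded to control idempotents, not merely elements.

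For (a), I would argue locally. The question of injectivity of $\Gamma(U,\OO_X) \to \Gamma(U_\eta,\OO_X)$ can be checked on an affine open cover of $U$: if $s \in \Gamma(U,\OO_X)$ restricts to $0$ on $U_\eta$, then its restriction to each affine open $V = \Spec(A) \subset U$ restricts to $0$ on $V_\eta = \Spec(A \otimes_{\Rt}\Kt)$, and it suffices to see $A \to A\otimes_{\Rt}\Kt$ is injective for $A$ flat of finite presentation over $\Rt$. But flatness of $A$ over the valuation ring $\Rt$ means $A$ is torsion-free as an $\Rt$-module, and $A \otimes_{\Rt}\Kt$ is the localization of $A$ at the multiplicative set of nonzero elements of $\Rt$; a torsion-free module injects into such a localization. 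So (a) is in fact immediate from flatness alone and does not even need the finite presentation or reducedness hypotheses — the point of stating it is to set up (b).

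For (b), suppose $X_\eta = \Spec$-locally has a nontrivial idempotent; more precisely, a decomposition $X_\eta = X_1 \amalg X_2$ into nonempty open-and-closed pieces corresponds, on each affine open $\Spec(A) \subset X$, to an idempotent $e \in A\otimes_{\Rt}\Kt$. Here is where Lemma \ref{lem:R-of finite presentation} enters: choosing a finite affine cover $\{\Spec(A_i)\}$ of $X$, apply the lemma to each $A_i$ and the finitely many elements $e|_{\Spec(A_i)}$, and after replacing $l$ by a common multiple, obtain for a single $l$ a flat $R_l$-subalgebra $C_i \subset A_i$ with $C_i \otimes_{R_l} k \cong A_i \otimes_{\Rt} k$ and with $e|_{\Spec(A_i)} \in C_i \otimes_{R_l} K_l$. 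One then checks these $C_i$ glue (on overlaps, both $C_i$ and $C_j$ are generated over $R_l$ by the same finitely many coordinate functions and the same relations can be arranged, shrinking $l$ further if necessary) to a flat locally-of-finite-presentation scheme $\mathfrak{X}_l$ over $\Spec(R_l)$ with $\mathfrak{X}_l \times_{R_l}\Rt \cong X$ and $(\mathfrak{X}_l)_k \cong X_k$, hence $(\mathfrak{X}_l)_k$ reduced, and $\mathfrak{X}_l$ connected (since $X$ is and $\Spec(\Rt)\to\Spec(R_l)$ is faithfully flat with connected fibers, or directly since $\mathfrak{X}_l$ and $X$ have the same underlying topological space-level structure after the faithfully flat base change). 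Now $R_l = k[[t^{1/l}]]$ is a DVR, so Lemma \ref{lem: DVR-connected} applies: the idempotent $e$, viewed in $C_i \otimes_{R_l} K_l$, actually lies in $C_i$, hence gives an open-and-closed decomposition of $\mathfrak{X}_l$; connectedness of $\mathfrak{X}_l$ forces $e \in \{0,1\}$ on each piece, hence $X_1$ or $X_2$ is empty, contradiction.

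The main obstacle I anticipate is the gluing step: Lemma \ref{lem:R-of finite presentation} is stated for a single affine ring, and producing a global model $\mathfrak{X}_l$ over $R_l$ descending $X$ requires care that the locally chosen $C_i$ are compatible on intersections. The clean way around this is to not glue schemes at all but to work purely with the idempotent: fix one affine open $\Spec(A) \subset X$ containing a point of the closure of each connected component of $X_\eta$ — or, more robustly, cover $X$ by finitely many affines and run the DVR argument on each $A_i$ separately to conclude $e|_{\Spec(A_i)} \in C_i \subset A_i$, so that the idempotent $e$ is already a global section of $\OO_X$, i.e.\ $e \in \Gamma(X,\OO_X)$; then connectedness of $X$ (a hypothesis) directly gives $e \in \{0,1\}$, and we are done without ever constructing $\mathfrak{X}_l$ as a scheme. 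This reduces the role of the finite-presentation model $C_i$ to a purely algebraic statement about each $A_i$, which is exactly what Lemma \ref{lem:R-of finite presentation} provides, and sidesteps the descent bookkeeping entirely.
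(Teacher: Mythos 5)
Your final argument --- proving (a) affine-locally from flatness of $A$ over $\Rt$, and then, for (b), applying Lemma \ref{lem:R-of finite presentation} and Lemma \ref{lem: DVR-connected} on each affine open separately to show the idempotent lies in each $A_i$, gluing these via (a) to a global section of $\OO_X$, and contradicting connectedness of $X$ --- is exactly the paper's proof. The gluing-a-global-model-$\mathfrak{X}_l$ detour you consider first is indeed the problematic step, and the fallback you settle on is the correct (and the paper's) route, so no gap remains.
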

        \begin{proof}
            We prove the statements from (a) to (b). 
            \begin{enumerate}
                \item[(a)] When $U$ is an affine open subscheme of $X$, the statement holds. 
                Indeed, let $A$ be a ring such that $\Spec(A) = X$. 
                Then $A$ is flat over $\Rt$. 
                Thus, an extension map $A\rightarrow A\otimes_{\Rt}\Kt$ is injective. 
                In the general case, $U$ can be covered by open affine subschemes of $X$. 
                Because $\OO_X$ is a sheaf over $X$, the statement holds. 
                \item[(b)] We show that the statement holds by a contradiction. 
                We assume that $X_\eta$ isn't connected. 
                Then there exists an idempotent element $e\in\Gamma(X_\eta, \OO_X)$ such that $e\neq 0$ and $e\neq1$. 
                Let $U$ be an open affine subscheme of $X$ and $A$ be a ring such that $U = \Spec(A)$ and $A$ is of finite presentation over $\Rt$. 
                We remark that $U_\eta = \Spec(A[\frac{1}{t}])$, so that $e|_{U_\eta}\in A\otimes_{\Rt} \Kt$. 
                Then for $A$ and $e|_{U_\eta}\in A\otimes_{\Rt} \Kt$, there exists $l\in\ZZ_{>0}$ and a sub $R_l$-algebra $C$ of $A$ such that $l$ and $C$ satisfy the conditions in Lemma \ref{lem:R-of finite presentation}. 
                Thus, $e|_{U_\eta}\in C\otimes_{R_l}K_l$ and $C\otimes_{R_l}K_l\rightarrow 
                A\otimes_{\Rt}\Kt$ is injective. 
                Hence, $e|_{U_\eta}$ is an idempotent elements of $C\otimes_{R_l}K_l$. 
                Moreover, $C$ is a flat over a DVR $R_l$ and $C\otimes_{R_l}k\cong A\otimes_{\Rt} k$ is reduced from the assumption. 
                Thus, from Lemma \ref{lem: DVR-connected}, we have $e|_{U_\eta}\in C$ and $e|_{U_\eta}\in A$. 
                This shows that $e|_{U_\eta}\in\Gamma(U, \OO_X)$ for any affine open subscheme $U$ of $X$. 
                Hence, from (a), we can check that $e\in\Gamma(X, \OO_X)$. 
                An element $e\in\Gamma(X, \OO_X)$ is a non-trivial idempotent element of $\Gamma(X, \OO_X)$, but it is a contradiction to the assumption that $X$ is connected. 
            \end{enumerate}
        \end{proof}
        \subsection{"Smoothness"}
        In this subsection, we prove the lemmas are needed in Lemma\ref{lem: for 7-1} and \ref{lem: for 7-4}. 
        \begin{lemma}\label{lem: fundamental smoothness}
            We itemize the notation as follows:
            \begin{itemize}
                \item Let $k$ be a field. 
                \item Let $A$ be a ring of finite type over $k$.
                \item Let $X$ denote $\Spec(A)$. 
                \item For $x\in X$, let $\kappa(x)$ denote a residue field of $x$. 
                \item Let $A[X_1, \ldots, X_n]$ denote a global section ring of $X\times_{k}\A^n_k$ and $0\neq f\in A[X_1, \ldots, X_n]$ be a polynomial. 
                \item Let $Y$ be a closed subscheme of $X\times_{k}\A^n_k$ defined by $f = 0$. 
                \item Let $V$ be a non-empty open subset of $X\times_{k}\A^n_k$. 
                \item Let $\pi$ denote the first projection $X\times_{k}\A^n_k\rightarrow X$. 
            \end{itemize}
            We assume that $V_{\kappa(x)}\cap Y_{\kappa(x)} \neq V_{\kappa(x)}$ and $V_{\kappa(x)}\cap Y_{\kappa(x)}$ is smooth over $\kappa(x)$. 
            
            Then there exists a non-empty open subset of $U\subset V$ such that $V_{\kappa(x)}\cap Y_{\kappa(x)}\subset U$ and the restriction $\pi|_{U\cap Y}\colon U\cap Y\rightarrow X$ is a smooth morphism. 
        \end{lemma}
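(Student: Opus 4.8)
The plan is to exploit that ``being smooth'' is an open condition on the source of a morphism locally of finite presentation, and to produce $U$ as (essentially) the smooth locus of $\pi|_{V\cap Y}$. The only nontrivial input is that this smooth locus contains the whole fibre $V_{\kappa(x)}\cap Y_{\kappa(x)}$, and for that I must verify that $\pi|_{V\cap Y}$ is flat at every point of that fibre; smoothness of the fibre over $\kappa(x)$ is then exactly the hypothesis, and the fibrewise criterion for smoothness (a morphism locally of finite presentation is smooth at a point iff it is flat there and its fibre over the image point is smooth over the residue field at that point) finishes the local check.

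\textbf{Step 1 (flatness along the special fibre).} Write $B=A[X_1,\dots,X_n]$, so that $Y=\Spec(B/(f))$ and $\pi$ corresponds to $A\to B/(f)$. Let $\mathfrak p\subset A$ be the prime of $x$ and $f_x\in\kappa(x)[X_1,\dots,X_n]$ the image of $f$. Since $V_{\kappa(x)}\cap Y_{\kappa(x)}\neq V_{\kappa(x)}$, the polynomial $f_x$ is not identically zero: otherwise $Y_{\kappa(x)}=\A^n_{\kappa(x)}$ and the two intersections would coincide. Base-changing to $A_{\mathfrak p}$ and using that $A_{\mathfrak p}$ is local, the image $\tilde f$ of $f$ in $A_{\mathfrak p}[X_1,\dots,X_n]$ has a coefficient which is a unit (namely any coefficient not lying in $\mathfrak p A_{\mathfrak p}$, which exists because $f_x\neq 0$), hence $\tilde f$ is a nonzerodivisor. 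Thus $0\to A_{\mathfrak p}[X]\xrightarrow{\tilde f}A_{\mathfrak p}[X]\to (B/(f))_{\mathfrak p}\to 0$ is exact, and tensoring against $\kappa(x)$ over $A_{\mathfrak p}$ gives $\mathrm{Tor}_1^{A_{\mathfrak p}}\big((B/(f))_{\mathfrak p},\kappa(x)\big)=\ker\big(f_x\cdot(-)\colon\kappa(x)[X]\to\kappa(x)[X]\big)=0$, since $\kappa(x)[X]$ is a domain. Localising further at any prime $\mathfrak q$ of $B/(f)$ lying over $x$ and applying the local criterion of flatness to the local homomorphism $A_{\mathfrak p}\to (B/(f))_{\mathfrak q}$ of Noetherian local rings, we conclude that $B/(f)$ is flat over $A$ at $\mathfrak q$; that is, $\pi$ is flat at every point of $Y$ lying over $x$, and a fortiori at every point of the open subscheme $V\cap Y$ lying over $x$.

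\textbf{Step 2 (openness and descent to $V$).} The morphism $\pi|_{V\cap Y}\colon V\cap Y\to X$ is of finite type between Noetherian schemes, hence locally of finite presentation, so its smooth locus $W\subseteq V\cap Y$ is open. For $y\in V_{\kappa(x)}\cap Y_{\kappa(x)}$ the morphism $\pi|_{V\cap Y}$ is flat at $y$ by Step 1, and its fibre over $x$, which is $V_{\kappa(x)}\cap Y_{\kappa(x)}$, is smooth over $\kappa(x)$ by hypothesis, hence smooth at $y$; by the fibrewise criterion for smoothness, $\pi|_{V\cap Y}$ is smooth at $y$, so $y\in W$. Therefore $V_{\kappa(x)}\cap Y_{\kappa(x)}\subseteq W$. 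Since $W$ is open in the closed subscheme $V\cap Y$ of $V$, pick an open $U''\subseteq X\times_{k}\A^n_k$ with $U''\cap Y=W$ and set $U=(U''\cap V)\cup(V\setminus Y)$; this is open, contained in $V$, and $U\cap Y=W$ (because $W\subseteq V$). Then $V_{\kappa(x)}\cap Y_{\kappa(x)}\subseteq W=U\cap Y\subseteq U$ and $\pi|_{U\cap Y}=\pi|_W$ is smooth. Finally $U$ is non-empty: again by $V_{\kappa(x)}\cap Y_{\kappa(x)}\neq V_{\kappa(x)}$ we have $V\not\subseteq Y$, so $V\setminus Y\neq\emptyset$.

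The main obstacle is Step 1, the flatness of $\pi$ along the special fibre, since the rest is the standard ``open nature of smoothness'' package (openness of the smooth locus, the fibrewise smoothness criterion, and elementary point-set manipulation). A secondary point requiring care is that $A$ need not be reduced, so one genuinely has to localise at $\mathfrak p$ before invoking exactness of the length-two complex attached to $\tilde f$; the hypothesis $f_x\neq 0$ is precisely what makes $\tilde f$ a nonzerodivisor there.
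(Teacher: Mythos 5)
Your argument is correct, but it reaches the conclusion by a genuinely different route from the paper. The paper works directly with the Jacobian criterion: it sets $Z=V\bigl(\partial f/\partial X_1,\ldots,\partial f/\partial X_n\bigr)$, uses smoothness of $V_{\kappa(x)}\cap Y_{\kappa(x)}$ (after base change to $\overline{\kappa(x)}$) to see that this fibre is disjoint from $Z$, takes $U=V\cap\bigl((X\times_k\A^n_k)\setminus Z\bigr)$, and concludes because $Y\cap D(\partial f/\partial X_i)\rightarrow X$ is standard smooth for each $i$. You never differentiate: you first prove that $\pi|_Y$ is flat at every point over $x$ (the length-two complex cut out by $\tilde f$, vanishing of $\mathrm{Tor}_1$ against $\kappa(x)$ because $f_x\neq 0$, then the local criterion of flatness), and then invoke the fibrewise criterion for smoothness together with openness of the smooth locus. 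Both are valid; the paper's proof is more explicit and elementary, handing you $U$ concretely as the complement of the critical locus, whereas yours is more conceptual and would extend beyond the hypersurface case whenever flatness along the special fibre can be verified, at the cost of heavier general machinery. Two small points: your claim that $\tilde f$ is a nonzerodivisor because some coefficient is a unit is true but not immediate when $A_{\mathfrak p}$ is non-reduced — it is McCoy's theorem (a zerodivisor polynomial is annihilated by a nonzero constant); alternatively, your Step 1 is exactly the slicing criterion for flatness, which you could cite outright. Also, your choice $U=(U''\cap V)\cup(V\setminus Y)$ disposes of the non-emptiness requirement more carefully than the paper's construction does.
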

        \begin{proof}
            Let $s$ denote a natural morphism $A[X_1, \ldots, X_n] \rightarrow \kappa(x)[X_1, \ldots, X_n]$ induced by $A\rightarrow \kappa(x)$. 
            Then from the assumption, $s(f)\neq 0$ in $\kappa(x)[X_1, \ldots, X_n]$. 
            We remark that for any $1\leq i\leq n$ and $g\in A[X_1, \ldots, X_n]$, the following equation holds in $\kappa(x)[X_1, \ldots, X_n]$: 
            \[
                \frac{\partial}{\partial X_i} s(g) = s\bigl(\frac{\partial g}{\partial X_i}\bigr)
            \]
            Let $Z$ denote a closed subscheme of $X\times_{k}\A^n_k$ defined by an ideal generated by $\{\frac{\partial f}{\partial X_i} \}_{1\leq i\leq n}$. 
            Now, we show that $V_{\kappa(x)}\cap Y_{\kappa(x)}\cap Z_{\kappa(x)} = \emptyset$ by a contradiction. 
            We assume that $V_{\kappa(x)}\cap Y_{\kappa(x)}\cap Z_{\kappa(x)} \neq\emptyset$. 
            Then $V_{\overline{\kappa(x)}}\cap Y_{\overline{\kappa(x)}}\cap Z_{\overline{\kappa(x)}} \neq\emptyset$. 
            From above remark, this shows that a hypersurface $Y_{\overline{\kappa(x)}}$ of $\A^n_{\overline{\kappa(x)}}$has a singular point in $V_{\overline{\kappa(x)}}\cap Y_{\overline{\kappa(x)}}$, but this is a contradiction to the assumption that $V_{\kappa(x)}\cap Y_{\kappa(x)}$ is smooth over $\kappa(x)$. 
            
            Let $U$ denote $V\cap (X\times_k\A^n_k\setminus Z)$ and $W_i$ denote a fundamental affine open subset of $X\times_k\A^n_k$ defined by $\frac{\partial f}{\partial X_i}$ for $1\leq i\leq n$.  
            Thus, $U$ contains $V_{\kappa(x)}\cap Y_{\kappa(x)}$. 
            Moreover, for any $1\leq i\leq n$, the restriction $\pi\colon Y\cap W_i\rightarrow X$ is a smooth morphism. 
            Thus, the restriction $\pi|_{U\cap Y}\colon U\cap Y\rightarrow X$ is a smooth morphism too.
        \end{proof}
        \begin{corollary}\label{cor:fundamental smoothness of small open}
            We itemize the notation as follows:
            \begin{itemize}
                \item Let $k$ be a field. 
                \item Let $A$ be a ring of finite type over $k$.
                \item Let $X$ denote $\Spec(A)$. 
                \item For $x\in X$, let $\kappa(x)$ denote a residue field of $x$. 
                \item Let $A[X_1^{\pm}, \ldots, X_n^{\pm}]$ denote a global section ring of $X\times_{k}{\Gm^n}_{,k}$ and $0\neq f\in A[X_1^{\pm}, \ldots, X_n^{\pm}]$ be a Laurant polynomial. 
                \item Let $Y$ be a closed subscheme of $X\times_{k}{\Gm^n}_{,k}$ defined by $f = 0$. 
                \item Let $V$ be a non-empty open subset of $X\times_{k}{\Gm^n}_{,k}$. 
                \item Let $\pi$ denote the first projection $X\times_{k}{\Gm^n}_{,k}\rightarrow X$. 
            \end{itemize}
            We assume that $V_{\kappa(x)}\cap Y_{\kappa(x)} \neq V_{\kappa(x)}$ and $V_{\kappa(x)}\cap Y_{\kappa(x)}$ is smooth over $\kappa(x)$. 
            
            Then there exists a non-empty open subset of $U\subset V$ such that $V_{\kappa(x)}\cap Y_{\kappa(x)}\subset U$ and the restriction $\pi|_{U\cap Y}\colon U\cap Y\rightarrow X$ is a smooth morphism. 
        \end{corollary}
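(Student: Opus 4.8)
The plan is to deduce Corollary \ref{cor:fundamental smoothness of small open} from Lemma \ref{lem: fundamental smoothness} by regarding $X\times_{k}{\Gm^n}_{,k}$ as the open subscheme of $X\times_k\A^n_k$ where all coordinates are invertible, and by clearing denominators in the Laurent polynomial $f$. Concretely, first I would choose integers $a_1,\dots,a_n\ge 0$ so that $\tilde f := (X_1^{a_1}\cdots X_n^{a_n})f$ lies in $A[X_1,\dots,X_n]$ and is not divisible by any $X_i$; in particular $\tilde f \ne 0$. Since each $X_i$ is a unit on $X\times_k{\Gm^n}_{,k}$, the monomial $X_1^{a_1}\cdots X_n^{a_n}$ is a unit there, so if $\tilde Y$ denotes the closed subscheme of $X\times_k\A^n_k$ defined by $\tilde f = 0$, then $\tilde Y \cap (X\times_k{\Gm^n}_{,k}) = Y$ as closed subschemes of $X\times_k{\Gm^n}_{,k}$.

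Next I would observe that $X\times_k{\Gm^n}_{,k}$ is an open subscheme of $X\times_k\A^n_k$, hence the given non-empty open $V$ is also a non-empty open subscheme of $X\times_k\A^n_k$, and that the first projection $X\times_k\A^n_k\to X$ restricts to the projection $\pi$ in the statement. Because $V\subset X\times_k{\Gm^n}_{,k}$, we have $V_{\kappa(x)}\cap \tilde Y_{\kappa(x)} = V_{\kappa(x)}\cap Y_{\kappa(x)}$, so the hypotheses $V_{\kappa(x)}\cap \tilde Y_{\kappa(x)}\ne V_{\kappa(x)}$ and ``$V_{\kappa(x)}\cap \tilde Y_{\kappa(x)}$ smooth over $\kappa(x)$'' both hold. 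Applying Lemma \ref{lem: fundamental smoothness} to $\tilde f$, $\tilde Y$ and $V$ yields a non-empty open $U\subset V$ with $V_{\kappa(x)}\cap Y_{\kappa(x)} = V_{\kappa(x)}\cap \tilde Y_{\kappa(x)}\subset U$ and $\pi|_{U\cap \tilde Y}\colon U\cap \tilde Y\to X$ smooth. Finally, since $U\subset V\subset X\times_k{\Gm^n}_{,k}$ we have $U\cap \tilde Y = U\cap Y$, so $\pi|_{U\cap Y}$ is the desired smooth morphism.

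There is no genuine obstacle here; the corollary is a routine transfer along the open immersion ${\Gm^n}_{,k}\hookrightarrow \A^n_k$. The only two points needing a line of care are that clearing denominators does not alter the vanishing locus inside the torus (because the monomial used is a unit on $X\times_k{\Gm^n}_{,k}$) and that $\tilde f$ remains nonzero so that Lemma \ref{lem: fundamental smoothness} is applicable; everything else is bookkeeping about the compatibility of the projections and of base change to $\kappa(x)$ with passing to open subschemes.
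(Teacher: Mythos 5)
Your proposal is correct and follows essentially the same route as the paper: both reduce to Lemma \ref{lem: fundamental smoothness} via the open immersion $X\times_{k}{\Gm^n}_{,k}\hookrightarrow X\times_{k}\A^n_k$, replacing the Laurent polynomial $f$ by a polynomial (your $\tilde f$, the paper's $g$) whose vanishing locus agrees with $Y$ on the torus. Your version simply makes explicit the denominator-clearing and the verification that the hypotheses transfer, which the paper leaves implicit.
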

        \begin{proof}
            The natural inclusion $A[X_1, \ldots, X_n]\hookrightarrow A[X_1^{\pm}, \ldots, X_n^{\pm}]$ induces an open immersion $X\times_{k}{\Gm^n}_{,k}\hookrightarrow X\times_{k}\A^n_k$. 
            Thus, on the above immersion, there exists $g\in A[X_1, \ldots, X_n]$ such that $Z\cap (X\times_{k}{\Gm^n}_{,k}) = Y$ as a closed subscheme of $X\times_{k}{\Gm^n}_{,k}$, where $Z$ is a closed subscheme of $X\times_{k}\A^n_k$ defined by $g = 0$. 
            Moreover, we can regard $V$ as an open subscheme of $X\times_{k}\A^n_k$. 
            Thus, from Lemma \ref{lem: fundamental smoothness}, the statement holds. 
        \end{proof}
    \bibliographystyle{amsplain}
    \bibliography{yoshino-bib}

\providecommand{\bysame}{\leavevmode\hbox to3em{\hrulefill}\thinspace}
\providecommand{\MR}{\relax\ifhmode\unskip\space\fi MR }
\providecommand{\MRhref}[2]{%
  \href{http://www.ams.org/mathscinet-getitem?mr=#1}{#2}
}
\providecommand{\href}[2]{#2}
\begin{thebibliography}{1}

\bibitem{CLS11}
David~A. Cox, John~B. Little, and Henry~K. Schenck, \emph{Toric varieties}, Graduate Studies in Mathematics, vol. 124, American Mathematical Society, Providence, RI, 2011. \MR{2810322}

\bibitem{Ful93}
William Fulton, \emph{Introduction to toric varieties}, Annals of Mathematics Studies, vol. 131, Princeton University Press, Princeton, NJ, 1993, The William H. Roever Lectures in Geometry. \MR{1234037}

\bibitem{G13}
Walter Gubler, \emph{A guide to tropicalizations}, Algebraic and combinatorial aspects of tropical geometry, Contemp. Math., vol. 589, Amer. Math. Soc., Providence, RI, 2013, pp.~125--189. \MR{3088913}

\bibitem{MS15}
Diane Maclagan and Bernd Sturmfels, \emph{Introduction to tropical geometry}, Graduate Studies in Mathematics, vol. 161, American Mathematical Society, Providence, RI, 2015. \MR{3287221}

\bibitem{NO22}
J.~Nicaise and J.~C. Ottem, \emph{Tropical degenerations and stable rationality}, Duke Math. J. \textbf{171} (2022), no.~15, 3023--3075.

\bibitem{NO21}
Johannes Nicaise and John~Christian Ottem, \emph{A refinement of the motivic volume, and specialization of birational types}, Rationality of varieties, Progr. Math., vol. 342, Birkh\"{a}user/Springer, Cham, [2021] \copyright 2021, pp.~291--322. \MR{4383702}

\bibitem{SP}
The {Stacks project authors}, \emph{The stacks project}, \url{https://stacks.math.columbia.edu}, 2023.

\bibitem{Tev}
Jenia Tevelev, \emph{Compactifications of subvarieties of tori}, Amer. J. Math. \textbf{129} (2007), no.~4, 1087--1104. \MR{2343384}

\end{thebibliography}
\end{document}